\let\over\@@over
\let\atop\@@atop
\newtheorem{assumption}[lemma]{Assumption}
\newtheorem{example}[lemma]{Example}
\newtheorem{metatheorem}[lemma]{Metatheorem}
\definecolor{darkred}{rgb}{0.9,0.1,0.1}
\newcommand\minus{%
  \setbox0=\hbox{-}%
  \vcenter{%
    \hrule width\wd0 height \the\fontdimen8\textfont3%
  }%
}
\def\PR{\boldsymbol{R}}
\def\M{\mathfrak{M}}
\def\RR{\mathfrak{R}}
\def\Ren{\mathscr{R}}
\def\DD{\mathscr{D}}
\def\D{\mathscr{D}}
\def\F{\mathscr{F}}
\def\XX{{\boldsymbol X}}
\def\E{\mathbf{E}}
\def\P{\mathbf{P}}
\def\/{\,\rule[-0.25em]{2pt}{1em}\,}
\def\bstar{\mathbin{\bar \star}}
\def\reg{\zeta}
\def\m{\mathfrak{m}}
\def\n#1{\lfloor \hspace{-0.29em} \rceil #1 \lfloor \hspace{-0.29em} \rceil}
\def\LL{\mathfrak{L}}
\def\Aut{\mop{Aut}}
\def\FF{\mathfrak{F}}
\def\GG{\mathscr{G}}
\def\TT{\mathscr{T}}
\def\T{\mathbf{T}}
\def\s{\mathfrak{s}}
\def\lspan{\mathop{\mathrm{span}}}
\def\Alg{\mathbin{\mathrm{Alg}}}
\def\Lip{\mathcal{C}}
\def\MM{\mathscr{M}}
\def\PP{\mathscr{P}}
\def\SS{\mathscr{S}}
\def\${|\!|\!|}
\def\g{\mathfrak{g}}
\def\m{\mathfrak{m}}
\def\DeltaM{\Delta^{\!M}}
\def\Deltap{\Delta^{\!+}}
\def\hDeltaM{\hat \Delta^{\!M}}
\def\K{\mathfrak{K}}
\def\ls{\lesssim}
\def\PPi{\boldsymbol{\Pi}}
\def\Dom{D}
\def\MHFigArg#1#2{\def\arg{#1}\mhpastefig{#2}}
\begin{document}

\title{A theory of regularity structures}
\author{M.~Hairer}
\institute{Mathematics Department, University of Warwick
 \\ \email{M.Hairer@Warwick.ac.uk}}
\titleindent=0.65cm

\maketitle
\thispagestyle{empty}

\begin{abstract}
We introduce a new notion of ``regularity structure'' that provides an 
algebraic framework allowing to describe functions and / or distributions
via a kind of ``jet'' or local Taylor expansion around each point. The main novel idea is to replace the 
classical polynomial model which is suitable for describing smooth functions
by arbitrary models that are purpose-built for the problem at hand. 
In particular, this allows to describe the local behaviour not only of functions
but also of large classes of distributions. 

We then build a calculus allowing to perform the various operations
(multiplication, composition with smooth functions, integration against
singular kernels) necessary to formulate fixed point equations for 
a very large class of semilinear PDEs driven by some very singular (typically random)
input. This allows, for the first time, 
to give a mathematically rigorous meaning to many interesting stochastic PDEs
arising in physics.
The theory comes with convergence results that allow to interpret the
solutions obtained in this way as limits of classical solutions to 
regularised problems, possibly modified by the addition of diverging counterterms.
These counterterms arise naturally through the action of a ``renormalisation group''
which is defined canonically in terms of the regularity structure associated to
the given class of PDEs. 

Our theory also allows to easily recover many existing results on
singular stochastic PDEs (KPZ equation, stochastic quantisation equations, Burgers-type 
equations) and to understand them as particular instances of a 
unified framework. One surprising insight is that in all of these instances local solutions
are actually ``smooth'' in the sense that they can be approximated locally to arbitrarily high
degree as linear combinations of a fixed family of random functions / distributions that play
the role of ``polynomials'' in the theory.

As an example of a novel application, we solve the long-standing problem
of building a natural Markov process that is symmetric with respect to the
(finite volume) measure describing the $\Phi^4_3$ Euclidean quantum field theory.
It is natural to conjecture that the Markov process built in this way describes 
the Glauber dynamic of $3$-dimensional ferromagnets near their critical temperature.
\end{abstract}

\keywords {\small Stochastic PDEs, Renormalisation, Wick products, quantum field theory}

\noindent{\small \textit{MSC class:} 60H15, 81S20, 82C28}

\setcounter{tocdepth}{2}
\tableofcontents

\section{Introduction}

The purpose of this article is to develop a general theory allowing to formulate, solve
and analyse solutions to semilinear stochastic partial differential equations of the type
\begin{equ}[e:generalProblem]
\CL u = F(u,\xi)\;,
\end{equ}
where $\CL$ is a (typically parabolic but possibly elliptic) differential operator, $\xi$ is a (typically very irregular) random input,
and $F$ is some nonlinearity. The nonlinearity $F$ does not necessarily need to be local,
and it is also allowed to depend on some partial derivatives of $u$, as long as these
are of strictly lower order than $\CL$. One example of random input that is of particular interest 
in many situations arising from the large-scale behaviour of some physical microscopic model
is that of white noise (either space-time or just in space), but let us stress immediately that Gaussianity is \textit{not}
essential to the theory, although it simplifies certain arguments. Furthermore, we will assume that $F$ 
depends on $\xi$ in an affine way, although this could in principle be relaxed to some
polynomial dependencies. 

Our main assumption will be that the equation described by \eref{e:generalProblem}
is \textit{locally subcritical} (see Assumption~\ref{ass:powercount} below). 
Roughly speaking, this means that if one rescales \eref{e:generalProblem}
in a way that keeps both $\CL u$ and $\xi$ invariant then, at small scales, all nonlinear terms
formally disappear. 
A ``na\"\i ve'' approach to such a problem is to consider a sequence of regularised problems
given by
\begin{equ}[e:mainProb]
\CL u_\eps = F(u_\eps,\xi_\eps)\;,
\end{equ}
where $\xi_\eps$ is some smoothened version of $\xi$ (obtained for example by convolution
with a smooth mollifier), and to show that $u_\eps$ converges to some limit $u$ which is independent of
the choice of mollifier.

This approach does in general fail, even under the assumption of local subcriticality.
Indeed, consider the KPZ equation on the line \cite{KPZOrig}, which is the stochastic PDE formally given by
\begin{equ}[e:basicKPZ]
\d_t h = \d_x^2 h + (\d_x h)^2 + \xi\;,
\end{equ}
where $\xi$ denotes space-time white noise. This is indeed of the form \eref{e:generalProblem}
with $\CL = \d_t - \d_x^2$ and $F(h,\xi) = (\d_x h)^2 + \xi$ and it is precisely this kind of problem that we have in mind. Furthermore, 
if we zoom into the small scales by writing $\tilde h(x,t) = \delta^{-1/2} h(\delta x, \delta^2 t)$ and $\tilde \xi(x,t) = \delta^{3/2}\xi(\delta x, \delta^2 t)$
for some small parameter $\delta$, then we have that on the one hand $\tilde \xi$ equals
$\xi$ in distribution, and on the other hand $\tilde h$ solves
\begin{equ}
\d_t \tilde h = \d_x^2 \tilde h +  \delta^{1/2} (\d_x \tilde h)^2 + \tilde \xi\;.
\end{equ}
As $\delta \to 0$ (which corresponds to probing solutions at very small scales), we see
that, at least at a formal level, the nonlinearity vanishes and we simply recover the stochastic 
heat equation. This shows that the KPZ equation is indeed locally subcritical in dimension $1$.
On the other hand, if we simply replace $\xi$ by $\xi_\eps$ in \eref{e:basicKPZ} and try to take
the limit $\eps \to 0$, solutions diverge due to the ill-posedness of the term $(\d_x h)^2$.

However, in this case, it is possible to devise a suitable renormalisation procedure 
\cite{MR1462228,KPZ},
which essentially amounts to subtracting a very large constant to the right hand side of 
a regularised version of \eref{e:basicKPZ}.
This then ensures that the corresponding sequence of solutions converges to a finite limit.
The purpose of this article is to build a general framework that goes far beyond the example of 
the KPZ equation and allows to provide a robust notion of solution to a very large
class of locally subcritical stochastic PDEs that are classically ill-posed.

\begin{remark}
In the language of quantum field theory (QFT), equations that are subcritical in the way just described
give rise to ``superrenormalisable'' theories. 
One major difference between the results presented in this article
and most of the literature on quantum field theory is that the approach explored here
is truly non-perturbative and therefore allows one to deal also with some non-polynomial equations
like \eref{e:PAMGen} or \eref{e:KPZ} below.
We furthermore consider parabolic problems, where we need to deal with the problem
of initial conditions and local (rather than global) solutions.
Nevertheless, the mathematical analysis of QFT was one of the main 
inspirations in the development of the techniques
and notations presented in Sections~\ref{sec:SPDE} and \ref{sec:Gaussian}.
\end{remark}

Conceptually, the approach developed in this article for formulating and solving problems of the
type \eref{e:generalProblem} consists of three steps.

\begin{list}{\labelitemi}{\leftmargin=\parindent}
\item[1.] In an \textit{algebraic} step, one first builds a ``regularity structure'', which is 
sufficiently rich to be able to describe the fixed point problem associated to \eref{e:generalProblem}.
Essentially, a regularity structure is a vector space that allows to describe the coefficients
in a kind of ``Taylor expansion'' of the solution around any point in space-time.
The twist is that the ``model'' for the Taylor expansion does not only consist of polynomials,
but can in general contain other functions and / or distributions built from multilinear expressions
involving $\xi$. 

\item[2.] In an \textit{analytical} step, one solves the fixed point problem formulated in the algebraic step. 
This allows to build 
an ``abstract'' solution map to \eref{e:generalProblem}. In a way, this is a closure procedure: the abstract solution
map essentially describes all ``reasonable'' limits that can be obtained when solving \eref{e:generalProblem}
for sequences of regular driving noises that converge to something very rough.

\item[3.] In a final \textit{probabilistic} step, one builds a ``model'' corresponding to the Gaussian 
process $\xi$ we are really interested in. In this step, one typically has to choose a renormalisation
procedure allowing to make sense of finitely many products of distributions that have no
classical meaning. Although there is some freedom involved, there usually is a canonical model,
which is ``almost unique'' in the sense that it is naturally parametrized by elements in some
finite-dimensional Lie group, which has an interpretation as a ``renormalisation group''
for \eref{e:generalProblem}.
\end{list}

We will see that there is a very general theory that allows to build a ``black box'', which performs
the first two steps for a very large class of stochastic PDEs. For the last step, we do not have
a completely general theory at the moment, but we have a general methodology, as well as a general
toolbox, which seem to be very useful in practice.

\subsection{Some examples of interesting stochastic PDEs}
\label{sec:examples}

Some examples of physically relevant equations that in principle fall into the category of problems 
amenable to analysis via the techniques developed in this article include:
\begin{list}{\labelitemi}{\leftmargin=\parindent}
\item The stochastic quantisation of $\Phi^4$ quantum field theory in dimension $3$. This
formally corresponds to the equation
\begin{equ}[e:Phi4]
\d_t \Phi = \Delta \Phi - \Phi^3 + \xi\;,\tag{$\Phi^4$}
\end{equ}
where $\xi$ denotes space-time white noise and the spatial variable takes values in the $3$-dimensional
torus, see \cite{ParisiWu}. Formally, the invariant measure of \eref{e:Phi4}
(or rather a suitably renormalised version of it) is the measure on Schwartz distributions
associated to Bosonic Euclidean quantum field theory in $3$ space-time dimensions.
The construction of this measure was one of the major achievements of the programme
of constructive quantum field theory, see the articles \cite{MR0231601,MR0359612,MR0408581,MR0416337,MR0384003}, as well as
the monograph \cite{MR887102} and the references therein.

In two spatial dimensions, this problem was previously treated in \cite{AlbRock91,MR2016604}.
It has also been argued more recently in \cite{MR2214506} that even though it is formally symmetric,
the $3$-dimensional version of this model is not amenable to 
analysis via Dirichlet forms. 
In dimension $4$, the model \eref{e:Phi4} becomes critical and one
does not expect to be able to give it any non-trivial (i.e. non-Gaussian in this case)
meaning as a random field for $d \ge 4$, see for example \cite{MR643591,MR678000,MR701675}.

Another reason why \eref{e:Phi4} is a very interesting equation to consider is that 
it is related to the behaviour of the $3D$ Ising model under Glauber dynamic near its
critical temperature. For example, it was shown in \cite{MR1317994} that the one-dimensional version
of this equation describes the Glauber dynamic of an Ising chain with a Kac-type interaction
at criticality. In \cite{MR1661764}, it is argued that the same should hold true in higher
dimensions and an argument is given that relates the renormalisation procedure required
to make sense of \eref{e:Phi4} to the precise choice of length scale as
a function of the distance from criticality.

\item The continuous parabolic Anderson model
\begin{equ}[e:PAM]
\d_t u = \Delta u + \xi u\;,\tag{PAM}
\end{equ}
where $\xi$ denotes spatial white noise that is constant in time. For smooth noise, this problem
has been treated extensively in \cite{MR1185878}. While the problem with $\xi$ given by spatial white noise
is well-posed in dimension $1$ (and a good approximation theory exists, see \cite{MR2451056}), 
it becomes ill-posed already in dimension $2$.
One does however expect this problem
to be renormalisable with the help of the techniques presented here in spatial dimensions
$2$ and $3$. Again, dimension $4$ is critical and one does not expect any continuous version
of the model for $d \ge 4$.

\item KPZ-type equations of the form
\begin{equ}[e:KPZ]
\d_t h = \d_x^2 h + g_1(h) \bigl(\d_x h\bigr)^2 + g_2(h) \d_x h + g_3(h) + g_4(h)\xi\;, \tag{KPZ}
\end{equ}
where $\xi$ denotes space-time white noise and the $g_i$ are smooth functions. 
While the classical KPZ equation can be made sense of via the Cole-Hopf transform
\cite{MR0042889,MR0047234,MR1462228}, this trick fails in the more general situation 
given above or in the case of a system of coupled KPZ equations, which arises naturally in
the study of chains of nonlinearly interacting oscillators \cite{Milton}.

A more robust concept of solution for the KPZ equation where $g_4 = g_1 = 1$ and $g_2 = g_3 = 0$,
as well as for a number of other equations belonging to the class \eref{e:KPZ} was given recently in the series
of articles \cite{MR2875759,BurgersRough,Hendrik,KPZ}, using ideas from the theory of rough paths that eventually lead
to the development of the theory presented here. 
The  more general class
of equations \eref{e:KPZ} is of particular interest since it is formally invariant under 
changes of coordinates
and would therefore be a good candidate for describing a natural ``free evolution'' for loops
on a manifold, which generalises the stochastic heat equation. See \cite{Funaki} for a previous
attempt in this direction and \cite{Zdzislaw} for some closely related work. 
\item The Navier-Stokes equations with very singular forcing
\begin{equ}[e:NS]\tag{SNS}
\d_t v = \Delta v - P(v\cdot \nabla)v + \xi\;,
\end{equ}
where $P$ is Leray's projection onto the space of divergence-free vector fields.
If we take $\xi$ to have the regularity of space-time white noise, \eref{e:NS} is already 
classically ill-posed in dimension $2$, although one can circumvent this problem, see \cite{MR1051499,MR1941997,MR2060312}.
However, it turns out that the actual critical dimension is $4$ again, so that we can hope to make sense
of \eref{e:NS} in a suitably renormalised sense in dimension $3$ and construct local solutions there.
\end{list}

One common feature of all of these problems is that they involve products between terms
that are too irregular for such a product to make sense as a continuous bilinear form
defined on some suitable function space. Indeed, denoting by $\CC^\alpha$ for $\alpha < 0$ the
Besov space $B^\alpha_{\infty,\infty}$, it is well-known that, for non-integer values of $\alpha$ 
and $\beta$, the map $(u,v) \mapsto uv$ is
well defined from $\CC^\alpha \times \CC^\beta$ into some space of Schwartz distributions
if and only if $\alpha + \beta > 0$ (see for example \cite{BookChemin}), which is quite easily seen to be
violated in all of these examples.

In the case of second-order parabolic equations, 
it is straightforward to verify (see also Section~\ref{sec:singular} below) that, for fixed time,  
the solutions to the linear equation
\begin{equ}
\d_t X = \Delta X + \xi\;,
\end{equ}
belong to $\CC^\alpha$ for $\alpha < 1-{d\over 2}$ when $\xi$ is space-time white noise
and $\alpha < 2-{d\over 2}$ when $\xi$ is purely spatial white noise.
As a consequence, one expects $\Phi$ to take values in $\CC^\alpha$ with
$\alpha < -1/2$, so that $\Phi^3$ is ill-defined. In the case of \eref{e:PAM}, one expects $u$
to take values in $\CC^\alpha$ with $\alpha < 2-d/2$, so that the product $u \xi$ is well-posed
only for $d < 2$. As in the case of \eref{e:Phi4}, dimension $2$ is ``borderline'' with the appearance
of logarithmic divergencies, while dimension $3$ sees the appearance of algebraic divergencies
and logarithmic subdivergencies.
Note also that, since $\xi$ is white noise \textit{in space}, there is no theory of
stochastic integration available to make sense of the product $u\xi$, unlike in the case when $\xi$ is
space-time white noise. (See however \cite{PAMPreprint} for a very recent 
article solving this particular
problem in dimension $2$.)
Finally, one expects the function $h$ in \eref{e:KPZ} to take values in $\CC^\alpha$
for $\alpha < {1\over 2}$, so that all the terms 
appearing in \eref{e:KPZ} are ill-posed, except for the term involving $g_3$.

Historically, such situations have been dealt with by 
replacing the products in question by their Wick ordering with respect to the Gaussian
structure given by the solution to the linear problem $\CL u = \xi$, see for example
\cite{MR815192,AlbRock91,MR1941997,MR2016604,MR2365646} and references therein. 
In many of the problems mentioned above, such a technique is bound to fail due to the presence of
additional subdivergencies. Furthermore, we would like to be able to consider terms like $g_1(h) (\d_x h)^2$ in 
\eref{e:KPZ} where $g_1$ is an arbitrary smooth function, so that it is not clear at all what a Wick ordering
would mean.
Over the past few years, it has transpired that the theory of controlled rough paths \cite{Lyons,MR2091358,Trees} 
could be used
in certain situations to provide a meaning to the ill-posed nonlinearities  arising in a class
of Burgers-type equations \cite{MR2860933,BurgersRough,Hendrik,JanHendrik}, as 
well as in the KPZ equation \cite{KPZ}.
That theory however is intrinsically a one-dimensional theory, which is why it has so far only been
successfully applied to stochastic evolution equations with one spatial dimension. 

In general, the theory of rough paths and its variants 
do however allow to deal with processes
taking values in an infinite-dimensional space. It has therefore been applied 
successfully to stochastic PDEs
driven by signals that are very rough in time (i.e. rougher than white noise), but at the expense
of requiring additional spatial regularity \cite{MaxSam,Oberhaus,Josef}.

One very recent attempt to use related ideas in higher dimensions was made in \cite{PAMPreprint}
by using a novel theory of ``controlled distributions''. With the help of this theory, which
relies heavily on the use of Bony's paraproduct, the authors can treat for example
\eref{e:PAM} (as well as some nonlinear variant thereof) in dimension $d = 2$.
The present article can be viewed as a far-reaching generalisation of related ideas,
in a way which will become clearer in Section~\ref{sec:regStructure} below.

\subsection{On regularity structures}
\label{sec:regStruct}

The main idea developed in the present work is that of describing the ``regularity'' of a function
or distribution in a way that is adapted to the problem at hand.
Traditionally, the regularity of a function is measured by its proximity to polynomials. Indeed,
we say that a function $u\colon \R^d \to\R$ is of class $\CC^\alpha$ with $\alpha > 0$ if, for every point
$x \in \R^d$, it is possible to find a polynomial $P_x$ such that
\begin{equ}
|f(y) - P_x(y)| \lesssim |x-y|^\alpha\;.
\end{equ}
What is so special about polynomials? For one, they have very nice algebraic properties:
products of polynomials are again polynomials, and so are their translates and derivatives.
Furthermore, a monomial is a homogeneous function: it behaves at the origin in a self-similar way
under rescalings. The latter property however does rely on the choice of a base point:
the polynomial $y \mapsto (y-x)^k$ is homogeneous of degree $k$ when viewed
around $x$, but it is made up from a sum of monomials with different homogeneities
when viewed around the origin.

In all of the examples considered in the previous subsection, solutions are expected to
be extremely irregular (at least in the classical sense!), 
so that polynomials alone are a very poor model for trying to describe them. 
However, because of local subcriticality, one expects the solutions to look
at smallest scales like solutions to the corresponding linear problems, so we are in situations
where it might be possible to make a good ``guess'' for a much more adequate model 
allowing to describe the small-scale structure of solutions.

\begin{remark}
In the particular case of functions of one variable, this point of view has been advocated
by Gubinelli  in \cite{MR2091358,Trees} (and to some extent by Davie in \cite{MR2387018}) 
as a way of interpreting Lyons's theory
of rough paths. (See also \cite{MR2036784,MR2314753,MR2604669} for 
some recent monographs surveying that theory.) That theory does however rely very strongly
on the notion of ``increments'' which is very one-dimensional in nature and forces
one to work with functions, rather than general distributions. In a more subtle
way, it also relies on the fact that one-dimensional integration can be viewed as 
convolution with the Heaviside function, which is locally constant away from $0$,
another typically one-dimensional feature.
\end{remark}

This line of reasoning is the motivation behind the introduction of 
the main novel abstract structure proposed in this work, which is that of a ``regularity structure''.
The precise definition will be given in Definition~\ref{def:regStruct} below, but 
the basic idea is to fix a finite family of functions (or distributions!) that will play the role
of polynomials. Typically, this family contains all polynomials, but it may contain more
than that. A simple way of formalising this is that one fixes some abstract vector space $T$
where each basis vector represents one of these distributions.
A ``Taylor expansion'' (or ``jet'') is then described by an element $a \in T$ which, via some
``model'' $\PPi\colon T \to \CS'(\R^d)$, one can interpret as determining some
distribution $\PPi a \in \CS'(\R^d)$.
In the case of polynomials, $T$ would be the space of abstract polynomials in $d$ commuting
indeterminates and $\PPi$ would be the map that realises such an abstract polynomial
as an actual function on $\R^d$.

As in the case of polynomials, different distributions have different homogeneities
(but these can now be arbitrary real numbers!), so we have a splitting of $T$ into
``homogeneous subspaces'' $T_\alpha$. Again, as in the case of polynomials, 
the 	homogeneity of an element $a$ describes the behaviour of $\PPi a$ around some
base point, say the origin $0$. Since we want to be able to place this base point
at an arbitrary location we also postulate that one has a family of invertible linear maps
$F_x \colon T \to T$ such that if $a \in T_\alpha$, then $\PPi F_x a$ exhibits
behaviour ``of order $\alpha$'' (this will be made precise below in the case of distribution)
near the point $x$.
In this sense, the map $\Pi_x = \PPi \circ F_x$ plays the role of the ``polynomials based at $x$'',
while the map $\Gamma_{xy} = F_x^{-1} \circ F_y$ plays the role of a ``translation operator''
that allows to rewrite a ``jet based at $y$'' into a ``jet based at $x$''.

We will endow the space of all models $(\PPi, F)$ as above with a topology 
that enforces the correct behaviour of $\Pi_x$ near each point $x$, and furthermore enforces
some natural notion of regularity of the map $x \mapsto F_x$.
The important remark is that although this turns the space of models into a complete
metric space, it does \textit{not} turn it into a linear (Banach) space!
It is the intrinsic nonlinearity of this space which allows to encode 
the subtle cancellations that one needs to be able to keep track of in order to treat the
examples mentioned in Section~\ref{sec:examples}.
Note that the algebraic structure arising in the theory of rough paths
(truncated tensor algebra, together with its group-like elements) can be viewed
as one particular example of an abstract regularity structure. The space of rough paths
with prescribed H\"older regularity is then precisely the corresponding space
of models. See Section~\ref{sec:RP} for a more detailed description of
this correspondence.

\subsection{Main results: abstract theory}

Let us now expose some of the main abstract results obtained in this article. 
Unfortunately, since the precise set-up requires a number of rather lengthy definitions, 
we cannot give precise statements here. However, we would like to provide the
reader with a flavour of the theory and refer to the main text for more details.

One of the main novel definitions consists in spaces $\CD^\gamma$ and $\CD_\alpha^\gamma$
(see Definition~\ref{def:Dgamma} and Remark~\ref{rem:notationReg} below)
which are the equivalent in our framework to the usual spaces $\CC^\gamma$.
They are given in terms of a ``local Taylor expansion of order $\gamma$''
at every point, together with suitable regularity assumption. 
Here, the index $\gamma$ measures the order of the expansion, while the index $\alpha$ (if present)
denotes the lowest homogeneity of the different terms appearing in the expansion. 
In the case of regular Taylor expansions, the term with the lowest homogeneity is always the
constant term, so one has $\alpha = 0$. However, since we allow elements of negative homogeneity,
one can have $\alpha \le 0$ in general.
Unlike the case of regular Taylor expansions where the first term always consists
of the value of the function itself, we are here in a situation where, due to the
fact that our ``model'' might contain elements that are distributions, it is not clear
at all whether these ``jets'' can actually be patched together to represent an actual
distribution. The reconstruction theorem, Theorem~\ref{theo:reconstruction} below,
states that this is always the case as soon as $\gamma > 0$. Loosely speaking, it states 
the following, where we again write $\CC^\alpha$ for the Besov space $\CB^\alpha_{\infty,\infty}$.
(Note that with this notation $\CC^0$ really denotes the space $L^\infty$, $\CC^1$ the space
of Lipschitz continuous functions, etc. This is consistent with the usual notation for non-integer
values of $\alpha$.)

\begin{theorem}[Reconstruction]
For every $\gamma > 0$ and $\alpha \le 0$, there exists a unique continuous linear 
map $\CR \colon \CD_\alpha^\gamma \to \CC^\alpha(\R^d)$ with the property that, in a neighbourhood
of size $\eps$ around any $x \in \R^d$, $\CR f$ is approximated by $\Pi_x f(x)$, the jet
described by $f(x)$, up to an error of order $\eps^\gamma$. 
\end{theorem}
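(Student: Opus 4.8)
The plan is to prove the theorem in two stages: first establish \emph{uniqueness}, which is comparatively soft, then construct $\CR f$ explicitly and verify the required approximation bound, which is where the real work lies.

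\textbf{Uniqueness.} Suppose $\CR_1 f$ and $\CR_2 f$ both satisfy the stated property. Their difference $g := \CR_1 f - \CR_2 f$ is then a distribution which, tested against a rescaled test function $\phi_x^\lambda$ localised at scale $\lambda$ around any $x$, satisfies $|\langle g, \phi_x^\lambda\rangle| \lesssim \lambda^\gamma$ (the two approximations by $\Pi_x f(x)$ cancel, leaving only the two error terms). Since $\gamma > 0$, a standard argument — test $g$ against an arbitrary smooth compactly supported $\psi$ by writing $\psi$ as a sum over dyadic scales of bumps and summing the geometric series $\sum_{n} 2^{-n\gamma}\cdot(\text{number of bumps})\cdot 2^{-nd}$, which converges because $\gamma>0$ — forces $g = 0$. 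This is essentially a ``Besov space with positive exponent has no nonzero elements of arbitrarily high local vanishing order'' statement, and I expect it to be clean.

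\textbf{Existence.} Here I would use a wavelet / multiresolution construction, which is the natural tool given that the target is a Besov space defined via Littlewood--Paley theory. Fix a sufficiently regular compactly supported wavelet basis (say $C^r$ with $r > |\alpha|$), giving scaling functions $\varphi_x^n$ and wavelets $\psi_x^n$ at each dyadic scale $n$. The idea is to \emph{define} $\CR f$ by prescribing its wavelet coefficients: at scale $n$ and location $x$, set the coefficient to be $\langle \Pi_x f(x), \psi_x^n\rangle$ — i.e., locally one pretends $\CR f$ really is the jet based at the nearest dyadic point. The key consistency check is that changing the base point from one dyadic point to a neighbouring one changes this quantity only by an acceptable amount; this is exactly where the structure of $\CD_\alpha^\gamma$ enters, through the bound on $\Pi_x f(x) - \Pi_y f(y) = \Pi_y(\Gamma_{yx}f(x) - f(y))$ combined with the defining regularity estimate $\|\Gamma_{yx}f(x) - f(y)\|_\beta \lesssim |x-y|^{\gamma-\beta}$ for each homogeneity $\beta < \gamma$ appearing in the model. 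Summing the contributions of $\Pi_x a$ for $a \in T_\beta$ tested against a scale-$\lambda$ wavelet gives a term of size $\lambda^\beta$, and these all have $\beta \ge \alpha$, which is what pins down the Besov regularity $\CC^\alpha$ of the result; the telescoping of base-point changes across scales, using $\gamma > 0$, is what makes the series defining $\CR f$ converge.

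\textbf{The approximation bound and linearity.} Once $\CR f$ is constructed, one must verify $|\langle \CR f - \Pi_x f(x), \phi_x^\lambda\rangle| \lesssim \lambda^\gamma$. This follows by splitting the test function into its components at scales $\ge \log_2(1/\lambda)$ versus finer scales: on coarse scales $\CR f$ agrees with $\Pi_x f(x)$ up to the base-point-change errors already controlled, on fine scales one again sums $\sum_{n} 2^{-n(\gamma - \text{something})}$ and uses $\gamma>0$ to get convergence. Linearity and continuity of $f \mapsto \CR f$ are then immediate from the construction, since everything is built linearly out of the coefficients $f(x)$. \textbf{Main obstacle.} The crux — and the place where the specific axioms of a regularity structure and of $\CD_\alpha^\gamma$ are genuinely used — is the compatibility estimate controlling how $\Pi_x f(x)$ and $\Pi_y f(y)$ differ when $|x-y|$ is of order $\lambda$, and organising the resulting double sum (over dyadic scales and over the finitely many homogeneities $\beta$) so that it converges; the negative-homogeneity elements ($\beta<0$) are what make this delicate, since naively they would blow up at small scales, and it is precisely the cancellation encoded in the nonlinear structure of the space of models that saves the day. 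Everything else is bookkeeping with wavelet estimates.
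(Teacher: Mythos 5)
Your uniqueness argument is fine and is essentially the paper's: mollify an arbitrary test function at scale $\delta$, use the $\delta^\gamma$ bound satisfied by the difference of the two candidates, and let $\delta \to 0$; positivity of $\gamma$ is exactly what makes this work.

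The gap is in the existence construction. You propose to \emph{define} $\CR f$ by declaring its wavelet coefficient at scale $n$ and location $x$ to be $\scal{\Pi_x f(x), \psi_x^{n,\s}}$. This is precisely the formula the paper reserves for the regime $\gamma \le 0$, and it does \emph{not} produce the required bound when $\gamma > 0$, which is the case of the theorem. Two ways to see the failure. Quantitatively: for the coarse scales $2^{-n} \ge \lambda$ there are only $O(1)$ contributing points $y$, each with $\|x-y\|_\s \lesssim 2^{-n}$, so the coherence estimate gives $|(\Pi_y f(y) - \Pi_x f(x))(\psi_y^{n,\s})| \lesssim \sum_{\beta<\gamma} 2^{-n(\gamma-\beta)}\,2^{-\beta n - n|\s|/2} \lesssim 2^{-n\gamma - n|\s|/2}$, and after pairing with $|\scal{\CS_{\s,x}^\lambda \eta, \psi_y^{n,\s}}| \lesssim 2^{n|\s|/2}$ the resulting sum $\sum_{2^{-n}\ge\lambda} 2^{-n\gamma}$ is $O(1)$, dominated by $n=0$, rather than $O(\lambda^\gamma)$: the geometric series runs the wrong way precisely because $\gamma>0$. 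Structurally: a wavelet basis of regularity $r > |\alpha|$ annihilates polynomials, so in the canonical example where $f$ is the Taylor jet of a $\CC^\gamma$ function $g$, \emph{all} of your prescribed wavelet coefficients vanish and your $\CR f$ collapses to a fixed coarse-scale function, nowhere near $g$. The low-frequency content of $\CR f$ cannot be read off scale by scale; it has to be imported from arbitrarily fine scales. The paper's construction does this by forming the scaling-function approximants $\CR_n f = \sum_{x\in\Lambda_n^\s} (\Pi_x f(x))(\phi_x^{n,\s})\,\phi_x^{n,\s}$ and passing to the limit $n\to\infty$: the telescoping difference $\CR_{n+1}f - \CR_n f$ splits into a piece in $V_n^\perp$ (controlled exactly as you describe) plus a piece $g_n \in V_n$ whose scaling-function coefficients are the ``coherence defects'' $\delta A_x^n$; it is the accumulated tail $\sum_{k\ge n} g_k$ — fine-scale information feeding back into the coarse coefficients — that corrects the low frequencies and yields the $\lambda^\gamma$ bound (this is the content of the sewing-lemma-type convergence criterion and the generalised reconstruction proposition in the paper). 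So your ``telescoping of base-point changes across scales'' is indeed the right mechanism, but it must be applied to the scaling-function approximants rather than packaged as a closed-form prescription of wavelet coefficients.
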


The reconstruction theorem shows that elements $f \in \CD^\gamma$ uniquely describe distributions
that are modelled locally on the distributions described by $\Pi_x f(x)$.
We therefore call such an element $f$ a ``modelled distribution''.
At this stage, the theory is purely descriptive: given a model of a 
regularity structure, it allows to describe
a large class of functions and / or distributions that ``locally look like'' linear combinations
of the elements in the model. 
We now argue that it is possible to construct a whole calculus that makes the
theory operational, and in particular sufficiently rich to allow to formulate and solve
large classes of semilinear PDEs.

One of the most important and non-trivial operations required for this is multiplication.
Indeed, one of the much lamented drawbacks of the classical theory
of Schwartz distributions is that there is no canonical way of multiplying them \cite{MR0064324}. 
As a matter of fact,
it is in general not even possible to multiply a distribution with a continuous function,
unless the said function has sufficient regularity.

The way we use here to circumvent this problem is to \textit{postulate} the values of
the products between elements of our model. If the regularity structure is
sufficiently large to also contain all of these products (or at least sufficiently
many of them in a sense to be made precise), then one can simply perform a pointwise
multiplication of the jets of two modelled distributions at each point.
Our main result in this respect is that, under some very natural structural assumptions,
such a product is again a modelled distribution. The following is a loose statement of
this result, the precise formulation of which is given in Theorem~\ref{theo:mult} below.

\begin{theorem}[Multiplication]\label{thm:mult}
Let $\star$ be a suitable product on $T$ and let $f_1 \in \CD_{\alpha_1}^{\gamma_1}$
and $f_2 \in \CD_{\alpha_2}^{\gamma_2}$ with $\gamma_i > 0$. 
Set $\alpha = \alpha_1 + \alpha_2$ and $\gamma = (\gamma_1 + \alpha_2) \wedge (\gamma_2 + \alpha_1)$.
Then, the pointwise product $f_1 \star f_2$ belongs to $\CD^\gamma_\alpha$.
\end{theorem}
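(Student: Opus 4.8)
The plan is to define the product pointwise, by truncation, and then verify directly the two defining bounds of $\CD^\gamma_\alpha$. Recall that each $f_i$ is valued in a sector of regularity $\alpha_i$ (so in particular $\Gamma_{xy}f_i(y)$ contains no homogeneities below $\alpha_i$), and that a suitable product is graded, $\star\colon T_{\beta_1}\times T_{\beta_2}\to T_{\beta_1+\beta_2}$, maps the relevant sectors into a sector of regularity $\alpha=\alpha_1+\alpha_2$, and is compatible with the structure group. I would then set
\[(f_1\star f_2)(x):=\mathcal Q_{<\gamma}\bigl(f_1(x)\star f_2(x)\bigr),\]
where $\mathcal Q_{<\gamma}$ denotes the projection onto homogeneities below $\gamma$; this is well defined since $f_i(x)$ has only finitely many non-vanishing homogeneous components, and it automatically takes values in homogeneities lying in $[\alpha,\gamma)$. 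The uniform bound $\sup_{x\in\K}\|(f_1\star f_2)(x)\|_\ell\lesssim 1$ for $\ell<\gamma$ is then immediate from the corresponding bounds on $f_1,f_2$ and boundedness of $\star$ on each pair of homogeneous subspaces, so the whole content lies in the increment estimate $\|(f_1\star f_2)(x)-\Gamma_{xy}(f_1\star f_2)(y)\|_\ell\lesssim|x-y|^{\gamma-\ell}$, for $\ell<\gamma$ and $|x-y|\le 1$.

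The heart of the argument is a telescoping step. Compatibility of $\star$ with the structure group gives $\Gamma_{xy}\bigl(f_1(y)\star f_2(y)\bigr)=\bigl(\Gamma_{xy}f_1(y)\bigr)\star\bigl(\Gamma_{xy}f_2(y)\bigr)$, whence
\[f_1(x)\star f_2(x)-\bigl(\Gamma_{xy}f_1(y)\bigr)\star\bigl(\Gamma_{xy}f_2(y)\bigr)=\bigl(f_1(x)-\Gamma_{xy}f_1(y)\bigr)\star f_2(x)+\bigl(\Gamma_{xy}f_1(y)\bigr)\star\bigl(f_2(x)-\Gamma_{xy}f_2(y)\bigr).\]
On a homogeneous component of degree $\ell=\beta_1+\beta_2<\gamma$, the first summand is controlled, using $\|(f_1(x)-\Gamma_{xy}f_1(y))_{\beta_1}\|\lesssim|x-y|^{\gamma_1-\beta_1}$ (legitimate since $f_1(x)-\Gamma_{xy}f_1(y)$ still lies in a sector of regularity $\alpha_1$), together with $\|f_2(x)_{\beta_2}\|\lesssim 1$ and $\beta_2\ge\alpha_2$, by $|x-y|^{\gamma_1+\beta_2-\ell}\le|x-y|^{(\gamma_1+\alpha_2)-\ell}$. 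The second summand is treated symmetrically, using $\|(\Gamma_{xy}f_1(y))_{\beta_1}\|\lesssim 1$ (again because $\beta_1\ge\alpha_1$) and $\|(f_2(x)-\Gamma_{xy}f_2(y))_{\beta_2}\|\lesssim|x-y|^{\gamma_2-\beta_2}$, which yields $|x-y|^{(\gamma_2+\alpha_1)-\ell}$. Taking the worse of the two exponents gives exactly $|x-y|^{\gamma-\ell}$ with $\gamma=(\gamma_1+\alpha_2)\wedge(\gamma_2+\alpha_1)$.

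Finally one must reinstate the truncation: since $\Gamma_{xy}$ lowers homogeneity within the sector, $\Gamma_{xy}(f_1\star f_2)(y)=\Gamma_{xy}\mathcal Q_{<\gamma}(f_1(y)\star f_2(y))$ differs from $\mathcal Q_{<\gamma}\Gamma_{xy}(f_1(y)\star f_2(y))$ only by the term $\mathcal Q_{<\gamma}\Gamma_{xy}\mathcal Q_{\ge\gamma}(f_1(y)\star f_2(y))$, which involves only components of $f_1(y)\star f_2(y)$ of homogeneity $m\in[\gamma,\gamma_1+\gamma_2)$; these are uniformly bounded and are sent by $\Gamma_{xy}$ into degree $\ell$ at cost $|x-y|^{m-\ell}\le|x-y|^{\gamma-\ell}$, so the discrepancy is harmless. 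Combining this with the telescoping estimate and the uniform bound shows $f_1\star f_2\in\CD^\gamma_\alpha$. \textbf{The step I expect to be most delicate} is precisely this interplay between the truncation $\mathcal Q_{<\gamma}$ and the group action, together with the bookkeeping that guarantees every exponent appearing is at least $\gamma-\ell$: this is where one genuinely needs that the $f_i$ are valued in honest sectors and that a suitable product maps sectors into sectors, and it is what prevents the telescoping identity from closing the argument by itself. The purely algebraic checks and the uniform estimate are routine.
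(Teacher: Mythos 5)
Your proof is correct and follows essentially the same route as the paper's: define the product by truncating to homogeneities below $\gamma$, obtain the supremum bound trivially, telescope the increment into terms each carrying a factor $f_i(x)-\Gamma_{xy}f_i(y)$, and separately estimate the mismatch between the truncation and the action of $\Gamma_{xy}$ by noting it only involves components of homogeneity at least $\gamma$. The only cosmetic differences are that you use a two-term telescoping where the paper uses the symmetric three-term version, and that you assume $\Gamma(a\star b)=(\Gamma a)\star(\Gamma b)$ exactly whereas the paper needs this identity only for $|a|_\s+|b|_\s<\gamma$ (the ``$\gamma$-regularity'' of the pair of sectors), the above-$\gamma$ terms being estimated directly exactly as in your final paragraph.
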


In the case of $f \in \CD^\gamma_0$, all terms in the local expansion have positive
homogeneity, so that $\CR f$ is actually a function. It is then of course
possible to compose this function with any smooth function $g$. The non-trivial fact is
that the new function obtained in this way does also have a local ``Taylor expansion''
around every point which is typically of the same order as for the original function $f$.
The reason why this statement is not trivial is that the function $\CR f$ does in general
not possess much ``classical'' regularity, so that $\CR f$ typically does \textit{not} 
belong to $\CC^\gamma$. Our precise result is the content of Theorem~\ref{theo:smooth} 
below, which can be stated loosely as follows.

\begin{theorem}[Smooth functions]\label{thm:smooth}
Let $g \colon \R \to \R$ be a smooth function and consider a regularity structure endowed with a 
product $\star$ satisfying suitable compatibility assumptions. Then, for $\gamma > 0$,
one can build a map $\CG \colon \CD^\gamma_0 \to \CD^\gamma_0$ such that the identity
$\bigl(\CR \CG (f)\bigr)(x) = g\bigl((\CR f)(x)\bigr)$ holds for every $x \in \R^d$.
\end{theorem}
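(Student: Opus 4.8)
The plan is to build the map $\CG$ locally at each point by composing the truncated jet of $f$ with the Taylor expansion of $g$, and then use the reconstruction theorem and the multiplication theorem to control the result.

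First I would decompose, for a fixed point $x$, the element $f(x) \in T$ into $f(x) = \bar f(x)\mathbf{1} + \tilde f(x)$, where $\bar f(x) = \langle \mathbf{1}, f(x)\rangle \in \R$ is the coefficient of the unit element (this is well-defined since $f \in \CD^\gamma_0$ means all components have nonnegative homogeneity, and the homogeneity-$0$ subspace is spanned by $\mathbf{1}$), and $\tilde f(x)$ collects the remaining components, all of \emph{strictly} positive homogeneity. The natural guess for $\CG(f)$ is then
\begin{equ}
\bigl(\CG(f)\bigr)(x) = \sum_{k \ge 0} {g^{(k)}\bigl(\bar f(x)\bigr) \over k!}\, \tilde f(x)^{\star k}\;,
\end{equ}
where the powers are taken with respect to the product $\star$, and the sum is finite because $\tilde f(x)$ has strictly positive homogeneity, so $\tilde f(x)^{\star k}$ has homogeneity $\ge k\alpha_0$ for some $\alpha_0 > 0$, hence $\tilde f(x)^{\star k}$ contributes nothing to the truncation at level $\gamma$ once $k\alpha_0 > \gamma$. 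One then needs to check that this candidate actually lies in $\CD^\gamma_0$: that is, that $\CG(f)(x) \in T_{\ge 0}$ (clear, since each term is a $\star$-product of nonnegative-homogeneity elements) and, crucially, that the defining bound
\begin{equ}
\bigl\|\CG(f)(x) - \Gamma_{xy}\,\CG(f)(y)\bigr\|_\beta \ls |x-y|^{\gamma - \beta}
\end{equ}
holds for each $\beta < \gamma$. For this I would repeatedly invoke Theorem~\ref{thm:mult} (the multiplication theorem), which tells us precisely that $\star$-products of modelled distributions are again modelled distributions with the expected regularity; one treats $\CG(f)$ as a finite linear combination of such products, with the scalar coefficients $g^{(k)}(\bar f(\cdot))/k!$ themselves being modelled distributions in $\CD^\gamma_0$ (they are ordinary $\CC^\gamma$ — in fact bounded-and-better — functions of the $\CC^\gamma$ function $\bar f = \langle \mathbf{1}, f(\cdot)\rangle$, composed with the smooth $g^{(k)}$, so the classical composition estimate applies and lifts trivially to a modelled distribution supported on $\mathbf{1}$).

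Next I would establish the identity $\bigl(\CR\CG(f)\bigr)(x) = g\bigl((\CR f)(x)\bigr)$. The point is that, by the reconstruction theorem, $\CR f$ is the unique distribution locally approximated by $\Pi_x f(x)$ to order $\gamma$; since $f \in \CD^\gamma_0$ with $\gamma > 0$, $\CR f$ is an honest continuous function and in fact $(\CR f)(x) = \bar f(x) + O(\text{higher order})$, more precisely $\bar f = \langle\mathbf{1}, f(\cdot)\rangle$ and $\CR f$ agree as functions (this should be a small lemma: evaluating the local expansion at the base point kills all strictly-positive-homogeneity terms). Then $g \circ \CR f$ is a continuous function, and one checks it is locally approximated to order $\gamma$ by $\Pi_x \CG(f)(x)$ — this is just a Taylor expansion of $g$ around $(\CR f)(x) = \bar f(x)$, with the error controlled by the positive-homogeneity tail. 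By uniqueness in the reconstruction theorem, $\CR\CG(f) = g \circ \CR f$.

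The main obstacle I expect is the regularity bound on $\CG(f)$, i.e. verifying $\CG(f) \in \CD^\gamma_0$ with the correct $\Gamma$-covariance estimates. One has to expand $g^{(k)}(\bar f(x)) - g^{(k)}(\bar f(y))$ using Taylor's theorem and simultaneously expand $\tilde f(x)^{\star k} - \Gamma_{xy}\tilde f(y)^{\star k}$ using multiplicativity of $\Gamma$ with respect to $\star$ (which is part of the ``suitable compatibility assumptions'' on the product), and then recombine these to recover exactly the structure of a Taylor remainder of $g$; the bookkeeping of which homogeneities pair with which powers of $|x-y|$, and checking that nothing exceeds order $\gamma$, is the delicate part. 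A secondary subtlety is that the formula for $\CG(f)$ must be shown independent of how far one carries the (a priori infinite) sum — but this is automatic once one notes that the extra terms live in homogeneities $> \gamma$ and are therefore truncated away, so they are irrelevant both to the membership in $\CD^\gamma_0$ and to the reconstruction. Continuity/stability of $\CG$ in $f$ and in the underlying model, if needed, follows from the corresponding continuity statements in Theorems~\ref{thm:mult} and~\ref{theo:reconstruction} together with local Lipschitz bounds on $g$ and its derivatives on bounded sets.
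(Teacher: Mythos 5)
Your overall strategy is the same as the paper's: define $\CG(f)(x)$ as the truncated Taylor series $\sum_k {g^{(k)}(\bar f(x))\over k!}\,\tilde f(x)^{\star k}$ with $f(x)=\bar f(x)\one+\tilde f(x)$, verify membership in $\CD^\gamma_0$ by a simultaneous expansion in $x-y$, and deduce the reconstruction identity from the fact that $\CR h(x)=\scal{\one,h(x)}$ on function-like sectors. The "delicate part" you describe at the end — expanding $g^{(k)}(\bar f(x))$ around $\bar f(y)$ to order $\lfloor\gamma/\zeta\rfloor$, writing $\Gamma_{xy}\tilde f(y)=\tilde f(x)+(\bar f(x)-\bar f(y))\one+R$, and recombining via the generalised binomial identity $\sum_{k+\ell=m}{1\over k!\ell!}\bigl(\tilde f+c\one\bigr)^{\star k}(-c)^\ell = \tilde f^{\star m}/m!$ — is exactly the paper's proof.

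However, one step as you state it is false and would derail the argument if taken literally: the parenthetical claim that the coefficients $g^{(k)}(\bar f(\cdot))/k!$ are "ordinary $\CC^\gamma$ functions" that "lift trivially to a modelled distribution supported on $\one$". The function $\bar f=\scal{\one,f(\cdot)}=\CR f$ is in general only $\CC^\zeta$, where $\zeta>0$ is the \emph{lowest} non-zero homogeneity of the sector, and $\zeta$ is typically much smaller than $\gamma$; a modelled distribution proportional to $\one$ lying in $\CD^\gamma$ would have to be classically $\gamma$-H\"older, which $g^{(k)}\circ\bar f$ is not. (This is precisely the point emphasised in the paper: $\CR f\notin\CC^\gamma$ in general, which is why the theorem is non-trivial.) So you cannot reduce the regularity bound to the multiplication theorem applied factor-by-factor with these scalar coefficients as one of the factors. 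The only viable route is the one you sketch afterwards: the deficit in the H\"older regularity of each coefficient $g^{(k)}(\bar f(\cdot))$ (only order $\zeta$ per increment) must be compensated by the extra smallness of the accompanying factor $\tilde f^{\star k}$ (homogeneity at least $k\zeta$), and this compensation only becomes visible after the Taylor re-expansion and binomial recombination. Also note that this forces a finite regularity requirement on $g$, namely $g\in\CC^\kappa$ with $\kappa\ge\gamma/\zeta$, rather than mere smoothness being a convenience.
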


The final ingredient that is required in any general solution theory for semilinear PDEs
consists in some regularity improvement arising from the linear part of the equation.
One of the most powerful class of such statements is given by the Schauder estimates.
In the case of convolution with the Green's function $G$ of the Laplacian, the Schauder 
estimates state that if $f \in \CC^\alpha$, then $G * f \in \CC^{\alpha + 2}$,
unless $\alpha + 2 \in \N$. (In which case some additional logarithms appear in the 
modulus of continuity of $G*f$.)
One of the main reasons why the theory developed in this article is useful is that such
an estimate still holds when $f \in \CD^\alpha$. This is highly non-trivial since
it involves ``guessing'' an expansion for the local behaviour of $G*\CR f$
up to sufficiently high order. Somewhat surprisingly, it turns out that
even though the convolution with $G$ is not a local operator at all, its action on the 
local expansion of a function is local, except for those coefficients that correspond
to the usual polynomials.

One way of stating our result is the following, which will be reformulated more 
precisely in Theorem~\ref{theo:Int} below.

\begin{theorem}[Multi-level Schauder estimate]\label{thm:Schauder}
Let $K \colon \R^d \setminus \{0\} \to \R$ be a smooth kernel with a singularity 
of order $\beta - d$ at the origin for some $\beta > 0$.
Then, under certain natural assumptions on the regularity structure and 
the model realising it, and provided that $\gamma + \beta \not \in \N$, 
one can construct for $\gamma > 0$ a linear operator
$\CK_\gamma \colon \CD^\gamma_{\alpha} \to \CD^{\gamma+\beta}_{(\alpha + \beta) \wedge 0}$
such that the identity
\begin{equ}
\CR \CK_\gamma f = K * \CR f\;,
\end{equ}
holds for every $f \in \CD^\gamma_\alpha$. Here, $*$ denotes the usual convolution between
two functions / distributions.
\end{theorem}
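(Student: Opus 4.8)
The plan is to \emph{construct} $\CK_\gamma$ by an explicit three-term formula, and then to verify the two required properties — that $\CK_\gamma f \in \CD^{\gamma+\beta}_{(\alpha+\beta)\wedge 0}$, and that $\CR\CK_\gamma f = K * \CR f$ — the first by direct estimation and the second via the uniqueness clause of the reconstruction theorem (Theorem~\ref{theo:reconstruction}).

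First I would \emph{decompose the kernel}, writing $K = \sum_{n \ge 0} K_n + \tilde K$ with $\tilde K$ smooth and compactly supported and each $K_n$ smooth, supported in a ball of radius $\sim 2^{-n}$ about the origin, and obeying the scaling bounds $|D^k K_n(z)| \lesssim 2^{n(d - \beta + |k|)}$. A useful preliminary reduction is to arrange, by absorbing a smooth correction into $\tilde K$, that every $K_n$ annihilates polynomials of some fixed degree $r > \gamma + \beta$; convolution against the leftover smooth compactly supported piece $\tilde K$ sends $\CC^\alpha$ into smooth functions and lifts trivially to modelled distributions, so from now on one works with $K = \sum_n K_n$. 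Next, using the abstract integration map $\mathcal{I}$ furnished by the assumptions on the structure (it raises homogeneity by $\beta$, annihilates the polynomial sector, and renders the model admissible in the sense that $\Pi_x \mathcal{I}\tau$ equals $K * \Pi_x \tau$ with its Taylor polynomial of appropriate degree at $x$ subtracted), I would set, for $f \in \CD^\gamma_\alpha$,
\[
(\CK_\gamma f)(x) = \mathcal{I}f(x) + \mathcal{J}(x)\,f(x) + (\mathcal{N}_\gamma f)(x),
\]
where $\mathcal{J}(x)$ sends a homogeneous $\tau$ of degree $\zeta$ to $\sum_{|k| < \zeta + \beta} \frac{X^k}{k!} \int D^k K(x - y)\,(\Pi_x \tau)(dy)$ and $(\mathcal{N}_\gamma f)(x) = \sum_{|k| < \gamma + \beta} \frac{X^k}{k!} \int D^k K(x - y)\,\bigl((\CR f)(dy) - (\Pi_x f(x))(dy)\bigr)$. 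All these integrals are read scale by scale as $\sum_n \int D^k K_n \cdots$; the last one is legitimate because the reconstruction theorem controls the pairing of $\CR f - \Pi_x f(x)$ against test functions rescaled around $x$.

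The heart of the matter, and the step I expect to be the main obstacle, is the bound $\CK_\gamma f \in \CD^{\gamma+\beta}_{(\alpha+\beta)\wedge 0}$: one must estimate each homogeneous component of $(\CK_\gamma f)(x)$ and, more delicately, of $(\CK_\gamma f)(y) - \Gamma_{yx}(\CK_\gamma f)(x)$, by $|x - y|^{\gamma + \beta - \ell}$ at homogeneity $\ell$. The $\mathcal{I}$-term is handled by combining $f \in \CD^\gamma$ with the compatibility of the model with $\mathcal{I}$. The $\mathcal{J}$- and $\mathcal{N}_\gamma$-terms reduce to summing over $n$ the pairings of $\int D^k K_n$ with $\Pi_x \tau$, with $(\Pi_y - \Pi_x \Gamma_{yx})(\cdots)$, and with $\CR f - \Pi_x f(x)$; here one plays the scaling bound $2^{n(d - \beta + |k|)}$ against the homogeneity bounds on the model and the reconstruction bound, and — at the small scales, where naive summation diverges — subtracts the relevant Taylor polynomial and uses the vanishing moments arranged above to gain extra powers of $2^{-n}$. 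The resulting geometric-type series converge precisely because $\gamma + \beta \notin \N$: the critical ratio is $2^{-n(\gamma + \beta - \lfloor \gamma + \beta \rfloor)}$, summable exactly when $\gamma + \beta$ is not an integer, the integer case being the familiar origin of the logarithmic corrections in the classical Schauder estimate. Bookkeeping which $X^k$ actually occur then identifies the lowest homogeneity as $(\alpha + \beta) \wedge 0$, and carrying the constants through the estimates gives continuity and linearity of $\CK_\gamma$.

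It remains to establish the identity $\CR \CK_\gamma f = K * \CR f$. By uniqueness in the reconstruction theorem it suffices to show that $K * \CR f$ is modelled on $x \mapsto \Pi_x (\CK_\gamma f)(x)$, i.e. that $(K * \CR f)(y) - (\Pi_x (\CK_\gamma f)(x))(y)$, tested against a bump rescaled to scale $\lambda$ around $x$, is $O(\lambda^{\gamma + \beta})$. Splitting $\CR f = \Pi_x f(x) + (\CR f - \Pi_x f(x))$ and $K = \sum_n K_n$, the $\Pi_x f(x)$ part is matched by $\Pi_x \mathcal{I}f(x) + \Pi_x \mathcal{J}(x)f(x)$, which by construction of $\mathcal{I}$ and $\mathcal{J}$ reassemble $K * \Pi_x f(x)$ on the non-polynomial sector, while the $\CR f - \Pi_x f(x)$ part is matched by $\Pi_x (\mathcal{N}_\gamma f)(x)$ up to order $\gamma + \beta$, using the reconstruction bound together with the same vanishing-moment estimate on the convolution. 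This completes the construction and the proof.
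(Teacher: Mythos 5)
Your proposal follows essentially the same route as the paper's proof of Theorem~\ref{theo:Int}: the same three-term definition $\CK_\gamma = \CI + \CJ + \CN_\gamma$, the same dyadic decomposition of $K$ with vanishing moments, the same small-scale/large-scale case analysis pitting the scaling bounds on $D^kK_n$ against the model and reconstruction bounds, and the same appeal to the uniqueness clause of the reconstruction theorem to identify $\CR\CK_\gamma f$ with $K*\CR f$. The argument is correct as outlined.
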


We call this a ``multi-level'' Schauder estimate because it is a statement not just about
$f$ itself but about every ``layer'' appearing in its local expansion.

\begin{remark}
The precise formulation of the multi-level Schauder estimate allows to
specify a non-uniform scaling of $\R^d$. This is very useful for example when
considering the heat kernel which scales differently in space and in time. 
In this case, Theorem~\ref{thm:Schauder} still holds, but all regularity
statements have to be interpreted in a suitable sense. 
See Sections~\ref{sec:realisation} and \ref{sec:integral} below for more details.
\end{remark}

At this stage, we appear to possibly rely very strongly on the various still unspecified 
structural assumptions that are required
of the regularity structure and of the model realising it.
The reason why, at least to some extent, this can be ``brushed under the rug'' 
without misleading the reader
is the following result, which is a synthesis of Proposition~\ref{prop:extendMult} and 
Theorem~\ref{theo:extension} below.

\begin{theorem}[Extension theorem]\label{thm:extend}
It is always possible to extend a given regularity structure in such a way that 
the assumptions implicit in the statements of Theorems~\ref{thm:mult}--\ref{thm:Schauder} 
do hold.
\end{theorem}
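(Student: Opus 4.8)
The plan is to prove the extension theorem in two conceptually separate pieces, matching the two ingredients it synthesises. The first piece is purely algebraic: given a regularity structure $\mathscr{T} = (A, T, G)$ together with whatever product structure $\star$ it already carries, one enlarges $T$ by freely adjoining the "missing" products and images under the abstract integration map, so that the new space $\hat T$ is closed under all the operations that Theorems~\ref{thm:mult}--\ref{thm:Schauder} implicitly require. Concretely, one takes $\hat T$ to be a (graded) subspace of the free algebra generated over $T$ by formal symbols $\mathcal{I}(\tau)$ and by products $\tau_1 \star \tau_2$, truncated at some sufficiently high homogeneity so that each $\hat T_\alpha$ remains finite-dimensional and $A$ stays locally finite. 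The grading is forced on us: $\mathcal{I}(\tau)$ must sit in homogeneity $|\tau| + \beta$ (dropping the integer obstruction), and $\tau_1 \star \tau_2$ in homogeneity $|\tau_1| + |\tau_2|$, exactly as in Theorems~\ref{thm:mult} and \ref{thm:Schauder}. One then has to extend the structure group $G$ to $\hat G$ acting on $\hat T$; the natural recipe is to let $\Gamma$ act multiplicatively on products, $\Gamma(\tau_1\star\tau_2) = (\Gamma\tau_1)\star(\Gamma\tau_2)$, and to define its action on $\mathcal{I}(\tau)$ so that the Schauder map $\mathcal{K}_\gamma$ will intertwine $\Gamma$ correctly — this introduces the familiar "polynomial correction" terms, i.e. $\Gamma \mathcal{I}(\tau) = \mathcal{I}(\Gamma\tau) + (\text{polynomial part})$, and one must check that the cocycle/coherence identities defining a structure group are preserved. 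This is the content of Proposition~\ref{prop:extendMult} for the product part.

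The second piece is analytic: one must show that any admissible model $(\PPi, F)$ for the original $\mathscr{T}$ can be extended to an admissible model $(\hat\PPi, \hat F)$ for $\hat{\mathscr{T}}$, with continuous dependence on the original model. Here the idea is to define $\hat\PPi$ on the new generators by hand and then extend multiplicatively and by the integration identity: on a product one sets $\hat\PPi_x(\tau_1\star\tau_2) = (\PPi_x\tau_1)(\PPi_x\tau_2)$ whenever this pointwise product makes classical sense, and otherwise one must \emph{construct} the distribution. For $\mathcal{I}(\tau)$ one wants $\hat\PPi_x \mathcal{I}(\tau) = $ "$K * \PPi_x \tau$ minus its Taylor jet at $x$ up to order $|\tau|+\beta$", which is precisely the expression appearing in the proof of Theorem~\ref{thm:Schauder}. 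The subtlety — and this is where Theorem~\ref{theo:extension} does real work — is that $\PPi_x\tau$ is only a distribution, so one cannot naively subtract a Taylor polynomial; one instead shows that the relevant combination is a well-defined element of $\mathcal{C}^{|\tau|+\beta}$ by the same wavelet/multiscale argument that underlies the reconstruction theorem, using the correct behaviour of $\Pi_x$ near $x$. One then has to verify the two analytic bounds characterising a model (the homogeneity bound on $\hat\PPi_x$ and the bound on $\hat\Gamma_{xy}$), and the algebraic consistency $\hat\PPi_x \hat\Gamma_{xy} = \hat\PPi_y$, on all the new basis vectors.

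The order I would carry this out in: (i) fix the truncation level and define $\hat T$, $\hat A$, checking finite-dimensionality of each homogeneous subspace; (ii) define $\hat G$ and verify it is a structure group, i.e. closure under composition and the required triangular action on $\hat T$; (iii) define $\hat\PPi_x$ on generators and extend, constructing the needed distributions via the multiscale estimate; (iv) prove the two model bounds and the identity $\hat\Pi_x = \hat\Pi_y \hat\Gamma_{yx}$; (v) check continuity of the map $(\PPi,F)\mapsto(\hat\PPi,\hat F)$ in the model topology. The main obstacle is step (iii)–(iv): one must show that adjoining $\mathcal{I}(\tau)$ for a $\tau$ whose image $\PPi_x\tau$ is genuinely a distribution still yields an \emph{admissible} model, which requires knowing that $K*\PPi_x\tau$ can be expanded to order $|\tau|+\beta$ around $x$ with the right remainder estimate — and this is essentially a self-referential instance of the multi-level Schauder estimate, so care is needed to avoid circularity (one does the construction directly on generators, not by invoking Theorem~\ref{thm:Schauder}). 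The excluded case $\gamma+\beta \in \N$ reappears here as the obstruction to defining $\mathcal{I}(\tau)$ when $|\tau|+\beta$ is a non-negative integer, and must be handled by the usual device of allowing the target homogeneity to be anything strictly below the integer, or by enlarging with logarithmically-corrected elements — I would simply impose $|\tau|+\beta\notin\N$ on the adjoined symbols, consistent with the hypotheses already present in Theorems~\ref{thm:mult}--\ref{thm:Schauder}.
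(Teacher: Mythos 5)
Your overall architecture---an algebraic enlargement followed by an extension of the model, with products handled by adjoining (tensor) products on which the group acts multiplicatively, and integration handled by the Taylor-subtracted convolution together with polynomial correction terms in the group action---is the same as the paper's (Proposition~\ref{prop:extendMult} for the product, Theorem~\ref{theo:extension} for the integration map). The integration half of your sketch is essentially right: the paper also defines $\hat \Pi_x \CI(\tau)$ directly by the formula ``$K*\Pi_x\tau$ minus its jet at $x$'' and proves the model bounds by direct multiscale estimates on the kernels $K_{n;xy}^{\alpha}$ (Lemmas~\ref{lem:wellDefInt} and~\ref{lem:boundGammaxy}), with the group extended by a skew-product so that $\CI\Gamma - \Gamma\CI$ lands in the polynomial sector; no circular appeal to the multi-level Schauder estimate is needed, exactly as you anticipate.

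The gap is in the product half. You write that $\hat\Pi_x(\tau_1\star\tau_2)$ is the pointwise product ``whenever this makes classical sense, and otherwise one must construct the distribution''---but that construction is the entire content of the statement in the only case that matters, and you do not give it. The paper's mechanism is the following: once the group action on the new element $a = \tau_1\otimes\tau_2$ has been defined multiplicatively, the map $f_{a,x}(y) = \Gamma_{yx}a - a$ takes values in the part of $T$ on which the model is already defined and belongs to $\CD^{\alpha+\beta}$ with norm controlled by $\|\Gamma\|$; one then \emph{defines} $\Pi_x a \eqdef \CR f_{a,x}$ via the reconstruction theorem. When $\alpha+\beta>0$ this choice is forced and unique by Proposition~\ref{prop:extension}; when $\alpha+\beta\le 0$ one must fix a particular reconstruction (the explicit wavelet formula \eref{e:Rfgen}), and it is the property \eref{e:propRf} of that specific choice which guarantees the algebraic identity $\Pi_z\Gamma_{zx}a = \Pi_x a$ for the newly defined objects---a genuinely delicate point, and also precisely where the non-canonical freedom (later organised by the renormalisation group) enters. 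Without this step your proposal produces no candidate for $\hat\Pi_x(\tau_1\star\tau_2)$ when the classical product is ill-defined, so the multiplication part of the theorem remains unproved.
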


Loosely speaking, the idea is then to start with the ``canonical'' regularity structure
corresponding to classical Taylor expansions and to enlarge it by successively applying 
the extension theorem, until it is large enough to allow a closed formulation of the
problem one wishes to study as a fixed point map.

\subsection{On renormalisation procedures}

The main problem with the strategy outlined above 
is that while the extension of an abstract regularity structure
given by Theorem~\ref{thm:extend} is actually very explicit and rather canonical, the corresponding
extension of the model $(\Pi,F)$ is unique (and continuous) only in the case of the multi-level
Schauder theorem and the composition by smooth functions, but \textit{not} 
in the case of multiplication when some of the homogeneities are strictly negative.
This is a reflection of the fact that multiplication between distributions and
functions that are too rough simply cannot be defined in any canonical way \cite{MR0064324}.
Different non-canonical choices of product then yield truly different solutions, 
so one might think that the theory is useless at selecting one ``natural'' solution
process.

If the driving noise $\xi$ in any of the equations from Section~\ref{sec:examples} 
is replaced by a smooth approximation $\xi^{(\eps)}$, 
then the associated model for the corresponding regularity structure
also consists of smooth functions. In this case, there is of course no problem in multiplying 
these functions, and one obtains a \textit{canonical} sequence of models 
$(\Pi^{(\eps)}, F^{(\eps)})$ realising our regularity structure. (See Section~\ref{sec:realAlg} for
details of this construction.)
At fixed $\eps$, our theory then simply yields some very local description 
of the corresponding classical solutions. 
In some special cases, the sequence $(\Pi^{(\eps)}, F^{(\eps)})$
converges to a limit that is independent of the regularisation procedure 
for a relatively large class of such regularisations. In particular, due to the 
symmetry of finite-dimensional control systems under time reversal, this is often
the case in the classical theory of rough paths, see \cite{Lyons,MR1883719,MR2667703}.

One important feature of the regularity structures arising naturally in the context of
solving semilinear PDEs is that they come with a natural \textit{finite-dimensional}
group $\RR$ of transformations that act on the space of models. 
In some examples (we will treat the case of \eref{e:Phi4} with $d=3$ in Section~\ref{sec:Phi4} and a 
generalisation of \eref{e:PAM} with $d=2$ in Section~\ref{sec:PAMGenRen}), one can explicitly exhibit a
subgroup $\RR_0$ of $\RR$ and a sequence of elements $M_\eps \in \RR_0$
such that the ``renormalised'' sequence $M_\eps(\Pi^{(\eps)}, F^{(\eps)})$
converges to a finite limiting model $(\hat \Pi, \hat F)$.
In such a case, the set of possible limits is parametrised 
by elements of $\RR_0$, which in our setting is always 
just a finite-dimensional nilpotent Lie group. 
In the two cases mentioned above, one can furthermore reinterpret solutions corresponding
to the ``renormalised''  model $M_\eps(\Pi^{(\eps)}, F^{(\eps)})$ as solutions
corresponding to the ``bare'' model $(\Pi^{(\eps)}, F^{(\eps)})$, but for a modified equation.

In this sense, $\RR$ (or a subgroup thereof) has an interpretation as a \textit{renormalisation group}
acting on some space of formal equations, which is a very common viewpoint in the
physics literature. (See for example \cite{Renorm} for a short introduction.)
This thus allows to usually reinterpret the objects constructed by our theory
as limits of solutions to equations that are modified by the addition of finitely many
diverging counterterms.
In the case of \eref{e:PAM} with $d=2$, the corresponding renormalisation
procedure is essentially a type of Wick ordering and therefore yields the appearance of
counterterms that are very similar in nature to those arising in the It\^o-Stratonovich
conversion formula for regular SDEs. (But with the crucial difference that they 
diverge logarithmically instead of being constant!) In the case of \eref{e:Phi4} with
$d=3$, the situation is much more delicate because of the appearance of a logarithmic subdivergence
``below'' the leading order divergence that cannot be dealt with by a Wick-type renormalisation.
For the invariant (Gibbs) measure corresponding to \eref{e:Phi4}, this fact is well-known and
had previously been observed in the context of constructive Euclidean QFT in 
\cite{MR0231601,MR0384003,MR0416337}.

\begin{remark}\label{rem:symmetry}
Symmetries typically play an important role in the analysis of the 
renormalisation group $\RR$. Indeed, if the equation under consideration exhibits
some symmetry, at least at a formal level, then it is natural to approximate 
it by regularised versions with the same symmetry. This then often places some
natural restrictions on $\RR_0 \subset \RR$, ensuring that the renormalised
version of the equation is still symmetric. For example, in the case of the
KPZ equation, it was already remarked in \cite{KPZ} that regularisation via
a non-symmetric mollifier can cause the appearance in the limiting solution of
an additional transport term, thus breaking the invariance under left / right 
reflection. In Section~\ref{sec:PAMGen} below, we will consider a class of equations
which, via the chain rule, is formally invariant under composition by 
diffeomorphisms. This ``symmetry''
again imposes a restriction on $\RR_0$ ensuring that the renormalised equations
again satisfy the chain rule.
\end{remark}

\begin{remark}
If an equation needs to be renormalised in order to have a finite limit, it typically
yields a whole family of limits parametrised by $\RR$ (or rather $\RR_0$ in the presence of
symmetries). Indeed, if $M_\eps(\Pi^{(\eps)}, F^{(\eps)})$ converges to a finite limit and
$M$ is any fixed element of $\RR_0$, then $M M_\eps(\Pi^{(\eps)}, F^{(\eps)})$ obviously
also converges to a finite limit. At first sight, this might look like a serious shortcoming
of the theory: our equations still aren't well-posed after all! It turns out that this state
of affairs is actually very natural. Even the very well-understood situation of
one-dimensional SDEs of the type
\begin{equ}[e:SDE]
dx = f(x)\,dt + \sigma(x)\,dW(t)\;,
\end{equ}
exhibits this phenomena: solutions are different whether we interpret the stochastic integral
as an It\^o integral, a Stratonovich integral, etc. In this particular case, one would
have $\RR \approx \R$ endowed with addition as its group structure and the action of $\RR$ onto
the space of equations is given by $M_c (f,\sigma) = (f, \sigma + c \sigma \sigma')$, where $M_c \in \RR$
is the group element corresponding to the real constant $c$. 
Switching between the It\^o and Stratonovich formulations is indeed a transformation of this type
with $c \in \{\pm {1\over 2}\}$.

If the equation is driven by more than one
Brownian motion, our renormalisation group increases in size: one now has a choice of stochastic integral
for each of the integrals appearing in the equation. On symmetry grounds however, we would of course
work with the subgroup $\RR_0 \subset \RR$ which corresponds to the same choice for each.
If we additionally exploit the fact that the class of equations \eref{e:SDE} is formally invariant
under the action of the group of diffeomorphisms of $\R$ (via the chain rule), then we could
reduce $\RR_0$ further by postulating that the renormalised solutions should also transform
under the classical chain rule. This would then reduce $\RR_0$ to the trivial group, thus leading
to a ``canonical'' choice (the Stratonovich integral). In this particular case, we could of course
also have imposed instead that the integral $\int W\,dW$ has no component in the 
$0$th Wiener chaos, thus leading 
to Wick renormalisation with the It\^o integral as a second ``canonical'' choice. 
\end{remark}

\subsection{Main results: applications}

We now show what kind of convergence results can be obtained by
concretely applying the theory developed in this article to two examples of stochastic PDEs
that cannot be interpreted by any classical means.
The precise type of convergence will be detailed in the main body of the article,
but it is essentially a convergence in probability on spaces of continuous
trajectories with values in $\CC^\alpha$ for a suitable (possibly negative)
value of $\alpha$. A slight technical difficulty arises due to the fact that the limit
processes do not necessarily have global solutions, but could exhibit
blow-ups in finite time. In such a case, we know that the blow-up time is almost surely
strictly positive and we have convergence ``up to the blow-up time''.

\subsubsection{Generalisation of the parabolic Anderson model}
\label{sec:PAMGen}

First, we consider the following generalisation of \eref{e:PAM}:
\begin{equ}[e:PAMGen]
\d_t u = \Delta u + f_{ij}(u) \,\d_i u\, \d_j u  + g(u)\xi\;,\qquad u(0) = u_0\;,\tag{PAMg}
\end{equ}
where $f$ and $g$ are smooth function and summation of the indices $i$ and $j$
is implicit.
Here, $\xi$ denotes spatial white noise. This notation is of course only formal since
neither the product $g(u)\xi$, nor the product $\d_i u\, \d_j u$ make any sense
classically. Here, we view $u$ as a function of time $t \ge 0$ and of
$x \in \T^2$, the two-dimensional torus.

It is then natural to replace $\xi$ by a smooth approximation $\xi_\eps$ which is
given by the convolution of $\xi$ with a rescaled mollifier $\rho$. Denote
by $u_\eps$ the solution to the equation
\begin{equ}[e:PAMRenorm]
\d_t u_\eps = \Delta u_\eps + f_{ij}(u_\eps) \,\bigl(\d_i u_\eps\, \d_j u_\eps - \delta_{ij} C_\eps g^2(u_\eps)\bigr) 
+ g(u_\eps)\bigl(\xi_\eps - 2 C_\eps g'(u_\eps)\bigr)\;,
\end{equ}
again with initial condition $u_0$. Then, we have the following result:

\begin{theorem}\label{theo:mainConvPAM}
Let $\alpha \in ({1\over 2}, 1)$.
There exists a choice of constants $C_\eps$ such that, for every initial 
condition $u_0 \in \CC^\alpha(\T^2)$, the sequence 
of solutions $u_\eps$ to \eref{e:PAMRenorm} converges to a limit $u$. 
Furthermore, there is an explicit constant $K_\rho$ depending on $\rho$ such
that if one sets $C_\eps = -{1\over \pi} \log \eps + K_\rho$, then the limit obtained in this way is 
independent of the choice of mollifier $\rho$.
\end{theorem}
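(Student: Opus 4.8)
The plan is to follow the three-step strategy (algebraic / analytic / probabilistic) advertised in the introduction. First I would build a regularity structure rich enough to formulate \eref{e:PAMGen} as a fixed point problem. Start from the polynomial regularity structure on $\R^3$ with the parabolic scaling $(2,1,1)$ and the abstract integration map $\CI$ associated to the heat kernel, which raises homogeneity by $2$. Introduce a symbol $\Xi$ of homogeneity $-1-\kappa$ (for small $\kappa>0$) representing the spatial white noise $\xi$, together with the symbols obtained by repeatedly applying $\CI$, multiplication, and spatial derivatives $\CI_i$, subject to the local subcriticality of the equation so that the construction terminates: one needs $\CI(\Xi)$, $\CI_i(\Xi)\CI_j(\Xi)$, $\CI(\Xi \cdot \mathbf 1)$, and a few products thereof of negative homogeneity. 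By Theorem~\ref{thm:extend} (the extension theorem) this structure can be completed so that the multiplication and Schauder hypotheses of Theorems~\ref{thm:mult}--\ref{thm:Schauder} hold. One must check that the relevant homogeneities never hit the forbidden set $\N$ for generic $\alpha$, and in particular that $\gamma+\beta\notin\N$ in the Schauder step.

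Second, the analytic step: rewrite \eref{e:PAMGen} as
\begin{equ}
U = \CP \mathbf 1_{t>0}\bigl( F_{ij}(U)\,\CD_i U\,\CD_j U + G(U)\,\Xi\bigr) + \CP u_0\;,
\end{equ}
where $\CP$ is the operator built from $\CK_\gamma$ plus the classical smoothing part of the heat kernel, $F_{ij},G$ are the lifts of $f_{ij},g$ via the composition operator $\CG$ of Theorem~\ref{thm:smooth}, and $\CD_i$ is the abstract derivative. Using Theorems~\ref{thm:mult}, \ref{thm:smooth} and \ref{thm:Schauder} one checks that the right-hand side maps a suitable $\CD^\gamma_\alpha$ into itself (with a gain in the time variable), so a Banach fixed point argument gives, for any admissible model, a unique local solution $U$ up to a possibly finite blow-up time, together with continuity of $U$ — hence of $\CR U$ — with respect to the model in the metric on the space of models. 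Here one must track the homogeneities carefully: with $u$ of regularity just below $1$, $\CD_i U$ has homogeneity just below $0$, the product $\CD_i U\,\CD_j U$ has homogeneity just below $0$ as well (so one needs $\gamma$ large enough, which forces adding several polynomial layers), and $G(U)\Xi$ has homogeneity just below $-1$; applying $\CK_\gamma$ brings everything back above $0$.

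Third, the probabilistic step, which is the crux. For the mollified noise $\xi_\eps$ there is a canonical model $(\Pi^{(\eps)},F^{(\eps)})$ obtained by setting $\Pi^{(\eps)}\Xi = \xi_\eps$ and defining all other symbols by the obvious multiplications and convolutions. One must identify the finite subgroup $\RR_0\subset\RR$ of the renormalisation group generated by the two counterterms in \eref{e:PAMRenorm} — i.e. the Wick contractions of the symbols $\CI_i(\Xi)\CI_j(\Xi)$ (giving the $\delta_{ij}C_\eps$ term) and $\CI(\Xi)\cdot\Xi$ (giving the $2C_\eps$ term) — and show, via an algebraic computation identifying $M_\eps(\Pi^{(\eps)},F^{(\eps)})$-solutions with $(\Pi^{(\eps)},F^{(\eps)})$-solutions of the modified equation \eref{e:PAMRenorm}, that the renormalised equation is precisely the one in the statement. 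Then one shows $M_\eps(\Pi^{(\eps)},F^{(\eps)})$ converges in probability in the model metric to a limiting model $(\hat\Pi,\hat F)$: this is a stochastic estimate done in a fixed inhomogeneous Wiener chaos, using Gaussian hypercontractivity and the explicit kernel bounds on the heat kernel, to control moments of the model's action on test functions, with the divergences of the unrenormalised objects being exactly cancelled by the choice $C_\eps = -\frac1\pi\log\eps + K_\rho$; the logarithmic divergence of the relevant two-point contraction in dimension $2$ produces the $\log\eps$, and $K_\rho$ collects the finite, mollifier-dependent remainder, whose independence of $\rho$ after this subtraction is checked by a direct comparison of two mollifications. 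Combining continuity of the solution map (step 2) with this convergence (and the identification of the renormalised equation) yields convergence of $u_\eps=\CR U^{(\eps)}$ to $u = \CR U$ up to the blow-up time, and the specific choice of $C_\eps$ makes the limit $\rho$-independent. I expect the main obstacle to be precisely this last step: obtaining the stochastic bounds on the renormalised model uniformly in $\eps$, and in particular disentangling the logarithmic subdivergence so that a single scalar counterterm of each type suffices.
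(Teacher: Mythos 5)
Your plan is exactly the paper's proof: the theorem is established there by combining the abstract well-posedness of the fixed point problem (Corollary~\ref{cor:locSolPAMGen}), the convergence of the renormalised models $M_\eps Z_\eps$ via Wiener-chaos estimates (Theorem~\ref{theo:convModelPAM}), and the identification of the renormalised solutions with solutions of \eref{e:PAMRenorm} (Proposition~\ref{prop:identifyRenormPAM}), with $C_\eps$ computed as in \eref{e:CEpsPAM}. Your identification of the two counterterms with the symbols $\CI(\Xi)\Xi$ and $\CI_i(\Xi)\CI_j(\Xi)$, and of the $\log\eps$ with the two-point contraction of the (time-integrated, logarithmic) kernel, matches the paper's treatment in Sections~\ref{sec:renPAM}, \ref{sec:PAMGenRigour} and \ref{sec:PAMGenRen}.
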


\begin{proof}
This is a combination of Corollary~\ref{cor:locSolPAMGen} (well-posedness of the abstract formulation
of the equation), Theorem~\ref{theo:convModelPAM} 
(convergence of the renormalised models to a limiting model) and
Proposition~\ref{prop:identifyRenormPAM} (identification of the renormalised solutions with \eref{e:PAMRenorm}). The explicit value of the constant $C_\eps$ is given in \eref{e:CEpsPAM}.
\end{proof}

\begin{remark}
In the case $f = 0$, this result has recently been obtained by different (though
related in spirit) techniques in \cite{PAMPreprint}.
\end{remark}

\begin{remark}\label{rem:convergenceNotion}
Since solutions might blow up in finite time, the notion of convergence considered
here is to fix some large cut-off $L >0$ and terminal time $T$
and to stop the solutions $u_\eps$ as soon
as $\|u_\eps(t)\|_\alpha \ge L$, and similarly for the limiting process $u$.
The convergence is then convergence in probability in $\CC^\alpha_\s([0,T]\times \T^2)$ 
for the stopped process. Here elements in $\CC^\alpha_\s$ are $\alpha$-H\"older continuous in space
and ${\alpha \over 2}$-H\"older continuous in time, see Definition~\ref{def:Calphas} below.
\end{remark}

%
\begin{remark}
It is lengthy but straightforward to verify that the additional 
diverging terms in the renormalised equation
\eref{e:PAMRenorm} are precisely such that if $\psi \colon \R \to \R$ is a smooth diffeomorphism,
then $v_\eps \eqdef \psi(u_\eps)$ solves again an equation of the type
\eref{e:PAMRenorm}. Furthermore, this equation is precisely the renormalised version of the equation
that one obtains by just formally applying the chain rule to \eref{e:PAMGen}!
This gives a rigorous justification of the chain rule for \eref{e:PAMGen}.
In the case \eref{e:KPZ}, one expects a similar phenomenon, which would then
allow to interpret the Cole-Hopf transform rigorously as a particular case of a general
change of variables formula.
\end{remark}

\subsubsection{The dynamical $\Phi^4_3$ model}

A similar convergence result can be obtained for \eref{e:Phi4}. This time, the renormalised
equation takes the form
\begin{equ}[e:Phi4bis]
\d_t u_\eps = \Delta u_\eps + C_\eps u_\eps - u_\eps^3 + \xi_\eps\;,
\end{equ}
where $u_\eps$ is a function of time $t \ge 0$ and space $x \in \T^3$, the three-dimensional
torus.
It turns out that the simplest class of approximating noise is to consider a space-time
mollifier $\rho(x,t)$ and to set $\xi_\eps \eqdef \xi * \rho_\eps$, where $\rho_\eps$
is the rescaled mollifier given by $\rho_\eps(x,t) = \eps^{-5} \rho(x/\eps, t/\eps^2)$.

With this notation, we then have the following convergence result, which is the content of Section~\ref{sec:Phi4} below.

\begin{theorem}\label{thm:Phi4}
Let $\alpha \in (-{2\over 3}, -{1\over 2})$.
There exists a choice of constants $C_\eps$ such that, for every initial 
condition $u_0 \in \CC^\alpha(\T^3)$, the sequence 
of solutions $u_\eps$ converges to a limit $u$. Furthermore, if $C_\eps$
are chosen suitably, then this limit is again independent of the choice of mollifier $\phi$.
\end{theorem}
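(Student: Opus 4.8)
The plan is to follow the same three-part strategy that underlies Theorem~\ref{theo:mainConvPAM}, but now applied to the regularity structure generated by the $\Phi^4_3$ equation. First I would set up the abstract problem: one needs a regularity structure rich enough to contain all the symbols arising when one iterates the fixed point map associated to $\partial_t u = \Delta u - u^3 + \xi$, starting from the symbol $\Xi$ of homogeneity $-\frac{5}{2}-\kappa$ (space-time white noise in the parabolic scaling on $\R^3$) and repeatedly applying the abstract integration operator $\CI$ (for the heat kernel, which has $\beta = 2$) together with the product $\star$ and the polynomial sector. By local subcriticality (the cube term formally disappears under rescaling in dimension below $4$), only finitely many symbols of negative homogeneity appear; one lists them (schematically $\Xi$, $\CI(\Xi)$, $\CI(\Xi)^2$, $\CI(\Xi)^3$, $\CI(\CI(\Xi)^3)$, and products with polynomials), checks that the assumptions of Theorems~\ref{thm:mult}--\ref{thm:Schauder} hold (using the extension theorem, Theorem~\ref{thm:extend}, where needed), and thereby obtains via the multi-level Schauder estimate, the multiplication theorem and composition with the smooth function $x\mapsto -x^3$ that the map $u \mapsto \CK_\gamma\bigl(\Xi - u^{\star 3}\bigr) + (\text{harmonic extension of }u_0)$ is, for $\gamma$ slightly above the homogeneity of $\CI(\Xi)$, a contraction on a ball in $\CD^\gamma_\alpha$ on a short time interval, depending continuously on the underlying admissible model. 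This gives a local solution map which is continuous in the model, and the reconstruction operator $\CR$ turns the abstract fixed point into an honest $\CC^\alpha$-valued function of time.

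Second I would carry out the probabilistic step: construct the canonical model $(\Pi^{(\eps)}, F^{(\eps)})$ associated to the mollified noise $\xi_\eps = \xi * \rho_\eps$, then find the renormalisation elements $M_\eps$ in the renormalisation group $\RR$ such that the renormalised models $\hat\Pi^{(\eps)} = M_\eps \Pi^{(\eps)}$ converge in probability, in the topology on models, to a limiting admissible model $(\hat\Pi, \hat F)$. Concretely this means computing, for each symbol $\tau$ of negative homogeneity, the Wiener chaos decomposition of $(\hat\Pi^{(\eps)}_x \tau)(\phi^\lambda_x)$ and showing, by the standard moment/hypercontractivity argument (reduce to second moments using equivalence of moments on a fixed chaos, then bound the relevant integral kernels), that it is Cauchy with the right scaling in $\lambda$. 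The constants $C_\eps$ enter here as the subtractions needed to make the components in the lower Wiener chaos converge: the leading divergence is the variance-type constant of order $\eps^{-1}$ coming from $\CI(\Xi)^2$, and the genuinely new feature is the logarithmic subdivergence coming from the symbol $\CI(\CI(\Xi)^3)\,\CI(\Xi)$, which forces a second, logarithmically divergent, constant — this is exactly the phenomenon noted after Remark~\ref{rem:symmetry} and is why a plain Wick renormalisation is not enough. One checks that both constants are affine in a single real parameter each, so $M_\eps$ lives in a two-parameter subgroup; verifying that the $\rho$-dependent finite parts can be matched so as to make the limiting model, and hence the limiting solution, independent of $\rho$ (subject to a natural symmetry/normalisation of $\rho$) is done by tracking these finite parts explicitly.

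Third, the identification step: show that the abstract solution driven by the renormalised model $\hat\Pi^{(\eps)}$, once reconstructed, solves exactly the classical PDE \eref{e:Phi4bis} with the stated counterterm $C_\eps u_\eps$. This is where one computes how the group action of $M_\eps$ on the model translates into a modification of the equation; because $-u^3$ composed through the renormalisation produces, via the structure of $\RR$ acting on the relevant symbols, a term proportional to $C_\eps u$ (the "shift" of the cubic nonlinearity analogous to the It\^o--Stratonovich correction discussed around \eref{e:SDE}, but now with $C_\eps$ the sum of the two divergent constants above), one recovers precisely \eref{e:Phi4bis}. Combining the three steps — continuity of the abstract solution map in the model, convergence $\hat\Pi^{(\eps)} \to \hat\Pi$, and the identification — yields convergence of $u_\eps$ to $\CR u$ where $u$ is the abstract solution driven by $(\hat\Pi,\hat F)$; the blow-up caveat is handled exactly as in Remark~\ref{rem:convergenceNotion}, stopping the solutions at a large cut-off in $\CC^\alpha_\s$.

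The main obstacle is the second step, and within it the logarithmic subdivergence: one must set up the Wiener-chaos bookkeeping carefully enough that the divergent part of $(\hat\Pi^{(\eps)}_x\tau)(\phi^\lambda_x)$ for the problematic symbol is seen to be a genuine constant (independent of $x$ and of the test function at the relevant order) so that it can be absorbed into $C_\eps$, while the remainder converges with a strictly positive power of $\lambda$; getting the right renormalisation map $M_\eps$ — i.e. identifying which "subtrees" must be contracted and checking this defines an element of $\RR$ compatible with admissibility of the resulting model — is the delicate combinatorial-analytic heart of the argument. Everything else (the contraction argument, the first-chaos/second-moment estimates for the non-anomalous symbols, the reconstruction and the identification of the PDE) is by now fairly mechanical given the machinery of Theorems~\ref{thm:mult}--\ref{thm:extend}.
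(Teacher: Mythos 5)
Your overall architecture is exactly the paper's: the theorem is assembled from abstract well-posedness of the fixed point (Proposition~\ref{prop:locSolPhi4}), convergence of the renormalised models (Theorem~\ref{theo:convPhi4}), and identification of the reconstructed solution with the classical solution of \eref{e:Phi4bis} (Proposition~\ref{prop:identifyRenormPhi4}). Two concrete points in your plan are wrong or glossed over, and both sit precisely where you locate the ``heart of the argument''.

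First, you attribute the logarithmic subdivergence to the symbol $\CI(\CI(\Xi)^3)\,\CI(\Xi)$. Writing $\Psi = \CI(\Xi)$, that symbol in fact only needs the Wick-type subtraction $3C_1\,\CI(\Psi)\Psi$ with $C_1^{(\eps)} = \int K_\eps^2 \sim \eps^{-1}$. The logarithmically divergent constant $C_2^{(\eps)}$ is forced by $\CI(\Psi^2)\Psi^2$ (and reappears as $-3C_2\Psi$ in the renormalisation of $\CI(\Psi^3)\Psi^2$): its zeroth Wiener chaos component contains the ``sunset'' constant $2\int K(z)\,P_\eps^2(z)\,dz$, where $P_\eps$ denotes the mollified propagator, and since $K\,P_\eps^2$ is a kernel of order $-5 = -|\s|$ this integral diverges logarithmically. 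Contracting the subtree you name would not cancel the actual divergence, so identifying the correct element $M_\eps$ of $\RR$ — which is the step you yourself single out as delicate — would fail as described.

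Second, the fixed point step is not the routine application of the general machinery that you sketch. For $\alpha < -\frac52$ the solution sector is not function-like, and $\PR^+\Xi$ violates the hypothesis $\alpha\wedge\eta > -\s_1$ of the reconstruction theorem for singular modelled distributions, so $\CK_\gamma\PR^+\Xi$ is not defined by the general theory. The paper splits off $v = (\CK_{\bar\gamma}+R_\gamma\CR)\PR^+\Xi$ with $\CR\PR^+\Xi$ defined by hand as $\xi\one_{t\ge 0}$, and imposes the additional regularity $\xi\in\bar\CC^\alpha_\s$ and $K*\xi\in\CC(\R,\CC^\eta)$ so that solutions can be evaluated on time slices, restarted, and stopped at the cut-off $L$; without some such device neither your contraction argument nor your blow-up criterion in $\CC^\alpha(\T^3)$ literally makes sense. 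Both issues are repairable, but they are exactly the points at which the $\Phi^4_3$ case departs from the generic template.
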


\begin{proof}
This time, the statement is a consequence of 
Proposition~\ref{prop:locSolPhi4} (well-posedness of the abstract formulation), 
Theorem~\ref{theo:convPhi4} 
(convergence of the renormalised models) and
Proposition~\ref{prop:identifyRenormPhi4} (identification of renormalised 
solutions with \eref{e:Phi4bis}).
\end{proof}

\begin{remark}
It turns out that the limiting solution $u$ is almost surely a continuous function in time
with values in $\CC^\alpha(\T^3)$.
The notion of convergence is then as in Remark~\ref{rem:convergenceNotion}.
Here, we wrote again $\CC^\alpha$ as a shorthand for the Besov space $B^\alpha_{\infty,\infty}$.
\end{remark}

\begin{remark}
As already noted in \cite{MR0384003} 
(but for a slightly different regularisation procedure, which is more natural
 for the static version of the model considered there), 
the correct choice of constants $C_\eps$ is of the form
\begin{equ}
C_\eps = {C_1\over \eps} + C_2 \log \eps + C_3\;,
\end{equ}
where $C_1$ and $C_3$ depend on the choice of $\rho$ in a way that is 
explicitly computable, and the constant $C_2$ is independent of the choice of $\rho$.
It is the presence of this additional logarithmic 
divergence that makes the analysis of \eref{e:Phi4} highly non-trivial. In particular,
it was recently remarked in \cite{MR2214506} that this 
seems to rule out the use of Dirichlet form techniques for interpreting \eref{e:Phi4}.
\end{remark}

\begin{remark}
Again, we do not claim that the solutions constructed here are global. Indeed, the convergence
holds in the space $\CC([0,T], \CC^{\alpha})$, but only up to some possibly 
finite explosion time. It is very likely that one can show that the solutions are global
for almost every choice of initial condition, where ``almost every'' refers to the
measure built in \cite{MR0384003}. This is because that measure is expected to be invariant for the limiting
process constructed in Theorem~\ref{thm:Phi4}.
\end{remark}

\subsubsection{General methodology}

Our methodology for proving the kind of convergence results mentioned above is the following. 
First, given a locally subcritical SPDE
of the type \eref{e:mainProb}, we build a regularity structure $\TT_F$ which takes into account
the structure of the nonlinearity $F$ (as well as the regularity index of the driving noise and
the local scaling properties of the linear operator $\CL$), together with
a class $\MM_F$ of ``admissible models'' on $\TT_F$ which are defined using the abstract properties
of $\TT_F$ and the Green's function of $\CL$. The general construction of such a 
structure is performed in Section~\ref{sec:SPDE}. We then also build a natural
``lift map'' $Z\colon \CC(\R^d) \to \MM_F$ (see Section~\ref{sec:realAlg}), 
where $d$ is the dimension of the underlying 
space-time, as well as an abstract solution map $\CS \colon \CC^\alpha\times \MM_F \to \CD^\gamma$,
with the property that $\CR \CS\bigl(u_0, Z(\xi_\eps)\bigr)$ yields the classical 
(local) solution to \eref{e:mainProb} with initial condition $u_0$ and noise $\xi_\eps$.
Here, $\CR$ is the ``reconstruction operator'' already mentioned earlier.
A general result showing that $\CS$ can be built for ``most'' subcritical semilinear evolution
problems is provided in Section~\ref{sec:solSPDE}. This relies fundamentally on the
multi-level Schauder estimate of Section~\ref{sec:integral}, as well as the results of 
Section~\ref{sec:singular} dealing with singular modelled distributions, which is required
in order to deal with the behaviour near time $0$. 

The main feature of this construction is that both the abstract solution map $\CS$ and the
reconstruction operator $\CR$ are continuous. In most cases of interest they are even locally
Lipschitz continuous in a suitable sense. Note that we made a rather serious abuse of notation here, since
the very definition of the space $\CD^\gamma$ does actually depend on the particular 
model $Z(\xi_\eps)$! This will not bother us unduly since one could very easily remedy this
by having the target space be ``$\MM_F\ltimes \CD^\gamma$'', with the understanding that each
``fiber'' $\CD^\gamma$ is modelled on the corresponding model in $\MM_F$. The map $\CS$ would
then simply act as the identity on $\MM_F$.

Finally, we show that it is possible to find a sequence of elements $M_\eps \in \RR$ such
that the sequence of renormalised models $M_\eps Z(\xi_\eps)$ converge to some limiting
model $\hat Z$ and we identify $\CR \CS\bigl(u_0, M_\eps Z(\xi_\eps)\bigr)$ with the classical
solution to a modified equation. The proof of this fact is the only part of the whole theory which
is not ``automated'', but has to be performed by hand for each class of problems. However,
if two problems give rise to the same structure $\MM_F$ and are based on the same linear operator
$\CL$, then they can be treated with the same procedure, since it is only the details of the
solution map $\CS$ that change from one problem to the other. We treat two classes of problems in
detail in Sections~\ref{sec:renProc} and \ref{sec:Gaussian}. Section~\ref{sec:Gaussian} also
contains a quite general toolbox that is very useful for treating the renormalisation of 
many equations with Gaussian driving noise.

\subsection{Alternative theories}

Before we proceed to the meat of this article, let us give a quick review of some of the main 
existing theories allowing to make sense of products of distributions. For each of these theories,
we will highlight the differences with the theory of regularity structures.

\subsubsection{Bony's paraproduct}

Denoting by $\Delta_j f$ the $j$th Paley-Littlewood block of a distribution $f$, one can define the 
 bilinear operators
\begin{equ}
\pi_<(f,g) = \sum_{i < j-1} \Delta_i f \Delta_j g\;,\quad
\pi_>(f,g) = \pi_<(g,f)\;,\quad
\pi_o(f,g) = \sum_{|i - j| \le 1} \Delta_i f \Delta_j g\;,
\end{equ}
so that, at least formally, one has $fg = \pi_<(f,g) + \pi_>(f,g) + \pi_o(f,g)$. (See \cite{MR631751} for the original article and some applications to the analysis of solutions to fully nonlinear PDEs, 
as well as the monograph
and review article \cite{BookChemin,MR2682821}. The notation of this section is borrowed from 
the recent work \cite{PAMPreprint}.)
It turns out that $\pi_<$ and $\pi_>$ make sense for \textit{any} two distributions
$f$ and $g$. Furthermore, if $f \in \CC^\alpha$ and $g \in \CC^\beta$ with $\alpha + \beta > 0$,
then 
\begin{equ}[e:paraprod]
\pi_<(f,g) \in \CC^\beta\;,\qquad \pi_>(f,g) \in \CC^\alpha\;,\qquad
\pi_o(f,g)\in \CC^{\alpha+\beta}\;,
\end{equ}
so that one has a gain of regularity there, but one does
again encounter a ``barrier'' at $\alpha + \beta = 0$.

The idea exploited in \cite{PAMPreprint} is to consider a ``model distribution'' $\eta$ and 
to consider ``controlled distributions'' of the type
\begin{equ}
f = \pi_{<}(f^\eta, \eta) + f^\sharp\;,
\end{equ}
where both $f^\eta$ and $f^\sharp$ are more regular than $\eta$. The construction is such that,
at small scales, irregularities of $f$ ``look like'' irregularities of $\eta$. The hope is then 
that if $f$ is controlled by $\eta$, $g$ is controlled by $\zeta$, and one knows of a
renormalisation procedure allowing to make
sense of the product $\eta \zeta$ (by using tools from stochastic analysis for example),
then one can also give a consistent meaning to the product $fg$.
This is the philosophy that was implemented in \cite[Theorems~9 and 31]{PAMPreprint}.

This approach is very close to the one taken in the present work, and indeed it is possible
to recover the results of \cite{PAMPreprint} in the context of regularity structures, modulo
slight modifications in the precise rigorous formulation of the convergence results. 
There are also some formal similarities:
compare for example \eref{e:paraprod} with the bounds on each of the three terms appearing
in \eref{e:splitProd}.
The main philosophical difference is that the approach presented
here is very local in nature, as opposed to the more global approach used in Bony's paraproduct.
It is also more general, allowing for an arbitrary number of controls which do themselves 
have small-scale structures that are linked to each other. 
As a consequence, the current work also puts a strong emphasis on the highly non-trivial
algebraic structures underlying our construction. In particular, we allow
for rather sophisticated renormalisation procedures going beyond the usual Wick ordering, which is
something that is required in several of the examples presented above.

\subsubsection{Colombeau's generalised functions}

In the early eighties, Colombeau introduced an algebra $\GG(\R^d)$ of generalised functions
on $\R^d$ (or an open subset thereof) with the property that $\CS'(\R^d) \subset \GG(\R^d)$
where $\CS'$ denotes the usual Schwartz distributions \cite{MR701451,MR738781}. 
Without entering into too much detail, $\GG(\R^d)$
is essentially defined as the set of smooth functions from $\CS(\R^d)$, the set of Schwartz test functions,
into $\R$, quotiented by a certain natural equivalence relation.

Some (but not all) generalised functions have an ``associated distribution''. In other words,
the theory comes with a kind of ``projection operator'' $P \colon \GG(\R^d) \to \CS'(\R^d)$ which is a 
left inverse for the injection $\iota \colon \CS'(\R^d) \hookrightarrow \GG(\R^d)$. However, it is important to note that
the domain of definition of $P$ is \textit{not} all of $\GG(\R^d)$.
Furthermore, the product in $\GG(\R^d)$ behaves as one would expect on the images of objects
that one would classically know how to multiply. For example, if $f$ and $g$ are continuous functions,
then $P((\iota f) (\iota g)) = fg$. The same holds true if $f$ is a  smooth function and $g$ is a distribution.

There are some similarities between the theory of regularity structures and that of Colombeau generalised 
functions. For example, just like elements in $\GG$,
elements in the spaces $\CD^\alpha$ (see Definition~\ref{def:Dgamma} below) contain more information
than what is strictly required in order to reconstruct the corresponding distribution.
The theory of regularity structures involves a reconstruction operator $\CR$,
which plays a very similar role to the operator $P$ from the theory of Colombeau's 
generalised functions by allowing to discard that
additional information. Also, both theories allow to provide a rigorous mathematical interpretation
of some of the calculations performed in the context of quantum field theory.

One major difference between the two theories is that the theory of regularity structures has more flexibility
built in. Indeed, it allows some freedom in the definition of the product between elements of the
``model'' used for performing the local Taylor expansions. This allows to account for the fact that 
taking limits along different smooth approximations might in general yield different answers. 
(A classical example is the fact that $\sin(x/\eps) \to 0$ in any reasonable topology where it does converge,
while $\sin^2(x/\eps) \to 1/2$. More sophisticated effects of this kind can easily be encoded in
a regularity structure, but are invisible to the theory of Colombeau's generalised functions.)
This could be viewed as a disadvantage of the theory of regularity structures:  
it requires substantially more effort on the part of the ``user'' in order to specify the theory
completely in a given example. Also, there isn't just ``one'' regularity structure: the precise
algebraic structure that is suitable for analysing a given problem does depend a lot on the problem
in question. However, we will see in Section~\ref{sec:SPDE} that there is a general procedure
allowing to build a large class of regularity structures arising in the analysis of semilinear SPDEs in
a unified way.

\subsubsection{White noise analysis}

One theory that in principle allows to give some meaning to \eref{e:Phi4}, \eref{e:PAM}, and \eref{e:NS}
(but to the best of the author's knowledge
not to \eref{e:PAMGen} or \eref{e:KPZ} with non-constant coefficients) is the theory of ``white noise analysis'' (WNA),
exposed for example in \cite{MR2571742} (see also \cite{MR0451429,MR1160391} for some of 
the earlier works). For example, the case of the stochastic Navier-Stokes equations has been
considered in \cite{MR2050201}, while the case of a stochastic version of the nonlinear heat equation was
considered in \cite{MR1451997}.
Unfortunately, WNA has a number of severe drawbacks
that are not shared by the theory of regularity structures:
\begin{list}{\labelitemi}{\leftmargin=\parindent}
\item Solutions in the WNA sense typically do not consist of random variables but of 
``Hida distributions''. As a consequence, only some suitable moments are obtained by this theory,
but no actual probability distributions and / or random variables.
\item Solutions in the WNA sense are typically \textit{not} obtained as limits of classical solutions
to some regularised version of the problem. As a consequence, their physical interpretation is
unclear. As a matter of fact, it was shown in \cite{MR1743612} that the WNA solution to the KPZ equation
exhibits a physically incorrect large-time behaviour, while the Cole-Hopf solution
(which can also be obtained via a suitable regularity structure, see \cite{KPZ})
is the physically relevant solution \cite{MR1462228}.
\end{list}
There are exceptions to these two rules (usually when the only ill-posed product is of the 
form $F(u)\cdot \xi$ with $\xi$ some white noise, and the problem is parabolic), 
and in such cases the solutions obtained 
by the theory of regularity structures typically ``contain'' the solutions obtained by WNA.
On the other hand, white noise analysis (or, in general, the Wiener chaos decomposition
of random variables)
is a very useful tool when building explicit models associated to a Gaussian noise. This will
be exploited in Section~\ref{sec:Gaussian} below.

\subsubsection{Rough paths}

The theory of rough paths was originally developed in \cite{Lyons} in order to 
interpret solutions to controlled differential equations of the type
\begin{equ}
dY(t) = F(Y)\,dX(t)\;,
\end{equ}
where $X \colon \R^+ \to \R^m$ is an irregular function and $F\colon \R^d \to \R^{dm}$ is
a sufficiently regular collection of vector fields on $\R^d$.
This can be viewed as an instance of the general problem \eref{e:generalProblem} if we
set $\CL = \d_t$ and $\xi = {dX \over dt}$, which is now a rather irregular distribution.
It turns out that, in the case of H\"older-regular
rough paths, the theory of rough paths can be recast into our framework. It can then
be interpreted as one particular class of regularity structures (one for each pair $(\alpha,m)$,
where $m$ is the dimension of the rough path and $\alpha$ its index of H\"older regularity),
with the corresponding space of rough paths being identified with the associated space of
models. Indeed, the theory of rough paths, and particularly the theory of controlled
rough paths as developed in \cite{MR2091358,Trees}, was one major source of inspiration of the 
present work. See Section~\ref{sec:RP} below for more details on the link between
the two theories.

\subsection{Notations}

Given a distribution $\xi$ and a test function $\phi$, we will use indiscriminately the notations
$\scal{\xi,\phi}$ and $\xi(\phi)$ for the evaluation of $\xi$ against $\phi$. We will also sometimes use
the abuse of notation $\int \phi(x)\,\xi(x)\,dx$ or $\int \phi(x)\,\xi(dx)$.

Throughout this article, we will always work with multiindices on $\R^d$. A multiindex $k$
is given by a vector $(k_1, \ldots,k_d)$ with each $k_i \ge 0$ a positive integer.
For $x \in \R^d$, we then write $x^k$ as a shorthand for $x_1^{k_1}\cdots x_d^{k_d}$.
The same notation will still be used when $X \in T^d$ for some algebra $T$. For a sufficiently
regular function $g \colon \R^d \to \R$, we write $D^k g(x)$ as a shorthand for
$\d_{x_1}^{k_1}\cdots \d_{x_d}^{k_d} g(x)$. We also write $k!$ as a shorthand for $k_1!\cdots k_d!$.

Finally, we will write $a \wedge b$ for the minimum of $a$ and $b$ and $a \vee b$ for the maximum.

\subsection*{Acknowledgements}

{\small
I am very grateful to M.~Gubinelli and to H.~Weber for our numerous discussions on quantum field
theory, renormalisation, rough paths, paraproducts, Hopf algebras, etc. 
These discussions were of enormous help
in clarifying the concepts presented in this article. Many other people provided valuable input that helped
shaping the theory. In particular, I would like to mention A.~Debussche, B.~Driver,
P.~Friz, J.~Jones, D.~Kelly, X.-M.~Li, M.~Lewin, T.~Lyons, J.~Maas, K.~Matetski, J.-C.~Mourrat, N.~Pillai, 
D.~Simon, T.~Souganidis, J.~Unterberger, and L.~Zambotti. Special thanks are due 
to L.~Zambotti
for pointing out a mistake in an earlier version of 
the definition of the class of regularity structures
considered in Section~8.

Financial support was kindly
provided by the Royal Society through a Wolfson Research Merit Award and by the Leverhulme 
Trust through a Philip Leverhulme Prize.
}

\section{Abstract regularity structures}
\label{sec:regStructure}

We start by introducing the abstract notion of a ``regularity structure'', which was already
mentioned in a loose way in the introduction, and which permeates the entirety of this work.

\begin{definition}\label{def:regStruct}
A \textit{regularity structure} $\TT = (A, T, G)$ consists of the following elements:
\begin{mylist}
\item An index set $A \subset \R$ such that $0 \in A$, $A$ is bounded from below, and $A$ is locally finite.
\item A \textit{model space} $T$, which is a graded vector space $T = \bigoplus_{\alpha \in A} T_\alpha$,
with each $T_\alpha$ a Banach space. Furthermore, $T_0 \approx \R$ and its unit vector
is denoted by $\one$.
\item A \textit{structure group} $G$ of linear operators acting on $T$ such that, for every $\Gamma \in G$, every $\alpha \in A$,
and every $a \in T_\alpha$, one has
\begin{equ}[e:coundGroup]
\Gamma a - a \in \bigoplus_{\beta < \alpha} T_\beta\;.
\end{equ}
Furthermore, $\Gamma \one = \one$ for every $\Gamma \in G$.
\end{mylist}
\end{definition}

\begin{remark}
It will sometimes be an advantage to consider $G$ as an abstract group, together with
a representation $\Gamma$ of $G$ on $T$. This point of view will be very natural 
in the construction of Section~\ref{sec:solSPDE} below. We will then sometimes use the notation
$g \in G$ for the abstract group element, and $\Gamma_g$ for the corresponding
linear operator. For the moment however, we identify
elements of $G$ directly with linear operators on $T$ in order to reduce the notational overhead.
\end{remark}

\begin{remark}
Recall that the elements of
$T = \bigoplus_{\alpha \in A} T_\alpha$ are \textit{finite} series of the type
$a = \sum_{\alpha \in A} a_\alpha$ with $a_\alpha \in T_\alpha$. All the operations that we will
construct in the sequel will then make sense component by component. 
\end{remark}

\begin{remark}
A good analogy to have in mind is the space of all polynomials, which will be explored in 
detail in Section~\ref{sec:canonical} below. In line with this analogy, we say that $T_\alpha$ consists of elements that
are \textit{homogeneous} of order $\alpha$.
In the particular case of polynomials in commuting indeterminates
our theory boils down to the very familiar theory of Taylor expansions on $\R^d$,
so that the reader might find it helpful to read the present section 
and Section~\ref{sec:canonical} in parallel to help build an intuition.
The reader familiar with the theory of rough paths \cite{Lyons} will also find it helpful to simultaneously
read Section~\ref{sec:RP} which shows how the theory of rough paths (as well as the theory of ``branched rough paths''
\cite{Trees}) fits within our framework.
\end{remark}

The idea behind this definition is that $T$ is a space whose elements describe 
the ``jet'' or ``local expansion'' of a function
(or distribution!) $f$ at any given point. One should then think of $T_\alpha$ as encoding the 
information required to describe $f$ locally ``at order $\alpha$''  in the sense that, at scale $\eps$, 
elements of $T_\alpha$ describe fluctuations of size $\eps^\alpha$. 
This interpretation will be made much clearer below, but at an intuitive level it already
shows that a regularity structure with $A \subset \R_+$ will describe functions, while a regularity
structure with $A \not\subset \R_+$ will also be able to describe distributions. 

The role of the structure group $G$ will be to translate coefficients from a local expansion around
a given point into coefficients for an expansion around a different point. Keeping in line with the 
analogy of Taylor expansions, the coefficients of a Taylor polynomial are just given by the partial 
derivatives of the underlying function $\phi$ at some point $x$. However, in order to compare the Taylor polynomial
at $x$ with the Taylor polynomial at $y$, it is not such a good idea to compare the coefficients themselves.
Instead, it is much more natural to first translate the first polynomial by the quantity $y-x$. In the case of polynomials
on $\R^d$, the structure group $G$ will therefore simply be given by $\R^d$ with addition as its group property, 
but we will see that non-abelian structure groups arise naturally in more general situations.
(For example, the structure group is non-Abelian in the theory of rough paths.)

Before we proceed to a study of some basic properties of regularity structures, let us introduce a
few notations.
For an element $a \in T$, we write $\CQ_\alpha a$ for the component of $a$ in $T_\alpha$
and $\|a\|_\alpha = \|\CQ_\alpha a\|$ for its norm.
We also use the shorthand notations
\begin{equ}[e:defTalpha+]
T_\alpha^+ = \bigoplus_{\gamma \ge \alpha} T_\gamma\;,\qquad
T_\alpha^-  = \bigoplus_{\gamma < \alpha} T_\gamma\;,
\end{equ}
with the conventions that $T_\alpha^+ = \{0\}$ if $\alpha > \max A$ and $T_\alpha^- = \{0\}$ if $\alpha \le \min A$.
We furthermore denote by $L_0^-(T)$ the space of all operators $L$ on $T$
such that $L a \in T_\alpha^-$ for $a \in T_\alpha$ and by $L^-$ the set of operators $L$ such that
$L-1 \in L_0^-$, so that $G \subset L^-$.

The condition that $\Gamma a - a \in T_\alpha^-$ for $a \in T_\alpha$, together with the fact that the index set 
$A$ is bounded from below, implies that, for every $\alpha \in A$ there exists $n>0$ such that 
$(\Gamma - 1)^n T_\alpha = 0$ for every $\Gamma \in G$.
In other words, $G$ is necessarily nilpotent. 
In particular, one can define a
function $\log\colon G \to L_0^-$ by
\begin{equ}[e:defLog]
\log \Gamma = \sum_{k=1}^n {(-1)^{k+1}\over k} (\Gamma -1)^k\;.
\end{equ}
Conversely, one can define an exponential map $\exp\colon L_0^- \to L^-$ 
by its  Taylor series, and one has the rather unsurprising
identity $\Gamma = \exp(\log \Gamma)$. As usual in the theory of Lie groups, we write
$\g = \log G$ as a shorthand.

A useful definition will be the following:

\begin{definition}\label{def:sector}
Given a regularity structure as above and some $\alpha \le 0$, 
a \textit{sector} $V$ of regularity $\alpha$ is a graded subspace
$V = \bigoplus_{\beta \in A} V_\beta$ with $V_\beta \subset T_\beta$ having the following properties.
\begin{mylist}
\item One has $V_\beta = \{0\}$ for every $\beta < \alpha$.
\item The space $V$ is invariant under $G$, i.e.\ $\Gamma V \subset V$ for every $\Gamma \in G$.
\item For every $\beta \in A$, there exists a complement $\bar V_\beta \subset T_\beta$ such that
$T_\beta$ is given by the direct sum $T_\beta = V_\beta \oplus \bar V_\beta$.
\end{mylist}
A sector of regularity $0$ is also called \textit{function-like} for reasons that will become clear
in Section~\ref{sec:reconFcn}.
\end{definition}

\begin{remark}
The regularity of a sector will always be less or equal to zero. In the case
of the regularity structure generated by polynomials for example, any non-trivial
sector has regularity $0$ since it always has to contain the element $\one$.
See Corollary~\ref{cor:regularity} below for a justification of this terminology.
\end{remark}

\begin{remark}\label{rem:sectorReg}
Given a sector $V$, we can define $A_V \subset A$ as the set of indices $\alpha$ such that
$V_\alpha \neq \{0\}$. If $\alpha > 0$, our definitions then ensure that $\TT_V = (V,A_V, G)$ is again a regularity 
structure with $\TT_V \subset \TT$. (See below for the meaning of such an inclusion.) 
It is then natural to talk about a subsector $W \subset V$ if
$W$ is a sector for $\TT_V$.
\end{remark}

\begin{remark}
Two natural non-empty sectors are given by $T_0 = \lspan\{\one\}$ 
and by $T_\alpha$ with $\alpha = \min A$. In both cases, $G$ automatically acts on them in
a trivial way.
Furthermore, as an immediate consequence of the definitions, given a sector $V$ of regularity $\alpha$
and a real number $\gamma > \alpha$, the space $V \cap T_\gamma^-$ is again a sector of regularity $\alpha$.
\end{remark}

In the case of polynomials on $\R^d$, typical examples of sectors would be given by the
set of polynomials depending only on some subset of the variables or by the set of polynomials
of some fixed degree.

\subsection{Basic properties of regularity structures}
\label{sec:regBasic}

The smallest possible regularity structure is given by $\TT_0 = (\{0\}, \R, \{1\})$,
where $\{1\}$ is the trivial group consisting only of the identity operator, and with $\one = 1$. 
This ``trivial'' regularity structure is the smallest possible structure that accommodates
the local information required to describe an arbitrary continuous function, i.e.\ simply the value
of the function at each point. 

The set of all regularity structures comes with a natural partial order.
Given two regularity structures $\TT = (A, T, G)$ and $\bar \TT = (\bar A, \bar T, \bar G)$
we say that $\TT$ contains $\bar \TT$ and write $\bar \TT \subset \TT$ if the following holds.
\begin{claim}
\item One has $\bar A \subset A$.
\item There is an injection $\iota \colon \bar T \to T$ such that, for every $\alpha \in \bar A$, one has $\iota(\bar T_\alpha) \subset T_\alpha$.
\item The space $\iota(\bar T)$ is invariant under $G$ and the map 
$j\colon  G \to L(\bar T,\bar T)$ defined by the identity $j \Gamma  = \iota^{-1} \Gamma \iota$
is a surjective group homomorphism from $G$ to $\bar G$.
\end{claim}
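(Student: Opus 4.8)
The plan is to prove that the trivial regularity structure $\TT_0 = (\{0\},\R,\{1\})$ is contained in every regularity structure $\TT = (A,T,G)$ — that is, to verify the three defining conditions of $\TT_0 \subset \TT$ — which is exactly the assertion (made informally just above) that $\TT_0$ is the smallest possible regularity structure. Each condition reduces directly to an axiom of Definition~\ref{def:regStruct}. The first one is immediate: here $\bar A = \{0\}$, and $0 \in A$ by the axioms of a regularity structure, so $\bar A \subset A$.

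For the second condition I would exhibit the injection explicitly. Since $T_0 \approx \R$ with distinguished unit vector $\one$, take $\iota \colon \R \to T$ to be the linear map $\iota(c) = c\,\one$. This is injective because $\one \neq 0$; it carries the unit $1$ of $\TT_0$ to the unit $\one$ of $\TT$; and since the only index in $\bar A$ is $0$, one has $\iota(\bar T_0) = \iota(\R) = \lspan\{\one\} \subset T_0$, which is precisely the grading compatibility required. Nothing further needs checking here because $\bar T$ is concentrated in degree $0$.

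For the third condition I must verify the $G$-invariance together with the surjective-homomorphism property. By Definition~\ref{def:regStruct} one has $\Gamma\one = \one$ for every $\Gamma \in G$, so the line $\iota(\R) = \lspan\{\one\}$ is invariant under $G$, and hence the map $j \colon G \to L(\bar T,\bar T)$ defined by $j\Gamma = \iota^{-1}\Gamma\iota$ is well defined. Because $\iota^{-1}$ inverts $\iota$ on $\iota(\R)$, $j$ is a group homomorphism; moreover $j\Gamma$ is the identity on $\R$ for every $\Gamma$, again because $\Gamma$ fixes $\one$. Therefore the image of $j$ is the one-element group, which is exactly $\bar G = \{1\}$, so $j$ is surjective onto $\bar G$. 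Combining the three conditions yields $\TT_0 \subset \TT$.

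There is no substantive obstacle in this case: the statement collapses because $\bar T$ is one-dimensional and $\bar G$ is trivial, so the single natural choice $\iota(c) = c\,\one$ satisfies every requirement. The only place where one genuinely uses structure is the axiom $\Gamma\one = \one$, which is invoked twice in the third condition — once to obtain $G$-invariance of $\iota(\R)$ and once to identify $j$ as the trivial homomorphism onto $\bar G$. In the more elaborate containment and extension arguments later in the paper it is precisely this third condition — producing a $G$-invariant graded injection whose induced map surjects onto $\bar G$ — that will carry all the content.
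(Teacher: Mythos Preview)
Your proposal is correct and matches the paper's treatment. The three bullet points are the \emph{definition} of containment $\bar\TT\subset\TT$; the assertion to verify is the subsequent claim that $\TT_0\subset\TT$ for every $\TT$, and the paper dispatches this in a single sentence by specifying $\iota 1=\one$ and $j$ the trivial homomorphism --- exactly the maps you write down and whose properties you verify in detail.
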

With this definition, one has $\TT_0 \subset \TT$ for every regularity structure $\TT$, with $\iota 1 = \one$ and
$j$ given by the trivial homomorphism.

One can also define the product $\hat \TT = \TT \otimes \bar \TT$ of two regularity structures
 $\TT = (A, T, G)$ and $\bar \TT = (\bar A, \bar T, \bar G)$
by  $\hat \TT = (\hat A, \hat T, \hat G)$ with
\begin{claim}
\item $\hat A = A + \bar A$,
\item $\hat T = \bigoplus_{(\alpha, \beta)} T_\alpha \otimes \bar T_\beta$ and 
$\hat T_\gamma = \bigoplus_{\alpha + \beta = \gamma} T_\alpha \otimes \bar T_\beta$, where
both sums run over pairs $(\alpha, \beta) \in A \times \bar A$,
\item $\hat G = G \otimes \bar G$,
\end{claim}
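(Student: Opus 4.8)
The plan is simply to check the three requirements of Definition~\ref{def:regStruct} for $\hat\TT=(\hat A,\hat T,\hat G)$, and then to produce the canonical inclusions $\TT\subset\hat\TT$ and $\bar\TT\subset\hat\TT$. Since $0\in A$ and $0\in\bar A$ one has $0=0+0\in\hat A$; boundedness from below of $\hat A$ is clear, with $\inf\hat A\ge\inf A+\inf\bar A$; and $\hat A$ is locally finite because any pair $(\alpha,\beta)\in A\times\bar A$ with $\alpha+\beta\le N$ satisfies $\alpha\le N-\inf\bar A$ and $\beta\le N-\inf A$, of which there are only finitely many by local finiteness of $A$ and $\bar A$. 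This same finiteness shows that each $\hat T_\gamma=\bigoplus_{\alpha+\beta=\gamma}T_\alpha\otimes\bar T_\beta$ is a \emph{finite} direct sum, hence a Banach space once one uses that the (completed) tensor product of two Banach spaces is again Banach and that finite direct sums of Banach spaces are Banach. The one point needing care is $\hat T_0\approx\R$: this holds, with unit $\one\otimes\bar\one$, precisely when the only pair $(\alpha,\beta)\in A\times\bar A$ with $\alpha+\beta=0$ carrying nontrivial components is $(0,0)$ — automatic as soon as one of $A,\bar A$ lies in $\R_{\ge 0}$, which is the situation in all the intended applications — and I would record this as a standing assumption on the pair $(\TT,\bar\TT)$.

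Next, $\hat G=G\otimes\bar G$ should be read as the group of operators $\Gamma\otimes\bar\Gamma$ on $\hat T$ with $\Gamma\in G$, $\bar\Gamma\in\bar G$; since $(\Gamma_1\otimes\bar\Gamma_1)(\Gamma_2\otimes\bar\Gamma_2)=(\Gamma_1\Gamma_2)\otimes(\bar\Gamma_1\bar\Gamma_2)$ this is a group acting linearly on $\hat T$ with identity $\Id\otimes\Id$, isomorphic to $G\times\bar G$. To verify \eref{e:coundGroup} it suffices, by linearity, to test it on $a\otimes b$ with $a\in T_\alpha$, $b\in\bar T_\beta$, so $a\otimes b\in\hat T_{\alpha+\beta}$: writing $\Gamma a=a+a'$ and $\bar\Gamma b=b+b'$ with $a'\in T_\alpha^-$, $b'\in\bar T_\beta^-$ via \eref{e:coundGroup} for the factors, one gets
\begin{equ}
(\Gamma\otimes\bar\Gamma)(a\otimes b)-a\otimes b = a\otimes b'+a'\otimes b+a'\otimes b'\;,
\end{equ}
and each summand lies in $\bigoplus_{\gamma<\alpha+\beta}\hat T_\gamma$ (e.g.\ $a\otimes b'\in T_\alpha\otimes\bar T_\beta^-\subset\bigoplus_{\gamma<\alpha+\beta}\hat T_\gamma$, and likewise for the other two). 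Finally $(\Gamma\otimes\bar\Gamma)(\one\otimes\bar\one)=\Gamma\one\otimes\bar\Gamma\bar\one=\one\otimes\bar\one$, so $\hat G$ fixes the unit of $\hat T$; nilpotency of $\hat G$ is then automatic from $\hat A$ being bounded below and locally finite, exactly as noted after Definition~\ref{def:regStruct}.

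For the inclusions I would take $\iota\colon T\to\hat T$, $\iota a=a\otimes\bar\one$: this is injective, carries $T_\alpha$ into $T_\alpha\otimes\bar T_0\subset\hat T_\alpha$ (and $A\subset\hat A$), has $\hat G$-invariant image $T\otimes\bar\one$ because $(\Gamma\otimes\bar\Gamma)(a\otimes\bar\one)=\Gamma a\otimes\bar\one$, and the induced map $j(\Gamma\otimes\bar\Gamma)=\iota^{-1}(\Gamma\otimes\bar\Gamma)\iota$ equals $\Gamma$, which is a surjective group homomorphism onto $G$; hence $\TT\subset\hat\TT$, and symmetrically $\bar\TT\subset\hat\TT$ via $b\mapsto\one\otimes b$. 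The only step I expect to require genuine care is the one already isolated — keeping $\hat T_0$ one-dimensional — everything else being a routine unwinding of the definitions; this is precisely why the product is meant to be taken between structures whose homogeneities do not produce an accidental cancellation to degree $0$.
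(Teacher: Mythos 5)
Your verification is correct in substance and is exactly the routine check that the paper leaves implicit (it only says the axioms are "easy to verify"): local finiteness and boundedness of $\hat A$, Banach-ness of the finite sums $\hat T_\gamma$, the computation $(\Gamma\otimes\bar\Gamma)(a\otimes b)-a\otimes b=a\otimes b'+a'\otimes b+a'\otimes b'$ for \eref{e:coundGroup}, preservation of $\one\otimes\bar\one$, and the two inclusions via $a\mapsto a\otimes\bar\one$ and $b\mapsto\one\otimes b$ all match what is intended, and you are right that the only genuinely delicate axiom is $\hat T_0\approx\R$, which the paper glosses over.

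One correction, though: your parenthetical claim that one-dimensionality of $\hat T_0$ is ``automatic as soon as \emph{one} of $A,\bar A$ lies in $\R_{\ge 0}$'' is false. If $A=\N$ (the polynomial structure) and $\bar A$ contains a negative integer, say $-1\in\bar A$, then the pair $(1,-1)$ contributes $T_1\otimes\bar T_{-1}$ to $\hat T_0$, which is then not one-dimensional. What you need is either that \emph{both} index sets lie in $\R_{\ge 0}$, or — better, and consistent with how such products are actually used in the paper, e.g.\ in the proof of Proposition~\ref{prop:multDist} where $A=\N\cup(\N+\alpha)$ with $\alpha\notin\Z$ — precisely the no-cancellation condition you state correctly in the first half of that sentence: the only pair $(\alpha,\beta)\in A\times\bar A$ with $\alpha+\beta=0$ and $T_\alpha\otimes\bar T_\beta\neq 0$ is $(0,0)$. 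Keep that as the standing assumption and drop the ``one of'' shortcut.
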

Setting $\hat \one = \one \otimes \bar \one$ (where $\one$ and $\bar \one$ are the unit elements
of $\TT$ and $\bar \TT$ respectively), it is easy to verify that this definition satisfies all the required axioms
for a regularity structure. If the individual components of $T$ and / or $\bar T$ are infinite-dimensional, 
this construction does of course rely on choices of tensor products for $T_\alpha \otimes \bar T_\beta$.

\begin{remark}
One has both $\TT \subset \TT \otimes \bar \TT$ and $\bar \TT \subset \TT \otimes \bar \TT$
with obvious inclusion maps. Furthermore, one has $\TT \otimes \TT_0 \approx \TT$ for the trivial regularity
structure $\TT_0$.
\end{remark}

\subsection{The polynomial regularity structure}
\label{sec:canonical}

One very important example to keep in mind for the abstract theory of regularity structures
presented in the main part
of this article is that generated by polynomials in $d$ commuting variables.
In this case, we simply recover the usual theory of Taylor expansions / regular functions in $\R^d$.
However, it is still of interest since it helps building our intuition and provides a nicely unified way of
treating regular functions with different scalings.

In this case, the model space $T$ consists of all abstract polynomials in $d$ indeterminates.
More precisely, we have $d$ ``dummy variables'' $\{X_i\}_{i=1}^d$ and $T$ consists of polynomials in $X$.
Given a multiindex $k = (k_1,\ldots,k_d)$, we will  use throughout this article the shorthand notation
\begin{equ}
X^k \eqdef X_1^{k_1}\cdots X_d^{k_d}\;.
\end{equ}
Finally, we denote by $\one = X^0$ the ``empty'' monomial. 

In general, we will be interested in situations where different variables come with different degrees of
homogeneity. A good example to keep in mind is that of parabolic equations, where the linear operator
is given by $\d_t - \Delta$, with the Laplacian acting on the spatial coordinates. By homogeneity, it 
is then natural to make powers of $t$ ``count double''.
In order to implement this classical idea, 
we assume from now on that we fix a \textit{scaling}\label{lab:scaling} $\s\in \N^d$ of $\R^d$, which is 
simply a vector of strictly positive relatively prime integers. The Euclidean scaling
is simply given by $\s_c = (1,\ldots,1)$. 

Given such a scaling, we defined the ``scaled degree'' of a multiindex $k$ by
\begin{equ}[e:scaledk]
|k|_\s = \sum_{i=1}^d \s_i\, k_i \;.
\end{equ}
With this notation we define, for every $n \in \N$, the subspace $T_n \subset T$ by
\begin{equ}
T_n = \lspan \{X^k\,:\, |k|_\s = n\}\;.
\end{equ}
For a monomial $P$ of the type $P(X) = X^k$, we then refer to $|k|_\s$ as the scaled degree of $P$.
Setting $A = \N$, we have thus constructed the first two components of a regularity structure. 

Our structure comes with a natural model, which is given by the concrete realisation of
an abstract polynomial as a function on $\R^d$. More precisely, for every $x\in \R^d$,
we have a natural linear map $\Pi_x\colon T \to \CC^\infty(\R^d)$ given by  
\begin{equ}[e:canonicalPi]
\bigl(\Pi_x X^k\bigr)(y) = (y-x)^k \;.
\end{equ}
In other words, given any ``abstract polynomial'' $P(X)$, $\Pi_x$ realises it as a concrete polynomial
on $\R^d$ based at the point $x$.

This suggests that there is a natural action of $\R^d$ on $T$ which
simply shifts the base point $x$. This is precisely the action that is described by the group $G$
which is the last ingredient missing to obtain a regularity structure.
As an abstract group, $G$ will simply be a copy of $\R^d$ endowed with addition as its group
operation. For any $h \in \R^d\approx G$, the action of $\Gamma_h$ on an abstract polynomial is then given by
\begin{equ}
\bigl(\Gamma_h P\bigr)(X) = P(X+h)\;.
\end{equ}
It is obvious from our notation that one has the identities
\begin{equ}
\Gamma_h \circ \Gamma_{\bar h} = \Gamma_{h+\bar h}\;,\qquad  \Pi_{x+h} \Gamma_h = \Pi_{x}\;,
\end{equ}
which will play a fundamental role in the sequel.

The triple $(\N,T,G)$ constructed in this way thus defines a regularity structure, 
which we call $\TT_{d,\s}$. (It depends on the scaling $\s$ only in the way that $T$
is split into subspaces, so $\s$ does not explicitly appear in the definition of $\TT_{d,\s}$.)

In this construction, the space $T$ comes with more structure than just that of a regularity structure.
Indeed, it comes with a natural multiplication $\star$ given by
\begin{equ}
\bigl(P\star Q\bigr)(X) = P(X) Q(X)\;.
\end{equ}
It is then straightforward to verify that this representation satisfies the properties that
\begin{claim}
\item For $P \in T_m$ and $Q\in T_n$, one has $P\star Q \in T_{m+n}$.
\item The element $\one$ is neutral for $\star$.
\item For every $h \in \R^d$ and $P,Q \in T$, one has $\Gamma_h(P\star Q) = \Gamma_h P \star \Gamma_h Q$.
\end{claim}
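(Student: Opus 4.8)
The plan is to verify all three properties directly on the monomial basis and then extend by bilinearity of $\star$, using that every element of $T$ is a \emph{finite} linear combination of the monomials $X^k$.

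For the first property it suffices to check it on basis elements. Given multiindices $k$ and $\ell$, the definition of $\star$ gives at once $X^k \star X^\ell = X^{k+\ell}$, and the scaled degree is additive: $|k+\ell|_\s = \sum_i \s_i(k_i+\ell_i) = |k|_\s + |\ell|_\s$. Hence if $|k|_\s = m$ and $|\ell|_\s = n$, then $X^k \star X^\ell \in T_{m+n}$. Writing $P = \sum_{|k|_\s = m} p_k X^k$ and $Q = \sum_{|\ell|_\s = n} q_\ell X^\ell$ and expanding, every term of $P\star Q$ lands in $T_{m+n}$, so $P \star Q \in T_{m+n}$. The second property is immediate: since $\one = X^0$, one has $(\one \star P)(X) = X^0 P(X) = P(X)$ and likewise $(P \star \one)(X) = P(X)$. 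For the third property I would use that $\Gamma_h$ acts by the substitution $X \mapsto X+h$, so that for any $P, Q \in T$,
\begin{equ}
\bigl(\Gamma_h(P\star Q)\bigr)(X) = (P\star Q)(X+h) = P(X+h)\,Q(X+h) = \bigl(\Gamma_h P\bigr)(X)\,\bigl(\Gamma_h Q\bigr)(X) = \bigl(\Gamma_h P \star \Gamma_h Q\bigr)(X)\;,
\end{equ}
which is the claimed identity.

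There is no genuine obstacle here; the verification is routine. The only point worth a moment's attention is that the grading of $T$ is by the \emph{scaled} degree $|\cdot|_\s$ rather than by the ordinary total degree, but since $|\cdot|_\s$ is still additive under multiplication of monomials, closedness of the subspaces $T_n$ under $\star$ holds exactly as in the Euclidean case. Likewise, the compatibility of $\star$ with the structure group $G$ reflects nothing more than the elementary fact that composition of a polynomial with a translation $X \mapsto X+h$ is an algebra homomorphism.
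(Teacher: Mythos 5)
Your verification is correct and is exactly the routine check the paper has in mind (the paper simply asserts these properties are "straightforward to verify"): additivity of the scaled degree on monomials, $\one = X^0$ being the multiplicative unit, and $\Gamma_h$ acting as composition with a translation, hence as an algebra homomorphism. Nothing is missing.
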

Furthermore, there exists a natural element $\scal{\one,\cdot\,}$ in the dual of $T$ which consists of
formally evaluating the corresponding polynomial at the origin. More precisely, one sets $\scal{\one,X^k} = \delta_{k,0}$.

As a space of polynomials, $T$ arises naturally as the space in which the Taylor expansion
of a function $\phi \colon \R^d \to \R$ takes values. Given a smooth
function $\phi\colon \R^d \to \R$ and an integer $\ell \ge 0$, we 
can ``lift'' $\phi$ in a natural way to $T$ by computing its Taylor expansion of order 
less than $\ell$ at each point. 
More precisely, we set
\begin{equ}[e:TaylorNormal]
\bigl(\CT_\ell \phi\bigr)(x) = \sum_{|k|_\s < \ell}{X^k\over k !} D^k\phi(x)\;,
\end{equ}
where, for a given multiindex $k = (k_1,\ldots,k_d)$, $D^k \phi$ stands as usual 
for the partial derivative $\d_1^{k_1}\cdots \d_d^{k_d} \phi(x)$. 
It then follows immediately from the general
Leibniz rule that for $\CC^\ell$ functions, $\CT_\ell$ is ``almost'' an algebra morphism, in the sense that
in addition to being linear, one has
\begin{equ}[e:Leibniz]
 \CT_\ell (\phi\cdot \psi)(x) = \CT_\ell \phi(x) \star \CT_\ell \psi(x) + R(x)\;,
\end{equ}
where the remainder $R(x)$ is a sum of homogeneous terms of scaled 
degree greater or equal to $\ell$.

We conclude this subsection by defining the classes $\CC^\alpha_\s$ of functions that are $\CC^\alpha$
with respect to a given scaling $\s$.
Recall that, for $\alpha \in (0,1]$,
the class $\CC^\alpha$ of ``usual'' $\alpha$-H\"older continuous functions is given by those functions
$f$ such that $|f(x) - f(y)| \lesssim |x-y|^\alpha$, uniformly over $x$ and $y$ in any compact set.
For any $\alpha > 1$, we can then define $\CC^\alpha$ recursively as consisting of functions
that are continuously differentiable and such that each directional derivative belongs to $\CC^{\alpha-1}$.

\begin{remark}\mhpastefig{warning}
In order to keep our notations consistent,
we have slightly strayed from the usual conventions by declaring a function to be of class $\CC^1$ even if it
is only Lipschitz continuous. A similar abuse of notation will be repeated for all positive integers,
and this will be the case throughout this article.
\end{remark}

\begin{remark}
We could have defined the spaces 
$\CC^\alpha$ for $\alpha \in [0,1)$ (note the missing point $1$!) 
similarly as above, but replacing the bound on $f(x) - f(y)$ by
\begin{equ}[e:boundAlpha2]
\lim_{|h| \to 0} |f(x+h) - f(x)|/|h|^\alpha = 0\;,
\end{equ}
imposing uniformity of the convergence for $x$ in any compact set.
If we extended this definition to $\alpha \ge 1$ recursively as above, this 
would coincide with the usual spaces $\CC^k$ for integer $k$, 
but the resulting spaces would be slightly smaller than the H\"older spaces 
for non-integer values.
(In fact, they would then coincide with the closure of smooth functions under the $\alpha$-H\"older 
norm.) Since the bound \eref{e:boundAlpha2} includes a supremum and a limit rather than
just a supremum, we prefer to stick with the definition given above. 
\end{remark}

Keeping this characterisation in mind, one nice feature of the regularity structure just described 
is that it provides a very natural ``direct'' characterisation of $\CC^\alpha$ for any $\alpha > 0$
without having to resort to an inductive construction. 
Indeed, in the case of the classical Euclidean scaling $\s = (1,\ldots,1)$, we have the following result, where
for $a \in T$, we denote by $\|a\|_m$ the norm of the component of $a$ in $T_m$.

\begin{lemma}\label{lem:Lipschitz}
A function $\phi \colon \R^d \to \R$ is of class $\Lip^{\alpha}$ with $\alpha > 0$ if and only if 
there exists a function $\hat \phi \colon \R^d \to T_\alpha^-$ such that
$\scal{\one, \hat\phi(x)} = \phi(x)$ and such that
\begin{equ}[e:charactCalpha]
\|\hat \phi(x+h) - \Gamma_h \hat \phi(x)\|_m \lesssim  |h|^{\alpha-m}\;,
\end{equ}
uniformly over $m < \alpha$, $|h| \le 1$ and $x$ in any compact set.
\end{lemma}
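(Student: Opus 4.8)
The plan is to prove both directions by induction on $\lceil \alpha \rceil$. For the ``only if'' direction, suppose $\phi \in \Lip^\alpha$ and define the natural candidate
\begin{equ}
\hat\phi(x) = \sum_{|k| < \alpha} \frac{X^k}{k!}\, D^k \phi(x)\;,
\end{equ}
which takes values in $T_\alpha^-$ and clearly satisfies $\scal{\one,\hat\phi(x)} = \phi(x)$. The component of $\hat\phi(x+h) - \Gamma_h\hat\phi(x)$ in $T_m$ is then, by the definition of $\Gamma_h$ via $\Gamma_h P(X) = P(X+h)$,
\begin{equ}
\frac{1}{m!}\Bigl(D^m\phi(x+h) - \sum_{m \le |k| < \alpha} \frac{h^{k-m}}{(k-m)!} D^k\phi(x)\Bigr)\;,
\end{equ}
which is precisely the remainder in the Taylor expansion of $D^m\phi$ around $x$ to order $\alpha - m$. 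Since $D^m\phi \in \Lip^{\alpha - m}$ by the recursive definition of $\Lip^\alpha$, the classical Taylor remainder estimate gives the bound $|h|^{\alpha - m}$, uniformly on compacts. (For $m$ such that $\alpha - m \in (0,1]$ this is the base case of the Hölder condition; for larger gaps one iterates.)

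For the ``if'' direction, suppose $\hat\phi$ exists with the stated bound. The $m=0$ component of \eref{e:charactCalpha} reads $|\phi(x+h) - \sum_{0 < |k| < \alpha} \tfrac{h^k}{k!} \scal{\text{something}}| \lesssim |h|^\alpha$, more precisely $|\scal{\one,\hat\phi(x+h)} - \scal{\one, \Gamma_h \hat\phi(x)}| \lesssim |h|^\alpha$, i.e. $\phi$ agrees with a polynomial in $h$ up to an error $|h|^\alpha$. When $\alpha \le 1$ this immediately gives $\phi \in \Lip^\alpha$. When $\alpha > 1$, I would identify the coefficient of $X_i$ in $\hat\phi(x)$ as a candidate for $\d_i\phi(x)$: write $\hat\phi_i(x) = \scal{X_i^*, \hat\phi(x)}$ for the appropriate dual pairing extracting the $T_{e_i}$-component. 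One shows first that $\d_i\phi$ exists and equals $\hat\phi_i$ by using the $m=0$ and $m = 1$ relations together (the $m=0$ relation controls the zeroth order behaviour, and subtracting off the contribution of $\hat\phi_i$ shows the difference quotient converges). Then one checks that the ``derivative lift'' $\widehat{\d_i\phi}(x)$, obtained by differentiating the coefficient functions of $\hat\phi$ appropriately (equivalently, projecting $\hat\phi$ onto the sector not involving $X_i$ and shifting degrees), satisfies the analogue of \eref{e:charactCalpha} with $\alpha$ replaced by $\alpha - 1$; the inductive hypothesis then yields $\d_i\phi \in \Lip^{\alpha-1}$ for each $i$, hence $\phi \in \Lip^\alpha$.

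The main obstacle I anticipate is bookkeeping in the inductive step of the ``if'' direction: one must verify that the components of $\hat\phi$ are mutually consistent — i.e. that the coefficient of $X^k$ really does behave like $\tfrac{1}{k!}D^k\phi$ and not like some unrelated function — and that this consistency propagates when passing to $\d_i\phi$. This requires combining the estimates \eref{e:charactCalpha} for several values of $m$ simultaneously, using the lower-triangular structure of $\Gamma_h$ with respect to the grading (guaranteed by \eref{e:coundGroup}), rather than just the single relation for $m=0$. The analytic content of each individual step is routine Taylor estimation; the care is entirely in organising the induction and keeping track of which polynomial coefficient is being controlled at which stage. Everything else — the explicit form of $\Gamma_h$, the grading, the multiindex conventions — is already in place from the construction of $\TT_{d,\s}$ above.
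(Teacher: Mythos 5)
Your proposal is correct and follows essentially the same route as the paper: the ``only if'' direction is the classical Taylor remainder estimate applied to $D^m\phi$, and the ``if'' direction is an induction on the recursive characterisation of $\CC^\alpha$, where your ``derivative lift'' is exactly the paper's abstract differentiation operator $\CD_i$ (defined by $\CD_i X_j = \delta_{ij}\one$ and Leibniz), whose commutation with $\Gamma_h$ gives $\CD_i\hat\phi\in\CD^{\alpha-1}$. The bookkeeping you flag is precisely what this commutation property resolves, so there is no gap.
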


\begin{proof}
For $\alpha \in (0,1]$, \eref{e:charactCalpha} is just a rewriting of the definition of $\Lip^{\alpha}$.
For the general case, denote by $\CD^\alpha$ the space of $T$-valued functions such that \eref{e:charactCalpha}
holds. Denote furthermore by $\CD_i\colon T \to T$ the linear map defined by 
$\CD_i X_j = \delta_{ij} \one$ and extended to higher powers of $X$ by the Leibniz rule.
For $\hat \phi \in \CD^\alpha$ with $\alpha > 1$, we then have that:
\begin{mylist}
\item The bound \eref{e:charactCalpha} for $m = 0$ implies that $\phi = \scal{\one,\hat \phi}$ 
is differentiable at $x$ with $i$th directional derivative given by $\d_i \phi(x) = \scal{\one,\CD_i \hat \phi(x)}$. 
\item The case $m = 1$ implies that the derivative $\d_i \phi$ is itself continuous. 
\item Since the operators $\CD_i$ commute with $\Gamma_h$ for every $h$, one has $\CD_i \phi \in \CD^{\alpha-1}$ for every $i \in \{1,\ldots,d\}$.
\end{mylist}
The claim then follows at once from the fact that this is precisely the recursive characterisation
of the spaces $\CC^\alpha$.
\end{proof}

This now provides a very natural generalisation of H\"older spaces of arbitrary order
to non-Euclidean scalings.
Indeed, to a scaling $\s$ of $\R^d$, we can naturally associate the metric $d_\s$ on $\R^d$ given by
\begin{equ}[e:defds]
d_\s(x,y) \eqdef \sum_{i=1}^d |x_i - y_i|^{1/\s_i}\;.
\end{equ}
We will also use in the sequel the notation $|\s| = \s_1+\ldots+\s_d$, 
which plays the role of a dimension. Indeed, with respect to the metric $d_\s$, the unit ball in $\R^d$ is
easily seen to have Hausdorff dimension $|\s|$ rather than $d$.
Even though the right hand side of \eref{e:defds} does not define a norm (it is not 
$1$-homogeneous, at least not in the usual sense), we will usually use the notation
$d_\s(x,y) = \|x-y\|_\s$.

\begin{remark}\label{rem:normSmooth}
It may occasionally be more convenient to use a metric with the same scaling
properties as $d_\s$ which is smooth away from the origin. In this case, one can for example
take $p = 2\,\mathop{\mathrm{lcm}}(\s_1,\ldots,\s_d)$ and set
\begin{equ}
\tilde d_\s(x,y) \eqdef \Bigl(\sum_{i=1}^d |x_i - y_i|^{p/\s_i}\Bigr)^{1/p}\;.
\end{equ}
It is easy to see that $\tilde d_\s$ and $d_\s$ are equivalent in the sense that they are bounded by fixed 
multiples of each other. In the Euclidean setting, $d_\s$ would be the $\ell^1$ distance, 
while $\tilde d_\s$ would be the $\ell^2$ distance.
\end{remark}

With this notation at hand, and in view of Lemma~\ref{lem:Lipschitz},
the following definition is very natural:

\begin{definition}\label{def:Calphas}
Given a scaling $\s$ on $\R^d$ and $\alpha > 0$, we say that a function 
$\phi \colon \R^d \to \R$ is of class $\Lip^{\alpha}_\s$ if there exists a function $\hat \phi \colon \R^d \to T_\alpha^-$
with $\scal{\one, \hat \phi(x)} = \phi(x)$ for every $x$ and
such that, for every compact set $\K \subset \R^d$, one has
\begin{equ}[e:boundAlpha]
\|\hat \phi(x+h) - \Gamma_h \hat \phi(x)\|_m  \lesssim \|h\|_\s^{\alpha-m}\;,
\end{equ}
uniformly over $m < \alpha$, $\|h\|_\s \le 1$ and $x\in \K$.
\end{definition}

\begin{remark}
One can verify that the map $x \mapsto \|x\|_\s^\alpha$ is in $\CC^\alpha_\s$ for 
$\alpha \in (0,1]$. Another well-known example \cite{MR876085,SPDENotes} is that the solutions
to the additive stochastic heat equation on the real line belong to
$\CC^\alpha_\s(\R^2)$ for every $\alpha < {1\over 2}$, provided that
the scaling $\s$ is the parabolic scaling $\s = (2,1)$. (Here, the first component is
the time direction.) 
\end{remark}

\begin{remark}\label{rem:uniquehatphi}
The choice of $\hat \phi$ in Definition~\ref{def:Calphas} is essentially unique in the sense that any two choices $\hat \phi_1$ and $\hat \phi_2$
satisfy $\CQ_\ell \hat \phi_1(x) = \CQ_\ell \hat \phi_2(x)$ for every $x$ and every $\ell < \alpha$.
(Recall that $\CQ_\ell$ is the projection onto $T_\ell$.)
This is because, similarly to the proof of Lemma~\ref{lem:Lipschitz}, one can show that the components
in $T_\ell$ have to coincide with the corresponding directional derivatives of $\phi$ at $x$, and that, 
if \eref{e:boundAlpha} is satisfied locally uniformly in $x$, these
directional derivatives exist and are continuous.
\end{remark}

\subsection{Models for regularity structures}
\label{sec:realisation}

In this section, we introduce the key notion of a ``model''
for a regularity structure, which was already alluded to several times in the introduction. 
Essentially, a model associates to each ``abstract'' element in $T$ a ``concrete'' 
function or distribution on $\R^d$. In the above example, such a model was given by
an interplay of the maps $\Pi_x$ that would associate to $a \in T$ a polynomial on $\R^d$ centred around $x$, 
and the maps $\Gamma_h$ that allow to translate the polynomial in 
question to any other point in $\R^d$. 

This is the structure that we are now going to generalise and this is where our theory departs
significantly from the theory of jets, as our model will typically contain elements that are
extremely irregular.
If we take again the case of the polynomial regularity structures as our 
guiding principle, we note that the index $\alpha \in A$ describes the speed at which functions of the form 
$\Pi_x a$ with $a \in T_\alpha$ vanish near $x$. The action of $\Gamma$ is then necessary 
in order to ensure
that this behaviour is the same at every point. In general, elements in the image of $\Pi_x$ 
are distributions and
not functions and the index $\alpha$ can be negative, so how do we describe the behaviour near a point?

One natural answer to this question is to test the distribution in question against approximations to a delta function and
to quantify this behaviour. 
Given a scaling $\s$, we thus define scaling maps
\begin{equ}[e:scalingMap]
\CS_\s^\delta \colon \R^d \to \R^d\;,\quad \CS_\s^\delta(x_1,\ldots,x_d) = (\delta^{-\s_1}x_1,\ldots,\delta^{-\s_d}x_d)\;.
\end{equ}
These scaling maps yield in a natural way a family of isometries on $L^1(\R^d)$ by
\begin{equ}[e:scaling]
\bigl(\CS_{\s,x}^\delta \phi\bigr)(y) \eqdef \delta^{-{|\s|}} \phi\bigl(\CS_\s^\delta (y-x)\bigr)\;.
\end{equ}
They are also the natural scalings under which $\|\cdot\|_\s$ behaves like a norm
in the sense that $\|\CS_\s^\delta x\|_\s = \delta^{-1} \|x\|_\s$.
Note now that if $P$ is a monomial of scaled degree $\ell \ge 0$ over $\R^d$ 
(where the scaled degree simply means that the monomial $x_i$ has degree $s_i$ rather than $1$)
and $\phi\colon \R^d \to \R$ is a compactly supported
function, then we have the identity
\begin{equs}
\int P(y-x) \bigl(\CS_{\s,x}^\delta \phi\bigr)(y) \,dy &= \int P(\delta^{\s_1}z_1,\ldots,\delta^{\s_d}z_d) \phi(z) \,dz \\
 &= \delta^{\ell}\int P(z) \phi(z) \,dz\;.\label{e:testPoly}
\end{equs}
Following the philosophy of taking the case of polynomials / Taylor expansions as our source of inspiration,
this simple calculation motivates the following definition.

\begin{definition}\label{def:model}
A \textit{model} for a given regularity structure $\TT = (A,T,G)$ on $\R^d$ 
with scaling $\s$ consists of the following elements:
\begin{mylist}
\item A map $\Gamma \colon \R^d\times \R^d \to G$ such that $\Gamma_{xx} = 1$, the identity operator, and 
such that $\Gamma_{xy}\, \Gamma_{yz} = \Gamma_{xz}$ for every $x,y,z$ in $\R^d$.
\item A collection of continuous linear maps $\Pi_x \colon T \to \CS'(\R^d)$ such that $\Pi_y = \Pi_x \circ \Gamma_{xy}$
for every $x,y \in \R^d$.
\end{mylist}
Furthermore, for every $\gamma > 0$ and every compact set $\K \subset \R^d$, there exists
a constant $C_{\gamma,\K}$ such that the bounds
\begin{equ}[e:boundPi]
|(\Pi_x a)\bigl(\CS_{\s,x}^\delta \phi\bigr)| \le C_{\gamma,\K} \|a\| \delta^{\ell}\;, \quad
\|\Gamma_{xy} a\|_m \le C_{\gamma,\K} \|a\| \, \|x-y\|_\s^{\ell-m}\;, 
\end{equ}
hold uniformly over all $x,y \in \K$, all $\delta \in (0,1]$, all smooth test functions
$\phi \colon B_\s(0,1) \to \R$ with $\|\phi\|_{\CC^{ r}}\le 1$, all $\ell \in A$ with $\ell < \gamma$, 
all $m < \ell$, and all $a \in T_\ell$.
Here, $r$ is the smallest integer such that $\ell > -r$ for every $\ell \in A$.
(Note that $\|\Gamma_{xy} a\|_m = \|\Gamma_{xy} a - a\|_m$ since $a \in T_\ell$ and $m < \ell$.)
\end{definition}

\begin{remark}
We will also sometimes call the pair $(\Pi,\Gamma)$ a \textit{model} for the regularity structure $\TT$.
\end{remark}

The following figure illustrates a typical example of model for 
a simple regularity structure
where $A = \{0,{1\over 2}, 1, {3\over 2}\}$ and each $T_\alpha$ is one-dimensional:
\begin{equ}
\mhpastefig[2/3]{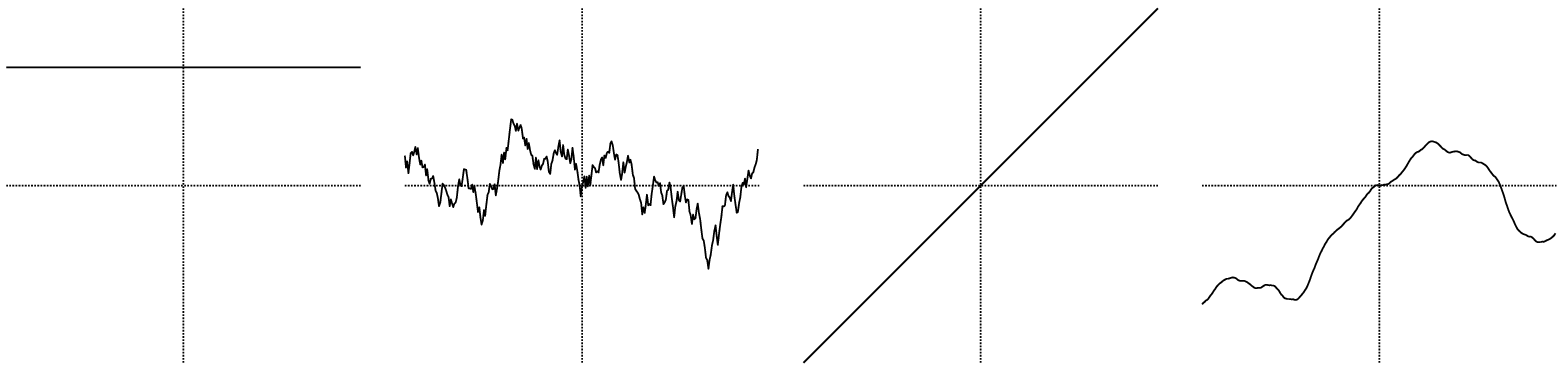}
\end{equ}
Write  $\tau_\alpha$ for the unit vector in $T_\alpha$. 
Given a ${1\over2}$-H\"older continuous function $f \colon \R \to \R$, the 
above picture has
\begin{equ}
\bigl(\Pi_x \tau_{1\over 2}\bigr)(y) = f(y) - f(x)\;,\quad
\bigl(\Pi_x \tau_{3\over 2}\bigr)(y) = \int_x^y \bigl(f(z) - f(x)\bigr)\,dz\;,
\end{equ}
while $\Pi_x \tau_0$ and $\Pi_x \tau_1$ are given by the canonical one-dimensional
model of polynomials.

A typical action of $\Gamma_{xy}$ is illustrated below:
\begin{equ}
\mhpastefig[2/3]{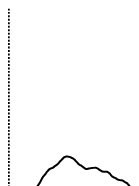}\qquad{\Gamma_{xy}\atop\Rightarrow}\qquad\mhpastefig[2/3]{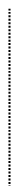}
\end{equ}
Here, the left figure
shows $\Pi_x \tau_{3\over 2}$, while the right figure shows $\Pi_y \tau_{3\over 2} = \Pi_x \Gamma_{xy} \tau_{3\over 2}$.
In this particular example, this 
is obtained from $\Pi_x \tau_{3\over 2}$ by adding a suitable affine function, i.e.\ 
a linear combination of $\Pi_x \tau_{0}$ and $\Pi_x \tau_{1}$.

\begin{remark}
Given a sector $V \subset T$, it will on
occasion be natural to consider models for $\TT_V$ rather than all of $\TT$.
In such a situation, we will say that $(\Pi, \Gamma)$ is a model for $\TT$
on $V$, or just a model for $V$.
\end{remark}

\begin{remark}\label{rem:normPiGamma}
Given a map $(x,y) \mapsto \Gamma_{xy}$ as above, the set of maps $x \mapsto \Pi_x$ as
above is actually a linear space. We can endow it with the natural system of seminorms 
$\|\Pi\|_{\gamma;\K}$
given by the smallest constant $C_{\gamma,\K}$ such that the first bound in \eref{e:boundPi} holds.
Similarly, we denote by $\|\Gamma\|_{\gamma;\K}$ the smallest constant $C_{\gamma,\K}$ such that 
the second bound in \eref{e:boundPi} holds.
Occasionally, it will be useful to have a notation for the combined bound, and we will then write
\begin{equ}[e:normModel]
\$Z\$_{\gamma;\K} = \|\Pi\|_{\gamma;\K} + \|\Gamma\|_{\gamma;\K}\;,
\end{equ}
where we set $Z = (\Pi,\Gamma)$.
\end{remark}

\begin{remark}\label{rem:altBound}
The first bound in \eref{e:boundPi} could alternatively have been formulated as 
$|(\Pi_x a)\bigl(\phi\bigr)| \le C \|a\| \delta^{\ell}$ for all smooth test functions
$\phi$ with support in a ball of radius $\delta$ around $x$ (in the $d_\s$-distance),
which are bounded by $\delta^{-|\s|}$ and such that their derivatives satisfy $\sup_x |D^\ell \phi(x)| \le \delta^{-|\s|-|\ell|_\s}$ for all 
multiindices $\ell$ of (usual) size less or equal to $r$.
\end{remark}

One important notion is that of an \textit{extension} of a model $(\Pi,\Gamma)$:

\begin{definition}\label{def:extension}
Let $\TT \subset \hat \TT$ be two regularity structures and let $(\Pi,\Gamma)$ be a model
for $\TT$. A model $(\hat \Pi,\hat \Gamma)$ is said to extend $(\Pi,\Gamma)$ for $\hat \TT$
if one has
\begin{equ}
\iota \Gamma_{xy} a = \hat \Gamma_{xy} \iota a\;,\qquad \Pi_x a = \hat \Pi_x \iota a\;,
\end{equ}
for every $a \in T$ and every $x,y$ in $\R^d$. Here, $\iota$ is as in Section~\ref{sec:regBasic}.
\end{definition}

We henceforth
denote by $\MM_\TT$ the set of all models of $\TT$,
which is a slight abuse of notation since one should
also fix the dimension $d$ and the scaling $\s$, but these are usually very clear from the context.
This space is endowed with a
natural system of pseudo-metrics by setting, 
for any two models $Z = (\Pi,\Gamma)$ and $\bar Z = (\bar \Pi, \bar \Gamma)$,
\begin{equs}[e:defDistModel]
\$Z ; \bar Z\$_{\gamma;\K} &\eqdef  \|\Pi - \bar \Pi\|_{\gamma;\K} + \|\Gamma - \bar \Gamma\|_{\gamma;\K}\;.
\end{equs}
While $\$\cdot ; \cdot \$_{\gamma;\K}$ defined in this way looks very much like
a seminorm, the space $\MM_\TT$ is \textit{not} a linear space due to the two nonlinear constraints
\begin{equ}[e:algebraic]
\Gamma_{xy}\Gamma_{yz} = \Gamma_{xz}\;,\quad\text{and}\quad \Pi_y = \Pi_x \circ \Gamma_{xy}\;,
\end{equ}
and due to the fact that $G$ is not necessarily a linear set of operators.
While $\MM_\TT$ is not linear,
it is however an algebraic variety in some infinite-dimensional Banach space.

\begin{remark}\label{rem:polynomStructures}
In most cases considered below, our regularity structure contains $\TT_{d,\s}$ for some
dimension $d$ and scaling $\s$.
In such a case, we denote by $\bar T \subset T$ the image of the model space of $\TT$ in $T$ under the 
inclusion map and we only consider models $(\Pi,\Gamma)$ that extend (in the sense of
Definition~\ref{def:extension}) the polynomial model
on $\bar T$. It is straightforward to verify that the polynomial model does indeed verify
the bounds and algebraic relations of Definition~\ref{def:model}, provided that we make
the identification $\Gamma_{xy} \sim \Gamma_h$ with $h = x-y$.
\end{remark}

\begin{remark}\label{rem:boundPi}
If, for every $a \in T_\ell$, $\Pi_x a$ happens to be a function such that 
$|\Pi_x a(y)| \le C \|x-y\|_\s^\ell$ for $y$ close to $x$, then the first bound in \eref{e:boundPi} holds for
$\ell \ge 0$. Informally, it thus states that $\Pi_x a$ behaves ``as if'' it were $\ell$-H\"older continuous
at $x$. The formulation given here has the very significant advantage that it also makes sense for negative values of $\ell$.
\end{remark}

\begin{remark}
Given a linear map $\bar \Pi \colon T \to \CS'(D)$, and a function $F \colon D \to G$, 
we can always set
\begin{equ}[e:genericFPi]
\Gamma_{xy} = F(x) \cdot F(y)^{-1}\;,\quad \Pi_x = \bar \Pi \circ F(x)^{-1}\;.
\end{equ}
Conversely, given a model $(\Pi,\Gamma)$ as above and a reference point $o$, we could set
\begin{equ}[e:refPoint]
F(x) = \Gamma_{xo}\;,\qquad \bar \Pi = \Pi_{o}\;,
\end{equ}
and $\Gamma$ and $\Pi$ could then be recovered from $F$ and $\bar \Pi$ by \eref{e:genericFPi}. 
The reason why we choose to keep our seemingly redundant formulation is that 
the definition \eref{e:defDistModel} and the bounds \eref{e:boundPi} are more natural in this formulation.
We will see in Section~\ref{sec:realAlg} below 
that in all the cases mentioned in the introduction, there are natural
maps $\bar \Pi$ and $F$ such that $(\Pi,\Gamma)$ are given by \eref{e:genericFPi}.
These are however \textit{not} of the form \eref{e:refPoint} for any reference point. 
\end{remark}

\begin{remark}
It follows from the definition
\eref{e:defLog} that the second bound in \eref{e:boundPi} is equivalent to the bound
\begin{equ}[e:boundGamma]
\|\log \Gamma_{xy} a\|_m \lesssim \|a\| \|x-y\|_\s^{\ell-m}\;,
\end{equ}
for all $a \in T_\ell$. Similarly, one can consider instead of \eref{e:defDistModel} the equivalent distance
obtained by replacing $\Gamma_{xy}$ by $\log\Gamma_{xy}$ and similarly for $\bar \Gamma_{xy}$.
\end{remark}

\begin{remark}
The reason for separating the notion of a regularity structure from the notion of a model is that, 
in the type of applications that we have in mind, the regularity structure will be fixed once and
for all. The model however will typically be random and there will be a different model
for the regularity structure for every realisation of the driving noise.
\end{remark}

\subsection{Automorphisms of regularity structures}
\label{sec:automorphisms}

There is a natural notion of ``automorphism'' of a given regularity 
structure. 
%
For this, we first define the set $L^+_0$ of linear maps $L\colon T\to T$ 
such that, for every $\alpha \in A$ there exists $\gamma \in A$ such that $L a \in \bigoplus_{\alpha<\beta \le \gamma}T_\beta$ for every  $a \in T_\alpha$.
We furthermore denote by $L^+_1$ the set of 
all linear operators $Q$ of the form
\begin{equ}
Qa - a = La\;,\qquad L \in L^+_0\;.
\end{equ}
Finally, we denote by 
$L^0$ the set of invertible ``block-diagonal'' operators $D$ such that $DT_\alpha \subset T_\alpha$ for every $\alpha\in A$.

With these notations at hand, denote by $L^+$ the set of all operators of the form
\begin{equ}
M = D\circ Q\;,\qquad D \in L^0\;,\quad Q \in L_1^+\;.
\end{equ}
This factorisation is unique since it suffices to define $D = \sum_{\alpha \in A} \CQ_\alpha M \CQ_\alpha$ and to
set $Q = D^{-1}M$, which yields an element of $L_1^+$. Note also that conjugation by block-diagonal
operators preserves $L_1^+$. Furthermore, elements in $L_1^+$ can be inverted by using the identity
\begin{equ}[e:inverse]
(1-L)^{-1} = 1+\sum_{n \ge 1} L^n\;,
\end{equ}
although this might map some elements of $T_\alpha$ into an infinite series.
With all of these notations at hand, we then give the following definition:

\begin{definition}
Given a regularity structure $\TT = (A,T,G)$, its group of automorphisms $\Aut \TT$ is given by
\begin{equ}
\Aut \TT = \{M \in L^+\,:\, M^{-1} \Gamma M \in G \quad\forall \Gamma \in G\}\;.
\end{equ}
\end{definition}

\begin{remark}
This is really an abuse of terminology since it might happen that $\Aut \TT$ contains some
elements in whose inverse maps finite series into infinite series and therefore does not belong to
$L^+$. In most cases of interest however, the index set $A$ is finite, in which case $\Aut \TT$ is always
an actual group.
\end{remark}

The reason why $\Aut \TT$ is important is that its elements induce an action on the
models for $\TT$ by
\begin{equ}
R_M\colon (\Pi,\Gamma)\mapsto (\bar \Pi, \bar \Gamma)\;,\qquad \bar \Pi_x = \Pi_x M\;,\quad \bar \Gamma_{xy} = M^{-1}\Gamma_{xy} M\;.
\end{equ}
One then has:

\begin{proposition}
For every $M \in \Aut \TT$, $R_M$ is a continuous map from $\MM_\TT$ into itself.
\end{proposition}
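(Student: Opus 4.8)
The plan is to check, in order: (i) that $(\bar\Pi,\bar\Gamma)$ satisfies the algebraic constraints \eref{e:algebraic} and takes values in $G$; (ii) that it satisfies the analytic bounds \eref{e:boundPi}, so that $R_M Z \in \MM_\TT$; and (iii) that $R_M$ is continuous for the pseudo-metrics $\$\cdot\,;\cdot\,\$_{\gamma;\K}$. Part (i) is a direct computation: $\bar\Gamma_{xx} = M^{-1}\Gamma_{xx}M = 1$, and $\bar\Gamma_{xy}\bar\Gamma_{yz} = M^{-1}\Gamma_{xy}\Gamma_{yz}M = M^{-1}\Gamma_{xz}M = \bar\Gamma_{xz}$, while $\bar\Pi_y = \Pi_y M = \Pi_x\Gamma_{xy}M = (\Pi_x M)(M^{-1}\Gamma_{xy}M) = \bar\Pi_x\bar\Gamma_{xy}$. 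The defining property of $\Aut\TT$ enters precisely here, to ensure $\bar\Gamma_{xy} = M^{-1}\Gamma_{xy}M \in G$ so that $\bar\Gamma$ is genuinely $G$-valued.

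For (ii) the key is to track how $M$ and $M^{-1}$ interact with the grading. Using the factorisation $M = D\circ Q$ with $D \in L^0$ block-diagonal and $Q = 1 + L$, $L \in L^+_0$, one sees that $M$ maps $T_\ell$ into a finite sum $\bigoplus_{\ell \le \beta \le \gamma_\ell} T_\beta$, each component bounded by a constant (depending only on $M$) times $\|a\|$; since $A$ is bounded below and locally finite, for any $\gamma > 0$ there is $\gamma' = \gamma'(\gamma,M)$ with $\gamma_\ell < \gamma'$ for all $\ell < \gamma$. Applying the first bound of \eref{e:boundPi} componentwise, together with $\delta \le 1$ and $\beta \ge \ell$, gives $|(\bar\Pi_x a)(\CS_{\s,x}^\delta\phi)| \ls \|\Pi\|_{\gamma';\K}\|a\|\delta^\ell$, i.e.\ $\|\bar\Pi\|_{\gamma;\K} \ls \|\Pi\|_{\gamma';\K}$. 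For $\bar\Gamma$, write $\bar\Gamma_{xy}a = M^{-1}\Gamma_{xy}Ma$ and note that $M^{-1} = Q^{-1}D^{-1}$ \emph{raises} homogeneity, so $\CQ_m M^{-1}$ annihilates everything of homogeneity $> m$. Hence extracting $\CQ_m(\bar\Gamma_{xy}a)$ for $m < \ell$ picks up only those contributions of $\Gamma_{xy}$ applied to the components $(Ma)_\beta$ that \emph{strictly} lower homogeneity (the homogeneity-preserving part $(Ma)_\beta$ itself lies in $T_\beta$ with $\beta \ge \ell > m$ and is killed); each such contribution carries a factor $\|x-y\|_\s^{\beta-m'}$ with $\beta \ge \ell$ and $m' \le m$, hence is $\ls \|\Gamma\|_{\gamma';\K}\|a\|\|x-y\|_\s^{\ell-m}$ for $\|x-y\|_\s \le 1$. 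Summing the finitely many contributions, and passing from $\|x-y\|_\s \le 1$ to arbitrary $x,y \in \K$ by the standard telescoping argument along the cocycle relation $\bar\Gamma_{xy} = \bar\Gamma_{xz}\bar\Gamma_{zy}$, yields $\|\bar\Gamma\|_{\gamma;\K} \ls \|\Gamma\|_{\gamma';\K}$.

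For (iii), the decisive observation is that $(\Pi,\Gamma)\mapsto(\bar\Pi,\bar\Gamma)$, being $\bar\Pi_x = \Pi_x M$ and $\bar\Gamma_{xy} = M^{-1}\Gamma_{xy}M$, is the restriction to $\MM_\TT$ of a \emph{linear} map on the ambient space of pairs $(\Pi,\Gamma)$. None of the bookkeeping in (ii) used that $(\Pi,\Gamma)$ was an actual model, so applying those same estimates to the differences $\Pi - \bar\Pi$ and $\Gamma - \bar\Gamma$ of two models $Z,\bar Z \in \MM_\TT$ gives $\$R_M Z\,;R_M\bar Z\$_{\gamma;\K} \ls \$Z\,;\bar Z\$_{\gamma';\K}$ for each $\gamma,\K$, which is the claimed continuity (indeed local Lipschitz continuity). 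I expect the only real work to be the grading analysis in step (ii): verifying that $M^{-1}$ raises homogeneity and that this forces every surviving term of $\bar\Gamma_{xy}a$ to carry the correct power of $\|x-y\|_\s$; the telescoping extension to non-small $x,y$ is routine.
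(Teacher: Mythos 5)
Your proof is correct and follows essentially the same route as the paper: the algebraic identities are immediate, the bound on $\bar\Pi$ uses that $M$ maps $T_\ell$ into finitely many components of homogeneity $\ge \ell$, and the bound on $\bar\Gamma$ uses that $M^{-1}$ raises homogeneity so that only the homogeneity-lowering part of $\Gamma_{xy}$ survives, each surviving term carrying at least the power $\|x-y\|_\s^{\ell-m}$. Your explicit observation that $R_M$ is the restriction of a linear map, so the same estimates applied to differences yield (local Lipschitz) continuity, is a detail the paper leaves implicit but is entirely consistent with its argument.
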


\begin{proof}
It is clear that the algebraic identities \eref{e:algebraic} are satisfied, so we only need to check that
the analytical bounds of Definition~\ref{def:model} hold for $(\bar \Pi, \bar \Gamma)$.

For $\Pi$, this is straightforward since, for $a \in T_\alpha$ and any $M \in L^+$, one has
\begin{equs}
\bar \Pi_x a(\psi_x^\lambda) &= \Pi_x M a(\psi_x^\lambda) =  \sum_{\beta \in A\cap[\alpha,\gamma]} \Pi_x \CQ_\beta Ma(\psi_x^\lambda) \\
&\le C \|a\|_\alpha \sum_{\beta \in A\cap[\alpha,\gamma]} \lambda^\beta \le \tilde C \lambda^\alpha \|a\|_\alpha\;,
\end{equs}
where, for a given test function $\psi$ we use the shorthand $\psi_x^\lambda = \CS_{\s,x}^\lambda \psi$
and where $\tilde C$ is a finite constant depending only on the norms of the components of $M$
and on the value $\gamma$ appearing in the definition of $L^+$.

For $\Gamma$, we similarly write, for $a\in T_\alpha$ and $\beta < \alpha$,
\begin{equs}
\|(\bar \Gamma_{xy}-1) a\|_\beta &= \|M^{-1} (\Gamma_{xy}-1) M a\|_\beta
\le C\sum_{\zeta \le \beta}\|(\Gamma_{xy}-1) M a\|_\zeta \\
&\le C \sum_{\zeta \le \beta}\sum_{\xi \ge \zeta} \|M a\|_\xi \|x-y\|_\s^{\xi-\zeta}
\le C \sum_{\zeta \le \beta}\sum_{\xi \ge (\zeta\vee \alpha)} \|a\|_\alpha \|x-y\|_\s^{\xi-\zeta}\;.
\end{equs}
Since one has on the one hand $\zeta \le \beta$ and on the other hand $\xi \ge \alpha$,
all terms appearing in this sum involve a power of $\|x-y\|_\s$ that is at least equal to $\alpha - \beta$.
Furthermore, the sum is finite by the definition of $L^+$, so that the claim follows at once.
\end{proof}

\section{Modelled distributions}
\label{sec:model}

Given a regularity structure $\TT$, as well as a model $(\Pi,\Gamma)$, 
we are now in a position to describe 
a class of distributions that locally ``look like'' the distributions in the model. 
Inspired by Definition~\ref{def:Calphas}, we define the space $\CD^\gamma$ 
(which depends in general not only on the regularity structure, but also on the model) in the following way.

\begin{definition}\label{def:Dgamma}
Fix a regularity structure $\TT$ and a model $(\Pi,\Gamma)$.
Then, for any $\gamma \in \R$,  the space $\CD^\gamma$ consists of all $T_\gamma^-$-valued functions 
$f$ such that, for every compact set $\K \subset \R^d$, one has
\begin{equ}[e:boundDgamma]
\$f\$_{\gamma;\K} = \sup_{x\in \K} \sup_{\beta < \gamma} {\|f(x)\|_\beta}  + \sup_{(x,y) \in \K \atop \|x-y\|_\s \le 1} \sup_{\beta < \gamma} {\|f(x) - \Gamma_{xy} f(y)\|_\beta \over \|x-y\|_\s^{\gamma - \beta} } < \infty\;.
\end{equ}
Here, the supremum runs only over elements $\beta \in A$. 
We call elements of $\CD^\gamma$ \textit{modelled distributions} for reasons that will
become clear in Theorem~\ref{theo:reconstruction} below.
\end{definition}

\begin{remark}\label{rem:discard}
One could alternatively think of 
$\CD^\gamma$ as consisting of equivalence classes of functions
where $f \sim g$ if $\CQ_\alpha f(x) = \CQ_\alpha g(x)$ for every $x\in \R^d$ and every $\alpha < \gamma$. 
However, any such equivalence class has one natural distinguished representative, which is the function
$f$ such that $\CQ_\alpha f(x) = 0$ for every $\alpha \ge\gamma$, and this is the representative
used in \eref{e:boundDgamma}. (In general, the norm $\$\cdot\$_{\gamma;\K}$ would depend
on the choice of representative because $\Gamma_{xy} \tau$ can have components in $T_\gamma^-$ even if
$\tau$ itself doesn't.) In the sequel, if we state that
$f \in \CD^\gamma$ for some $f$ which does not necessarily take values 
in $T_\gamma^-$, it is this representative
that we are talking about. This also allows to identify $\CD^{\bar\gamma}$ as a subspace of $\CD^{\gamma}$
for any $\bar \gamma >  \gamma$. (Verifying that this is indeed the case is a useful exercise!)
\end{remark}

\begin{remark}
The choice of notation $\CD^\gamma$ is intentionally close to the notation $\CC^\gamma$ for the space
of $\gamma$-H\"older continuous functions since, in the case of the ``canonical'' regularity structures built 
from polynomials, the two spaces essentially agree,
as we saw in Section~\ref{sec:canonical}.
\end{remark}

\begin{remark}
The spaces $\CD^\gamma$, as well as the norms $\$\cdot\$_{\gamma;\K}$ 
do depend on the choice of $\Gamma$, but not on the choice of $\Pi$. 
However, Definition~\ref{def:model} strongly interweaves 
$\Gamma$ and $\Pi$, so that a given choice of $\Gamma$ typically restricts the choice of $\Pi$ very severely.
As we will see in Proposition~\ref{prop:extension} below, there are actually situations in 
which the choice of $\Gamma$ completely determines $\Pi$.
In order to compare elements of spaces $\CD^\gamma$ corresponding to different choices of $\Gamma$,
say $f \in \CD^\gamma(\Gamma)$ and $\bar f \in \CD^\gamma(\bar \Gamma)$,
it will be convenient to introduce the norm
\begin{equ}
\|f-\bar f\|_{\gamma;\K} = \sup_{x\in \K} \sup_{\beta < \gamma} {\|f(x)-\bar f(x)\|_\beta} \;,
\end{equ} 
which is independent of the choice of $\Gamma$.
Measuring the distance between elements of $\CD^\gamma$ in the norm $\|\cdot\|_{\gamma;\K}$
will be sufficient to obtain some convergence properties, as 
long as this is supplemented by uniform
bounds in $\$\cdot\$_{\gamma;\K}$. 
\end{remark}

\begin{remark}\label{rem:notationReg}
It will often be advantageous to consider elements of $\CD^\gamma$ that only take values
in a given sector $V$ of $T$. In this case, we use the notation $\CD^\gamma(V)$ instead.
In cases where $V$ is of regularity $\alpha$ for some $\alpha \ge \min  A$, we will also occasionally 
use instead the notation
$\CD_\alpha^\gamma$ to emphasise this additional regularity. 
Occasionally, we will also write $\CD^\gamma(\Gamma)$ or $\CD^\gamma(\Gamma;V)$
to emphasise the dependence of these spaces on the particular choice of $\Gamma$.
\end{remark}

\begin{remark}
A more efficient way of comparing elements $f \in \CD^\gamma(\Gamma)$ and  $\bar f\in\CD^\gamma(\bar \Gamma)$
for two different models $(\Pi,\Gamma)$ and $(\bar \Pi, \bar \Gamma)$ is to introduce the
quantity
\begin{equ}
 \$f;\bar f\$_{\gamma;\K} = \|f-\bar f\|_{\gamma;\K}  + \sup_{(x,y) \in \K \atop \|x-y\|_\s \le 1} \sup_{\beta< \gamma} {\|f(x) - \bar f(x) - \Gamma_{xy} f(y) + \bar \Gamma_{xy} \bar f(y)\|_\beta \over \|x-y\|_\s^{\gamma - \beta} } \;.
\end{equ}
Note that this quantity is \textit{not} a function of $f - \bar f$, which is the reason for the 
slightly unusual notation $\$f;\bar f\$_{\gamma;\K}$.
\end{remark}

It turns out that the spaces $\CD^\gamma$ encode a very useful notion of 
regularity. The idea is that functions $f \in \CD^\gamma$ should be interpreted as ``jets'' of
distributions that locally, around any given point $x \in \R^d$, ``look like'' the model distribution 
$\Pi_x f(x) \in \CS'$.
The results of this section justify this point of view by showing that it is indeed possible to ``reconstruct''  
all elements of $\CD^\gamma$ as distributions in $\R^d$. Furthermore, the corresponding reconstruction
map $\CR$ is continuous as a function of both the element in $f \in \CD^\gamma$ and the model 
$(\Pi,\Gamma)$ realising the regularity structure under consideration. 

To this end, we further extend the definition of the H\"older spaces $\Lip_\s^{\alpha}$ 
to include exponents $\alpha < 0$,
consisting of distributions that are suitable for our purpose. Informally speaking, elements of $\Lip_\s^\alpha$ 
have scaling properties akin to $\|x-y\|_\s^\alpha$ when tested against a test function localised around some
$x \in \R^d$. In the following definition, we write $\CC^r_0$ for the space of
compactly supported $\CC^r$ functions.
For further properties of the spaces $\Lip_\s^\alpha$, see Section~\ref{sec:spaceC} below. We set:

\begin{definition}\label{def:C-alpha}
Let $\alpha < 0$ and let $r = -\lfloor \alpha\rfloor$. 
We say that $\xi \in \CS'$ belongs to 
$\Lip_\s^{\alpha}$ if it belongs to the dual of $\CC^r_0$ and, for every compact set $\K$,
there exists a constant $C$ such that the bound
\begin{equ}
\scal{\xi,\CS_{\s,x}^{\delta} \eta} \le C \delta^{\alpha}\;,
\end{equ}
holds for all $\eta\in \CC^r$ with $\|\eta\|_{\CC^r} \le 1$ and $\supp \eta \subset B_\s(0,1)$, all $\delta \le 1$,
and all $x \in \K$.
Here, $B_\s(0,1)$ denotes the ball of radius $1$ in the distance $d_\s$, centred at the origin.
\end{definition}

From now on, we will denote by $\CB^r_{\s,0}$ the set of all test functions $\eta$ as in 
Definition~\ref{def:C-alpha}.
For $\xi \in \CC_\s^\alpha$ and $\K$ a compact set, we will henceforth denote by $\|\xi\|_{\alpha;\K}$
the seminorm given by
\begin{equ}[e:seminormAlpha]
\|\xi\|_{\alpha;\K} \eqdef \sup_{x\in \K} \sup_{\eta\in \CB^r_{\s,0}} \sup_{\delta \le 1} \delta^{-\alpha} |\scal{\xi,\CS_{\s,x}^{\delta} \eta}|\;.
\end{equ}
We also write $\|\cdot\|_\alpha$ for the same expression with $\K=\R^d$.

\begin{remark}
The space $\CC^\alpha_\s$ is essentially the Besov space $B^\alpha_{\infty,\infty}$ (see e.g.~\cite{MR1228209}), 
with the slight difference that our definition is local rather than global and, more importantly, that it allows 
for non-Euclidean scalings.
\end{remark}

\begin{remark}
The seminorm \eref{e:seminormAlpha} depends of course not only on $\alpha$, but also on
the choice of scaling $\s$. This scaling will however always be clear from the context,
so we do not emphasise this in the notation. 
\end{remark}

%
%

The following ``reconstruction theorem'' is one of the main workhorses of this theory.

\begin{theorem}[Reconstruction theorem]\label{theo:reconstruction}
Let $\TT = (A,T,G)$ be a regularity structure, let $(\Pi,\Gamma)$ be a 
model for $\TT$ on $\R^d$ with scaling $\s$, let $\alpha = \min A$,
and let $r > |\alpha|$. 

Then, for every $\gamma \in \R$, there
exists a continuous linear map $\CR \colon \CD^\gamma \to  \Lip_\s^{\alpha}$ with the property that,
for every compact set $\K \subset \R^d$,
\begin{equ}[e:boundRf]
\bigl|\bigl(\CR f - \Pi_x f(x)\bigr)(\CS_{\s,x}^\delta \eta)\bigr| \lesssim \delta^{\gamma} \|\Pi\|_{\gamma; \bar \K} \$f\$_{\gamma;\bar \K} \;,
\end{equ}
uniformly over all test functions $\eta\in \CB^r_{\s,0}$, 
all $\delta \in (0,1]$, all $f \in \CD^\gamma$, and all $x \in \K$. 
If $\gamma > 0$, then the bound \eref{e:boundRf} defines $\CR f$ uniquely. 
Here, we denoted by $\bar \K$ the $1$-fattening of $\K$, and the proportionality 
constant depends 
only on $\gamma$ and the structure of $\TT$.

Furthermore, if $(\bar \Pi, \bar \Gamma)$ is a second model for $\TT$ with associated reconstruction
operator $\bar \CR$, then one has the bound
\begin{equ}[e:boundRfDiff]
\bigl|\bigl(\CR f - \bar \CR \bar f- \Pi_x f(x) + \bar \Pi_x \bar f(x)\bigr)(\CS_{\s,x}^\delta \eta)\bigr| \lesssim \delta^{\gamma} \bigl(\|\bar \Pi\|_{\gamma; \bar \K} \$f;\bar f\$_{\gamma;\bar \K} + \|\Pi-\bar \Pi\|_{\gamma; \bar \K} \$f\$_{\gamma;\bar \K}\bigr)\;,
\end{equ}
uniformly over $x$ and $\eta$ as above. Finally, for $0<\kappa<\gamma/(\gamma-\alpha)$ and for every $C>0$,
one has the bound 
\begin{equs}
\bigl|\bigl(\CR f - \bar \CR \bar f &- \Pi_x f(x) + \bar \Pi_x \bar f(x)\bigr)(\CS_{\s,x}^\delta \eta)\bigr| \label{e:boundRfDiff2} \\
&\quad \lesssim \delta^{\bar \gamma}  \bigl( \|f-\bar f\|_{\gamma;\bar \K}^\kappa  + \|\Pi-\bar \Pi\|_{\gamma; \bar \K}^\kappa
+ \|\Gamma - \bar \Gamma\|_{\gamma;\bar \K}^\kappa\bigr)\;,
\end{equs}
where we set $\bar \gamma = \gamma - \kappa(\gamma-\alpha)$, and where we assume that 
$\$f\$_{\gamma;\bar \K}$, $\|\Pi\|_{\gamma;\bar \K}$ and $\|\Gamma\|_{\gamma;\bar \K}$
are bounded by $C$, and similarly for $\bar f$, $\bar \Pi$ and $\bar \Gamma$. 
\end{theorem}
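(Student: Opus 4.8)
The plan is to construct $\CR f$ by a dyadic/wavelet-type approximation scheme and then establish the three bounds. First I would fix a sufficiently regular compactly supported function $\varphi$ generating a multiresolution analysis adapted to the scaling $\s$ (a wavelet basis with $\varphi$ of class $\CC^r$, all polynomials of scaled degree $<\gamma$ reproduced, and orthogonality between scales). Let $\varphi_x^n = \CS_{\s,x}^{2^{-n}}\varphi$ and, for $x$ on the dyadic grid $\Lambda_n$ (at scale $2^{-n}$ in the $d_\s$ metric), define the ``provisional reconstruction at scale $n$'' by
\begin{equ}
\CR_n f = \sum_{x \in \Lambda_n} \bigl(\Pi_x f(x)\bigr)(\varphi_x^n)\,\varphi_x^n\;.
\end{equ}
The key step is to show that $\CR_n f$ is Cauchy as $n \to \infty$ in $\Lip_\s^\alpha$, by estimating $\CR_{n+1}f - \CR_n f$ componentwise in the wavelet basis. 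The crucial input here is the ``consistency'' estimate that follows from $f \in \CD^\gamma$: for a grid point $y$ neighbouring $x$, $\|f(x) - \Gamma_{xy}f(y)\|_\beta \lesssim \$f\$_{\gamma;\K}\,\|x-y\|_\s^{\gamma-\beta}$, combined with the model bounds $|(\Pi_x a)(\varphi_x^n)| \lesssim \|\Pi\|\,\|a\|\,2^{-n\beta}$ and $\Pi_y = \Pi_x\Gamma_{xy}$, to obtain $|(\Pi_x f(x) - \Pi_y f(y))(\varphi_y^n)| \lesssim \|\Pi\|\$f\$\,2^{-n\gamma}$. Summing geometrically over the $O(1)$ neighbouring grid points, and using the scale-$n$ localisation of the wavelets, gives $\|\CR_{n+1}f - \CR_n f\|_{\Lip_\s^\alpha} \lesssim 2^{-n(\gamma-\alpha)}$ when $\gamma>0$ (for $\gamma \le 0$ one gets a fixed geometric bound, still summable for the purpose of defining the map, but uniqueness fails). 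This yields a limit $\CR f$, and telescoping the differences $\CR f - \CR_n f$ gives the bound \eref{e:boundRf}: test $(\CR f - \Pi_x f(x))$ against $\CS_{\s,x}^\delta\eta$ at scale $\delta \sim 2^{-n}$, split into $(\CR f - \CR_n f)$ (bounded by $\delta^\gamma$ from the telescoping) plus $(\CR_n f - \Pi_x f(x))(\CS_{\s,x}^\delta\eta)$ (bounded the same way by comparing $\Pi_x f(x)$ with the nearby $\Pi_y f(y)$ terms). Uniqueness for $\gamma > 0$ is then the observation that any two distributions satisfying \eref{e:boundRf} differ by something that is $O(\delta^\gamma)$ against all rescaled test functions at every point, hence zero since $\gamma>0$.

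For the difference bound \eref{e:boundRfDiff}, I would run the identical scheme for both models and write $\CR f - \bar\CR\bar f = \lim_n (\CR_n f - \bar\CR_n \bar f)$, then telescope; the summand $\CR_{n+1}f - \CR_n f - \bar\CR_{n+1}\bar f + \bar\CR_n\bar f$ is estimated by the same mechanism, now using the triangle-inequality-style decomposition of $\Pi_x f(x) - \bar\Pi_x\bar f(x) - \Pi_y f(y) + \bar\Pi_y\bar f(y)$ into terms controlled by $\|\Pi - \bar\Pi\|\,\$f\$$ and by $\|\bar\Pi\|\,\$f;\bar f\$$ (the latter absorbing both $\|f - \bar f\|$ and the cross term with $\Gamma - \bar\Gamma$, which is exactly why $\$f;\bar f\$$ is defined the way it is). This is bilinear bookkeeping with no new idea beyond \eref{e:boundRf}.

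The genuinely new content is the interpolation bound \eref{e:boundRfDiff2}, and I expect this to be the main obstacle. The idea is to interpolate between two trivial-ish estimates: on one hand, for $\delta$ fixed, $|(\CR f - \bar\CR\bar f - \Pi_x f(x) + \bar\Pi_x\bar f(x))(\CS_{\s,x}^\delta\eta)|$ is bounded \emph{below scale $\delta$} crudely by $\delta^\alpha$ times $(\|f-\bar f\| + \|\Pi-\bar\Pi\| + \|\Gamma-\bar\Gamma\|)$ — obtained by summing the scale-by-scale differences without gaining from cancellation, using only that everything lives in $T_\alpha^-$; on the other hand, for the \emph{coarse} scales one has the genuine $\delta^\gamma$ decay from \eref{e:boundRfDiff} with an $O(C)$ constant (since all the $\$\cdot\$$ norms are assumed bounded by $C$, and $\$f;\bar f\$$ can be crudely bounded by a constant times $(\|f-\bar f\| + \|\Pi-\bar\Pi\| + \|\Gamma-\bar\Gamma\|)$ plus an $O(C)$ term — here one must be careful, and really the right move is to reprove the telescoping estimate tracking both a $\delta^\gamma$-with-small-constant piece and a $\delta^\alpha$-with-$O(1)$-constant piece). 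Choosing the crossover scale $\delta_* $ to balance $\delta_*^\gamma \cdot C$ against $\delta_*^\alpha \cdot \eta$ where $\eta := \|f-\bar f\|^\kappa + \|\Pi-\bar\Pi\|^\kappa + \|\Gamma-\bar\Gamma\|^\kappa$ (roughly $\delta_* \sim \eta^{1/(\gamma-\alpha)}$ up to the $\kappa$ bookkeeping) produces a bound of the form $\delta^{\bar\gamma}\eta$ with $\bar\gamma = \gamma - \kappa(\gamma - \alpha)$; the constraint $0 < \kappa < \gamma/(\gamma-\alpha)$ is exactly what keeps $\bar\gamma > 0$ so that the crossover argument is legitimate and $\delta_* \le 1$ in the relevant regime. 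The delicate point throughout is keeping the constants uniform: the $O(C)$-constant estimate must not secretly hide a factor of $\|f - \bar f\|^{-1}$ or similar, so the telescoping must be redone carefully rather than quoted, and the split of $\$f;\bar f\$$ into a ``small'' and ``$O(C)$'' part must be tracked explicitly at each scale $n$.
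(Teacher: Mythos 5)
Your architecture --- the approximants $\CR_n f = \sum_{x\in\Lambda_n^\s}(\Pi_x f(x))(\phi_x^{n,\s})\,\phi_x^{n,\s}$, the consistency estimate coming from $\Pi_y = \Pi_x\Gamma_{xy}$ and the definition of $\CD^\gamma$, and the mollification argument for uniqueness --- is exactly the paper's. But the engine you propose for convergence does not run: the estimate $\|\CR_{n+1}f-\CR_nf\|_{\Lip_\s^\alpha}\lesssim 2^{-n(\gamma-\alpha)}$ is false, and $\CR_n f$ is \emph{not} Cauchy in $\Lip_\s^\alpha$. Already for $f(x)\equiv\tau$ with $\tau$ of homogeneity $\alpha<0$ and $\Gamma$ acting trivially, $\CR_{n+1}f-\CR_nf=(\CP_{n+1}-\CP_n)\Pi_x\tau$ is the scale-$n$ detail of a fixed element of $\Lip_\s^\alpha$, whose $\Lip_\s^\alpha$ norm is generically of order one (take $\Pi_x\tau$ to be white noise). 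The correct mechanism splits $\CR_{n+1}f-\CR_nf=g_n+\delta f_n$ with $\delta f_n\in V_n^\perp$ and $g_n\in V_n$. The wavelet coefficients of $\delta f_n$ obey only $|\scal{\delta f_n,\psi_x^{n,\s}}|\lesssim 2^{-n|\s|/2-\alpha n}$, so the partial sums of the $\delta f_k$ are uniformly bounded in $\Lip_\s^\alpha$ but Cauchy only in $\Lip_\s^{\bar\alpha}$ for $\bar\alpha<\alpha$, with a limit that lies in $\Lip_\s^\alpha$ by the wavelet characterisation of these spaces; it is only the low-frequency correction $g_n$, whose coefficients are exactly the quantities controlled by your consistency estimate, that decays like $2^{-\gamma n}$, and this is where $\gamma>0$ enters. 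The $\delta^\gamma$ bound \eref{e:boundRf} is then obtained by testing both pieces against $\CS_{\s,x}^\delta\eta$ separately around the crossover scale $2^{-n_0}\sim\delta$, not by telescoping a $\Lip_\s^\alpha$ bound. For the same reason your parenthetical claim that the telescoping is ``still summable'' when $\gamma\le0$ is wrong: the $g_n$ need not sum, and existence for $\gamma\le0$ requires a separate explicit construction, e.g.\ retaining only the detail coefficients $\sum_{n,x,\psi}\scal{\Pi_xf(x),\psi_x^{n,\s}}\psi_x^{n,\s}$.

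For \eref{e:boundRfDiff2} your instinct to ``reprove the telescoping tracking two pieces'' is the right one, but the interpolation must happen at the level of the wavelet coefficients, not of the two final tested bounds. Setting $\zeta_x=\Pi_xf(x)-\bar\Pi_x\bar f(x)$, one has two estimates on $\scal{\zeta_x-\zeta_y,\phi_x^{n,\s}}$: one with the gain $\|x-y\|_\s^{\gamma-\alpha}$ and an $O(C)$ constant, and one with no gain in $\|x-y\|_\s$ but linear in $\|f-\bar f\|_{\gamma;\bar\K}+\|\Pi-\bar\Pi\|_{\gamma;\bar\K}+\|\Gamma-\bar\Gamma\|_{\gamma;\bar\K}$. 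Applying $a\wedge b\le a^\kappa b^{1-\kappa}$ to these produces a consistency estimate with gain $\|x-y\|_\s^{\bar\gamma-\alpha}$ and constant proportional to the $\kappa$-th power of the differences, after which the reconstruction machinery is run once more with $\bar\gamma$ in place of $\gamma$; this is precisely where the condition $\bar\gamma>0$, i.e.\ $\kappa<\gamma/(\gamma-\alpha)$, is used. A direct interpolation of the two final bounds is not available, because there is no standalone ``$\delta^\alpha$ times the differences'' estimate on $(\CR f-\bar\CR\bar f-\zeta_x)(\CS_{\s,x}^\delta\eta)$: without a consistency gain in $\|x-y\|_\s$ the machinery gives no control whatsoever on the reconstruction error.
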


\begin{remark}
At first sight, it might seem surprising that $\Gamma$ does not appear
in the bound \eref{e:boundRf}. 
It does however appear in a hidden way through the definition of the spaces $\CD^\gamma$
and thus of the norm $\$f\$_{\gamma;\bar \K}$.
Furthermore, \eref{e:boundRf} is quite reasonable since, for $\Gamma$ fixed, 
the map $\CR$ is actually bilinear in $f$ and  $\Pi$. 
However, the mere existence of $\CR$ depends crucially on the nonlinear
structure encoded in Definition~\ref{def:model}, and the spaces $\CD^\gamma$ do depend on
the choice of $\Gamma$. Occasionally, when the particular model $(\Pi,\Gamma)$ plays a role,
we will denote $\CR$ by $\CR_\Gamma$ in order to emphasise its dependence on $\Gamma$.
\end{remark}

\begin{remark}
Setting $\tilde f(y) = f(y) - \Gamma_{yx} f(x)$, we note that one has 
\begin{equ}
\CR f - \Pi_x f(x) = \CR \tilde f - \Pi_x \tilde f(x) = \CR \tilde f\;.
\end{equ}
As a consequence, the bound \eref{e:boundRf} actually depends only on the  
second term in the right hand side of \eref{e:boundDgamma}.
\end{remark}

\begin{remark}
In the particular case when $(\bar \Pi, \bar \Gamma)=(\Pi, \Gamma)$, the
 bound \eref{e:boundRfDiff} is a trivial consequence of \eref{e:boundRf} and the bilinearity of $\CR$
 in $f$ and $\Pi$. As it stands however, this bound needs to be stated and proved separately.
 The bound \eref{e:boundRfDiff2} can be interpreted as an interpolation theorem between \eref{e:boundRf}
 and \eref{e:boundRfDiff}.
\end{remark}

\begin{proof}[(uniqueness only)]
The uniqueness of the map $\CR$ in the case $\gamma > 0$
is quite easy to prove. Take $f \in \CD^\gamma$ as in the 
statement and assume that the two distributions $\xi_1$ and $\xi_2$ are
candidates for $\CR f$ that both satisfy the bound \eref{e:boundRf}. Our aim is to show that one then necessarily has
$\xi_1 = \xi_2$. Take any smooth compactly supported test function $\psi\colon \R^d \to \R$, 
and choose
an even smooth function $\eta \colon B_1 \to \R_+$ with $\int \eta(x)\,dx = 1$. Define
\begin{equ}
\psi_\delta (y) = \scal{\CS_{\s,y}^\delta \eta, \psi}=  \int \psi(x)\, \bigl(\CS_{\s,x}^\delta \eta\bigr)(y)\,dx\;,
\end{equ}
so that, for any distribution $\xi$, one has the identity
\begin{equ}[e:idenXif]
\xi(\psi_\delta) =  \int \psi(x)\, \scal{\xi, \CS_{\s,x}^\delta \eta}\,dx\;.
\end{equ}
Choosing $\xi = \xi_2 - \xi_1$, it then follows from \eref{e:boundRf} that
\begin{equ}
|\xi(\psi_\delta)| \lesssim \delta^\gamma \int_D \psi(x)\, \bar \rho(x)\,dx\;,
\end{equ}
which converges to $0$ as $\delta \to 0$. On the other hand, one has $\psi_\delta \to \psi$ in the $\CC^\infty$ 
topology, so that $\xi(\psi_\delta) \to \xi(\psi)$. This shows that $\xi(\psi) = 0$ for every smooth
compactly supported test function $\psi$, so that $\xi = 0$.

The existence of a map $\CR$ with the required properties 
is much more difficult to establish, and this is the content of the remainder of this section. 
\end{proof}

\begin{remark}
We call the map $\CR$ the ``reconstruction map'' as it allows to reconstruct a distribution in terms of
its local description via a model and regularity structure.
\end{remark}

\begin{remark}\label{rem:continuousModel}
One very important special case is when the model $(\Pi,\Gamma)$ happens to be such that
there exists $\alpha>0$ such that $\Pi_x a \in \CC^\alpha_\s(\R^d)$ for every $a \in T$, even though the homogeneity 
of $a$ might be negative. In this case, for $f \in \CD^\gamma$ with $\gamma > 0$, $\CR f$ is a 
continuous function and one has the identity $(\CR f)(x) = \bigl(\Pi_x f(x)\bigr)(x)$.  Indeed, setting
$\tilde \CR f(x) = \bigl(\Pi_x f(x)\bigr)(x)$, one has
\begin{equ}
\bigl|\tilde \CR f(y) - \tilde \CR f(x)\bigr| \le \bigl|\bigl(\Pi_x f(x)\bigr)(x) - \bigl(\Pi_x f(x)\bigr)(y)\bigr| + 
\bigl|\Pi_y \bigl(\Gamma_{yx} f(x) - f(y)\bigr)(y)\bigr|\;.
\end{equ}
By assumption, the first term is bounded by $C \|x-y\|_\s^\alpha$ for some constant $C$. The second
term on the other hand is bounded by $C \|x-y\|_\s^\gamma$ by the definition of $\CD^\gamma$, combined
with the fact that our assumption on the model implies that  $\bigl(\Pi_x a\bigr)(x) = 0$ whenever 
$a$ is homogeneous of positive degree.
\end{remark}

A straightforward corollary of this result is given by the following statement, which is the 
\textit{a posteriori} justification for the terminology ``regularity'' in Definition~\ref{def:sector}:

\begin{corollary}\label{cor:regularity}
In the context of the statement of Theorem~\ref{theo:reconstruction}, 
if $f$ takes values in a sector $V$ of regularity $\beta \in [\alpha,0)$,
then one has $\CR f \in \CC^\beta_\s$ and, for every compact set $\K$ and $\gamma > 0$, there exists a 
constant $C$ such that
\begin{equ}
\|\CR f\|_{\beta;\K} \le C \|\Pi\|_{\gamma; \bar \K} \$f\$_{\gamma;\bar \K}\;.
\end{equ}
\end{corollary}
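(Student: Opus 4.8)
The plan is to estimate $(\CR f)(\CS_{\s,x}^\delta\eta)$ directly, by writing it as the sum of the reconstruction error $(\CR f - \Pi_x f(x))(\CS_{\s,x}^\delta\eta)$ and the ``model contribution'' $(\Pi_x f(x))(\CS_{\s,x}^\delta\eta)$, and by bounding each of the two by a constant multiple of $\|\Pi\|_{\gamma;\bar\K}\,\$f\$_{\gamma;\bar\K}\,\delta^\beta$, uniformly over $x \in \K$, $\delta \in (0,1]$ and $\eta$ ranging over the class of test functions fixed in Theorem~\ref{theo:reconstruction}.

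For the reconstruction error there is nothing to do beyond invoking \eref{e:boundRf}: it provides a bound proportional to $\delta^\gamma$, and since $\gamma > 0 > \beta$ and $\delta \le 1$ one has $\delta^\gamma \le \delta^\beta$, so this term is already controlled by $\delta^\beta\,\|\Pi\|_{\gamma;\bar\K}\,\$f\$_{\gamma;\bar\K}$. For the model contribution I would use the hypothesis that $f$ takes values in the sector $V$ of regularity $\beta$: together with the fact that, by Definition~\ref{def:Dgamma}, $f$ takes values in $T_\gamma^-$, this means $f(x) = \sum_{\zeta}\CQ_\zeta f(x)$, where the sum runs over the finitely many $\zeta \in A_V$ with $\beta \le \zeta < \gamma$ (finitely many because $A$ is locally finite). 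Applying the first bound of \eref{e:boundPi} to each homogeneous component separately — which is legitimate since any admissible test function also satisfies the normalisation required in Definition~\ref{def:model} — one gets $|(\Pi_x \CQ_\zeta f(x))(\CS_{\s,x}^\delta\eta)| \le \|\Pi\|_{\gamma;\bar\K}\,\|\CQ_\zeta f(x)\|\,\delta^\zeta \le \|\Pi\|_{\gamma;\bar\K}\,\$f\$_{\gamma;\bar\K}\,\delta^\beta$, where the last inequality uses $\zeta \ge \beta$ and $\delta \le 1$, together with $\|\CQ_\zeta f(x)\| \le \$f\$_{\gamma;\bar\K}$. Summing over the finitely many relevant $\zeta$ yields the desired bound on $(\Pi_x f(x))(\CS_{\s,x}^\delta\eta)$.

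Combining the two estimates gives $|(\CR f)(\CS_{\s,x}^\delta\eta)| \lesssim \|\Pi\|_{\gamma;\bar\K}\,\$f\$_{\gamma;\bar\K}\,\delta^\beta$, uniformly as above. Since the reconstruction theorem already hands us $\CR f$ as an element of $\CS'(\R^d)$ — indeed of $\Lip_\s^\alpha$ — it then remains only to read off from this local scaling bound that $\CR f \in \CC^\beta_\s$, with $\|\CR f\|_{\beta;\K} \le C\,\|\Pi\|_{\gamma;\bar\K}\,\$f\$_{\gamma;\bar\K}$ for a constant $C$ depending on $\gamma$ and the structure of $\TT$.

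The only step that is not completely mechanical is this last one. The estimate we produce is against test functions of regularity $r > |\alpha| \ge |\beta|$, whereas the seminorm $\|\cdot\|_{\beta;\K}$ of Definition~\ref{def:C-alpha} is defined using test functions of regularity exactly $-\lfloor\beta\rfloor$, which may be strictly smaller. This discrepancy is harmless: the characterisation of $\CC^\beta_\s$ via local scaling bounds is insensitive to the precise regularity of the test functions used, as long as it exceeds $|\beta|$ — this robustness, essentially the wavelet characterisation of the Besov space $B^\beta_{\infty,\infty}$, is precisely what is collected in Section~\ref{sec:spaceC}, and it is the one input required here beyond the elementary computation sketched above. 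With it in hand the corollary follows; everything else is routine bookkeeping.
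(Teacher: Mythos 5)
Your proposal is correct and follows exactly the route the paper takes: its proof of Corollary~\ref{cor:regularity} is a one-line citation of \eref{e:boundRf}, Remark~\ref{rem:normPiGamma} (i.e.\ the first bound in \eref{e:boundPi} applied to the homogeneous components $\CQ_\zeta f(x)$ with $\zeta \ge \beta$), and the definition of $\|\cdot\|_{\beta;\K}$ — precisely the two-term decomposition and estimates you spell out. Your closing remark about the regularity of the test functions is a legitimate point that the paper glosses over, and it is indeed absorbed by the wavelet characterisation of Section~\ref{sec:spaceC}.
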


\begin{proof}
Immediate from \eref{e:boundRf}, Remark~\ref{rem:normPiGamma}, and the definition of $\|\cdot\|_{\beta;\K}$.
\end{proof}

Before we proceed to the remainder of the proof of Theorem~\ref{theo:reconstruction}, we introduce some of the 
basic notions of wavelet analysis required for its proof. For a more detailed introduction
to the subject, see for example \cite{MR1162107,MR1228209}.

\subsection{Elements of wavelet analysis}
\label{sec:wavelets}

Recall that a multiresolution analysis of $\R$ is based on
a real-valued ``scaling function'' $\phi \in L^2(\R)$ with the following two properties:
\begin{claim}
\item[1.] One has $\int \phi(x) \phi(x+k)\,dx = \delta_{k,0}$ for every $k \in \Z$.
\item[2.] There exist ``structure constants'' $a_k$ such that
\begin{equ}[e:structure]
\phi(x) = \sum_{k \in \Z} a_k \phi(2x-k)\;.
\end{equ}
\end{claim}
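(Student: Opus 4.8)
The two displayed properties are not a consequence of anything proved earlier: together they constitute the definition of a scaling function generating a multiresolution analysis. Consequently the only thing that genuinely requires an argument is the \emph{existence} of such a $\phi$, and --- since the wavelet expansions used in the rest of this section rely on it --- the fact that $\phi$ may in addition be chosen to be compactly supported and of class $\CC^r$ for $r$ as large as we wish. The plan is to produce such a $\phi$ by Daubechies' construction, which proceeds on the Fourier side.

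First I would encode the structure constants $(a_k)$ into the $2\pi$-periodic low-pass filter $m_0(\xi) = \tfrac12 \sum_{k} a_k e^{-ik\xi}$. With this notation the refinement equation \eref{e:structure} becomes the functional equation $\hat\phi(\xi) = m_0(\xi/2)\,\hat\phi(\xi/2)$, which --- imposing the normalisation $\hat\phi(0) = 1$ --- forces
\begin{equ}
\hat\phi(\xi) = \prod_{j\ge 1} m_0(2^{-j}\xi)\;.
\end{equ}
For the infinite product to converge locally uniformly and define an $L^2$ function it suffices that $m_0(0) = 1$ and that $m_0$ be, say, H\"older continuous, both of which hold when $m_0$ is a trigonometric polynomial with $m_0(0)=1$. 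The identity that makes property~1 work is the quadrature-mirror-filter relation
\begin{equ}
|m_0(\xi)|^2 + |m_0(\xi+\pi)|^2 = 1\;,
\end{equ}
because, by Parseval, the condition $\int \phi(x)\phi(x+k)\,dx = \delta_{k,0}$ for all $k \in \Z$ is equivalent to $\sum_{\ell \in \Z} |\hat\phi(\xi+2\pi\ell)|^2 \equiv 1$.

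The core step is to deduce this last periodisation identity from the filter relation. Setting $P(\xi) = \sum_{\ell \in \Z}|\hat\phi(\xi+2\pi\ell)|^2$, the functional equation for $\hat\phi$ together with the filter relation shows that $P$ is a continuous $2\pi$-periodic fixed point, normalised by $P(0)=1$, of the transfer operator
\begin{equ}
(RP)(\xi) = |m_0(\xi/2)|^2\,P(\xi/2) + |m_0(\xi/2 + \pi)|^2\,P(\xi/2+\pi)\;;
\end{equ}
since the constant function $1$ is also such a fixed point, uniqueness of the fixed point forces $P \equiv 1$. This uniqueness is precisely where Cohen's criterion enters: it holds provided $m_0$ does not vanish on $\{|\xi| \le \pi/2\}$, a condition one simply builds into the choice of $m_0$. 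At this point $\phi$ satisfies property~2 by construction and property~1 by the argument just given.

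Finally, for the additional features used later: taking $m_0$ to be a trigonometric polynomial makes $\hat\phi$ entire of exponential type, so the Paley--Wiener theorem gives that $\supp\phi$ is compact; and choosing the Daubechies filter $m_0(\xi) = \bigl(\tfrac{1+e^{-i\xi}}{2}\bigr)^N Q(\xi)$ with $N$ large yields, directly from the infinite product, a decay estimate of the form $|\hat\phi(\xi)| \lesssim (1+|\xi|)^{-N+c}$, hence $\phi \in \CC^r$ once $N$ is large enough relative to $r$. The one genuinely nontrivial ingredient in the whole scheme is the transfer-operator / Cohen-criterion step that upgrades the algebraic filter relation to the orthonormality in property~1; everything else is routine Fourier analysis together with the Paley--Wiener theorem.
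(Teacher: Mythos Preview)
You correctly recognise that the two displayed properties are the \emph{definition} of a scaling function for a multiresolution analysis, not assertions to be derived. The paper treats them exactly this way: it introduces them as defining properties, gives the Haar function $\one_{[0,1)}$ as a trivial example, and then simply \emph{cites} Daubechies' theorem (with a bare \texttt{\textbackslash qed}) for the existence of compactly supported $\CC^r$ scaling functions, referring the reader to Daubechies' original article and to Meyer's monograph for the proof.

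Your proposal therefore goes further than the paper does: you actually sketch the Daubechies construction on the Fourier side --- the low-pass filter $m_0$, the infinite-product formula for $\hat\phi$, the QMF identity, Cohen's criterion via the transfer operator, and Paley--Wiener for compact support. This outline is correct and is the standard route; the one point you flag as nontrivial (the Cohen-criterion step) is indeed the crux. So there is no gap, and what you have written is a faithful condensation of the standard proof that the paper deliberately chose to black-box.
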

One classical example of such a function $\phi$ is given by the indicator function $\phi(x) = \one_{[0,1)}(x)$,
but this has the substantial drawback that it is not even continuous. A celebrated result by 
Daubechies (see the original article \cite{MR951745} or for example the monograph \cite{MR1162107}) 
ensures the existence of functions $\phi$ as above that are compactly supported
but still regular:

\begin{theorem}[Daubechies]
For every $r > 0$ there exists a compactly supported function $\phi$ with the two properties above
and such that $\phi \in \CC^r(\R)$.\qed
\end{theorem}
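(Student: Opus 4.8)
The plan is to carry out Daubechies' construction, which takes place on the Fourier side. Writing $\hat\phi(\xi) = \int \phi(x)\,e^{-ix\xi}\,dx$, the two-scale relation~\eref{e:structure} reads $\hat\phi(\xi) = m_0(\xi/2)\,\hat\phi(\xi/2)$, where $m_0(\xi) = \tfrac12\sum_k a_k\,e^{-ik\xi}$ is a \emph{trigonometric polynomial}, precisely because only finitely many $a_k$ are allowed to be nonzero; it is this finiteness that will ultimately force $\phi$ to have compact support. Normalising $\hat\phi(0)=1$ forces $m_0(0)=1$, and iterating gives the candidate $\hat\phi(\xi) = \prod_{j\ge 1} m_0(\xi/2^j)$, an infinite product that converges uniformly on compact sets since $m_0$ is smooth with $m_0(0)=1$. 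The orthonormality condition (property~1) is, via the Poisson summation formula, equivalent to $\sum_{k}|\hat\phi(\xi+2\pi k)|^2 = 1$, and one checks that this holds provided $m_0$ satisfies the quadrature-mirror-filter identity $|m_0(\xi)|^2 + |m_0(\xi+\pi)|^2 = 1$, together with a mild non-vanishing condition on $m_0$ on $[-\pi/2,\pi/2]$ (Cohen's criterion) ensuring the product is not degenerate.

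Thus everything reduces to constructing, for each prescribed regularity $r$, a trigonometric polynomial $m_0$ with $m_0(0)=1$, satisfying the QMF identity, and with a high-order zero at $\xi=\pi$ --- the latter being what produces decay of $\hat\phi$ and hence smoothness of $\phi$. One seeks $m_0$ in the form $m_0(\xi) = \bigl(\tfrac{1+e^{-i\xi}}{2}\bigr)^{\!N}\mathcal L(\xi)$ with $\mathcal L$ a trigonometric polynomial. Substituting, $|m_0(\xi)|^2 = \cos^{2N}(\xi/2)\,P\bigl(\sin^2(\xi/2)\bigr)$ for a polynomial $P\ge 0$ on $[0,1]$, and the QMF identity becomes the Bézout-type equation $(1-y)^N P(y) + y^N P(1-y) = 1$, which admits the explicit minimal nonnegative solution $P(y) = \sum_{k=0}^{N-1}\binom{N-1+k}{k}\,y^k$. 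The genuinely nontrivial algebraic step is then the \emph{spectral factorisation}: given the nonnegative trigonometric polynomial $\cos^{2N}(\xi/2)\,P(\sin^2(\xi/2))$, the Fejér--Riesz lemma yields a trigonometric polynomial $m_0$ with real coefficients whose squared modulus equals this expression. Feeding this $m_0$ into the product defines $\hat\phi$, hence $\phi$ by Fourier inversion, and by construction $\phi$ then satisfies properties~1 and~2.

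It remains to check compact support and regularity. For the support, $m_0$ has degree $2N-1$, so $a_k$ is supported in $\{0,\dots,2N-1\}$; running the cascade iteration $\phi_n = \sum_k a_k\,\phi_{n-1}(2\,\cdot-k)$ from $\phi_0 = \one_{[0,1)}$ keeps all supports in $[0,2N-1]$ and $\phi_n\to\phi$, so $\supp\phi\subset[0,2N-1]$ (equivalently, $\hat\phi$ extends to an entire function of exponential type, so Paley--Wiener applies). For the regularity, the telescoping identity $\prod_{j\ge1}\cos(\xi/2^{j+1}) = \tfrac{\sin(\xi/2)}{\xi/2}$ gives
\begin{equ}
|\hat\phi(\xi)| = \Bigl|\frac{\sin(\xi/2)}{\xi/2}\Bigr|^{N}\,\prod_{j\ge1}\bigl|\mathcal L(\xi/2^j)\bigr|\;,
\end{equ}
and since $\mathcal L(0)=1$ and $\mathcal L$ is Lipschitz, a standard dyadic splitting of the product shows $\prod_{j\ge1}|\mathcal L(\xi/2^j)| \lesssim (1+|\xi|)^{\log_2 B_N}$ with $B_N = \sup_{[-\pi,\pi]}|\mathcal L|$. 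Hence $|\hat\phi(\xi)| \lesssim (1+|\xi|)^{-N+\log_2 B_N}$, and whenever $N - \log_2 B_N > r+1$ one gets $\hat\phi(\xi)(1+|\xi|)^r \in L^1$, so $\phi\in\CC^r(\R)$. The final point is therefore to show $\limsup_{N\to\infty}\tfrac{\log_2 B_N}{N} < 1$, so that $N-\log_2 B_N\to\infty$ and any $r$ is attainable by taking $N$ large; this follows from an explicit bound of the form $B_N \le C\,2^{\mu N}$ with $\mu < 1$. I expect this last regularity estimate --- controlling the growth of $B_N$ and handling the infinite product uniformly --- to be the main obstacle, whereas the spectral factorisation, although the conceptual heart of the matter, is a routine application of Fejér--Riesz once the Bézout identity has been solved.
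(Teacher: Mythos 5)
The paper itself offers no proof of this statement --- it is quoted from Daubechies' original article with a \qed\ --- so the relevant comparison is with the standard construction in the literature, which is exactly what you reproduce. Your architecture is the correct one and essentially complete: the two-scale relation becomes $\hat\phi(\xi)=m_0(\xi/2)\hat\phi(\xi/2)$ with $m_0$ a trigonometric polynomial, orthonormality reduces via Poisson summation to the QMF identity plus Cohen's criterion (which holds here since $P_N>0$ on $[0,1]$ and $\cos(\xi/2)\neq 0$ on $[-\pi/2,\pi/2]$), the Bézout equation has the explicit solution $P_N(y)=\sum_{k=0}^{N-1}\binom{N-1+k}{k}y^k$, Fejér--Riesz produces $\mathcal L$, and compact support follows from Paley--Wiener since the partial products are entire of exponential types summing to $2N-1$. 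All of this is sound.

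The gap is in the very last step, which you correctly single out as the crux but then dispatch with a false estimate. With your definition $B_N=\sup_{[-\pi,\pi]}|\mathcal L|$ one has $B_N^2=\sup_{y\in[0,1]}P_N(y)=P_N(1)=\binom{2N-1}{N}$, since $P_N$ has nonnegative coefficients and is therefore increasing on $[0,1]$. By Stirling, $\binom{2N-1}{N}\sim 4^N/(2\sqrt{\pi N})$, so $\log_2 B_N = N-\tfrac14\log_2 N+O(1)$ and $\limsup_N \log_2 B_N/N=1$: there is \emph{no} bound $B_N\le C\,2^{\mu N}$ with $\mu<1$, and any crude estimate such as $P_N(1)\le 4^N$ yields a decay exponent $-N+\log_2 B_N$ that is bounded, hence no regularity at all. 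Two repairs are available. The cheap one is to use the exact value of $B_N$: the Stirling correction gives $N-\log_2 B_N=\tfrac14\log_2(\pi N)+O(1)\to\infty$, so for each $r$ one can choose $N$ (exponentially large in $r$, roughly $N\gtrsim 2^{4r}$) with $N-\log_2 B_N>r+1$; this proves the theorem as stated, though with a very poor dependence of $N$ on $r$. The standard repair, due to Daubechies, is to replace the one-step bound by the two-step quantity $\sup_\xi|\mathcal L(\xi)\mathcal L(2\xi)|^{1/2}$ in the dyadic splitting; using $\sin^2\xi=4y(1-y)$ with $y=\sin^2(\xi/2)$ together with the elementary bounds $P_N(y)\le 2^{N-1}$ for $y\le\tfrac12$ and $P_N(y)\le 2^{N-1}(2y)^{N-1}$ for $y\ge\tfrac12$ (both consequences of $P_N(1/2)=2^{N-1}$, which follows from the Bézout identity at $y=1/2$), one obtains a genuine bound $2^{\mu N}$ with $\mu<1$ for this two-step supremum, and hence regularity growing \emph{linearly} in $N$. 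Either route closes the argument; as written, your final inequality does not.
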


From now on, we will always assume that the scaling function $\phi$ is compactly supported.
Denote now $\Lambda_n = \{2^{-n}k\,:\, k \in \Z\}$ and, for $n \in \Z$ and $x \in \Lambda_n$, set
\begin{equ}[e:defphin]
\phi_x^n(y) = 2^{n/2}\phi\bigl(2^n (y-x)\bigr)\;.
\end{equ}
One furthermore denotes by $V_n \subset L^2(\R)$ the subspace generated by $\{\phi_x^n\,:\, x \in \Lambda_n\}$.
Property~2 above then ensures that these spaces satisfy the inclusion $V_n \subset V_{n+1}$ for every $n$. 
Furthermore, it turns out that there is a simple description of the orthogonal complement $V_n^\perp$ of $V_n$ in $V_{n+1}$.
It turns out that it is possible to find finitely many coefficients $b_k$ such that, setting  
\begin{equ}[e:defPsi]
\psi(x) =  \sum_{k \in \Z} b_k \phi(2x-k)\;,
\end{equ}
and defining $\psi_x^n$ similarly to \eref{e:defphin}, the space $V_n^\perp$ is given by the linear 
span of $\{\psi_x^n\,:\, x \in \Lambda_n\}$, see for example \cite[Chap.~6.4.5]{MR2100936}.
(One  has actually $b_k = (-1)^k a_{1-k}$ but this isn't important for us.)
The following result is taken from \cite{MR1228209}:

\begin{theorem}\label{thm:wavelet}
One has $\scal{\psi_x^n, \psi_y^m} = \delta_{n,m}\delta_{x,y}$ for every $n,m \in \Z$ and every $x \in \Lambda_n$, $y \in \Lambda_m$. Furthermore,  $\scal{\phi_x^n, \psi_y^m} = 0$ for every $m \ge n$ and every $x \in \Lambda_n$, $y \in \Lambda_m$. Finally, for every $n \in \Z$, the set
\begin{equ}
\{\phi_x^n\,:\, x \in \Lambda_n\}\cup \{\psi_x^m\,:\, m \ge n\,,\; x \in \Lambda_m\}\;,
\end{equ}
forms an orthonormal basis of $L^2(\R)$.\qed
\end{theorem}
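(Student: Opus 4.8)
The plan is to turn Theorem~\ref{thm:wavelet} into a sequence of elementary Hilbert-space computations resting on four facts: the orthonormality of the translates of $\phi$ (Property~1), the two-scale relations \eref{e:structure} and \eref{e:defPsi}, the nesting $V_n \subset V_{n+1}$, and --- for the completeness statement only --- that $\bigcup_n V_n$ is dense in $L^2(\R)$ while $\bigcap_n V_n = \{0\}$. It is convenient to introduce the unitary dilation $(Df)(y) = 2^{1/2} f(2y)$ and the translations $(T_k f)(y) = f(y-k)$ for $k \in \Z$, so that for $x \in \Lambda_n$ one has $\phi_x^n = D^n T_{2^n x}\phi$ and $\psi_x^n = D^n T_{2^n x}\psi$ with $2^n x \in \Z$, and $D$ maps $V_n$ bijectively onto $V_{n+1}$, hence maps $W_n := V_{n+1}\ominus V_n$ (the orthogonal complement of $V_n$ in $V_{n+1}$, which by hypothesis is the linear span of $\{\psi_x^n : x \in \Lambda_n\}$) bijectively onto $W_{n+1}$. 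In particular it suffices to prove all scale-dependent orthonormality statements at the single scale $n = 0$ and then transport them by powers of $D$.

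First I would record the algebraic identities satisfied by the structure constants. Substituting \eref{e:structure} into Property~1 and using Property~1 once more for the family $\{\phi(2\cdot - k)\}$ gives $\sum_k a_k a_{k+2j} = 2\,\delta_{j,0}$ for every $j \in \Z$. Using $b_k = (-1)^k a_{1-k}$, the identical computation applied to \eref{e:defPsi} yields $\scal{\psi, T_j \psi} = {1\over 2}\sum_k b_k b_{k+2j} = {1\over 2}\sum_k a_{1-k} a_{1-2j-k} = \delta_{j,0}$ after reindexing, so $\{\psi_x^0 : x \in \Z\}$ is orthonormal; likewise $\scal{\phi, T_j\psi} = {1\over 2}\sum_k a_{2j+k} b_k$ vanishes by the substitution $k \mapsto 1 - 2j - k$, which confirms the consistency requirement $\psi \perp V_0$ implicit in $\psi_x^0 \in W_0$. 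Together with Property~1, which directly gives $\scal{\phi_j^0, \phi_k^0} = \delta_{j,k}$, and the unitarity of $D^n$, we conclude that for each fixed $n$ both $\{\phi_x^n : x \in \Lambda_n\}$ and $\{\psi_x^n : x \in \Lambda_n\}$ are orthonormal systems.

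Next I would assemble the cross-scale orthogonality and the basis property. If $m \ge n$, then $\phi_x^n \in V_n \subset V_m$ by nesting while $\psi_y^m \in W_m \perp V_m$, which gives the second displayed identity $\scal{\phi_x^n, \psi_y^m} = 0$. For the first displayed identity, the case $n = m$ is the single-scale orthonormality just obtained, while for $n < m$ one has $\psi_x^n \in W_n \subset V_{n+1} \subset V_m$ and again $\psi_y^m \perp V_m$, so $\scal{\psi_x^n, \psi_y^m} = 0$, and the case $n > m$ is symmetric; hence $\scal{\psi_x^n, \psi_y^m} = \delta_{n,m}\delta_{x,y}$. For the last assertion, iterating $V_{k+1} = V_k \oplus W_k$ gives the orthogonal decomposition $V_{N+1} = V_n \oplus \bigoplus_{m=n}^{N} W_m$ for every $N \ge n$; letting $N \to \infty$ and using that $\bigcup_N V_N$ is dense in $L^2(\R)$ while $\bigcap_N V_N = \{0\}$, one obtains $L^2(\R) = V_n \oplus \bigoplus_{m \ge n} W_m$ as an orthogonal direct sum of closed subspaces. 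Since $\{\phi_x^n : x\in\Lambda_n\}$ is an orthonormal system spanning $V_n$ by definition and $\{\psi_x^m : x\in\Lambda_m\}$ is an orthonormal system spanning $W_m$ by hypothesis, their union is an orthonormal basis of $L^2(\R)$, which is the claim.

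The step I expect to be the real obstacle is entirely analytic and not algebraic: the density of $\bigcup_n V_n$ in $L^2(\R)$ and the triviality of $\bigcap_n V_n$ do not follow from Properties~1--2 as literally stated but require finer information about the Daubechies scaling function $\phi$ --- in particular continuity of $\hat\phi$ at the origin with $\hat\phi(0)\ne 0$, together with sufficient decay --- so at that point I would simply invoke the standard multiresolution-analysis results of \cite{MR1162107,MR1228209} rather than reprove them. Everything else reduces, once the relation $b_k = (-1)^k a_{1-k}$ and the quadratic identities are in hand, to the bookkeeping sketched above.
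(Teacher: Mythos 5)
Your proposal is correct. Note that the paper does not prove this statement at all: Theorem~\ref{thm:wavelet} is quoted verbatim from Meyer's monograph \cite{MR1228209} and stamped with a \qed, so there is no internal argument to compare against. What you have written is essentially the standard multiresolution-analysis derivation that the paper outsources to the literature, and the bookkeeping checks out: the quadratic identity $\sum_k a_k a_{k+2j}=2\delta_{j,0}$ follows from Property~1 and \eref{e:structure} exactly as you compute, the relation $b_k=(-1)^k a_{1-k}$ then gives both $\scal{\psi,T_j\psi}=\delta_{j,0}$ and $\scal{\phi,T_j\psi}=0$ via the involution $k\mapsto 1-2j-k$, and the unitary dilation transports everything to scale $n$. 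The cross-scale orthogonality is then pure Hilbert-space nesting, and the completeness reduces to $\overline{\bigcup_N V_N}=L^2(\R)$ (for the decomposition with the fixed base level $n$ you do not actually need $\bigcap_N V_N=\{0\}$, though mentioning it is harmless). You are also right to flag the two inputs that genuinely cannot be extracted from Properties~1--2 alone --- the density of $\bigcup_N V_N$ and the fact that $\{\psi_x^n\}$ spans all of $V_n^\perp$ rather than a proper subspace --- and deferring those to \cite{MR1162107,MR1228209} is exactly what the paper itself does for the whole theorem.
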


Intuitively, one should think of the $\phi_x^n$ as providing a description of a function at scales down to $2^{-n}$ and
the $\psi_x^m$ as ``filling in the details'' at even smaller scales. In particular, for every function $f \in L^2$, one has
\begin{equ}[e:defPf]
\lim_{n \to \infty} \CP_n f \eqdef \lim_{n \to \infty} \sum_{x \in \Lambda_n} \scal{f, \phi_x^n}\phi_x^n = f\;,
\end{equ}
and this relation actually holds for much larger classes of $f$, including sufficiently regular tempered distributions \cite{MR1228209}.

One very useful properties of wavelets, which can be found for example
in \cite[Chap.~3.2]{MR1228209}, is that the functions $\psi_x^m$ automatically have vanishing moments:

\begin{lemma}
Let $\phi$ be a compactly supported scaling function as above which is $\CC^r$ for $r \ge 0$ and let $\psi$ be defined by \eref{e:defPsi}. Then, $\int_\R \psi(x)\, x^m\,dx = 0$
for every integer $m \le r$.\qed
\end{lemma}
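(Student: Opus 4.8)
The plan is to pass to the Fourier side, where the vanishing of moments becomes a statement about the order of a zero at the origin (we may assume $r$ is a nonnegative integer, since $\phi \in \CC^r$ implies $\phi \in \CC^{\lfloor r \rfloor}$). Writing $\hat f(\xi) = \int f(x)e^{-i\xi x}\,dx$, the compact support of $\psi$ makes $\hat\psi$ entire, and $\int_\R \psi(x)x^m\,dx = i^m\,\hat\psi^{(m)}(0)$; hence the claim is equivalent to the assertion that $\hat\psi$ vanishes to order $r+1$ at $\xi = 0$. From the two-scale relations \eref{e:structure} and \eref{e:defPsi} one obtains $\hat\phi(\xi) = m_0(\xi/2)\,\hat\phi(\xi/2)$ and $\hat\psi(\xi) = m_1(\xi/2)\,\hat\phi(\xi/2)$, where $m_0(\xi) = \frac12\sum_k a_k e^{-ik\xi}$ and $m_1(\xi) = \frac12\sum_k b_k e^{-ik\xi}$ are trigonometric polynomials. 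Since $\hat\phi(0) \neq 0$ (as always for the scaling function of a multiresolution analysis, this follows from the orthonormality of the shifts together with the two-scale relation), the desired statement reduces to showing that $m_1$ vanishes to order $r+1$ at the origin.

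To control $m_1$ near $0$, I would use the orthogonality relations recorded in Theorem~\ref{thm:wavelet}. Orthonormality of $\{\phi(\cdot - k)\}$ together with the two-scale relation gives the usual identity $|m_0(\xi)|^2 + |m_0(\xi+\pi)|^2 = 1$, while the relation $\scal{\psi, \phi(\cdot - k)} = 0$ for all $k$ (a special case of Theorem~\ref{thm:wavelet}, taking $n = m = 0$) translates, by the same Poisson-summation computation, into $m_1(\xi)\overline{m_0(\xi)} + m_1(\xi+\pi)\overline{m_0(\xi+\pi)} = 0$. Evaluating the first identity at $\xi = 0$ and using $m_0(0) = 1$ (which follows from $\hat\phi(0) = m_0(0)\hat\phi(0)$ and $\hat\phi(0)\neq 0$) yields $m_0(\pi) = 0$; more importantly, for $\xi$ near $0$ the vector $(m_0(\xi), m_0(\xi+\pi))$ is a unit vector close to $(1,0)$, so the second identity forces $m_1(\xi) = u(\xi)\,\overline{m_0(\xi+\pi)}$ for some function $u$ that is smooth and bounded in a neighbourhood of $0$. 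Consequently $m_1$ vanishes at $0$ to at least the order to which $m_0$ vanishes at $\pi$.

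It therefore remains to prove that the regularity hypothesis $\phi \in \CC^r$ forces $m_0$ to have a zero of order at least $r+1$ at $\xi = \pi$, and this is the substantive step. Here one exploits the infinite-product formula $\hat\phi(\xi) = \hat\phi(0)\prod_{j\ge 1} m_0(2^{-j}\xi)$: writing $m_0(\xi) = (\cos(\xi/2))^p\,q(\xi)$ with $q(\pi)\neq 0$, where $p$ is the order of the zero of $m_0$ at $\pi$, the product telescopes to $\hat\phi(\xi) = \hat\phi(0)\,(\sin(\xi/2)/(\xi/2))^p \prod_{j\ge 1} q(2^{-j}\xi)$, so that $|\hat\phi(\xi)|$ cannot decay faster than a fixed power of $|\xi|$ governed by $p$. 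On the other hand, $\phi$ is compactly supported and of class $\CC^r$, hence lies in the Sobolev space $H^r$, i.e.\ $\int (1+\xi^2)^r|\hat\phi(\xi)|^2\,d\xi < \infty$, which is incompatible with too slow a decay unless $p \ge r+1$. The quantitative form of this dichotomy is classical and I would simply invoke it (see Daubechies \cite{MR951745,MR1162107} or \cite[Chap.~3.2]{MR1228209}); this is the only point at which $\CC^r$ regularity, as opposed to mere continuity, enters, and it is the main obstacle in a fully self-contained argument. Combining the three steps, $m_1$ — and hence $\hat\psi$ — vanishes to order $r+1$ at the origin, which is exactly the assertion of the lemma.
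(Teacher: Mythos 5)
First, note that the paper does not actually prove this lemma: it is quoted from \cite[Chap.~3.2]{MR1228209} and stated with an immediate \emph{qed}, so there is no internal argument to compare against. Your reduction is the standard Fourier-side one and its first two steps are correct: $\hat\psi(\xi)=m_1(\xi/2)\hat\phi(\xi/2)$ with $\hat\phi(0)\neq 0$, and the orthogonality relations do give $m_1(\xi)=u(\xi)\,\overline{m_0(\xi+\pi)}$ with $u$ analytic near $0$ (the $m_i$ are trigonometric polynomials), so the moments of $\psi$ up to order $r$ vanish as soon as $m_0$ vanishes to order $r+1$ at $\pi$. The gap is in the step you yourself flag as substantive. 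The implication ``$\phi\in\CC^r$ compactly supported with orthonormal shifts $\Rightarrow$ $m_0$ vanishes to order $r+1$ at $\pi$'' is true, but your sketched justification does not work: the tail product $\prod_{j\ge 1}q(2^{-j}\xi)$ vanishes wherever any single factor does and admits no uniform pointwise lower bound, so the factorisation $\hat\phi(\xi)=\hat\phi(0)\bigl(\sin(\xi/2)/(\xi/2)\bigr)^p\prod_{j\ge1}q(2^{-j}\xi)$ yields no pointwise lower bound on $|\hat\phi|$; moreover $H^r$ membership constrains $|\hat\phi|$ only in an averaged sense, so no contradiction follows. More seriously, in the references you invoke (Daubechies, Meyer) the divisibility of $m_0$ by $\bigl((1+e^{-i\xi})/2\bigr)^{r+1}$ is \emph{deduced from} the vanishing-moments property of $\psi$, not the other way around, so citing it here is circular unless you supply an independent proof of that necessity statement — which is genuinely the hard part.

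The direct argument, which is the one behind the citation and uses only ingredients already recorded in the paper, avoids the Fourier transform entirely. Since $\psi$ is a finite linear combination of the $\phi(2\cdot-k)$, it is itself compactly supported and of class $\CC^r$. By Theorem~\ref{thm:wavelet} one has $\scal{\phi_x^{n},\psi}=0$ for every $n\le 0$ and $x\in\Lambda_n$; as $n\to-\infty$ the function $\phi_x^n$ varies on the scale $2^{-n}$, which is much larger than the fixed support of $\psi$, so one may Taylor-expand $\phi_x^n$ on $\supp\psi$ to order $r$. The orthogonality relation then reads $\sum_{m\le r} c_{m,n}\, 2^{nm} M_m + o(2^{nr})=0$ with $M_m=\int x^m\psi(x)\,dx$ and with $c_{0,n}$ bounded away from zero for a suitable choice of $x$; letting $n\to-\infty$ gives $M_0=0$, and an induction on $m$ (peeling off one power of $2^{n}$ at each step) gives $M_m=0$ for all $m\le r$. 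This is where the $\CC^r$ regularity enters — through the Taylor remainder — and it replaces the deep step of your argument by an elementary one. I would either switch to this route or, at minimum, replace your heuristic for the zero of $m_0$ at $\pi$ by a precise citation of a proof that does not itself pass through the vanishing-moments lemma.
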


For our purpose, we need to extend this construction to $\R^d$. Classically, such an extension can be performed
by simply taking products of the $\phi_x^n$ for each coordinate. In our case however, we want to take into account 
the fact that we consider non-trivial scalings. For any given scaling $\s$ of $\R^d$ and any $n \in \Z$, we thus define\label{lab:Lambdan}
\begin{equ}
\Lambda_n^\s = \Bigl\{\sum_{j=1}^d 2^{- n\s_j} k_j e_j\,:\, k_j \in \Z\Bigr\}\subset \R^d\;,
\end{equ}
where we denote by $e_j$ the $j$th element of the canonical basis of $\R^d$. For every $x \in \Lambda_n^\s$,
we then set
\begin{equ}[e:phins]
\phi_x^{n,\s}(y) \eqdef \prod_{j=1}^d \phi_{x_j}^{n\s_j}(y_j)\;.
\end{equ}
Since we assume that $\phi$ is compactly supported, it follows from \eref{e:structure} that there
exists a \textit{finite} collection of vectors $\CK \subset \Lambda_1^\s$ and structure constants $\{a_k\,:\, k \in \CK\}$
such that the identity
\begin{equ}[e:decompPhi]
\phi_x^{0,\s}(y) = \sum_{k \in \CK} a_k \phi_{x+k}^{1,\s}(y)\;,
\end{equ}
holds. In order to simplify notations, we will henceforth use the notation
\begin{equ}
2^{-n\s}k = \bigl(2^{-n\s_1}k_1,\ldots,2^{-n\s_d}k_d\bigr)\;,
\end{equ}
so that the scaling properties of the $\phi_x^{n,\s}$ combined with \eref{e:decompPhi} imply that
\begin{equ}[e:decompPhin]
\phi_x^{n,\s}(y) = \sum_{k \in \CK} a_k \phi_{x+2^{-n\s}k}^{n+1,\s}(y)\;.
\end{equ}

Similarly, there exists a finite collection $\Psi$ of orthonormal compactly supported functions such that,
if we define $V_n$ similarly as before, 
$V_n^\perp$ is given by 
\begin{equ}
V_n^\perp = \mathop{\mathrm{span}} \{\psi_x^{n,\s}\,:\, \psi \in \Psi\quad x \in \Lambda_n^\s\}\;.
\end{equ}
In this expression, given a  function $\psi \in \Psi$, we have set 
$\psi_x^{n,\s} = 2^{-n|\s|/2}\CS_{\s,x}^{2^{-n}} \psi$, where the scaling map was defined in \eref{e:scaling}.
(The additional factor makes sure that the scaling leaves the $L^2$ norm invariant instead of the $L^1$
norm, which is more convenient in this context.)
Furthermore, this collection forms an orthonormal basis of $V_n^\perp$.
Actually, the set $\Psi$ is given by all functions obtained by products of the form $\Pi_{i =1}^d \psi_{\pm}(x_i)$,
where $\psi_- = \psi$ and $\psi_+ = \phi$, and where at least one factor consists of an instance of $\psi$.

\subsection{A convergence criterion in \texorpdfstring{$\CC_\s^\alpha$}{Ca}}
\label{sec:spaceC}

The spaces $\Lip_\s^\alpha$ with $\alpha < 0$ given in Definition~\ref{def:C-alpha} 
enjoy a number of remarkable properties that will be very useful in the sequel. In particular, it turns out
that distributions in $\CC_\s^{\alpha}$ can be completely characterised by the magnitude of the 
coefficients in their wavelet expansion. This is true independently of the particular choice of the scaling
function $\phi$, provided that it has sufficient regularity. 

In this sense, the interplay between the wavelet expansion and the spaces $\CC_\s^\alpha$ is very
similar to the classical interplay between Fourier expansion and fractional Sobolev spaces.
The feature of wavelet expansions that makes it much more suitable for our purpose is that its 
basis functions are compactly supported with supports that are more and more localised
for larger values of $n$.
The announced characterisation is given by the following.

\begin{proposition}\label{prop:charSpaces}
Let $\alpha < 0$ and $\xi \in \CS'(\R^d)$.
Consider a wavelet analysis as above with a compactly supported scaling function $\phi \in \CC^r$ for some $r > |\alpha|$. Then $\xi \in \Lip_\s^{\alpha}$ if and only if $\xi$ belongs to the dual of $\CC^r_0$ and, for every compact set $\K \subset \R^d$, the bounds
\begin{equ}[e:boundScalingAlpha]
|\scal{\xi,\psi_x^{n,\s}}| \lesssim 2^{-{n|\s|\over 2}-n\alpha}\;, \qquad |\scal{\xi, \phi_y^{0}}| \lesssim 1\;, 
\end{equ}
hold uniformly over $n \ge 0$, every $\psi\in \Psi$,
every $x \in \Lambda_n^\s\cap \K$, and every $y \in \Lambda_0^\s\cap \K$.
\end{proposition}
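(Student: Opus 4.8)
The plan is to prove the two implications separately. Since the condition ``$\xi$ lies in the dual of $\CC^r_0$'' enters both sides of the equivalence, the real content is that, for such a $\xi$, the scaling bound of Definition~\ref{def:C-alpha} is equivalent to the two wavelet bounds in \eref{e:boundScalingAlpha}. Throughout I would work with the $L^2$-normalised wavelets $\phi^{n,\s}_x$, $\psi^{n,\s}_x$ of Section~\ref{sec:wavelets}, using that $\phi$ (hence every $\psi\in\Psi$) is compactly supported and $\CC^r$ and that every $\psi\in\Psi$ annihilates polynomials of degree $\le r$, and I would fix once and for all the compact set $\K$ together with a slightly larger compact set $\bar\K$ containing the supports of all rescaled test functions and lattice translates that occur.

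For the direction $\xi\in\CC^\alpha_\s\Rightarrow$ \eref{e:boundScalingAlpha}: given $\psi\in\Psi$ there are a fixed integer $m_0$ (depending only on the diameter of $\supp\psi$) and a fixed constant $C_0$ (depending only on $\phi$) such that, using the semigroup identity $\CS^\delta_{\s,x}\CS^{\delta'}_{\s,0}=\CS^{\delta\delta'}_{\s,x}$, one has $\CS^{2^{-n}}_{\s,x}\psi=C_0\,\CS^{2^{-n+m_0}}_{\s,x}\eta$ for all $n\ge m_0$, where $\eta=C_0^{-1}\CS^{2^{-m_0}}_{\s,0}\psi$ has $\supp\eta\subset B_\s(0,1)$ and $\|\eta\|_{\CC^r}\le 1$. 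Definition~\ref{def:C-alpha} then gives $|\scal{\xi,\CS^{2^{-n}}_{\s,x}\psi}|\lesssim 2^{-n\alpha}$, and since $\psi^{n,\s}_x=2^{-n|\s|/2}\CS^{2^{-n}}_{\s,x}\psi$ this is exactly the first bound in \eref{e:boundScalingAlpha}; the finitely many scales $n<m_0$, as well as the bound on $\scal{\xi,\phi^{0,\s}_y}$, follow from the continuity of $\xi$ on $\CC^r_0$, since for $x,y\in\K$ the relevant test functions form a fixed $\CC^r$-bounded family supported in a fixed compact set.

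For the converse I would first establish the auxiliary estimate $|\scal{\xi,\phi^{n,\s}_z}|\lesssim 2^{-n(|\s|/2+\alpha)}$, uniformly in $n\ge 0$ and $z\in\Lambda^\s_n\cap\K$, by expanding $\phi^{n,\s}_z\in V_n=V_0\oplus\bigoplus_{0\le m<n}V_m^\perp$ in the orthonormal basis $\{\phi^{0,\s}_y\}\cup\{\psi^{m,\s}_y:0\le m<n\}$: at each scale only $O(1)$ basis functions overlap $\supp\phi^{n,\s}_z$, their coefficients are bounded by $\|\phi^{n,\s}_z\|_{L^1}\|\psi^{m,\s}_y\|_{L^\infty}\lesssim 2^{-n|\s|/2}2^{m|\s|/2}$ (and by $\lesssim 2^{-n|\s|/2}$ at the coarsest level), and inserting the hypothesised bounds produces a sum over $m$ which, as $\alpha<0$, is geometric and controlled by its top term $m=n-1$. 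Now, given $\eta$ and $\delta\le 1$ as in Definition~\ref{def:C-alpha}, choose $n_0$ with $2^{-n_0}\le\delta<2^{-n_0+1}$ and write
\begin{equ}
\CS^\delta_{\s,x}\eta=\sum_{z\in\Lambda^\s_{n_0}}\scal{\CS^\delta_{\s,x}\eta,\phi^{n_0,\s}_z}\,\phi^{n_0,\s}_z+\sum_{m\ge n_0}\sum_{\psi\in\Psi}\sum_{z\in\Lambda^\s_m}\scal{\CS^\delta_{\s,x}\eta,\psi^{m,\s}_z}\,\psi^{m,\s}_z\;,
\end{equ}
the convergence being handled by standard wavelet theory (cf.\ the discussion following \eref{e:defPf}) so that one may pair against $\xi$ term by term. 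In the coarse sum only $O(1)$ terms survive --- those $\phi^{n_0,\s}_z$ whose support meets $B_\s(x,\delta)$ --- and each coefficient is $\lesssim\|\CS^\delta_{\s,x}\eta\|_{L^\infty}\|\phi^{n_0,\s}_z\|_{L^1}\lesssim\delta^{-|\s|/2}$, so together with the auxiliary estimate this part contributes $\lesssim\delta^{-|\s|/2}\,2^{-n_0(|\s|/2+\alpha)}\lesssim\delta^\alpha$.

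The fine sum is the crux, and I expect its estimate to be the main obstacle. Here one uses the vanishing moments: since $\psi$ kills polynomials of degree $\le r$, one may subtract from $\CS^\delta_{\s,x}\eta$ its Taylor polynomial around $z$ (of degree $\le r$, hence annihilated by $\psi^{m,\s}_z$) before pairing with $\psi^{m,\s}_z$. The essential point is that, although $\CS^\delta_{\s,x}\eta$ is merely $\CC^r$ in the Euclidean sense, its smoothness is \emph{anisotropic}: on a $d_\s$-ball of radius $\rho\le\delta$ the argument $\CS^\delta_\s(\cdot-x)$ of $\eta$ moves by at most $\lesssim\rho/\delta$, so the Taylor remainder on $\supp\psi^{m,\s}_z$ (of $d_\s$-radius $\rho\sim 2^{-m}\le\delta$, as $m\ge n_0$) is $\lesssim\delta^{-|\s|}(\rho/\delta)^r=\delta^{-|\s|-r}2^{-mr}$. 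Combining this with $\|\psi^{m,\s}_z\|_{L^1}\lesssim 2^{-m|\s|/2}$, the hypothesis $|\scal{\xi,\psi^{m,\s}_z}|\lesssim 2^{-m|\s|/2-m\alpha}$, and the fact that only $O\bigl((\delta 2^m)^{|\s|}\bigr)$ lattice points $z\in\Lambda^\s_m$ contribute at scale $m$, the level-$m$ contribution works out to $\lesssim\delta^{-r}2^{-m(r+\alpha)}$; summing over $m\ge n_0$ --- which converges \emph{precisely because} $r>|\alpha|$ --- gives $\lesssim\delta^{-r}2^{-n_0(r+\alpha)}\lesssim\delta^\alpha$. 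Together with the coarse estimate this yields $|\scal{\xi,\CS^\delta_{\s,x}\eta}|\lesssim\delta^\alpha$, i.e.\ $\xi\in\CC^\alpha_\s$, with all constants depending only on $\alpha$, $\s$, $\phi$ and $\bar\K$. (For non-integer $\alpha$ the Taylor argument with $\eta$ of the minimal regularity $-\lfloor\alpha\rfloor$ is already clean; for integer $\alpha$ one takes the wavelet slightly more regular, with a few more vanishing moments, without changing the structure.)
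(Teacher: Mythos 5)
Your proof is correct and follows essentially the same route as the paper's: necessity by rescaling the wavelets into admissible test functions, and sufficiency by expanding $\CS^\delta_{\s,x}\eta$ in the wavelet basis, splitting at the scale $n_0$ with $2^{-n_0}\sim\delta$, using the vanishing moments together with $r>|\alpha|$ for the fine scales and the geometric sum enabled by $\alpha<0$ for the coarse scales. The only (inessential) difference is organisational: you project onto $V_{n_0}$ and derive the auxiliary bound $|\scal{\xi,\phi^{n_0,\s}_z}|\lesssim 2^{-n_0(|\s|/2+\alpha)}$ by telescoping from level $0$, whereas the paper keeps the full level-$0$ expansion and bounds the coarse-scale $\psi$-coefficients directly by $L^1$--$L^\infty$ duality before invoking Lemma~\ref{lem:sumExp}.
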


The proof of Proposition~\ref{prop:charSpaces} relies on classical arguments very similar
to those found for example in the monograph \cite{MR1228209}. Since the spaces with inhomogeneous 
scaling do not seem to be standard in the literature and since we consider localised versions
of the spaces, we prefer to provide a proof. 
Before we proceed, we state the following elementary fact:

\begin{lemma}\label{lem:sumExp}
Let $a\in\R$ and let $b_-, b_+ \in \R$. Then, the bound
\begin{equ}
\sum_{n =0}^{n_0} 2^{an} 2^{-b_-(n_0-n)} + \sum_{n =n_0}^{\infty} 2^{an} 2^{-b_+(n-n_0)} \lesssim 2^{a n_0}\;,
\end{equ}
holds provided that $b_+ > a$ and $b_- > -a$.\qed
\end{lemma}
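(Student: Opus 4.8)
The plan is to handle the two sums separately, reducing each one to a convergent geometric series by an index shift, and then to add the resulting bounds.

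For the first sum I would substitute $m = n_0 - n$, so that the summand $2^{an}2^{-b_-(n_0-n)}$ becomes $2^{a(n_0-m)}2^{-b_-m} = 2^{an_0}\,2^{-(a+b_-)m}$, and the range $0 \le n \le n_0$ turns into $0 \le m \le n_0$. Pulling out the factor $2^{an_0}$, the first sum equals $2^{an_0}\sum_{m=0}^{n_0}2^{-(a+b_-)m}$. The hypothesis $b_- > -a$ says precisely that $a+b_- > 0$, so the series $\sum_{m \ge 0}2^{-(a+b_-)m}$ converges, with value $(1-2^{-(a+b_-)})^{-1}$; in particular the truncated sum is bounded by that constant, uniformly in $n_0$ (the truncation only helps). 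Hence the first sum is at most a constant times $2^{an_0}$.

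For the second sum I would analogously set $m = n - n_0 \ge 0$, so that $2^{an}2^{-b_+(n-n_0)}$ becomes $2^{a(n_0+m)}2^{-b_+m} = 2^{an_0}\,2^{(a-b_+)m}$, whence the second sum equals $2^{an_0}\sum_{m \ge 0}2^{(a-b_+)m}$. Since $b_+ > a$ gives $a-b_+ < 0$, this geometric series converges to $(1-2^{a-b_+})^{-1}$, a constant independent of $n_0$, so the second sum too is bounded by a constant times $2^{an_0}$.

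Adding the two estimates yields the claim, with implicit constant $(1-2^{-(a+b_-)})^{-1} + (1-2^{a-b_+})^{-1}$ depending only on $a$, $b_-$, $b_+$. There is no genuine obstacle here; the only thing to note is that the two conditions $b_- > -a$ and $b_+ > a$ are exactly what is needed to make the respective geometric series summable.
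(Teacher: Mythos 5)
Your proof is correct and is exactly the elementary geometric-series argument the paper has in mind; the paper in fact omits the proof entirely (the lemma is stated with an immediate \qed), so there is nothing to compare beyond noting that your index shifts and the identification of $b_->-a$, $b_+>a$ as the summability conditions are precisely right.
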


\begin{proof}[of~Proposition~\ref{prop:charSpaces}]
It is clear that the condition \eref{e:boundScalingAlpha} is necessary, 
since it boils down to taking $\eta \in \Psi$ and $\delta = 2^{-n}$ in Definition~\ref{def:C-alpha}. 
In order to show that it is also sufficient, we take an arbitrary test function $\eta \in \CC^r$ with support in $B_1$
and we rewrite $\scal{\xi,\CS_{\s,x}^{\delta} \eta}$ as
\begin{equ}
\scal{\xi,\CS_{\s,x}^{\delta} \eta}  = \sum_{n\ge 0}\sum_{y\in\Lambda_n^\s} \scal{\xi, \psi_y^{n,\s}}\scal{\psi_y^{n,\s}, \CS_{\s,x}^{\delta} \eta}   + \sum_{y\in\Lambda_0^\s} \scal{\xi, \phi_y^{0,\s}}\scal{ \phi_y^{0,\s}, \CS_{\s,x}^{\delta} \eta}\;. \label{e:decompProd}
\end{equ}
Let furthermore $n_0$ be the smallest integer such that $2^{-n_0} \le \delta$. 
For the situations where the supports of $\psi_y^{n,\s}$ and $\CS_{\s,x}^{\delta} \eta$ overlap, we then have the following bounds.

First, we note that if $(x,y)$ contributes to \eref{e:decompProd}, then $\|x-y\|_\s \le C$
for some fixed constant $C$. 
As a consequence of this, it follows that one  has the bound
\begin{equ}[e:boundScal]
|\scal{\xi,\psi_y^{n,\s}}|  \lesssim 2^{-{n|\s|\over 2}-n\alpha}\;,
\end{equ}
uniformly over all pairs $(x,y)$ yielding a non-vanishing contribution to \eref{e:decompProd}.

For $n \ge n_0$, and $\|x-y\|_\s \le C\delta$, we furthermore have the bound 
\begin{equ}[e:boundscalPhi]
|\scal{\psi_y^{n,\s}, \CS_{\s,x}^{\delta} \eta}| \lesssim 2^{-(n-n_0)\bigl(r + {|\s|\over 2}\bigr)}2^{n_0|\s|\over 2}\;,
\end{equ}
so that
\begin{equ}
\sum_{y \in \Lambda_n^\s} |\scal{\psi_y^{n,\s}, \CS_{\s,x}^{\delta} \eta}| \lesssim 2^{-(n-n_0)\bigl(r - {|\s|\over 2}\bigr)}2^{n_0|\s|\over 2}\;.
\end{equ}
Here and below, the proportionality constants are uniform over all $\eta$ with 
$\|\eta\|_{\CC^r} \le 1$ with $\supp \eta \subset B_1$.
On the other hand, for $n \le n_0$, and $\|x-y\|_\s \le C 2^{-n_0}$, we have the bound
\begin{equ}[e:boundfpsi]
|\scal{\psi_y^{n,\s}, \CS_{\s,x}^{\delta} \eta}| \lesssim 2^{n{|\s|\over 2}}\;,
\end{equ}
so that, since only finitely many terms contribute to the sum,
\begin{equ}
\sum_{y \in \Lambda_n^\s} |\scal{\psi_y^{n,\s}, \CS_{\s,x}^{\delta} \eta}| \lesssim 2^{n{|\s|\over 2}}\;.
\end{equ}
Since, by the assumptions on $r$ and $\alpha$, one has indeed
$r + {|\s|\over 2} > \alpha + {|\s|\over 2}$ and ${|\s|\over 2} > {|\s|\over 2} - \alpha$, we can apply Lemma~\ref{lem:sumExp}
to conclude that the first sum in \eref{e:decompProd} is indeed bounded by a multiple of $\delta^\alpha$,
which is precisely the required bound.
The second term on the other hand 
satisfies a bound similar to \eref{e:boundfpsi} with $n=0$, so that the claim follows.
\end{proof}

\begin{remark}
For $\alpha \ge 0$, it is not so straightforward to characterise the H\"older regularity of a function
by the magnitude of its wavelet coefficients due to special behaviour
at integer values, but for non-integer values the characterisation given above still holds, 
see \cite{MR1228209}.
\end{remark}

Another nice property of the spaces $\Lip_\s^{\alpha}$ is that, using Proposition~\ref{prop:charSpaces}, 
one can give a very useful and sharp condition for a sequence
of elements in $V_n$ to converge to an element in $\Lip_\s^{\alpha}$. Once again, we fix a multiresolution
analysis of sufficiently high regularity (i.e.\ $r > |\alpha|$) and the spaces $V_n$ are given in terms
of that particular analysis.
For this characterisation, 
we use the fact that 
a sequence
$\{f_n\}_{n \ge 0}$ with $f_n \in V_n$ for every $n$ can always be written as
\begin{equ}[e:defAnfn]
f_n = \sum_{x \in \Lambda_n^\s} A_x^n \phi_x^{n,\s}\;,\qquad A_x^n = \scal{\phi_x^{n,\s}, f_n}\;.
\end{equ}
Given a sequence of coefficients $A_x^n$,  we then define $\delta A_x^n$ by
\begin{equ}
\delta A_x^n = A_x^n - \sum_{k \in \CK} a_k A_{x+2^{-n\s}k}^{n+1}\;,
\end{equ}
where the set $\CK$ and the structure constants $a_k$ are as in \eref{e:decompPhi}. We then have
the following result, which can be seen as a generalisation of the ``sewing lemma''  
(see \cite[Prop.~1]{MR2091358} or \cite[Lem.~2.1]{MR2261056}),
which can itself be viewed as a generalisation of Young's original theory of integration \cite{Young}.
In order to make the link to these theories, consider the case where $\R^d$ is replaced by an interval
and take for $\phi$ the Haar wavelets.

\begin{theorem}\label{theo:sowingLemma}
Let $\s$ be a scaling of $\R^d$, let $\alpha < 0 < \gamma$, and fix a wavelet basis with regularity $r > |\alpha|$.
For every $n\ge 0$, let $x \mapsto A_x^n$ be a function on $\R^d$ satisfying the bounds
\begin{equ}[e:boundA]
|A_x^n| \le \|A\| 2^{-{n\s\over 2} - \alpha n} \;,\qquad
|\delta A_x^n| \lesssim \|A\|2^{-{n\s\over 2} - \gamma n}\;,
\end{equ}
for some constant $\|A\|$, uniformly over $n \ge 0$ and $x\in \R^d$.

Then, the sequence $\{f_n\}_{n \ge 0}$ given by $f_n = \sum_{x \in \Lambda_n^\s} A_x^n\, \phi_x^{n,\s}$ converges
in $\Lip_\s^{\bar \alpha}$ for every $\bar \alpha < \alpha$ and its limit $f$ belongs to $\Lip_\s^\alpha$. Furthermore, the bounds
\begin{equ}[e:boundCauchy]
\|f - f_n\|_{\bar \alpha} \lesssim \|A\| 2^{-(\alpha - \bar \alpha) n}\;,
\qquad \|\CP_n f - f_n\|_{\alpha} \lesssim \|A\| 2^{-\gamma n}\;,
\end{equ}
hold for $\bar \alpha \in (\alpha-\gamma,\alpha)$, where $\CP_n$ is as in \eref{e:defPf}.
\end{theorem}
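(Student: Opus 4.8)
The plan is to run the standard telescoping-plus-orthogonal-splitting argument, adapted to the non-isotropic multiresolution analysis of Section~\ref{sec:wavelets}. Write $f_n = f_0 + \sum_{m=0}^{n-1}(f_{m+1}-f_m)$, and use $V_{m+1}=V_m\oplus V_m^\perp$ to split each increment as $f_{m+1}-f_m = (\CP_m f_{m+1}-f_m) + \CP_m^\perp f_{m+1}$, where $\CP_m$ and $\CP_m^\perp$ are the orthogonal projections onto $V_m$ and onto its complement $V_m^\perp$ in $V_{m+1}$. Since $\{\phi_x^{m,\s}\}$ is orthonormal, $A_x^m = \scal{\phi_x^{m,\s},f_m}$, and the refinement identity \eref{e:decompPhin} gives $\scal{\phi_x^{m,\s},f_{m+1}} = \sum_{k\in\CK}a_k A_{x+2^{-m\s}k}^{m+1}$, so the diagonal term is exactly $\CP_m f_{m+1}-f_m = -\sum_{x\in\Lambda_m^\s}\delta A_x^m\,\phi_x^{m,\s}$. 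For the off-diagonal term, write $\CP_m^\perp f_{m+1} = \sum_{x\in\Lambda_m^\s,\,\psi\in\Psi}\scal{\psi_x^{m,\s},f_{m+1}}\,\psi_x^{m,\s}$; expanding $\psi_x^{m,\s}$ in the basis $\{\phi_y^{m+1,\s}\}$ of $V_{m+1}$ and using $\int\psi_x^{m,\s}=0$ to subtract the constant $A_x^m$ yields $\scal{\psi_x^{m,\s},f_{m+1}} = \sum_y(A_y^{m+1}-A_x^m)\scal{\psi_x^{m,\s},\phi_y^{m+1,\s}}$, a sum of $O(1)$ terms, whence $|\scal{\psi_x^{m,\s},f_{m+1}}| \lesssim \|A\|\,2^{-m|\s|/2-\alpha m}$ directly from the first bound in \eref{e:boundA}.

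The second ingredient is a wavelet counting lemma, stated once and reused. First, if $g=\sum_{x\in\Lambda_n^\s}c_x\phi_x^{n,\s}$ with $\sup_x|c_x|\le M$, then $\|g\|_{\bar\beta;\K}\lesssim M\,2^{n|\s|/2}$ for any $\bar\beta\le 0$ (at each point only $O(1)$ of the $\phi_x^{n,\s}$ are nonzero, each of size $2^{n|\s|/2}$, so this is just $\|g\|_{L^\infty}\|\CS_{\s,z}^\delta\eta\|_{L^1}$). Second, if $g=\sum_{x,\psi}c_x^\psi\psi_x^{n,\s}$ with $\sup|c_x^\psi|\le M\,2^{-n|\s|/2-\beta n}$ for some $\beta<0$, then $\|g\|_{\bar\beta;\K}\lesssim M\,2^{-(\beta-\bar\beta)n}$ for $\bar\beta<\beta$ with $r>|\bar\beta|$; this follows by testing against $\CS_{\s,z}^\delta\eta$, splitting $\delta\ge 2^{-n}$ from $\delta<2^{-n}$, counting the $O((2^n\delta)^{|\s|})$, resp.\ $O(1)$, overlapping $\psi_x^{n,\s}$, and using the $r$ vanishing moments of $\psi$ to gain $(2^n\delta)^{-r}$ in the regime $\delta\ge 2^{-n}$. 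Applying these to the two pieces of $f_{m+1}-f_m$ (with $M=\|A\|$; for the diagonal piece $|c_x|\lesssim\|A\|2^{-m|\s|/2-\gamma m}$, for the off-diagonal one $\beta=\alpha$) gives $\|\CP_m f_{m+1}-f_m\|_{\bar\alpha;\K}\lesssim\|A\|\,2^{-\gamma m}$ and $\|\CP_m^\perp f_{m+1}\|_{\bar\alpha;\K}\lesssim\|A\|\,2^{-(\alpha-\bar\alpha)m}$. Both are summable for $\bar\alpha<\alpha$ and $\gamma>0$, so $\{f_n\}$ is Cauchy, hence convergent, in $\Lip_\s^{\bar\alpha}$; summing the tails gives $\|f-f_n\|_{\bar\alpha;\K}\lesssim\|A\|\,2^{-(\alpha-\bar\alpha)n}$, which for $\bar\alpha\in(\alpha-\gamma,\alpha)$ is the first bound in \eref{e:boundCauchy}. (It is enough to treat $\bar\alpha$ close to $\alpha$, since $\Lip_\s^{\bar\alpha'}\subset\Lip_\s^{\bar\alpha}$ whenever $\bar\alpha<\bar\alpha'$.)

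To see that $f\in\Lip_\s^\alpha$ and to get the second bound in \eref{e:boundCauchy}, I would compute the wavelet coefficients of $f$ directly and invoke Proposition~\ref{prop:charSpaces}. Fix $m\ge0$; then $\psi_x^{m,\s}\in V_{m+1}\subset V_j$ for every $j>m$, so $\scal{\CP_j^\perp f_{j+1},\psi_x^{m,\s}}=0$, and therefore $\scal{f,\psi_x^{m,\s}} = \scal{f_{m+1},\psi_x^{m,\s}} - \sum_{j>m}\scal{\sum_y\delta A_y^j\phi_y^{j,\s},\,\psi_x^{m,\s}}$. Bounding $|\scal{\phi_y^{j,\s},\psi_x^{m,\s}}|\le\|\phi_y^{j,\s}\|_{L^1}\|\psi_x^{m,\s}\|_{L^\infty}$ with $O(2^{(j-m)|\s|})$ contributing $y$ and using the second bound in \eref{e:boundA} gives $|\scal{f,\psi_x^{m,\s}}|\lesssim\|A\|\,2^{-m|\s|/2-\alpha m} + \|A\|\,2^{-m|\s|/2}\sum_{j>m}2^{-\gamma j}\lesssim\|A\|\,2^{-m|\s|/2-\alpha m}$ (here $\gamma>0>\alpha$ is used); the companion bound $|\scal{f,\phi_y^{0,\s}}|\lesssim\|A\|$ is obtained identically. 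Hence $f\in\Lip_\s^\alpha$ with $\|f\|_{\alpha;\K}\lesssim\|A\|$. For the last estimate, $\CP_n f - f_n = \sum_{x\in\Lambda_n^\s}(\scal{f,\phi_x^{n,\s}}-A_x^n)\phi_x^{n,\s}$ and, by the same argument, $\scal{f,\phi_x^{n,\s}}-A_x^n = -\delta A_x^n - \sum_{j>n}\scal{\sum_y\delta A_y^j\phi_y^{j,\s},\,\phi_x^{n,\s}}$ has size $\lesssim\|A\|\,2^{-n|\s|/2-\gamma n}$; since these coefficients pair to zero against $\psi_y^{m,\s}$ for $m\ge n$, feeding them into the counting lemma together with Proposition~\ref{prop:charSpaces} yields $\|\CP_n f - f_n\|_{\alpha;\K}\lesssim\|A\|\,2^{-\gamma n}$.

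The one genuinely structural step is the decomposition of the first paragraph: that the diagonal part of the increment is exactly $-\sum_x\delta A_x^m\phi_x^{m,\s}$ (so its size is governed by the $\gamma$-bound), while the off-diagonal part is only ever controlled by the crude $\alpha$-bound on $A$ — which is precisely why convergence takes place in $\Lip_\s^{\bar\alpha}$ for $\bar\alpha<\alpha$ whereas the membership $f\in\Lip_\s^\alpha$ must be recovered separately from the cancellations carried by $\delta A$. The remaining work is routine wavelet bookkeeping; the only delicate point there is to keep the three competing exponents — the $|\s|/2$ from $L^2$-normalisation, the Hölder index, and the number $r$ of vanishing moments — in balance so that the geometric series over $m$ (and over $j$) converge, which is exactly where the hypotheses $\bar\alpha<\alpha$, $\gamma>0$ and $r>|\alpha|$ enter.
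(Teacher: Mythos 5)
Your argument is correct and is essentially the paper's proof: the same splitting of $f_{m+1}-f_m$ into its $V_m$-component $-\sum_x \delta A_x^m\,\phi_x^{m,\s}$ (controlled by the $\gamma$-bound) and its $V_m^\perp$-component (controlled only by the crude $\alpha$-bound), followed by the wavelet characterisation of Proposition~\ref{prop:charSpaces}. The only (harmless) deviations are that you bound the $V_m$-part by a plain $L^\infty$ estimate and recover $f\in\Lip_\s^\alpha$ and the bound on $\CP_n f - f_n$ by computing the wavelet coefficients of the limit directly, whereas the paper organises the same estimates through the quantities $\|\CQ_m g_n\|_\alpha$; both routes rest on identical inequalities.
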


\begin{proof}
By linearity, it is sufficient to restrict ourselves to the case $\|A\|= 1$.
By construction, we have $f_{n+1} - f_n \in V_{n+1}$, so that we can decompose this difference as
\begin{equ}[e:defgdf]
f_{n+1} - f_n = g_n + \delta f_n\;,
\end{equ}
where $\delta f_n \in V_n^\perp$ and $g_n \in V_n$. By Proposition~\ref{prop:charSpaces}, we 
note that there exists a constant $C$ such that, for every $n\ge 0$ and $m \ge n$, and for every $\beta < 0$, one has
\begin{equ}
\Bigl\|\sum_{k=n}^m \delta f_k\Bigr\|_{\beta} \le C \sup_{k \in \{n,\ldots,m\}} \bigl\|\delta f_k\bigr\|_{\beta}\;,
\end{equ}
so that a sufficient condition for the sequence $\{\sum_{k=0}^n\delta f_k\}_{n \ge 0}$ to have the required properties
is given by
\begin{equ}[e:convdeltaf]
\lim_{n \to \infty}  \|\delta f_n\|_{\bar \alpha} = 0\;,\qquad 
\sup_{n}  \|\delta f_n\|_{\alpha} < \infty\;.
\end{equ}
Regarding the bounds on $\delta f_n$, we have
\begin{equ}
\scal{\delta f_n, \psi_x^{n,\s}} = \scal{f_{n+1} - f_n, \psi_x^{n,\s}} = \sum_{\|x-y\|_\s \le K 2^{-n|\s|}} a_{xy}A_{y}^{n+1}\;,
\end{equ}
where the $a_{xy} = \scal{\phi_y^{n+1,\s},\psi_x^{n,\s}}$ 
are a finite number of uniformly bounded coefficients 
and $K>0$ is some fixed constant. 
It then follows from the assumption on the coefficients $A_{y}^n$ that 
\begin{equ}
|\scal{\delta f_n, \psi_x^{n,\s}}| \lesssim  2^{-{n |\s|\over 2} - \alpha n}\;.
\end{equ}
Combining this with the characterisation of $\Lip_\s^{\bar \alpha}$ given in Proposition~\ref{prop:charSpaces}, 
we conclude that
\begin{equ}[e:boundSumdf]
\|\delta f_n\|_{\bar \alpha} \lesssim 2^{-(\alpha-\bar \alpha)n}\;,\qquad
\|\delta f_n\|_{\alpha} \lesssim 1\;,
\end{equ}
so that the condition \eref{e:convdeltaf} is indeed satisfied.

It remains to show that the sequence of partial sums of the $g_k$ from \eref{e:defgdf}
also satisfies the requested properties. 
Using again the characterisation given by 
Proposition~\ref{prop:charSpaces}, we see that 
\begin{equ}[e:boundRemg]
\Bigl\|\sum_{k=n}^m g_k\Bigr\|_{\alpha} \lesssim \sup_{N\ge 0} \sum_{k =n}^m \|\CQ_N g_k\|_{\alpha}\;.
\end{equ}
From the definition of $g_n$, we furthermore have the identity
\begin{equs}
\scal{g_n, \phi_x^{n,\s}} &= \scal{f_{n+1}-f_n, \phi_x^{n,\s}} =  \Bigl(\sum_{k \in \CK} a_k \scal{f_{n+1},  \phi_{x+2^{-n\s}k}^{n+1,\s}}\Bigr) -  \scal{f_n, \phi_x^{n,\s}}\\
&= - \delta A_x^n\;, \label{e:gnAn}
\end{equs}
so that one can decompose $g_n$ as
\begin{equ}[e:decompgn]
g_n = -\sum_{x \in \Lambda_n^\s} \delta A_x^n\,\phi_x^{n,\s}\;.
\end{equ}
It follows in a straightforward way from the definitions that, for $m \le n$, there exists a constant $C$ such that we have the bound
\begin{equ}[e:boundpsiphi]
|\scal{\psi_y^{m,\s}, \phi_x^{n,\s}}| \le C2^{(m-n){|\s|\over 2}}\one_{\|x-y\|_\s \le C 2^{-m}}\;.
\end{equ}
Since on the other hand, one has
\begin{equ}
\bigl|\bigl\{x \in \Lambda_n^\s\,:\, \|x-y\|_\s \le C 2^{-m}\bigr\}\bigr| \lesssim 2^{(n-m)|\s|}\;,
\end{equ}
we obtain from this and \eref{e:decompgn} the bound 
\begin{equs}
|\scal{\psi_y^{m,\s}, g_n}| &\lesssim 2^{(n-m){|\s|\over 2}}\sup\bigl\{ |\delta A_x^n| \,:\, \|x-y\|_\s \le C 2^{-m}\bigr\}  \\
&\lesssim 2^{-m{|\s|\over 2} - \gamma n}\;,\label{e:boundGn1}
\end{equs}
where we used again the fact that $\|x-y\|_\s \lesssim d_\s(y,\d D)$ by the definition of the functions $\psi_y^{m,\s}$.
Combining this
with the characterisation of $\Lip_\s^{\alpha}$ given in Proposition~\ref{prop:charSpaces}, we conclude that
\begin{equ}
\|\CQ_m g_n\|_{\alpha} \lesssim 2^{\alpha m -\gamma n} \one_{m \le n}\;,
\end{equ}
so that 
\begin{equ}
\sum_{k=n}^m \|\CQ_N g_k\|_{\alpha} \lesssim  \sum_{k=n \vee N}^m 2^{\alpha N - \gamma k}
\lesssim 2^{\alpha N - \gamma (N \vee n)}\;.
\end{equ}
This expression is maximised at $N=0$, so that the bound 
$\|\sum_{k=n}^m g_k\|_{\alpha} \lesssim 2^{- \gamma n}$
follows from \eref{e:boundRemg}. Combining this with \eref{e:boundSumdf}, we thus obtain
\eref{e:boundCauchy}, as stated.
\end{proof}

A simple but important corollary of the proof is given by
\begin{corollary}
In the situation of Theorem~\ref{theo:sowingLemma}, let $\K\subset \R^d$ be a compact set and let $\bar \K$ be its $1$-fattening. Then, provided that \eref{e:boundA} holds uniformly over $\bar \K$, the bound
\eref{e:boundCauchy} still holds with $\|\cdot\|_{\alpha}$ replaced by $\|\cdot\|_{\alpha;\K}$.
\end{corollary}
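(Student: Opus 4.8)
The plan is to observe that the entire proof of Theorem~\ref{theo:sowingLemma} is local, and therefore localises to $\K$ with no essential change. The only point at which a global statement about $\Lip_\s^\alpha$ or $\Lip_\s^{\bar\alpha}$ enters that proof is through Proposition~\ref{prop:charSpaces}, and that proposition is already formulated locally: it controls $\|\xi\|_{\alpha;\K}$ (and likewise $\|\xi\|_{\bar\alpha;\K}$) purely in terms of the wavelet coefficients $\scal{\xi,\psi_x^{n,\s}}$ with $x\in\Lambda_n^\s\cap\K$, together with the coarse coefficients $\scal{\xi,\phi_y^{0,\s}}$ with $y\in\Lambda_0^\s\cap\K$. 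Hence the task reduces to checking that, when $\xi$ is taken to be one of $f-f_n$ or $\CP_n f-f_n$ and the base point lies in $\K$, these particular wavelet coefficients involve only the data $A_z^m$ at points $z$ lying inside the $1$-fattening $\bar\K$.

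First I would recall the decomposition $f_{k+1}-f_k=g_k+\delta f_k$ from \eref{e:defgdf}, with $\delta f_k\in V_k^\perp$ and $g_k\in V_k$, so that $f-f_n=\sum_{k\ge n}(g_k+\delta f_k)$ and, since $\delta f_k\perp V_n$ for $k\ge n$, also $\CP_n f-f_n=\CP_n\sum_{k\ge n}g_k$. The key observation is that, because the scaling function and the mother wavelets are compactly supported and because every scale $k$ occurring satisfies $k\ge n\ge0$, hence $2^{-k}\le1$, each coefficient $\scal{\delta f_k,\psi_x^{k,\s}}$ — which in the proof is a finite sum of terms $a_{xy}A_y^{k+1}$ over $y$ close to $x$ — each coefficient $\scal{g_k,\psi_y^{m,\s}}$ — computed via the representation $g_k=-\sum_z\delta A_z^k\,\phi_z^{k,\s}$ of \eref{e:decompgn} — and each coarse-scale coefficient $\scal{\,\cdot\,,\phi_y^{0,\s}}$ of the functions above, involve only finitely many of the quantities $A_z^k$, $A_z^{k+1}$, $\delta A_z^k$, all at points $z$ within a fixed, scale-independent distance of the base point. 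Thus, for base points in $\K$, only the data $A_z^m$ with $z$ in a fixed bounded neighbourhood of $\K$ are ever used, and that neighbourhood is contained in $\bar\K$. Consequently all the intermediate bounds in the proof of Theorem~\ref{theo:sowingLemma}, in particular \eref{e:boundSumdf} and \eref{e:boundGn1}, remain valid verbatim once $\|A\|$ is replaced by the constant obtained by taking the supremum in \eref{e:boundA} over $\bar\K$ only; summing over $k$ exactly as before then yields \eref{e:boundCauchy} with $\|\cdot\|_{\alpha}$ replaced by $\|\cdot\|_{\alpha;\K}$, and similarly for $\|\cdot\|_{\bar\alpha}$.

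The hard part, such as it is, is purely bookkeeping: one must make sure that the finite, fixed enlargement of $\K$ produced by the supports of the wavelets and by the finitely many refinement coefficients relating consecutive scales is genuinely accounted for. Since every scale involved satisfies $2^{-k}\le1$, this enlargement is by an amount independent of $k$; as is standard for such wavelet arguments its precise size is immaterial, and one may simply let $\bar\K$ denote a fixed fattening (of size $1$, say) large enough to contain it. With this convention understood, no estimate in the proof of Theorem~\ref{theo:sowingLemma} has to be revisited.
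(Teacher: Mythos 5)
Your proposal is correct and follows essentially the same route as the paper: the paper's own proof simply observes that all arguments in Proposition~\ref{prop:charSpaces} and in the proof of Theorem~\ref{theo:sowingLemma} are local, so that $\|\cdot\|_{\alpha;\K}$ is controlled by the wavelet coefficients based in $\bar\K$. Your version merely spells out this locality bookkeeping in more detail, which is fine.
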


\begin{proof}
Follow step by step the argument given above noting that, since all the arguments in the proof
of Proposition~\ref{prop:charSpaces} are local,
one can bound the norm $\|\cdot\|_{\alpha;\K}$ by the smallest constant such that the
bounds \eref{e:boundScalingAlpha} hold uniformly over $x,y \in \bar \K$.
\end{proof}

\subsection{The reconstruction theorem for distributions}

One very important special case of Theorem~\ref{theo:sowingLemma} is given by 
the situation where there exists a family $x \mapsto \zeta_x \in \CS'(\R^d)$
of distributions such that the sequence $f_n$ is given by \eref{e:defAnfn}
with $A_x^n = \scal{\phi_x^{n,\s}, \zeta_x}$. 
Once this is established, the reconstruction theorem will be straightforward.
In the situation just described, we have the following result which, as we will
see shortly, can really be interpreted as a generalisation of the reconstruction theorem.

\begin{proposition}\label{prop:reconstrGen}
In the above situation, assume that the family $\zeta_x$ is such that, for some
constants $K_1$ and $K_2$ and exponents $\alpha < 0 < \gamma$, the bounds
\begin{equ}[e:assGenReconst]
|\scal{\phi_x^{n,\s}, \zeta_x - \zeta_y}| \le K_1 \|x-y\|_\s^{\gamma-\alpha} 2^{- {n |\s|\over 2}-\alpha n}\;,
\quad |\scal{\phi_x^{n,\s}, \zeta_x}| \le K_2 2^{-\alpha n - {n |\s|\over 2}}\;,
\end{equ}
hold uniformly over all $x,y$ such that $2^{-n} \le \|x-y\|_\s \le 1$. Here, as before,
$\phi$ is the scaling function 
for a wavelet basis of regularity $r > |\alpha|$.
Then, the assumptions of Theorem~\ref{theo:sowingLemma} are satisfied.
Furthermore, the limit distribution $f \in \CC_\s^\alpha$ satisfies the bound
\begin{equ}[e:localBehaviour]
|(f - \zeta_x)(\CS_{\s,x}^{\delta} \eta)| \lesssim  K_1 \delta^\gamma\;,
\end{equ}
uniformly over $\eta \in \CB^r_{\s,0}$.
Here, the proportionality constant only depends on the choice of wavelet basis, but not on $K_2$.
\end{proposition}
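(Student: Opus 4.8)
The plan is to deduce the statement from Theorem~\ref{theo:sowingLemma} together with a local estimate describing how well the limit $f$ is approximated by $\zeta_x$ near $x$. First I would verify that the coefficients $A_x^n=\scal{\phi_x^{n,\s},\zeta_x}$ satisfy the hypotheses of Theorem~\ref{theo:sowingLemma}. The bound $|A_x^n|\le K_2\,2^{-n|\s|/2-\alpha n}$ is precisely the second estimate in \eref{e:assGenReconst}. For the increments, \eref{e:decompPhin} gives $\delta A_x^n=\sum_{k\in\CK}a_k\scal{\phi^{n+1,\s}_{x+2^{-n\s}k},\zeta_x-\zeta_{x+2^{-n\s}k}}$; the term with $k=0$ vanishes, and for $k\neq0$ one has $2^{-(n+1)}\le\|2^{-n\s}k\|_\s\lesssim 2^{-n}\le1$, so the first estimate in \eref{e:assGenReconst} at scale $n+1$ applies to each summand and gives $|\delta A_x^n|\lesssim K_1\,2^{-n|\s|/2-\gamma n}$. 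Theorem~\ref{theo:sowingLemma} then applies: $f_n=\sum_{z\in\Lambda_n^\s}A_z^n\phi_z^{n,\s}$ converges in $\CC_\s^{\bar\alpha}$ for every $\bar\alpha<\alpha$ to a limit $f\in\CC_\s^\alpha$ obeying \eref{e:boundCauchy}. The key bookkeeping remark is that, inspecting the proof of Theorem~\ref{theo:sowingLemma}, the estimate $\|\CP_nf-f_n\|_\alpha\lesssim 2^{-\gamma n}$ comes solely from the pieces $g_k$, which are controlled by $\delta A$ and hence by $K_1$; its constant does not see $K_2$.

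For \eref{e:localBehaviour}, fix $x$, $\eta\in\CB^r_{\s,0}$ and $\delta\in(0,1]$, and let $n_0$ be minimal with $2^{-n_0}\le\delta$. Since $f_N\to f$ in $\CC_\s^{\bar\alpha}$ — choosing $\bar\alpha$ close enough to $\alpha$ that $-\bar\alpha<r$, so that the pairing with $\CS_{\s,x}^\delta\eta\in\CC^r_0$ passes to the limit — and $\CP_N\zeta_x\to\zeta_x$ in $\CS'$, telescoping and splitting the resulting sum at level $n_0$ yields, in $\CS'$,
\begin{equ}
f-\zeta_x \;=\; \bigl(f_{n_0}-\CP_{n_0}\zeta_x\bigr)\;+\;\sum_{k\ge n_0}\bigl(g_k+h_k\bigr)\;,
\end{equ}
where $g_k\in V_k$ and $h_k\in V_k^\perp$ are the two components of $(f_{k+1}-f_k)-(\CP_{k+1}\zeta_x-\CP_k\zeta_x)$; explicitly $g_k=-\sum_z\delta A_z^k\,\phi_z^{k,\s}$, while $\scal{h_k,\psi_y^{k,\s}}=\scal{f_{k+1}-\zeta_x,\psi_y^{k,\s}}=\sum_z\scal{\phi_z^{k+1,\s},\zeta_z-\zeta_x}\scal{\phi_z^{k+1,\s},\psi_y^{k,\s}}$, the sums running over the $O(1)$ lattice points near $y$. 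The point of subtracting $\CP_N\zeta_x$ (instead of, say, $\CP_{n_0}f$) is that it prevents the diagonal size $K_2$ from entering: it only survives inside $\zeta_x$ itself.

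Testing this identity against $\CS_{\s,x}^\delta\eta$, and using throughout that only the $\lesssim(\delta 2^k)^{|\s|}$ lattice points within distance $\lesssim\delta$ of $x$ contribute, together with $|\scal{\phi_z^{m,\s},\CS_{\s,x}^\delta\eta}|\lesssim 2^{-m|\s|/2}\delta^{-|\s|}$ for $m\le n_0$ and the wavelet localisation \eref{e:boundscalPhi} for $m\ge n_0$, one finds: the base term is $\lesssim K_1\,2^{-\gamma n_0}$, using $|\scal{\phi_z^{n_0,\s},\zeta_z-\zeta_x}|\lesssim K_1\|z-x\|_\s^{\gamma-\alpha}2^{-n_0|\s|/2-\alpha n_0}\lesssim K_1\,2^{-n_0|\s|/2-\gamma n_0}$ from \eref{e:assGenReconst}; $|g_k(\CS_{\s,x}^\delta\eta)|\lesssim K_1\,2^{-\gamma k}$, from $|\delta A_z^k|\lesssim K_1\,2^{-k|\s|/2-\gamma k}$; and $|h_k(\CS_{\s,x}^\delta\eta)|\lesssim K_1\,2^{-\gamma n_0}2^{-(k-n_0)(r+\alpha)}$, from $|\scal{\phi_z^{k+1,\s},\zeta_z-\zeta_x}|\lesssim K_1\delta^{\gamma-\alpha}2^{-k|\s|/2-\alpha k}$ combined with \eref{e:boundscalPhi} and a short computation of the exponents. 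Summing the geometric series over $k\ge n_0$, which converge because $\gamma>0$ and $r+\alpha>0$ (this is where the hypothesis $r>|\alpha|$ is used, via Lemma~\ref{lem:sumExp}), and using $2^{-\gamma n_0}\asymp\delta^\gamma$, all three contributions are $\lesssim K_1\delta^\gamma$ with a constant depending only on $\gamma,\alpha,\s$ and the wavelet basis; letting $N\to\infty$ — so that the tail of the sum, tested against the fixed function $\CS_{\s,x}^\delta\eta$, vanishes — then gives \eref{e:localBehaviour}.

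I expect the genuine obstacle to be achieving simultaneously the sharp exponent $\delta^\gamma$ — as opposed to the $\delta^\alpha$ that $f\in\CC_\s^\alpha$ alone would give — and a proportionality constant free of $K_2$; this is exactly what forces the decomposition above and the careful tracking of which ingredients of the sowing lemma depend on $K_1$ rather than $K_2$. A subsidiary technical nuisance is that in the terms $\scal{\phi_z^{n,\s},\zeta_z-\zeta_x}$ there are finitely many lattice points $z$ closer to $x$ than the test-function scale $2^{-n}$, where \eref{e:assGenReconst} does not directly apply; these are handled by a short chaining argument — inserting an auxiliary lattice point at distance $\asymp 2^{-n}$ from $x$ and iterating \eref{e:decompPhin} down to the finer scale on which $x$ and $z$ separate, where the expected bound $K_1\,2^{-n|\s|/2-\gamma n}$ is recovered (and which is in any event dominated by the non-exceptional bound whenever $\delta\gtrsim 2^{-n}$).
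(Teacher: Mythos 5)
Your argument is correct and follows essentially the same route as the paper's proof: the same telescoping decomposition $f-\zeta_x=\bigl(f_{n_0}-\CP_{n_0}\zeta_x\bigr)+\sum_{k\ge n_0}(g_k+h_k)$ at the scale $n_0$ determined by $2^{-n_0}\le\delta$, the same wavelet localisation estimates for the three contributions (whose exponents match the paper's), and the same bookkeeping showing that only $K_1$ enters the final constant. The one point where you go beyond the written proof is the chaining remark for lattice points closer to $x$ than the wavelet scale, where \eref{e:assGenReconst} does not literally apply; the paper glosses over this, and your fix is the natural one.
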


\begin{proof}
We are in the situation of Theorem~\ref{theo:sowingLemma} with $A_x^n = \zeta_x(\phi_x^{n,\s})$,
so that one has the identity
\begin{equ}[e:exprdeltaA]
\delta A_x^n = \sum_{k \in \CK}a_k \scal[b]{\zeta_x - \zeta_y,\phi_{y}^{(n+1),\s}}\;,
\end{equ}
where we used the shortcut $y = x+2^{-n\s} k$ in the right hand side.
It then follows immediately from \eref{e:assGenReconst} that  
the assumptions of Theorem~\ref{theo:sowingLemma} are indeed satisfied, so that the sequence
$f_n$ converges to some limit $f$. It remains to show that the local behaviour of $f$
around every point $x$ is given by \eref{e:localBehaviour}.

For this, we write 
\begin{equ}
f - \zeta_x = \bigl(f_{n_0} - \CP_{n_0} \zeta_x\bigr)\label{e:decompWanted} 
 + \sum_{n \ge n_0} \bigl(f_{n+1} - f_n - (\CP_{n+1} - \CP_n) \zeta_x\bigr)\;,
\end{equ}
for some $n_0 > 0$.
We choose $n_0$ to be the smallest integer such that $2^{-n_0} \le \delta$.
Note that, as in \eref{e:boundscalPhi}, one has for $n \ge n_0$ the bounds
\begin{equ}[e:boundsPhi]
|\scal{\psi_y^{n,\s}, \CS_{\s,x}^{\delta} \eta}| \lesssim 2^{n_0|\s|\over 2}2^{-(n-n_0)\bigl(r + {|\s|\over 2}\bigr)}\;,\qquad
|\scal{\phi_y^{n,\s}, \CS_{\s,x}^{\delta} \eta}| \lesssim 2^{n_0|\s|\over 2}2^{-(n-n_0){|\s|\over 2}}\;.
\end{equ}
Since, by construction, the first term in \eref{e:decompWanted} belongs to $V_{n_0}$, we can rewrite it as
\begin{equ}
\bigl(f_{n_0} - \CP_{n_0} \zeta_x\bigr)(\CS_{\s,x}^{\delta} \eta) = \sum_{y \in \Lambda_{n_0}^\s} \bigl(\zeta_y - \zeta_x\bigr)(\phi_y^{n_0,\s})\,\scal{\phi_y^{n_0,\s}, \CS_{\s,x}^{\delta} \eta}\;.
\end{equ}
Since terms appearing in the above sum with 
$\|x-y\|_\s \ge \delta$ are identically $0$, we can use the bound
\begin{equ}
|\bigl(\zeta_y - \zeta_x\bigr)(\phi_y^{n_0,\s})| \lesssim K_12^{-\gamma n_0 - {n_0 |\s|\over 2}}\;.
\end{equ}
Combining this with \eref{e:boundsPhi} and the fact that there
are only finitely many non-vanishing terms in the sum, we obtain the bound
\begin{equ}[e:firstBound]
\bigl|\bigl(f_{n_0} - \CP_{n_0} \zeta_x\bigr)(\CS_{\s,x}^{\delta} \eta)\bigr| \lesssim K_1 2^{- n_0 \gamma} \approx K_1\delta^\gamma\;,
\end{equ}
which is of the required order.

Regarding the second term in \eref{e:decompWanted}, we decompose $f_{n+1} - f_n$ as
in the proof of Theorem~\ref{theo:sowingLemma} as $f_{n+1} - f_n = g_n + \delta f_n$ with
$g_n \in V_n$ and $\delta f_n \in V_n^\perp$.
As a consequence of \eref{e:gnAn} and of the bounds \eref{e:assGenReconst} and \eref{e:boundsPhi}, 
we have the bound
\begin{equs}
\bigl|\scal{g_n , \CS_{\s,x}^{\delta} \eta}\bigr| &\le
\sum_{y \in \Lambda_n^\s} \bigl|\scal{g_n, \phi_y^{n,\s}}\bigr|\,\bigl|\scal{\phi_y^{n,\s}, \CS_{\s,x}^{\delta} \eta}\bigr| \\ &\le
\sum_{y \in \Lambda_n^\s} \bigl|\delta A_y^n\bigr|\,\bigl|\scal{\phi_y^{n,\s}, \CS_{\s,x}^{\delta} \eta}\bigr| \lesssim
K_1 2^{-(n-n_0)(|\s|+\alpha)-\gamma n}\;,
\end{equs}
where we made use of \eref{e:exprdeltaA} for the last bound.
Summing this bound over all $n \ge n_0$, we obtain again a bound of order $K_1 \delta^\gamma$, as required.
It remains to obtain a similar bound for the quantity
\begin{equ}
\sum_{n \ge n_0} \bigl(\delta f_n - (\CP_{n+1} - \CP_n)\zeta_x\bigr) \bigl(\CS_{\s,x}^{\delta} \eta\bigr)\;.
\end{equ}
Note that $\delta f_n$ is nothing but the projection of $f_{n+1}$ onto the
space $V_n^\perp$. Similarly, $(\CP_{n+1} - \CP_n)\zeta_x$ is the projection of $\zeta_x$
onto that same space. As a consequence, we have the identity
\begin{equs}
\bigl(\delta f_n - &(\CP_{n+1} - \CP_n)\zeta_x\bigr) \bigl(\CS_{\s,x}^{\delta} \eta\bigr) \\
&= \sum_{z\in \Lambda_\s^{n+1}}\sum_{y\in \Lambda_\s^{n}}\sum_{\psi \in \Psi} \scal{\zeta_z - \zeta_x,\phi_z^{n+1,\s}} \scal{\phi_z^{n+1,\s}, \psi_y^{n,\s}}
\scal[b]{\psi_y^{n,\s}, \CS_{\s,x}^{\delta} \eta}\;.
\end{equs}
Note that this triple sum only contains of the order of $2^{(n-n_0)|\s|}$ terms since,
for any given value of $y$, the sum over $z$ only has a fixed finite number of non-vanishing terms.
At this stage, we make use of the first bound in \eref{e:boundsPhi}, together with the
assumption \eref{e:assGenReconst} and the fact that $2^{-n_0} \lesssim \|x-z\|_\s \lesssim \delta$ 
for every term in this sum. This yields for this
expression a bound of the order
\begin{equ}
K_12^{(n-n_0)|\s|} \delta^{\gamma-\alpha}  2^{- {n|\s|\over 2} - \alpha n} 2^{{n_0 |\s|\over 2}}
2^{-(n-n_0)(r+|\s|/2)} = K_1\delta^{\gamma-\alpha} 2^{-r(n-n_0)-\alpha n}  \;.
\end{equ}
Since, by assumption, $r$ is sufficiently large so that $r > |\alpha|$, this expression converges to $0$ as $n \to \infty$. Summing over $n \ge n_0$ and combining all of the above bounds, the claim follows at once.
\end{proof}

\begin{remark}
As before, the construction is completely local. As a consequence, the required bounds
hold over a compact $\K$, provided that the assumptions hold over its $1$-fattening
$\bar \K$.
\end{remark}

We now finally have all the elements in place to give the proof of Theorem~\ref{theo:reconstruction}.

\begin{proof}[of Theorem~\ref{theo:reconstruction}]
We first consider the case $\gamma > 0$, where the operator $\CR$ is unique.
In order to construct $\CR$, we will proceed by successive approximations, using
a multiresolution analysis. Again, we fix a wavelet basis as above associated with
a compactly supported scaling function $\phi$. We choose $\phi$ to be $\CC^{r}$ for 
$r > |\min A|$. (Which in particular also implies that the elements $\psi \in \Psi$
annihilate polynomials of degree $r$.)

Since, for any given $n>0$, the functions $\phi_x^{n,\s}$ are orthonormal and since, as $n \to \infty$, they
get closer and closer to forming a basis of very sharply localised functions of $L^2$, 
it appears natural to define a sequence of 
operators $\CR_n \colon \CD^\gamma \to \CC^{r}$ by
\begin{equ}
\CR_n f = \sum_{x \in \Lambda_n^\s} \bigl(\Pi_x  f(x)\bigr)(\phi_x^{n,\s})\, \phi_x^{n,\s}\;,
\end{equ}
and to define $\CR$ as the limit of $\CR_n$ as $n \to \infty$, if such a limit exists.

We are thus precisely in the situation of Proposition~\ref{prop:reconstrGen} with
$\zeta_x = \Pi_x f(x)$. Since we are interested in a local statement, we only need to
construct the distribution $\CR f$ acting on test functions supported on a fixed compact
domain $\K$. As a consequence, since all of our constructions involve some fixed wavelet
basis, it suffices to obtain bounds on the wavelet coefficients $\psi_x^n$ with
$x$ such that $\psi_x^n$ is supported in $\bar \K$, the $1$-fattening of $\K$.

It follows from the definitions of $\CD^\gamma$ and the space of models $\MM_\TT$ that,
for such values of $x$, one has
\begin{equ}
|\scal{\Pi_x f(x), \phi_x^{n,\s}}| \lesssim \|f\|_{\gamma;\bar\K} \|\Pi\|_{\gamma;\bar\K} 2^{-{n|\s|\over 2}- \alpha n}\;,
\end{equ}
where, as before, $\alpha = \min A$ is the smallest homogeneity arising in the description of
the regularity structure $\TT$. Similarly, we have
\begin{equs}
|\scal{\Pi_x f(x) - \Pi_y f(y), \phi_x^{n,\s}}|
&= \bigl|\scal[b]{\Pi_x \bigl(f(x) - \Gamma_{xy} f(y)\bigr), \phi_x^{n,\s}}\bigr| \label{e:boundDiffzeta1}\\
&\lesssim 
\sum_{\ell < \gamma} \$f\$_{\gamma;\bar\K} \|\Pi\|_{\gamma;\bar\K} \|x-y\|_\s^{\gamma-\ell} 2^{-{n|\s|\over 2}- \ell n}\;,
\end{equs}
where the sum runs over elements in $A$. Since, in the assumption of Proposition~\ref{prop:reconstrGen}, 
we only consider points $(x,y)$ such that $\|x-y\|_\s \gtrsim 2^{-n}$, the bound 
\eref{e:assGenReconst} follows.

As a consequence, we can apply Theorem~\ref{theo:sowingLemma} to construct a limiting distribution
$\CR f = \lim_{n \to \infty} \CR_n f$, where convergence takes place in $\CC_\s^{\bar \alpha}$ for
every $\bar \alpha < \alpha$. Furthermore, the limit does itself belong to $\CC_\s^\alpha$.
The bound \eref{e:boundRf} follows immediately from Proposition~\ref{prop:reconstrGen}.

In order to obtain the bound \eref{e:boundRfDiff}, we use again Proposition~\ref{prop:reconstrGen},
but this time with $\zeta_x = \Pi_x f(x) - \bar \Pi_x \bar f(x)$. We then have the identity
\begin{equ}
\zeta_x - \zeta_y = \Pi_x (f(x) - \Gamma_{xy} f(y) - \bar f(x) + \bar \Gamma_{xy} \bar f(y)\bigr)
+ (\Pi_x - \bar \Pi_x) \bigl(\bar f(x) - \bar \Gamma_{xy} \bar f(y)\bigr)\;.
\end{equ}
Similarly to above, it then follows from the definition of $\$f;\bar f\$_{\gamma;\K}$ that
\begin{equ}
\bigl|\scal{\zeta_x - \zeta_y, \phi_x^{n,\s}}\bigr| \lesssim  \bigl(\|\Pi\|_{\gamma;\K} \$f;\bar f\$_{\gamma;\K} + \|\Pi - \bar \Pi\|_{\gamma;\K} \$\bar f\$_{\gamma;\K}\bigr) \|x-y\|_\s^{\gamma-\alpha} 2^{- {n|\s|\over 2} - \alpha n}\;,
\end{equ}
from which the requested bound follows at once.

The bound \eref{e:boundRfDiff2} is obtained again from Proposition~\ref{prop:reconstrGen} 
with $\zeta_x = \Pi_x f(x) - \bar \Pi_x \bar f(x)$. This time however, we aim to obtain
bounds on this quantity by only making use of bounds on $\|f - \bar f\|_{\gamma;\K}$ rather than
$\$f ; \bar f\$_{\gamma;\K}$. Note first that, as a consequence of \eref{e:boundDiffzeta1},
we have the bound
\begin{equ}[e:firstBoundzeta]
\bigl|\scal{\zeta_x - \zeta_y, \phi_x^{n,\s}}\bigr| \lesssim \|x-y\|_\s^{\gamma-\alpha} 2^{-{n|\s|\over 2}- \alpha n}\;.
\end{equ}
On the other hand, we can rewrite $\zeta_x - \zeta_y$ as
\begin{equs}
\zeta_x - \zeta_y &= \Pi_x \bigl(f(x) -  \bar f(x)\bigr) + (\bar \Pi_x - \Pi_x)\bigl(\bar \Gamma_{xy} \bar f(y) - \bar f(x)\bigr) \\
&\qquad - \Pi_x \Gamma_{xy} \bigl(f(y) - \bar f(y)\bigr)
+ \Pi_x\bigl(\bar \Gamma_{xy}  -  \Gamma_{xy} \bigr)\bar f(x)\;.
\end{equs}
It follows at once that one has the bound
\begin{equ}
\bigl|\scal{\zeta_x - \zeta_y, \phi_x^{n,\s}}\bigr| \lesssim \bigl(\|f-\bar f\|_{\gamma;\K}
+ \|\Pi-\bar \Pi\|_{\gamma;\K} + \|\Gamma-\bar \Gamma\|_{\gamma;\K}\bigr) 2^{-{n|\s|\over 2}- \alpha n}\;.
\end{equ}
Combining this with \eref{e:firstBoundzeta} and making use of the bound
$a\wedge b \le a^\kappa b^{1-\kappa}$, which is valid for any two positive numbers $a$ and $b$,
we have
\begin{equ}
\bigl|\scal{\zeta_x - \zeta_y, \phi_x^{n,\s}}\bigr| \lesssim \bigl(\|f-\bar f\|_{\gamma;\K}
+ \|\Pi-\bar \Pi\|_{\gamma;\K} + \|\Gamma-\bar \Gamma\|_{\gamma;\K}\bigr)^\kappa \|x-y\|_\s^{\bar \gamma-\alpha} 2^{-{n|\s|\over 2}- \alpha n}\;,
\end{equ}
from which the claimed bound follows.

We now prove the claim for $\gamma \le 0$. It is clear that in this case $\CR$ cannot be unique since,
if $\CR f$ satisfies \eref{e:boundRf} and $\xi \in \CC_\s^\gamma$, then $\CR f + \xi$ does again satisfy \eref{e:boundRf}.
Still, the existence of $\CR f$ is not completely trivial in general since 
$\Pi_x f(x)$ itself only belongs to $\CC_\s^\alpha$ and one can have $\alpha < \gamma \le 0$ in general. 
It turns out that one very simple choice for $\CR f$ is given by
\begin{equ}[e:Rfgen]
\CR f = \sum_{n \ge 0} \sum_{x\in \Lambda_\s^n} \sum_{\psi \in \Psi} \scal{\Pi_x f(x),\psi_x^{n,\s}} \psi_x^{n,\s}
+ \sum_{x\in \Lambda_\s^0}\scal{\Pi_x f(x),\phi_x^{0,\s}} \phi_x^{0,\s}\;.
\end{equ}
This is obviously not canonical: different choices for our multiresolution analysis yield different
definitions for $\CR$. However, it has the  advantage of not relying at all on the axiom of choice,
which was used in \cite{Extension} to prove a similar result in the one-dimensional case.
Furthermore, it has the additional property that if $f$ is ``constant'' in the sense that
$f(x) = \Gamma_{xy}f(y)$ for any two points $x$ and $y$, then one has the identity
\begin{equ}[e:propRf]
\CR f = \Pi_x f(x)\;,
\end{equ}
where the right hand side is independent of $x$ by assumption. (This wouldn't be the case if the second term in
\eqref{e:Rfgen} were absent.)
Actually, our construction is related in spirit to the one given in \cite{Unterberger}, but it has the advantage of
being very straightforward to analyse.

For $\CR f$ as in \eref{e:Rfgen}, it remains to show that \eref{e:boundRf} holds. Note first that the second
part of \eqref{e:Rfgen} defines a smooth function, so that we can discard it.
To bound the remainder, let $\eta$
be a suitable test function and note that one has the bounds
\begin{equ}
|\scal{\CS_{\s,x}^\delta \eta, \psi_y^{n,\s}}| \lesssim 
\left\{\begin{array}{cl}
	2^{-n{|\s|\over 2}-rn}\delta^{-|\s|-r} & \text{if $2^{-n} \le \delta$,} \\
	2^{n{|\s|\over 2}} & \text{otherwise.}
\end{array}\right.
\end{equ}
Furthermore, one has of course $\scal{\CS_{\s,x}^\delta \eta, \psi_y^{n,\s}} = 0$
unless $\|x-y\|_\s \lesssim \delta + 2^{-n}$.
It also follows immediately from the definition \eref{e:Rfgen} that one has the bound
\begin{equs}
\bigl|\bigl(\CR f - \Pi_x f(x)\bigr)(\psi_y^{n,\s})\bigr| &= \bigl|\bigl(\Pi_y f(y) - \Pi_x f(x)\bigr)(\psi_y^{n,\s})\bigr|
= \bigl|\Pi_y \bigl(f(y) - \Gamma_{yx} f(x)\bigr)(\psi_y^{n,\s})\bigr| \\
&\lesssim \sum_{\beta < \gamma} \|x-y\|_\s^{\gamma-\beta} 2^{-n{|\s|\over 2} - \beta n}\;,
\end{equs}
where the proportionality constant is as in \eref{e:boundRf}.
These bounds are now inserted into the identity
\begin{equ}
\bigl(\CR f - \Pi_x f(x)\bigr)(\CS_{\s,x}^\delta \eta) = 
\sum_{n > 0} \sum_{y\in \Lambda_\s^n} \sum_{\psi \in \Psi} \bigl(\CR f - \Pi_x f(x)\bigr)(\psi_y^{n,\s})
\scal{\CS_{\s,x}^\delta \eta, \psi_y^{n,\s}} \;.
\end{equ}
For the terms with $2^{-n} \le \delta$, we thus obtain a contribution of the order
\begin{equ}
\delta^{|\s|} \sum_{2^{-n} \le \delta} 2^{n|\s|} 
\sum_{\beta < \gamma} \delta^{\gamma-\beta} 2^{-n{|\s|\over 2} - \beta n}
2^{-n{|\s|\over 2}-rn}\delta^{-|\s|-r} \lesssim \delta^{\gamma}\;.
\end{equ}
Here, the bound follows from the fact that we have chosen $r$ such that $r>|\gamma|$ and the factor
$\delta^{|\s|} 2^{n|\s|}$ counts the number of non-zero terms appearing in the sum over $y$. 
For the terms with $2^{-n} > \delta$, we similarly obtain a contribution of
\begin{equ}
\sum_{2^{-n} > \delta} \sum_{\beta < \gamma} \delta^{\gamma-\beta} 2^{-n{|\s|\over 2} - \beta n}
2^{n{|\s|\over 2}}\lesssim \delta^{\gamma}\;,
\end{equ}
where we used the fact that $\beta < \gamma \le 0$. The claim then follows at once.
\end{proof}

\begin{remark}\label{rem:boundReconstr}
Recall that in Proposition~\ref{prop:reconstrGen}, the bound on $f - \zeta_x$ depends on
$K_1$ but not on $K_2$. This shows that in the reconstruction theorem, 
the bound on $\CR f - \Pi_x f(x)$ only depends on the second part of the definition of
$\$f\$_{\gamma;\K}$. This remark will be important when dealing with
singular modelled distributions in Section~\ref{sec:singular} below.
\end{remark}

\subsection{The reconstruction theorem for functions}
\label{sec:reconFcn}

A very important special case is given by the situation in which $\TT$ contains 
a copy of the canonical regularity structure $\TT_{d,\s}$ 
(write $\bar T \subset T$ for the model space associated to the abstract polynomials) 
as in Remark~\ref{rem:polynomStructures},
and where the model $(\Pi, \Gamma)$ we consider yields the canonical polynomial model
when restricted to $\bar T$. 
We consider the particular case of the reconstruction theorem
applied to elements $f \in \CD^\gamma(V)$, where $V$ is a sector 
of regularity $0$, but such that
\begin{equ}[e:propV]
V \subset \bar T + T_\alpha^+\;,
\end{equ}
for some $\alpha \in (0,\gamma)$. Loosely speaking, this states that the 
elements of the model $\Pi$
used to describe $\CR f$ consist only of polynomials and of functions
that are H\"older regular of order $\alpha$ or more.

This is made more precise by the following result:

\begin{proposition}\label{prop:recFcn}
Let $f \in \CD^\gamma(V)$, where $V$ is a sector as in \eref{e:propV}. Then, 
$\CR f$ coincides with the function given by 
\begin{equ}[e:wantedRf]
\CR f(x) = \scal{\one, f(x)}\;,
\end{equ}
and one has $\CR f \in \CC_\s^\alpha$.
\end{proposition}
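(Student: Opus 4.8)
The plan is to identify $\CR f$ with the function $g := \scal{\one, f(\cdot)}$; the membership $\CR f \in \CC^\alpha_\s$ then follows once we know $g \in \CC^\alpha_\s$. Write $\hat g(x) = \sum_{\beta < \alpha} \CQ_\beta f(x)$ for the truncation of $f$ below level $\alpha$, and $r(x) = f(x) - \hat g(x) = \sum_{\alpha \le \beta < \gamma} \CQ_\beta f(x)$, which takes values in $T_\alpha^+$. The first observation is that $\hat g$ is \emph{polynomial-valued}, i.e.\ takes values in $\bar T$: since $V \subset \bar T + T_\alpha^+$ and the homogeneity-$\beta$ component of $T_\alpha^+$ vanishes for $\beta < \alpha$, projecting onto $T_\beta$ gives $V_\beta \subset \bar T_\beta$ for every $\beta < \alpha$.

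The next step is to check that $\hat g$ belongs to $\CD^\alpha(\bar T)$ for the restriction of the given model to $\bar T$ (which, by assumption, is the canonical polynomial model). The bound on $\|\hat g(x)\|_\beta$ for $\beta < \alpha$ is immediate from $\$f\$_{\gamma;\K} < \infty$. For the increment bound, fix $m < \alpha$ and write, using $\CQ_m r(x+h) = 0$,
\begin{equ}
\CQ_m\bigl(\hat g(x+h) - \Gamma_{x+h,x}\hat g(x)\bigr) = \CQ_m\bigl(f(x+h) - \Gamma_{x+h,x}f(x)\bigr) + \CQ_m\Gamma_{x+h,x}r(x)\;.
\end{equ}
The first term is $\lesssim \$f\$\,\|h\|_\s^{\gamma - m}$ by $f \in \CD^\gamma$; the second is $\lesssim \$f\$\,\sum_{\alpha \le \beta < \gamma}\|h\|_\s^{\beta - m}$ by applying the second bound of \eref{e:boundPi} to each homogeneous component of $r(x)$ (note $m < \alpha \le \beta$, so the bound applies). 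Since $\|h\|_\s \le 1$ and both $\gamma$ and $\beta$ are $\ge \alpha$, both contributions are $\lesssim \$f\$\,\|h\|_\s^{\alpha - m}$, locally uniformly. By Definition~\ref{def:Calphas}, this shows $g = \scal{\one, \hat g(\cdot)} \in \CC^\alpha_\s$, with $\hat g$ as its (essentially unique) jet.

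It remains to show $\CR f = g$. I would argue that both $\CR f$ and $g$, viewed as distributions, satisfy the bound $|(\,\cdot\, - \Pi_x \hat g(x))(\CS_{\s,x}^\delta \eta)| \lesssim \delta^\alpha$, uniformly over $x$ in a compact set, $\delta \in (0,1]$ and test functions $\eta$ as in \eref{e:boundRf}. For $\CR f$ this follows by combining \eref{e:boundRf} with the estimate $(\Pi_x r(x))(\CS_{\s,x}^\delta \eta) = \sum_{\alpha \le \beta < \gamma} (\Pi_x \CQ_\beta f(x))(\CS_{\s,x}^\delta \eta) = O(\delta^\alpha)$, which is the first bound of \eref{e:boundPi}. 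For $g$, one uses that the model is canonical on $\bar T$, so that $(\Pi_x \hat g(x))(y) = \scal{\one, \Gamma_{yx}\hat g(x)}$ is exactly the Taylor polynomial of $g$ at $x$ of scaled degree $< \alpha$; the $m = 0$ case of the increment bound just established then gives $|g(y) - (\Pi_x \hat g(x))(y)| \lesssim \|y - x\|_\s^\alpha$, and integrating against $\CS_{\s,x}^\delta \eta$ (supported in a ball of $\s$-radius $\lesssim \delta$ and with bounded $L^1$ norm) yields $O(\delta^\alpha)$. Since $\alpha > 0$, the uniqueness part of Theorem~\ref{theo:reconstruction} — whose proof only uses the validity of such a bound for an element of $\CD^\alpha$, not the prior existence of a reconstruction — forces $\CR f = g$. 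Together with the first part this gives $\CR f = \scal{\one, f(\cdot)} \in \CC^\alpha_\s$, as claimed.

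I expect the only genuine work to be the verification that $\hat g \in \CD^\alpha(\bar T)$: one must keep careful track of which homogeneous components of $f$ are forced to be polynomial and apply the $\Gamma$-bounds of \eref{e:boundPi} to the tail $r$. Everything else is bookkeeping together with an appeal to the already-proved uniqueness statement in the reconstruction theorem.
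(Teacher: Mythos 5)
Your proof is correct and follows the same route as the paper's: show that the projection of $f$ onto $\bar T$ (your $\hat g$) lies in $\CD^\alpha(\bar T)$ so that $\scal{\one,f(\cdot)}\in\CC^\alpha_\s$, then verify that both $\CR f$ and this function approximate $\Pi_x\hat g(x)$ to order $\delta^\alpha$ and invoke the uniqueness part of the reconstruction theorem. The paper states these two steps as immediate; you have simply supplied the details (the splitting of the increment into the $\CD^\gamma$ bound on $f$ plus the $\Gamma$-bound on the tail $r$, and the $O(\delta^\alpha)$ estimate on $\Pi_x r(x)$), all of which are accurate.
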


\begin{proof}
The fact that the function $x\mapsto \scal{\one, f(x)}$ belongs to $\CC_\s^\alpha$ is an immediate consequence
of the definitions and the fact that the projection of $f$ onto $\bar T$ belongs to $\CD^\alpha$.
It follows immediately that one has
\begin{equ}
\int_{\R^d} \bigl(\CR f(x) - \scal{\one, f(x)}\bigr)\, \psi_y^\lambda(x)\,dx \lesssim \lambda^\alpha\;,
\end{equ}
from which, by the uniqueness of the reconstruction operator,
we deduce that one does indeed have the identity \eref{e:wantedRf}.
\end{proof}

Another useful fact is the following result showing that once we know 
that $f \in \CD^\gamma$ for some $\gamma > 0$, the components of $f$ 
in $\bar T_k$ for $0 < k < \gamma$ are uniquely determined by the knowledge of
the remaining components. More precisely, we have

\begin{proposition}\label{prop:uniqueDeriv}
If $f, g \in \CD^\gamma$ with $\gamma > 0$ are such that 
$f(x) - g(x) \in \bigoplus_{0 < k < \gamma} \bar T_k$, then $f = g$.
\end{proposition}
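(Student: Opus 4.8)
The plan is to write $h=f-g$. Since $\CD^\gamma$ is a linear space for a fixed model, $h\in\CD^\gamma$, and by hypothesis $h(x)\in\bigoplus_{0<|k|_\s<\gamma}\bar T_k$ for every $x$. Because the model under consideration restricts to the canonical polynomial model on $\bar T$ (Remark~\ref{rem:polynomStructures}), for each $y$ the distribution $\Pi_y h(y)=\sum_{0<|k|_\s<\gamma}c_k(y)\,(\cdot-y)^k$ is an honest polynomial in the variable $\cdot-y$ with vanishing constant term, so in particular $\bigl(\Pi_y h(y)\bigr)(y)=0$. I would record this elementary observation first.

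The key step is then a pointwise bound on $\Pi_x h(x)$ near $x$, obtained exactly as in the argument of Remark~\ref{rem:continuousModel}: fixing $x$ and taking $\|x-y\|_\s\le 1$, one has, using $\Pi_y=\Pi_x\circ\Gamma_{xy}$ together with $\bigl(\Pi_y h(y)\bigr)(y)=0$,
\begin{equs}
\bigl|(\Pi_x h(x))(y)\bigr| &= \bigl|\bigl(\Pi_x \bigl(h(x) - \Gamma_{xy} h(y)\bigr)\bigr)(y)\bigr| \\
&\le \sum_{\beta < \gamma} \|h(x) - \Gamma_{xy} h(y)\|_\beta\,\|x-y\|_\s^{\beta}
\;\lesssim\; \|x-y\|_\s^{\gamma}\;,
\end{equs}
where I used that $|(\Pi_x\tau)(y)|\lesssim\|\tau\|\,\|x-y\|_\s^\beta$ for $\tau\in\bar T_\beta$ (a polynomial of scaled degree $\beta$, so $|(\cdot-x)^k|\le\|x-y\|_\s^{|k|_\s}$), then the defining bound of $\CD^\gamma$; the sum over $\beta\in A$ with $\beta<\gamma$ is finite since $A$ is locally finite. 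Note that no appeal to the reconstruction theorem is actually needed for this.

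Finally I would observe that a polynomial $P(w)=\sum_{0<|k|_\s<\gamma}c_k\,w^k$ with $|P(w)|\lesssim\|w\|_\s^{\gamma}$ for $\|w\|_\s\le 1$ must vanish identically: substituting $w\mapsto(t^{\s_1}w_1,\dots,t^{\s_d}w_d)$ turns the estimate into $\bigl|\sum_m t^m P_m(w)\bigr|\lesssim t^{\gamma}$ for $\|w\|_\s\le 1$ and $t\in(0,1]$, where $P_m$ collects the monomials of scaled degree $m$ (only finitely many $m<\gamma$ occur); dividing by the lowest power of $t$ present and letting $t\to 0$ forces $P_m\equiv 0$ for each $m$, hence all $c_k=0$. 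Applying this to $P(\cdot)=(\Pi_x h(x))(x+\cdot)$ yields $\Pi_x h(x)=0$, and since $\Pi_x$ is injective on $\bar T$ (the functions $(\cdot-x)^k$ are linearly independent) this gives $h(x)=0$. As $x$ is arbitrary, $f=g$. The only point to be a little careful about is this anisotropic rescaling argument, which is nonetheless entirely elementary; there is no serious obstacle.
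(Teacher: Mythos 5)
Your proof is correct, but it takes a genuinely different route from the paper's. The paper disposes of the statement in two lines: setting $h=f-g$, it invokes the uniqueness part of the reconstruction theorem (in the setting of Proposition~\ref{prop:recFcn}, where $h$ takes values in the polynomial sector and $\scal{\one,h(x)}=0$) to get $\CR h=0$, and then appeals to Remark~\ref{rem:uniquehatphi} — the fact that the components of a polynomial-valued modelled distribution are forced to coincide with the directional derivatives of the function it represents — to conclude $h=0$. You bypass the reconstruction operator entirely: from $\bigl(\Pi_y h(y)\bigr)(y)=0$, the identity $\Pi_y=\Pi_x\circ\Gamma_{xy}$, and the defining bound of $\CD^\gamma$, you show directly that the polynomial $\Pi_x h(x)$ is $O(\|x-y\|_\s^\gamma)$ near $x$, and then kill it by the anisotropic rescaling argument; injectivity of $\Pi_x$ on $\bar T$ finishes the job. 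Your rescaling step is precisely the elementary fact that is left implicit in Remark~\ref{rem:uniquehatphi} (whose justification the paper only sketches by reference to the proof of Lemma~\ref{lem:Lipschitz}), so your argument is in effect a self-contained unwinding of the paper's. What the paper's version buys is brevity given the machinery already in place; what yours buys is independence from the reconstruction theorem and a fully explicit form of the ``a polynomial of scaled degree $<\gamma$ that vanishes to order $\gamma$ must be zero'' step, valid for any non-trivial scaling $\s$.
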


\begin{proof}
Setting $h = f-g$, one has $\CR h = 0$ from the uniqueness of the reconstruction operator. 
The fact that this implies that $h = 0$ was already shown in Remark~\ref{rem:uniquehatphi}.
\end{proof}

\begin{remark}
In full generality, it is not true that $h$ is completely determined by the knowledge
of $\CR h$. Actually, whether such a determinacy holds or not depends on the intricate details
of the particular model $(\Pi,\Gamma)$ that is being considered. However, for
models that are built in a ``natural'' way from a sufficiently non-degenerate Gaussian
process, it does tend to be the case that $\CR h$ fully determines $h$. See \cite{Natesh} for
more details in the particular case of rough paths.
\end{remark}

\subsection{Consequences of the reconstruction theorem}

To conclude this section, we provide a few very useful consequences of the reconstruction theorem
which shed some light on the interplay between $\Pi$ and $\Gamma$.
First, we show that for $\alpha > 0$, the action of $\Pi_x$ on $T_\alpha$ is
completely determined by $\Gamma$. In  a way, one can interpret this result as a 
generalisation of \cite[Theorem~2.2.1]{Lyons}.

\begin{proposition}\label{prop:extension}
Let $\TT$ be a regularity structure, let $\alpha > 0$, and let $(\Pi,\Gamma)$ be a model for $\TT$
over $\R^d$ with scaling $\s$. Then, the action of $\Pi$ on $T_\alpha$ is 
completely determined by the action of $\Pi$ on $T_\alpha^-$ and the action of $\Gamma$ on
$T_\alpha$. Furthermore, one has the bound 
\begin{equ}[e:boundExt]
\sup_{x \in \K}\sup_{\delta < 1}\sup_{\phi \in\CB^r_{\s,0}}\sup_{a \in T_\alpha \atop \|a\| \le 1} \delta^{-\alpha}|(\Pi_x a)(\CS_{\s,x}^\delta \phi)| \le \|\Pi\|_{\alpha;\bar \K} \|\Gamma\|_{\alpha;\bar \K}\;, 
\end{equ}
where $\bar \K$ denotes the $1$-fattening of $\K$ as before and $r > |\min A|$.
If $(\bar \Pi, \bar \Gamma)$ is a second model for the same regularity structure,
one furthermore has the bound
\begin{equs}[e:boundExtDiff]
\sup_{x\in \K}\sup_{\delta;\phi;a}\delta^{-\alpha}|(\Pi_x a-\bar \Pi_x a)(\CS_{\s,x}^\delta \phi)| &\le \|\Pi - \bar \Pi\|_{\alpha;\bar \K} \bigl(\|\Gamma\|_{\alpha;\bar \K} + \|\Gamma\|_{\alpha;\bar \K}\bigr) \\
&\quad + \|\Gamma - \bar \Gamma\|_{\alpha;\bar \K} \bigl(\|\Pi\|_{\alpha;\bar \K} + \|\bar \Pi\|_{\alpha;\bar \K}\bigr)\;,
\end{equs}
where the supremum runs over the same set as in \eref{e:boundExt}.
\end{proposition}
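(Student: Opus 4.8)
The plan is to deduce everything from the reconstruction theorem applied to a cleverly chosen modelled distribution. Fix $a \in T_\alpha$ with $\|a\| \le 1$ and a base point $x_0$. The idea is to define $f_a \colon \R^d \to T_\alpha^-$ by $f_a(x) = \Gamma_{x x_0} a - \CQ_\alpha \Gamma_{x x_0} a$, i.e.\ the image of $a$ under $\Gamma_{x x_0}$ with its (constant-in-$x$ along this family, since $\Gamma$ preserves $T_\alpha$ modulo lower order... actually $\CQ_\alpha \Gamma_{xx_0} a = a$) top component removed, so that $f_a$ takes values in $T_\alpha^-$. First I would check that $f_a \in \CD^\gamma$ for $\gamma = \alpha$: indeed $f_a(x) - \Gamma_{xy} f_a(y) = \Gamma_{xx_0}a - \Gamma_{xy}\Gamma_{yx_0}a - a + \Gamma_{xy}a = \Gamma_{xy}a - a$, whose component in $T_m$ for $m < \alpha$ is bounded by $\|\Gamma\|_{\alpha;\bar\K}\|x-y\|_\s^{\alpha-m}$ by the second bound in \eref{e:boundPi}, which is exactly the $\CD^\alpha$ estimate (the first term in $\$f_a\$_{\alpha;\K}$ is bounded similarly). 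Note crucially that $f_a$ depends only on $\Gamma$, not on $\Pi$.

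Next I would invoke the reconstruction theorem: since $\gamma = \alpha > 0$, there is a \emph{unique} distribution $\CR f_a$ with $\bigl|(\CR f_a - \Pi_x f_a(x))(\CS^\delta_{\s,x}\eta)\bigr| \lesssim \delta^\alpha \|\Pi\|_{\alpha;\bar\K}\$f_a\$_{\alpha;\bar\K}$. Now observe that $\Pi_x f_a(x) = \Pi_x(\Gamma_{xx_0}a - a) = \Pi_{x_0}a - \Pi_x a$, using $\Pi_x \Gamma_{xx_0} = \Pi_{x_0}$. Hence $\CR f_a + \Pi_{x_0}a$ is the unique distribution approximated by $\Pi_x a$ to order $\delta^\alpha$ near each $x$, which shows that the family $x \mapsto \Pi_x a$ (equivalently $\Pi_{x_0}a$, since $\Pi_x a = \Pi_{x_0}\Gamma^{-1}_{xx_0}a$ and the $\Gamma$'s are known) is determined by $\Gamma$ together with $\Pi\restriction T_\alpha^-$ — that last piece enters only through $f_a(x) \in T_\alpha^-$ and the bound $\$f_a\$$. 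This gives the claimed uniqueness. Taking $x = x_0$ and $\delta$-testing at $x_0$, we get $|(\Pi_{x_0}a)(\CS^\delta_{\s,x_0}\eta)| = |(\CR f_a - \Pi_{x_0}f_a(x_0) + \Pi_{x_0}a)(\CS^\delta_{\s,x_0}\eta)|$; since $\Pi_{x_0}f_a(x_0) = \Pi_{x_0}a - \Pi_{x_0}a = 0$ (because $f_a(x_0) = \Gamma_{x_0 x_0}a - a = 0$), this equals $|(\CR f_a)(\CS^\delta_{\s,x_0}\eta)| = |(\CR f_a - \Pi_{x_0}f_a(x_0))(\CS^\delta_{\s,x_0}\eta)| \lesssim \delta^\alpha\|\Pi\|_{\alpha;\bar\K}\$f_a\$_{\alpha;\bar\K}$, and since $\$f_a\$_{\alpha;\bar\K} \lesssim \|\Gamma\|_{\alpha;\bar\K}\|a\|$ we obtain \eref{e:boundExt} (tracking constants carefully to get exactly the product $\|\Pi\|\,\|\Gamma\|$ rather than a larger multiple).

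For the difference bound \eref{e:boundExtDiff}, I would run the same argument with the two-model version of reconstruction, \eref{e:boundRfDiff}, applied to $f_a$ (for $\Gamma$) and $\bar f_a$ (for $\bar\Gamma$), again evaluated at $x=x_0$ where both $f_a(x_0)$ and $\bar f_a(x_0)$ vanish, so that $(\Pi_{x_0}a - \bar\Pi_{x_0}a)(\CS^\delta_{\s,x_0}\eta) = (\CR f_a - \bar\CR \bar f_a)(\CS^\delta_{\s,x_0}\eta)$; then \eref{e:boundRfDiff} gives a bound in terms of $\|\bar\Pi\|\,\$f_a;\bar f_a\$ + \|\Pi-\bar\Pi\|\,\$f_a\$$, and it remains to estimate $\$f_a;\bar f_a\$_{\alpha;\bar\K}$ by $\|\Gamma-\bar\Gamma\|_{\alpha;\bar\K}\|a\|$ and $\$f_a\$_{\alpha;\bar\K}$ by $\|\Gamma\|_{\alpha;\bar\K}\|a\|$, plugging in $f_a - \bar f_a = \Gamma_{\cdot x_0}a - \bar\Gamma_{\cdot x_0}a$ and the explicit form of the combined difference computed above. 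The main obstacle is purely bookkeeping: arranging the decomposition of $\$f_a;\bar f_a\$$ so that the resulting constants assemble into precisely the symmetric expression on the right-hand side of \eref{e:boundExtDiff} rather than something merely comparable — this requires being careful about which model's norm multiplies which difference, and using $\Pi_x = \Pi_{x_0}\Gamma_{x_0 x}$ to move everything to the base point $x_0$ cleanly. There is also a minor point to handle: strictly one should take $x_0 \in \K$ arbitrary and note all estimates are uniform in $x_0 \in \K$ with $\bar\K$ its $1$-fattening, which is immediate since the wavelet/reconstruction constants are local.
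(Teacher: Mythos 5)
Your proposal follows essentially the same route as the paper: there one defines $f_{a,x}(y) = \Gamma_{yx} a - a$, checks $f_{a,x}\in\CD^\alpha$ with norm controlled by $\|\Gamma\|_{\alpha;\bar\K}$, identifies $\Pi_x a = \CR f_{a,x}$ via the uniqueness part of the reconstruction theorem, and reads off \eref{e:boundExt} from the reconstruction bound evaluated at the base point where $f_{a,x}$ vanishes, with the difference bound obtained from the two-model reconstruction estimate exactly as you suggest. The only blemish is a sign slip in your intermediate bookkeeping: the distribution locally approximated by $\Pi_x a$ is $\Pi_{x_0}a - \CR f_a$ rather than $\CR f_a + \Pi_{x_0}a$ (and in fact $\CR f_a = \Pi_{x_0}a$, so this distribution is $0$), which makes your displayed identity $|(\Pi_{x_0}a)(\CS^\delta_{\s,x_0}\eta)| = |(\CR f_a - \Pi_{x_0}f_a(x_0) + \Pi_{x_0}a)(\CS^\delta_{\s,x_0}\eta)|$ carry a spurious $+\Pi_{x_0}a$; once that term is dropped, your final chain of estimates is precisely the paper's argument.
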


\begin{proof}
For any $a \in T_\alpha$ and $x \in \R^d$, we define a function $f_{a,x}\colon \R^d \to T_\alpha^-$ by
\begin{equ}[e:fax]
f_{a,x}(y) = \Gamma_{yx} a - a\;.
\end{equ}
It follows immediately from the definitions that $f_{a,x} \in \CD^\alpha$
and that, uniformly over all $a$ with $\|a\|\le 1$,
its norm over any domain $\K$ is bounded by the corresponding 
norm of $\Gamma$. Indeed, we have the identity
\begin{equs}
\Gamma_{yz} f_{a,x}(z) - f_{a,x}(y) &= \bigl(\Gamma_{yx} a - \Gamma_{yz} a\bigr) - \bigl(\Gamma_{yx} a - a\bigr)\\
&= a - \Gamma_{yz} a\;,
\end{equs}
so that the required bound follows from Definition~\ref{def:model}.

We claim that one then has $\Pi_x a = \CR f_{a,x}$, which depends only on the action of $\Pi$ on $T_\alpha^-$. 
This follows from the fact that, for every $y \in \R^d$, one has $\Pi_x a = \Pi_y \Gamma_{yx} a$, so that
\begin{equs}
\bigl(\Pi_x a - \Pi_y f_{a,x}(y)\bigr)(\CS_{\s,y}^\lambda \eta)
&= \bigl(\Pi_y a\bigr)(\CS_{\s,y}^\lambda \eta) \lesssim \lambda^\alpha
\|\Pi\|_{\alpha;\bar \K} \$f_{a,x}\$_{\alpha;\bar \K} \\
&\le \lambda^\alpha
\|\Pi\|_{\alpha;\bar \K} \|\Gamma\|_{\alpha;\bar \K}\;, \label{e:goodBound}
\end{equs} 
for all suitable test functions $\eta$. The claim now follows from the uniqueness part of the reconstruction theorem. Furthermore, the bound \eref{e:boundExt} is a consequence of
\eref{e:goodBound} with $y=x$, noting that $f_{a,x}(x) = 0$.

It remains to obtain the bound \eref{e:boundExtDiff}.
For this, we consider two models as in the statement, and we set
$\bar f_{a,x}(y) = \bar \Gamma_{yx} a - a$,
We then apply the generalised version of the reconstruction theorem,
Proposition~\ref{prop:reconstrGen}, noting that we are exactly in the situation that it covers,
with $\zeta_y = \Pi_y f_{a,x}(y) - \bar \Pi_y \bar f_{a,x}(y)$.
We then have the identity
\begin{equs}
\zeta_y - \zeta_z &= \bigl(\Pi_y (\Gamma_{yx} - I)  - \bar \Pi_y (\bar \Gamma_{yx}-I)\bigr) a - \bigl(\Pi_z (\Gamma_{zx}-I) - \bar \Pi_z (\bar\Gamma_{zx}-I)\bigr)a\\
&= \Pi_y (\Gamma_{yz} - I)a   - \bar \Pi_y (\bar \Gamma_{yz}-I) a \\
&= (\Pi_y - \bar \Pi_y) (\Gamma_{yz} - I)a   + \bar \Pi_y (\Gamma_{yz} - \bar \Gamma_{yz}) a \;.
\end{equs}
It follows that one has the bound
\begin{equs}
2^{n |\s|\over 2}\scal{\zeta_y - \zeta_z, \phi_y^{n,\s}} &\le \|\Pi- \bar \Pi\|_{\alpha;\bar \K}\|\Gamma\|_{\alpha;\bar \K} \sum_{\beta < \alpha}
\|y-z\|_\s^{\alpha - \beta}2^{- \beta n}\\
&\qquad + \|\Gamma - \bar\Gamma\|_{\alpha;\bar \K}\|\bar \Pi\|_{\alpha;\bar \K} \sum_{\beta < \alpha}
\|y-z\|_\s^{\alpha - \beta}2^{- \beta n}\;,
\end{equs}
where, in both instances, the sum runs over elements in $A$. 
Since we only need to consider pairs $(y,z)$ such that
$\|y-z\|_\s \ge 2^{-n}$, this does imply the bound \eref{e:assGenReconst} with the desired constants,
so that the claim follows from Proposition~\ref{prop:reconstrGen}.
\end{proof}

Another consequence of the reconstruction theorem is that, in order to characterise a model
$(\Pi,\Gamma)$ on some sector $V \subset T$, 
it suffices to know the action of $\Gamma_{xy}$ on $V$, 
as well as the values of $\bigl(\Pi_x a\bigr)(\phi_x^{n,\s})$ for $a \in V$, $x \in \Lambda_\s^n$ and 
$\phi$ the scaling function of some fixed sufficiently regular multiresolution analysis as 
in Section~\ref{sec:wavelets}.
More precisely, we have:

\begin{proposition}\label{prop:wavelet}
A model $(\Pi,\Gamma)$ for a given regularity structure is completely determined by 
the knowledge of $\bigl(\Pi_x a\bigr)(\phi_x^{n,\s})$ for $x \in \Lambda_\s^n$ and $n \ge 0$,
as well as $\Gamma_{xy} a$ for $x,y \in \R^d$.

Furthermore, for every compact set $\K\subset \R^d$ and every sector $V$,
one has the bound
\begin{equ}[e:boundNormPi]
\|\Pi\|_{V;\K} \lesssim  \bigl(1+\|\Gamma\|_{V;\K}\bigr) \sup_{\alpha \in A_V} \sup_{a \in V_\alpha} \sup_{n \ge 0} \sup_{x \in \Lambda_\s^n(\bar \K)} 2^{\alpha n + {n|\s|\over 2}}  {\bigl|\bigl(\Pi_x a\bigr)(\phi_x^{n,\s})\bigr|\over \|a\|}\;.
\end{equ}
Here, we denote by $\|\Pi\|_{V;\K}$ the norm given as in Definition~\ref{def:model}, but where we
restrict ourselves to vectors $a \in V$.
Finally, for any two models $(\Pi, \Gamma)$ and $(\bar \Pi, \bar \Gamma)$, one has
\begin{equ}
\|\Pi-\bar \Pi\|_{V;\K} \lesssim  \bigl(1+\|\Gamma\|_{V;\K}\bigr) \sup_{\alpha \in A_V} \sup_{a \in V_\alpha} \sup_{n \ge 0} \sup_{x \in \Lambda_\s^n(\bar \K)} 2^{\alpha n + {n|\s|\over 2}}  {\bigl|\bigl(\Pi_x a - \bar \Pi_x a\bigr)(\phi_x^{n,\s})\bigr|\over \|a\|}\;.
\end{equ}

\end{proposition}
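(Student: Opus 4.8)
The plan is to exploit the cocycle identity $\Pi_x = \Pi_z \circ \Gamma_{zx}$. Fix $a \in T_\alpha$; since $\Gamma_{zx}a - a \in T_\alpha^-$, we have $\Gamma_{zx}a = \sum_{\beta \le \alpha}\CQ_\beta\Gamma_{zx}a$ with $\CQ_\alpha\Gamma_{zx}a = a$ and only finitely many nonzero terms, so for every $z \in \Lambda_n^\s$
\begin{equ}
\bigl(\Pi_x a\bigr)(\phi_z^{n,\s}) = \bigl(\Pi_z\Gamma_{zx}a\bigr)(\phi_z^{n,\s}) = \sum_{\beta \le \alpha}\bigl(\Pi_z \CQ_\beta\Gamma_{zx}a\bigr)(\phi_z^{n,\s})\;,
\end{equ}
and each summand is one of the prescribed numbers $\bigl(\Pi_w b\bigr)(\phi_w^{n,\s})$ ($w \in \Lambda_n^\s$, $b \in T$), since the decomposition of $\Gamma_{zx}a$ uses only $\Gamma$. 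For the determinacy claim (with $V = T$) I would introduce the constant modelled distribution $h_{a,x}(y) = \Gamma_{yx}a$: the cocycle property gives $h_{a,x}(y) - \Gamma_{yz}h_{a,x}(z) = 0$, while the second bound of Definition~\ref{def:model} bounds $\|h_{a,x}(y)\|_\beta$ locally, so $h_{a,x} \in \CD^\gamma$ for any $\gamma > \max(\alpha,0)$; since $\Pi_y h_{a,x}(y) = \Pi_x a$ for all $y$, the uniqueness part of Theorem~\ref{theo:reconstruction} forces $\CR h_{a,x} = \Pi_x a$. But the construction of $\CR$ in the proof of that theorem gives $\Pi_x a = \CR h_{a,x} = \lim_{n}\CR_n h_{a,x}$ in $\CC_\s^{\bar\alpha}$ with $\CR_n h_{a,x} = \sum_{z\in\Lambda_n^\s}\bigl(\Pi_z\Gamma_{zx}a\bigr)(\phi_z^{n,\s})\,\phi_z^{n,\s}$, and by the display above each $\CR_n h_{a,x}$ is built entirely from the data and from $\Gamma$; hence so is $\Pi_x a$, and since $\Gamma_{xy}$ is given the model is determined.

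For the bound \eref{e:boundNormPi}, write $M$ for its right hand side, fix $a \in V_\alpha$ with $\|a\| \le 1$, $x \in \K$, $\delta \le 1$, $\phi \in \CB^r_{\s,0}$, put $\eta = \CS_{\s,x}^\delta\phi$, and let $n_0$ be the smallest integer with $2^{-n_0} \le \delta$. Expanding $\eta$ in the wavelet basis down to level $n_0$ exactly as in the proof of Proposition~\ref{prop:charSpaces} and pairing with $\Pi_x a$, every non-vanishing term involves a dyadic point $z$ with $\|x-z\|_\s \lesssim \delta$; by the display above, the definition of $M$, and the estimate $\|\CQ_\beta\Gamma_{zx}a\| \lesssim \|\Gamma\|_{V;\bar\K}\,\|x-z\|_\s^{\alpha-\beta}$ for $\beta < \alpha$ (the $\beta=\alpha$ component being $a$ itself),
\begin{equ}
\bigl|\bigl(\Pi_x a\bigr)(\phi_z^{n,\s})\bigr| \;\lesssim\; \bigl(1+\|\Gamma\|_{V;\bar\K}\bigr)\,M\,2^{-n|\s|/2}\sum_{\beta \in A,\ \beta \le \alpha}\|x-z\|_\s^{\alpha-\beta}\,2^{-\beta n}\;,
\end{equ}
and the same bound for $\bigl(\Pi_x a\bigr)(\psi_z^{n,\s})$, $\psi\in\Psi$, since each such wavelet is a fixed finite combination of the $\phi_w^{n+1,\s}$ with $\|w-z\|_\s \lesssim 2^{-n}$. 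Inserting these estimates, together with the decay bounds for $\scal{\eta,\phi_z^{n_0,\s}}$ and $\scal{\eta,\psi_z^{n,\s}}$ and the count of non-vanishing $z$ per level (both borrowed from the proof of Proposition~\ref{prop:charSpaces}), using $\|x-z\|_\s \lesssim \delta \approx 2^{-n_0}$, and summing the resulting geometric series over $n \ge n_0$ — convergent precisely because $r > |\min A|$ — gives $\bigl|\bigl(\Pi_x a\bigr)(\CS_{\s,x}^\delta\phi)\bigr| \lesssim \bigl(1+\|\Gamma\|_{V;\bar\K}\bigr)\,M\,\delta^\alpha$, which is \eref{e:boundNormPi}.

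The last bound follows from the same computation applied to two models: the wavelet coefficients of $\Pi_x a - \bar\Pi_x a$ at a dyadic point $z$ equal $\bigl(\Pi_z\Gamma_{zx}a - \bar\Pi_z\bar\Gamma_{zx}a\bigr)(\phi_z^{n,\s})$, which one splits as $\bigl((\Pi_z - \bar\Pi_z)\Gamma_{zx}a\bigr)(\phi_z^{n,\s})$ — estimated as above with $M$ replaced by the difference-supremum of the statement — plus $\bigl(\bar\Pi_z(\Gamma_{zx}-\bar\Gamma_{zx})a\bigr)(\phi_z^{n,\s})$, which vanishes when the two models carry the same $\Gamma$; summing as before gives the claim. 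The genuinely routine ingredients — the two test-function wavelet-decay bounds and the geometric summation — are already present in the proof of Proposition~\ref{prop:charSpaces}; the one point I would be careful about is that the per-coefficient estimate must be kept in the form $\sum_{\beta\le\alpha}\|x-z\|_\s^{\alpha-\beta}2^{-\beta n}$ rather than collapsed to $2^{-\alpha n}$, since $\|x-z\|_\s$ is only of order $\delta \gg 2^{-n}$ for $n$ large — it is after summing over $n$, where $\|x-z\|_\s^{\alpha-\beta}\approx\delta^{\alpha-\beta}$ meets $2^{-\beta n_0}$, that the correct order $\delta^\alpha$ emerges, and this is also what makes the diagonal quantity $M$ (and not the full norm $\|\Pi\|$) appear on the right.
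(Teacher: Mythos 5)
Your proof is correct and follows essentially the same route as the paper's: the determinacy claim via the constant modelled distribution $y \mapsto \Gamma_{yx}a$ together with the (uniqueness and explicit construction in the) reconstruction theorem, and the quantitative bound via the wavelet expansion of the test function combined with the cocycle identity and the bounds on $\Gamma$ — the paper merely delegates this last computation to Theorem~\ref{theo:sowingLemma}, while you carry it out explicitly. Your two cautionary remarks — that the per-coefficient estimate must be kept as the sum $\sum_{\beta\le\alpha}\|x-z\|_\s^{\alpha-\beta}2^{-\beta n}$ until after the summation over scales, and that the final difference bound as literally stated requires $\Gamma=\bar\Gamma$ (otherwise a term proportional to $\|\Gamma-\bar\Gamma\|$ and the diagonal data of $\bar\Pi$ must be added) — are both accurate.
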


\begin{proof}
Given $a\in V_\alpha$ and $x \in \R^d$, we define similarly to above a function
$f_x^a\colon \R^d \to V$ by $f_x^a(y) = \Gamma_{yx}a$. (This time $\alpha$ can be
arbitrary though.)
One then has $\Pi_y f_x^a(y) = \Pi_y \Gamma_{yx} a = \Pi_x a$, so that 
$\CR f_x^a = \Pi_x a$.
On the other hand, the proof of the reconstruction theorem only makes
use of the values $\bigl(\Pi_x a\bigr)(\phi_x^{n,\s})$ and the function $(x,y) \mapsto \Gamma_{xy}$, 
so that the claim follows.

The bound \eref{e:boundNormPi}, as well as the corresponding bound on $\Pi - \bar \Pi$
are an immediate consequence of Theorem~\ref{theo:sowingLemma}, noting again that 
the coefficients $A_x^n$ only involve evaluations of $\bigl(\Pi_x a\bigr)(\phi_x^{n,\s})$
and the map $\Gamma_{xy}$.
\end{proof}

Although this result was very straightforward to prove, it is very important when
constructing random models for a regularity structure.
Indeed, provided that one has suitable moment estimates, it is in many cases possible
to show that the right hand side of \eref{e:boundNormPi} is bounded almost surely.
One can then make use of this knowledge to \textit{define} the distribution $\Pi_x a$
by $\CR f_x^a$ via the reconstruction theorem.
This is completely analogous to Kolmogorov's continuity criterion where the knowledge of
a random function on a dense countable subset of $\R^d$ is sufficient to define
a random variable on the space of continuous functions on $\R^d$ as a consequence 
of suitable moment bounds. 
Actually, the standard proof of Kolmogorov's continuity criterion is very similar
in spirit to the proof given here, since it also relies on the hierarchical 
approximation of points in $\R^d$ by points with dyadic coordinates, see for
example \cite{RevYor}.

\subsection{Symmetries}
\label{sec:symmetric}

It will often be useful to consider modelled distributions that, although they are
defined on all of $\R^d$, are known to obey certain symmetries. 
Although the extension of the framework to such a situation is completely straightforward,
we perform it here mostly in order to introduce the relevant notation which will be used
later.

Consider some discrete symmetry group $\SS$ which acts on $\R^d$ via isometries
$T_g$. In other words, for every $g \in \SS$, $T_g$ is an isometry of $\R^d$ and 
$T_{g \bar g} = T_g \circ T_{\bar g}$. Given a regularity structure $\TT$, we call a map
$M\colon \SS \to L^0$ (where $L^0$ is as in Section~\ref{sec:automorphisms}) 
an \textit{action} of $\SS$ on $\TT$ if $M_g \in \Aut \TT$ for every
$g \in \SS$ and furthermore one has the identity $M_{g\bar g} = M_{\bar g} \circ M_{g}$
for any two elements $g, \bar g \in \SS$.
Note that $\SS$ also acts naturally on any space of functions on $\R^d$ via
the identity
\begin{equ}
\bigl(T_g^\star \psi\bigr)(x) = \psi(T_g^{-1}x)\;.
\end{equ}
With these notations, the following definition is natural:

\begin{definition}\label{def:symmetric}
Let $\SS$ be a group of symmetries of $\R^d$ acting on some regularity structure $\TT$.
A model $(\Pi,\Gamma)$ for $\TT$ is said to be \textit{adapted} to the action of $\SS$
if the following two properties hold: 
\begin{mylist}
\item For every test function $\psi \colon \R^d \to \R$, every $x \in \R^d$, every $a \in T$,
and every $g \in \SS$, one has the identity 
$\bigl(\Pi_{T_g x} a\bigr)(T_g^\star \psi) = \bigl(\Pi_{x} M_g a\bigr)(\psi)$.
\item For every $x,y \in \R^d$ and every $g \in \SS$, one has the identity 
$M_g \Gamma_{T_g x T_g y} = \Gamma_{xy} M_g$.
\end{mylist}
A modelled distribution $f \colon \R^d \to T$ is said to be \textit{symmetric} if
$M_g f(T_g x) = f(x)$ for every $x \in \R^d$ and every $g \in \SS$.
\end{definition}

\begin{remark}
One could additionally impose that the norms on the spaces $T_\alpha$ are chosen in such a way
that the operators $M_g$ all have norm $1$. This is not essential but makes some expressions nicer.
\end{remark}

\begin{remark}\label{rem:symcanon}
In the particular case where $\TT$ contains the polynomial regularity structure $\TT_{d,\s}$
and $(\Pi,\Gamma)$ extends its canonical model, the action $M_g$ of $\SS$ on the 
abstract element $X$ is necessarily given by $M_g X = A_g X$, where $A_g$ is the $d\times d$
matrix such that $T_g$ acts on elements of $\R^d$ by $T_g x = A_g x + b_g$, for some vector
$b_g$. This can be checked by making use of the first identity in Definition~\ref{def:symmetric}.

The action on elements of the form $X^k$ for an arbitrary multiindex $k$ is then naturally
given by $M_g (X^k) = (A_g X)^k = \prod_i (\sum_j A_g^{ij} X_j)^{k_i}$.
\end{remark}

\begin{remark}
One could have relaxed the first property to the identity
$\bigl(\Pi_{T_g x} a\bigr)(T_g^\star \psi) = (-1)^{\eps(g)}\bigl(\Pi_{x} M_g a\bigr)(\psi)$, where $\eps \colon \SS \to \{\pm 1\}$ is any group morphism.
This would then also allow to treat Dirichlet boundary conditions in domains 
generated by reflections. We will not consider this for the sake of conciseness.
\end{remark}

\begin{remark}\label{rem:polynomSym}
While Definition~\ref{def:symmetric} ensures that the model $(\Pi,\Gamma)$ behaves ``nicely''
under the action of $\SS$, this does \textit{not} mean that the distributions $\Pi_x$ themselves
are symmetric in the sense that $\Pi_x(\psi) = \Pi_x(T_g^\star \psi)$.
The simplest possible example on which this is already visible is the case where
$\SS$ consists of a subgroup of the translations. If we take $\TT$ to be the
canonical polynomial structure and $M$ to be the trivial action, then it is straightforward
to verify that the canonical model $(\Pi,\Gamma)$ is indeed adapted to the action of $\SS$.
Furthermore, $f$ being ``symmetric'' in this case simply means that $f$ has a suitable
periodicity. However, polynomials themselves of course aren't periodic.
\end{remark}

Our definitions were chosen in such a way that one has the following result.

\begin{proposition}\label{prop:symmetric}
Let $\SS$ be as above, acting on $\TT$, let $(\Pi,\Gamma)$ be adapted to the action of $\SS$, 
and let $f \in \CD^\gamma$ (for some $\gamma > 0$)
be symmetric. Then, $\CR f$ satisfies $\bigl(\CR f\bigr)(T_g^\star \psi) = 
\bigl(\CR f\bigr)(\psi)$ for
every test function $\psi$ and every $g \in \SS$.
\end{proposition}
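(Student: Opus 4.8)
The plan is to deduce this from the uniqueness part of the reconstruction theorem (Theorem~\ref{theo:reconstruction}), which is available because $\gamma > 0$. Fix $g \in \SS$ and introduce the distribution $\tilde\xi$ defined by $\tilde\xi(\psi) = (\CR f)(T_g^\star\psi)$ for every test function $\psi$. Pulling back by the isometry $T_g$ preserves the class $\CC_\s^\alpha \subset (\CC^r_0)'$, so $\tilde\xi$ is a bona fide distribution of the type handled by the reconstruction theorem. The whole argument then reduces to showing that $\tilde\xi$ satisfies the characterising bound \eref{e:boundRf} for the model applied to $f$; by the uniqueness statement this forces $\tilde\xi = \CR f$, which is exactly the desired identity $(\CR f)(T_g^\star\psi) = (\CR f)(\psi)$.

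To check that $\tilde\xi$ satisfies \eref{e:boundRf} I would first record the geometric identity
\begin{equ}
T_g^\star\bigl(\CS_{\s,x}^\delta\eta\bigr) = \CS_{\s,T_g x}^\delta\bigl(A_g^\star\eta\bigr)\;,\qquad A_g^\star\eta \eqdef \eta\circ A_g^{-1}\;,
\end{equ}
where $A_g$ is the linear part of $T_g$, i.e.\ $T_g x = A_g x + b_g$. This is a short computation from \eref{e:scaling}, once one knows that $A_g$ commutes with the scaling maps $\CS_\s^\delta$ and that $A_g^\star$ maps $\CB^r_{\s,0}$ into itself; both follow from $T_g$ being an isometry for $d_\s$, since its linear part then only permutes coordinates of equal homogeneity, up to signs. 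Granting this, for $x$ in a fixed compact set and $\eta \in \CB^r_{\s,0}$ I would chain three elementary steps: (i) apply the reconstruction bound \eref{e:boundRf} for $\CR f$ at the point $T_g x$ to the test function $A_g^\star\eta$, obtaining $\tilde\xi(\CS_{\s,x}^\delta\eta) = \bigl(\Pi_{T_g x} f(T_g x)\bigr)\bigl(\CS_{\s,T_g x}^\delta(A_g^\star\eta)\bigr) + O(\delta^\gamma)$; (ii) read the displayed identity backwards, so that $\CS_{\s,T_g x}^\delta(A_g^\star\eta) = T_g^\star(\CS_{\s,x}^\delta\eta)$, and invoke the first property of Definition~\ref{def:symmetric} with $a = f(T_g x)$ to turn $\bigl(\Pi_{T_g x} f(T_g x)\bigr)\bigl(T_g^\star(\CS_{\s,x}^\delta\eta)\bigr)$ into $\bigl(\Pi_x M_g f(T_g x)\bigr)(\CS_{\s,x}^\delta\eta)$; (iii) use that $f$ is symmetric, $M_g f(T_g x) = f(x)$, to rewrite this as $\bigl(\Pi_x f(x)\bigr)(\CS_{\s,x}^\delta\eta)$. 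Subtracting, $\bigl(\tilde\xi - \Pi_x f(x)\bigr)(\CS_{\s,x}^\delta\eta)$ is $O(\delta^\gamma)$ uniformly in $x$ and $\eta$, which is precisely \eref{e:boundRf}.

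I do not expect a serious obstacle: once the base points are kept track of carefully, the three ingredients — the reconstruction estimate, the adaptedness of the model, and the symmetry of $f$ — slot together mechanically, with essentially no analysis beyond what Theorem~\ref{theo:reconstruction} already provides. The only point needing genuine care is the geometric identity above and, relatedly, the verification that $A_g^\star\eta$ still lies in the admissible class $\CB^r_{\s,0}$ (possibly after absorbing a harmless $g$-dependent constant into the proportionality constant in \eref{e:boundRf}); this is where the assumption that the $T_g$ are $d_\s$-isometries, rather than arbitrary changes of variables, is actually used. A secondary bookkeeping point is simply to note that $\tilde\xi$ belongs to $\CC_\s^\alpha$, hence to the dual of $\CC^r_0$, so that the uniqueness assertion of the reconstruction theorem legitimately applies to it.
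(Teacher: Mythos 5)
Your proof is correct and rests on exactly the same ingredients as the paper's argument — the reconstruction bound \eref{e:boundRf}, the adaptedness identity of Definition~\ref{def:symmetric}, the symmetry of $f$, and the (implicitly assumed) commutation of the linear part $A_g$ with the scaling maps $\CS_\s^\delta$, which the paper also uses silently when it writes $T_g^\star \CS_{\s,T_g^{-1}y}^\lambda\phi = \CS_{\s,y}^\lambda\phi^g$. The only difference is packaging: the paper approximates $\psi$ by $\int \CS_{\s,y}^\lambda\phi\,\psi(y)\,dy$ and passes to the limit $\lambda\to 0$, whereas you verify that $(\CR f)\circ T_g^\star$ satisfies the characterising bound and invoke the uniqueness clause of Theorem~\ref{theo:reconstruction} directly; both routes are valid and yours is, if anything, slightly more streamlined.
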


\begin{proof}
Take a smooth compactly supported test function $\phi$ that integrates to $1$ and fix
an element $g \in \SS$. Since $T_g$ is an isometry of $\R^d$, its action is given by
$T_g(x) = A_g x + b_g$ for some orthogonal matrix $A_g$ and a vector $b_g \in \R^d$.
We then define $\phi^g(x) = \phi(A_g^{-1} x)$, which is a test function having the same
properties as $\phi$ itself.

One then has the identity
\begin{equ}
\psi(x) = \lim_{\lambda \to 0} \int_{\R^d} \bigl(\CS_{\s,y}^\lambda \phi\bigr)(x)\,\psi(y)\,dy\;.
\end{equ}
Furthermore, this convergence holds not only pointwise, but in every space $\CC^k$. As a consequence
of this, combined with the reconstruction theorem, we have
\begin{equs}
\bigl(\CR f\bigr)(\psi) &= \lim_{\lambda \to 0}\int_{\R^d} \bigl(\CR f\bigr)\bigl(\CS_{\s,y}^\lambda \phi\bigr)\,\psi(y)\,dy
= \lim_{\lambda \to 0}\int_{\R^d} \bigl(\Pi_y f(y)\bigr)\bigl(\CS_{\s,y}^\lambda \phi\bigr)\,\psi(y)\,dy\\
&= \lim_{\lambda \to 0}\int_{\R^d} \bigl(\Pi_{T_g y} M_g^{-1} M_gf(T_g y)\bigr)\bigl(T_g^\star \CS_{\s,y}^\lambda \phi\bigr)\,\psi(y)\,dy\\
&= \lim_{\lambda \to 0}\int_{\R^d} \bigl(\Pi_{y} f(y)\bigr)\bigl(T_g^\star \CS_{\s,T_g^{-1}y}^\lambda \phi\bigr)\,\bigl(T_g^\star \psi\bigr)(y)\,dy\\
&= \lim_{\lambda \to 0}\int_{\R^d} \bigl(\Pi_{y} f(y)\bigr)\bigl(\CS_{\s,y}^\lambda \phi^g\bigr)\,\bigl(T_g^\star \psi\bigr)(y)\,dy = \bigl(\CR f\bigr)(T_g^\star \psi)\;,
\end{equs}
as claimed. Here, we used the symmetry of $f$ and the adaptedness of $(\Pi,\Gamma)$ 
to obtain the second line, while we performed a simple change of variables to obtain the 
third line.
\end{proof}

One particularly nice situation is that when the fundamental domain $\K$ of $\SS$ is compact in $\R^d$.
In this case, provided of course that $(\Pi,\Gamma)$ is adapted to the action of $\SS$,
the analytical bounds \eref{e:boundPi} automatically hold over all of $\R^d$. The same is true for the bounds
\eref{e:boundDgamma} if $f$ is a symmetric modelled distribution. 

\section{Multiplication}
\label{sec:multiplication}

So far, our theory was purely descriptive: 
we have shown that $T$-valued maps with a suitable regularity property can be used to provide a precise
local description of a class of distributions that locally look like a given family of ``model distributions''. 
We now proceed to show that one can perform a number of operations on these modelled distributions, 
while still retaining their description as elements in some $\CD^\gamma$. 

The most conceptually non-trivial of
such operations is of course the multiplication of distributions, which we address in this section. 
Surprisingly, even though elements in $\CD^\gamma$ describe distributions that can potentially
be extremely irregular, it is possible to work with them largely as if they consisted
of continuous functions. In particular, if we are given a product $\star$ on $T$
(see below for precise assumptions on $\star$), then
we can multiply modelled distributions by forming the pointwise product
\begin{equ}[e:defProd]
\bigl(f\star g\bigr)(x) = f(x)\star g(x)\;,
\end{equ}
and then projecting the result back to $T_\gamma^-$ for a suitable $\gamma$.

\begin{definition}
A continuous bilinear map $(a,b) \mapsto a\star b$ is a product on $T$ if
\begin{claim}
\item  For every $a \in T_\alpha$ and
$b \in T_\beta$, one has $a\star b \in T_{\alpha + \beta}$.
\item  One has $\one \star a = a\star \one = a$ for every $a \in T$.
\end{claim}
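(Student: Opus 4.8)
The two conditions displayed — compatibility of $\star$ with the grading, and $\one$ being a two-sided unit — are, together with bilinearity and continuity, the defining axioms of a product on $T$; what I would do is verify that they are met by the product with which the theory canonically equips $T$, namely pointwise multiplication $(P\star Q)(X) = P(X)\,Q(X)$ on the polynomial regularity structure $\TT_{d,\s}$ of Section~\ref{sec:canonical}, together with the products obtained on larger regularity structures by prescribing the values $\tau\star\bar\tau$ of products of basis vectors of the model space. (For $\alpha+\beta\notin A$ one reads $T_{\alpha+\beta}=\{0\}$, so the first condition is always meaningful.)

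For the polynomial product the verification is direct. Bilinearity is the distributive law for polynomials. Continuity reduces to boundedness of the bilinear maps $T_m\times T_n\to T$: since $X^k\star X^l=X^{k+l}$ on monomials, this map takes values in the single graded component $T_{m+n}$, which is spanned by the finitely many monomials of scaled degree $m+n$ and is therefore finite-dimensional, so the restricted map is automatically bounded; as every element of $T$ is a finite sum of homogeneous components, $\star$ is continuous on all of $T$. For the grading I would compute $|k+l|_\s=\sum_{i=1}^d\s_i(k_i+l_i)=|k|_\s+|l|_\s$, whence $X^k\star X^l=X^{k+l}\in T_{|k|_\s+|l|_\s}$, and bilinearity promotes this to $a\star b\in T_{m+n}$ for all $a\in T_m$, $b\in T_n$. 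For the unit axiom, $\one=X^0$ and $(X^0\star X^k)(X)=X^k=(X^k\star X^0)(X)$, so $\one\star a=a\star\one=a$ for every $a\in T$ by bilinearity. For an extension of $\TT_{d,\s}$ built by prescribing $\tau\star\bar\tau$ on a basis of the model space, all the axioms hold by construction: one only ever prescribes $\tau\star\bar\tau\in T_{\alpha+\beta}$ when $\tau\in T_\alpha$ and $\bar\tau\in T_\beta$, keeps $\one$ as the unit, and continuity is again automatic since each product lands in one finite-dimensional graded piece.

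Every step here is elementary bookkeeping, so there is no real obstacle; the only point that deserves a moment's attention is the continuity claim in the possibly infinite-dimensional setting, where one must use the fact that each product lands in a single homogeneous component $T_{m+n}$ — finite-dimensional in all the examples at hand — to reduce boundedness of $\star$ to the trivial finite-dimensional case, rather than attempting to estimate $\star$ directly on all of $T$.
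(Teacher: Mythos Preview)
The displayed claim is not a theorem but a \emph{definition}: in Section~\ref{sec:multiplication} the paper is introducing the notion of a product on $T$, and these two items are precisely the axioms being imposed. There is accordingly no proof in the paper, and none is required.

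You correctly identify this in your first sentence, but then pivot to verifying the axioms for the polynomial product and for products built by prescription on basis vectors. That verification is perfectly sound --- and indeed the paper remarks in Section~\ref{sec:canonical} that polynomial multiplication ``straightforward[ly]'' satisfies these properties --- but it is beside the point here: the statement at hand is stipulative, not assertoric. If the task were to exhibit an example of a product in the sense of the definition, your argument for $\TT_{d,\s}$ would be exactly right; as a proof of the statement itself, it is addressing a question that was not asked.
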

\end{definition}

\begin{remark}
In all of the situations considered later on, the product $\star$ will furthermore
be associative and commutative. However, these properties do not seem to be essential
as far as the abstract theory is concerned.
\end{remark}

\begin{remark}
What we mean by ``continuous'' here is that for any two indices $\alpha, \beta \in A$, the bilinear
map $\star\colon T_\alpha\times T_\beta \to T_{\alpha+\beta}$ is continuous.
\end{remark}

\begin{remark}
If $V_1$ and $V_2$ are two sectors of $\TT$ and $\star$ is defined as  a bilinear map on
$V_1\times V_2$, we can always extend it to $T$ by setting $a \star b = 0$ if either $a$
belongs to the complement of $V_1$ or $b$ belongs to the complement of $V_2$.
\end{remark}

\begin{remark}
We could have slightly relaxed the first assumption by allowing $a\star b \in T_{\alpha + \beta}^+$.
However, the current formulation appears more natural in the context of interpreting elements of 
the spaces $T_\alpha$ as ``homogeneous elements''. 
\end{remark}

Ideally, one would also like to impose the additional property that
$\Gamma (a\star b) = (\Gamma a)\star (\Gamma b)$ for every $\Gamma \in G$ and every $a,b \in T$.
Indeed, assume for a moment that $\Pi_x$ takes values in some function space and that the operation
$\star$ represents the actual pointwise product between two functions, namely
\begin{equ}[e:modelProduct]
\Pi_x (a\star b)(y) = (\Pi_x a)(y) \, (\Pi_x b)(y)\;.
\end{equ}
In this case, one has the
identity
\begin{equs}
\Pi_x \Gamma_{xy} (a\star b) &= \Pi_y (a\star b) = (\Pi_y a) \, (\Pi_y b) = (\Pi_x\Gamma_{xy} a) \, (\Pi_x\Gamma_{xy} b)\\
&= \Pi_x \bigl(\Gamma_{xy} a \star \Gamma_{xy} b\bigr)\;.
\end{equs}
In many cases considered in this article however, 
the model space $T$ is either finite-dimensional or, even though it is infinite-dimensional, 
some truncation still takes place and one 
cannot expect \eref{e:modelProduct} to hold exactly. Instead, the following
definition ensures that it holds up to an error which is ``of order $\gamma$''.

\begin{definition}\label{def:regularity}
Let $\TT$ be a regularity structure, let $V$ and $W$ be two sectors of $\TT$, and let $\star$ be a product
on $\TT$. The pair $(V,W)$ is said to be \textit{$\gamma$-regular} if $\Gamma (a\star b) = (\Gamma a)\star (\Gamma b)$ for every $\Gamma \in G$ and for every $a \in V_\alpha$ and $b \in W_\beta$ such that
$\alpha + \beta < \gamma$ and every $\Gamma \in G$. 

We say that $(V,W)$ is regular if it is $\gamma$-regular for every $\gamma$. 
In the case $V = W$, we say that $V$ is ($\gamma$-)regular if this is true for the pair $(V,V)$.
\end{definition}


The aim of this section is to demonstrate that, provided that a pair of sectors is $\gamma$-regular for some $\gamma > 0$,
the pointwise product between modelled distributions in these sectors yields again a modelled distribution.
Throughout this section, we assume that $V$ and $W$ are two sectors of regularities $\alpha_1$ and
$\alpha_2$ respectively. We then have the following:

\begin{theorem}\label{theo:mult}
Let $(V,W)$ be a pair of sectors with regularities 
$\alpha_1$ and $\alpha_2$ respectively, 
let $f_1 \in \CD^{\gamma_1}(V)$ and $f_2 \in \CD^{\gamma_2}(W)$,
and let $\gamma = (\gamma_1 + \alpha_2) \wedge (\gamma_2 + \alpha_1)$. Then,
provided that $(V,W)$ is $\gamma$-regular, one has $f_1\star f_2 \in \CD^\gamma(T)$ and, 
for every compact set $\K$, the bound
\begin{equ}
\$f_1 \star f_2 \$_{\gamma;\K} \lesssim \$f_1\$_{\gamma_1;\K}\$f_2\$_{\gamma_2;\K} (1+\|\Gamma\|_{\gamma_1+\gamma_2;\K})^2\;,
\end{equ}
holds for some proportionality constant only depending on the underlying structure $\TT$.
\end{theorem}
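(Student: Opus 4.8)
The plan is to verify directly that $f_1\star f_2$ — defined via \eref{e:defProd} and then, following Remark~\ref{rem:discard}, replaced by its canonical representative in $\CD^\gamma$, i.e.\ the $T_\gamma^-$-valued function with $\CQ_\beta(f_1\star f_2)(x)=\CQ_\beta\bigl(f_1(x)\star f_2(x)\bigr)$ for $\beta<\gamma$ and $\CQ_\beta(f_1\star f_2)(x)=0$ for $\beta\ge\gamma$ — satisfies the two bounds defining $\$\cdot\$_{\gamma;\K}$ in \eref{e:boundDgamma}. The first (local) bound is immediate: writing $f_1(x)\star f_2(x)$ as the sum $\sum_{\beta_1,\beta_2}\CQ_{\beta_1}f_1(x)\star\CQ_{\beta_2}f_2(x)$, which is finite because $\CQ_{\beta_i}f_i(x)=0$ unless $\alpha_i\le\beta_i<\gamma_i$ (using local finiteness and boundedness below of $A$), and invoking continuity of $\star$, one gets $\|\CQ_{\beta_1}f_1(x)\star\CQ_{\beta_2}f_2(x)\|\lesssim\|f_1(x)\|_{\beta_1}\|f_2(x)\|_{\beta_2}\lesssim\$f_1\$_{\gamma_1;\K}\$f_2\$_{\gamma_2;\K}$.

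For the increment bound, fix $x,y\in\K$ with $\|x-y\|_\s\le1$ and set $g_i(x)=f_i(x)-\Gamma_{xy}f_i(y)$, so that $g_i(x)\in T_{\gamma_i}^-$ with $\|g_i(x)\|_m\lesssim\$f_i\$_{\gamma_i;\K}\|x-y\|_\s^{\gamma_i-m}$ for $m<\gamma_i$. Then expand
\begin{equs}
f_1(x)\star f_2(x) &= \bigl(\Gamma_{xy}f_1(y)\bigr)\star\bigl(\Gamma_{xy}f_2(y)\bigr) + \bigl(\Gamma_{xy}f_1(y)\bigr)\star g_2(x)\\
&\quad + g_1(x)\star\bigl(\Gamma_{xy}f_2(y)\bigr) + g_1(x)\star g_2(x)\;.
\end{equs}
The heart of the argument is the first term. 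Decomposing both factors over homogeneities, for each pair $(\beta_1,\beta_2)$ with $\beta_1+\beta_2<\gamma$ the hypothesis that $(V,W)$ is $\gamma$-regular (Definition~\ref{def:regularity}) gives $\bigl(\Gamma_{xy}\CQ_{\beta_1}f_1(y)\bigr)\star\bigl(\Gamma_{xy}\CQ_{\beta_2}f_2(y)\bigr)=\Gamma_{xy}\bigl(\CQ_{\beta_1}f_1(y)\star\CQ_{\beta_2}f_2(y)\bigr)$; summing these and applying $\CQ_\beta$ reproduces precisely $\CQ_\beta\Gamma_{xy}(f_1\star f_2)(y)$ — the quantity we are subtracting — because $f_i(y)$ already lies in $T_{\gamma_i}^-$ and $(f_1\star f_2)(y)$ is by definition the $T_\gamma^-$-part of $f_1(y)\star f_2(y)$. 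The remaining pairs have $\beta_1+\beta_2\ge\gamma$, and here is the key mechanism: a contribution to $\CQ_\beta$ with $\beta<\gamma$ comes from multiplying a $T_{\delta_1}$-component of $\Gamma_{xy}\CQ_{\beta_1}f_1(y)$ with a $T_{\delta_2}$-component of $\Gamma_{xy}\CQ_{\beta_2}f_2(y)$, where $\delta_i\le\beta_i$ and $\delta_1+\delta_2=\beta<\gamma\le\beta_1+\beta_2$, so the $\Gamma$-bounds of \eref{e:boundPi} furnish a factor $\|x-y\|_\s^{(\beta_1-\delta_1)+(\beta_2-\delta_2)}=\|x-y\|_\s^{\beta_1+\beta_2-\beta}$, which is $\le\|x-y\|_\s^{\gamma-\beta}$ since $\|x-y\|_\s\le1$. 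The two applications of $\Gamma_{xy}$ here are what produce the factor $(1+\|\Gamma\|_{\gamma_1+\gamma_2;\K})^2$.

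The three remaining pieces of the expansion are handled by the same homogeneity bookkeeping. For $\CQ_\beta\bigl[(\Gamma_{xy}\CQ_{\beta_1}f_1(y))\star\CQ_{\beta_2}g_2(x)\bigr]$ the $\Gamma$-bound and the bound on $g_2$ produce $\|x-y\|_\s^{(\beta_1-\delta_1)+(\gamma_2-\beta_2)}$ with $\delta_1+\beta_2=\beta$, i.e.\ exponent $\beta_1+\gamma_2-\beta\ge\alpha_1+\gamma_2-\beta\ge\gamma-\beta$ (using $\beta_1\ge\alpha_1$ and $\gamma\le\gamma_2+\alpha_1$); the mirror-image term gives exponent $\ge\gamma_1+\alpha_2-\beta\ge\gamma-\beta$; and $\CQ_\beta\bigl(g_1(x)\star g_2(x)\bigr)$ gives $\|x-y\|_\s^{\gamma_1+\gamma_2-\beta}\le\|x-y\|_\s^{\gamma-\beta}$ since $\gamma\le\gamma_1+\gamma_2$. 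Summing the finitely many terms, dominating all surplus powers of $\|x-y\|_\s$ by $\|x-y\|_\s^{\gamma-\beta}$, and collecting constants (each occurrence of $\Gamma_{xy}$ costing one factor of $\|\Gamma\|_{\gamma_1+\gamma_2;\K}$, with the first term of the expansion being the only one requiring two) yields the asserted estimate; in particular $f_1\star f_2\in\CD^\gamma(T)$.

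The main obstacle is purely organisational: one must confirm that exactly where $\gamma$-regularity \emph{fails} — at total homogeneities $\ge\gamma$ — the offending terms are harmless because projecting onto a homogeneity $\beta<\gamma$ forces at least one $\Gamma$-shift to strictly lower the degree, contributing the missing power of $\|x-y\|_\s$; and one must check that the particular value $\gamma=(\gamma_1+\alpha_2)\wedge(\gamma_2+\alpha_1)$ is precisely what makes the two cross terms land at order $\gamma-\beta$. A small point deserving care is that the truncation onto $T_\gamma^-$ does not commute with $\Gamma_{xy}$; this is sidestepped by never commuting them and instead arguing at the level of the identity above, where the identification of the main term with $\CQ_\beta\Gamma_{xy}(f_1\star f_2)(y)$ is exact.
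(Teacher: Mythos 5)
Your proof is correct and follows essentially the same route as the paper: the same truncated representative $f_1\star_\gamma f_2$, the same four-term splitting of the increment, the same use of $\gamma$-regularity to identify the sub-$\gamma$ part of $(\Gamma_{xy}f_1(y))\star(\Gamma_{xy}f_2(y))$ with $\Gamma_{xy}(f_1\star_\gamma f_2)(y)$ so that only pairs with $\beta_1+\beta_2\ge\gamma$ survive, and the same homogeneity bookkeeping for the cross terms. The only (cosmetic) difference is that your cross terms carry $\Gamma_{xy}f_i(y)$ where the paper's carry $f_i(x)$, which just shifts where the factors of $\|\Gamma\|_{\gamma_1+\gamma_2;\K}$ are spent without affecting the final bound.
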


\begin{remark}\label{rem:multReg}
If we denote as before by $\CD_\alpha^\gamma$ an element of $\CD^\gamma(V)$ for some
sector $V$ of regularity $\alpha$, then Theorem~\ref{theo:mult} can loosely be stated as
\begin{equ}
f_1 \in \CD_{\alpha_1}^{\gamma_1}\quad\&\quad
f_2 \in \CD_{\alpha_2}^{\gamma_2}
\qquad\Rightarrow\qquad 
f_1 \star f_2 \in \CD_\alpha^\gamma\;,
\end{equ}
where $\alpha = \alpha_1 + \alpha_2$ and $\gamma = (\gamma_1 + \alpha_2) \wedge (\gamma_2 + \alpha_1)$.
This statement appears to be slightly misleading since it completely glosses over the assumption that
the pair $(V,W)$ be $\gamma$-regular. However, at the expense of possibly extending 
the regularity structure $\TT$ and the model $(\Pi,\Gamma)$,  we will see in Proposition~\ref{prop:extendMult} 
below that it is always possible to ensure that this assumption holds, albeit possibly in a 
non-canonical way.
\end{remark}

\begin{remark}
The proof of this result is a rather straightforward consequence of our 
definitions, combined with
standard algebraic manipulations. It has nontrivial consequences mostly when combined with 
the reconstruction theorem, Theorem~\ref{theo:reconstruction}.
\end{remark}
%
%
%

\begin{proof}[of Theorem~\ref{theo:mult}]
Note first that since we are only interested in showing that $f_1\star f_2 \in \CD^\gamma$, we
 discard all of the components in $T_\gamma^+$. (See also Remark~\ref{rem:discard}.) As a consequence, we actually consider the
function given by
\begin{equ}[e:defProdTrunc]
f(x) \eqdef \bigl(f_1\star_\gamma f_2\bigr)(x) \eqdef \sum_{m+n < \gamma} \CQ_m f_1(x)\star \CQ_n f_2(x)\;.
\end{equ}
It then follows immediately from the properties of the product that 
\begin{equ}
\|f_1\star_\gamma f_2\|_{\gamma;\K} \lesssim \|f_1\|_{V;\K}\|f_2\|_{W;\K} \;,
\end{equ}
where the proportionality constant depends only on $\gamma$ and $\TT$, but not on $\K$.

From now on we will assume that $\$f_1\$_{V;\K} \le 1$ and $\$f_2\$_{W;\K} \le 1$, which is not a restriction by bilinearity. 
It remains to obtain a bound on 
\begin{equ}
\Gamma_{xy} \bigl(f_1\star_\gamma f_2\bigr)(y) - \bigl(f_1\star_\gamma f_2\bigr)(x)\;.
\end{equ}
Using the triangle inequality and recalling that $\CQ_\ell(f_1 \star_\gamma f_2) = \CQ_\ell(f_1 \star f_2)$ for $\gamma < \ell$, we can write
\begin{equs}
\|\Gamma_{xy} f(y) - f(x)\|_\ell
&\le \|\Gamma_{xy} \bigl(f_1\star_\gamma f_2\bigr)(y) - \bigl(\Gamma_{xy} f_1(y)\bigr)\star \bigl(\Gamma_{xy} f_2(y)\bigr)\|_\ell \\
&\quad + \|\bigl(\Gamma_{xy} f_1(y) - f_1(x)\bigr)\star \bigl(\Gamma_{xy} f_2(y) - f_2(x)\bigr)\|_\ell \\
&\quad + \|\bigl(\Gamma_{xy} f_1(y) - f_1(x)\bigr)\star f_2(x)\|_\ell \\
&\quad + \|f_1(x)\star \bigl(\Gamma_{xy} f_2(y) - f_2(x)\bigr)\|_\ell\;. \label{e:splitProd}
\end{equs}
It follows from  \eref{e:defProdTrunc} and the definition of $(V,W)$ being $\gamma$-regular 
that for the first term, one has the identity
\begin{equ}[e:termDiffxy]
\Gamma_{xy} f(y) - \bigl(\Gamma_{xy} f_1(y)\bigr)\star \bigl(\Gamma_{xy} f_2(y)\bigr) = -\sum_{m+n \ge \gamma} \bigl(\Gamma_{xy} \CQ_m f_1(y)\bigr)\star \bigl(\Gamma_{xy} \CQ_n f_2(y)\bigr)\;.
\end{equ}
Furthermore, one has
\begin{equs}
\|\bigl(\Gamma_{xy} \CQ_m f_1(y)\bigr) &\star \bigl(\Gamma_{xy} \CQ_n f_2(y)\bigr)\|_\ell
\lesssim \sum_{\beta_1 + \beta_2 = \ell} \|\Gamma_{xy} \CQ_m f_1(y)\|_{\beta_1}
\|\Gamma_{xy} \CQ_n f_2(y)\|_{\beta_2} \\
&\lesssim \sum_{\beta_1 + \beta_2 = \ell} \|\Gamma\|_{\gamma_1+\gamma_2;\K}^2 \|x-y\|_\s^{m+n-\beta_1- \beta_2} \\
&\lesssim \|\Gamma\|_{\gamma_1+\gamma_2;\K}^2 \|x-y\|_\s^{\gamma - \ell} \label{e:boundProdGamma}
\end{equs}
where we have made use of the facts that $m+n \ge \gamma$ and that $\|x-y\|_\s \le 1$.

It follows from the properties of the product $\star$ that the second term in \eref{e:splitProd} 
is bounded by a constant times
\begin{equs}
\sum_{\beta_1 + \beta_2 =\ell} &\|\Gamma_{xy} f_1(y) - f_1(x)\|_{\beta_1} \|\Gamma_{xy} f_2(y) - f_2(x)\|_{\beta_2} \\
&\lesssim \sum_{\beta_1 + \beta_2 =\ell} \|x-y\|_\s^{\gamma_1-\beta_1}\|x-y\|_\s^{\gamma_2-\beta_2} 
\lesssim \|x-y\|_\s^{\gamma_1+\gamma_2-\ell}\;.
\end{equs}
The third term is bounded by a constant times
\begin{equ}
\sum_{\beta_1 + \beta_2 = \ell} \|\Gamma_{xy} f_1(y) - f_1(x)\|_{\beta_1} \|f_2(x)\|_{\beta_2} \lesssim \|x-y\|_\s^{\gamma_1-\beta_1} \,\one_{\beta_2 \ge \alpha_2} \lesssim
\|x-y\|_\s^{\gamma_1+\alpha_2-\ell}\;,
\end{equ}
where the second inequality uses the identity $\beta_1 + \beta_2 = \ell$. 
The last term is bounded similarly by reversing 
the roles played by $f_1$ and $f_2$.
\end{proof}

In applications, one would also like to have suitable continuity properties of the product as a function
of its factors. By bilinearity, it is of course straightforward to obtain bounds of the type
\begin{equs}
\|f_1 \star f_2 - g_1 \star g_2\|_{\gamma;\K} &\lesssim \|f_1-g_1\|_{\gamma_1;\K}\|f_2\|_{\gamma_2;\K}
+ \|f_2-g_2\|_{\gamma_2;\K}\|g_1\|_{\gamma_1;\K}\;,\\
\$f_1 \star f_2 - g_1 \star g_2\$_{\gamma;\K} &\lesssim \$f_1-g_1\$_{\gamma_2;\K}\$f_2\$_{\gamma_2;\K}
+ \$f_2-g_2\$_{\gamma_2;\K}\$g_1\$_{\gamma_1;\K}\;,
\end{equs}
provided that both $f_i$ and $g_i$ belong to $\CD^{\gamma_i}$ with respect to the same
model. Note also that as before the proportionality constants implicit in these bounds depend
on the size of $\Gamma$ in the domain $\K$.
However, one has also the following improved bound:

\begin{proposition}\label{prop:multDiff}
Let $(V,W)$ be as above, let $(\Pi, \Gamma)$ and $(\bar \Pi, \bar \Gamma)$ be two
models for $\TT$, and let $f_1 \in \CD^{\gamma_1}(V;\Gamma)$, $f_2 \in \CD^{\gamma_2}(W;\Gamma)$,
$g_1 \in \CD^{\gamma_1}(V;\bar \Gamma)$, and $g_2 \in \CD^{\gamma_2}(W;\bar \Gamma)$.

Then, for every $C > 0$, one has the bound
\begin{equ}
\$f_1 \star f_2 ; g_1 \star g_2\$_{\gamma;\K} \lesssim \$f_1;g_1\$_{\gamma_1;\K}
+ \$f_2;g_2\$_{\gamma_2;\K} + \|\Gamma-\bar \Gamma\|_{\gamma_1+\gamma_2;\K}\;,
\end{equ}
uniformly over all $f_i$ and $g_i$ with $\$f_i\$_{\gamma_i;\K} + \$g_i\$_{\gamma_i;\K} \le C$,
as well as models satisfying $\|\Gamma\|_{\gamma_1+\gamma_2;\K}+\|\bar \Gamma\|_{\gamma_1+\gamma_2;\K} \le C$. Here, the proportionality constant depends only on $C$.
\end{proposition}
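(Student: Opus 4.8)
The proof follows the same strategy as that of Theorem~\ref{theo:mult}, but now tracking the dependence on \emph{two} models simultaneously, in exactly the same spirit as the passage from \eref{e:boundRf} to \eref{e:boundRfDiff} in the reconstruction theorem. The plan is to bound, for $\ell < \gamma$,
\begin{equ}
\|f(x) - g(x)\|_\ell \qquad\text{and}\qquad \|f(x) - g(x) - \Gamma_{xy} f(y) + \bar \Gamma_{xy} g(y)\|_\ell\;,
\end{equ}
where $f = f_1 \star_\gamma f_2$ and $g = g_1 \star_\gamma g_2$ are the truncated products as in \eref{e:defProdTrunc}. For the first quantity one simply writes $f_1 \star f_2 - g_1 \star g_2 = (f_1 - g_1)\star f_2 + g_1 \star (f_2 - g_2)$, uses the continuity of the product, and bounds everything by $\$f_1;g_1\$_{\gamma_1;\K}\$f_2\$_{\gamma_2;\K} + \$f_2;g_2\$_{\gamma_2;\K}\$g_1\$_{\gamma_1;\K}$; under the uniform bound by $C$ this is $\lesssim \$f_1;g_1\$_{\gamma_1;\K} + \$f_2;g_2\$_{\gamma_2;\K}$.

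For the second quantity I would mimic the decomposition \eref{e:splitProd}, but applied to the \emph{difference}. Write $A_{xy} = \Gamma_{xy} f_1(y) - f_1(x)$, $B_{xy} = \Gamma_{xy} f_2(y) - f_2(x)$ and similarly $\bar A_{xy}, \bar B_{xy}$ with $g_i$ and $\bar\Gamma$. The key algebraic identity is
\begin{equ}
\Gamma_{xy} f_1(y) \star \Gamma_{xy} f_2(y) - \bar\Gamma_{xy} g_1(y) \star \bar\Gamma_{xy} g_2(y) = A_{xy}\star B_{xy} - \bar A_{xy}\star \bar B_{xy} + A_{xy}\star f_2(x) - \bar A_{xy}\star g_2(x) + f_1(x)\star B_{xy} - f_1(x)\star g_2(x) + f_1(x)\star f_2(x) - g_1(x)\star g_2(x)\;,
\end{equ}
to which one must add the ``error'' terms coming from the fact that $\Gamma_{xy}$ is not multiplicative above degree $\gamma$, i.e. the analogue of \eref{e:termDiffxy} for both models. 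Each of the resulting pairs is then handled by the elementary ``telescoping'' bound: for bilinear expressions, $u \star v - \bar u \star \bar v = (u - \bar u)\star v + \bar u \star (v - \bar v)$, so a difference of products is controlled by a difference in one factor times a norm of the other. Applying this to $A_{xy}\star B_{xy} - \bar A_{xy} \star \bar B_{xy}$ gives terms like $\|A_{xy} - \bar A_{xy}\|_{\beta_1}\|B_{xy}\|_{\beta_2}$, where $\|A_{xy} - \bar A_{xy}\|_{\beta_1} \lesssim \$f_1;g_1\$_{\gamma_1;\K}\|x-y\|_\s^{\gamma_1 - \beta_1}$ directly from the definition of $\$\cdot;\cdot\$_{\gamma_1;\K}$, and $\|B_{xy}\|_{\beta_2} \lesssim \$f_2\$_{\gamma_2;\K}\|x-y\|_\s^{\gamma_2-\beta_2}$; the product of the two powers is $\|x-y\|_\s^{\gamma_1+\gamma_2 - \ell} \le \|x-y\|_\s^{\gamma - \ell}$ since $\|x-y\|_\s \le 1$. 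The mixed terms $A_{xy}\star f_2(x) - \bar A_{xy}\star g_2(x)$ are split as $(A_{xy} - \bar A_{xy})\star f_2(x) + \bar A_{xy}\star (f_2(x) - g_2(x))$, producing a factor $\$f_1;g_1\$_{\gamma_1;\K}$ in the first case and, in the second, $\|\bar A_{xy}\|_{\beta_1}\|f_2(x)-g_2(x)\|_{\beta_2} \lesssim \$g_1\$_{\gamma_1;\K}\|x-y\|_\s^{\gamma_1-\beta_1}\cdot \|f_2 - g_2\|_{\gamma_2;\K}$; the power $\|x-y\|_\s^{\gamma_1-\beta_1}$ with $\beta_1 + \beta_2 = \ell$, $\beta_2 \ge \alpha_2$ yields $\|x-y\|_\s^{\gamma_1 + \alpha_2 - \ell} \ge \|x-y\|_\s^{\gamma-\ell}$, and $\|f_2 - g_2\|_{\gamma_2;\K} \le \$f_2;g_2\$_{\gamma_2;\K}$, so this is of the required order. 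Finally, the term $f_1(x)\star f_2(x) - g_1(x)\star g_2(x)$ contributes to the $\|f(x) - g(x)\|_\ell$ bound already treated.

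The only genuinely new ingredient, and what I expect to be the main obstacle, is the appearance of the factor $\|\Gamma - \bar\Gamma\|_{\gamma_1+\gamma_2;\K}$: it enters precisely through the ``non-multiplicativity'' error terms. The difference of the two analogues of \eref{e:termDiffxy} is
\begin{equ}
-\sum_{m+n\ge\gamma} \bigl(\Gamma_{xy}\CQ_m f_1(y)\bigr)\star\bigl(\Gamma_{xy}\CQ_n f_2(y)\bigr) + \sum_{m+n\ge\gamma}\bigl(\bar\Gamma_{xy}\CQ_m g_1(y)\bigr)\star\bigl(\bar\Gamma_{xy}\CQ_n g_2(y)\bigr)\;,
\end{equ}
and telescoping this through the chain $\Gamma_{xy}\CQ_m f_1(y)\rightsquigarrow \bar\Gamma_{xy}\CQ_m f_1(y)\rightsquigarrow \bar\Gamma_{xy}\CQ_m g_1(y)$ (and similarly in the second factor) produces either a factor $\|(\Gamma_{xy} - \bar\Gamma_{xy})\CQ_m f_1(y)\|$, bounded by $\|\Gamma - \bar\Gamma\|_{\gamma_1+\gamma_2;\K}\$f_1\$_{\gamma_1;\K}$ times the appropriate power of $\|x-y\|_\s$ (using $\|\CQ_m f_1(y)\| \lesssim \$f_1\$_{\gamma_1;\K}$ and the second bound in \eref{e:boundPi} for $\Gamma - \bar\Gamma$), or a factor involving $\$f_1;g_1\$_{\gamma_1;\K}$ via $\|\bar\Gamma_{xy}(\CQ_m f_1(y) - \CQ_m g_1(y))\|$; in all cases, since $m + n \ge \gamma$, the accumulated power of $\|x-y\|_\s$ is at least $\gamma - \ell$ and, as $\|x-y\|_\s \le 1$, this is $\lesssim \|x-y\|_\s^{\gamma-\ell}$. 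Care must be taken that the sums over $m,n$ and over $\beta_1 + \beta_2 = \ell$ are finite, which holds because $A$ is locally finite and $f_i, g_i$ take values in the finite range $T_{\gamma_i}^-$. Collecting the contributions of all terms, one obtains precisely
\begin{equ}
\$f_1\star f_2; g_1\star g_2\$_{\gamma;\K} \lesssim \$f_1;g_1\$_{\gamma_1;\K} + \$f_2;g_2\$_{\gamma_2;\K} + \|\Gamma-\bar\Gamma\|_{\gamma_1+\gamma_2;\K}\;,
\end{equ}
with the proportionality constant depending only on $C$ and on $\TT$, as claimed.
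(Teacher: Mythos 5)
Your proof is correct and follows essentially the same route as the paper: first control the non-multiplicativity error for both models (the two analogues of \eref{e:termDiffxy}) by telescoping through $\Gamma\rightsquigarrow\bar\Gamma$ and $f_i\rightsquigarrow g_i$, which is where $\|\Gamma-\bar\Gamma\|_{\gamma_1+\gamma_2;\K}$ enters, and then telescope the remaining bilinear difference into pairs each controlled directly by the definitions of $\$\cdot;\cdot\$$ and of the model norms (the paper's own splitting into the terms $T_1,\dots,T_5$ in \eref{e:termsTi} is just a slightly different bookkeeping of the same decomposition). Note only that your displayed algebraic identity contains a transcription slip: the term $-\,f_1(x)\star g_2(x)$ should read $-\,g_1(x)\star\bar B_{xy}$, and that pair is then handled symmetrically to $A_{xy}\star f_2(x)-\bar A_{xy}\star g_2(x)$.
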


\begin{proof}
As before, our aim is to bound the components in $T_\ell$ for $\ell < \gamma$ of the quantity
\begin{equ}
f_1(x) \star f_2(x) - g_1(x) \star g_2(x) - \Gamma_{xy} \bigl(f_1 \star_\gamma f_2\bigr)(y)
+ \bar \Gamma_{xy} \bigl(g_1 \star_\gamma g_2\bigr)(y)\;.
\end{equ}
First, as in the proof of Theorem~\ref{theo:mult}, we would like to replace 
$\Gamma_{xy} \bigl(f_1 \star_\gamma f_2\bigr)(y)$ by $\Gamma_{xy} f_1(y) \star \Gamma_{xy} f_2(y)$ and
similarly for the corresponding term involving the $g_i$.
This can be done just as in \eref{e:boundProdGamma}, which yields a bound of the order
\begin{equ}
\bigl(\|\Gamma-\bar \Gamma\|_{\gamma_1+\gamma_2;\K} + \|f_1 - g_1\|_{\gamma_1;\K} + \|f_2 - g_2\|_{\gamma_2;\K}\bigr) \|x-y\|_\s^{\gamma - \ell}\;,
\end{equ}
as required. We rewrite the remainder as
\begin{equs}
f_1(x) \star f_2(x) &- g_1(x) \star g_2(x) - \Gamma_{xy} f_1(y) \star \Gamma_{xy} f_2(y)
+ \bar \Gamma_{xy} g_1(y) \star \bar \Gamma_{xy} g_2(y)\\
&\quad = \bigl(f_1(x) - g_1(x)  - \Gamma_{xy} f_1(y) + \bar \Gamma_{xy} g_1(y)\bigr)\star f_2(x)\\
&\qquad + \Gamma_{xy}f_1(y)\star \bigl(f_2(x) - g_2(x)  - \Gamma_{xy} f_2(y) + \bar \Gamma_{xy} g_2(y)\bigr) \\
&\qquad + \bar \Gamma_{xy}\bigl( g_1(y) - f_1(y)\bigr)\star
\bigl(\bar \Gamma_{xy} g_2(y) - g_2(x)\bigr)\\
&\qquad + \bigl(\bar \Gamma_{xy} f_1(y) - \Gamma_{xy} f_1(y)\bigr)\star
\bigl(\bar \Gamma_{xy} g_2(y) - g_2(x)\bigr)\\
&\qquad + \bigl(g_1(y) - \bar \Gamma_{xy} g_1(y)\bigr)\star
\bigl(f_2(x) - g_2(x)\bigr)\\
&\quad \eqdef T_1 + T_2 + T_3 + T_4 + T_5\;. \label{e:termsTi}
\end{equs}
It follows from the definition of $\$\cdot;\cdot\$_{\gamma_1;\K}$ that we have the bound
\begin{equ}
\|T_1\|_\ell \lesssim \$f_1;g_1\$_{\gamma_1;\K}\sum_{m+n = \ell
\atop m \ge \alpha_1; n\ge \alpha_2} \|x-y\|_\s^{\gamma_1 - m}\;.
\end{equ}
(As usual, sums are performed over exponents in $A$.)
Since the largest possible value for $m$ is equal to $\ell - \alpha_2$, this is the required bound.
A similar bound on $T_2$ follows in virtually the same way.
The term $T_3$ is bounded by
\begin{equ}
\|T_3\|_\ell \lesssim \|f_1 - g_1\|_{\gamma_1;\K} \sum_{m+n = \ell \atop m \ge \alpha_1; n\ge \alpha_2} \|x-y\|_\s^{\gamma_2 - n}\;.
\end{equ}
Again, the largest possible value for $n$ is given by $\ell - \alpha_1$, so the required bound follows.
The bound on $T_4$ is obtained in a similar way, replacing $\|f_1 - g_1\|_{\gamma_1;\K}$
by $\|\Gamma - \bar \Gamma\|_{\gamma_1;\K}$. The last term $T_5$ is very similar to $T_3$ and
can be bounded in the same fashion, thus concluding the proof.
\end{proof}

As already announced earlier, 
the regularity condition on $(V,W)$ can always be satisfied by possibly extending
our regularity structure. 
However, at this level of generality, 
the way of extending $\TT$ and $(\Pi,\Gamma)$ can of course not be expected to be canonical! In practice, 
one would have to identify a ``natural'' extension, which can potentially require a great
deal of effort. Our abstract result however is:

\begin{proposition}\label{prop:extendMult}
Let $\TT$ be a regularity structure such that each of the $T_\alpha$ is finite-dimensional,
let $(V,W)$ be two sectors of $\TT$, let $(\Pi,\Gamma)$ be a model for $\TT$, and let $\gamma \in \R$. 
Then, it is always possible to find a regularity structure $\bar \TT$ containing $\TT$
and a model $(\bar \Pi, \bar \Gamma)$ for $\bar\TT$ extending $(\Pi,\Gamma)$, such that
the pair $(\iota V, \iota W)$ is $\gamma$-regular in $\bar \TT$.
\end{proposition}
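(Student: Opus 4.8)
The plan is to enlarge $\TT$ by formally adjoining product symbols for the finitely many ``missing'' products and then to \emph{define} the structure group on those symbols to act multiplicatively, so that $\gamma$-regularity holds essentially by construction; the analytic side of the statement is then vacuous. Since $A$ is locally finite and bounded below and each $T_\alpha$ is finite-dimensional, there are only finitely many pairs $(\alpha,\beta)\in A\times A$ with $\alpha+\beta<\gamma$, $V_\alpha\neq\{0\}$ and $W_\beta\neq\{0\}$, and each $V_\alpha\otimes W_\beta$ is finite-dimensional. I would set $\hat T=\bigoplus V_\alpha\otimes W_\beta$ (sum over these pairs), declare $V_\alpha\otimes W_\beta$ homogeneous of degree $\alpha+\beta$, and put $\bar T=T\oplus\hat T$ and $\bar A=A\cup\{\alpha+\beta:\ \alpha,\beta \text{ as above}\}$, which is again locally finite, bounded below and contains $0$. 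One extends $\star$ to a product $\bstar$ on $\bar T$ by declaring $a\bstar b=a\otimes b\in\hat T$ for $a\in\iota V_\alpha$, $b\in\iota W_\beta$ with $\alpha+\beta<\gamma$ (keeping $\one$ as a two-sided unit), and $a\bstar b=0$ on all other pairs, exactly as in the usual extension of a partially defined product.

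For the structure group I would let $\bar\Gamma_g$ act as $\Gamma_g$ on $T$ and as $\Gamma_g\otimes\Gamma_g$ on $\hat T$. This is consistent: since $V$ and $W$ are $G$-invariant and $\Gamma_g$ is triangular, $(\Gamma_g\otimes\Gamma_g)(V_\alpha\otimes W_\beta)\subset\bigoplus_{\alpha'\le\alpha,\ \beta'\le\beta}V_{\alpha'}\otimes W_{\beta'}$, and every such pair still satisfies $\alpha'+\beta'\le\alpha+\beta<\gamma$, so $\hat T$ is invariant; the Leibniz identity $(\Gamma_g\otimes\Gamma_g-\mathrm{Id})(a\otimes b)=(\Gamma_g a-a)\otimes\Gamma_g b+a\otimes(\Gamma_g b-b)$ gives the required lower-triangularity; $g\mapsto\bar\Gamma_g$ is a homomorphism because $\Gamma_{gh}\otimes\Gamma_{gh}=(\Gamma_g\otimes\Gamma_g)(\Gamma_h\otimes\Gamma_h)$; and $\bar\Gamma_g\one=\one$. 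Hence $\bar G=\{\bar\Gamma_g\}$ is a structure group, $\bar\TT=(\bar A,\bar T,\bar G)$ is a regularity structure containing $\TT$ via the obvious inclusion $\iota$ (the group morphism being the inverse isomorphism $\bar\Gamma_g\mapsto\Gamma_g$), and for $a\in V_\alpha$, $b\in W_\beta$ with $\alpha+\beta<\gamma$ one has $\bar\Gamma_g(\iota a\bstar\iota b)=(\bar\Gamma_g\iota a)\bstar(\bar\Gamma_g\iota b)$ straight from the definition of $\bar\Gamma_g$ on pure tensors; extending bilinearly, $(\iota V,\iota W)$ is $\gamma$-regular.

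For the model I would take $\bar\Gamma_{xy}=\bar\Gamma_{g(x,y)}$, where $\Gamma_{g(x,y)}=\Gamma_{xy}$; this is a cocycle in $\bar G$ extending $\Gamma$, and the required bound on $\bar\Gamma$ over $\hat T$ follows from that on $\Gamma$, since the degree-$m$ component of $\bar\Gamma_{xy}(a\otimes b)$, with $m=\alpha'+\beta'$, is controlled by $\|\Gamma_{xy}a\|_{\alpha'}\,\|\Gamma_{xy}b\|_{\beta'}\lesssim\|x-y\|_\s^{(\alpha+\beta)-m}$. Crucially, no work is needed to realise the new symbols: one may simply set $\bar\Pi_x=\Pi_x$ on $T$ and $\bar\Pi_x=0$ on $\hat T$. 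Then $\bar\Pi_y=\bar\Pi_x\bar\Gamma_{xy}$ (it holds on $T$, and both sides vanish on the $\bar\Gamma$-invariant subspace $\hat T$), and the first bound of Definition~\ref{def:model} is trivially satisfied on $\hat T$, so $(\bar\Pi,\bar\Gamma)$ is a model for $\bar\TT$ extending $(\Pi,\Gamma)$ in the sense of Definition~\ref{def:extension}, which is all that is asked.

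The one genuinely delicate point — and the only place where I expect real care to be needed — is that a pair with $\alpha+\beta=0$ and $\alpha\neq0$ (which can occur only when $A$ contains a negative element together with its negative) would put a nonzero space into $\bar T_0$, conflicting with the axiom $T_0\approx\R$. Since there are only finitely many such symbols, this is handled by invoking the relaxed form of the first product axiom noted earlier (allowing $a\star b\in\bar T^+_{\alpha+\beta}$): one places these borderline symbols, and recursively the product symbols that $\bar\Gamma$ generates from them, at a strictly positive homogeneity, enlarging $\bar A$ by one value if necessary. With that bookkeeping in place all the verifications above go through unchanged, and the remainder of the argument is a purely formal consequence of the definitions.
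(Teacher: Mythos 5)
Your algebraic construction is essentially the one in the paper: adjoin the spaces $V_\alpha\otimes W_\beta$ as new homogeneous components of degree $\alpha+\beta$, let $G$ act on them by $\Gamma\otimes\Gamma$, and read off $\gamma$-regularity from the definition. (The paper does this one pair $(\alpha,\beta)$ at a time rather than all at once, and quotients by an equivalence relation to allow identifications; the collision with $T_0\approx\R$ that you flag is glossed over in the paper as well, so you are if anything more careful there.) Where you genuinely diverge is the model: you set $\bar\Pi_x=0$ on all new elements. This is internally consistent precisely because in your setup the new elements form a $\bar\Gamma$-invariant direct summand entirely disjoint from $T$: the algebraic identity $\bar\Pi_y=\bar\Pi_x\bar\Gamma_{xy}$ reads $0=0$ there, the analytic bound is vacuous, and even the rigidity of Proposition~\ref{prop:extension} is not violated, since for a new element of positive homogeneity the function $f_{a\otimes b,x}(y)=\bar\Gamma_{yx}(a\otimes b)-a\otimes b$ takes values in lower-degree \emph{new} elements, where $\bar\Pi$ vanishes, so its reconstruction is $0$. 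So the literal statement is satisfied.

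You should be aware, however, of what this shortcut costs and why it cannot survive even mild extra requirements. First, the moment any of the lower-order products must be identified with existing elements of $T$ — which is unavoidable for the unit ($\one\star b=b$, not a new symbol $\one\otimes b$) and whenever $\star$ is already partially defined on $V\times W$ (the situation the proposition is really addressing: the paper reduces to the case where $(V,W)$ is already $(\alpha+\beta)$-regular and only the top product is missing) — the function $f_{a\otimes b,x}$ takes values in the \emph{old} structure, where $\Pi$ is nontrivial. Proposition~\ref{prop:extension} then forces $\bar\Pi_x(a\otimes b)=\CR f_{a\otimes b,x}\neq 0$ for $\alpha+\beta>0$, and the entire content of the paper's proof is the verification (via the reconstruction theorem and the property \eref{e:propRf} of the explicit choice \eref{e:Rfgen} when $\alpha+\beta\le 0$) that this definition satisfies the algebraic identity $\bar\Pi_z\bar\Gamma_{zx}=\bar\Pi_x$ together with the analytic bounds. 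Second, your extension is useless for the purpose the proposition serves in Theorem~\ref{theo:mult}: with $\bar\Pi=0$ on the new symbols, $\CR(f_1\star f_2)$ discards every genuinely new product term and bears no relation to $\CR f_1$ and $\CR f_2$ even for smooth models. Finally, your fix for the degree-zero collision (promoting those symbols to positive homogeneity) silently replaces the product axiom $a\star b\in T_{\alpha+\beta}$ by the relaxed $a\star b\in T_{\alpha+\beta}^+$, so strictly speaking you prove the statement only for that relaxed notion of product. None of this makes your argument wrong for the statement as literally written, but the reconstruction-theorem step you declare unnecessary is exactly the step that makes the proposition worth having.
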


\begin{proof}
It suffices to consider the situation where there exist $\alpha$ and $\beta$ in $A$ such that 
$(V,W)$ is $(\alpha+\beta)$-regular but $\star$ isn't yet defined on $V_\alpha$ and $W_\beta$.
In such a situation, we build the required extension as follows. 
First, extend the action of $G$ to $T \oplus (V_\alpha \otimes W_\beta)$ 
by setting 
\begin{equ}[e:defGamma]
\Gamma(a \otimes b) \eqdef \Gamma a \bstar \Gamma b\;,\quad a \in V_\alpha\;,\quad
b \in W_\beta\;,\quad \Gamma \in G\;,
\end{equ}
where $\bstar$ is defined on $V_\alpha \times W_\beta$ by
$a \bstar b =  a \otimes b$. (Outside of $V_\alpha \times W_\beta$, we simply set $\bstar = {\star}$.)
Then, consider \textit{some} linear equivalence relation $\sim$ on
$T_{\alpha + \beta} \oplus (V_\alpha \otimes W_\beta)$ such that
\begin{equ}[e:propRel]
a \sim b \;\Rightarrow\; \Gamma a - a = \Gamma b - b\quad \forall \Gamma \in G\;,
\end{equ}
and such that no two elements in $T_{\alpha+\beta}$ are equivalent.
(Note that the implication only goes from left to right. In particular, it is always possible
to take for $\sim$ the trivial relation under which no two distinct elements are equivalent. 
However, allowing for non-trivial equivalence relations allows to impose additional algebraic
properties, like the commutativity of $\bstar$ or Leibniz's rule.) Given such an equivalence relation,
we now define $\bar \TT = (\bar A, \bar T, \bar G)$ by setting
\begin{equ}
\bar A = A \cup \{\alpha + \beta\}\;,\qquad \bar T_{\alpha+\beta} = \bigl(T_{\alpha + \beta} \oplus (V_\alpha \otimes W_\beta)\bigr)/\sim \;.
\end{equ}
For $\gamma \neq \alpha+\beta$, we simply set $\bar T_\gamma = T_\gamma$. Furthermore,
we use $\bstar$ as the product in $\bar T$ which, by construction, coincides with $\star$, except
on $T_\alpha \otimes T_\beta$. 
Finally, the group $\bar G$ is identical to $G$
as an abstract group, but each element of $G$ is extended to $\bar T_{\alpha+\beta}$
in the way described above. Property \eref{e:propRel} ensures that this is well-defined in the sense that the 
action of $G$ on different elements of an equivalence class of $\sim$ is compatible. 

It remains to extend $(\Pi,\Gamma)$ to a model $(\bar \Pi,\bar \Gamma)$
for $\bar \TT$ as an abstract group element, with its action on $\bar T$ given by \eref{e:defGamma}. 
For $\bar \Gamma$, we simply set $\bar \Gamma_{xy} = \Gamma_{xy}$.
The definition \eref{e:propRel} then ensures that the bound \eref{e:boundPi} for $\Gamma$
also holds for elements in $\bar T_{\alpha+\beta}$.
Regarding $\bar \Pi$, since $\bar T_{\alpha+\beta}$ still contains $T_{\alpha+\beta}$ as a subspace,
it remains to define it on some basis of the complement of $T_{\alpha+\beta}$ in $\bar T_{\alpha+\beta}$.
For each such basis vector $a$, we can then proceed as in Proposition~\ref{prop:extension} to construct
$\Pi_x a$ for some (and therefore all) $x \in \R^d$. More precisely, we \textit{define} $\Pi_x a$ by 
$\Pi_x a = \CR f_{a,x}$ with $f_{a,x}$ as in \eqref{e:fax}, where $\CR$ is the reconstruction operator
given in the proof of Theorem~\ref{theo:reconstruction}. In case $\alpha+ \beta \le 0$, the choice
of $\CR$ is not unique and we explicitly make the choice given in \eqref{e:Rfgen} for a suitable wavelet basis.
This definition then implies for any two points $x$ and $z$ the identity
\begin{equ}
\Pi_z \Gamma_{zx} a - \Pi_x a = \Pi_z a - \Pi_x a + \Pi_z \bigl(\Gamma_{zx}a - a\bigr)
= \CR \bigl(f_{a,z} - f_{a,x}\bigr) + \Pi_z \bigl(\Gamma_{zx}a - a\bigr)\;,
\end{equ}
where we used the linearity of $\CR$.
Note now that $\bigl(f_{a,z} - f_{a,x}\bigr)(y) = \Gamma_{yz} \bigl(a - \Gamma_{zx}a\bigr)$, so that
we are precisely in the situation of \eqref{e:propRf}. This shows that our construction guarantees that
$\CR \bigl(f_{a,z} - f_{a,x}\bigr) = -\Pi_z \bigl(\Gamma_{zx}a - a\bigr)$, so that the algebraic identity
$\Pi_z \Gamma_{zx} a = \Pi_x a$ holds for any two points, as required. The required analytical bounds on 
$\Pi_x a$ on the other hand are an immediate consequence of Theorem~\ref{theo:reconstruction}.

As a byproduct of our construction and of Proposition~\ref{prop:extension}, 
we see that the extension is essentially unique if $\alpha + \beta > 0$,
but that there is considerable freedom whenever $\alpha + \beta \le 0$.
\end{proof}

\begin{remark}
At this stage one might wonder what the meaning of $\CR(f_1\star f_2)$ is in situations where the
distributions $\CR f_1$ and $\CR f_2$ cannot be multiplied in any ``classical'' sense. 
In general, this strongly depends on the choice of model and of regularity structure.
However, we will see below that in cases where the model was built using a natural renormalisation
procedure and the $f_i$ are obtained as solutions to some fixed point problem, it is usually
possible to interpret $\CR(f_1\star f_2)$ as the weak limit of some (possibly quite non-trivial) 
expression involving the $f_i$'s.
\end{remark}

\begin{remark}\label{rem:product}
In situations where a model happens to consist of continuous functions such that one has indeed
$\Pi_x (a\star b)(y) = \bigl(\Pi_x a\bigr)(y)\bigl(\Pi_x b\bigr)(y)$, it follows from Remark~\ref{rem:continuousModel}
that one has the identity $\CR (f_1 \star f_2) = \CR f_1\, \CR f_2$. In some situations, 
it may thus happen that there are natural approximating models and approximating functions such
that $\CR f_1 = \lim_{\eps\to 0} \CR_\eps f_{1;\eps}$ (and similarly for $f_2$) and 
$\CR(f_1\star f_2) = \lim_{\eps\to 0} (\CR_\eps f_{1;\eps})(\CR_\eps f_{2;\eps})$. 
See for example Section~\ref{sec:RP}, as well as \cite{MR1883719,MR2667703}.

However, this need not always be the case. As we have already seen in Section~\ref{sec:automorphisms}, the formalism
is sufficiently flexible to allow for products that encode some renormalisation procedure, which
is actually the main purpose of this theory.
\end{remark}

\subsection{Classical multiplication}

We are now able to give a rather straightforward application of this theory,
which can be seen as a multidimensional analogue of Young integration.
In the case of the Euclidean scaling, this result is of course well-known, see for 
example \cite{BookChemin}.

\begin{proposition}\label{prop:multDist}
For $\alpha, \beta \in \R $, the map $(f,g) \mapsto f\cdot g$ extends to a 
continuous bilinear map
from $\Lip^\alpha_\s(\R^d) \times \Lip^\beta_\s(\R^d)$ to $\Lip^{\alpha \wedge \beta}_\s(\R^d)$ if 
$\alpha + \beta > 0$. Furthermore, if $\alpha \not \in \N$, then this condition is also necessary.
\end{proposition}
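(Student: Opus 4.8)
The plan is to use the symmetry of the product to assume $\alpha\le\beta$, so that $\alpha\wedge\beta=\alpha$; then $\alpha+\beta>0$ forces $\beta>0$ whenever $\alpha\le 0$. The case $\alpha=0$ is elementary ($f\in L^\infty_{\mathrm{loc}}$ and $g\in\Lip^\beta_\s\subset L^\infty_{\mathrm{loc}}$, so $fg\in L^\infty_{\mathrm{loc}}=\Lip^0_\s$, with the obvious bilinear bound). For $\alpha>0$ I would lift $f$ and $g$ into the polynomial regularity structure $\TT_{d,\s}$ with its canonical model, obtaining $\hat f\in\CD^\alpha$ and $\hat g\in\CD^\beta$ with $\scal{\one,\hat f(x)}=f(x)$, $\scal{\one,\hat g(x)}=g(x)$ (Lemma~\ref{lem:Lipschitz}, Definition~\ref{def:Calphas}). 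Since the polynomial structure is regular (each $\Gamma_h$ is an algebra morphism for $\star$), Theorem~\ref{theo:mult} gives $\hat f\star\hat g\in\CD^\alpha$, and because $\CD^\alpha$ for this structure is precisely the image of $\Lip^\alpha_\s$ under the lift (Definition~\ref{def:Calphas}, Remark~\ref{rem:uniquehatphi}), its $\one$-component $fg$ lies in $\Lip^\alpha_\s=\Lip^{\alpha\wedge\beta}_\s$; the bilinear bound is the one provided by Theorem~\ref{theo:mult} together with continuity of the lifts.

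The substantial case is $\alpha<0$, where $f$ is a genuine distribution and $\alpha<0<\beta$. Here I would enlarge the polynomial structure: adjoin, for each multiindex $k$ with $|k|_\s<\beta$, a basis vector $\mathbf f_k$ of homogeneity $\alpha+|k|_\s$ (abbreviate $\mathbf f:=\mathbf f_0$), let $G$ act by $\Gamma_h\mathbf f_k=\sum_{j\le k}\binom{k}{j} h^{k-j}\mathbf f_j$, and extend the product by $\mathbf f_j\star X^k=\mathbf f_{j+k}$ (read as $0$ when $|j+k|_\s\ge\beta$). One checks directly that this defines a regularity structure $\TT$ which is regular on the sectors $V=\lspan\{\mathbf f_k\}$ and $W=\bar T$ (the identity $\Gamma_h(\mathbf f_j\star X^k)=\Gamma_h\mathbf f_j\star\Gamma_hX^k$ is Vandermonde's identity), and that setting $(\Pi_x\mathbf f_k)(y)=f(y)\,(y-x)^k$ together with the canonical polynomial model on $\bar T$ yields a model on $\TT$: the relation $\Pi_x=\Pi_y\Gamma_{xy}$ is the binomial theorem, and the analytic bounds of Definition~\ref{def:model} reduce to those of Definition~\ref{def:C-alpha} once the monomial $(y-x)^k$ is absorbed into the test function (costing a factor $\delta^{|k|_\s}$), with $\|\Pi\|_{\gamma;\K}\lesssim 1+\|f\|_{\alpha;\K}$ and $\|\Gamma\|_{\gamma;\K}$ bounded independently of $f$.

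With this structure in hand, $\mathbf f\in\CD^{\gamma_1}(V)$ for every $\gamma_1>\alpha$ (it is ``constant'', $\mathbf f=\Gamma_{xy}\mathbf f$), and $\hat g\in\CD^\beta(W)$ as before; taking $\gamma_1=\alpha+\beta$, Theorem~\ref{theo:mult} gives $\mathbf f\star\hat g\in\CD^{\alpha+\beta}$ with $\alpha+\beta>0$. Since $\mathbf f\star\hat g$ takes values in a sector of regularity $\alpha$, Corollary~\ref{cor:regularity} yields $\CR(\mathbf f\star\hat g)\in\CC^\alpha_\s=\Lip^{\alpha\wedge\beta}_\s$, and as $(f,g)\mapsto\CR(\mathbf f\star\hat g)$ is bilinear (the model, and hence $\Gamma$, depend linearly on $f$ and not at all on $g$), the estimate of Theorem~\ref{theo:mult} combined with Corollary~\ref{cor:regularity} upgrades to a genuine bilinear bound $\lesssim\|f\|_{\alpha;\bar\K}\|g\|_{\beta;\bar\K}$. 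Finally, when $f$ is smooth the model consists of continuous functions satisfying $\Pi_x(a\star b)(y)=(\Pi_xa)(y)(\Pi_xb)(y)$, so Remarks~\ref{rem:continuousModel} and~\ref{rem:product} identify $\CR(\mathbf f\star\hat g)$ with the pointwise product $f\cdot g$; mollifying $f$ (smooth functions being dense in $\Lip^{\alpha'}_\s$ for $\alpha'<\alpha$, with uniformly bounded $\Lip^\alpha_\s$ norms) and re-running the construction with $\alpha'$ in place of $\alpha$ then shows that the bilinear map built above is the unique continuous extension of pointwise multiplication, so it agrees with the classical product wherever the latter is defined.

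For necessity, suppose $\alpha\notin\N$ and $\alpha+\beta\le 0$; the plan is to produce $f_n,g_n$ bounded in $\Lip^\alpha_\s$, $\Lip^\beta_\s$ with $f_ng_n$ unbounded in $\CS'(\R^d)$, ruling out any continuous bilinear extension. In the Euclidean scaling on $\R$ (the general case following by restricting test functions to a coordinate axis, or by rescaling) I would take $f_n=\chi\sum_{j=1}^n 2^{-j\alpha}\sin(2^jx)$ and $g_n=\chi\sum_{j=1}^n 2^{-j\beta}\sin(2^jx)$ for a fixed bump $\chi$; a standard Littlewood--Paley computation gives $\|f_n\|_{\alpha;\K}+\|g_n\|_{\beta;\K}\lesssim 1$, and it is exactly here that the hypothesis $\alpha\notin\N$ is used, so that the relevant H\"older/Besov norm is insensitive to the dyadic oscillation. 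On the other hand the diagonal ($j=k$) part of $f_ng_n=\chi^2\sum_{j,k}2^{-j\alpha-k\beta}\sin(2^jx)\sin(2^kx)$ produces a zero-frequency contribution $\tfrac12\chi^2\sum_{j=1}^n 2^{-j(\alpha+\beta)}$ which diverges (algebraically if $\alpha+\beta<0$, logarithmically if $\alpha+\beta=0$), so $f_ng_n$ is unbounded in $\CS'(\R^d)$; this is the classical obstruction, see \cite{BookChemin}. The only genuinely non-mechanical steps in the whole proof are the verification that the enlarged structure of the second paragraph carries a model obeying Definition~\ref{def:model} and is regular, and the two scaling estimates needed for the counterexample here; no conceptual obstacle arises, since Theorems~\ref{theo:mult}--\ref{theo:reconstruction} do the real work.
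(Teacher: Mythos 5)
Your sufficiency argument is, up to notation, identical to the paper's: your adjoined symbols $\mathbf f_k$ of homogeneity $\alpha+|k|_\s$ with $\Gamma_h\mathbf f_k=\sum_{j\le k}\binom{k}{j}h^{k-j}\mathbf f_j$, the product $\mathbf f_j\star X^k=\mathbf f_{j+k}$, and the model $(\Pi_x\mathbf f_k)(y)=f(y)(y-x)^k$ are exactly the paper's $\Xi X^k$, its canonical $\Gamma$, and the map $J\colon \xi\mapsto(\Pi^\xi,\Gamma)$; the conclusion is then drawn in the same way from Theorem~\ref{theo:mult}, the reconstruction theorem and the linearity of the model in $f$. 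The only genuine divergence is in the necessity direction: you use a lacunary trigonometric series in one Euclidean variable, whereas the paper works directly with the wavelet characterisation of Proposition~\ref{prop:charSpaces} and takes $f_n=\sum_{k\le n}k^{-1/2}\sum_x 2^{-k|\s|/2-\alpha k}\psi_x^{k,\s}$; the wavelet construction has the advantage of handling arbitrary scalings $\s$ and arbitrary $d$ with no extra work, while your reduction ``by restricting to a coordinate axis'' should really be phrased as tensoring a one-variable counterexample with a fixed bump in the remaining variables and checking that the $\CC^\alpha_\s$ norm of the tensor product is controlled by the $\CC^{\alpha/\s_1}$ norm of the one-dimensional factor. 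Two small corrections: when $\alpha+\beta=0$ the diagonal contribution $\sum_{j=1}^n 2^{-j(\alpha+\beta)}=n$ diverges linearly in $n$, not logarithmically (though it still diverges, which is all you need); and your identification of where $\alpha\notin\N$ enters is right and matches the paper's — for integer $\alpha$ the spaces $\CC^\alpha_\s$ are Lipschitz-type rather than Zygmund-type, so the Littlewood--Paley/wavelet bound on $\|f_n\|_{\alpha}$ genuinely fails there.
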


\begin{remark}
More precisely, if $\K$ is a compact subset of $\R^d$ and $\bar \K$ its $1$-fattening,
then there exists a constant $C$ such that
\begin{equ}[e:boundProd]
\|f\cdot g\|_{(\alpha \wedge \beta);\K} \le C \|f\|_{\alpha;\bar \K}\,\|g\|_{\beta;\bar \K}\;,
\end{equ}
for any two smooth functions $f$ and $g$.
\end{remark}

\begin{proof}
The necessity of the condition $\alpha + \beta > 0$ 
is straightforward. Fixing a compact set $\K\subset \R^d$ and assuming that
$\alpha + \beta \le 0$ (or the corresponding strict inequality for integer values), 
it suffices to exhibit a sequence of $\CC^r$ functions $f_n, g_n \in \CC(\K)$ (with $r > \max\{|\alpha|,|\beta|\}$)
such that $\{f_n\}$ is bounded in $\CC^\alpha_\s(\K)$, $g_n$ is bounded in $\CC^\beta_\s(\K)$, and
$\scal{f_n, g_n} \to \infty$, where $\scal{\cdot,\cdot}$ denotes the usual $L^2$-scalar product. 
This is because, since $f_n$ and $g_n$ are supported  in $\K$,
one can easily find a smooth compactly supported test function $\phi$ such that $\scal{f_n,g_n} = \scal{\phi, f_n g_n}$.

A straightforward modification of \cite[Thm~6.5]{MR1228209} shows that the characterisation
of Proposition~\ref{prop:charSpaces} for $f \in \CC(\K)$ to belong to $\CC^\alpha_s$ is also valid
for $\alpha \in \R_+\setminus \N$ (since $f$ is compactly supported, there are no
boundary effects).
The required counterexample can then easily be constructed by setting for example
\begin{equ}
f_n = \sum_{k = 0}^n {1\over \sqrt k} \sum_{x \in \Lambda_k^\s \cap \bar \K} 2^{-k{|\s|\over 2} - \alpha k} \psi_x^{k,\s}\;,
\end{equ}
and similarly for $g_n$ with $\alpha$ replaced by $\beta$. Here, $\bar \K \subset \K$ is 
such that the support of each of the $\psi_x^{k,\s}$ is indeed in $\K$. (One may have to start the sum from some $k_0 > 0$.)
Noting that $\lim_{n \to \infty} \scal{f_n, g_n} = \infty$ as soon as $\alpha + \beta \le 0$, this is the required 
counterexample.

Combining Theorem~\ref{theo:mult} and the reconstruction theorem,
Theorem~\ref{theo:reconstruction}, we can give a short and elegant proof of the sufficiency of $\alpha + \beta > 0$
that no longer makes any reference to wavelet analysis. 
Assume from now on that $\xi \in \CC^\alpha_\s$ for some $\alpha < 0$ and that $f \in \CC^\beta_\s$
for some $\beta > |\alpha|$. 
By bilinearity, we can also assume without loss of generality that the norms appearing in the
right hand side of \eref{e:boundProd} are bounded by $1$.
We then build a regularity structure $\TT$ in the following way. For the set $A$,
we take $A = \N \cup (\N+\alpha)$.
For $T$, we set $T = V \oplus W$,
where each of the sectors $V$ and $W$ is a copy of $\TT_{d,\s}$,
 the canonical model. 
 We also choose $\Gamma$ as in the canonical
 model, acting simultaneously on each of the two instances.

As before, we denote by $X^k$ the canonical basis vectors in $V$. We also use the suggestive notation\label{lab:Xi}
``$\Xi X^k$'' for the corresponding basis vector in $W$, but we postulate that $\Xi X^k \in T_{\alpha + |k|_\s}$ rather than $\Xi X^k \in T_{|k|_\s}$.
With this notation at hand, we also define the product $\star$ between $V$ and $W$ by
the natural identity
\begin{equ}
\bigl(\Xi X^k\bigr)\star \bigl(X^\ell\bigr) =  \Xi X^{k+\ell}\;.
\end{equ}
It is straightforward to verify that, with this product, the pair $(V,W)$ is regular.

Finally, we define a map $J\colon \CC^\alpha_\s \to \MM_\TT$ given by $J\colon \xi \mapsto (\Pi^\xi,\Gamma)$,
where $\Gamma$ is as in the canonical model, while $\Pi^\xi$ acts as
\begin{equ}
\bigl(\Pi^\xi_x  X^k\bigr)(y) = (y-x)^k \;,\qquad
\bigl(\Pi^\xi_x  \Xi X^k\bigr)(y) = (y-x)^k \xi(y)\;,
\end{equ}
with the obvious abuse of notation in the second expression. 
It is then straightforward to verify that 
$\Pi_y = \Pi_x\circ \Gamma_{xy}$ and that 
the map $J$ is Lipschitz continuous.

Denote now by $\CR^\xi$ the reconstruction map associated to the model $J(\xi)$ and, for $u \in \CC^\beta_\s$,
denote by $\CT_\beta u$ as in \eref{e:TaylorNormal} the unique element in $\CD^\beta(V)$ such that $\scal{\one, (\CT_\beta u)(x)} = u(x)$.
Note that even though the space $\CD^\beta(V)$ does in principle depend on the choice of model, in our situation
it is independent of $\xi$ for every model $J(\xi)$. 
Since, when viewed as a $W$-valued function, one has $\Xi \in \CD^\infty(W)$,
one has $\CT_\beta u \star \Xi \in \CD^{\alpha+\beta}$ by Theorem~\ref{theo:mult}. 
We now consider the map
\begin{equ}
B(u,\xi) = \CR^\xi \bigl(\CT_\beta u \star \Xi\bigr)\;.
\end{equ}
By Theorem~\ref{theo:reconstruction}, combined with the continuity of $J$, 
this is a jointly continuous map from $\CC^\beta_\s \times \CC^\alpha_\s$ into $\CC^\alpha_\s$, provided that 
$\alpha + \beta > 0$.
If $\xi$ happens to be a smooth function, then it follows immediately from Remark~\ref{rem:continuousModel} that 
$B(u,\xi) = u(x)\xi(x)$, so that $B$ is indeed the requested continuous extension of the product.
\end{proof}

\subsection{Composition with smooth functions}
\label{sec:compSmooth}

In general, it makes no sense to compose elements $f \in \CD^\gamma$ with arbitrary 
smooth functions. 
In the particular case when $f \in \CD^\gamma(V)$ for a function-like sector $V$ however, 
this is possible. 
Throughout this subsection, we decompose elements $a \in V$ as 
$a = \bar a \one + \tilde a$, with $\tilde a \in T_0^+$ and $\bar a = \scal{\one, a}$.
(This notation is suggestive of the fact that $\tilde a$ encodes the small-scale
fluctuations of $\Pi_x a$ near $x$.) 
We denote by $\zeta > 0$ the smallest non-zero value such that $V_\zeta \neq 0$,
so that one actually has $\tilde a \in T_\zeta^+$.

Given a function-like sector $V$ and a smooth function $F \colon \R^n \to \R$,
we lift $F$ to a function $\hat F \colon V^n \to V$ by
setting
\begin{equ}[e:defFcirc]
\hat F (a) = \sum_{k} {D^k F(\bar a) \over k !} \tilde a^{\star k}\;,
\end{equ}
where the sum runs over all possible multiindices.
Here, $a = (a_1,\ldots,a_n)$ with $a_i \in V$
and, for an arbitrary multiindex $k = (k_1,\ldots, k_n)$, we used the shorthand notation
\begin{equ}
\tilde a^{\star k} = \tilde a_1^{\star k_1}\star \ldots \star \tilde a_d^{\star k_n}\;,
\end{equ}
with the convention that $\tilde a^{\star 0} = \one$.

In order for this definition to make any sense, the sector $V$ needs of course to be endowed
with a product $\star$ which also leaves $V$ invariant. 
In principle, the sum in \eref{e:defFcirc} looks infinite, but by the
properties of the product $\star$, we have $\tilde a^{\star k} \in T_{|k|\reg}^+$. Since $\reg$ is strictly 
positive, only finitely many terms in \eref{e:defFcirc} contribute at each order of homogeneity,
so that $\hat F(a)$ is well-defined as soon as $F \in \CC^\infty$. 
The main result in this subsection is given by:

\begin{theorem}\label{theo:smooth}
Let $V$ be a function-like sector of some regularity structure $\TT$,
let $\reg > 0$ be as above, let $\gamma > 0$, and let $F\in \CC^\kappa(\R^k, \R)$ for some 
$\kappa \ge \gamma/\zeta \vee 1$. Assume furthermore that $V$ is $\gamma$-regular.
Then, for any $f \in \CD^\gamma(V)$, the map $\hat F_\gamma(f)$ defined by
\begin{equ}
\hat F_\gamma (f)(x) = \CQ_\gamma^-\hat F(f(x))\;,
\end{equ}
again belongs to $\CD^\gamma(V)$.
If one furthermore has $F\in \CC^\kappa(\R^k, \R)$ for $\kappa \ge (\gamma / \reg \vee 1) + 1$, 
then the map $f \mapsto \hat F(f)$ is locally Lipschitz continuous 
in the sense that one has the bounds
\begin{equ}[e:locLip]
\|\hat F_\gamma(f) - \hat F_\gamma(g)\|_{\gamma;\K} \lesssim \|f - g\|_{\gamma;\K}\;,\qquad 
\$\hat F_\gamma(f) - \hat F_\gamma(g)\$_{\gamma;\K} \lesssim \$f - g\$_{\gamma;\K}\;,
\end{equ}
for any compact set $\K \subset \R^d$, where the proportionality constant in the first bound is uniform
over all $f$, $g$ with $\|f\|_{\gamma;\K} + \|g\|_{\gamma;\K} \le C$, while in the
second bound it is uniform over all $f$, $g$ with $\$f\$_{\gamma;\K} + \$g\$_{\gamma;\K} \le C$,
for any fixed constant $C$. We furthermore performed a slight abuse of notation by writing again
$\|f\|_{\gamma;\K}$ (for example) instead of $\sum_{i\le n} \|f_i\|_{\gamma;\K}$.
\end{theorem}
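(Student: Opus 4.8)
The plan is to reduce everything to the multiplication theorem (Theorem~\ref{theo:mult}), Taylor's theorem with remainder, and careful bookkeeping of homogeneities. First I would record the basic structural facts: since $V$ is function-like, elements $a \in V$ split as $a = \bar a\one + \tilde a$ with $\tilde a \in T_\zeta^+$, and $\tilde a^{\star k} \in T_{|k|\zeta}^+$ by the properties of $\star$. Consequently only multiindices $k$ with $|k|\zeta < \gamma$, i.e.\ $|k| < \gamma/\zeta$, contribute to $\hat F_\gamma(f)(x) = \CQ_\gamma^- \hat F(f(x))$, so it is a \emph{finite} sum of terms of the form $c_k(x)\,\tilde f(x)^{\star k}$ with $c_k(x) = D^kF(\bar f(x))/k!$. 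The scalar function $x \mapsto \bar f(x) = \scal{\one,f(x)}$ lies in $\CC^\gamma_\s$ (more precisely its polynomial lift is in $\CD^\gamma$ restricted to the $\bar T$-part; this is Proposition~\ref{prop:recFcn} combined with the definition of $\CD^\gamma$), and since $F \in \CC^\kappa$ with $\kappa \ge \gamma/\zeta \vee 1$, each $D^kF$ with $|k| < \gamma/\zeta$ is still $\CC^{\kappa - |k|}$ with $\kappa - |k| > \gamma/\zeta - \gamma/\zeta \ge$ enough to conclude $c_k \in \CC^{\gamma'}_\s$ for some $\gamma' > 0$ (one needs $\kappa - |k| \ge \gamma/\zeta - |k|$, which holds; the sharp count is that we need the composition $D^kF \circ \bar f$ to be in $\CC^{(\gamma - |k|\zeta)/\zeta}$ or better). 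Then $c_k$, viewed via its Taylor lift as an element of $\CD^{\gamma - |k|\zeta}(\bar T)$, multiplied by $\tilde f^{\star k} \in \CD^{\gamma - |k|\zeta + |k|\zeta} = \CD^\gamma$... — here one has to be slightly careful and instead argue directly.

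The cleanest route I would take is the direct one: to show $\hat F_\gamma(f) \in \CD^\gamma(V)$ it suffices to bound, for $\beta < \gamma$,
\begin{equ}
\|\Gamma_{xy}\hat F_\gamma(f)(y) - \hat F_\gamma(f)(x)\|_\beta \lesssim \|x-y\|_\s^{\gamma - \beta}\;.
\end{equ}
Using $\gamma$-regularity of $V$, $\Gamma_{xy}$ distributes over $\star$ on the relevant (low-homogeneity) components, so $\Gamma_{xy}\bigl(\tilde f(y)^{\star k}\bigr)$ agrees with $(\Gamma_{xy}\tilde f(y))^{\star k}$ up to terms in $T_\gamma^+$, which are discarded. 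One also checks $\Gamma_{xy}\tilde f(y) = \widetilde{\Gamma_{xy}f(y)}$ up to the $\one$-component (since $\Gamma\one = \one$). The key step is then a Taylor expansion: writing $\hat F(a) = \sum_k \frac{D^kF(\bar a)}{k!}\tilde a^{\star k}$, one compares $\hat F(\Gamma_{xy}f(y))$ with $\hat F(f(x))$. Because $\Gamma_{xy}f(y) - f(x)$ has $\one$-component $\bar f(y) - \bar f(x) = O(\|x-y\|_\s^\gamma)$ only in the naive sense — actually $\bar f$ is merely $\CC^\gamma_\s$ so $|\bar f(y) - \bar f(x)| \lesssim \|x-y\|_\s^{\gamma}$ fails; it is $\lesssim \|x-y\|_\s^{\gamma \wedge 1}$ — one must expand $D^kF(\bar f(y))$ around $\bar f(x)$ using the recursive structure of $\CC^\gamma_\s$ exactly as in the proof of Lemma~\ref{lem:Lipschitz}: the higher derivatives of $F$ supply precisely the coefficients matching the higher-order Taylor jet of $\bar f$ at $x$. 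This is the standard ``Faà di Bruno'' bookkeeping and it is where $\kappa \ge \gamma/\zeta \vee 1$ is used — the $\vee 1$ handles the case $\zeta \ge \gamma$ where only $k = 0$ and $|k| = 1$ terms survive but one still needs $F \in \CC^1$ to control $F(\bar f(y)) - F(\bar f(x))$. Combining the Taylor remainder bound with the $\CD^{\gamma_i}$ bounds on $\tilde f^{\star k}$ coming from Theorem~\ref{theo:mult} (which gives $\tilde f^{\star k} \in \CD^{\gamma}$ with the norm estimate involving $\|\Gamma\|$) yields the claimed bound.

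For the local Lipschitz estimates \eref{e:locLip}, under the stronger hypothesis $\kappa \ge (\gamma/\zeta \vee 1) + 1$, I would write $\hat F_\gamma(f) - \hat F_\gamma(g)$ telescopically: difference the coefficients $D^kF(\bar f(x)) - D^kF(\bar g(x))$ (controlled by $\|f - g\|_{\gamma;\K}$ since $D^{k}F$ is $\CC^1$, hence locally Lipschitz — this is why one extra derivative is needed) and difference the products $\tilde f^{\star k} - \tilde g^{\star k}$ (controlled by the bilinearity/multiplicativity estimates of Theorem~\ref{theo:mult} and Proposition~\ref{prop:multDiff}, expanding the $k$-fold product as a telescoping sum). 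The $\$\cdot\$$-bound then follows by the same expansion combined with Proposition~\ref{prop:multDiff}, tracking the $\Gamma$-dependence. The main obstacle is the Taylor-remainder bookkeeping in the first part: one has to match, order by order in homogeneity, the contributions of $D^kF$ composed with the \emph{jet} of $\bar f$ (not just its value) against the genuine product structure, and verify no term of net homogeneity $< \gamma$ is left unbounded by $\|x-y\|_\s^{\gamma - \beta}$; everything else is routine bilinear estimation using results already proved.
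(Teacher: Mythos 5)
Your plan is correct and follows essentially the same route as the paper's proof: truncate the sum defining $\hat F$ at $|k| \le \lfloor \gamma/\zeta\rfloor$, use $\gamma$-regularity to commute $\Gamma_{xy}$ with the $k$-fold products up to remainders in $T_\gamma^+$, Taylor-expand $D^kF$ around the value of $\bar f$ at the other base point to compensate for the low H\"older regularity of $\bar f$, recombine via the (generalised) binomial identity, and obtain the Lipschitz bounds by differencing with one extra derivative of $F$ (the paper writes the difference as an integral of $D^{k+e_i}F$ along the segment from $g$ to $f$, which is just a cleaner form of your telescoping). One small correction: the increment bound on $\bar f(x) = \scal{\one, f(x)}$ is $|\bar f(x)-\bar f(y)| \lesssim \|x-y\|_\s^{\zeta}$ (with $\zeta$ the lowest non-zero homogeneity of $V$, possibly $<1$), not $\|x-y\|_\s^{\gamma\wedge 1}$ as you wrote — $\bar f$ is in general only $\CC^\zeta_\s$, not $\CC^\gamma_\s$ — and it is precisely this exponent $\zeta$ that forces the Taylor expansion of $D^kF$ to be carried out to order $\lfloor\gamma/\zeta\rfloor - |k|$ and hence dictates the hypothesis $\kappa \ge \gamma/\zeta$; with your exponent one would be tempted to truncate that expansion too early.
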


\begin{proof}
From now on we redefine $\zeta$ so that $\zeta = \gamma$ in the case when $A$ contains no 
index between $0$ and $\gamma$. In this case, our  original condition 
$\kappa \ge \gamma/\zeta \vee 1$ reads simply as $\kappa \ge \gamma/\zeta$.

Let $L = \lfloor\gamma / \reg\rfloor$,
which is the length of the largest multiindex 
appearing in \eref{e:defFcirc} which still yields a contribution to $T_\gamma^-$.
Writing $b(x) = \CQ_\gamma^- \hat F \bigl(f(x)\bigr)$, we aim to find a bound on $\Gamma_{yx} b(x) - b(y)$.
It follows from a straightforward generalisation of the computation from Theorem~\ref{theo:mult}
that
\begin{equs}
\Gamma_{yx} b(x) &= \sum_{|k| \le L} {D^k F(\bar f(x)) \over k !} \Gamma_{yx} \bigl(\CQ_\gamma^- \tilde f(x)^{\star k}\bigr)  \\
&= \sum_{|k|\le L} {D^k F(\bar f(x)) \over k !} \bigl(\Gamma_{yx} \tilde f(x)\bigr)^{\star k} + R_1(x,y)\;,
\end{equs}
with a remainder term $R_1$ such that 
$\|R_1(x,y)\|_\beta \lesssim \|x-y\|_\s^{\gamma - \beta}$,
for all $\beta < \gamma$. 
Since $\Gamma_{yx}\one = \one$, we can furthermore write
\begin{equ}
\Gamma_{yx} \tilde f(x) = \Gamma_{yx} f(x) - \bar f(x) \one = \tilde f(y) + (\bar f(y) - \bar f(x))\one + R_f(x,y)\;,
\end{equ}
where, by the assumption on $f$, the remainder term $R_f$ again satisfies the bound $\|R_f(x,y)\|_\beta \ls \|x-y\|_\s^{\gamma-\beta}$ for all $\beta < \gamma$. Combining this with the bound we already obtained, we
get
\begin{equ}[e:newbound]
\Gamma_{yx} b(x) = \sum_{|k| \le L} {D^k F(\bar f(x)) \over k !} \bigl(\tilde f(y) + (\bar f(y) - \bar f(x))\one\bigr)^{\star k} + R_2(x,y)\;,
\end{equ}
with
\begin{equ}
\|R_2(x,y)\|_\beta \lesssim \|x-y\|_\s^{\gamma-\beta}\;,
\end{equ}
for all $\beta < \gamma$ as above.
We now expand $D^kF$ around $\bar f(y)$, yielding
\begin{equ}[e:TaylorF]
D^kF(\bar f(x)) = \sum_{|k+\ell| \le L} {D^{k+\ell}F(\bar f(y)) \over \ell !} \bigl(\bar f(x) - \bar f(y)\bigr)^\ell + \CO \bigl(\|x-y\|_\s^{\gamma-|k|\zeta}\bigr)\;,
\end{equ}
where we made use of the fact that $|\bar f(x) - \bar f(y)| \lesssim \|x-y\|_\s^\reg$ by the definition
of $\CD^\gamma$, and the fact that $F$ is $\CC^{\gamma/\reg}$ by assumption.
Similarly, we have the bound
\begin{equ}[e:boundProdf]
\bigl\| \bigl(\tilde f(y) + (\bar f(y) - \bar f(x))\one\bigr)^{\star k}\bigr\|_\beta \lesssim \|x-y\|_\s^{\reg |k| - \beta}\;,
\end{equ}
so that, combining this with \eref{e:newbound} and \eref{e:TaylorF}, we obtain the identity
\begin{equ}[e:lastbx]
\Gamma_{yx} b(x) = \sum_{|k+\ell| \le L} {D^{k+\ell}F(\bar f(y)) \over k ! \ell !} \bigl(\tilde f(y) + (\bar f(y) - \bar f(x))\one\bigr)^{\star k} (\bar f(x) - \bar f(y))^\ell + R_3(x,y)\;,
\end{equ}
where $R_3$ is again a remainder term satisfying the bound
\begin{equ}[e:lastBoundRemainder]
\|R_3(x,y)\|_\beta \lesssim \|x-y\|_\s^{\gamma-\beta}\;.
\end{equ}
Using the generalised binomial identity, we have
\begin{equ}
\sum_{k + \ell = m} {1 \over k! \ell!} \bigl(\tilde f(y) + (\bar f(y) - \bar f(x))\one\bigr)^{\star k} (\bar f(x) - \bar f(y))^\ell = {\tilde f(y)^{\star m} \over m!}\;,
\end{equ}
so that the component in $T_\gamma^-$ of the first term in the right hand 
side of \eref{e:lastbx} is precisely equal to the component in $T_\gamma^-$ of $b(y)$. 
Since the remainder satisfies \eref{e:lastBoundRemainder}, this shows that one does
indeed have $b\in \CD^\gamma(V)$. 

The first bound in \eref{e:locLip} is immediate from the definition \eref{e:defFcirc},
as well as the fact that the assumption implies the local Lipschitz continuity of $D^k F$
for every $|k| \le L$.

The second bound is a little more involved. One way of obtaining it is to first define $h = f-g$
and to note that one then has the identity
\begin{equs}
\hat F (f(x)) - \hat F(g(x)) &= \sum_{k,i} \int_0^1 {D^{k+e_i} F(\bar g(x) + t\bar h(x)) \over k!} \bigl(\tilde g(x) + t \tilde h(x)\bigr)^{\star k} \bar h_i(x)\,dt \\
&\quad + \sum_{k,i} \int_0^1 {D^{k} F(\bar g(x) + t\bar h(x)) \over k!} k_i \bigl(\tilde g(x) + t \tilde h(x)\bigr)^{\star (k-e_i)} \tilde h_i(x)\,dt \\
&= \sum_{k,i} \int_0^1 {D^{k+e_i} F(\bar g(x) + t\bar h(x)) \over k!} \bigl(\tilde g(x) + t \tilde h(x)\bigr)^{\star k} h_i(x)\,dt\;.
\end{equs}
Here, $k$ runs over all possible multiindices and $i$ takes the values $1,\ldots,n$.
We used the notation $e_i$ for the $i$th canonical multiindex. Note also that our way of writing
the second
term makes sense since, whenever $k_i = 0$ so that $k-e_i$ isn't a multiindex anymore, 
it vanishes thanks to the prefactor $k_i$. 

From this point on, the calculation is virtually identical to the calculation already
performed previously. The main differences are that $F$ appears with one more derivative and
that every term always appears with a prefactor $h$, which is responsible for the 
bound proportional to $\$h\$_{\gamma;\K}$.
\end{proof}

\subsection{Relation to Hopf algebras}
\label{sec:Hopf}

Structures like the one of Definition~\ref{def:regularity}
must seem somewhat familiar to the reader used to the formalism 
of Hopf algebras \cite{Sweedler}. Indeed, there are several natural instances of regularity 
structures that are obtained from a Hopf algebra (see for example Section~\ref{sec:RP} below).
This will also be useful in the context of the kind of structures arising when solving semilinear PDEs,
so let us quickly outline this construction. 

Let $\CH$ be a connected, graded, commutative Hopf algebra with product $\star$ and a compatible
coproduct $\Delta$ so that $\Delta(f\star g) = \Delta f \star \Delta g$. 
We assume that the grading is indexed by $\Z_+^d$ for some $d \ge 1$, so that
$\CH = \bigoplus_{k \in \Z_+^d} \CH_k$, and that each of the $\CH_k$ is finite-dimensional.
The grading is assumed to be compatible with the product structures, meaning that
\begin{equ}[e:compat]
\star \colon \CH_k \otimes \CH_\ell \to \CH_{k+\ell}\;,\qquad \Delta\colon \CH_k \to \bigoplus_{\ell+m = k}\CH_\ell\otimes \CH_m\;.
\end{equ}
Furthermore, $\CH_0$ is spanned by the unit $\one$ (this is the definition of connectedness), 
the antipode $\CA$ maps $\CH_k$ to itself for every $k$, and the
counit $\one^*$ is normalised so that $\scal{\one^*, \one} = 1$.

The dual $\CH^\star = \bigoplus_{k \in \Z_+^d} \CH_k^*$ is then again a graded Hopf algebra with a product $\circ$ given by the 
adjoint of $\Delta$ and a coproduct $\Delta^\star$ given by the adjoint of $\star$. (Note that while $\star$ is assumed to be commutative,
$\circ$ is definitely not in general!) By \eref{e:compat}, both $\circ$ and $\Delta^\star$ respect the grading of $\CH^\star$.
There is a natural action $\Gamma$ of $\CH^\star$ onto $\CH$ given by
the identity
\begin{equ}[e:actionG]
\scal{\ell, \Gamma_g f} = \scal{\ell\circ g, f}\;,
\end{equ}
valid for all $\ell, g \in \CH^\star$ and all $f \in \CH$. An alternative way of writing this is
\begin{equ}[e:defAction]
\Gamma_g f = (1\otimes g)\Delta f\;,
\end{equ}
where we view $g$ as a linear operator from $\CH$ to $\R$.
It follows easily from \eref{e:compat}
that, if $g$  and $f$ are homogeneous of degrees $d_g$ and $d_f$ respectively, then $\Gamma_g f$ is
homogeneous of degree $d_f - d_g$, provided that $d_f - d_g \in \Z_+^d$. If not, then one necessarily
has $\Gamma_g f = 0$.

\begin{remark}\label{rem:leftAction}
Another natural action of $\CH^\star$ onto $\CH$ would be given by
\begin{equ}
\scal{\ell, \bar \Gamma_g f} = \scal{(\CA^\star g) \circ \ell, f}\;,
\end{equ}
where, $\CA^\star$, the adjoint of $\CA$, 
is the antipode for $\CH^\star$. Since it is an antihomomorphism, one has indeed the required identity
$\bar \Gamma_{g_1} \bar \Gamma_{g_2} = \bar \Gamma_{g_1\circ g_2}$.
\end{remark}

Since we assumed that $\star$ is commutative, it follows from the Milnor-Moore 
theorem \cite{MR0174052} that $\CH^\star$ is the universal enveloping algebra of $P(\CH^\star)$,
the set of primitive elements of $\CH^\star$ given by
\begin{equ}
P(\CH^\star) = \{g \in \CH^\star \,:\, \Delta^\star g = \one^\star\otimes g + g \otimes \one^\star\}\;.
\end{equ}
Using the fact that the coproduct $\Delta^\star$ is an algebra morphism, it is 
easy to check that $P(\CH^\star)$ is indeed a Lie algebra with bracket
given by $[g_1,g_2] = g_1 \circ g_2 - g_2 \circ g_1$.
This yields in a natural way a Lie group $G \subset \CH^\star$ given by $G = \exp(P(\CH^\star))$.
It turns out (see \cite{MR0214646}) that this Lie group has the very useful property that
\begin{equ}
\Delta^\star(g) = g \otimes g\;,\qquad \forall g \in G\;.
\end{equ}
As a consequence, it is straightforward to verify that one has the remarkable identity
\begin{equ}[e:realIdentity]
\Gamma_g (f_1\star f_2) = (\Gamma_g f_1)\star (\Gamma_g f_2)\;,
\end{equ}
valid for every $g \in G$. This is nothing but an exact version of the regularity requirement
of Definition~\ref{def:regularity}! Note also that \eref{e:realIdentity} is definitely \textit{not} true
for arbitrary elements $g \in \CH^\star$.

All this suggests that a very natural way of constructing a regularity structure is from a graded commutative
Hopf algebra. The typical set-up will then be to fix scaling exponents $\{\alpha_i\}_{i=1}^d$ and
to write $\scal{\alpha,k} = \sum_{i=1}^d \alpha_i k_1$ for any index $k \in \Z_+^d$. We then set
\begin{equ}
A = \{\scal{\alpha,k}\,:\, k \in \Z_+^d\} \;,\quad T_\gamma = \bigoplus_{\scal{\alpha,k} = \gamma} \CH_k\;.
\end{equ}
With this notation at hand, we have:

\begin{lemma}\label{lem:HopfRegular}
In the setting of this subsection, $(A,T,G)$ is a regularity structure, with $G$ acting on $T$ via
$\Gamma$. Furthermore, $T$ equipped with the product $\star$ is regular.
\end{lemma}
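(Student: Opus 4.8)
The plan is to verify the three defining properties of a regularity structure (Definition~\ref{def:regStruct}) for the triple $(A,T,G)$, and then to check that $\star$ is a product on $T$ which is moreover regular in the sense of Definition~\ref{def:regularity}. Almost everything is a direct translation of the Hopf-algebraic hypotheses, so the ``proof'' is really a bookkeeping exercise; the one point requiring a genuine (though standard) input is the nilpotency of $G$.

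First I would check the index set $A$. Since $0 = \scal{\alpha,0} \in A$ and $\CH_0 = \lspan\{\one\}$, the element $\one \in T_0$ is the required unit, and $T_0 \approx \R$. If all $\alpha_i > 0$ then $A \subset \R_+$ is bounded below by $0$ and locally finite because for each bound $N$ there are only finitely many $k \in \Z_+^d$ with $\scal{\alpha,k} \le N$ (here one uses $\alpha_i>0$ and $\CH_k$ finite-dimensional, so $T_\gamma = \bigoplus_{\scal{\alpha,k}=\gamma}\CH_k$ is finite-dimensional). One should state this positivity hypothesis explicitly (it is implicit in ``scaling exponents''). The grading $T = \bigoplus_{\gamma \in A} T_\gamma$ is then immediate from $\CH = \bigoplus_k \CH_k$. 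Next, $G = \exp(P(\CH^\star))$ acts on $T$ via $\Gamma$, and one needs $\Gamma_g a - a \in \bigoplus_{\beta < \gamma} T_\beta$ for $a \in T_\gamma$ and $\Gamma_g \one = \one$. The latter follows from $\Delta\one = \one\otimes\one$ and $\scal{g,\one}=1$ for $g \in G$ (group-like elements evaluate to $1$ on the unit since $G$ consists of characters). For the former: by the discussion preceding the lemma, if $a \in \CH_k$ is homogeneous and $g$ is homogeneous of degree $d_g$, then $\Gamma_g a$ is homogeneous of degree $k - d_g$ when $k-d_g \in \Z_+^d$ and zero otherwise; writing $g = \one^\star + (g-\one^\star)$ and expanding $\Delta a$ with Sweedler notation, the $\one^\star$-component reproduces $a$ itself (by the counit axiom) and every other component lowers the degree strictly, so $\Gamma_g a - a \in \bigoplus_{\ell < k}\CH_\ell$, hence in $\bigoplus_{\beta<\gamma}T_\beta$ after translating degrees via $\scal{\alpha,\cdot}$.

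It remains to confirm that $G$ is genuinely a group of linear operators on $T$ (not merely a formal object): this is exactly the nilpotency remark in the text just after Definition~\ref{def:regStruct}. Because $A$ is locally finite and bounded below, for each $\gamma$ there is $n$ with $(\Gamma_g - 1)^n T_\gamma = 0$ uniformly in $g$, so the exponential and the product $\Gamma_{g_1}\Gamma_{g_2} = \Gamma_{g_1 \circ g_2}$ (from \eqref{e:actionG}) are well-defined; invertibility follows from $\Gamma_g^{-1} = \Gamma_{g^{-1}}$ where $g^{-1} = \CA^\star g$ in $\CH^\star$. So $(A,T,G)$ is a regularity structure.

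Finally, for the product $\star$: compatibility of the grading \eqref{e:compat} gives $\star\colon \CH_k \otimes \CH_\ell \to \CH_{k+\ell}$, which under the assignment $\gamma = \scal{\alpha,k}$ becomes $\star\colon T_\alpha \times T_\beta \to T_{\alpha+\beta}$; the unit axiom $\one \star a = a \star \one = a$ holds in $\CH$; and continuity is automatic since each $T_\gamma$ is finite-dimensional. So $\star$ is a product on $T$ in the sense of the definition preceding Remark~\ref{rem:multReg}. Regularity is precisely the content of the identity \eqref{e:realIdentity}, $\Gamma_g(f_1 \star f_2) = (\Gamma_g f_1)\star(\Gamma_g f_2)$ for every $g \in G$, which the text has already established from the fact that elements of $G = \exp(P(\CH^\star))$ satisfy $\Delta^\star g = g \otimes g$; since this holds for all $g \in G$ and all $f_1, f_2$, the pair $(T,T)$ is $\gamma$-regular for every $\gamma$, i.e.\ $T$ is regular. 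The only mild obstacle worth flagging is ensuring one has actually imposed $\alpha_i > 0$ (needed for boundedness below and local finiteness of $A$); everything else is a transcription of the Hopf-algebra axioms. This completes the proof.
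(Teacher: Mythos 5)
Your proposal is correct, and for the part of the lemma that actually requires an argument it takes a slightly different (but equally standard) route from the paper. The paper's proof dismisses everything except the single property $\Gamma_g a - a \in T_\gamma^-$, which it establishes by noting that $P(\CH^\star)$ has a homogeneous basis of elements of non-zero degree, so that $\Gamma_{g^n}a$ strictly lowers the homogeneity for $g$ primitive and $n>0$, and then summing the exponential series defining elements of $G$. You instead argue directly from the character property of group-like elements: writing $\Gamma_g a - a = \bigl(1\otimes(g-\one^\star)\bigr)\Delta a$, the degree-zero part of the second factor of $\Delta a$ contributes nothing because $\CH_0 = \lspan\{\one\}$ (connectedness) and $\scal{g-\one^\star,\one}=0$, while every other term strictly lowers the $\Z_+^d$-degree of the first factor. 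Both arguments rest on the same two ingredients (compatibility of $\Delta$ with the grading, and connectedness); yours avoids invoking the exponential structure of $G$ altogether and so applies verbatim to any character, which is marginally more general, while the paper's is the natural one given that $G$ was defined as $\exp(P(\CH^\star))$. Your observation that one must implicitly assume $\alpha_i>0$ (both for $A$ to be bounded below and locally finite, and for a strict decrease of $\Z_+^d$-degree to translate into a strict decrease of real homogeneity) is a fair and worthwhile remark; the paper leaves this implicit in the word ``scaling''. The remaining verifications in your write-up (unit, grading of $\star$, continuity, and regularity via \eref{e:realIdentity}) match what the paper takes for granted.
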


\begin{proof}
In view of \eref{e:realIdentity}, the only property that remains to be shown is that $\Gamma_g a - a \in T_\gamma^-$
for $a \in T_\gamma$.

It is easy to show that $P(\CH^\star)$ has a basis consisting of homogeneous elements
and that these belong to $\CH_k^\star$ for some $k \neq 0$. (Since $\Delta^\star \one^\star = \one^\star \otimes \one^\star$.) As a consequence, for $a \in T_\gamma$, $g \in P(\CH^\star)$, and $n > 0$, we 
have $\Gamma_{g^n} a \in T_\beta$ for some
$\beta < \gamma$. Since every element of $G$ is of the form $\exp(g)$ for some
$g \in P(\CH^\star)$ and since $g\mapsto \Gamma_g$ is linear, one has indeed $\Gamma_g a - a \in T_\gamma^-$.
\end{proof}

\begin{remark}
The canonical regularity structure is an example of a regularity structure that can be obtained
via this construction. Indeed, a natural dual to the space $\CH$ of polynomials in $d$ indeterminates 
is given by the space
$\CH^\star$ of differential operators over $\R^d$ with constant coefficients, 
which does itself come with a natural commutative product
given by the composition of operators. (Here, the word ``differential operator'' should be taken in a somewhat loose
sense since it consists in general of an infinite power series.)
Given such a differential operator $\CL$
and an (abstract) polynomial $P$, a natural duality pairing $\scal{\CL,P}$ is given by
applying $\CL$ to $P$ and evaluating the resulting polynomial at the origin. Somewhat informally, one sets
\begin{equ}
\scal{\CL,P} = \bigl(\CL P\bigr)(0)\;.
\end{equ}
The action $\Gamma$ described in \eref{e:actionG} is then given by simply applying $\CL$ to $P$:
\begin{equ}
\Gamma_\CL P = \CL P\;.
\end{equ}
It is indeed obvious that \eref{e:actionG} holds in this case. 
The space of primitives of $\CH^\star$ then consists of those differential operators that satisfy
Leibniz's rule, which are of course precisely the first-order differential operators.
The group-like elements consist of their exponentials, which act on polynomials indeed precisely 
as the group of translations on $\R^d$.
\end{remark}

\subsection{Rough paths}
\label{sec:RP}

A prime example of a regularity structure on $\R$ that is quite different from the canonical
structure of polynomials is the structure associated to $E$-valued 
geometric rough paths of class $\CC^\gamma$ for some $\gamma \in (0,1]$, and some Banach space $E$.
For an introduction to the theory of rough paths, see for example the monographs \cite{MR2036784,MR2314753,MR2604669}
or the original article \cite{Lyons}. 
We will see in this section that, given a Banach space $E$, we can associate to it in a natural way a
regularity structure $\RR_E^\gamma$ which describes the space of $E$-valued rough paths. The regularity index $\gamma$
will only appear in the definition of the index set $A$.
Given such a structure, the space of rough paths with regularity $\gamma$ turns out to be nothing but the 
space of models for $\RR_E^\gamma$.

Setting $A = \gamma \N$, we take for $T$ the tensor algebra
built upon $E^*$, the topological dual of $E$:
\begin{equ}[e:defT]
T = \bigoplus_{k=0}^\infty T_{k\gamma} \;,\qquad T_{k\gamma}=\bigl(E^*\bigr)^{\otimes k}\;,
\end{equ}
where $(E^*)^{\otimes 0} = \R$. 
The choice of tensor product on $E$ and $E^*$ does not matter in principle,
as long as we are consistent in the sense that $\bigl(E^{\otimes k}\bigr)^* =(E^*)^{\otimes k}$ for every $k$. 
We also introduce the space $T_\star$ (which is the predual of $T$) as the tensor algebra 
built from $E$, namely $T_\star = T((E))$.

\begin{remark}
One would like to write again $T_\star = \bigoplus_{k=0}^\infty E^{\otimes k}$.
However, while we consider for $T$ \textit{finite} linear combinations of 
elements in the spaces $T_{k\gamma}$,
for $T_\star$, it will be useful to allow for infinite linear combinations.
\end{remark}

Both $T$ and $T_\star$ come equipped with a natural product. 
On $T_\star$, it will be natural to consider the tensor product $\otimes$,
which will be used to define $G$ and its action on $T$. 
The space $T$ also comes equipped with a natural product, the \textit{shuffle product}, which plays in this context
the role that polynomial multiplication played for the canonical regularity structures. 
Recall that, for any alphabet $\CW$, the shuffle product $\shuffle$ is defined on the free algebra over $\CW$ 
by considering all possible ways of interleaving two words in ways that preserve the original order of the letters. 
In our context, if $a$, $b$ and $c$ are elements of $E^*$, we set for example
\begin{equ}
(a\otimes b) \shuffle (a\otimes c) = a \otimes b \otimes a \otimes c + 2 a \otimes a \otimes b \otimes c + 2 a \otimes a \otimes c \otimes b + a \otimes c \otimes a \otimes b\;.
\end{equ}

Regarding the group $G$, we then perform the following construction. 
For any two elements $a,b \in T_\star$, we define their ``Lie bracket'' by 
\begin{equ}
{}[a,b] = a \otimes b - b \otimes a\;.
\end{equ}
We then define $\LL \subset T_\star$ as the (possibly infinite) linear combinations of all such brackets,
and we set $G= \exp(\LL) \subset T_\star$, with the group operation given by the tensor product $\otimes$.
Here, for any element $a \in T_\star$, we write
\begin{equ}
\exp(a) = \sum_{k=0}^\infty {a^{\otimes k} \over k!}\;,
\end{equ}
with the convention that $a^{\otimes 0} = \one \in T_0$. Note that this sum
makes sense for every element in $T_\star$, and that $\exp(-a) = \bigl(\exp(a)\bigr)^{-1}$.
For every $a \in G$, the corresponding linear map $\Gamma_a$ acting on $T$ 
is then obtained by duality, via the identity 
\begin{equ}[e:defGammaHopf]
\scal{c,\Gamma_a b} = \scal{a^{-1}\otimes c, b}\;,
\end{equ}
where $\scal{\cdot,\cdot}$ denotes the pairing between $T$ and $T_\star$.
Let us denote by $\RR_E^\gamma$ the regularity structure $(A,T,G)$ constructed in this way.

\begin{remark}
The regularity structure $\RR_E^\gamma$ is yet another example of a regularity structure 
that can be obtained via the general construction of Section~\ref{sec:Hopf}.
In this case, our Hopf algebra is given by $T$, equipped with the commutative product $\shuffle$
and the non-commutative coproduct obtained from $\otimes$ by duality.
The required morphism property then just reflects the fact that the shuffle product 
is indeed a morphism for the deconcatenation coproduct.
The choice of action is then the one given by Remark~\ref{rem:leftAction}.
\end{remark}

What are the models $(\Pi,\Gamma)$ for the regularity structure $\RR_E^\gamma$?
It turns out that the elements $\Gamma_{st}$ (which we identify with an element $\XX_{st}$
in $T_\star$ acting via \eref{e:defGammaHopf}) are nothing but what is generally referred to as 
geometric rough paths. Indeed, the identity $\Gamma_{st} \circ \Gamma_{tu} = \Gamma_{su}$,
translates into the identity
\begin{equ}[e:Chen]
\XX_{su} = \XX_{st}\otimes \XX_{tu} \;,
\end{equ}
which is nothing but Chen's relations \cite{MR0073174}. 
The bound \eref{e:boundGamma} on the other hand precisely states that the rough path $\XX$
is $\gamma$-H\"older continuous in the sense of \cite{MR2604669} for example. Finally, it is well-known 
(see \eref{e:realIdentity} or \cite{reutenauer93}) that,
for $a \in T_{k\gamma}$ and $b \in T_{\ell\gamma}$ with $k + \ell \le p$, and any $\Gamma \in G$,
one has the shuffle identity,
\begin{equ}
\Gamma(a\shuffle b) = \bigl(\Gamma a\bigr)\shuffle \bigl(\Gamma b\bigr)\;,
\end{equ}
which can be interpreted as a way of encoding the chain rule.
This should again be compared to Definition~\ref{def:regularity}, which shows 
that the shuffle product is indeed
the natural product for $T$ in this context and that $T$ is regular for $\shuffle$.

By Proposition~\ref{prop:extension}, since our regularity structure only contains elements of positive
homogeneity, the model $\Pi$ is uniquely determined by $\Gamma$.
It is straightforward to check that if we set
\begin{equ}
\bigl(\Pi_s a\bigr)(t) = \scal{\XX_{st},a}\;,
\end{equ}
then the relations and bounds of Definition~\ref{def:model} 
are indeed satisfied, so that this is the
unique model $\Pi$ compatible with a given choice of $\Gamma$ (or equivalently $\XX$).

The interpretation of such a rough path is as follows. 
Denote by $X_t$ the projection of $\XX_{0t}$ onto $E$, the predual of $T_\gamma$.
Then, for every $a \in T_{k\gamma}$ with $k \in \N$, we interpret $\scal{\XX_{st}, a}$ as providing a 
value for the corresponding $k$-fold iterated integral, i.e.,
\begin{equ}[e:iterInt]
\scal{\XX_{st}, a} \;\text{``$=$''}\; \int_s^t \int_s^{t_k}\ldots \int_s^{t_{2}} \scal{dX_{t_1}\otimes\ldots \otimes dX_{t_{k-1}} \otimes dX_{t_k},a}\;.
\end{equ}
A celebrated result by Chen \cite{MR0073174} then shows that indeed, if $t \mapsto X_t \in E$ is a continuous function of bounded variation,
and if $\XX$ is defined by the right hand side of \eref{e:iterInt}, then it is the case that $\XX_{st} \in G$ for every $s,t$
and \eref{e:Chen} holds.

Now that we have identified geometric rough paths with the space of models realising $\RR_E^\gamma$,
it is natural to ask what is the interpretation of the spaces $\CD^\beta$ introduced in 
Section~\ref{sec:model}. An element $f$ of $\CD^\beta$ should then be thought of as
describing a function whose increments can locally (at scale $\eps$) be approximated by linear combinations of components of $\XX$,
up to errors of order $\eps^\beta$. Setting $p = \lfloor 1/\gamma \rfloor$, it
can be checked that elements of $\CD^\beta$ with $\beta = p\gamma$
are nothing but the controlled rough paths in the sense of \cite{MR2091358}.

Writing $f_0(t)$ for the component of $f(t)$ in $T_0 = \R$, it does indeed follow
from the definition of $\CD^\beta$ that
\begin{equ}
|f_0(t) - \scal{\XX_{st},f(s)}| \lesssim |t-s|^\beta\;.
\end{equ}
Since, on the other hand, $\scal{\XX_{st},\one} = 1$, we see that one has indeed
\begin{equ}
f_0(t) - f_0(s) =  \scal{\XX_{st}, \CQ_0^\perp f(s)} + \CO(|t-s|^\beta)\;,
\end{equ}
where $\CQ_0^\perp$ is the projection onto the orthogonal complement to $\one$.

The power of the theory is then that, even though $f_0$ itself is typically only $\gamma$-H\"older continuous, 
it does in many
respects behave ``as if'' it was actually $\beta$-H\"older continuous, and one can have $\beta > \gamma$.
In particular, it is now quite straightforward to define ``integration maps'' $\CI_a$ for $a \in E^*$ such that $F = \CI_a f$ 
should be thought of
as describing the integral $F_0(t) = \int_0^t f_0(s)\,d\scal{X_s,a}$, provided that $\beta + \gamma > 1$.

It follows from the interpretation \eref{e:iterInt} that if $f_0(t) = \scal{\XX_t, b}$ for some element $b \in T$, then 
it is natural to have $F_0(t) =   \scal{\XX_t, b \otimes a}$.
At first sight, this suggests that one should simply set $F(t) = \bigl(\CI_a f\bigr)(t) = f(t) \otimes a$.
However, since $\scal{\one, f(t) \otimes a} = 0$, this would not define an element of $\CD_\gamma^\beta$ for
any $\beta > \gamma$ so one still needs to find the correct value for $\scal{\one, F(t)}$.
The following result, which is essentially a reformulation of \cite[Thm~8.5]{Trees} in the geometric context,
states that there is a unique natural way of constructing this missing component. 

\begin{theorem}\label{theo:IntMap}
For every $\beta > 1-\gamma$ and every $a \in E^*$ there exists a unique linear map $I_a \colon \CD^\beta \to \CC^\gamma$ such 
that $\bigl(I_a f\bigr)(0) = 0$ and such that the map $\CI_a$ defined by
\begin{equ}
\bigl(\CI_a f\bigr)(t) = f(t) \otimes a + \bigl(I_a f\bigr)(t)\,\one\;,
\end{equ}
maps $\CD^\beta$ into $\CD^{\bar \beta}$ with $\bar \beta = (\beta \wedge \gamma p) + \gamma$.
\end{theorem}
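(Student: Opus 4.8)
The plan is to reduce the statement to the one-dimensional sewing lemma. First I would dispose of the ``$\wedge \gamma p$'': when $\beta > \gamma p$, so that $\bar\beta = (p+1)\gamma$, the construction below only ever uses the projection of $f$ onto the homogeneous components of degree strictly below $\gamma p$ (since $\Gamma$ only lowers homogeneities, nothing above $\gamma p$ can affect either the $T_0$-component being built or the components of $\CI_a f$ of degree $\le \gamma p$), and this projection is readily checked to lie in $\CD^{\gamma p}$; replacing $f$ by it changes neither $\bar\beta$ nor $\CI_a f$. So I may assume $\beta \le \gamma p$, whence $\bar\beta = \beta + \gamma$ and $f(t)\otimes a$ already takes values in $T^-_{\bar\beta}$; note also that $\beta > 1-\gamma$ forces $\bar\beta > 1$.

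Next I would isolate which components of $\CI_a f$ are rigid. For an arbitrary $F_0\colon \R\to\R$ put $F(t) = f(t)\otimes a + F_0(t)\one$. The key algebraic input is the identity $\Gamma_{ts}(b\otimes a) = (\Gamma_{ts}b)\otimes a + \scal{\one,\Gamma_{ts}(b\otimes a)}\one$ for $b\in T$, i.e.\ appending the letter $a$ to a word commutes with the action of $G$ up to a correction living in $T_0$; this is immediate from the explicit deconcatenation action of $G$ on $T$ described in Section~\ref{sec:RP} (it is also the content of a computation in \cite[Thm~8.5]{Trees}). Combined with $\Gamma_{ts}\one = \one$, it shows that for $m\ge 1$ with $m\gamma < \bar\beta$ one has $\CQ_{m\gamma}(F(t) - \Gamma_{ts}F(s)) = \CQ_{(m-1)\gamma}(f(t) - \Gamma_{ts}f(s))\otimes a$, which is independent of $F_0$ and, since $f\in\CD^\beta$, $\|c\otimes a\|\lesssim\|c\|$, and $m\gamma<\bar\beta=\beta+\gamma$ forces $(m-1)\gamma<\beta$, is bounded by $|t-s|^{\beta-(m-1)\gamma} = |t-s|^{\bar\beta-m\gamma}$ on compacts. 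Hence the only genuine constraint on $F_0$ comes from the $T_0$-component: one needs $|F_0(t) - F_0(s) - \mu_{s,t}|\lesssim|t-s|^{\bar\beta}$ on compacts together with $F_0(0)=0$, where $\mu_{s,t} := \scal{\one,\Gamma_{ts}(f(s)\otimes a)}$ (note $\mu_{s,s}=0$).

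The third step is to verify that $\mu$ is an admissible datum for the sewing lemma. The bound on $\Gamma$ in Definition~\ref{def:model}, applied to $f_k(s)\otimes a\in T_{(k+1)\gamma}$ with target homogeneity $0$, gives $|\mu_{s,t}|\lesssim|t-s|^\gamma$. Using $\Gamma_{tu}\Gamma_{us}=\Gamma_{ts}$, $\Gamma_{tu}\one=\one$ and the displayed identity, the defect $\mu_{s,t}-\mu_{s,u}-\mu_{u,t}$ collapses to $\scal{\one,\Gamma_{tu}(R_{s,u}\otimes a)}$ with $R_{s,u} := \Gamma_{us}f(s) - f(u)$, and the $\CD^\beta$-bound $\|R_{s,u}\|_{k\gamma}\lesssim|u-s|^{\beta-k\gamma}$ together with the $\Gamma$-bound yields $|\mu_{s,t}-\mu_{s,u}-\mu_{u,t}|\lesssim\sum_{k\gamma<\beta}|t-u|^{(k+1)\gamma}|u-s|^{\beta-k\gamma}\lesssim|t-s|^{\bar\beta}$, each term's exponents summing to $\beta+\gamma=\bar\beta>1$. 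The sewing lemma (\cite[Prop.~1]{MR2091358}, or the Haar-wavelet special case of Theorem~\ref{theo:sowingLemma} mentioned after its statement) then produces a unique $F_0$ with $F_0(0)=0$ and $|F_0(t)-F_0(s)-\mu_{s,t}|\lesssim|t-s|^{\bar\beta}$, depending linearly on $\mu$ hence on $f$; since $|F_0(t)-F_0(s)|\lesssim|\mu_{s,t}|+|t-s|^{\bar\beta}\lesssim|t-s|^\gamma$ we get $I_a f := F_0\in\CC^\gamma$, and by the previous paragraph $\CI_a f\in\CD^{\bar\beta}$. This settles existence (and linearity of $I_a$). For uniqueness: if $I_a$ and $\tilde I_a$ both work, then for fixed $f$ the function $g := I_a f - \tilde I_a f$ satisfies $g(0)=0$ and $g\one = \CI_a f - \tilde\CI_a f\in\CD^{\bar\beta}$ (a difference of elements of $\CD^{\bar\beta}$ over the same model), whose level-$0$ bound reads $|g(t)-g(s)| = \|g(t)\one - \Gamma_{ts}(g(s)\one)\|_0\lesssim|t-s|^{\bar\beta}$; since $\bar\beta>1$ this makes $g$ locally constant, so $g\equiv 0$. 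I expect the only real subtlety to be the algebraic identity for $\Gamma_{ts}(b\otimes a)$ and the bookkeeping that pins the gain down exactly at $(\beta\wedge\gamma p)+\gamma$ (the cap at $\gamma p$ being precisely where $\Gamma$ stops having enough positive homogeneities below it to improve the estimate); everything else is a routine application of the sewing lemma and of the defining bounds of a model and of the spaces $\CD^\beta$.
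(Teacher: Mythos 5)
Your argument is correct and is exactly the intended one: the paper omits the proof of this theorem, referring instead to \cite[Thm~8.5]{Trees}, and the proof there is precisely your reduction to the sewing lemma applied to the germ $\mu_{s,t}=\scal{\one,\Gamma_{ts}(f(s)\otimes a)}$, with the deconcatenation identity $\Gamma_{ts}(b\otimes a)=(\Gamma_{ts}b)\otimes a+\scal{\one,\Gamma_{ts}(b\otimes a)}\one$ showing that all components of positive homogeneity satisfy the $\CD^{\bar\beta}$ bounds automatically. Your checks of the germ's coherence (exponents summing to $\beta+\gamma>1$), of the truncation at $\gamma p$, and of uniqueness via $\bar\beta>1$ are all sound.
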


\begin{remark}
Even in the context of the classical theory of rough paths, 
one advantage of the framework presented here is that it is straightforward to accommodate the
case of driving processes with different orders of regularity for different components. 
\end{remark}

\begin{remark}
Using Theorem~\ref{theo:IntMap}, it is straightforward to combine it with Theorem~\ref{theo:smooth} 
in order to solve ``rough differential equations'' of the form $dY = F(Y)\,dX$. It does indeed suffice to 
formulate them as fixed point problems
\begin{equ}
Y = y_0 + \CI \bigl(\hat F(Y)\bigr)\;.
\end{equ}
As a map from $\CD^\beta([0,T])$ into itself, $\CI$ then has norm $\CO(T^{\bar \beta - \beta})$,
which tends to $0$ as $T \to 0$ and the composition with $F$ is (locally) 
Lipschitz continuous for sufficiently regular $F$, so that this map is indeed a contraction for small enough $T$.
\end{remark}

\begin{remark}
In general, one can imagine theories of integration in which the chain rule fails, which is very natural in 
the context of numerical approximations.
In this case, it makes sense to replace the tensor algebra by the Connes-Kreimer Hopf
algebra of rooted trees \cite{MR2106008}, which plays in this context the role of the ``free'' 
algebra generated by the multiplication and
integration maps. This is precisely what was done in \cite{Trees}, and one can verify that the construction
given there is again equivalent to the construction of Section~\ref{sec:Hopf}. See also
\cite{MR0305608,hairer74} for more details on the role of the Connes-Kreimer algebra (whose group-like
elements are also called the ``Butcher group'' in the numerical analysis literature) in the
context of the numerical approximation of solutions to ODEs with smooth coefficients.
See also \cite{DavidK} for an analysis of this type of structure from a different angle
more closely related to the present work.
\end{remark}

%
%
%

\section{Integration against singular kernels}
\label{sec:integral}

In this section, we show how to integrate a modelled distribution against a kernel (think of the Green's function
for the linear part of the stochastic PDE under consideration) with a well-behaved
singularity on the diagonal in order to obtain another modelled distribution. In other words, given a
modelled distribution $f$, we would like to build another modelled distribution $\CK f$ with the property
that
\begin{equ}[e:wanted]
\bigl(\CR \CK f\bigr)(x) = \bigl(K * \CR f\bigr)(x) \eqdef \int_{\R^d} K(x,y) \CR f(y)\,dy\;,
\end{equ}
for a given kernel $K \colon \R^d\times \R^d \to \R$, which is singular on the diagonal. 
Here, $\CR$ denotes the reconstruction operator as before.
Of course, this way of writing is rather formal since neither $\CR f$ nor $\CR \CK f$ need to be functions, 
but it is more suggestive than the actual property we are interested in, namely
\begin{equ}[e:realWanted]
\bigl(\CR \CK f\bigr)(\psi) = \bigl(K * \CR f\bigr)(\psi) \eqdef \bigl(\CR f\bigr)(K^\star \psi)\;,\qquad
K^\star \psi(y) \eqdef \int_{\R^d} K(x,y) \psi(x)\,dx\;,
\end{equ}
for all sufficiently smooth test functions $\psi$.
In the remainder of this section, we will always use a notation of the type \eref{e:wanted}
instead of \eref{e:realWanted} in order to state our assumptions and results. It is always
straightforward to translate it into an expression that makes sense rigorously, but this would clutter the
exposition of the results, so we only use the more cumbersome notation in the proofs.
Furthermore, we would like to encode the fact that 
the kernel $K$ ``improves regularity by $\beta$''
in the sense that, in the notation of Remark~\ref{rem:multReg}, $\CK$ is 
bounded from $\CD_\alpha^\gamma$ into $\CD_{(\alpha+\beta)\wedge 0}^{\gamma+\beta}$
for some $\beta > 0$. For example, in the case of the convolution with the heat kernel, one would like
to obtain such a bound with $\beta = 2$, which would be a form of Schauder estimate in our context.

In the case when the right hand side of \eref{e:wanted} actually defines a function (which is the case for many
examples of interest), it may appear that it is straightforward to define $\CK$: simply encode it into the canonical
part of the regularity structure by \eref{e:wanted} and possibly some of its derivatives. The problem with this is
that since, for $f \in \CD_\alpha^\gamma$, one has  $\CR f \in \CC^\alpha$, the best one can expect is to have
$\CR \CK f \in \CC^{\alpha + \beta}$. Encoding this into the canonical regularity structure would then yield
an element of $\CD_{0}^{\alpha+\beta}$, provided that one even has
$\alpha + \beta > 0$. 
In cases where $\gamma > \alpha$, which is the generic situation considered in this article,
this can be substantially
short of the result announced above. As a consequence, $\CK f$ should in general also have non-zero components
in parts of $T$ that do \textit{not} encode the canonical regularity structure, which is why the 
construction of $\CK$ is highly non-trivial.

Let us first state exactly what we mean by the fact that the kernel $K\colon \R^d \times \R^d \to \R$ ``improves regularity by order $\beta$'':

\begin{assumption}\label{def:regK}
The function $K$ can be decomposed as
\begin{equ}[e:defKbar]
K(x,y) = \sum_{n \ge 0} K_n(x,y)\;,
\end{equ}
where the functions $K_n$ have the following properties:
\begin{claim}
\item For all $n \ge 0$, the map $K_n$ is supported in the set $\{(x,y)\,:\, \|x-y\|_\s \le 2^{-n}\}$.
\item For any two multiindices $k$ and $\ell$, there exists a constant $C$ such that
the bound
\begin{equ}[e:propK1]
\bigl|D_1^k D_2^\ell K_n(x,y)\bigr| \le C 2^{(|\s| - \beta + |\ell|_\s + |k|_\s)n}\;,
\end{equ}
holds uniformly over all $n \ge 0$ and all $x,y \in \R^d$.
\item For any two multiindices $k$ and $\ell$, there exists a constant $C$ such that
the bounds
\begin{equs}[e:propK2]
\Bigl| \int_{\R^d} (x-y)^\ell D_2^k K_n(x,y) \,dx\Bigr| &\le C2^{-\beta n}\;,\\
\Bigl| \int_{\R^d} (y-x)^\ell D_1^k K_n(x,y) \,dy\Bigr| &\le C2^{-\beta n}\;,
\end{equs}
hold uniformly over all $n \ge 0$ and all $x,y \in \R^d$.
\end{claim}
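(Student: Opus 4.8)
The statement labelled Assumption~\ref{def:regK} is a hypothesis rather than a theorem, so the substantive content to supply is that the kernels one actually integrates against --- the Green's functions of the linear operator $\CL$, typically the heat kernel or the Green's function of $\Delta$ --- can be brought into the form \eref{e:defKbar}. Accordingly I would prove the following: \emph{if $K\colon\R^d\setminus\{0\}\to\R$ is translation invariant (as it is in all the applications of the paper), smooth away from the origin, and has there a singularity of scaled order $\beta-|\s|$, i.e.\ $|D^kK(z)|\lesssim\|z\|_\s^{\beta-|\s|-|k|_\s}$ for every multiindex $k$ up to a fixed order, then $K=\sum_{n\ge 0}K_n+\tilde K$ where $\tilde K$ is smooth and compactly supported and the $K_n$ satisfy properties~1--3 of Assumption~\ref{def:regK}.} The plan is to use a Littlewood--Paley-type dyadic decomposition adapted to the scaling $\s$.

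\textbf{The decomposition (properties 1 and 2).} First fix a smooth $\varphi\colon\R^d\to[0,1]$ equal to $1$ on $B_\s(0,1/2)$ and supported in $B_\s(0,1)$, and split $K=K\varphi+K(1-\varphi)$. Since $1-\varphi$ vanishes near the origin, $K(1-\varphi)$ is smooth; for the Green's functions of the paper (posed on a spatial torus and a bounded time interval) it is, after a harmless cutoff whose error is absorbed by the ordinary Schauder estimate, also compactly supported --- this is $\tilde K$. For the singular part I would set $\hat K_n(z)=K(z)\bigl(\varphi(\CS_\s^{2^{-n}}z)-\varphi(\CS_\s^{2^{-n-1}}z)\bigr)$ for $n\ge 0$, so that $\sum_{n\ge 0}\hat K_n=K\varphi$ and $\hat K_n$ is supported in $\{z:2^{-n-2}\le\|z\|_\s\le 2^{-n}\}$; writing $K_n(x,y)=\hat K_n(x-y)$ gives property~1. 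On this annulus $\|z\|_\s\sim 2^{-n}$, so the singularity bound on $K$ together with the chain rule applied to the $\s$-rescaled cutoffs yields $|D^k\hat K_n(z)|\lesssim 2^{n(|\s|-\beta+|k|_\s)}$ uniformly in $n$; since $D_1^kD_2^\ell K_n(x,y)=(-1)^{|\ell|}(D^{k+\ell}\hat K_n)(x-y)$, this is exactly \eref{e:propK1}. In particular, using that the annulus has volume $\lesssim 2^{-n|\s|}$, one gets $\|\hat K_n\|_{L^1}\lesssim 2^{-\beta n}$.

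\textbf{Property 3.} By translation invariance, $\int(x-y)^\ell D_2^kK_n(x,y)\,dx=\pm\int_{\R^d}w^\ell(D^k\hat K_n)(w)\,dw$, and integrating by parts in $w$ (all pieces being smooth and compactly supported) this equals $\pm\int_{\R^d}\partial^k(w^\ell)\,\hat K_n(w)\,dw$, which vanishes unless $k\le\ell$ componentwise and is otherwise a fixed multiple of the moment $\int w^{\ell-k}\hat K_n(w)\,dw$. Since $|w^{\ell-k}|\lesssim 2^{-n|\ell-k|_\s}\le 1$ on the support, this moment is bounded by $\|\hat K_n\|_{L^1}\lesssim 2^{-\beta n}$, which is the first line of \eref{e:propK2}; the second line is identical after exchanging the two variables. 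Finally $\sum_nK_n$ reconstructs $K\varphi$, so $K=\sum_nK_n+\tilde K$ as claimed.

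\textbf{Where the difficulty lies.} The singular behaviour on the diagonal is the easy part; the real work is at the interface with the smooth remainder. For a concrete Green's function one must verify that the off-diagonal part genuinely is $\CC^\infty$ with controlled growth --- interior elliptic regularity, Gaussian bounds on the heat kernel and all its derivatives --- so that it may legitimately be absorbed into a smooth compactly supported $\tilde K$. Moreover, for truly variable-coefficient kernels $K(x,y)$ the substitution and integration by parts used above are unavailable, and the cancellations \eref{e:propK2} with $|k|_\s>|\ell|_\s$ then become a genuine structural hypothesis rather than a consequence of \eref{e:propK1}. The decomposition is in any case far from unique. This verification is precisely what makes the multi-level Schauder estimate (Theorem~\ref{thm:Schauder}) applicable to each of the equations in Section~\ref{sec:examples}.
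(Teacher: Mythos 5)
Your proposal is correct and follows essentially the same route as the paper's Lemma~\ref{lem:diffSing}: a dyadic decomposition of the translation-invariant kernel by smooth cutoffs supported on scaled annuli, derivative bounds from the singular behaviour at the origin, and \eref{e:propK2} obtained by integration by parts, which is exactly the remark opening that proof. The only differences are cosmetic — you work from pointwise derivative bounds rather than the exact homogeneity \eref{e:scalingK}, which the paper exploits to reduce everything to a single rescaled profile $\bar K_0$ — and note that the paper's lemma additionally arranges the vanishing moments of Assumption~\ref{ass:polynom}, which your construction does not address but which is also not part of the quoted statement.
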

In these expressions, we write $D_1$ for the derivative with respect to the first argument
and $D_2$ for the derivative with respect to the second argument.
\end{assumption}

\begin{remark}
In principle, we typically only need \eref{e:propK1} and \eref{e:propK2} to hold for
multiindices $k$ and $\ell$ that are smaller than some fixed number, which depends on the 
particular ``Schauder estimate'' we wish to obtain. In practice however
these bounds tend to hold for all multiindices, so we assume this in order to simplify
notations. 
\end{remark}

A very important insight is that polynomials are going to play a distinguished role in this section.
As a consequence, we work with a fixed regularity structure $\TT = (A, T, G)$ and we assume
that one has $\TT_{d,\s} \subset \TT$ for the same scaling $\s$ and dimension $d$ as appearing in
Definition~\ref{def:regK}. As already mentioned in Remark~\ref{rem:polynomStructures}, 
we will use the notation $\bar T \subset T$ for
the subspace spanned by the ``abstract polynomials''. 
Furthermore, as in Section~\ref{sec:canonical}, we will denote by $X^k$ the canonical basis vectors of $\bar T$,
where $k$ is a multiindex in $\N^d$. We furthermore assume that, except for
polynomials, integer homogeneities are avoided:

\begin{assumption}\label{ass:integer}
For every integer value $n \ge 0$, $T_n = \bar T_n$ consists of the linear span of
elements of the form $X^k$ with $|k|_\s = n$. Furthermore, one considers models that are 
compatible with this structure in the sense that
$\bigl(\Pi_x X^k\bigr)(y) = (y-x)^k$.
\end{assumption}

In order to interplay nicely with our structure,
we will make the following additional assumption on the decomposition of the kernel $K$:

\begin{assumption}\label{ass:polynom}
There exists $r > 0$ such that
\begin{equ}[e:assKpoly]
\int_{\R^d} K_n(x,y)\, P(y)\,dy = 0\;,
\end{equ}
for every $n \ge 0$, every $x \in \R^d$, and every polynomial $P$ of scaled degree less than or equal to $r$.
\end{assumption}

All of these three assumptions will be standing throughout this whole section.
We will therefore not restate this explicitly, except in the statements of the main
theorems.
Even though Assumption~\ref{ass:polynom} seems quite restrictive, it turns out not to matter at all. Indeed, a kernel
$K$ that is regularity improving in the sense of Definition~\ref{def:regK} can typically 
be rewritten as
$K = K_0 + K_1$ such that $K_0$ is smooth and $K_1$ 
additionally satisfies both Assumptions~\ref{def:regK} and \ref{ass:polynom}.
Essentially, it suffices to ``excise the singularity'' with the help of a compactly supported smooth
cut-off function and to then add and subtract some smooth function supported away from the origin
which ensures that the required
number of moments vanish.

In many cases of interest, one can take $K$ to depend only on the difference between its
two arguments. In this case, one has the following result, which shows that our
assumptions typically do cover the Green's functions of differential operators with
constant coefficients.

\begin{lemma}\label{lem:diffSing}
Let $\bar K \colon \R^d \setminus \{0\} \to \R$ be a smooth function which is homogeneous
under the scaling $\s$ in the sense that there exists a $\beta > 0$ such that the identity
\begin{equ}[e:scalingK]
\bar K(\CS_\s^\delta x) = \delta^{|\s|-\beta}\bar K(x)\;,
\end{equ}
holds for all $x \neq 0$ and all $\delta \in (0,1]$. 
Then, it is possible to decompose $\bar K$ as $\bar K(x) = K(x) + R(x)$ in such a 
way that the ``remainder'' $R$ is $\CC^\infty$ on all of $\R^d$ and such that the
map $(x,y) \mapsto K(x-y)$ satisfies Assumptions~\ref{def:regK} and \ref{ass:polynom}. 
\end{lemma}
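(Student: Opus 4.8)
The plan is to localise $\bar K$ near the origin using a smooth cutoff, and then to correct the resulting compactly supported singular kernel by adding a smooth function supported in an annulus so as to kill the moments required by Assumption~\ref{ass:polynom}. Concretely, fix a smooth radial (with respect to $d_\s$) cutoff $\chi \colon \R^d \to [0,1]$ with $\chi(x) = 1$ for $\|x\|_\s \le 1/2$ and $\chi(x) = 0$ for $\|x\|_\s \ge 1$, and write $\bar K(x) = \chi(x)\bar K(x) + (1-\chi(x))\bar K(x)$. The second term is smooth on all of $\R^d$ (since $\bar K$ is smooth away from $0$), so it can be absorbed into $R$. The first term, call it $\tilde K(x) = \chi(x)\bar K(x)$, is supported in the unit $d_\s$-ball and inherits the scaling bound: differentiating \eref{e:scalingK} gives $|D^k \bar K(x)| \lesssim \|x\|_\s^{|\s|-\beta-|k|_\s}$, and combined with the support and smoothness of $\chi$ this yields $|D^k \tilde K(x)| \lesssim \|x\|_\s^{|\s|-\beta-|k|_\s}$ for $\|x\|_\s \le 1$.

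Next I would produce the dyadic decomposition. Take a smooth partition-like function $\varphi \colon \R^d \to [0,1]$ supported in the $d_\s$-annulus $\{1/4 \le \|x\|_\s \le 1\}$ such that $\sum_{n \ge 0} \varphi(\CS_\s^{2^n} x) = 1$ for all $x$ with $0 < \|x\|_\s \le 1/2$ (such a function exists by a standard construction, adjusting the innermost terms so the sum telescopes correctly near the boundary of the support of $\chi$), and set $K_n(x,y) = \varphi(\CS_\s^{2^n}(x-y))\tilde K(x-y)$ for $n \ge 1$, with $K_0$ chosen to mop up the region $\{\|x-y\|_\s \ge 1/4\}$ inside the support of $\tilde K$, i.e.\ $K_0(x,y) = \bigl(1 - \sum_{n\ge 1}\varphi(\CS_\s^{2^n}(\cdot))\bigr)(x-y)\,\tilde K(x-y)$. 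Then each $K_n$ is supported in $\{\|x-y\|_\s \le 2^{-n}\}$ as required by the first item of Assumption~\ref{def:regK}, and \eref{e:propK1} follows from the scaling bound on $\tilde K$ together with the fact that derivatives of $\varphi(\CS_\s^{2^n}\cdot)$ of order $|k|_\s$ pick up a factor $2^{|k|_\s n}$, while $\|x-y\|_\s \sim 2^{-n}$ on the support of $K_n$. The translation-invariance makes $D_1 = -D_2$ on $K_n(x-y)$, so the two bounds in \eref{e:propK2} are the same; this bound follows by the change of variables $x - y = \CS_\s^{2^{-n}}z$, which turns $\int (x-y)^\ell D_1^k K_n\,dx$ into $2^{-\beta n}$ times a fixed $n$-independent integral (using $\int (\CS_\s^{2^{-n}}z)^\ell = 2^{-|\ell|_\s n}z^\ell$ and the homogeneity of $\tilde K$ away from its cutoff region — the $\chi$-correction only affects finitely many scales $n \le n_0$ and there the bound is trivial since the integrands are bounded and compactly supported).

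The main obstacle is arranging Assumption~\ref{ass:polynom}, i.e.\ making $\int_{\R^d} K_n(x,y)P(y)\,dy = 0$ for all polynomials $P$ of scaled degree $\le r$; equivalently, by translation invariance, $\int_{\R^d} K_n(z)\, z^\ell\,dz = 0$ for all multiindices $\ell$ with $|\ell|_\s \le r$. I would achieve this by subtracting, for each $n$, a smooth function $Q_n$ supported in a fixed annulus $\{1 \le \|z\|_\s \le 2\}$ (rescaled: $Q_n(z) = \CS_\s^{2^n}$-scaled copy of a fixed profile with coefficients depending on the offending moments of $K_n$) chosen so that $\int (K_n - Q_n)(z) z^\ell\,dz = 0$ for all $|\ell|_\s \le r$. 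Since the moments of $K_n$ scale like $2^{-(\beta+|\ell|_\s)n}$ and the finitely many moment conditions form an invertible linear system against a fixed finite-dimensional space of bump functions supported in the annulus, one can solve for the correction coefficients with size $\lesssim 2^{-\beta n}$ (up to bounded factors), so that $Q_n$ itself satisfies bounds of the same type as \eref{e:propK1}. One must check that shifting the supports of the $Q_n$ into $\{\|z\|_\s \le 2\}$ (rather than $\le 1$) is harmless: it only requires allowing the support condition with $2^{-n}$ replaced by $2^{1-n}$, which can be re-indexed, or alternatively one rescales the annulus so the corrected kernels still live in $\{\|z\|_\s \le 2^{-n}\}$ at the cost of a bounded constant. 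Then set $K(z) = \sum_{n\ge 0}\bigl(K_n - Q_n\bigr)(z)$ and $R = \bar K - K(\cdot - \cdot)$; the sum $\sum_n Q_n$ converges to a function that is smooth away from $0$ and — because of the decay $\lesssim 2^{-\beta n}$ with $\beta > 0$ in the $C^0$ norm but growing derivatives — one must verify it is actually $C^\infty$ across $0$; this follows since the $Q_n$ are supported away from $0$ at scale $2^{-n}$, so near any fixed point $z \ne 0$ only finitely many terms contribute, and at $z = 0$ each $Q_n$ vanishes identically in a neighbourhood, giving smoothness there trivially. Thus $R$ is $C^\infty$ on all of $\R^d$ as required, and $K(x-y)$ satisfies Assumptions~\ref{def:regK} and \ref{ass:polynom} by construction.
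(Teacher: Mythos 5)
Your localisation, dyadic decomposition, and derivation of \eref{e:propK1} all match the paper's argument, and the idea of killing the moments by subtracting bumps supported in dyadic annuli is also the right one. But there is a genuine gap at the very last step, where you claim that $R$ is smooth at the origin because ``at $z=0$ each $Q_n$ vanishes identically in a neighbourhood''. Each \emph{individual} $Q_n$ does vanish on a ball of radius $\sim 2^{-n}$, but these neighbourhoods shrink with $n$, so this says nothing about the \emph{sum}. In fact your own estimates rule smoothness out: since the $\ell$-th moment of $K_n$ is of order $2^{-(\beta+|\ell|_\s)n}$ and $Q_n$ is supported in an annulus of radius $\sim 2^{-n}$, matching the moments forces $\|Q_n\|_{L^\infty}\sim 2^{(|\s|-\beta)n}$ (you say yourself that $Q_n$ obeys bounds ``of the same type as \eref{e:propK1}''). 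Hence $\sum_n Q_n$ takes values of size $2^{(|\s|-\beta)n}$ at distance $2^{-n}$ from the origin, i.e.\ it has a singularity of order $\beta-|\s|$ there --- exactly as bad as $\bar K$ itself --- and $R=(1-\chi)\bar K+\sum_n Q_n$ is not even bounded near $0$ when $\beta<|\s|$ (and in any case not $\CC^\infty$, since the derivatives $D^kQ_n\sim 2^{(|\s|-\beta+|k|_\s)n}$ always blow up for $|k|_\s$ large). The missing idea is that the corrections at successive scales must be made to \emph{telescope}: the paper takes a single profile $\psi$ at scale one with $\bigl(1-2^{-\beta-|k|_\s}\bigr)\int x^k\psi\,dx=\int x^k\bar K_0\,dx$, sets $K_0=\bar K_0-\psi+2^{|\s|-\beta}\psi(\CS_\s^{2}\cdot)$, and defines $K_n$ by exact rescaling of $K_0$; then each $K_n$ has vanishing moments by scaling, while the partial sums satisfy $\sum_{n<N}K_n=\sum_{n<N}\bar K_n-\psi+2^{-(\beta-|\s|)N}\psi(\CS_\s^{2^{N}}\cdot)$, whose last term tends to zero distributionally, so the net correction to $\bar K$ is the single smooth function $\psi$ rather than an infinite non-cancelling pile of bumps accumulating at the origin.

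A secondary, more minor point: in your verification of \eref{e:propK2}, the pure change of variables gives $\int z^\ell D^kK_n(z)\,dz = 2^{(|k|_\s-|\ell|_\s-\beta)n}$ times an $n$-independent integral, not $2^{-\beta n}$ times one; to conclude you still need to integrate by parts so that the prefactor integral $\int w^\ell D^kK_\star(w)\,dw=\pm\int D^k(w^\ell)K_\star(w)\,dw$ vanishes unless $k\le\ell$ componentwise (whence $|k|_\s\le|\ell|_\s$). This is exactly the observation the paper makes at the start of its proof, and it is needed, not optional.
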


\begin{proof}
Note first that if each of the $K_n$ is a function of $x-y$, then the bounds 
\eref{e:propK2} follow from \eref{e:propK1} by integration by parts. We therefore
only need to exhibit a decomposition $K_n$ such that \eref{e:propK1} is satisfied and
such that \eref{e:assKpoly} holds for every polynomial $P$ of some fixed but
arbitrary degree.

Let $N \colon \R^d \setminus \{0\} \to \R_+$ be a smooth ``norm'' for the scaling $\s$ in the sense that
$N$ is smooth, convex, strictly positive, and $N(\CS_\s^\delta x) = \delta N(x)$. (See for example Remark~\ref{rem:normSmooth}.) Then, we can introduce
``spherical coordinates'' $(r,\theta)$ with $r \in \R_+$ and $\theta \in S \eqdef N^{-1}(1)$ by
$r(x) = N(x)$, and $\theta(x) = \CS_\s^{r(x)} x$. With these notations, \eref{e:scalingK}
is another way of stating that $\bar K$ can be factored as
\begin{equ}[e:factorK]
\bar K(x) = r^{\beta-|\s|} \Theta (\theta)\;,
\end{equ}
for some smooth function $\Theta$ on $S$. Here and below, we suppress the implicit dependency
of $r$ and $\theta$ on $x$.

Our main ingredient is then the existence of a smooth ``cutoff function''
$\phi \colon \R_+ \to [0,1]$ such that $\phi(r) = 0$ for $r \not \in [1/2,2]$, and such that
\begin{equ}[e:propPhi]
\sum_{n \in \Z} \phi(2^n r) = 1\;,
\end{equ}
for all $r > 0$ (see for example the construction of Paley-Littlewood blocks in \cite{BookChemin}). 
We also set $\phi_R(r) = \sum_{n < 0} \phi(2^n r)$ and,
for $n \ge 0$, $\phi_n(r) = \phi(2^n r)$.
With these functions at hand, we define
\begin{equ}
\bar K_n(x) = \phi_n(r)\bar K(x)\;,\qquad \bar R(x) = \phi_R(r)\bar K(x)\;.
\end{equ}
Since $\phi_R$ is supported away from the origin, the function $\bar R$ is 
globally smooth.
Furthermore, each of the $\bar K_n$ is supported in the ball of radius $2^{-n}$, 
provided that the ``norm'' $N$ was chosen such that $N(x) \ge 2\|x\|_\s$.

It is straightforward to verify that \eref{e:propK1} also holds. 
Indeed, by the exact scaling property \eref{e:scalingK} of $\bar K$, one has the identity
\begin{equ}
\bar K_n(x) = 2^{-(\beta-|\s|)n} \bar K_0(\CS_\s^{2^{-n}}x)\;,
\end{equ}
and \eref{e:propK1} then follows immediately form the fact that $K_0$ is a compactly supported
smooth function. 

It remains to modify this construction in such a way that \eref{e:assKpoly} holds as well.
For this, choose any function $\psi$ which is smooth, supported in the unit ball around the origin,
and such that, for every multiindex $k$ with $|k|_\s \le r$, one has the identity
\begin{equ}
\bigl(1 - 2^{-\beta - |k|_\s}\bigr)\int x^k \psi(x) \,dx = \int  x^k \bar K_0(x)\,dx \;.
\end{equ}
It is of course straightforward to find such a function. We then set
\begin{equ}
K_0(x) = \bar K_0(x) - \psi (x) + 2^{|\s|-\beta} \psi(\CS_\s^{2}x)\;,
\end{equ}
as well as 
\begin{equ}
K_n(x) = 2^{-(\beta-|\s|)n} K_0(\CS_\s^{2^{n}}x)\;,\qquad R(x) = \bar R(x) + \psi(x)\;.
\end{equ}
Since $\psi$ is smooth and $K_n$ has the same scaling properties as before, 
it is clear that the required bounds are still satisfied. Furthermore, our construction
is such that one has the identity
\begin{equ}
\sum_{n=0}^{N-1} K_n(x) = \sum_{n=0}^{N-1} \bar K_n(x) - \psi(x) + 2^{-(\beta-|\s|)N} \psi(\CS_\s^{2^{N}}x)\;,
\end{equ}
so that it is still the case that $\bar K(x) = R(x) + \sum_{n \ge 0} K_n(x)$.
Finally, the exact scaling properties of these expressions imply that
\begin{equs}
\int x^k K_n(x)\,dx &= 2^{-(\beta + |k|_\s)n} \int x^k K_0(x)\,dx \\
&= 2^{-(\beta + |k|_\s)n} \int x^k \bigl(\bar K_0(x) - \psi(x) + 2^{|\s|-\beta}\psi(\CS_\s^{2} x)\bigr) \,dx  \\
&= 2^{-(\beta + |k|_\s)n} \int x^k \bigl(\bar K_0(x) - (1-2^{-\beta-|k|_\s})\psi(x)\bigr) \,dx 
= 0\;,
\end{equs}
as required.
\end{proof}

\begin{remark}
A slight modification of the argument given above 
also allows to cover the situation where \eref{e:factorK}
is replaced by $\bar K(x) = \Theta(\theta) \log r$. 
One can then set
\begin{equ}
\bar K_n(x) = -\Theta(\theta) \int_r^\infty {\phi_n(r) \over r} dr\;,
\end{equ}
and the rest of the argument is virtually identical to the one just given.
In such a situation, one then has $\beta = |\s|$,
thus covering for example the case of the Green's
function of the Laplacian in dimension $2$.
\end{remark}

Of course, in order to have any chance at all to obtain a Schauder-type bound as above, 
our model needs to be sufficiently
``rich'' to be able to describe $\CK f$ with sufficient amount of detail. For this, we need two ingredients.
First, we need the existence of a map $\CI \colon T \to T$ that provides an ``abstract'' representation
of $\CK$ operating at the level of the regularity structure, and second we need that the model $\Pi$ is
adapted to this representation in a suitable manner.

In our definition, we denote again by $\bar T$ the sector spanned by abstract monomials 
of the type $X^k$ for some multiindex $k$.

\begin{definition}\label{def:abstractI}
Given a sector $V$, a linear map $\CI \colon V \to T$ is an abstract integration map of order $\beta > 0$
if it satisfies the following properties:
\begin{claim}
\item One has $\CI \colon V_\alpha \to T_{\alpha + \beta}$ for every $\alpha \in A$.
\item One has $\CI a = 0$ for every $a \in V \cap \bar T$.
\item One has $\CI \Gamma a - \Gamma \CI a \in \bar T$ for every $a \in V$ and 
every $\Gamma \in G$.
\end{claim}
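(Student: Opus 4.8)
The final statement is the \emph{definition} of an abstract integration map, so the thing that actually requires argument is the companion existence fact---entirely parallel to Proposition~\ref{prop:extendMult} for the product---that, given a regularity structure $\TT = (A,T,G)$ satisfying Assumption~\ref{ass:integer} and a sector $V$, one can enlarge $\TT$ to a structure $\hat\TT \supset \TT$ carrying a map $\CI$ that verifies the three properties of Definition~\ref{def:abstractI}. The plan is to build $\hat\TT$ and $\CI$ by hand and then check (i), (ii), (iii) directly. Since property~(ii) forces $\CI$ to annihilate $V \cap \bar T$ and $\CI$ is linear, $\CI$ is determined by its restriction to a fixed \emph{graded} complement $W = \bigoplus_\alpha W_\alpha$ of $V \cap \bar T$ in $V$ (so that $W \cap \bar T = \{0\}$). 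I would therefore adjoin, for each basis vector $\tau$ of each $W_\alpha$, one new basis vector ``$\CI\tau$'', declared homogeneous of degree $\alpha + \beta$; thus $\hat A = A \cup \{\alpha+\beta : W_\alpha \neq 0\}$ and $\hat T_\gamma = T_\gamma \oplus \lspan\{\CI\tau : \tau \in W,\ |\tau|+\beta = \gamma\}$, and $\CI$ is extended linearly over $W$ and by $0$ over $V \cap \bar T$. With these choices property~(i) holds by construction and property~(ii) is built in.

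The substance is property~(iii) together with the requirement that the extended $\hat G$ be a group of operators still satisfying \eref{e:coundGroup}. For $a \in V_\alpha$ one has $\Gamma a - a \in T_\alpha^-$, and $\Gamma a - a \in V$ by $G$-invariance of $V$; decompose it along the graded splitting $V \cap T_\alpha^- = \bigl((V\cap\bar T)\cap T_\alpha^-\bigr) \oplus \bigl(W \cap T_\alpha^-\bigr)$ as $\Gamma a - a = P_\Gamma(a) + \tilde\Gamma a$ with $P_\Gamma(a) \in \bar T$ and $\tilde\Gamma a \in W$. Since $\CI(\tilde\Gamma a)$ lies in $\bigoplus_{\beta'<\alpha+\beta}\hat T_{\beta'}$, I define the action of $\Gamma \in G$ on the new symbols by
\begin{equ}
\Gamma(\CI a) = \CI a + \CI(\tilde\Gamma a)\;,
\end{equ}
for $a \in W$ (extended linearly), which makes property~(iii) hold by definition and which manifestly respects the triangularity \eref{e:coundGroup}. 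The one point requiring care is that $\Gamma \mapsto \Gamma$ so extended is still a homomorphism, i.e.\ $(\Gamma_1\Gamma_2)(\CI a) = \Gamma_1\bigl(\Gamma_2(\CI a)\bigr)$; unwinding both sides and using linearity of $\CI$, this reduces to the identity $\widetilde{\Gamma_1\Gamma_2}\, a = \tilde\Gamma_1 a + \tilde\Gamma_2 a + \tilde\Gamma_1(\tilde\Gamma_2 a)$ in $W$, which in turn follows by writing $\Gamma_1\Gamma_2 a - a = (\Gamma_1 a - a) + \Gamma_1\bigl(P_{\Gamma_2}(a)\bigr) + \bigl(\Gamma_1(\tilde\Gamma_2 a) - \tilde\Gamma_2 a\bigr) + \tilde\Gamma_2 a$, projecting onto the $W$-component, and using that $\bar T$ is $G$-invariant (so $\Gamma_1 P_{\Gamma_2}(a) \in \bar T$ contributes nothing to the $W$-part) together with $W \cap \bar T = \{0\}$. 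This shows $\hat\TT$ is a genuine regularity structure and $\CI$ an abstract integration map of order $\beta$ on $\iota V$.

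Finally I would record the bookkeeping: $\TT \subset \hat\TT$ with the inclusion $\iota$ and the obvious restriction homomorphism $j$, and on the polynomial sector $\bar T$ itself the zero map trivially satisfies (i)--(iii). The main obstacle here is not analytic at all---there are no estimates to prove at this structural level---but is exactly the combinatorial verification just sketched: one must be scrupulous about the graded decomposition $V = (V\cap\bar T)\oplus W$ and about the fact that $\Gamma a - a$ need \emph{not} stay inside $W$, so the polynomial part $P_\Gamma(a)$ has to be carried along and its $G$-invariance exploited. Everything beyond that---in particular the compatibility of a model with $\CI$, which is the further content of Definition~\ref{def:abstractI}---is then handled, as in Proposition~\ref{prop:extendMult}, by \emph{defining} the new distributions $\hat\Pi_x(\CI\tau)$ through the reconstruction operator applied to the appropriate germ, and is routine.
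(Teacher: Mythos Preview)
You are right that the three displayed properties constitute the \emph{definition} of an abstract integration map and require no proof; what you have sketched is the structural half of the extension theorem (Theorem~\ref{theo:extension}). Your construction of $\hat\TT$ and the verification that $\Gamma\mapsto\Gamma$ extended via $\Gamma(\CI a)=\CI a+\CI(\tilde\Gamma a)$ is a homomorphism are both correct: the cocycle identity $\widetilde{\Gamma_1\Gamma_2}\,a=\tilde\Gamma_1 a+\tilde\Gamma_2 a+\tilde\Gamma_1(\tilde\Gamma_2 a)$ does follow exactly as you say from the $G$-invariance of $\bar T$.

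However, there is a genuine gap in your last paragraph. Your extended structure group is strictly smaller than the one the paper builds: you obtain $\CI\Gamma a - \Gamma\CI a = 0$ identically, whereas the paper enlarges $G$ to the skew product $\hat G = G\times M_{\bar W}^{\alpha_n+\beta}$, carrying an extra polynomial-valued factor $M$ (see \eref{e:compGhat}). This is not cosmetic. For the extended \emph{model} to realise $K$ in the sense of Definition~\ref{def:reprK}, one needs $\hat\Pi_x\CI a = K*(\Pi_x a) - \Pi_x\CJ(x)a$, and the Taylor correction $\CJ(x)$ does not commute with re-centring: in general $\Gamma_{xy}\CJ(y)\neq\CJ(x)\Gamma_{xy}$. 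The paper absorbs precisely this defect into $M_{xy}=\CJ(x)\Gamma_{xy}-\Gamma_{xy}\CJ(y)$ (see \eref{e:defMxy} and Lemma~\ref{lem:boundGammaxy}), and the algebraic identity $\hat\Pi_x\hat\Gamma_{xy}=\hat\Pi_y$ is then checked by hand in \eref{e:checkIdenPiGamma}. With your rigid group there is no room for this correction: imposing both $\hat\Pi_x\hat\Gamma_{xy}=\hat\Pi_y$ and the realisation identity forces $\Pi_y\CJ(y)a=\Pi_x\CJ(x)\Gamma_{xy}a$, which fails generically. So the model extension is \emph{not} routine in your framework, and the appeal to Proposition~\ref{prop:extendMult} is misplaced---the product case has no analogue of the polynomial subtraction $\CJ$.
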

(The first property should be interpreted as $\CI a = 0$ if $a \in V_\alpha$ and $\alpha + \beta \not \in A$.)
\end{definition}

\begin{remark}
At first sight, the second and third conditions might seem strange. It would have
been aesthetically more pleasing to impose that $\CI$ commutes
with $G$, i.e.\ that $\CI \Gamma = \Gamma \CI$. This would indeed be very natural if
$\CI$ was a ``direct'' abstraction of our integration map in the sense that
\begin{equ}[e:wantedDef]
\Pi_x \CI a =  \int_{\R^d} K(\cdot ,z)\bigl(\Pi_x a\bigr)(dz) \;.
\end{equ}
The problem with such a definition is that if $a \in T_\alpha$ with $\alpha > -\beta$,
so that $\CI a \in T_{\bar \alpha}$ for some $\bar \alpha > 0$, then \eref{e:boundPi}
requires us to define $\Pi_x \CI a$ in such a way that it vanishes to some positive
order for localised test functions. This is simply not true in general, so that 
\eref{e:wantedDef} is \textit{not} the right requirement. 
Instead, we will see below that one should modify \eref{e:wantedDef} in a way to 
subtract a suitable polynomial that forces the $\Pi_x \CI a$ to vanish
at the correct order. It is this fact that leads to consider structures with
$\CI \Gamma a - \Gamma \CI a \in \bar T$ rather than $\CI \Gamma a - \Gamma \CI a =0$.
\end{remark}

Our second and main ingredient is that the model should be ``compatible'' with the fact 
that $\CI$ encodes the
integral kernel $K$. 
For this, given an integral kernel $K$ as above,
an important role will be played by the function 
$\CJ\colon \R^d \to L_T^\beta$ which, for every $a \in T_\alpha$ and every $\alpha \in A$, is given by
\begin{equ}[e:defJx]
\CJ(x) a = \sum_{|k|_\s < \alpha + \beta} {X^k\over k!} \int_{\R^d} D_1^{k}K(x,z) \bigl(\Pi_x a\bigr)(dz)\;,
\end{equ}
where we denote by $D_1$ the differentiation operator with respect to the first variable.
It is straightforward 
to verify that, writing $K = \sum K_n$ as before and swapping the sum over $n$ with the
integration, this expression does indeed make sense.

\begin{definition}\label{def:reprK}
Given a sector $V$ and an abstract integration operator $\CI$ on $V$, 
we say that a model $\Pi$ realises $K$ for $\CI$ if, for every $\alpha \in A$,
every $a \in V_\alpha$ and every $x \in \R^d$, one has the identity
\begin{equ}[e:defIa]
\Pi_x \CI a =  \int_{\R^d} K(\cdot ,z)\bigl(\Pi_x a\bigr)(dz) - \Pi_x \CJ(x) a \;,
\end{equ}
\end{definition}

\begin{remark}
The rigorous way of stating this definition is that, for all smooth and compactly supported test functions $\psi$
and for all $a \in T_\alpha$,
one has
\begin{equ}[e:defIrigor]
\bigl(\Pi_x \CI a\bigr)(\psi) = \sum_{n \ge 0} \int_{\R^d} \psi(y) \bigl(\Pi_x a\bigr)(K_{n;xy}^\alpha)\,dy\;,
\end{equ}
where the function $K_{n;xy}^{\alpha}$ is given by
\begin{equ}[e:TaylorKn]
K_{n;xy}^{\alpha}(z) = K_n(y,z) - \sum_{|k|_\s < \alpha+\beta} {(y-x)^k \over k!} D_1^kK_n(x,z)\;.
\end{equ}
The purpose of subtracting the term involving the truncated Taylor expansion of $K$ is to ensure that
$\Pi_x \CI a$ vanishes at $x$ at sufficiently high order.
We will see below that in our context, it is always guaranteed that the sum over $n$ appearing
in \eref{e:defIrigor} converges absolutely, see Lemma~\ref{lem:wellDefInt} below.
\end{remark}

\begin{remark}\label{rem:const}
The case of simple integration in one dimension is very special in this respect. Indeed, the 
role of the ``Green's function'' $K$ is then played by the Heaviside function. This has the
particular property of being \textit{constant} away from the origin, so that all of its derivatives vanish.
In particular, the quantity $\CJ(x)a$ then always takes values in $T_0$.
This is why it is possible to consider expansions of arbitrary order in the theory of rough
paths without ever having to incorporate the space of polynomials into the corresponding regularity
structure. 

Note however that the ``rough integral'' is not an immediate corollary of Theorem~\ref{theo:Int} below,
due in particular to the fact that Assumption~\ref{ass:polynom} does not hold for the Heaviside function. 
It is however straightforward to build the rough integral of any controlled path against
the underlying rough path using the formalism developed here. In order not to stray too far
from our main line of investigation we refrain from giving this construction. 
\end{remark}

With all of these definitions at hand, we are now in the position to provide the definition of 
the map $\CK$ on modelled distributions announced at the beginning of this section. 
Actually, it turns out that for different values of $\gamma$ one should use slightly
different definitions. Given $f \in \CD^\gamma$, we set
\begin{equ}[e:defI]
\bigl(\CK_\gamma f\bigr)(x) = \CI f(x) + \CJ(x) f(x) + \bigl(\CN_\gamma f\bigr)(x)\;,
\end{equ}
where $\CI$ is as above, acting pointwise, $\CJ$ is given in \eref{e:defJx},
and the operator $\CN_\gamma$ maps $f$ into a
$\bar T$-valued function by setting
\begin{equ}[e:defIbar]
\bigl(\CN_\gamma f\bigr)(x) = \sum_{|k|_\s < \gamma+\beta} {X^k\over k!}\int_{\R^d} D_1^k K(x,y) \bigl(\CR f - \Pi_x f(x)\bigr)(dy)\;.
\end{equ}
(We will show later that this expression is indeed well-defined for all $f \in \CD^\gamma$.)

With all of these definitions at hand, we can state the following two results, which are
the linchpin around which the whole theory developed in this work revolves. First,
we have the announced Schauder-type estimate:

\begin{theorem}\label{theo:Int}
Let $\TT = (A,T,G)$ be a regularity structure 
and $(\Pi,\Gamma)$ be a model for $\TT$ satisfying Assumption~\ref{ass:integer}.
Let $K$ be a $\beta$-regularising kernel for some $\beta > 0$,
let $\CI$ be an abstract integration map of order $\beta$ acting on some sector $V$, 
and let $\Pi$ be a model realising $K$ for $\CI$.
Let furthermore $\gamma > 0$,
assume that $K$ satisfies Assumption~\ref{ass:polynom} for $r = \gamma + \beta$, 
and define the operator $\CK_\gamma$ by \eref{e:defI}.

Then, provided that $\gamma + \beta \not \in \N$,
$\CK_\gamma$ maps $\CD^\gamma(V)$ into $\CD^{\gamma+\beta}$, and the identity
\begin{equ}[e:wantedIden]
\CR \CK_\gamma f = K * \CR f\;,
\end{equ}
holds for every $f \in \CD^\gamma(V)$. Furthermore, if $(\bar \Pi, \bar \Gamma)$ is
a second model realising $K$ and one has $\bar f \in \CD^\gamma(V;\bar \Gamma)$, 
then the bound
\begin{equ}
\$\CK_\gamma f ; \bar \CK_\gamma \bar f\$_{\gamma+\beta;\K} \lesssim 
\$f ; \bar f\$_{\gamma;\bar \K} + \|\Pi - \bar \Pi\|_{\gamma;\bar \K}
+ \|\Gamma - \bar \Gamma\|_{\gamma+\beta;\bar \K}\;,
\end{equ}
holds. Here, $\K$ is a compact and $\bar \K$ is its $1$-fattening. 
The proportionality constant implicit in the bound depends only on the norms
$\$f\$_{\gamma;\bar \K}$, $\$\bar f\$_{\gamma;\bar \K}$, as well as similar bounds on the
two models.
\end{theorem}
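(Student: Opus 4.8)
The plan is to establish three things in sequence: (i) that $\CK_\gamma f$ genuinely takes values in $T_{\gamma+\beta}^-$ and lies in $\CD^{\gamma+\beta}$, (ii) that the reconstruction identity $\CR \CK_\gamma f = K * \CR f$ holds, and (iii) the quantitative continuity bound in terms of the two models. Throughout I would write the kernel as $K = \sum_n K_n$ and work with the truncated pieces $K_{n;xy}^\alpha$ from \eref{e:TaylorKn}, so that all sums over $n$ can be controlled term by term using Assumption~\ref{def:regK}. The first preparatory lemma (the one referred to as Lemma~\ref{lem:wellDefInt} in the text) should record that for $a \in T_\alpha$ the sum $\sum_n (\Pi_x a)(K_{n;xy}^\alpha)$ converges absolutely, with a bound of order $\|y-x\|_\s^{(\alpha+\beta)\wedge 0}$ away from integer values, and similarly that $\CN_\gamma f$ and $\CJ(x)f(x)$ are well-defined — this uses the moment conditions \eref{e:propK2} to gain the factor $2^{-\beta n}$ whenever we integrate against $\Pi_x a$, together with the reconstruction bound \eref{e:boundRf} for the $\CR f - \Pi_x f(x)$ piece appearing in $\CN_\gamma$.

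\textbf{Membership in $\CD^{\gamma+\beta}$.} The key algebraic fact is that $\Gamma_{xy}$ acts on $\CI f(y) + \CJ(y) f(y)$ almost like $\CI$ of the transported jet, up to a polynomial correction. Concretely, using property (3) of Definition~\ref{def:abstractI} together with the definition \eref{e:defIa} of a model realising $K$, one derives an identity expressing
\begin{equ}
\bigl(\CK_\gamma f\bigr)(x) - \Gamma_{xy}\bigl(\CK_\gamma f\bigr)(y)
\end{equ}
as a sum of: an ``$\CI$-part'' equal to $\CI\bigl(f(x) - \Gamma_{xy} f(y)\bigr)$, which lands in $T_{\gamma+\beta}^-$ with the right bound by hypothesis $f \in \CD^\gamma(V)$ and property (1) of $\CI$; a collection of polynomial terms with coefficients of the form $\int D_1^k K_n(x,z)\,(\Pi_x(\cdots))(dz)$ evaluated on $f(x) - \Gamma_{xy}f(y)$ and on $\CR f - \Pi_x f(x)$. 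Each of these polynomial coefficients is then estimated via the two preparatory lemmas: the $D_1^k K$ bounds \eref{e:propK1} together with the moment cancellations \eref{e:propK2}, the latter being exactly what forces the would-be leading contributions to cancel and leaves a remainder of order $\|x-y\|_\s^{\gamma+\beta - |k|_\s}$. This is where Assumption~\ref{ass:polynom} (with $r = \gamma+\beta$) enters: it guarantees that the Taylor-remainder in \eref{e:TaylorKn} does not pick up spurious polynomial pieces of the wrong homogeneity, and it is precisely the hypothesis $\gamma + \beta \notin \N$ that rules out the borderline integer case where a logarithmic correction would appear. Collecting, one gets $\$\CK_\gamma f\$_{\gamma+\beta;\K} \lesssim \$f\$_{\gamma;\bar\K}$ with constant depending on the model norms.

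\textbf{The reconstruction identity.} Having $\CK_\gamma f \in \CD^{\gamma+\beta}$ with $\gamma + \beta > 0$, uniqueness in Theorem~\ref{theo:reconstruction} means it suffices to check that $K * \CR f$ approximates $\Pi_x (\CK_\gamma f)(x)$ to order $\gamma+\beta$ near $x$. By the definition of a model realising $K$ and of $\CN_\gamma$, one computes directly
\begin{equ}
\Pi_x\bigl(\CK_\gamma f\bigr)(x) = \Pi_x \CI f(x) + \Pi_x \CJ(x) f(x) + \Pi_x \CN_\gamma f(x) = K * \Pi_x f(x) + K*\bigl(\CR f - \Pi_x f(x)\bigr) - (\text{poly. tail}),
\end{equ}
where the polynomial tail is exactly the high-order Taylor data that makes the left side vanish at $x$ to the right order; subtracting $K * \CR f$ and testing against $\CS_{\s,x}^\delta\eta$, one splits $K = \sum_{2^{-n}\le \delta} K_n + \sum_{2^{-n}>\delta}K_n$ and uses \eref{e:boundRf} on the near-diagonal block and the regularity of $K$ plus the smoothness-improving bounds on the far block; each contributes $\CO(\delta^{\gamma+\beta})$. (For $\gamma+\beta$ possibly landing where $\CR$ of the target is only a distribution one argues as in Remark~\ref{rem:continuousModel}/Proposition~\ref{prop:recFcn} when applicable, but the uniqueness argument above is model-independent.)

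\textbf{Continuity in the model.} This is the step I expect to be the main obstacle, since it requires tracking every estimate above with two models simultaneously and telescoping the differences. The strategy is the usual one: write $\CK_\gamma f - \bar \CK_\gamma \bar f$ as a sum where in each term at most one factor is a difference ($f - \bar f$, $\Pi - \bar\Pi$, $\Gamma - \bar\Gamma$, or $\CR f - \bar\CR\bar f$) and the rest are bounded uniformly by the assumed norms. The genuinely delicate points are (a) the term involving $\CN_\gamma f - \bar\CN_\gamma \bar f$, which hides a difference of reconstruction operators and must be handled with \eref{e:boundRfDiff} rather than \eref{e:boundRf}, and (b) keeping the moment-cancellation structure intact when one of the $K$-integrals is paired against $\Pi_x a - \bar\Pi_x \bar a$ — fortunately $K$ itself is the same for both models (both \emph{realise} the same $K$), so no kernel differences arise and the cancellations \eref{e:propK2} apply verbatim. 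Assembling the pieces with the elementary interpolation $a \wedge b \le a^\kappa b^{1-\kappa}$ where needed yields
\begin{equ}
\$\CK_\gamma f ; \bar \CK_\gamma \bar f\$_{\gamma+\beta;\K} \lesssim \$f;\bar f\$_{\gamma;\bar\K} + \|\Pi - \bar\Pi\|_{\gamma;\bar\K} + \|\Gamma - \bar\Gamma\|_{\gamma+\beta;\bar\K},
\end{equ}
with the proportionality constant depending only on the $\$\cdot\$$-norms of $f,\bar f$ and on the model norms, as claimed.
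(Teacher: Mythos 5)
Your overall architecture matches the paper's: well-definedness of $\CN_\gamma$ via the dyadic decomposition of $K$, an algebraic identity splitting $\bigl(\CK_\gamma f\bigr)(x) - \Gamma_{xy}\bigl(\CK_\gamma f\bigr)(y)$ into an $\CI$-part plus polynomial coefficients (this is Lemma~\ref{lem:defCJ} in the text), uniqueness in the reconstruction theorem for \eref{e:wantedIden}, and telescoping of differences for the model-continuity bound, using \eref{e:boundRfDiff} for the hidden difference of reconstruction operators.

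There is, however, a genuine gap in your treatment of the polynomial components, specifically at the large scales $2^{-n}\ge \|x-y\|_\s$. You describe the polynomial coefficients as $x$-centred integrals $\int D_1^k K_n(x,z)\,\bigl(\Pi_x(\cdots)\bigr)(dz)$ paired against $f(x)-\Gamma_{xy}f(y)$ and $\CR f - \Pi_x f(x)$, and attribute the gain of $\|x-y\|_\s^{\gamma+\beta-|k|_\s}$ to the moment conditions \eref{e:propK2}. As written this estimate fails: the bound $|(\CR f - \Pi_x f(x))(D_1^k K_n(x,\cdot))|\lesssim 2^{(|k|_\s-\beta-\gamma)n}$ has a strictly negative exponent, so summing it over the range $2^{-n}\ge\|x-y\|_\s$ yields only $\CO(1)$, not $\CO(\|x-y\|_\s^{\gamma+\beta-|k|_\s})$. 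The mechanism that actually closes this regime is a recombination: the four quantities $\CN_\gamma^{(n)}f(x)$, $\Gamma_{xy}\CN_\gamma^{(n)}f(y)$, $\CJ^{(n)}(x)f(x)$ and $\CJ^{(n)}(x)\Gamma_{xy}f(y)$ must be added so that the $y$-centred and $x$-centred kernel evaluations assemble into the Taylor remainder $K_{n;xy}^{k,\gamma}$ of \eref{e:TaylorKn}, tested against $\Pi_y f(y)-\CR f$; it is the subtracted Taylor polynomial of the kernel (expanded via Proposition~\ref{prop:Taylor}), not \eref{e:propK2}, that produces the positive powers $2^{\delta n}\|x-y\|_\s^{\delta+\gamma+\beta-|k|_\s}$ making the large-scale sum converge to the right quantity. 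Assumption~\ref{ass:polynom} and the condition $\gamma+\beta\notin\N$ enter elsewhere: they annihilate the borderline contributions $\Pi_x\CQ_\zeta(\cdots)(D_1^kK_n(x,\cdot))$ with $\zeta+\beta=|k|_\s$, for which the exponent of $2^n$ would otherwise vanish and ruin summability. You invoke the kernels $K_{n;xy}^\alpha$ at the outset, so the repair is within reach, but the membership step $\CK_\gamma f\in\CD^{\gamma+\beta}$ does not close as described.
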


\begin{remark}
One surprising feature of Theorem~\ref{theo:Int} is that the only non-local term in $\CK_\gamma$ is the 
operator $\CN_\gamma$ which is a kind of ``remainder term''. In particular, the ``rough'' parts of $\CK_\gamma f$, 
i.e.\ the fluctuations that cannot be described by the canonical model consisting of polynomials, are always
obtained as the image of the ``rough'' parts of $f$ under a simple local linear map.
We will see in Section~\ref{sec:SPDE} below that, as a consequence of this fact, if $f \in \CD^\gamma$ 
is the solution to a stochastic PDE built from a local fixed point argument using this theory,
then the ``rough'' part in the description of $f$ is \textit{always} given by \textit{explicit
local functions} of the ``smooth part'', which can be interpreted as some kind of
renormalised Taylor series.
\end{remark}

The assumptions on the model $\Pi$ and on the regularity structure $\TT = (A,T,G)$ (in particular the existence
of a map $\CI$ with the right properties) may look quite stringent at first sight. However, it turns out that it is \textit{always} possible
to embed \textit{any} regularity structure $\TT$ into a 
larger regularity structure in such a way that these assumptions
are satisfied. This is our second main result, which can be stated in the following way.

\begin{theorem}[Extension theorem]\label{theo:extension}
Let $\TT = (A,T,G)$ be a regularity structure containing the canonical regularity structure 
$\TT_{d,\s}$ as stated in Assumption~\ref{ass:integer}, let $\beta > 0$, and
let $V \subset T$ be a sector of order $\bar \gamma$ with the property that for
every $\alpha \not \in \N$ with $V_\alpha \neq 0$, one has $\alpha + \beta \not \in \N$.
Let furthermore $W \subset V$ be a subsector of $V$ and
let $K$ be a kernel on $\R^d$ satisfying 
Assumptions~\ref{def:regK} and \ref{ass:polynom} for every $r \le \bar \gamma$.
 Let $(\Pi,\Gamma)$ be a model for $\TT$, and let $\CI \colon W \to T$ be 
 an abstract integration map of order $\beta$ such that $\Pi$ realises $K$ for $\CI$.
 
Then, there exists a regularity structure $\hat \TT$ containing $\TT$, a model
$(\hat \Pi,\hat \Gamma)$ for $\hat \TT$ extending $(\Pi,\Gamma)$, and an abstract integration map 
$\hat \CI$ of order $\beta$ acting on $\hat V = \iota V$ such that:
\begin{claim}
\item The model $\hat \Pi$ realises $K$ for $\hat \CI$.
\item The map $\hat \CI$ extends $\CI$ in the sense that $\hat \CI \iota a = \iota \CI a$ for every $a \in W$.
\end{claim}

Furthermore, the map $(\Pi,\Gamma) \mapsto (\hat \Pi, \hat \Gamma)$
is locally bounded and Lipschitz continuous in the sense that if $(\Pi, \Gamma)$ and $(\bar \Pi, \bar \Gamma)$
are two models for $\TT$ and $(\hat \Pi, \hat \Gamma)$ and $(\hat{\bar \Pi}, \hat {\bar \Gamma})$
are their respective extensions, then one has the bounds
\begin{equs}
\|\hat \Pi\|_{\hat V; \K} + \|\hat \Gamma\|_{\hat V; \K} &\lesssim \|\Pi\|_{V; \bar \K} (1+\|\Gamma\|_{V;\bar \K})\;, \label{e:boundDiffReal}\\
\|\hat \Pi - \hat{\bar \Pi}\|_{\hat V; \K} + \|\hat \Gamma - \hat{\bar \Gamma}\|_{\hat V; \K} &\lesssim \|\Pi - \bar \Pi\|_{V; \bar \K}(1+\|\Gamma\|_{V;\bar \K})
+  \|\bar\Pi\|_{V; \bar \K} \|\Gamma - \bar \Gamma\|_{V;\bar \K}\;,
\end{equs}
for any compact $\K \subset \R^d$ and its $2$-fattening $\bar \K$.
\end{theorem}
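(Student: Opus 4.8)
The plan is to construct $\hat\TT$ by adjoining, to $T$, one fresh basis vector for each homogeneous element of $V$ that lies in the complement of $W$. Concretely, fix a graded complement $U$ of $W$ in $V$, so $V = W \oplus U$ as graded vector spaces, and let $\hat T = T \oplus \CI U$, where $\CI U$ is a formal copy of $U$ with the grading shifted by $\beta$: an element $a \in U_\alpha$ gives a symbol $\hat\CI a$ living in $\hat T_{\alpha+\beta}$. We then extend $\CI$ to $\hat\CI$ on $\hat V = \iota V$ by setting $\hat\CI(\iota a) = \iota(\CI a)$ for $a \in W$ and $\hat\CI(\iota a) = \hat\CI a$ (the new symbol) for $a \in U$; by construction property (2) of Definition~\ref{def:abstractI}, namely $\hat\CI a = 0$ on $\bar T$, is automatic since $V \cap \bar T \subset W$ and $\CI$ already kills it. The grading shift gives property (1). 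The new index set is $\hat A = A \cup \{\alpha+\beta : V_\alpha \neq 0\}$, which is still locally finite and bounded below, and the hypothesis that $\alpha+\beta \notin \N$ whenever $\alpha \notin \N$ guarantees Assumption~\ref{ass:integer} is preserved.

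Next I would define the extended structure group and the model simultaneously, since they are intertwined through the algebraic constraint $\hat\Pi_y = \hat\Pi_x \circ \hat\Gamma_{xy}$. Motivated by Definition~\ref{def:reprK}, I define, for $a \in U_\alpha$ and $x \in \R^d$,
\begin{equ}
\hat\Pi_x \hat\CI a = \int_{\R^d} K(\cdot,z)\,(\Pi_x a)(dz) - \Pi_x \CJ(x)a\;,
\end{equ}
interpreted rigorously via \eref{e:defIrigor}--\eref{e:TaylorKn}; the term being subtracted is a polynomial in the canonical part $\bar T$, so it already has a model. The action of $\hat\Gamma_{xy}$ on $\hat\CI a$ is then forced: one computes $\hat\Pi_x \hat\CI a - \hat\Pi_x \hat\Gamma_{xy}\hat\CI a$ using $\Pi_x = \Pi_y \Gamma_{yx}$ and the identity $\CJ(x)\Gamma_{xy} - \Gamma_{xy}\CJ(y)$ evaluated against $\Pi$, which (after a Taylor-expansion bookkeeping of $K_n$) turns out to be a polynomial; this polynomial is what one defines $\hat\Gamma_{xy}\hat\CI a - \hat\CI a$ to be, so that $\hat\Gamma$ lands in $\bar T$ modulo $\hat\CI a$ itself, which is exactly property (3) of Definition~\ref{def:abstractI}. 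One must then check that this $\hat\Gamma$ is a genuine structure group: $\hat\Gamma_{xx} = 1$, the cocycle relation $\hat\Gamma_{xy}\hat\Gamma_{yz} = \hat\Gamma_{xz}$, and $\hat\Gamma_{xy}a - a \in \bigoplus_{\beta<\alpha}\hat T_\beta$. The cocycle identity is where the precise form of the subtracted polynomials matters; it reduces, as in the proof of Proposition~\ref{prop:extension}, to the uniqueness part of the reconstruction theorem applied to the function $y \mapsto \hat\Gamma_{yx}\hat\CI a$.

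The analytic bounds are then the substance of the argument. I would verify the first bound of \eref{e:boundPi} for $\hat\Pi_x\hat\CI a$ by the same dyadic decomposition $K = \sum_n K_n$ used throughout Section~\ref{sec:integral}: split the sum over $n$ at the scale $2^{-n_0} \sim \delta$ (the support of the test function), use Assumption~\ref{def:regK}(i)--(ii) on each $K_n$, the bound $|(\Pi_x a)(\CS^\delta_{\s,x}\phi)| \lesssim \delta^\alpha$ for $n$ large, and the Taylor-remainder estimate \eref{e:TaylorKn} together with Assumption~\ref{ass:polynom} to kill the low-frequency contributions; summing the geometric series (an instance of Lemma~\ref{lem:sumExp}) gives the required $\delta^{\alpha+\beta}$. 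This is essentially the content of the lemma stating that \eref{e:defIrigor} converges absolutely — I expect the bookkeeping there (the analogue of what the paper calls Lemma~\ref{lem:wellDefInt}) to be the main technical obstacle, since one has to track the interplay between the number of subtracted Taylor terms $|k|_\s < \alpha+\beta$, the vanishing-moment condition of order $r = \bar\gamma$, and the non-integrality hypotheses. The bound on $\hat\Gamma_{xy}\hat\CI a$ follows from the same estimates applied to the polynomial difference, and \eref{e:boundDiffReal} together with the Lipschitz bound is obtained by running all of the above for the difference of two models, exactly as in the second halves of Propositions~\ref{prop:extension} and~\ref{prop:multDiff}. Finally, one notes that all estimates are local — everything happens at distance $O(1)$ from the support of the test function — which is why the bounds are stated over $\K$ in terms of data on its $2$-fattening $\bar\K$.
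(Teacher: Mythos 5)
Your construction is essentially the paper's proof: you adjoin a grading-shifted copy of a complement of $W$ in $V$, define $\hat\Pi_x$ on the new symbols by the realisation identity \eref{e:defIa}, force $\hat\Gamma_{xy}\hat\CI a - \hat\CI a$ to be the polynomial $\CJ(x)\Gamma_{xy}a - \Gamma_{xy}\CJ(y)a$, and isolate exactly the two analytic lemmas (absolute convergence of \eref{e:defIrigor} with the $\delta^{\alpha+\beta}$ bound, and the bound on $\CJ(x)\Gamma_{xy}-\Gamma_{xy}\CJ(y)$) that the paper proves as Lemmas~\ref{lem:wellDefInt} and~\ref{lem:boundGammaxy}. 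The only cosmetic differences are that the paper extends one homogeneity at a time and verifies the cocycle identity by a direct computation using the injectivity of $\Pi_x$ on $\bar T$ (Lemma~\ref{lem:defCJ}) rather than via the uniqueness part of the reconstruction theorem, but both routes are valid.
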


\begin{remark}
In this statement, the sector $W$ is also allowed to be empty.
See also Section~\ref{sec:realAlg} below for a general construction showing how one can
build a regularity structure from an abstract integration map.
\end{remark}

The remainder of this section is devoted to the proof of these two results. We start with the proof of the
extension theorem, which allows us to introduce all the objects that are then needed in the proof of the 
multi-level Schauder estimate, Theorem~\ref{theo:Int}.

\subsection{Proof of the extension theorem}
\label{sec:extension}

Before we turn to the proof, we prove the following lemma which will turn out to be very useful:

\begin{lemma}\label{lem:defCJ}
Let $\CJ\colon \R^d \to \bar T$ be as above, let $V \subset T$ be a sector,
and let $\CI\colon V \to T$ be adapted to the kernel $K$.
Then one has the identity
\begin{equ}[e:actionGammaI]
\Gamma_{xy} \bigl(\CI + \CJ(y)\bigr) = \bigl(\CI + \CJ(x)\bigr) \Gamma_{xy}\;,
\end{equ}
for every $x,y \in \R^d$.
\end{lemma}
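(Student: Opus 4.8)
The plan is to verify the identity \eref{e:actionGammaI} directly from the definitions of $\CI$ and $\CJ$, using the fact that $\Pi_x \CI a = \Pi_x \CJ(x) a$ modulo the kernel-convolution term, together with the key property that $\Pi_y = \Pi_x \Gamma_{xy}$. The cleanest route is to act with $\Pi_x$ on both sides of \eref{e:actionGammaI} applied to an arbitrary $a \in V_\alpha$: since a model determines the $\Pi_x$-images uniquely on elements of positive homogeneity by Proposition~\ref{prop:extension}, and since the polynomial part is handled by Assumption~\ref{ass:integer}, it suffices to check that $\Pi_x$ of the left- and right-hand sides agree as distributions on $\R^d$, and then to check that the difference of the two sides lies in $\bar T$ so that nothing is lost in the polynomial directions.

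First I would compute $\Pi_x \bigl(\CI + \CJ(y)\bigr)\Gamma_{xy} a$. Writing $b = \Gamma_{xy} a$, we have by Definition~\ref{def:reprK} that $\Pi_x \CI b = K * \Pi_x b - \Pi_x \CJ(x) b$, so $\Pi_x(\CI + \CJ(y))b = K * \Pi_x b + \Pi_x(\CJ(y) - \CJ(x))b$. On the other side, $\Pi_x(\CI + \CJ(x))\Gamma_{xy} a = K * \Pi_x \Gamma_{xy} a = K * \Pi_y a$ — wait, that is not quite symmetric, so instead I would apply $\Pi_y$ to the right-hand side: $\Pi_y(\CI + \CJ(x))\Gamma_{xy} a$. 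Actually the slickest approach is: apply $\Pi_y$ to the left side of \eref{e:actionGammaI} and $\Pi_x$ to the right side after using $\Pi_y = \Pi_x\Gamma_{xy}$, obtaining on the left $\Pi_y(\CI + \CJ(y))a = K * \Pi_y a$ (the defining relation), and on the right $\Pi_x \Gamma_{xy}(\CI + \CJ(y))a$; so the claim reduces to $\Pi_x\Gamma_{xy}(\CI + \CJ(y))a = \Pi_y(\CI + \CJ(x))\Gamma_{xy}a$, i.e. to checking that both sides equal $K * \Pi_y a$. Expanding $\Pi_y(\CI + \CJ(x))\Gamma_{xy}a = \Pi_y\CI\Gamma_{xy}a + \Pi_y\CJ(x)\Gamma_{xy}a = (K*\Pi_y\Gamma_{xy}a - \Pi_y\CJ(y)\Gamma_{xy}a) + \Pi_y\CJ(x)\Gamma_{xy}a$; since $\Pi_y\Gamma_{xy}$ needs care — here one uses $\Pi_y = \Pi_x\Gamma_{xy}$ the other way — one unwinds everything to the base point and the convolution terms match by the translation identity \eqref{e:TaylorKn} for $K_{n;xy}^\alpha$.

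The second, and I expect main, step is to control the polynomial ($\bar T$-valued) discrepancy. The difference $\Gamma_{xy}(\CI + \CJ(y))a - (\CI + \CJ(x))\Gamma_{xy} a$ lies in $\bar T$ by property~(iii) of Definition~\ref{def:abstractI} (which says $\CI\Gamma - \Gamma\CI \in \bar T$) together with the fact that $\CJ$ takes values in $\bar T$. Since $\Pi_x$ restricted to $\bar T$ is injective — it realises abstract monomials $X^k$ as the genuine monomials $(y-x)^k$ by Assumption~\ref{ass:integer}, and these are linearly independent — the vanishing of $\Pi_x$ applied to this $\bar T$-valued difference forces the difference itself to vanish. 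So the argument is: (a) show the difference is in $\bar T$; (b) show $\Pi_x$ annihilates it using the defining relations and the explicit form \eqref{e:defJx} of $\CJ$; (c) conclude by injectivity of $\Pi_x$ on $\bar T$. The delicate bookkeeping will be in step (b), matching up the truncated-Taylor-expansion correction terms in \eqref{e:defJx} with the action of $\Gamma_{xy}$ on both the abstract monomials $X^k$ (via $\Gamma_h X^k$ shifting the base point) and the $D_1^k K$ coefficients; one has to be careful that the sum over multiindices in $\CJ(x)a$ versus $\CJ(y)\Gamma_{xy}a$ reassembles correctly, which is essentially the Taylor expansion identity $D_1^k K(y,z) = \sum_{\ell} \frac{(y-x)^\ell}{\ell!} D_1^{k+\ell} K(x,z)$ read off against $\Pi_x(\Gamma_{xy}a) = \Pi_y a$. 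I would also note the absolute convergence of all the $n$-sums involved, which is guaranteed by (the forthcoming) Lemma~\ref{lem:wellDefInt}, so that interchanging sums and the distributional pairing is legitimate.
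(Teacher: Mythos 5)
Your proposal is correct and follows the paper's proof almost verbatim: the difference of the two sides lies in $\bar T$ by property~(iii) of Definition~\ref{def:abstractI} together with the fact that $\CJ$ is $\bar T$-valued, $\Pi_x$ annihilates it because both $\Pi_x\Gamma_{xy}(\CI+\CJ(y))a = \Pi_y(\CI+\CJ(y))a$ and $\Pi_x(\CI+\CJ(x))\Gamma_{xy}a$ equal $K*\Pi_y a$ by \eref{e:defIa}, and injectivity of $\Pi_x$ on $\bar T$ concludes. The only remark worth making is that the ``delicate bookkeeping'' you anticipate in step~(b) is unnecessary: \eref{e:defIa} applied at base point $x$ to the element $\Gamma_{xy}a$ and at base point $y$ to $a$ gives the cancellation directly, with no need to unwind the Taylor correction terms in \eref{e:defJx}.
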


\begin{proof}
Note first that $\CJ$ is well-defined in the sense that the following expression converges:
\begin{equ}[e:defJxreal]
\bigl(\CJ(x) a\bigr)_k = {1\over k!}\sum_{\gamma \in A \atop
|k|_\s < \gamma + \beta} \sum_{n \ge 0} \bigl(\Pi_x \CQ_\gamma a\bigr) \bigl(D_1^{k}K_n(x,\cdot)\bigr)\;.
\end{equ}
Indeed, applying the bound \eref{e:boundPixaKn}  which will be obtained
in the proof of Lemma~\ref{lem:wellDefInt} below, we see that the sum in \eref{e:defJxreal} is uniformly convergent for every $\gamma \in A$.

In order to show \eref{e:actionGammaI} we use the fact that, by the definition of an abstract integration map, 
we have $\Gamma_{xy} \CI a - \CI\Gamma_{xy} a \in \bar T$
for every $a \in T$ and every pair $x,y \in \R^d$. Since $\Pi_x$ is injective on $\bar T$ 
(it maps an abstract polynomial
into its concrete realisation based at $x$), it
therefore suffices to show that one has the identity
\begin{equ}
\Pi_y \bigl(\CI + \CJ(y)\bigr) = \Pi_x \bigl(\CI + \CJ(x)\bigr) \Gamma_{xy}\;.
\end{equ}
This however follows immediately from \eref{e:defIa}.
\end{proof}

\begin{proof}[of Theorem~\ref{theo:extension}]
We first argue that we can assume without loss of generality that we are in a situation where 
the sector $V$ is given by a finite sum
\begin{equ}[e:assV]
V = V_{\alpha_1} \oplus V_{\alpha_2} \oplus \ldots \oplus V_{\alpha_n}\;,
\end{equ}
where the $\alpha_i$ are an increasing sequence of elements in $A$,
and where furthermore $W_{\alpha_k} = V_{\alpha_k}$ for all $k < n$. Indeed, we can first
consider the case $V = V_{\alpha_1}$ and $W=W_{\alpha_1}$ and apply our result to build an extension
to all of $V_{\alpha_1}$. We then consider the case $V=V_{\alpha_1} \oplus V_{\alpha_2}$ and
$W = V_{\alpha_1} \oplus W_{\alpha_2}$, etc. 
We then denote by $\bar W$ the complement of $W_{\alpha_n}$
in $V_{\alpha_n}$ so that $V_{\alpha_n} = W_{\alpha_n} \oplus \bar W_{\alpha_n}$.

The proof then consists of two steps. First, we build the regularity structure $\hat \TT = (\hat A, \hat T, \hat G)$
and the map $\hat \CI$, and we show that they have the required properties.
In a second step, we will then build the required extension $(\hat \Pi,\hat \Gamma)$ and we will show that
it satisfies the identity given by Definition~\ref{def:reprK}, as well as the bounds of Definition~\ref{def:model} 
required to make it a bona fide model for $\hat \TT$. 

The only reason why $\TT$ needs to be extended is that we have no way a priori to define $\hat\CI$ to
$\bar W$, so we simply add a copy of it to $T$ and we postulate this copy to be image of $\bar W$ under
the extension $\hat\CI$ of $\CI$. We then extend $G$ in a way which is consistent with 
Definition~\ref{def:abstractI}.
More precisely, our construction goes as follows.
We first  define 
\begin{equ}
\hat A = A \cup \{\alpha_n + \beta\}\;,
\end{equ}
where $\alpha_n$ is as in \eref{e:assV},
and we define $\hat T$ to be the space given by
\begin{equ}
\hat T = T \oplus \bar W\;.
\end{equ}
We henceforth denote elements in $\hat T$ by $(a,b)$ with $a \in T$ and $b \in \bar W$,
and the injection map $\iota \colon T \to \hat T$ is simply given by $\iota a = (a,0)$.
Furthermore, we set
\begin{equ}
\hat T_\alpha = 
\left\{\begin{array}{ll}
	T_\alpha \oplus \bar W & \text{if $\alpha = \alpha_n + \beta$,} \\
	T_{\alpha} \oplus 0 & \text{otherwise.}
\end{array}\right.
\end{equ}
With these notations, one then indeed has the identity $\hat T = \bigoplus_{\alpha \in \hat A} \hat T_\alpha$
as required. 

In order to complete the construction of $\hat \TT$, it remains to extend $G$. As a set, we simply 
set $\hat G = G \times M_{\bar W}^{\alpha_n+\beta}$, where $M_{\bar W}^\alpha$ denotes the set of linear
maps from $\bar W$ into $\bar T_\alpha^-$ (i.e.\ the polynomials of scaled degree strictly less than $\alpha$).
The composition rule on $\hat G$ is then given by the following skew-product:
\begin{equ}[e:compGhat]
(\Gamma_1, M_1) \circ (\Gamma_2, M_2) = \bigl(\Gamma_1  \Gamma_2, \Gamma_1  M_2 + M_1 + \bigl(\Gamma_1 \CI - \CI \Gamma_1\bigr)(\Gamma_2 - 1)\bigr)\;.
\end{equ}
One can check that this composition rule yields an element of $\hat G$. Indeed, by assumption, 
$G$ leaves $\bar T$ invariant, so that
$\Gamma_1  M_2$ is indeed again an element of $M_{\bar W}^{\alpha_n+\beta}$.
Furthermore, $\Gamma_1 \CI - \CI \Gamma_1$ is an element of $L_V^\beta \subset M_{V}^{\alpha_n + \beta}$ by assumption, so that the last term also maps $\bar W$ into $\bar T_{\alpha_n+\beta}^-$ as required.
For any $(\Gamma, M) \in \hat G$, we then 
give its action on $\hat T$ by  setting 
\begin{equ}
(\Gamma, M)(a,b) = \bigl(\Gamma a + \CI (\Gamma b - b) + M b, b\bigr)\;.
\end{equ}
Observe that
\begin{equ}
(\Gamma, M)(a,b) - (a,b) = \bigl((\Gamma a - a) + \CI (\Gamma b - b) + M b, 0\bigr)\;,
\end{equ}
so that this definition does satisfy the condition \eref{e:coundGroup}.

Straightforward verification shows that one has indeed
\begin{equ}
\bigl((\Gamma_1, M_1) \circ (\Gamma_2, M_2)\bigr)(a,b) = 
(\Gamma_1, M_1) \bigl((\Gamma_2, M_2)(a,b)\bigr)\;.
\end{equ}
Since it is immediate that this action is also faithful, this does imply that the operation $\circ$ defined
in \eref{e:compGhat} is associative as required. Furthermore, one can verify that $(1,0)$ is neutral 
for the operation $\circ$ and that $(\Gamma, M)$ has an inverse given by
\begin{equ}
(\Gamma, M)^{-1} = \bigl(\Gamma^{-1}, - \Gamma^{-1}\bigl(M + (\Gamma \CI - \CI \Gamma)(\Gamma-1)\bigr)\bigr)\;,
\end{equ}
so that $(\hat G, \circ)$ is indeed a group. 
This shows that $\hat \TT = (\hat A, \hat T, \hat G)$ is indeed again a regularity structure.
Furthermore, the map $j\colon \hat G \to G$ given by
$j (\Gamma, M) = \Gamma$ is a group homomorphism which verifies that, 
for every $a \in T$ and $\Gamma \in G$, one has the identity
\begin{equ}
\bigl(j (\Gamma, M)\bigr)a = \Gamma a = \iota^{-1} (\Gamma a, 0) = \iota^{-1} (\Gamma, M)\iota a\;.
\end{equ}
This shows that $\iota$ and $j$ do indeed define a canonical inclusion $\TT \subset \hat \TT$, see Section~\ref{sec:regBasic}.

It is now very easy to extend $\CI$ to the image of all of $V$ in $\hat T$. Indeed, for any $a \in V$, we
have a unique decomposition $a = a_0 + a_1$ with $a_0 \in W$ and $a_1\in \bar W$. We then set
\begin{equ}
\hat \CI (a,0) = (\CI a_0, a_1)\;.
\end{equ}
Since $a_1 = 0$ for $a \in W$, one has indeed $\hat \CI \iota a = \hat \CI(a,0) = (\CI a, 0) = \iota \CI a$ in this
case, as claimed in the statement of the theorem. As far as the abstract part of our construction is concerned,
it therefore remains to verify that $\hat \CI$ defined in this way does verify our definition of an
abstract integration map.
The fact that $\hat \CI \colon \hat V_\alpha \to \hat T_{\alpha+\beta}$ is a direct consequence of the fact that
we have simply \textit{postulated} that $0 \oplus \bar W \subset \hat T_{\alpha_n+\beta}$.
Since the action of $\CI$ on $\bar T$ did not change in our construction, one still has $\hat \CI \bar T = 0$.
Regarding the third property, for any $(\Gamma,M) \in \hat G$ and every 
$a = a_1 + a_2 \in V$ as above, we have
\begin{equ}
\hat \CI (\Gamma, M)(a,0) = \hat \CI (\Gamma a, 0) = \bigl(\CI \Gamma a_1 + \CI (\Gamma a_2 - a_2), a_2\bigr)\;,
\end{equ}
where we use the fact that $\Gamma a_2 - a_2 \in V$ by the structural assumption \eref{e:assV} we made
at the beginning of this proof. On the other hand, we have
\begin{equ}
(\Gamma, M) \hat \CI (a,0) = (\Gamma, M) (\CI a_1, a_2)
= \bigl(\Gamma \CI a_1 + \CI(\Gamma a_2 - a_2) + M a_2, a_2\bigr)\;,
\end{equ}
so that the last  property of an abstract integration map is also satisfied.

It remains to provide an explicit formula for the extended model $(\hat \Pi, \hat \Gamma)$. 
Regarding $\hat \Pi$, for $b \in \bar W$ and $x \in \R^d$, we simply \textit{define} it to be given by
\begin{equ}[e:defPihat]
\hat \Pi_x (a,b) =  \Pi_x a + \int_{\R^d} K(\cdot ,z)\bigl(\Pi_x b\bigr)(dz) - \Pi_x \CJ(x) b\;,
\end{equ}
where $\CJ$ is given by \eref{e:defJx}, which guarantees that the model $\hat \Pi$ realises $K$ for $\hat \CI$ on $V$.
Again, this expression is only formal and should really be interpreted as in \eref{e:defIrigor}.
It follows from Lemma~\ref{lem:wellDefInt} below that the sum in \eref{e:defIrigor} converges
and that it furthermore satisfies the required bounds when tested against smooth test functions that are
localised near $x$. Note that the map $\Pi_x \mapsto \hat \Pi_x$ is linear and does not
depend at all on the realisation of $\Gamma$. As a consequence, the bound
on the difference between the extensions of different regularity structures follows at once.
It remains to define $\hat \Gamma_{xy} \in \hat G$ and to show that 
it satisfies both the algebraic and the analytical conditions given by Definition~\ref{def:model}.

We set
\begin{equ}[e:defMxy]
\hat \Gamma_{xy} = (\Gamma_{xy}, M_{xy})\;,\qquad M_{xy}b =  \CJ(x) \Gamma_{xy}b - \Gamma_{xy} \CJ(y)b \;.
\end{equ}
By the definition of $\CJ$, the linear map $M_{xy}$ defined in this way does indeed belong to
$M_{\bar W}^{\alpha_n+\beta}$.
Making use of Lemma~\ref{lem:defCJ}, we 
then have the identity
\begin{equs}
\hat \Gamma_{xy} \circ \hat \Gamma_{yz} &= \bigl(\Gamma_{xy} \Gamma_{yz}, \Gamma_{xy} (\CJ(y) \Gamma_{yz} - \Gamma_{yz} \CJ(z)) + \CJ(x) \Gamma_{xy} - \Gamma_{xy} \CJ(y) \\
&\qquad + (\Gamma_{xy} \CI - \CI \Gamma_{xy})(\Gamma_{yz} - 1)\bigr) \\
& = \bigl(\Gamma_{xz}, -\Gamma_{xz} \CJ(z) + \Gamma_{xy} \CJ(y) \Gamma_{yz} +  \CJ(x) \Gamma_{xy} - \Gamma_{xy} \CJ(y)\\
&\qquad + (\CJ(x) \Gamma_{xy} - \Gamma_{xy} \CJ(y))(\Gamma_{yz} - 1)\bigr)\\
& = \bigl(\Gamma_{xz}, \CJ(x) \Gamma_{xz}-\Gamma_{xz} \CJ(z)\bigr)\;,
\end{equs}
which is the first required algebraic identity. Regarding the second identity, we have
\begin{equs}
\hat \Pi_x \hat \Gamma_{xy} (a,b) &= \hat \Pi_x \bigl(\Gamma_{xy}a + \CI(\Gamma_{xy} b - b) + \CJ(x) \Gamma_{xy}b - \Gamma_{xy} \CJ(y)b , b\bigr) \\
& = \Pi_x a + \int_{\R^d} K(\cdot, z) \Pi_x(\Gamma_{xy} b - b)(dz) - \Pi_x \CJ(x) (\Gamma_{xy}b-b)\\
&\quad + \Pi_x \CJ(x) \Gamma_{xy} b - \Pi_y \CJ(y) b + \int_{\R^d} K(\cdot, z) \Pi_x b(dz) -  \Pi_x \CJ(x) b\\
&= \Pi_x a + \int_{\R^d} K(\cdot, z) \Pi_y b(dz) - \Pi_y \CJ(y) b\\
& = \hat \Pi_y (a,b)\;. \label{e:checkIdenPiGamma}
\end{equs}
Here, in order to go from the first to the second line, we used the fact that $\CI$ realises $K$ for $\CI$ on $W$ by assumption.

It then only remains to check the bound on $\hat \Gamma_{xy}$ stated in \eref{e:boundPi}. 
Since $\hat \Gamma_{xy}(a,0) = (\Gamma_{xy} a, 0)$, we only need to check that
the required bound holds for elements of the form $(0,b)$. Note here that $(0,b) \in \hat T_{\alpha_n + \beta}$, 
but that $(b,0) \in \hat T_{\alpha_n}$. As a consequence, 
\begin{equ}
\|\CI (\Gamma_{xy} b - b)\|_{\gamma} = \|\Gamma_{xy} b - b\|_{\gamma-\beta}
\lesssim \|x-y\|_\s^{\alpha_n - (\gamma - \beta)} = \|x-y\|_\s^{(\alpha_n + \beta) - \gamma}\;,
\end{equ}
as required. It therefore remains to obtain a similar bound on the term $\|M_{xy}b\|_\gamma$.
In view of \eref{e:defMxy}, this on the other hand
is precisely the content of Lemma~\ref{lem:boundGammaxy} below, which concludes the proof.
\end{proof}

\begin{remark}
It is clear from the construction that $\hat \TT$ is the ``smallest possible''
extension of $\TT$ which is guaranteed to have all the required properties.
In some particular cases it might however happen that there exists an even smaller
extension, due to the fact that the matrices $M_{xy}$ appearing in \eref{e:defMxy}
may have additional structure.
\end{remark}

The remainder of this subsection is devoted to the proof of the quantitative estimates
given in Lemma~\ref{lem:wellDefInt} and Lemma~\ref{lem:boundGammaxy}.
We will assume without further restating it that some regularity structure $\TT = (A,T,G)$ is given and that $K$
is a kernel satisfying Assumptions~\ref{def:regK} and \ref{ass:polynom} for some $\beta > 0$.
The test functions $K_{n;xy}^{\alpha}$ introduced in \eref{e:TaylorKn}
will play an important role in these bounds. Actually, we will encounter
the following variant: for any multiindex $k$ and for $\alpha \in \R$, set
\begin{equ}
K_{n,xy}^{k,\alpha}(z) = D^k_1 K_n(y,z) - \sum_{|k+\ell|_\s < \alpha+\beta} {(y-x)^\ell\over \ell!} D_1^{k+\ell} K_n(x,z)\;,
\end{equ}
so that $K_{n,xy}^{\alpha} = K_{n,xy}^{0,\alpha}$.
We then have the following bound:

\begin{lemma}\label{lem:boundPibar}
Let $K_{n,xy}^{k,\alpha}$ be as above, $a \in T_{\alpha}$ for some $\alpha \in A$, and assume that $\alpha + \beta \not \in \N$. 
Then, one has the bound
\begin{equ}
\bigl|\bigl(\Pi_y a\bigr)\bigl(K_{n,xy}^{k,\alpha}\bigr)\bigr| \lesssim \|\Pi\|_{\alpha;\K_x} \bigl(1+\|\Gamma\|_{\alpha;\K_x}\bigr)\sum_{\delta > 0} 2^{\delta n} \|x-y\|_\s^{\delta +  \alpha+\beta -|k|_\s}\;,\label{e:boundKn2}
\end{equ}
and similarly for $\bigl|\bigl(\Pi_x a\bigr)\bigl(K_{n,xy}^{k,\alpha}\bigr)\bigr|$.
Here, the sum runs over finitely many strictly positive values and we used the shorthand $\K_x$ for
the ball of radius $2$ centred around $x$. 
Furthermore, one has the bound
\begin{equs}
\bigl|\bigl(\Pi_y - \bar \Pi_y a\bigr)\bigl(K_{n,xy}^{k,\alpha}\bigr)\bigr| &\lesssim \bigl(\|\Pi - \bar \Pi\|_{\alpha;\K_x} \bigl(1+\|\Gamma\|_{\alpha;\K_x}\bigr) + \|\bar \Pi\|_{\alpha;\K_x} \|\Gamma - \bar \Gamma\|_{\alpha;\K_x}\bigr)\\
&\qquad \times \sum_{\delta > 0} 2^{\delta n} \|x-y\|_\s^{\delta +  \alpha+\beta -|k|_\s}\;,\label{e:boundKn2diff}
\end{equs}
(and similarly for $\Pi_x - \bar \Pi_x$) for any two models $(\Pi,\Gamma)$ and $(\bar \Pi,\bar \Gamma)$.
\end{lemma}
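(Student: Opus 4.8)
The plan is to expand $K_{n,xy}^{k,\alpha}$ according to its definition and to split the analysis into the two scales $2^{-n}>\|x-y\|_\s$ and $2^{-n}\le\|x-y\|_\s$. A preliminary observation, used throughout, is that by Assumption~\ref{def:regK} one can write, for any multiindex $m$, $D_1^m K_n(z_0,\cdot) = 2^{(|m|_\s-\beta)n}\,\CS_{\s,z_0}^{c2^{-n}}\psi_m$ for some fixed constant $c>0$ and functions $\psi_m$ supported in the unit ball of $d_\s$ with $\|\psi_m\|_{\CC^r}$ bounded uniformly in $n$ and $z_0$; here $r$ is the finite integer from Definition~\ref{def:model}. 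Hence each ``building block'' appearing in $K_{n,xy}^{k,\alpha}$ can be tested directly against a model via \eref{e:boundPi}.

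In the regime $2^{-n}>\|x-y\|_\s$ I would treat $z\mapsto K_{n,xy}^{k,\alpha}(z)$ as the Taylor remainder of $y'\mapsto D_1^kK_n(y',z)$ around $x$, truncated at scaled degree $\alpha+\beta-|k|_\s$. A Taylor estimate adapted to the scaling $\s$ then bounds $K_{n,xy}^{k,\alpha}$ by a finite sum of terms $\|x-y\|_\s^{|\ell|_\s}$ times suprema of $D_1^{k+\ell}K_n$, where $|\ell|_\s$ ranges over the smallest scaled degrees exceeding $\alpha+\beta-|k|_\s$; since $\alpha+\beta\notin\N$ these degrees are \emph{strictly} larger, which is exactly the source of the strictly positive exponents $\delta$. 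The whole object is supported in a ball of radius $\lesssim 2^{-n}$ around $y$, so rewriting it as a multiple of $\CS_{\s,y}^{c2^{-n}}\psi$ and applying only the first bound of \eref{e:boundPi} to $\Pi_y a$ gives a contribution $\lesssim\|\Pi\|_{\alpha;\K_x}\|a\|\sum_{\delta>0}2^{\delta n}\|x-y\|_\s^{\delta+\alpha+\beta-|k|_\s}$, with no factor of $\|\Gamma\|$; the bound with $\Pi_x a$ in place of $\Pi_y a$ follows verbatim.

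In the regime $2^{-n}\le\|x-y\|_\s$ I would instead estimate the terms of $K_{n,xy}^{k,\alpha}$ separately: $D_1^kK_n(y,\cdot)$ is tested directly against $\Pi_y a$, while each Taylor term $\tfrac{(y-x)^\ell}{\ell!}D_1^{k+\ell}K_n(x,\cdot)$ is supported near $x$, so one uses $\Pi_y=\Pi_x\circ\Gamma_{xy}$, decomposes $\Gamma_{xy}a$ along the components $\CQ_{\gamma'}\Gamma_{xy}a$ with $\gamma'\le\alpha$, and invokes $\|\Gamma_{xy}a\|_{\gamma'}\lesssim\|\Gamma\|_{\alpha;\K_x}\|a\|\|x-y\|_\s^{\alpha-\gamma'}$ together with \eref{e:boundPi}; this is where the factor $1+\|\Gamma\|_{\alpha;\K_x}$ enters. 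A short bookkeeping shows that every resulting term has the shape $2^{cn}\|x-y\|_\s^{c+\alpha+\beta-|k|_\s}$ for a real $c$ bounded above by a constant depending only on $\alpha,\beta,|k|_\s$ and the structure, and since $2^n\|x-y\|_\s\ge 1$ in this regime such a term is dominated by $2^{\delta n}\|x-y\|_\s^{\delta+\alpha+\beta-|k|_\s}$ for any $\delta$ in the finite index set that is at least as large as the finitely many relevant $c$'s. Collecting the two regimes yields \eref{e:boundKn2}.

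For \eref{e:boundKn2diff} the argument is identical after replacing $\Pi_y a$ by $(\Pi_y-\bar\Pi_y)a$. When $2^{-n}>\|x-y\|_\s$ one uses the definition \eref{e:defDistModel} of $\|\Pi-\bar\Pi\|_{\alpha;\K_x}$ on the remainder bump near $y$, producing the first term $\|\Pi-\bar\Pi\|_{\alpha;\K_x}(1+\|\Gamma\|_{\alpha;\K_x})$ (the factor $1+\|\Gamma\|$ being harmless). When $2^{-n}\le\|x-y\|_\s$, for the Taylor terms one writes $(\Pi_y-\bar\Pi_y)a=(\Pi_x-\bar\Pi_x)\Gamma_{xy}a+\bar\Pi_x(\Gamma_{xy}-\bar\Gamma_{xy})a$ and estimates the two pieces with $\|\Pi-\bar\Pi\|_{\alpha;\K_x}$ and $\|\bar\Pi\|_{\alpha;\K_x}\|\Gamma-\bar\Gamma\|_{\alpha;\K_x}$ respectively, noting that $(\Gamma_{xy}-\bar\Gamma_{xy})a$ has no component in $T_\alpha$ so only $\gamma'<\alpha$ contribute; this gives exactly the two-term bound claimed. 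I expect the only genuinely delicate point to be the scaled Taylor remainder estimate of the first regime: keeping precise track of which derivatives $D_1^{k+\ell}K_n$ occur in the remainder, and checking that the non-integrality of $\alpha+\beta$ forces all exponents $\delta$ to be strictly positive, which is what makes the estimate summable over $n$ when it is later fed into Lemma~\ref{lem:wellDefInt}.
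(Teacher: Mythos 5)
Your proof is correct, but it is organised rather differently from the one in the paper, so a comparison is worth recording. The paper does not split according to the relative size of $2^{-n}$ and $\|x-y\|_\s$; it distinguishes instead the cases $\alpha+\beta>|k|_\s$ (non-empty Taylor sum) and $\alpha+\beta<|k|_\s$ (empty sum), and in the first case writes the remainder in the \emph{exact} integral form provided by the anisotropic Taylor formula of Proposition~\ref{prop:Taylor}, namely $K_{n;xy}^{k,\alpha}=\sum_\ell\int D_1^{k+\ell}K_n(y+h,\cdot)\,\CQ^\ell(x-y,dh)$ with $|\ell|_\s>\alpha+\beta-|k|_\s$ thanks to the non-integrality of $\alpha+\beta$. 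Each kernel $D_1^{k+\ell}K_n(y+h,\cdot)$ is then tested against $\Pi_y a=\Pi_{y+h}\Gamma_{y+h,y}a$, and the support and mass bounds $\|h\|_\s\le\|x-y\|_\s$, $\CQ^\ell(x-y,\R^d)\lesssim\|x-y\|_\s^{|\ell|_\s}$ give the claimed exponents \emph{uniformly in $n$}, at the price of the factor $1+\|\Gamma\|$ at every scale. You replace the exact remainder formula by a sup-norm remainder estimate (which is really Proposition~\ref{prop:Taylor} in disguise: be sure to use the scaled version, as the naive line-segment Taylor expansion gives the wrong exponents for non-Euclidean $\s$) and, in the regime $\|x-y\|_\s\le 2^{-n}$, package the whole remainder as a single rescaled test function; this is legitimate because $x$ and $y$ then lie in a common ball of radius $O(2^{-n})$, and it buys the (harmless) refinement that $\|\Gamma\|$ is only needed at small scales. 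Your small-scale regime — term-by-term recentering via $\Pi_y=\Pi_x\circ\Gamma_{xy}$, decomposition of $\Gamma_{xy}a$ along homogeneities $\gamma'\le\alpha$, and absorption of the resulting exponents $c$ into some $\delta\ge c\vee 0$ using $2^{n}\|x-y\|_\s\ge1$ — is precisely the bookkeeping the paper performs in Lemmas~\ref{lem:wellDefInt} and~\ref{lem:boundGammaxy}, and your decomposition $(\Pi_y-\bar\Pi_y)a=(\Pi_x-\bar\Pi_x)\Gamma_{xy}a+\bar\Pi_x(\Gamma_{xy}-\bar\Gamma_{xy})a$ for the difference bound matches what the paper leaves to the reader. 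Both arguments rest on the same two pillars, the scaled Taylor remainder and the strict positivity of $\delta$ forced by $\alpha+\beta\notin\N$, so the difference is one of packaging rather than substance.
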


\begin{proof}
It turns out that the cases $\alpha + \beta > |k|_\s$ and $\alpha + \beta < |k|_\s$ are treated slightly differently. 
(The case $\alpha + \beta = |k|_\s$ is ruled out by assumption.)
In the case $\alpha + \beta > |k|_\s$, it 
follows from Proposition~\ref{prop:Taylor} that we can express $K_{n;xy}^{k,\alpha}$ as
\begin{equ}[e:reprKalphan]
K_{n;xy}^{k,\alpha}(z) = \sum_{\ell \in \d A_{\alpha}} \int_{\R^d} D_1^{k+\ell} K_n(y+h,z) \CQ^\ell(x-y,dh)\;,
\end{equ}
where $A_\alpha$ is the set of multiindices given by $A_\alpha = \{\ell\,:\, |k+\ell|_\s < \alpha + \beta\}$
and the objects $\d A_\alpha$ and $\CQ^\ell$ are as in Proposition~\ref{prop:Taylor}. In particular,
note that $|\ell|_\s \ge \alpha + \beta - |k|_\s$ for every term appearing in the above sum.

At this point, we note that, thanks to the first two properties in Definition~\ref{def:regK}, we have the
bound
\begin{equ}[e:boundPixaKn]
\bigl|\bigl( \Pi_y a\bigr) \bigl(D_1^{k+\ell} K_n(y,\cdot)\bigr)\bigr| \lesssim 2^{|k+\ell|_\s n -  \alpha n - \beta n}\|\Pi\|_{\alpha;\K_x}\;,
\end{equ}
uniformly over all $y$ with $\|y-x\|_\s \le 1$ and for all $a \in T_{\alpha}$. Unfortunately, the function $D^{k+\ell} K_n$ is evaluated at $(y+h,z)$
in our case, but this can easily be remedied by shifting the model:
\begin{equs}
\bigl( \Pi_y a\bigr)(D_1^{k+\ell} K_n(y+h,\cdot)) &= \bigl( \Pi_{y+h} \Gamma_{y+h,x} a\bigr)(D_1^{k+\ell} K_n(y+h,\cdot)) \\
&\lesssim  \sum_{\zeta \le \alpha} \|h\|_\s^{\alpha - \zeta} 2^{|k+\ell|_\s n - \zeta n - \beta n}\;,\label{e:boundPiKn}
\end{equs}
where the sum runs over elements in $A$ (in particular, it is a finite sum).
In order to obtain the bound on the second line, we made use of the properties \eref{e:boundPi} of the model.
We now use the fact that $\CQ^\ell(y-x,\cdot)$ is supported on values $h$ such that $\|h\|_\s \le \|x-y\|_\s$
and that
\begin{equ}[e:boundMassQl]
\CQ^\ell(y-x,\R^d) \lesssim \prod_{i-1}^d |y_i-x_i|^{\ell_i} \lesssim \|x-y\|_\s^{|\ell|_\s}\;.
\end{equ}
Combining these bounds, it follows that one has indeed 
\begin{equ}
\bigl|\bigl( \Pi_y a\bigr) \bigl(K_{n;xy}^{k,\alpha}\bigr)\bigr| \lesssim \sum_{\zeta; \ell} \|x-y\|_\s^{\alpha - \zeta+|\ell|_\s} 2^{|k+\ell|_\s n - \zeta n - \beta n}\;,
\end{equ}
where the sum runs over finitely many values of $\zeta$ and $\ell$ with $\zeta \le  \alpha$ and $|\ell|_\s \ge \alpha + \beta - |k|_\s$. Since, by assumption, one has $\alpha +\beta \not \in \N$, it follows that one actually has $|\ell|_\s > \alpha + \beta - |k|_\s$ for each of these terms, so that the required bound follows at once. The bound with $\Pi_y$ replaced by $\Pi_x$ follows
in exactly the same way as above.

In the case $\alpha + \beta < |k|_\s$,
we have $K_{n;yx}^{k,\alpha}(z) = D_1^k K_n(x,z)$ and, proceeding almost exactly as above, one obtains
\begin{equs}
\bigl|\bigl( \Pi_x a\bigr)(D_1^k K_n(x,\cdot))\bigr| &\lesssim  2^{|k|_\s n - \alpha n - \beta n}\;,\\
\bigl|\bigl( \Pi_y a\bigr)(D_1^k K_n(x,\cdot))\bigr| &\lesssim  \sum_{\zeta \le  \alpha} \|x-y\|_\s^{\alpha - \zeta} 2^{|k|_\s n - \zeta n - \beta n}\;.
\end{equs}
with proportionality constants of the required order.

Regarding the bound on the differences between two models, the proof is again
virtually identical, so we do not repeat it.
\end{proof}

Definition~\ref{def:reprK} makes sense thanks to the following lemma:

\begin{lemma}\label{lem:wellDefInt}
In the same setting as above, for any $\alpha \in A$
with $\alpha + \beta \not \in \N$,
the right hand side in \eref{e:defIrigor} with $a \in T_\alpha$ converges absolutely. Furthermore, one has the bound
\begin{equ}[e:wantedBoundInt]
\sum_{n \ge 0} \int_{\R^d} (\Pi_x a)(K_{n;yx}^{\alpha})\,\psi_x^\lambda(y)\,dy \lesssim \lambda^{\alpha + \beta} \|\Pi\|_{\alpha; \K_x}\bigl(1+\|\Gamma\|_{\alpha;\K_x}\bigr)\;,
\end{equ}
uniformly over all $x \in \R^d$, all $\lambda \in (0,1]$,
and all smooth functions supported in $B_\s(1)$ with $\|\psi\|_{\CC^r} \le 1$.
Here, we used the shorthand notation $\psi_x^\lambda = \CS_{\s,x}^\lambda\psi$,
and $\K_x$ is as above. As in Lemma~\ref{lem:boundPibar}, a similar bound holds for 
$\Pi_x - \bar \Pi_x$, but with the expression from the right hand side of the first line of \eref{e:boundDiffReal}
replaced by the expression appearing on the second line.
\end{lemma}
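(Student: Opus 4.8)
The plan is to reduce the convergence of the series in \eref{e:defIrigor} to a pointwise bound on each term $\int_{\R^d} (\Pi_x a)(K_{n;yx}^{\alpha})\,\psi_x^\lambda(y)\,dy$ and then sum over $n$ using a variant of Lemma~\ref{lem:sumExp}. First I would split the sum according to the relative size of the two scales $2^{-n}$ and $\lambda$, writing $n_0$ for the smallest integer with $2^{-n_0} \le \lambda$. For $n \le n_0$ (the ``coarse'' regime) the test function $\psi_x^\lambda$ looks smooth at the scale $2^{-n}$ on which $K_n$ lives, so one should Taylor-expand $\psi_x^\lambda$ around $x$ and use the cancellation built into $K_{n;yx}^{\alpha}$ together with the moment conditions \eref{e:propK2} and the polynomial-annihilation Assumption~\ref{ass:polynom}. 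For $n \ge n_0$ (the ``fine'' regime) one instead exploits that $K_n$ is supported in a ball of radius $2^{-n} \le \lambda$, so the number of relevant $y$'s is controlled, and one bounds $(\Pi_x a)(K_{n;yx}^{\alpha})$ directly.

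The key estimate feeding both regimes is the bound on $(\Pi_y a)(K_{n;yx}^{k,\alpha})$ and $(\Pi_x a)(K_{n;yx}^{k,\alpha})$ established in Lemma~\ref{lem:boundPibar}, namely that this quantity is $\lesssim \|\Pi\|_{\alpha;\K_x}(1+\|\Gamma\|_{\alpha;\K_x}) \sum_{\delta>0} 2^{\delta n}\|x-y\|_\s^{\delta + \alpha + \beta - |k|_\s}$ with $k=0$ here. Since $K_{n;yx}^{\alpha}$ is supported in $\|x-y\|_\s \lesssim 2^{-n}$, on its support $\|x-y\|_\s \le 2^{-n}$, so each exponent $\delta > 0$ gives $2^{\delta n} \|x-y\|_\s^{\delta + \alpha + \beta} \lesssim \|x-y\|_\s^{\alpha+\beta}$ after absorbing $2^{\delta n} 2^{-\delta n} = 1$; what remains is $|(\Pi_x a)(K_{n;yx}^{\alpha})| \lesssim 2^{-n(\alpha+\beta)} \|\Pi\|_{\alpha;\K_x}(1+\|\Gamma\|_{\alpha;\K_x})$ on a set of measure $\lesssim 2^{-n|\s|}$ (or one works with the supremum against $\psi_x^\lambda$ directly). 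Then: for $n \ge n_0$, since $\|\psi_x^\lambda\|_\infty \lesssim \lambda^{-|\s|}$ and the integration region has measure $\lesssim 2^{-n|\s|}$, one gets a contribution $\lesssim \lambda^{-|\s|} 2^{-n|\s|} 2^{-n(\alpha+\beta)}$, and summing the geometric series over $n \ge n_0$ (which converges since $\alpha+\beta+|\s| > 0$) yields $\lesssim \lambda^{-|\s|} 2^{-n_0(|\s|+\alpha+\beta)} \lesssim \lambda^{\alpha+\beta}$, as required. For $n \le n_0$, using the improved cancellation one should obtain a bound of the form $2^{-\kappa n} \lambda^{\gamma'}$ with the exponents arranged so that Lemma~\ref{lem:sumExp} sums it to $\lambda^{\alpha+\beta}$; here is where one must be careful to extract enough powers of $\lambda$ from the Taylor remainder of $\psi_x^\lambda$ (each derivative of $\psi_x^\lambda$ costs a factor $\lambda^{-|\s|-|\ell|_\s}$, compensated by $(y-x)^\ell$ which contributes $2^{-n|\ell|_\s}$ on the support).

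The main obstacle I anticipate is the bookkeeping in the coarse regime $n \le n_0$: one needs to combine (i) the truncated-Taylor structure of $K_{n;yx}^{\alpha}$, which already subtracts the polynomial part of $K_n(y,\cdot)$ in the $y$ variable up to order $\alpha+\beta$, with (ii) a Taylor expansion of $\psi_x^\lambda$ around $x$, and track which moment conditions (\eref{e:propK2} versus Assumption~\ref{ass:polynom}) kill which terms, so that the leftover terms all carry a strictly positive power of $2^{-n}\!/\lambda$ and hence sum. The hypothesis $\alpha+\beta \notin \N$ is exactly what guarantees strict (rather than marginal) inequalities in the exponents appearing in Lemma~\ref{lem:boundPibar}, so no logarithmic corrections arise; I would flag that this is used essentially. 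Absolute convergence of the series is then immediate from the per-term bounds, since both the $n \le n_0$ and $n \ge n_0$ pieces are summable. Finally, the statement for the difference $\Pi_x - \bar\Pi_x$ follows by running the identical argument but invoking the difference bound \eref{e:boundKn2diff} from Lemma~\ref{lem:boundPibar} in place of \eref{e:boundKn2}; since every step above is linear in the relevant norms of the model, no new ideas are needed and I would simply remark that the proof is ``virtually identical.''
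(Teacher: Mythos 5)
Your overall architecture is right -- split at the scale $n_0$ with $2^{-n_0}\le\lambda$, use Lemma~\ref{lem:boundPibar} in one regime and the moment conditions \eref{e:propK2} in the other, sum with Lemma~\ref{lem:sumExp}, and get the difference bound by linearity -- but you have attached each of the two mechanisms to the \emph{wrong} regime, and with that assignment neither half of the argument closes. In the fine regime $2^{-n}\le\lambda$, the $y$-integration runs over the support of $\psi_x^\lambda$, which has measure of order $\lambda^{|\s|}$, \emph{not} $2^{-n|\s|}$: the ball of radius $2^{-n}$ is the support of $K_{n;yx}^\alpha$ in its argument $z$ (the variable paired with $\Pi_x a$), and places no restriction on $y$. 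Moreover, on $\supp\psi_x^\lambda$ one has $\|x-y\|_\s$ up to $\lambda\ge 2^{-n}$, so the pointwise bound of Lemma~\ref{lem:boundPibar} only gives $\sum_{\delta>0}(2^n\lambda)^\delta\lambda^{\alpha+\beta}$, which \emph{diverges} when summed over $n\ge n_0$. The factor $2^{-\beta n}$ that makes the fine scales summable comes precisely from \eref{e:propK2} and Assumption~\ref{ass:polynom}: one must exchange the $y$-integral with the pairing, form $Y_n^\lambda(z)=\int K_n(y,z)\psi_x^\lambda(y)\,dy$ as in \eref{e:defYn}, Taylor-expand $\psi_x^\lambda$ in $y$ and use the vanishing moments of $K_n$ in its first argument to obtain $\sup_z|D^\ell Y_n^\lambda(z)|\lesssim 2^{-\beta n}\lambda^{-|\s|-|\ell|_\s}$, and then test $\Pi_x a$ against $Y_n^\lambda$ (plus the finitely many correction terms $Z_{n,\ell}^\lambda(z)= D_1^\ell K_n(x,z)\int(y-x)^\ell\psi_x^\lambda(y)\,dy$, which \emph{are} supported in a $2^{-n}$-ball) via Remark~\ref{rem:altBound}. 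This yields $\lambda^\alpha 2^{-\beta n}$ plus terms $\lambda^{|\ell|_\s}2^{(|\ell|_\s-\alpha-\beta)n}$ with $|\ell|_\s<\alpha+\beta$, i.e.\ the bound \eref{e:boundKnpsi2}, which sums over $n\ge n_0$ to $\lambda^{\alpha+\beta}$.

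Conversely, in the coarse regime $2^{-n}\ge\lambda$ a Taylor expansion of $\psi_x^\lambda$ buys nothing: on $\supp\psi_x^\lambda$ one has $|(y-x)^\ell|\lesssim\lambda^{|\ell|_\s}$ (not $2^{-n|\ell|_\s}$ as you write -- the radius $2^{-n}$ is irrelevant to the $y$-variable), which exactly cancels the $\lambda^{-|\ell|_\s}$ cost of the derivative, so there is no gain to sum over $n$. The correct tool there is the pointwise bound of Lemma~\ref{lem:boundPibar}, which \emph{is} applicable since $\|x-y\|_\s\le\lambda\le 2^{-n}$ throughout the support; integrating it against $\psi_x^\lambda$ gives $\sum_{\delta>0}2^{\delta n}\lambda^{\delta+\alpha+\beta}$, and the sum over $n\le n_0$ is $\lesssim\lambda^{\alpha+\beta}$ by Lemma~\ref{lem:sumExp} precisely because now $2^n\lambda\le 1$. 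So you should swap your two mechanisms between the two regimes; once that is done, the rest of your outline (absolute convergence from the per-term bounds, the role of $\alpha+\beta\notin\N$ in keeping the exponents $\delta$ strictly positive, and the verbatim repetition for $\Pi_x-\bar\Pi_x$ using \eref{e:boundKn2diff}) matches the paper's proof.
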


\begin{remark}
The condition that $\alpha + \beta \not \in \N$ is actually known to be necessary in general.
Indeed, it is possible to construct examples of functions $f \in \CC(\R^2)$ such that
$K * f \not \in \CC^2(\R^2)$, where $K$ denotes the Green's function of the Laplacian \cite{MR1228209}. 
\end{remark}

\begin{proof}
We treat various regimes separately. For this, we obtain separately the bounds
\minilab{e:boundsKnpsi}
\begin{equs}
\bigl(\Pi_x a\bigr)(K_{n;yx}^{\alpha})   &\lesssim \|\Pi\|_{\alpha; \K_x}\bigl(1+\|\Gamma\|_{\alpha;\K_x}\bigr)\sum_{\delta > 0} \|x-y\|_\s^{\alpha + \beta + \delta} 2^{\delta n }\;,\qquad\quad \label{e:boundKnpsi1}\\
\int_{\R^d} \bigl(\Pi_x a\bigr)(K_{n;yx}^{\alpha})\, &\psi_{x}^\lambda(y)\,dy  \lesssim \|\Pi\|_{\alpha; \K_x}\sum_{\delta>0}  \lambda^{\alpha+\beta-\delta}2^{-\delta n}\;,\label{e:boundKnpsi2}
\end{equs} 
for $\|x-y\|_\s \le 1$. 
Both sums run over some finite set of strictly positive indices $\delta$.
Furthermore, \eref{e:boundKnpsi1} holds whenever $\|x-y\|_\s \le 2^{-n}$, 
while \eref{e:boundKnpsi2} holds whenever $2^{-n} \le \lambda$.
Using the expression \eref{e:defIrigor}, it is then straightforward to show 
that \eref{e:boundsKnpsi}  implies \eref{e:wantedBoundInt} by using the bound
\begin{equ}
\int_{\R^d} \|x-y\|_\s^\gamma \psi_x^\lambda(y)\,dy \lesssim \lambda^\gamma\;,
\end{equ}
and summing the resulting expressions over $n$.

The bound \eref{e:boundKnpsi1} (as well as the corresponding version for the difference between 
two different models for our regularity structure) is a particular case of Lemma~\ref{lem:boundPibar}, so
we only need to consider the second bound. 
This bound is only useful in the regime $2^{-n} \le \lambda$, so that 
we assume this  from now on.
It turns out that in this case, the bound \eref{e:boundKnpsi2} does not require the
use of the identity $\Pi_z = \Pi_x \Gamma_{xz}$, so that the corresponding bound on the difference
between two models follows by linearity.
For fixed $n$, it follows from the linearity of $\Pi_x a$ that
\begin{equ}
\int_{\R^d} \bigl(\Pi_x a\bigr)(K_{n;yx}^{\alpha}) \psi_x^\lambda(y)\,dy =
 \bigl(\Pi_x a\bigr)\Bigl(\int_{\R^d}K_{n;yx}^{\alpha}(\,\cdot\,)\,\psi_x^\lambda(y)\,dy\Bigr) \;.
\end{equ}
We decompose $K_{n;yx}^{\alpha}$ according to \eref{e:TaylorKn} and consider the first term.
It follows from the first property in Definition~\ref{def:regK} that the function
\begin{equ}[e:defYn]
Y_n^\lambda(z) = \int_{\R^d}K_n(y,z)\, \psi_x^\lambda(y)\,dy
\end{equ}
is supported in a ball of radius $2\lambda$ around $x$, and bounded by $C 2^{-\beta n} \lambda^{-|\s|}$ for some constant $C$.
In order to bound its derivatives, we use the fact that  
\begin{equ}
D^\ell Y_n^\lambda(z) = 
 \sum_{k < \ell} {\bigl(D^k \psi_x^\lambda\bigr)(x) \over k!} \int_{\R^d} D_2^\ell K_n(y,z)\,  (y-x)^k \,dy
+ \int_{\R^d} D_2^\ell K_n(y,z)\, R_x(y)\,dy\;,
\end{equ}
where the remainder $R_x(y)$ satisfies the bound $|R_x(y)| \lesssim \lambda^{-|\s|-|\ell|_\s} \|x-y\|_\s^{|\ell|_\s}$.
Making use of \eref{e:propK1} and \eref{e:propK2}, we thus obtain  the bound
\begin{equs}
\sup_{z\in \R^d}\bigl|D^\ell Y_n^\lambda(z)\bigr| &\lesssim
\sum_{k < \ell} 2^{- \beta n} \lambda^{-|\s| - |k|_\s} +  2^{-\beta n} \lambda^{-|\s|-|\ell|_\s}\\
&\lesssim 2^{-\beta n} \lambda^{-|\s|-|\ell|_\s}\;.\label{e:boundYn}
\end{equs}
Combining these bounds with Remark~\ref{rem:altBound}, we obtain the estimate
\begin{equ}
\bigl|\bigl(\Pi_x a\bigr)(Y_n^\lambda)\bigr| \lesssim \lambda^\alpha 2^{-\beta n}\;.
\end{equ}
It remains to obtain a similar bound on the remaining terms in the decomposition of  $K_{n;yx}^{\alpha}$.
This follows if we obtain a bound analogous to \eref{e:boundYn}, but for the test functions
\begin{equ}
Z_{n,\ell}^\lambda(z) = D_1^\ell K_n(x,z) \int_{\R^d} (y-x)^\ell\, \psi_x^\lambda(y)\,dy\;.
\end{equ}
These are supported in a ball of radius $2^{-n}$ around $x$ and bounded by 
a constant multiple of $2^{(|\ell|_\s + |\s| -\beta) n} \lambda^{|\ell|_\s}$. 
Regarding their derivatives, the bound \eref{e:propK1} immediately yields
\begin{equ}
\sup_{z\in \R^d}\bigl|D^k Z_{n,\ell}^\lambda(z)\bigr| \lesssim
2^{(|\ell|_\s +|k|_\s+ |\s| -\beta) n} \lambda^{|\ell|_\s} \;.
\end{equ}
Combining these bounds again with Remark~\ref{rem:altBound} yields the estimate
\begin{equ}
\bigl|\bigl(\Pi_x a\bigr)(Z_{n,\ell}^\lambda)\bigr| \lesssim 2^{(|\ell|_\s - \alpha-\beta) n} \lambda^{|\ell|_\s}\;.
\end{equ}
Since the indices $\ell$ appearing in \eref{e:TaylorKn} all
satisfy $|\ell|_\s < \alpha + \beta$, the bound \eref{e:boundKnpsi2} does indeed hold for some finite collection
 of strictly positive indices $\delta$.
\end{proof}

The following lemma is the last ingredient required for the proof of the extension theorem.
In order to state it, we make use of the shorthand notation
\begin{equ}[e:defJxy]
\CJ_{xy} \eqdef \CJ(x) \Gamma_{xy} - \Gamma_{xy} \CJ(y)\;,
\end{equ}
where, given a regularity structure $\TT$ and a model $(\Pi,\Gamma)$, the map $\CJ$
was defined in \eref{e:defJx}.

\begin{lemma}\label{lem:boundGammaxy}
Let $V\subset T$ be a sector satisfying the same assumptions
as in Theorem~\ref{theo:extension}. Then, for every $\alpha \in A$, $a \in V_\alpha$, every 
multiindex $k$ with $|k|_\s < \alpha + \beta$, and every 
pair $(x,y)$ with $\|x-y\|_\s \le 1$, 
one has the bound
\begin{equ}[e:wantedCJ]
\bigl|\bigl(\CJ_{xy} a\bigr)_k\bigr| \lesssim \|\Pi\|_{\alpha;\K_x} \bigl(1+\|\Gamma\|_{\alpha;\K_x}\bigr) \|x-y\|_\s^{\alpha +\beta-|k|_\s}\;,
\end{equ}
where $\K_x$ is as before. 
Furthermore, if we denote by $\bar \CJ_{xy}$ the function defined like \eref{e:defJxy},
but with respect to a second model $(\bar \Pi, \bar \Gamma)$,
then we obtain a bound similar to \eref{e:wantedCJ} on the difference $\CJ_{xy} a - \bar \CJ_{xy} a$,
again with the expression from the right hand side of the first line of \eref{e:boundDiffReal}
replaced by the expression appearing on the second line.
\end{lemma}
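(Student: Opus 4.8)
The plan is to compute the $X^k$-coefficient of $\CJ_{xy}a$ explicitly from the definitions and then estimate the resulting series by splitting it at the scale $2^{-n}\approx\|x-y\|_\s$. We may assume $a\in V_\alpha$ with $\alpha\notin\N$, so that $\alpha+\beta\notin\N$ by the hypothesis on $V$ and Lemma~\ref{lem:boundPibar} applies; the case $a\in\bar T$ is treated the same way and is in fact simpler, the relevant moments of the $K_n$ vanishing by Assumption~\ref{ass:polynom}. For $|k|_\s\ge\alpha+\beta$ both $\CJ(x)\Gamma_{xy}a$ and $\Gamma_{xy}\CJ(y)a$ have vanishing $X^k$-component (the latter because $\Gamma_{xy}$ can only lower the polynomial degree), so we may also assume $|k|_\s<\alpha+\beta$. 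Using \eref{e:defJxreal} for $\CJ$, the action $\Gamma_{xy}X^m=(X+(x-y))^m$ on the polynomial sector, and $\Pi_x\Gamma_{xy}=\Pi_y$, one obtains
\begin{equs}
k!\,(\CJ_{xy}a)_k &= \sum_{n\ge0}\Theta_n\;,\\
\Theta_n &= \sum_{\alpha'\in A\,:\,|k|_\s<\alpha'+\beta}\bigl(\Pi_x\CQ_{\alpha'}\Gamma_{xy}a\bigr)\bigl(D_1^k K_n(x,\cdot)\bigr)\\
&\qquad - \sum_{|k+\ell|_\s<\alpha+\beta}{(x-y)^\ell\over\ell!}\bigl(\Pi_y a\bigr)\bigl(D_1^{k+\ell}K_n(y,\cdot)\bigr)\;,
\end{equs}
the first sum running in fact over $\alpha'\le\alpha$ since $\CQ_{\alpha'}\Gamma_{xy}a=0$ otherwise. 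Throughout I would use that $\CQ_\alpha\Gamma_{xy}a=a$, that $b_{\alpha'}:=\CQ_{\alpha'}\Gamma_{xy}a$ satisfies $\|b_{\alpha'}\|\lesssim\|\Gamma\|_{\alpha;\K_x}\|x-y\|_\s^{\alpha-\alpha'}$ for $\alpha'<\alpha$ (both immediate from Definition~\ref{def:model}), and that by \eref{e:propK1} the function $D_1^jK_n(z,\cdot)$ equals $2^{(|j|_\s-\beta)n}$ times a test function supported in $B_\s(z,2^{-n})$ which is admissible in the sense of Remark~\ref{rem:altBound}, so that $|(\Pi_zc)(D_1^jK_n(z,\cdot))|\lesssim\|\Pi\|_{\alpha;\K_x}\|c\|\,2^{(|j|_\s-\beta-\zeta)n}$ for $c\in T_\zeta$.

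For the summation I would split the range of $n$ according to whether $2^{-n}\ge\|x-y\|_\s$ or not. In the first range (finitely many terms), the separated form of $\Theta_n$ loses too much, since each of its two sums contains $O(1)$ contributions that must cancel; instead I add and subtract the low-homogeneity pieces to rewrite
\begin{equ}
\Theta_n=\bigl(\Pi_y a\bigr)\bigl(K_{n,yx}^{k,\alpha}\bigr)-\sum_{\alpha'\le|k|_\s-\beta}\bigl(\Pi_x b_{\alpha'}\bigr)\bigl(D_1^k K_n(x,\cdot)\bigr)\;,
\end{equ}
where $K_{n,yx}^{k,\alpha}$ is the Taylor-remainder kernel defined before Lemma~\ref{lem:boundPibar}, with $x$ and $y$ interchanged. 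The first term is bounded, via Lemma~\ref{lem:boundPibar} applied with base point $y$, by $\|\Pi\|_{\alpha;\K_x}(1+\|\Gamma\|_{\alpha;\K_x})\sum_{\delta>0}2^{\delta n}\|x-y\|_\s^{\delta+\alpha+\beta-|k|_\s}$ (enlarging $\K_x$ by $\|x-y\|_\s\le1$ if needed, which is harmless); summing over $2^{-n}\ge\|x-y\|_\s$ and using $\sum_{2^{-n}\ge\|x-y\|_\s}2^{\delta n}\lesssim\|x-y\|_\s^{-\delta}$ gives exactly $\|x-y\|_\s^{\alpha+\beta-|k|_\s}$. For the second term, $\alpha'\le|k|_\s-\beta$ forces the exponent $|k|_\s-\beta-\alpha'$ to be nonnegative, so $\sum_{2^{-n}\ge\|x-y\|_\s}2^{(|k|_\s-\beta-\alpha')n}\lesssim\|x-y\|_\s^{\alpha'+\beta-|k|_\s}$, and multiplying by the bound on $\|b_{\alpha'}\|$ (here $\alpha'<\alpha$ necessarily) again yields $\|\Gamma\|_{\alpha;\K_x}\|x-y\|_\s^{\alpha+\beta-|k|_\s}$.

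In the second range, $2^{-n}<\|x-y\|_\s$, I keep $\Theta_n$ in its separated form. In the first sum the constraint $|k|_\s<\alpha'+\beta$ makes the exponent $|k|_\s-\beta-\alpha'$ strictly negative, so $\sum_{2^{-n}<\|x-y\|_\s}2^{(|k|_\s-\beta-\alpha')n}\lesssim\|x-y\|_\s^{\alpha'+\beta-|k|_\s}$; combined with $\|b_{\alpha'}\|\lesssim\|\Gamma\|\|x-y\|_\s^{\alpha-\alpha'}$ for $\alpha'<\alpha$ and with $b_\alpha=a$ for the top term, this produces $\|x-y\|_\s^{\alpha+\beta-|k|_\s}$. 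In the second sum, $|(\Pi_y a)(D_1^{k+\ell}K_n(y,\cdot))|\lesssim\|\Pi\|\|a\|\,2^{(|k+\ell|_\s-\alpha-\beta)n}$ with a negative exponent, so after multiplying by $\|x-y\|_\s^{|\ell|_\s}$ and summing over $2^{-n}<\|x-y\|_\s$ one gets $\|x-y\|_\s^{|\ell|_\s}\|x-y\|_\s^{\alpha+\beta-|k+\ell|_\s}=\|x-y\|_\s^{\alpha+\beta-|k|_\s}$. These estimates also show $\sum_n|\Theta_n|<\infty$, so that $\CJ_{xy}a$ is well defined, and assembling the contributions gives \eref{e:wantedCJ}. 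The bound on $\CJ_{xy}a-\bar\CJ_{xy}a$ follows by running the identical argument on the difference: every occurrence of $\Pi_y a$ tested against $K_{n,yx}^{k,\alpha}$ is estimated by the difference bound \eref{e:boundKn2diff} of Lemma~\ref{lem:boundPibar}, while every occurrence of $\Pi_x b_{\alpha'}$ or $\Pi_y a$ tested against a single $D_1^jK_n$ is split into a $(\Pi-\bar\Pi)$-part and a $(b_{\alpha'}-\bar b_{\alpha'})$-part, with $\|b_{\alpha'}-\bar b_{\alpha'}\|\lesssim\|\Gamma-\bar\Gamma\|\|x-y\|_\s^{\alpha-\alpha'}$; this reproduces the right-hand side of the second line of \eref{e:boundDiffReal}.

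The step I expect to be the main obstacle is precisely this bookkeeping. Because $\CJ$ is truncated, the low-homogeneity components of $\Gamma_{xy}a$ generate contributions to $\Theta_n$ that diverge if one tries to sum them term by term; they become summable only thanks to the Taylor-remainder cancellation encoded in $K_{n,yx}^{k,\alpha}$ at large scales and thanks to support localisation together with the change of base point $x$ at small scales. Keeping track of exactly which combination of terms survives in each regime, and lining up the thresholds ($2^{-n}$ versus $\|x-y\|_\s$, and $\alpha'$ versus $|k|_\s-\beta$) so that all the geometric sums collapse to the single exponent $\alpha+\beta-|k|_\s$, is where the care is required.
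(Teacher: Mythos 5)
Your argument follows the paper's proof essentially step for step: the same explicit formula for $k!\,(\CJ_{xy}a)_k$ as a sum of terms $\Theta_n$ (this is \eref{e:termJxy} in the paper), the same split at $2^{-n}\approx\|x-y\|_\s$, the same regrouping at large scales into $(\Pi_y a)(K_{n,yx}^{k,\alpha})$ minus the low-homogeneity contributions $\sum_{\alpha'\le|k|_\s-\beta}(\Pi_x\CQ_{\alpha'}\Gamma_{xy}a)(D_1^kK_n(x,\cdot))$, handled by Lemma~\ref{lem:boundPibar}, and the same term-by-term treatment at small scales. The difference bound is also obtained as in the paper.

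There is, however, one step that fails as written, and it is exactly the point where the hypothesis of Theorem~\ref{theo:extension} on $V$ (that $V_{\alpha'}\neq 0$ with $\alpha'\notin\N$ implies $\alpha'+\beta\notin\N$) must enter. In the large-scale regime you claim that for $\alpha'\le|k|_\s-\beta$ the exponent $|k|_\s-\beta-\alpha'$ is nonnegative and hence $\sum_{2^{-n}\ge\|x-y\|_\s}2^{(|k|_\s-\beta-\alpha')n}\lesssim\|x-y\|_\s^{\alpha'+\beta-|k|_\s}$. When the exponent vanishes, i.e.\ $\alpha'+\beta=|k|_\s$, this geometric sum is of order $\log(1/\|x-y\|_\s)$ and the claimed bound is false; a logarithmic loss here would destroy the exponent $\alpha+\beta-|k|_\s$ in \eref{e:wantedCJ}. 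The resolution is not analytic but structural: if $\alpha'+\beta=|k|_\s\in\N$ then the standing hypothesis on $V$ forces $\alpha'\in\N$, so by Assumption~\ref{ass:integer} the component $\CQ_{\alpha'}\Gamma_{xy}a$ is an abstract polynomial, and by Assumption~\ref{ass:polynom} one has $(\Pi_x\CQ_{\alpha'}\Gamma_{xy}a)(D_1^kK_n(x,\cdot))=0$, so the borderline term simply does not contribute. You invoke the vanishing-moments mechanism only for the case $a\in\bar T$, but it is needed here for the polynomial components of $\Gamma_{xy}a$ even when $a$ itself is not polynomial. With that one observation added, your proof is complete and coincides with the paper's.
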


\begin{proof}
For any multiindex $k$ with $|k|_\s < \alpha + \beta$,
we can rewrite the $k$th component of $\CJ_{xy} a$ as
\begin{equs}
\bigl(\CJ_{xy} a\bigr)_k &= {1\over k!}\sum_{n \ge 0} 
\Bigl(\sum_{ |k|_\s - \beta < \gamma \le \alpha} \bigl( \Pi_x \CQ_\gamma \Gamma_{xy} a\bigr)\bigl(D^{k}_1 K_n(x,\cdot)\bigr) \label{e:termJxy}\\
&\qquad - \sum_{|\ell|_\s < \alpha +\beta-|k|_\s} {(x-y)^\ell\over \ell!}\bigl( \Pi_y a\bigr)\bigl(D^{k+\ell}_1 K_n(y,\cdot)\bigr)\Bigr)\\
&\eqdef {1\over k!} \sum_{n\ge 0}  \CJ_{xy}^{n,k} a\;.
\end{equs}
As usual, we treat separately the cases $\|x-y\|_\s \le 2^{-n}$ and $\|x-y\|_\s \ge 2^{-n}$.
In the case $\|x-y\|_\s \le 2^{-n}$, we rewrite $\CJ_{xy}^{n,k} a$ as
\begin{equ}
\CJ_{xy}^{n,k} a = \bigl(\Pi_y a\bigr) \bigl(K_{n;xy}^{k,\alpha}\bigr)
 - \sum_{\gamma \le  |k|_\s - \beta} \bigl(\Pi_x \CQ_\gamma \Gamma_{xy} a\bigr)\bigl(D^{k}_1 K_n(x,\cdot)\bigr)\;.\label{e:decompCJxy}
\end{equ}
The first term has already been bounded in Lemma~\ref{lem:boundPibar}, yielding a bound of the
type \eref{e:wantedCJ} when summing over the relevant values of $n$. 
Regarding the second term, we make use of the fact that, for $\gamma < \alpha$ (which is 
satisfied since $|k|_\s < \alpha + \beta$),
one has the bound $\|\Gamma_{xy} a\|_\gamma \lesssim \|x-y\|_\s^{\alpha-\gamma}$. Furthermore,
for any $b \in T_\gamma$, one has 
\begin{equ}[e:boundDKn]
\bigl(\Pi_x b\bigr)\bigl(D^{k}_x K_n(x,\cdot)\bigr)
\lesssim \|b\| 2^{(|k|_\s - \beta - \gamma)n}\;.
\end{equ}
In principle, the exponent appearing in this term might vanish.
As a consequence of our assumptions, this however cannot happen. Indeed,
if $\gamma$ is such that $\gamma + \beta = |k|_\s$, then we necessarily have that 
$\gamma$ itself is an integer. By Assumptions~\ref{ass:integer}
and \ref{ass:polynom} however, we  have the identity
\begin{equ}
\bigl(\Pi_x b\bigr)\bigl(D^{k}_x K_n(x,\cdot)\bigr) = 0\;,
\end{equ}
for every $b$ with integer homogeneity.

Combining all these bounds, we thus obtain similarly to before the bound
\begin{equ}[e:boundJxynk]
\bigl|\CJ_{xy}^{n,k} a \bigr| \lesssim  \|\Pi\|_{\alpha;\K_x} \bigl(1+\|\Gamma\|_{\alpha;\K_x}\bigr) \sum_{\delta > 0} \|x-y\|_\s^{\alpha+\beta - |k|_\s +\delta} 2^{\delta n}\;,
\end{equ}
where the sum runs over a finite number of exponents.
This expression is valid for all $n\ge 0$ with $\|x-y\|_\s \le 2^{-n}$.
Furthermore, if we consider two different models $(\Pi,\Gamma)$ and $(\bar \Pi, \bar \Gamma)$,
we obtain a similar bound on the difference $\CJ_{xy}^{n,k} a - \bar \CJ_{xy}^{n,k} a$.

In the case $\|x-y\|_\s \ge 2^{-n}$, we treat the two terms in \eref{e:termJxy} separately
and, for both cases, we make use of the bound \eref{e:boundDKn}. As a consequence, we obtain
\begin{equs}
\bigl|\CJ_{xy}^{n,k} a \bigr| &\lesssim  \sum_{ |k|_\s - \beta < \gamma \le \alpha} \|x-y\|_\s^{\alpha-\gamma}2^{(|k|_\s-\beta-\gamma)n} \\
&\qquad+  \sum_{|\ell|_\s < \alpha +\beta-|k|_\s} \|x-y\|_\s^{|\ell|_\s} 2^{(|k|_\s+|\ell|_\s-\beta-\alpha)n}\;,
\end{equs}
with a proportionality constant as before.
Thanks to our assumptions, the exponent of $2^n$ appearing in each of these terms is always strictly
\textit{negative}. We thus obtain a bound like \eref{e:boundJxynk}, but where the sum now runs over a finite number
of exponents $\delta$ with $\delta < 0$. Summing both bounds over $n$, we see that
\eref{e:wantedCJ} does indeed hold for $\CJ_{xy}$.
In this case, the bound on the difference again simply holds by linearity.
\end{proof}

\subsection{Multi-level Schauder estimate}
\label{sec:Schauder}

We now have all the ingredients in place to prove the ``multi-level Schauder estimate''
announced at the beginning of this section. Our proof has a similar flavour to proofs of the classical
(elliptic or parabolic) Schauder estimates using scale-invariance, like for example \cite{MR1459795}.

\begin{proof}[of Theorem~\ref{theo:Int}]
We first note that \eref{e:defIbar} is well-defined for every $k$ with $|k|_\s < \gamma+\beta$.
Indeed, it follows from the reconstruction theorem and the assumptions on $K$ that 
\begin{equ}[e:boundRf2]
\bigl(\CR f - \Pi_x f(x)\bigr)\bigl(D_1^k K_n(x,\cdot)\bigr) \lesssim 2^{(|k|_\s -\beta-\gamma)n}\;,
\end{equ}
which is summable since the exponent appearing in this expression is strictly negative. 
Regarding $\CK_\gamma f - \bar \CK_\gamma \bar f$, we use \eref{e:boundRfDiff}, which yields
\begin{equs}
\bigl|\bigl(\CR f - \bar \CR \bar f &- \Pi_x f(x) + \bar \Pi_x \bar f(x)\bigr)\bigl(D_1^k K_n(x,\cdot)\bigr)\bigr| 
\label{e:boundRf2Diff}\\ 
&\quad \lesssim 2^{(|k|_\s -\beta-\gamma)n} \bigl(\$f;\bar f\$_{\gamma; \bar \K} + \|\Pi - \bar \Pi\|_{\gamma;\bar \K}\bigr)\;,
\end{equs}
where the proportionality constants depend on the bounds on $f$, $\bar f$, and the two
models. 
In particular, this already shows that one has the bounds
\begin{equ}
\|\CK_\gamma f\|_{\gamma+\beta;\K} \lesssim 
\$f\$_{\gamma; \bar \K} \;,\qquad 
\|\CK_\gamma f - \bar \CK_\gamma \bar f\|_{\gamma+\beta;\K} \lesssim 
\$f;\bar f\$_{\gamma; \bar \K} + \|\Pi - \bar \Pi\|_{\gamma;\bar \K} \;,
\end{equ}
so that it remains to obtain suitable bounds on differences between two points.

We also note that by the definition of $\CK_\gamma$ and the properties of $\CI$, one has for $\ell \not \in \N$ the
bound
\begin{equs}
\|\CK_\gamma f(x) - \Gamma_{xy} \CK_\gamma f(y)\|_\ell &= \|\CI \bigl(f(x) - \Gamma_{xy} f(y)\bigr)\|_\ell
\lesssim \|f(x) - \Gamma_{xy} f(y)\|_{\ell-\beta} \\
&\lesssim \|x-y\|_\s^{\gamma+\beta-\ell}\;,
\end{equs}
which is precisely the required bound. 
A similar calculation allows to bound the terms involved in the definition
of $\$\CK_\gamma f;\bar \CK_\gamma \bar f\$_{\gamma+\beta;\K}$, so that it 
remains to show a similar bound for $\ell \in \N$.

It follows from \eref{e:actionGammaI}, combined with the fact that $\CI$ does not produce any component
in $\bar T$ by assumption, that one has the identity
\begin{equs}
\bigl(\Gamma_{xy} \CK_\gamma f(y)\bigr)_k - \bigl(\CK_\gamma f(x)\bigr)_k &= 
\bigl(\Gamma_{xy} \CN_\gamma f(y)\bigr)_k - \bigl(\CN_\gamma f(x)\bigr)_k \\
&\qquad + \bigl(\CJ(x) \bigl(\Gamma_{xy} f(y) - f(x)\bigr)\bigr)_k\;,
\end{equs}
so our aim is to bound this expression. We decompose $\CJ$ as $\CJ = \sum_{n \ge 0} \CJ^{(n)}$
and $\CN_\gamma = \sum_{n \ge 0} \CN_\gamma^{(n)}$, where the $n$th term in each 
sum is obtained by replacing
$K$ by $K_n$ in the expressions for $\CJ$ and $\CN_\gamma$ respectively.
It follows from the definition of $\CN_\gamma$, as well as the action of $\Gamma$ on the
space of elementary polynomials that one has the identities
\begin{equs}
\bigl(\Gamma_{xy}  \CN_\gamma^{(n)} f(y)\bigr)_k &= {1\over k!}\sum_{|k+\ell|_\s < \gamma+\beta}{(x-y)^\ell\over \ell!} \bigl(\CR f -  \Pi_y f(y)\bigr)\bigl(D_1^{k+\ell} K_n(y,\cdot)\bigr)\;,\\
\bigl(\CJ^{(n)}(x) \Gamma_{xy} f(y)\bigr)_k &= {1\over k!} \sum_{\delta \in B_k} \bigl( \Pi_x \CQ_\delta \Gamma_{xy} f(y)\bigr)\bigl(D_1^{k} K_n(x,\cdot)\bigr)\;,\label{e:expressions}\\
\bigl(\CJ^{(n)}(x) f(x)\bigr)_k &= {1\over k!} \sum_{\delta \in B_k} \bigl(\Pi_x \CQ_\delta  f(x)\bigr)\bigl(D_1^{k} K_n(x,\cdot)\bigr)\;,
\end{equs}
where the set $B_k$ is given by
\begin{equ}
B_k = \{\delta \in A\,:\, |k|_\s - \beta < \delta < \gamma\}\;.
\end{equ}
(The upper bound $\gamma$ appearing in $B_k$ actually has no effect since, by assumption, $f$ has no component
in $T_\delta$ for $\delta \ge \gamma$.) 
As previously, we use different strategies for small scales and for large scales. 

We first bound the terms at small scales, i.e.\ when $2^{-n} \le \|x-y\|_\s$. In this case,
we bound separately the terms $\CN_\gamma^{(n)} f$, $\Gamma_{xy} \CN_\gamma^{(n)} f$, and
$\CJ^{(n)}(x) \bigl(\Gamma_{xy} f(y) - f(x)\bigr)$.
In order to bound the distance between $\CK_\gamma f$ and 
$\bar \CK_\gamma \bar f$, we also need to obtain similar bounds on
$\CN_\gamma^{(n)} f - \bar \CN_\gamma^{(n)} \bar f$, $\Gamma_{xy} \CN_\gamma^{(n)} f - \bar \Gamma_{xy} \bar \CN_\gamma^{(n)} \bar f$, as well as 
$\CJ^{(n)}(x) \bigl(\Gamma_{xy} f(y) - f(x)\bigr) - \bar \CJ^{(n)}(x) \bigl(\bar \Gamma_{xy} \bar f(y) - \bar f(x)\bigr)$. Here, we denote by $\bar \CJ$ the same function as $\CJ$, but defined from
the model $(\bar \Pi, \bar \Gamma)$. The same holds for $\bar \CN_\gamma$.

Recall from \eref{e:boundRf2} that
we have for $\CN_\gamma^{(n)} f$ the bound
\begin{equ}[e:boundNgamma]
\bigl|\bigl(\CN_\gamma^{(n)} f(x)\bigr)_k\bigr| \lesssim 2^{(|k|_\s - \beta - \gamma)n}\;,
\end{equ}
so that, since we only consider indices $k$ such that $|k|_\s - \beta - \gamma < 0$, one obtains
\begin{equ}
\sum_{n \,:\, 2^{-n} \le \|x-y\|_\s}\bigl|\bigl(\CN_\gamma^{(n)} f(x)\bigr)_k\bigr| \lesssim \|x-y\|_\s^{\beta + \gamma-|k|_\s}\;,
\end{equ}
as required. In the same way, we obtain the bound
\begin{equ}
\sum_{n \,:\, 2^{-n} \le \|x-y\|_\s}\bigl|\bigl(\CN_\gamma^{(n)} f(x) - \bar \CN_\gamma^{(n)} \bar f(x)\bigr)_k\bigr| \lesssim \|x-y\|_\s^{\beta + \gamma-|k|_\s} \bigl(\$f;\bar f\$_{\gamma;\bar \K}
+ \|\Pi - \bar \Pi\|_{\gamma;\bar \K}\bigr)\;,
\end{equ}
where we made use of \eref{e:boundRf2Diff} instead of \eref{e:boundRf2}.

Similarly, we obtain for $\bigl(\Gamma_{xy} \CN_\gamma^{(n)} f(y)\bigr)_k$ the bound
\begin{equ}
\bigl|\bigl(\Gamma_{xy} \CN_\gamma^{(n)} f(y)\bigr)_k\bigr| \lesssim \sum_{|k+\ell|_\s < \gamma+\beta} 
\|x-y\|_\s^{|\ell|_\s} 2^{(|k+\ell|_\s - \beta - \gamma)n}\;.
\end{equ}
Summing
over values of $n$ with $2^{-n} \le \|x-y\|_\s$, we can bound this term again by 
a multiple of $\|x-y\|_\s^{\beta + \gamma-|k|_\s}$. 
In virtually the same way, we obtain the bound
\begin{equs}
\bigl|\bigl(\Gamma_{xy} \CN_\gamma^{(n)} f &- \bar \Gamma_{xy} \bar \CN_\gamma^{(n)} \bar f\bigr)_k\bigr|
\\
&\quad \lesssim \|x-y\|_\s^{\beta + \gamma-|k|_\s} \bigl(\$f;\bar f\$_{\gamma;\bar \K}
+ \|\Pi - \bar \Pi\|_{\gamma;\bar \K} + \|\Gamma - \bar \Gamma\|_{\gamma+\beta;\bar \K}\bigr)\;,
\end{equs}
where rewrote the left hand side as $(\Gamma_{xy} - \bar \Gamma_{xy}) \CN_\gamma^{(n)} f + \bar \Gamma_{xy} \bigl(\bar \CN_\gamma^{(n)} \bar f - \CN_\gamma^{(n)} f\bigr)$
and then proceeded to bound both terms as above.

We now turn to the term involving $\CJ^{(n)}$. From the definition of $\CJ^{(n)}$, we then obtain the bound
\begin{equs}
\bigl|\bigl(\CJ^{(n)}(x) (\Gamma_{xy}f(y) - f(x))\bigr)_k\bigr| &= 
\sum_{\delta \in B_k} \bigl(\Pi_x \CQ_\delta \bigl( \Gamma_{xy} f(y) - f(x)\bigr)\bigr)\bigl(D_1^{k} K_n(x,\cdot)\bigr) \\
&\lesssim \sum_{\delta\in B_k} \|x-y\|_\s^{\gamma-\delta} 2^{(|k|_\s - \beta - \delta)n}\;.\label{e:boundJGf}
\end{equs}
It follows from the definition of $B_k$ that
$|k|_\s - \beta - \delta < 0$ for every term appearing in this sum. As a consequence, summing
over all $n$ such that $2^{-n} \le \|x-y\|_\s$, we obtain a bound of the order $\|x-y\|_\s^{\gamma+\beta -|k|_\s}$
as required. Regarding the corresponding term arising in $\CK_\gamma f - \bar \CK_\gamma \bar f$,
we use the identity
\begin{equs}
\Pi_x \CQ_\delta \bigl( \Gamma_{xy} f(y) &- f(x)\bigr) - 
\bar \Pi_x \CQ_\delta \bigl( \bar \Gamma_{xy} \bar f(y) - \bar f(x)\bigr) \label{e:idenDiffPiGamma}\\
&\quad = \bigl(\Pi_x -\bar \Pi_x\bigr) \CQ_\delta \bigl( \Gamma_{xy} f(y) - f(x)\bigr) \\
&\qquad + \bar \Pi_x \CQ_\delta \bigl(\bar f(x) - f(x) - \bar \Gamma_{xy} \bar f(y) + \Gamma_{xy} f(y)\bigr)\;,
\end{equs}
and we bound both terms separately in the same way as above, making use of the definition
of $\$f;\bar f\$_{\gamma;\bar \K}$ in order to control the second term.

It remains to obtain similar bounds on large scales, i.e.\ in the regime $2^{-n} \ge \|x-y\|_\s$.
We define
\begin{equs}
\CT_1^k &\eqdef -k! \bigl(\bigl(\CN_\gamma^{(n)} f\bigr)(x) + \CJ^{(n)}(x) f(x)\bigr)_k \;,\\
\CT_2^k &\eqdef k! \bigl(\bigl(\Gamma_{xy} \CN_\gamma^{(n)} f\bigr)(y) + \CJ^{(n)}(x) \Gamma_{xy}f(y)\bigr)_k\;.
\end{equs}
Inspecting the definitions of these terms, we then obtain the identities
\begin{equs}
\CT_1^k &= \Bigl(\sum_{\zeta \le |k|_\s - \beta} \Pi_x \CQ_\zeta f(x) - \CR f\Bigr)\bigl(D_1^k K_n(x,\cdot)\bigr)\;,\\
\CT_2^k &= \sum_{\zeta > |k|_\s-\beta} \bigl(\Pi_x \CQ_\zeta \Gamma_{xy} f(y)\bigr)\bigl(D_1^{k} K_n(x,\cdot)\bigr)\\
&\quad -\sum_{|k+\ell|_\s < \gamma+\beta}{(x-y)^\ell\over \ell!} \bigl(\Pi_y f(y)-\CR f\bigr)\bigl(D_1^{k+\ell} K_n(y,\cdot)\bigr)\;.
\end{equs}
Adding these two terms, we have 
\begin{equs}
\CT_2^k + \CT_1^k &= \bigl(\Pi_y f(y) - \CR f\bigr)\bigl(K_{n;xy}^{k,\gamma}\bigr) \label{e:largeScaleIden}\\
&\quad -\sum_{\zeta \le |k|_\s - \beta} \bigl(\Pi_x \CQ_\zeta \bigl(\Gamma_{xy} f(y) - f(x)\bigr)\bigr)\bigl(D_1^k K_n(x,\cdot)\bigr)\;.
\end{equs}
In order to bound the first term, we proceed similarly to the proof of the second part of Lemma~\ref{lem:boundPibar}.
The only difference is that the analogue to the left hand side of \eref{e:boundPiKn} is now given by
\begin{equs}
\bigl(\Pi_y f(y) - \CR f\bigr)\bigl(D_1^{k+\ell}K_n(\bar y,\cdot)\bigr)
&= \bigl(\Pi_{\bar y} f(\bar y) - \CR f\bigr)\bigl(D_1^{k+\ell}K_n(\bar y,\cdot)\bigr) \label{e:splitExprPiR}\\ 
&\qquad + \bigl(\Pi_{\bar y} \bigl(\Gamma_{\bar y y} f(y) - f(\bar y)\bigr)\bigr)\bigl(D_1^{k+\ell}K_n(\bar y,\cdot)\bigr)\;,
\end{equs}
where we set $\bar y = y+h$. Regarding the first term in this expression, recall from
\eref{e:boundRf2} that
\begin{equ}
\bigl|\bigl(\Pi_{\bar y} f(\bar y) - \CR f\bigr)\bigl(D_1^{k+\ell}K_n(\bar y,\cdot)\bigr)\bigr|
\lesssim 2^{(|k+\ell|_\s-\beta-\gamma)n}\;.
\end{equ}
Since $\beta +\gamma \not \in \N$ by assumption, the exponent appearing in this
expression is always strictly positive, thus yielding the required bound.
The corresponding bound on $\CK_\gamma f - \bar \CK_\gamma \bar f$ is obtained
in the same way, but making use of \eref{e:boundRf2Diff} instead of \eref{e:boundRf2}.

To bound the second term 
in \eref{e:splitExprPiR}, we use the fact that $f \in \CD^\gamma$ which yields
\begin{equ}
\bigl|\bigl(\Pi_{\bar y} \bigl(\Gamma_{\bar y y} f(y) - f(\bar y)\bigr)\bigr)\bigl(D_1^{k+\ell}K_n(\bar y,\cdot)\bigr)\bigr|
\lesssim \sum_{\zeta \le \gamma} \|x-y\|_\s^{\gamma-\zeta} 2^{(|k+\ell|_\s-\zeta-\beta)n}\;.
\end{equ}
We thus obtain a bound analogous to \eref{e:boundPiKn}, with $\alpha$ replaced by $\gamma$.
Proceeding analogously to \eref{e:idenDiffPiGamma}, we obtain a similar bound
(but with a prefactor $\|\Gamma - \bar \Gamma\|_{\gamma+\beta;\bar \K} + \$f;\bar f\$_{\gamma;\bar \K}$)
for the corresponding term appearing in the difference between $\CK_\gamma f$ and $\bar \CK_\gamma\bar f$.
Proceeding as in the remainder of the proof of Lemma~\ref{lem:boundPibar}, we then obtain the bound
\begin{equ}[e:boundRfPi]
\bigl|\bigl(\Pi_y f(y) - \CR f\bigr)\bigl(K_{n;xy}^{k,\gamma}\bigr) \bigr| \lesssim \sum_{\delta > 0} 2^{\delta n} \|x-y\|_\s^{\delta + \gamma+\beta - |k|_\s}\;,
\end{equ}
where the sum runs only over finitely many values of $\delta$.
The corresponding bound for the difference is obtained in the same way.

Regarding the second term in \eref{e:largeScaleIden}, we obtain the bound
\begin{equ}
\bigl|\bigl(\Pi_x \CQ_\zeta \bigl(\Gamma_{xy} f(y) - f(x)\bigr)\bigr)\bigl(D_1^k K_n(x,\cdot)\bigr)\bigr|
\lesssim \|x-y\|_\s^{\gamma-\zeta} 2^{(|k|_\s-\beta-\zeta)n}\;.
\end{equ}
At this stage, one might again have summability 
problems if $\zeta = |k|_\s - \beta$. 
However, just as in the proof of Lemma~\ref{lem:boundGammaxy}, our 
assumptions guarantee that such
terms do not contribute.
Summing both of these bounds over the relevant values of $n$, the requested bound follows at once.
Again, the corresponding term involved in the difference can be bounded in the same way,
by making use of the decomposition \eref{e:idenDiffPiGamma}.

It remains to show that the identity \eref{e:wantedIden} holds. Actually, by the uniqueness part of the reconstruction
theorem, it suffices to show that, for any suitable test function $\psi$ and any $x \in D$, one has
\begin{equ}
\bigl(\Pi_x \CK f(x) - K * \CR f\bigr)(\CS_{\s,x}^\lambda \psi) \lesssim \lambda^\delta\;,
\end{equ}
for some strictly positive exponent $\delta$. Writing $\psi_x^\lambda = \CS_{\s,x}^\lambda \psi$ as a shorthand, we obtain the identity
\begin{equs}
\bigl(\Pi_x \CK &f(x) - K * \CR f\bigr)(\psi_x^\lambda) \\
&= \sum_{n\ge 0} \int \Bigg(\sum_{\zeta\in A} \bigl( \Pi_x \CQ_\zeta f(x)\bigr)
\Bigl(K_n(y,\cdot) - \sum_{|\ell|_\s < \zeta + \beta} {(y-x)^\ell\over \ell!} D_1^\ell K_n(x,\cdot)\Bigr) \\
&\quad +  \sum_{\zeta\in A}\sum_{|\ell|_\s < \zeta + \beta} {(y-x)^\ell\over \ell!} \bigl( \Pi_x \CQ_\zeta f(x)\bigr)
\bigl(D_1^\ell K_n(x,\cdot)\bigr) \\
&\quad+ \sum_{|k|_\s < \gamma+\beta} {(y-x)^k\over k!} \bigl(\CR f -  \Pi_x f(x)\bigr)\bigl(D_1^k K_n(x,\cdot)\bigr) \\
&\quad - \bigl(\CR f\bigr)\bigl(K_n(y,\cdot)\bigr)\Bigg)\,\psi_x^\lambda(y)\,dy\\
&= \sum_{n \ge 0} \int \bigl( \Pi_x f(x)-\CR f \bigr)(K_{n;yx}^\gamma) \psi_x^\lambda(y)\,dy\;.
\end{equs}
It thus remains to obtain
a suitable bound on $\bigl(\Pi_x f(x)-\CR f \bigr)(K_{n;yx}^\gamma)$. As is by now usual, we
treat separately the cases $2^{-n} \lessgtr \lambda$.

In the case $2^{-n} \ge \lambda$, we already obtained the bound \eref{e:boundRfPi} (with $k=0$),
which yields a bound of the order of $\lambda^{\gamma+\beta}$ when summed over $n$ and integrated
against $\psi_x^\lambda$. In the case $2^{-n} \le \lambda$, we rewrite $K_{n;yx}^\gamma$
as 
\begin{equ}[e:decompKn]
K_{n;yx}^\gamma = K_n(y,\cdot) - \sum_{|\ell|_\s < \gamma+\beta} {(y-x)^\ell\over \ell!} D^\ell_x K_n(x,\cdot)\;, 
\end{equ}
and we bound the resulting terms separately. To bound the terms involving derivatives of $K_n$,
we note that, as a consequence of the reconstruction theorem, we have the bound
\begin{equ}
\bigl|\bigl(\Pi_y f(y)-\CR f \bigr) \bigl(D^\ell_x K_n(x,\cdot)\bigr)\bigr| \lesssim 2^{(|\ell|_\s-\beta-\gamma)n}\;.
\end{equ}
Since this exponent is always strictly negative (because $\gamma+\beta \not\in \N$ by assumption),
this term is summable for large $n$. After summation and integration against $\psi_x^\lambda$, we 
indeed obtain a bound of the order of $\lambda^{\gamma+\beta}$ as required.

To bound the expression arising from the first term in \eref{e:decompKn}, we rewrite it as
\begin{equ}
\int \bigl(\Pi_x f(x)-\CR f \bigr) \bigl(K_n(y,\cdot)\bigr) \,\psi_x^\lambda(y)\,dy =
\bigl(\Pi_x f(x)-\CR f \bigr) \bigl(Y_n^\lambda\bigr)\;,
\end{equ}
where $Y_n^\lambda$ is as in \eref{e:defYn}. It then follows from \eref{e:boundYn}, combined with
the reconstruction theorem, that
\begin{equs}
\bigl|\bigl(\Pi_x f(x)-\CR f \bigr) \bigl(Y_n^\lambda\bigr)\bigr| \lesssim 2^{-\beta n} \lambda^{\gamma}\;.
\end{equs}
Summing over all $n$ with $2^{-n} \le \lambda$, we obtain again a bound of the order $\lambda^{\gamma+\beta}$,
which concludes the proof. 
\end{proof}

\begin{remark}
Alternatively, it is also possible to prove the multi-level Schauder estimate as a
consequence of the extension and the reconstruction theorems. 
The argument goes as follows: first, we add to $T$ one additional ``abstract''
element $b$ which we decree to be of homogeneity $\gamma$. We then extend the
representation $(\Pi,\Gamma)$ to $b$ by setting
\begin{equ}
\Pi_x b \eqdef \CR f - \Pi_x f(x)\;,\qquad \Gamma_{xy} b - b \eqdef f(x) - \Gamma_{xy} f(y) \;.
\end{equ}
(Of course the group $G$ has to be suitable extended to ensure the second identity.)
It is an easy exercise to verify that this satisfies the required algebraic
identities. Furthermore, the required analytical bounds on $\Pi$ are satisfied
as a consequence of the reconstruction theorem, while the bounds on $\Gamma$
are satisfied by the definition of $\CD^\gamma$.

Setting $\hat f(x) = f(x) + b$, it then follows immediately from the definitions
that $\Pi_x \hat f(x) = \CR f$ for every $x$. One can then apply the extension
theorem to construct an element $\CI b$ such that \eref{e:defIa} holds. 
In particular, this shows that the function $\hat F$ given by
\begin{equ}
\hat F(x) = \CI \hat f(x) + \CJ(x) \hat f(x)\;,
\end{equ}
satisfies $\Pi_x \hat F(x) = K * \CR f$ for every $x$.
Noting that $\Gamma_{xy} \hat F(x) = \hat F(y)$, it is then 
possible to show that on the one hand the map $x \mapsto \hat F(x) - \CI b$
belongs to $\CD^{\gamma+\beta}$, and that on the other hand one has
 $\hat F(x) - \CI b = \bigl(\CK_\gamma f\bigr)(x)$, so the claim follows.
 
The reason for providing the longer proof is twofold. First, it is more direct
and therefore gives a ``reality check''
of the rather abstract construction performed in the extension theorem.
Second, the direct proof extends to the case of singular modelled distributions
considered in Section~\ref{sec:singular} below, while the short argument given
above does not.  
\end{remark}

\subsection{The symmetric case}

If we are in the situation of some symmetry group $\SS$ acting on $\TT$ as in 
Section~\ref{sec:symmetric}, then it is natural to impose that $K$ is also symmetric in the sense
that $K(T_g x, T_g y) = K(x,y)$, and that the abstract integration map $\CI$ commutes with
the action of $\SS$ in the sense that $M_g \CI = \CI M_g$ for every $g \in \SS$.

One then has the following result:

\begin{proposition}\label{prop:intSym}
In the setting of Theorem~\ref{theo:Int}, assume furthermore that a discrete symmetry group
$\SS$ acts on $\R^d$ and on $\TT$, that $K$ is symmetric under this action, that 
$(\Pi,\Gamma)$ is adapted to it, and that $\CI$ commutes with it. 
Then, if $f \in \CD^\gamma$ is symmetric, so is $\CK_\gamma f$.
\end{proposition}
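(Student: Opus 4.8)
The plan is to show directly from the definition of symmetry (Definition~\ref{def:symmetric}) that $M_g\bigl(\CK_\gamma f\bigr)(T_g x) = \bigl(\CK_\gamma f\bigr)(x)$ for every $g\in\SS$ and every $x\in\R^d$, using the decomposition $\CK_\gamma f = \CI f + \CJ(\cdot)f + \CN_\gamma f$ from \eref{e:defI} and treating the three summands separately. First I would handle the term $\CI f(x)$: since $\CI$ commutes with the action, $M_g \CI f(T_g x) = \CI M_g f(T_g x) = \CI f(x)$, using symmetry of $f$. Second, for the term $\CN_\gamma f$, I would use the explicit formula \eref{e:defIbar}, the identity $\CR f(T_g^\star\psi) = \CR f(\psi)$ from Proposition~\ref{prop:symmetric} (valid since $f\in\CD^\gamma$ is symmetric and $(\Pi,\Gamma)$ is adapted), the symmetry $K(T_g x, T_g y) = K(x,y)$ (which implies a corresponding transformation rule for $D_1^k K(x,\cdot)$ under the orthogonal part $A_g$ of $T_g$), together with the fact that $\bigl(\Pi_x f(x)\bigr)(T_g^\star\psi) = \bigl(\Pi_{T_g^{-1}x} M_g f(T_g^{-1}x)\bigr)(\psi) = \bigl(\Pi_{T_g^{-1}x} f(x)\bigr)(\psi)$ by Definition~\ref{def:symmetric} and symmetry of $f$; after a change of variables in the integral this gives $M_g\bigl(\CN_\gamma f\bigr)(T_g x) = \bigl(\CN_\gamma f\bigr)(x)$, where the action of $M_g$ on the polynomial part $X^k$ is by $A_g$ as recorded in Remark~\ref{rem:symcanon}.

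The term $\CJ(x) f(x)$ is handled in exactly the same way as $\CN_\gamma f$, using the defining formula \eref{e:defJx}: the only objects entering it are $D_1^k K(x,\cdot)$ and $\Pi_x f(x)$ tested against them, so the same combination of the symmetry of $K$, the adaptedness relation for $\Pi$, and the symmetry of $f$ yields $M_g\bigl(\CJ(T_g x) f(T_g x)\bigr) = \CJ(x) f(x)$ after the change of variables $z\mapsto T_g^{-1}z$. Summing the three identities gives $M_g\bigl(\CK_\gamma f\bigr)(T_g x) = \bigl(\CK_\gamma f\bigr)(x)$; we already know $\CK_\gamma f \in \CD^{\gamma+\beta}$ by Theorem~\ref{theo:Int}, so this is precisely the assertion that $\CK_\gamma f$ is symmetric.

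The only mildly delicate point — and the step I expect to require the most care — is keeping track of how the polynomial-valued objects $\CJ(x)f(x)$ and $\CN_\gamma f(x)$, which live in $\bar T$ and are built out of monomials $X^k$ with the explicit prefactors $\tfrac{1}{k!}$, transform under $M_g$: one must check that the orthogonal change of variables in the defining integrals produces exactly the action $M_g X^k = (A_g X)^k$ on the monomials and not some other linear combination. This is a bookkeeping matter that follows from the chain rule $D_1^k\bigl(K\circ(T_g\times T_g)\bigr)(x,z) = $ (a linear combination determined by $A_g$ of) $(D_1^\ell K)(T_g x, T_g z)$ together with the corresponding transformation of the factors $(y-x)^\ell$ appearing in \eref{e:defIbar}; since $A_g$ is orthogonal and $T_g$ is an affine isometry, everything matches up. Once this compatibility is verified, the rest is a routine change of variables, and the proof is complete.
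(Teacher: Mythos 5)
Your plan coincides with the paper's proof: the paper also splits $\CK_\gamma f$ into the three terms of \eref{e:defI}, disposes of $\CI f$ by the commutation assumption, and handles $\CJ(\cdot)f$ (and, ``similarly'', $\CN_\gamma f$ via the symmetry of $\CR f$ from Proposition~\ref{prop:symmetric}) by exactly the change of variables $z\mapsto T_g z$ combined with the symmetry of $K$, the adaptedness of $(\Pi,\Gamma)$, the orthogonality of $A_g$, and the action $M_g X^k=(A_g X)^k$ recorded in Remark~\ref{rem:symcanon}. The only blemish is a harmless index slip in your adaptedness identity, which should read $\bigl(\Pi_x f(x)\bigr)(T_g^\star\psi)=\bigl(\Pi_{T_g^{-1}x}M_g f(x)\bigr)(\psi)=\bigl(\Pi_{T_g^{-1}x}f(T_g^{-1}x)\bigr)(\psi)$; the paper sidesteps such inverses by verifying the symmetry identity at the point $T_g x$ rather than at $x$.
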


\begin{proof}
For $g \in \SS$, we write again its action on $\R^d$ as
$T_g x = A_g x + b_g$.
We want to verify that $M_g \bigl(\CK_\gamma f\bigr)(T_g x) = \bigl(\CK_\gamma f\bigr)(x)$.
Actually, this identity holds true separately for the three terms that make up
$\CK_\gamma f$ in \eref{e:defI}. 

For the first term, this holds by our assumption on $\CI$. To treat the second term, recall 
Remark~\ref{rem:polynomSym}. With the notation used there, we have the identity
\begin{equs}
M_g \CJ(T_g x) a &= \sum_{|k|_\s \le \alpha} {(A_g X)^k \over k!} \int D_1^k K(T_g x, z)\,\bigl(\Pi_{T_g x} a\bigr)(dz)\\
&= \sum_{|k|_\s \le \alpha} {(A_g X)^k \over k!} \int D_1^k K(T_g x, T_g z)\,\bigl(\Pi_{x} M_g a\bigr)(dz)\\
&= \sum_{|k|_\s \le \alpha} {X^k \over k!} \int D_1^k K(x, z)\,\bigl(\Pi_{x} M_g a\bigr)(dz)
= \CJ(x) M_g a\;,
\end{equs}
as required. Here, we made use of the symmetry of $K$, combined with the
fact that $A_g$ is an orthogonal matrix, to go from the second line to the third.
The last term is treated similarly by exploiting the symmetry of $\CR f$ given
by Proposition~\ref{prop:symmetric}.
\end{proof}

Finally, one has

\begin{lemma}\label{lem:decompSym}
In the setting of Lemma~\ref{lem:diffSing}, if $\bar K$ is symmetric, then it is possible
to choose the decomposition $\bar K = K+R$ in such a way that both $K$ and $R$ are symmetric.
\end{lemma}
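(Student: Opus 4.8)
The plan is to rerun the construction in the proof of Lemma~\ref{lem:diffSing}, making each auxiliary object invariant under the symmetries. Write the symmetries as $T_g x = A_g x + b_g$ with $A_g$ orthogonal. By Remark~\ref{rem:symcanon} the action of $\SS$ on the polynomial part of $\TT$ is $X\mapsto A_g X$, and since $M_g$ is block-diagonal (it lies in $L^0$), $A_g$ preserves each homogeneous subspace of the scaling grading on $\R^d$; in particular $A_g\CS_\s^\delta = \CS_\s^\delta A_g$ for every $\delta>0$, and $A_g$ maps polynomials of scaled degree $m$ to polynomials of scaled degree $m$. As in the proof of Lemma~\ref{lem:diffSing} we regard the kernels as functions of the difference variable, $\bar K(x,y)=\bar K(x-y)$, etc.; then symmetry of $\bar K$ is equivalent to $\bar K\circ A_g=\bar K$ on $\R^d\setminus\{0\}$, and it suffices to produce a decomposition $\bar K = K + R$ with $K\circ A_g = K$ and $R\circ A_g = R$ for all $g$.

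First I would choose the smooth norm $N$ of that proof to be $\SS$-invariant: starting from any smooth, convex, strictly positive, $\s$-homogeneous $N_0$ (as in Remark~\ref{rem:normSmooth}), set $N = \frac{c}{|\SS|}\sum_{g\in\SS} N_0\circ A_g$, with $c$ large enough that $N\ge 2\|\cdot\|_\s$ (possible since any two such ``norms'' are comparable). Averaging preserves smoothness away from the origin, convexity, strict positivity and $\s$-homogeneity (the last using $A_g\CS_\s^\delta=\CS_\s^\delta A_g$), and now $N\circ A_g = N$. Hence the radial variable $r=N$ is $\SS$-invariant, and so are the functions $\bar K_n(x)=\phi_n(r(x))\bar K(x)$ and $\bar R(x)=\phi_R(r(x))\bar K(x)$.

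It then remains to choose the correction $\psi$ — smooth, supported in the unit ball, with $(1-2^{-\beta-|k|_\s})\int x^k\psi\,dx = \int x^k\bar K_0\,dx$ for $|k|_\s\le r$ — to be $\SS$-invariant. The linear functional $P\mapsto \int P(x)\bar K_0(x)\,dx$ on the finite-dimensional space of polynomials of scaled degree $\le r$ is $\SS$-invariant, since $A_g$ preserves scaled degree, $|\det A_g|=1$ and $\bar K_0\circ A_g=\bar K_0$; as the scalars $1-2^{-\beta-|k|_\s}$ depend only on $|k|_\s$, the target moment functional is $\SS$-invariant too. The moment map from smooth functions supported in the unit ball to the dual of this polynomial space is surjective and $\SS$-equivariant, so taking any preimage $\psi_0$ of the target and averaging over $\SS$ gives an $\SS$-invariant $\psi$ with the required moments, still supported in the unit ball because the $A_g$ are orthogonal. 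With this $\psi$, the formula $K_0(x)=\bar K_0(x)-\psi(x)+2^{|\s|-\beta}\psi(\CS_\s^{2}x)$ gives an $\SS$-invariant $K_0$; since $K_n(x)=2^{-(\beta-|\s|)n}K_0(\CS_\s^{2^n}x)$ and $\CS_\s^{2^n}$ commutes with $A_g$, every $K_n$ — hence $K=\sum_{n\ge0}K_n$ — is $\SS$-invariant, and so is $R=\bar R+\psi$. Undoing the reduction to the difference variable gives $K(T_gx,T_gy)=K(x,y)$ and $R(T_gx,T_gy)=R(x,y)$. (The logarithmic variant in the remark following Lemma~\ref{lem:diffSing} is handled the same way, the radial integral there being $\SS$-invariant once $N$ is.)

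The only step requiring real attention is the $\SS$-invariant solution of the moment conditions; this is entirely taken care of by the equivariance bookkeeping above — the relevant moment functionals are $\SS$-invariant and the moment map on test functions is $\SS$-equivariant, so averaging an arbitrary preimage of an invariant target yields an invariant one. Everything else is a routine verification that $\SS$-invariance propagates through each step of the construction of Lemma~\ref{lem:diffSing}, which works precisely because the scaling maps commute with the $A_g$.
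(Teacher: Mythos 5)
Your argument is correct, but it takes a genuinely different route from the paper. The paper's proof is a one-liner: take \emph{any} decomposition $\bar K = K_0 + R_0$ produced by Lemma~\ref{lem:diffSing} and average both pieces over the crystallographic point group $\GG$, i.e.\ set $K(x) = |\GG|^{-1}\sum_{A\in\GG}K_0(Ax)$ and likewise for $R$. Since $\bar K$ is itself $\GG$-invariant, the two averages still sum to $\bar K$; and Assumptions~\ref{def:regK} and \ref{ass:polynom} (as well as smoothness of $R_0$) are stable under composition with elements of $\GG$ and under convex combinations, because each $A\in\GG$ is orthogonal and compatible with the scaling, so $\|Ax\|_\s \sim \|x\|_\s$ and $A$ preserves scaled polynomial degree. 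You instead push the invariance \emph{through} the construction of Lemma~\ref{lem:diffSing}: you symmetrise the smooth norm $N$, then symmetrise the moment-correcting function $\psi$ via the equivariance of the moment map, and observe that invariance propagates to $K_0$, to each $K_n$, and hence to $K$ and $R$. Both arguments rest on the same two facts — $A_g$ commutes with $\CS_\s^\delta$ and preserves the scaled grading — and both are complete. The paper's version is shorter because it only has to check that the \emph{class} of admissible decompositions is closed under averaging, whereas yours re-verifies invariance at every stage; on the other hand, your approach makes explicit which choices in the construction can be made canonically symmetric, and it handles the logarithmic variant without further comment. Neither approach has a gap.
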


\begin{proof}
Denote by $\GG$ the crystallographic point group associated to $\SS$. Then, given any decomposition
$\bar K = K_0 + R_0$ given by Lemma~\ref{lem:diffSing}, it suffices to set
\begin{equ}
K(x) = {1\over |\GG|}\sum_{A \in \GG} K(Ax)\;,\qquad
R(x) = {1\over |\GG|}\sum_{A \in \GG} R(Ax)\;.
\end{equ}
The required properties then follow at once.
\end{proof}

\subsection{Differentiation}
\label{sec:diff}

Being a local operation, differentiating a modelled distribution is straightforward, 
provided again that the model one works with
is sufficiently rich. Denote by $D_i$ the (usual) derivative of a distribution on $\R^d$ with respect to
the $i$th coordinate. We then have the following natural definition:

\begin{definition}\label{def:Di}
Given a sector $V$ of a regularity structure $\TT$,
a family of operators $\DD_i \colon V \to T$ is an abstract gradient for $\R^d$ with scaling $\s$
if
\begin{claim}
\item one has $\DD_i a \in T_{\alpha-\s_i}$ for every $a \in V_\alpha$,
\item one has $\Gamma \DD_i a = \DD_i \Gamma a$ for every $a \in V$ and every $i$.
\end{claim}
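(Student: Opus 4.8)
The content to establish is that abstract gradients in the sense of Definition~\ref{def:Di} actually exist, the canonical instance being the following: for the polynomial regularity structure $\TT_{d,\s}$ (equipped with its standard model $(\Pi,\Gamma)$, with $G\approx\R^d$ acting by $(\Gamma_h P)(X) = P(X+h)$), the linear operators $\DD_i\colon T\to T$ uniquely determined on basis vectors by $\DD_i X^k = k_i X^{k-e_i}$, with the convention $X^{k-e_i}=0$ whenever $k_i=0$, satisfy properties~(1) and~(2) of Definition~\ref{def:Di}. The plan is simply to verify the two conditions on monomials and then extend by linearity, since both requirements are linear in $a$; the same argument then applies verbatim to any regularity structure containing $\TT_{d,\s}$ as in Assumption~\ref{ass:integer}.

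For property~(1), recall that $X^k$ is homogeneous of scaled degree $|k|_\s = \sum_j \s_j k_j$, hence lies in $T_{|k|_\s}$. If $k_i = 0$ then $\DD_i X^k = 0$, which belongs to every homogeneous subspace, in particular to $T_{|k|_\s - \s_i}$ (with the convention $T_\beta = \{0\}$ for $\beta<\min A$). If $k_i \ge 1$ then $|k-e_i|_\s = |k|_\s - \s_i$, and moreover $\s_i k_i \le |k|_\s$ forces $|k|_\s - \s_i \ge 0 \in \N = A$, so that $\DD_i X^k = k_i X^{k-e_i} \in T_{|k|_\s - \s_i}$. By linearity this gives $\DD_i \colon T_\alpha \to T_{\alpha-\s_i}$ for every $\alpha\in A$, which is exactly condition~(1).

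For property~(2), I would use that $\DD_i$ is nothing but the formal partial derivative $\d_{X_i}$ acting on abstract polynomials (as already observed in Section~\ref{sec:canonical}), and that $\Gamma_h$ is the shift $P(X)\mapsto P(X+h)$. Since the substitution $X\mapsto X+h$ has $\d_{X_i}(X_i+h_i) = 1$, the chain rule yields $\d_{X_i}\bigl(P(X+h)\bigr) = (\d_{X_i}P)(X+h)$; on a monomial this is the identity $\DD_i\bigl((X+h)^k\bigr) = k_i (X+h)^{k-e_i}$, obtained by applying the Leibniz rule to $(X+h)^k = \prod_j (X_j+h_j)^{k_j}$. Comparing with $\Gamma_h \DD_i X^k = \Gamma_h\bigl(k_i X^{k-e_i}\bigr) = k_i (X+h)^{k-e_i}$ gives $\Gamma_h \DD_i X^k = \DD_i \Gamma_h X^k$ for all $h$ and all $k$, hence $\Gamma \DD_i = \DD_i \Gamma$ on all of $T$ after extending by linearity. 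This is condition~(2), and therefore $(\DD_i)_{i=1}^d$ is an abstract gradient.

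There is essentially no hard step here: the only mild bookkeeping point is the convention $X^{k-e_i}=0$ when $k_i=0$, which is precisely what makes condition~(1) hold in the borderline range $|k|_\s < \s_i$, and the observation that condition~(2) is just the commutation of differentiation with translation. When $\TT$ strictly contains $\TT_{d,\s}$, one extends $\DD_i$ to the remaining sectors in a way dictated by the problem at hand (for instance so that it interacts correctly with the abstract integration map $\CI$), but on the polynomial part $\bar T$ the formulas and the verification above are unchanged, so the two defining properties continue to hold there.
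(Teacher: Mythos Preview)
The two bullet points you are asked about are the defining conditions in Definition~\ref{def:Di} of an abstract gradient; they are axioms being imposed, not assertions the paper proves. There is consequently no ``paper's own proof'' to compare against. You recognised this and instead verified that the formal partial derivatives $\DD_i X^k = k_i X^{k-e_i}$ on the polynomial regularity structure $\TT_{d,\s}$ satisfy the two conditions --- essentially fleshing out the later remark in the paper that $\TT_{d,\s}$ comes equipped with a natural gradient, which the paper states without proof.

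Your verification is correct: property~(1) is an immediate homogeneity count, and property~(2) is the commutation of formal differentiation with translation, which you handled cleanly via Leibniz on the factorisation $(X+h)^k = \prod_j (X_j+h_j)^{k_j}$. One cosmetic point: the convention that $T_\beta = \{0\}$ for $\beta \notin A$ is already built into the paper's framework, so you need not invoke it separately for the case $k_i = 0$; the output $0$ lies in every $T_\beta$ regardless.
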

\end{definition}

Regarding the realisation of the actual derivations $D_i$, we use the following definition:

\begin{definition}\label{def:compatDiff}
Given an abstract gradient $\DD$ as above, a model $(\Pi,\Gamma)$ on $\R^d$ with scaling $\s$ 
is compatible with $\DD$ if the identity
\begin{equ}
D_i \Pi_x a = \Pi_x \DD_i a\;,
\end{equ}
holds for every $a \in V$ and every $x \in \R^d$.
\end{definition}

\begin{remark}
Note that we do not make any assumption on the interplay between the abstract gradient 
$\DD$ and the product $\star$. In particular, unless one happens to have the identity
$\DD_i(a\star b) = a\star \DD_i b + \DD_i a \star b$,
there is absolutely no \textit{a priori} reason forcing the Leibniz rule to hold.
This is not surprising since our framework can accommodate It\^o integration, where the
chain rule (and thus the Leibniz rule) fails.
See \cite{DavidK} for a more thorough investigation of this fact.
\end{remark}

\begin{proposition}
Let $\DD$ be an abstract gradient as above and let $f \in \CD_\alpha^\beta(V)$ for some $\beta > \s_i$
and some model $(\Pi,\Gamma)$ compatible with $\DD$. 
Then, $\DD_i f \in \CD_{\alpha-\s_i}^{\beta-\s_i}$ and the identity $\CR \DD_i f = D_i \CR f$ holds. 
\end{proposition}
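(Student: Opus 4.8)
The plan is to verify that $\DD_i f$ lands in the right space $\CD^{\beta-\s_i}_{\alpha-\s_i}$ and then to identify its reconstruction. First I would check that $\DD_i f$ takes values in $T_{\alpha-\s_i}^-$: this is immediate from the first property in Definition~\ref{def:Di}, since $f(x) \in T_\beta^-$ means each component sits in some $T_\gamma$ with $\gamma < \beta$, hence $\DD_i$ sends it to $T_{\gamma - \s_i}$ with $\gamma - \s_i < \beta - \s_i$, and the lowest homogeneity is shifted from $\alpha$ to $\alpha - \s_i$. Next I would verify the two bounds in \eref{e:boundDgamma} for $\DD_i f$ at level $\beta - \s_i$. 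Since $\DD_i$ is a fixed linear operator on the (finite-dimensional) homogeneous components, boundedness of $\DD_i$ on each $T_\gamma$ gives $\|\DD_i f(x)\|_{\gamma - \s_i} \lesssim \|f(x)\|_\gamma$, handling the first supremum. For the second, I would use the commutation $\Gamma_{xy}\DD_i = \DD_i \Gamma_{xy}$ from Definition~\ref{def:Di} to write
\begin{equ}
\DD_i f(x) - \Gamma_{xy}\DD_i f(y) = \DD_i\bigl(f(x) - \Gamma_{xy} f(y)\bigr)\;,
\end{equ}
so that $\|\DD_i f(x) - \Gamma_{xy}\DD_i f(y)\|_{\gamma - \s_i} \lesssim \|f(x) - \Gamma_{xy} f(y)\|_\gamma \lesssim \|x-y\|_\s^{\beta - \gamma} = \|x-y\|_\s^{(\beta - \s_i) - (\gamma - \s_i)}$, which is exactly the required exponent. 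This shows $\DD_i f \in \CD^{\beta-\s_i}_{\alpha-\s_i}$ with $\$\DD_i f\$_{\beta-\s_i;\K} \lesssim \$f\$_{\beta;\K}$.

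The second part is to prove $\CR\DD_i f = D_i \CR f$. Since $\beta > \s_i > 0$, both $f$ and $\DD_i f$ have strictly positive $\gamma$-index (namely $\beta$ and $\beta - \s_i$), so the reconstruction operator is \emph{unique} for both, and I only need to check that $D_i \CR f$ satisfies the defining bound \eref{e:boundRf} for $\DD_i f$. The natural approach is to test against $\CS_{\s,x}^\delta \eta$ and integrate by parts: $D_i$ acting on a distribution is the adjoint of $-D_i$ on test functions, and $D_i(\CS_{\s,x}^\delta \eta) = \delta^{-\s_i}\,\CS_{\s,x}^\delta(D_i \eta)$ up to the scaling bookkeeping. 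Concretely, I would write
\begin{equ}
\bigl(D_i \CR f - \Pi_x \DD_i f(x)\bigr)(\CS_{\s,x}^\delta \eta) = -\bigl(\CR f - \Pi_x f(x)\bigr)\bigl(D_i \CS_{\s,x}^\delta \eta\bigr)\;,
\end{equ}
using the compatibility $D_i \Pi_x = \Pi_x \DD_i$ from Definition~\ref{def:compatDiff} to replace $D_i \Pi_x f(x)$ by $\Pi_x \DD_i f(x)$. Now $D_i \CS_{\s,x}^\delta \eta = \delta^{-\s_i}\CS_{\s,x}^\delta\tilde\eta$ where $\tilde\eta = D_i\eta$ is again a test function supported in $B_\s(0,1)$ with $\CC^r$-norm bounded (one needs $r$ large enough, which is guaranteed since we may assume the wavelet/regularity parameter $r$ exceeds $|\min A| + \max_j \s_j$ or simply absorb the extra derivative into the assumptions on the model). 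Applying \eref{e:boundRf} for $f$ at scale $\delta$ then gives
\begin{equ}
\bigl|\bigl(D_i \CR f - \Pi_x \DD_i f(x)\bigr)(\CS_{\s,x}^\delta \eta)\bigr| \lesssim \delta^{-\s_i}\,\delta^{\beta} = \delta^{\beta - \s_i}\;,
\end{equ}
which is precisely the bound characterising $\CR\DD_i f$ at index $\beta - \s_i$. By uniqueness of reconstruction (Theorem~\ref{theo:reconstruction}, since $\beta - \s_i > 0$), we conclude $\CR\DD_i f = D_i \CR f$.

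The main obstacle I anticipate is a bookkeeping one rather than a conceptual one: making the test-function manipulation $D_i(\CS_{\s,x}^\delta\eta) = \delta^{-\s_i}\CS_{\s,x}^\delta(D_i\eta)$ precise with the non-Euclidean scaling $\CS_{\s,x}^\delta$ (the factor $\delta^{-\s_i}$ comes from differentiating in the $i$th variable, which is scaled by $\delta^{-\s_i}$), and checking that $D_i\eta$ still satisfies the normalisation required of an admissible test function — in particular that its $\CC^r$ norm is controlled, which forces the implicit use of one extra degree of regularity on the kernel of admissible test functions (equivalently, one works with $\CC^{r+1}$ test functions, or notes that the reconstruction bound \eref{e:boundRf} is insensitive to replacing $r$ by $r+1$). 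One should also double-check the degenerate possibility that $\alpha - \s_i$ or $\beta - \s_i$ coincides with a value where some care is needed, but since no integer-homogeneity hypotheses are invoked here and $\beta - \s_i > 0$ is all that the reconstruction theorem requires, there is no genuine difficulty. Finally, the continuity/Lipschitz dependence on the model follows automatically from the linearity of $\DD_i$ together with the corresponding statements in Theorem~\ref{theo:reconstruction}, so no separate argument is needed.
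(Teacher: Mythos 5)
Your proposal is correct and follows essentially the same route as the paper: the membership $\DD_i f \in \CD^{\beta-\s_i}_{\alpha-\s_i}$ falls out of the two defining properties of an abstract gradient, and the identity $\CR\DD_i f = D_i\CR f$ is obtained exactly as you describe, by testing against $\CS_{\s,x}^\delta\eta$, using $D_i\Pi_x = \Pi_x\DD_i$ to integrate by parts, noting $D_i\CS_{\s,x}^\delta\eta = \delta^{-\s_i}\CS_{\s,x}^\delta D_i\eta$, and invoking the uniqueness part of the reconstruction theorem since $\beta-\s_i>0$. The bookkeeping caveat you raise about the regularity of $D_i\eta$ is harmless, as you correctly note.
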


\begin{proof}
The fact that $\DD_i f \in \CD_{\alpha-\s_i}^{\beta-\s_i}$ is an immediate consequence of the definitions, so we 
only need to show that $\CR \DD_i f = D_i \CR f$.

By the ``uniqueness'' part of the reconstruction theorem, this on the other hand follows immediately
if we can show that, for every fixed test function $\psi$ and every $x \in D$, one has
\begin{equ}
\bigl(\Pi_x \DD_i f(x) - D_i \CR f\bigr)(\psi_x^\lambda) \lesssim \lambda^\delta\;,
\end{equ}
for some $\delta > 0$. Here, we defined $\psi_x^\lambda = \CS_{\s,x}^\lambda \psi$ as before. 
By the assumption on the model $\Pi$, we have the identity
\begin{equ}
\bigl(\Pi_x \DD_i f(x) - D_i \CR f\bigr)(\psi_x^\lambda) = \bigl( D_i \Pi_x f(x) - D_i \CR f\bigr)(\psi_x^\lambda)
= -\bigl(\Pi_x f(x) - \CR f\bigr)(D_i \psi_x^\lambda)\;.
\end{equ}
Since $D_i \psi_x^\lambda = \lambda^{-\s_i} \CD_{\s,x}^\lambda D_i \psi$, it then follows immediately from the reconstruction theorem that the right hand side of this expression is of order $\lambda^{\beta-\s_i}$, as required.
\end{proof}

\begin{remark}
The polynomial regularity structures $\TT_{d,\s}$ do of course come equipped with a natural
gradient operator, obtained by setting $\DD_i X_j = \delta_{ij}\one$ and extending this to all of
$T$ by imposing the Leibniz rule. 
\end{remark}

\begin{remark}\label{rem:gradCov}
In cases where a symmetry $\SS$ acts on $\TT$, it is natural to impose that the abstract gradient
is covariant in the sense that if $g \in \SS$ acts on $\R^d$ as $T_g x = A_g x + b_g$ and $M_g$ denotes the
corresponding action on $T$, then one imposes that
\begin{equ}
M_g \DD_i \tau = \sum_{j=1}^d A_g^{ij} \DD_j \tau\;,
\end{equ}
for every $\tau$ in the domain of $\DD$. This is consistent with 
the fact that
\begin{equs}
\bigl(\Pi_x M_g \DD_i \tau\bigr)(\psi) &= 
\bigl(\Pi_{T_gx} \DD_i \tau\bigr)(T_g^\sharp \psi) = 
\bigl(D_i \Pi_{T_gx} \tau\bigr)(T_g^\sharp \psi) \\ &= 
- \bigl(\Pi_{T_gx} \tau\bigr)(D_i T_g^\sharp \psi) = 
- A_g^{ij} \bigl(\Pi_{T_gx} \tau\bigr)( T_g^\sharp D_j\psi) \\ &= 
- A_g^{ij} \bigl(\Pi_{x} M_g \tau\bigr)(D_j\psi) = 
A_g^{ij} \bigl(\Pi_{x} \DD_j M_g \tau\bigr)(\psi)\;,
\end{equs}
where summation over $j$ is implicit.
It is also consistent with Remark~\ref{rem:symcanon}.
\end{remark}

\section{Singular modelled distributions}
\label{sec:singular}

In all of the previous section, we have considered situations where 
our modelled distributions belong to some space $\CD^\gamma$, which ensures that the bounds
\eref{e:boundDgamma} hold locally uniformly in $\R^d$. One very important
situation for the treatment of initial conditions and / or boundary values is that of functions
$f \colon \R^d \to T$ which are of the class $\CD^\gamma$ away from some fixed 
sufficiently regular submanifold $P$ (think of the hyperplane formed by ``time $0$'', which will be
our main example), but may exhibit a singularity on $P$.

In order to streamline the exposition, we only consider the case where $P$ is given by a hyperplane 
that is furthermore parallel to some of the canonical basis elements of $\R^d$.
The extension to general submanifolds is almost immediate.
Throughout this section, we fix again the ambient space $\R^d$ and its scaling $\s$, and we
fix a hyperplane $P$ which we assume for simplicity to be given by
\begin{equ}
P = \{x \in \R^d\,:\, x_i = 0\;,\quad i=1,\ldots,\bar d\}\;.
\end{equ}
An important role will be played by the ``effective codimension'' of $P$, which we denote by
\begin{equ}[e:defm]
\m = \s_1+\ldots+\s_{\bar d}\;.
\end{equ}

\begin{remark}
In the case where $P$ is a 
smooth submanifold, it is important for our analysis that it has a product structure with 
each factor belonging to a 
subspace with all components having the same scaling. 
More precisely, we consider a partition $\PP$
of the set $\{1,\ldots,d\}$ into $J$ disjoint non-empty subsets
with cardinalities $\{d_j\}_{j=1}^J$ such that 
$\s_i = \s_j$ if and only if $i$ and $j$ belong to the same element of $\PP$.
This yields a decomposition
\begin{equ}
\R^d \sim \R^{d_1} \times \cdots \times \R^{d_J}\;.
\end{equ}
With this notation, we impose that $P$ is of the form
$\CM_1\times\ldots \times \CM_J$, with each of the $\CM_j$ being a smooth 
(or at least Lipschitz) submanifold of $\R^{d_j}$.
The effective codimension $\m$ is then given by $\m = \sum_{j=1}^J \m_j$, where 
$\m_j$ is the codimension of $\CM_j$ in $\R^{d_j}$, multiplied by the corresponding
scaling factor.
\end{remark}

We also introduce the notations
\begin{equ}
\|x\|_P = 1\wedge d_\s(x,P)\;,\qquad \|x,y\|_P = \|x\|_P \wedge \|y\|_P\;.
\end{equ}
Given a subset $\K \subset \R^d$, we also denote by $\K_P$ the set
\begin{equ}
\K_P = \{(x,y) \in (\K\setminus P)^2\,:\, x \neq y\quad\text{and}\quad \|x-y\|_\s \le  \|x,y\|_P\}\;.
\end{equ}
With these notations at hand, we define the spaces $\CD^{\gamma,\eta}_P$ similarly
to $\CD^\gamma$, but we introduce an additional exponent $\eta$ controlling
the behaviour of the coefficients near $P$. Our precise definition goes as follows:

\begin{definition}\label{def:singDist}
Fix a regularity structure $\TT$ and a model $(\Pi,\Gamma)$, as well as a hyperplane $P$ as above.
Then, for any $\gamma > 0$ and $\eta \in \R$,  we set
\begin{equ}
\|f\|_{\gamma,\eta;\K} \eqdef \sup_{x\in \K \setminus P} \sup_{\ell < \gamma} {\|f(x)\|_\ell \over \|x\|_P^{(\eta-\ell)\wedge0}} \;,\qquad
\n{f}_{\gamma,\eta;\K} \eqdef \sup_{x\in \K \setminus P} \sup_{\ell < \gamma} {\|f(x)\|_\ell \over \|x\|_P^{\eta-\ell}} \;. 
\end{equ}
The space $\CD^{\gamma,\eta}_P(V)$ then consists of all functions 
$f\colon \R^d \setminus P \to T_\gamma^-$ such that, for every compact set $\K \subset \R^d$, one has
\begin{equ}[e:boundDgammaSing]
\$f\$_{\gamma,\eta;\K} \eqdef \|f\|_{\gamma,\eta;\K}   + \sup_{(x,y) \in \K_P} \sup_{\ell < \gamma} {\|f(x) - \Gamma_{xy} f(y)\|_\ell \over \|x-y\|_\s^{\gamma - \ell} \|x,y\|_P^{\eta-\gamma}} < \infty\;.
\end{equ}
Similarly to before, we also set
\begin{equ}
\$f;\bar f\$_{\gamma,\eta;\K} \eqdef \|f-\bar f\|_{\gamma,\eta;\K}  + \sup_{(x,y) \in \K_P} \sup_{\ell < \gamma} {\|f(x) - \bar f(x) - \Gamma_{xy} f(y) + \bar \Gamma_{xy} \bar f(y)\|_\ell \over \|x-y\|_\s^{\gamma - \ell} \|x,y\|_P^{\eta-\gamma}}\;.
\end{equ}
\end{definition}

\begin{remark}
In the particular case of $\TT = \TT_{d,\s}$ and $(\Pi,\Gamma)$ being the canonical model
consisting of polynomials, we use the notation $\CC^{\gamma,\eta}_P(V)$ instead of $\CD^{\gamma,\eta}_P(V)$.
\end{remark}

At distances of order $1$ from $P$, we see that 
the spaces $\CD^{\gamma,\eta}_P$ and $\CD^\gamma$ coincide.
However, if $\K$ is such that $d_\s(x,P) \sim \lambda$ for all $x \in \K$,
then one has, roughly speaking,
\begin{equ}[e:approxScaling]
\$f\$_{\gamma,\eta;\K} \sim \lambda^{\gamma - \eta} \$f\$_{\gamma;\K} \;.
\end{equ}
In fact, this is not quite true: the components appearing in the first term
in \eref{e:boundDgammaSing} scale slightly differently. However, it turns out that the 
first bound actually follows from the second, provided that one has an order one bound on
$f$ somewhere at order one distance from $P$, so that \eref{e:approxScaling} does 
convey the right intuition in most situations.

The spaces $\CD^{\gamma,\eta}_P$ will be particularly useful when setting
up fixed point arguments to solve semilinear parabolic problems, where the solution exhibits a singularity
(or at least some form of discontinuity) at $t = 0$. In particular, in all of the concrete examples
treated in this article, we will have $P = \{(t,x)\,:\, t = 0\}$.

\begin{remark}
The space $\CD^{\gamma,0}_P$ does \textit{not} coincide with $\CD^\gamma$.
This is due to the fact that our definition still allows for some discontinuity at $P$. 
However, $\CD^{\gamma,\gamma}_P$ essentially coincides with
$\CD^\gamma$, the difference being that the supremum in \eref{e:boundDgammaSing}
only runs over elements in $\K_P$. If $P$ is a hyperplane of codimension $1$,
then $f(x)$ can have different limits whether $x$ approaches $P$ from one side or the other.
\end{remark}

Definition~\ref{def:singDist} is tailored in such a way that if $\K$ is of bounded diameter
and we know that 
\begin{equ}
\sup_{\ell < \gamma} \|f(x)\|_\ell < \infty
\end{equ}
for \textit{some} $x\in \K \setminus P$, then the bound on the first term in
\eref{e:boundDgammaSing} follows from the bound on the second term. 
The following statement is a slightly different 
version of this fact which will be particularly useful
when setting up local fixed point arguments, since it yields good control on $f(x)$
for $x$ near $P$.

For $x \in \R^d$ and $\delta > 0$, we write $\CS_P^\delta x$ for the value\label{lab:defSPdelta}
\begin{equ}
\CS_P^\delta x = (\delta x_1,\ldots,\delta x_{\bar d},x_{\bar d+1},\ldots ,x_d)\;.
\end{equ}
With this notation at hand, we then have:

\begin{lemma}\label{lem:vanishPower}
Let $\K$ be a domain such that for every $x = (x_1,\ldots,x_d) \in \K$, one has
$\CS_P^\delta x \in \K$ for every $\delta \in [0,1]$.
Let $f \in \CD^{\gamma,\eta}_P$ for some $\gamma > 0$ and assume that, for every  $\ell < \eta$, 
the map $x \mapsto \CQ_\ell f(x)$ extends continuously to all of $\K$ 
in such a way that $\CQ_\ell f(x)=0$ for $x\in P$.
Then, one has the bound
\begin{equ}
\n{f}_{\gamma,\eta;\K} \lesssim \$f\$_{\gamma,\eta;\K}\;,
\end{equ}
with a proportionality constant depending affinely on $\|\Gamma\|_{\gamma;\K}$.
Similarly, let $\bar f \in \CD^{\gamma,\eta}_P$ with respect to a second model $(\bar \Pi, \bar \Gamma)$
and assume this time that $\lim_{x \to P}  \CQ_\ell (f(x) - \bar f(x)) = 0$ for every $\ell < \eta$.
Then, one has the bound
\begin{equs}
\n{f - \bar f}_{\gamma,\eta;\K} &\lesssim \$f;\bar f\$_{\gamma,\eta;\K} + \|\Gamma - \bar \Gamma\|_{\gamma;\K} \bigl(\$f\$_{\gamma,\eta;\K} + \$\bar f\$_{\gamma,\eta;\K}\bigr) \;,
\end{equs}
with a proportionality constant depending again affinely on $\|\Gamma\|_{\gamma;\K}$
and $\|\bar \Gamma\|_{\gamma;\K}$.
\end{lemma}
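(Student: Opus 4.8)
The plan is to estimate $\|f(x)\|_\ell$ for $x \in \K \setminus P$ and $\ell < \eta$ by writing $f(x)$ as a telescoping sum along the dyadic rescaling $\CS_P^\delta x$ that brings $x$ towards $P$. Fix such an $x$ and let $\lambda = d_\s(x,P)$ (we may assume $\lambda \le 1$, since for $\lambda$ of order one the bound $\n{f}_{\gamma,\eta;\K} \lesssim \$f\$_{\gamma,\eta;\K}$ is trivial from the definition of $\|f\|_{\gamma,\eta;\K}$, noting that $\|x\|_P^{\eta-\ell}$ and $\|x\|_P^{(\eta-\ell)\wedge 0}$ agree up to constants in that regime). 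Set $x_n = \CS_P^{2^{-n}} x$, so that $x_0 = x$, each $x_n \in \K$ by hypothesis, and $d_\s(x_n, P) \sim 2^{-n}\lambda$ with $\|x_{n+1} - x_n\|_\s \sim 2^{-n}\lambda$. The key point is that the pair $(x_n, x_{n+1})$ lies in $\K_P$ for $n$ large enough (once $2^{-n}\lambda \le \|x_n, x_{n+1}\|_P$, which holds for all $n \ge n_0$ with $n_0$ of order $1$), so the second term in \eref{e:boundDgammaSing} applies.

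First I would write, for $N > n_0$,
\begin{equ}
f(x_{n_0}) = \Gamma_{x_{n_0} x_N} f(x_N) + \sum_{n=n_0}^{N-1} \bigl(f(x_n) - \Gamma_{x_n x_{n+1}} f(x_{n+1})\bigr)\;,
\end{equ}
after inserting the multiplicativity $\Gamma_{x_{n_0} x_N} = \Gamma_{x_{n_0} x_{n_0+1}} \cdots \Gamma_{x_{N-1} x_N}$ and expanding. The continuity hypothesis forces $\CQ_\ell f(x_N) \to 0$ as $N \to \infty$ for every $\ell < \eta$, and the bound on $\Gamma$ together with $\|x_N\|_P \to 0$ makes $\CQ_\ell \Gamma_{x_{n_0} x_N} f(x_N) \to 0$ as well (for $\ell < \eta$, using $\|f(x_N)\|_{\ell'} \lesssim \|x_N\|_P^{(\eta-\ell')\wedge 0}$ and the fact that $\Gamma$ lowers homogeneity). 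So, taking $N \to \infty$,
\begin{equ}
\CQ_\ell f(x_{n_0}) = \sum_{n \ge n_0} \CQ_\ell\bigl(f(x_n) - \Gamma_{x_n x_{n+1}} f(x_{n+1})\bigr)\;,
\end{equ}
and each summand is bounded using \eref{e:boundDgammaSing} by $\$f\$_{\gamma,\eta;\K}\, \|x_n - x_{n+1}\|_\s^{\gamma-\ell} \|x_n,x_{n+1}\|_P^{\eta-\gamma} \lesssim \$f\$_{\gamma,\eta;\K}\, (2^{-n}\lambda)^{\gamma-\ell}(2^{-n}\lambda)^{\eta-\gamma} = \$f\$_{\gamma,\eta;\K}\, (2^{-n}\lambda)^{\eta-\ell}$. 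Since $\eta - \ell > 0$, the geometric series over $n \ge n_0$ converges and gives $\|\CQ_\ell f(x_{n_0})\|_\ell \lesssim \$f\$_{\gamma,\eta;\K}\, \lambda^{\eta-\ell}$. Finally, one passes from $x_{n_0}$ back to $x = x_0$ using finitely many applications of the defining bound on $f(x_n) - \Gamma_{x_n x_{n+1}} f(x_{n+1})$ for $n < n_0$ — here the pairs need not be in $\K_P$, but there are only boundedly many of them and one uses the first term $\|f\|_{\gamma,\eta;\K}$ of \eref{e:boundDgammaSing} directly, which already gives $\|f(x_n)\|_\ell \lesssim \lambda^{(\eta-\ell)\wedge 0} = \lambda^{\eta-\ell}$ since $\eta - \ell > 0$. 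For $\ell \ge \eta$ with $\ell < \gamma$ there is nothing to prove beyond what $\|f\|_{\gamma,\eta;\K}$ already furnishes. Tracking the $\Gamma$-norms through the telescoping shows the proportionality constant depends affinely on $\|\Gamma\|_{\gamma;\K}$.

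The difference bound is obtained by the same telescoping applied to $f - \bar f$, using the quantity $\$f;\bar f\$_{\gamma,\eta;\K}$ in place of $\$f\$_{\gamma,\eta;\K}$. The one extra subtlety is that $f$ is modelled on $\Gamma$ while $\bar f$ is modelled on $\bar\Gamma$, so when telescoping $f(x_n) - \bar f(x_n) - \Gamma_{x_n x_{n+1}} f(x_{n+1}) + \bar\Gamma_{x_n x_{n+1}} \bar f(x_{n+1})$ one must also absorb terms of the form $(\Gamma_{x_n x_{n+1}} - \bar\Gamma_{x_n x_{n+1}}) f(x_{n+1})$ arising from the non-commutativity of the multiplicativity relations for the two models; these are controlled by $\|\Gamma - \bar\Gamma\|_{\gamma;\K}$ times $\$f\$_{\gamma,\eta;\K} + \$\bar f\$_{\gamma,\eta;\K}$, with the same geometric-series structure, yielding the stated estimate. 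I expect the main obstacle to be purely bookkeeping: verifying carefully that the rescaled points $x_n$ and the pairs $(x_n, x_{n+1})$ really do land in $\K_P$ for $n \ge n_0$ with $n_0$ uniform over $x \in \K$, and that the boundary terms from the initial finitely many steps $n < n_0$ are handled without circularity (i.e.\ using only $\|f\|_{\gamma,\eta;\K}$, which is finite by the definition of $\CD^{\gamma,\eta}_P$, not the quantity $\n{f}_{\gamma,\eta;\K}$ we are trying to bound).
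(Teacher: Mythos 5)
Your telescoping identity is not an identity, and the gap this hides is precisely where the paper's proof has to work hardest. Expanding $\Gamma_{x_{n_0}x_N} = \Gamma_{x_{n_0}x_{n_0+1}}\cdots\Gamma_{x_{N-1}x_N}$ correctly gives
\begin{equ}
f(x_{n_0}) - \Gamma_{x_{n_0}x_N}f(x_N) = \sum_{n=n_0}^{N-1}\Gamma_{x_{n_0}x_n}\bigl(f(x_n) - \Gamma_{x_nx_{n+1}}f(x_{n+1})\bigr)\;,
\end{equ}
with a transport operator $\Gamma_{x_{n_0}x_n}$ in front of each summand that your display omits. This prefactor acts over a distance $\|x_{n_0}-x_n\|_\s$ of order $\|x\|_P$ (not $2^{-n}\|x\|_P$), so its $\ell$-component picks up contributions of size $\|x\|_P^{m-\ell}\,\|f(x_n)-\Gamma_{x_nx_{n+1}}f(x_{n+1})\|_m$ from every homogeneity $m\ge\ell$. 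For $m\in(\eta,\gamma)$ the definition of $\CD_P^{\gamma,\eta}$ only gives $\|f(x_n)-\Gamma_{x_nx_{n+1}}f(x_{n+1})\|_m\lesssim (2^{-n}\|x\|_P)^{\eta-m}$, which diverges as $n\to\infty$, so the resulting series does not converge; for the same reason the boundary term $\CQ_\ell\Gamma_{x_{n_0}x_N}f(x_N)$ does not tend to zero, since the components of $f(x_N)$ of homogeneity $m>\eta$ blow up like $\|x_N\|_P^{\eta-m}$ while being transported down to level $\ell$ with only an order-one factor $\|x\|_P^{m-\ell}$. The argument therefore does not close as written.

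The paper avoids long-range transport altogether: it telescopes the plain differences, $\CQ_\ell f(x) = \sum_{n\ge0}\CQ_\ell\bigl(f(x_{n+1})-f(x_n)\bigr)$ (using $\CQ_\ell f(x_\infty)=0$ from the continuity hypothesis), and splits each increment as $\bigl(f(x_{n+1})-\Gamma_n f(x_n)\bigr) + \bigl(\Gamma_n - 1\bigr)f(x_n)$ with $\Gamma_n=\Gamma_{x_{n+1}x_n}$ acting only over the short distance $2^{-n}\|x\|_P$. The first piece is exactly your estimate. The second piece is the one your proposal drops entirely: it is bounded by $\sum_{m>\ell}(2^{-n}\|x\|_P)^{m-\ell}\|f(x_n)\|_m$, and for the intermediate homogeneities $\ell<m<\eta$ the a priori bound $\|f(x_n)\|_m\lesssim\|x_n\|_P^{(\eta-m)\wedge0}=1$ is not good enough; one needs the improved bound $\|f(x_n)\|_m\lesssim\|x_n\|_P^{\eta-m}$, which is the very statement being proved at level $m$. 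This forces a reverse induction on $\ell$, starting from the largest homogeneity below $\eta$ (where the sum over $m>\ell$ involves only $m\ge\eta$ and the a priori bound suffices) and working downwards; this induction, and the term that necessitates it, are the missing idea. The rest of your outline (the points $x_n=\CS_P^{2^{-n}}x$, the use of the vanishing at $P$, the treatment of the difference bound with the extra $\|\Gamma-\bar\Gamma\|$ term) matches the paper. Note finally that $\|x_{n+1}-x_n\|_\s=\|x_n,x_{n+1}\|_P=2^{-(n+1)}\|x\|_P$ exactly, so every pair $(x_n,x_{n+1})$ lies in $\K_P$ and no threshold $n_0$ is needed.
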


\begin{proof}
For $d_\s(x,P) \ge 1$ or $\ell \ge \eta$, the bounds follow trivially from the 
definitions, so we only need to consider the
case $d_\s(x,P) < 1$ and $\ell < \eta$. We then set
$x_n = \CS_P^{2^{-n}} x$ and $x_\infty = \CS_P^0 x$.
We also use the shorthand $\Gamma_n = \Gamma_{x_{n+1}x_n}$,
and we assume without loss of generality that $\$f\$_{\gamma,\eta;\K} \le 1$.
Note that the sequence $x_n$ converges to $x_\infty$ and that 
\begin{equ}[e:boundDist]
\|x_{n+1} - x_n\|_\s = \|x_{n+1} - x_\infty\|_\s = \|x_{n+1}\|_P = 2^{-(n+1)}\|x\|_P\;.
\end{equ}

The argument now goes by ``reverse induction'' on $\ell$. Assume that the 
bound $\|f(x)\|_m \lesssim \|x\|_P^{\eta-m}$
holds for all $m > \ell$, which we certainly know to be the case when
$\ell$ is the largest element in $A$ smaller than $\eta$ since then this bound is already controlled
by $\$f\$_{\gamma,\eta;\K}$.
One then has
\begin{equs}
\|f(x_{n+1}) - f(x_n)\|_\ell &\le \|f(x_{n+1}) - \Gamma_n f(x_n)\|_\ell + \|(1-\Gamma_n)f(x_n)\|_\ell \label{e:firstLine}\\
&\lesssim 2^{-n(\eta - \ell)}\|x\|_P^{\eta - \ell} + \sum_{m > \ell} 2^{-n(m-\ell)}\|x\|_P^{m-\ell} 2^{-n(\eta - m)} \|x\|_P^{\eta-m} \\
&\lesssim 2^{-n(\eta - \ell)}\|x\|_P^{\eta-\ell}\;,
\end{equs}
where we made use of the definition of $\$f\$_{\gamma,\eta;\K}$ and \eref{e:boundDist}
to bound the first term and
of the inductive hypothesis, combined with \eref{e:boundDist} and
the bounds on $\Gamma$ for the second term.
It immediately follows that 
\begin{equ}
\|f(x)\|_\ell =
\|f(x) - f(x_\infty)\|_\ell 
\le \sum_{n \ge 0} \|f(x_{n+1}) - f(x_n)\|_\ell \lesssim  \sum_{n \ge 0} 2^{-n(\eta - \ell)}\|x\|_P^{\eta-\ell}\;,
\end{equ}
which is precisely what is required for the first bound to hold.
Here, the induction argument on $\ell$ works because $A$ is locally finite
by assumption.

The second bound follows in a very similar way. Setting $\delta f = f - \bar f$, we write
\begin{equs}
\|\delta f(x_{n+1}) - \delta f(x_n)\|_\ell &\le \|f(x_{n+1}) - \bar f(x_{n+1}) - \Gamma_n f(x_n) + \bar \Gamma_n \bar f(x_n)\|_\ell \\
&\qquad + \|(1-\Gamma_n)f(x_n)- (1-\bar \Gamma_n)\bar f(x_n)\|_\ell\;.
\end{equs}
The first term in this expression is bounded in the same way as above. The 
second term is bounded by
\begin{equ}
\|(1-\Gamma_n)f(x_n) - (1-\bar \Gamma_n)\bar f(x_n)\|_\ell \lesssim 
2^{-n(\eta - \ell)}\|x\|_P^{\eta-\ell} \bigl(\$f, \bar f\$_{\gamma,\eta;\K} + \|\Gamma - \bar \Gamma\|_{\gamma;\K}\bigr)\;,
\end{equ}
from which the stated bound then also follows in the same way as above.
\end{proof}

%
The following kind of interpolation inequality will also be useful:

\begin{lemma}\label{lem:interpolation}
Let $\gamma > 0$ and $\kappa \in (0,1)$ and let $f$ and $\bar f$ satisfy the assumptions of Lemma~\ref{lem:vanishPower}. Then, for every compact set $\K$, one has the bound 
\begin{equ}
\$f;\bar f\$_{(1-\kappa)\gamma,\eta;\K} \lesssim \n{f-\bar f}_{\gamma,\eta;\K}^{\kappa}  \bigl(\$f\$_{\gamma,\eta;\K} + \$\bar f\$_{\gamma,\eta;\K}\bigr)^{1-\kappa}\;,
\end{equ}
where the proportionality constant depends on $\|\Gamma\|_{\gamma;\K} + \|\bar \Gamma\|_{\gamma;\K}$.
\end{lemma}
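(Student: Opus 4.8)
Set $h=f-\bar f$ and, to fix ideas, assume first that $f$ and $\bar f$ are modelled distributions for the \emph{same} model $(\Pi,\Gamma)$, so that the numerator in the second term of $\$f;\bar f\$_{(1-\kappa)\gamma,\eta;\K}$ is just $\|h(x)-\Gamma_{xy}h(y)\|_\ell$; the case of two distinct models only adds a term involving $\Gamma-\bar\Gamma$, which one bounds in exactly the same way (and whose presence is why the constant is allowed to depend on $\|\Gamma\|_{\gamma;\K}+\|\bar\Gamma\|_{\gamma;\K}$). The plan is to estimate the two contributions to $\$f;\bar f\$_{(1-\kappa)\gamma,\eta;\K}$ separately. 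For the first one, $\|h\|_{(1-\kappa)\gamma,\eta;\K}$, one has trivially $\|h\|_{(1-\kappa)\gamma,\eta;\K}\le\|h\|_{\gamma,\eta;\K}$, and since $\|x\|_P\le 1$ the quantity $\|h\|_{\gamma,\eta;\K}$ is dominated \emph{both} by $\n{f-\bar f}_{\gamma,\eta;\K}$ and by $\$f\$_{\gamma,\eta;\K}+\$\bar f\$_{\gamma,\eta;\K}$. The elementary inequality $a\wedge b\le a^\kappa b^{1-\kappa}$ then closes this piece.

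For the second contribution, fix $(x,y)\in\K_P$ and $\ell<(1-\kappa)\gamma$. A preliminary observation I would record is that on $\K_P$ the three quantities $\|x\|_P$, $\|y\|_P$ and $\|x,y\|_P$ are comparable up to a factor $2$: this follows from the triangle inequality for the metric $d_\s$, together with $\|x-y\|_\s\le\|x,y\|_P$. Next I would use two bounds on $M_\ell\eqdef\|h(x)-\Gamma_{xy}h(y)\|_\ell$. The first, obtained directly from Definition~\ref{def:singDist} applied to $f$ and $\bar f$, is
$M_\ell\le(\$f\$_{\gamma,\eta;\K}+\$\bar f\$_{\gamma,\eta;\K})\,\|x-y\|_\s^{\gamma-\ell}\|x,y\|_P^{\eta-\gamma}$.
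The second is the crude one, $M_\ell\le\|h(x)\|_\ell+\|\Gamma_{xy}h(y)\|_\ell$: estimating $\|h(x)\|_\ell$ and each $\|\CQ_m h(y)\|$ by $\n{f-\bar f}_{\gamma,\eta;\K}$ times the corresponding power of $\|\cdot\|_P$, using the bound on $\Gamma$ from Definition~\ref{def:model} and $\|x-y\|_\s\le\|x,y\|_P\le1$ to absorb the extra factors, gives $M_\ell\lesssim(1+\|\Gamma\|_{\gamma;\K})\,\n{f-\bar f}_{\gamma,\eta;\K}\,\|x,y\|_P^{\eta-\ell}$.

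Dividing both bounds by the weight $\|x-y\|_\s^{(1-\kappa)\gamma-\ell}\|x,y\|_P^{\eta-(1-\kappa)\gamma}$ and writing $t=\|x-y\|_\s/\|x,y\|_P\in(0,1]$, they become respectively $\le A\,t^{\kappa\gamma}$ and $\le C_\Gamma\,\n{f-\bar f}_{\gamma,\eta;\K}\,t^{-((1-\kappa)\gamma-\ell)}$, with $A=\$f\$_{\gamma,\eta;\K}+\$\bar f\$_{\gamma,\eta;\K}$ and $C_\Gamma=C(1+\|\Gamma\|_{\gamma;\K})$. Taking the minimum of the two and then the geometric mean with weights $1-\kappa$ and $\kappa$, the resulting power of $t$ is $(1-\kappa)\kappa\gamma-\kappa((1-\kappa)\gamma-\ell)=\kappa\ell$. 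For $\ell\ge0$ this power is nonnegative, $t^{\kappa\ell}\le1$, and one obtains precisely $\lesssim\n{f-\bar f}_{\gamma,\eta;\K}^{\kappa}A^{1-\kappa}$, which after taking the supremum over $(x,y)$ and $\ell$ is the claim.

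The step I expect to be the actual obstacle is the range of indices $\ell<0$, which occurs whenever the sector carries components of strictly negative homogeneity: there $t^{\kappa\ell}$ blows up as $t\to0$, so the naive geometric mean is not enough. The remedy is to split the range of $t$ (use the first bound for $t$ small and the second for $t$ close to $1$) and to bring in Lemma~\ref{lem:vanishPower}, which under the standing hypotheses on $f$ and $\bar f$ yields $\n{f-\bar f}_{\gamma,\eta;\K}\lesssim\$f\$_{\gamma,\eta;\K}+\$\bar f\$_{\gamma,\eta;\K}$, to control the transitional regime; this comparison, which is exactly why the hypotheses of Lemma~\ref{lem:vanishPower} are imposed here, is the only place where those hypotheses are used. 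The rest is routine bookkeeping of exponents, entirely parallel to the computations already carried out in the proof of Lemma~\ref{lem:vanishPower}.
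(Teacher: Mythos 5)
Your main argument coincides with the paper's proof: the same two bounds on $\|f(x)-\bar f(x)-\Gamma_{xy}f(y)+\bar\Gamma_{xy}\bar f(y)\|_\ell$ (one by $(\$f\$_{\gamma,\eta;\K}+\$\bar f\$_{\gamma,\eta;\K})\,\|x-y\|_\s^{\gamma-\ell}\|x,y\|_P^{\eta-\gamma}$, one by a model-dependent constant times $\n{f-\bar f}_{\gamma,\eta;\K}\,\|x,y\|_P^{\eta-\ell}$), followed by the geometric mean with weights $1-\kappa$ and $\kappa$ and the observation that the leftover factor $(\|x-y\|_\s/\|x,y\|_P)^{\kappa\ell}$ is bounded on $\K_P$; the supremum part of the norm is handled via $a\wedge b\le a^\kappa b^{1-\kappa}$ exactly as in the paper. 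So for $\ell\ge 0$ your proof is the paper's proof.

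The only place you depart from the paper is the case $\ell<0$, and there your proposed remedy does not actually work. (To be fair, the paper's own final inequality also uses $(\|x-y\|_\s/\|x,y\|_P)^{\kappa\ell}\lesssim 1$ and is therefore subject to exactly the caveat you raise.) Writing $A=\$f\$_{\gamma,\eta;\K}+\$\bar f\$_{\gamma,\eta;\K}$, $B=\n{f-\bar f}_{\gamma,\eta;\K}$ and $\mu=(1-\kappa)\gamma-\ell$, optimising the split point between the two bounds $A\,t^{\kappa\gamma}$ and $B\,t^{-\mu}$ yields $A^{1-\theta}B^{\theta}$ with $\theta=\kappa\gamma/(\gamma-\ell)$, which for $\ell<0$ satisfies $\theta<\kappa$. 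Since Lemma~\ref{lem:vanishPower} gives $B\lesssim A$ and not the reverse, one has $A^{1-\theta}B^{\theta}=A^{1-\kappa}B^{\kappa}(A/B)^{\kappa-\theta}\gtrsim A^{1-\kappa}B^{\kappa}$, so the inequality comes out in the wrong direction and the claimed power $\kappa$ of $\n{f-\bar f}_{\gamma,\eta;\K}$ is not recovered: the two starting bounds are simply too weak to interpolate with exponent $\kappa$ when $\ell<0$, because the second one carries no decay in $\|x-y\|_\s$. I would not count this against you, since the paper glosses over the same point, but you should not present the sketched remedy as if it closed the gap.
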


\begin{proof}
All the operations are local, so we can just as well take $\K = \R^d$.
First, one then has the obvious bound
\begin{equ}
\|f(x) - \Gamma_{xy} f(y) - \bar f(x) + \bar \Gamma_{xy} \bar f(y)\|_{\ell} 
\le \bigl(\$f\$_{\gamma,\eta} + \$\bar f\$_{\gamma,\eta}\bigr) \|x-y\|_\s^{\gamma - \ell} \|x,y\|_P^{\eta-\gamma}\;.
\end{equ}
On the other hand, one also has the bound
\begin{equ}
\|f(x) - \Gamma_{xy} f(y) - \bar f(x) + \bar \Gamma_{xy} \bar f(y)\|_{\ell} 
\lesssim \n{f-\bar f}_{\gamma,\eta} \|x,y\|_P^{\eta-\ell}\;,
\end{equ}
where the proportionality depends on the sizes of $\Gamma$ and $\bar \Gamma$. 
As a consequence of these two bounds, we obtain
\begin{equs}
\|f(x) &- \Gamma_{xy} f(y) - \bar f(x) + \bar \Gamma_{xy} \bar f(y)\|_{\ell}
\lesssim \n{f-\bar f}_{\gamma,\eta}^\kappa \bigl(\$f\$_{\gamma,\eta} + \$\bar f\$_{\gamma,\eta}\bigr)^{1-\kappa}\\
&\qquad \times \|x-y\|_\s^{\gamma - \ell - \kappa(\gamma - \ell)}
\|x,y\|_P^{\eta-\kappa \ell - (1-\kappa)\gamma} \\
&\lesssim \n{f-\bar f}_{\gamma,\eta}^\kappa \bigl(\$f\$_{\gamma,\eta} + \$\bar f\$_{\gamma,\eta}\bigr)^{1-\kappa} \|x-y\|_\s^{(1-\kappa) \gamma - \ell}
\|x,y\|_P^{\eta - (1-\kappa)\gamma}\;,
\end{equs}
which is precisely the required bound. Here, we made use of the fact that 
we only consider points with $\|x-y\|_\s \lesssim \|x,y\|_P$ to obtain the last inequality.

Regarding the bound on $\|f(x) - \bar f(x)\|_\ell$, one immediately obtains the required bound
\begin{equ}
\|f(x) - \bar f(x)\|_\ell \lesssim \n{f-\bar f}_{\gamma,\eta}^\kappa \bigl(\$f\$_{\gamma,\eta} + \$\bar f\$_{\gamma,\eta}\bigr)^{1-\kappa} \|x\|_P^{(\eta-\ell)\wedge 1}\;,
\end{equ}
simply because both $\n{\cdot}_{\gamma,\eta}$ and $\$\cdot\$_{\gamma,\eta}$ dominate that term.
\end{proof}

In this section, we show that all of the calculus developed in the previous sections still carries over to
these weighted spaces, provided that the exponents $\eta$ are chosen in a suitable way. 
The proofs are mostly based on relatively straightforward but tedious modifications 
of the existing proofs in the uniform case, so we will try to focus mainly on those aspects that 
do actually differ.

\subsection{Reconstruction theorem}

We first obtain a modified version of the reconstruction theorem for elements 
$f \in \CD^{\gamma,\eta}_P$. Since the reconstruction operator $\CR$ is local and since 
$f$ belongs to $\CD^\gamma$ away from $P$,  there exists a unique element 
$\tilde \CR f$ in the dual of smooth functions that are compactly supported away from $P$
which is such that
\begin{equ}
\bigl(\tilde \CR f - \Pi_x f(x)\bigr)(\psi_x^\lambda) \lesssim \lambda^\gamma\;,
\end{equ}
for all $x \not \in P$ and $\lambda \ll d(x,P)$.
The aim of this subsection is to show that, under suitable assumptions,  $\tilde \CR f$ extends
in a natural way to an actual distribution $\CR f$ on $\R^d$. 

In order to prepare for this result, the following result will be useful.
\begin{lemma}\label{lem:restriction}
Let $\TT = (A,T,G)$ be a regularity structure and let $(\Pi,\Gamma)$ be a model for $\TT$ over $\R^d$
with scaling $\s$. 
Let $\psi \in \CB_{\s,0}^r$ with $r > |\min A|$ and $\lambda > 0$. Then, for $f \in \CD^\gamma$,
one has the bound
\begin{equ}
\bigl|\bigl(\CR f - \Pi_x f(x)\bigr)(\psi_x^\lambda)\bigr| \lesssim \lambda^\gamma
\sup_{y,z\in B_{2\lambda}(x)}\sup_{\ell < \gamma} {\|f(z) - \Gamma_{zy} f(y)\|_\ell \over \|z-y\|_\s^{\gamma-\ell}}\;,
\end{equ}
where the proportionality constant is of order $1+\|\Gamma\|_{\gamma; B_{2\lambda}(x)} \$\Pi\$_{\gamma; B_{2\lambda}(x)}$.
\end{lemma}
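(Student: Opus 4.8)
The plan is to adapt the proof of the reconstruction theorem (via Proposition~\ref{prop:reconstrGen} and Theorem~\ref{theo:sowingLemma}) but to keep careful track of which constant appears in the final bound, so that the right-hand side involves only the local oscillation quantity
\begin{equ}
\omega_{\lambda}(x) = \sup_{y,z\in B_{2\lambda}(x)}\sup_{\ell < \gamma} {\|f(z) - \Gamma_{zy} f(y)\|_\ell \over \|z-y\|_\s^{\gamma-\ell}}
\end{equ}
rather than the full norm $\$f\$_{\gamma;\bar\K}$. The key observation — already recorded in Remark~\ref{rem:boundReconstr} — is that in Proposition~\ref{prop:reconstrGen} the bound \eref{e:localBehaviour} on $f-\zeta_x$ depends on the constant $K_1$ controlling the \emph{increments} $\scal{\phi_x^{n,\s},\zeta_x - \zeta_y}$, and \emph{not} on the constant $K_2$ controlling $\zeta_x$ itself. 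So the whole strategy is: apply this with $\zeta_y = \Pi_y f(y)$, and check that the relevant $K_1$ can be taken to be a multiple of $\omega_\lambda(x)$ as soon as we only test against $\psi$ supported in $B_\lambda(x)$ (in the $d_\s$-metric).

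First I would fix $x$ and $\lambda$ and reduce to the case $\lambda \le 1$ (for $\lambda > 1$ the statement is weaker than the ordinary reconstruction bound and follows from Theorem~\ref{theo:reconstruction} directly, after noting $\omega_\lambda(x) \gtrsim \$f\$_{\gamma;B_{2\lambda}(x)}$ up to a harmless additive term). Then I would recall from the proof of Theorem~\ref{theo:reconstruction} that $\bigl(\CR f - \Pi_x f(x)\bigr)(\psi_x^\lambda)$ is expressed via the telescoping sum \eref{e:decompWanted}, and that every term there is estimated using only quantities of the form $\scal{\Pi_x f(x) - \Pi_y f(y), \phi_y^{n,\s}}$ or $\scal{\Pi_z f(z) - \Pi_y f(y), \psi_y^{n,\s}}$ — i.e.\ always \emph{differences} $\Pi_z f(z) - \Pi_y f(y) = \Pi_z(f(z) - \Gamma_{zy} f(y))$. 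Because $\psi_x^\lambda$ is supported in $B_\lambda(x)$ and the wavelets entering the sums have support of size $\lesssim 2^{-n_0}\lesssim\lambda$ near $x$, all pairs $(y,z)$ that actually contribute satisfy $y,z \in B_{2\lambda}(x)$. Hence each such difference is bounded, via the model estimates \eref{e:boundPi}, by
\begin{equ}
\bigl|\scal[b]{\Pi_z\bigl(f(z)-\Gamma_{zy}f(y)\bigr), \phi_y^{n,\s}}\bigr| \lesssim \$\Pi\$_{\gamma;B_{2\lambda}(x)} \sum_{\ell<\gamma} \|f(z)-\Gamma_{zy}f(y)\|_\ell\, 2^{-{n|\s|\over 2}-\ell n} \lesssim \$\Pi\$_{\gamma;B_{2\lambda}(x)}\,\omega_\lambda(x)\sum_{\ell<\gamma}\|z-y\|_\s^{\gamma-\ell}2^{-{n|\s|\over 2}-\ell n}\;,
\end{equ}
and similarly with $\psi_y^{n,\s}$ in place of $\phi_y^{n,\s}$; the factor $1+\|\Gamma\|_{\gamma;B_{2\lambda}(x)}$ enters precisely when one rewrites $\Pi_z = \Pi_y\Gamma_{yz}$ (or uses Proposition~\ref{prop:extension}) to turn the estimate into one uniform in the base point. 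This is exactly the hypothesis \eref{e:assGenReconst} of Proposition~\ref{prop:reconstrGen} with $K_1 \lesssim \omega_\lambda(x)\,\$\Pi\$_{\gamma;B_{2\lambda}(x)}(1+\|\Gamma\|_{\gamma;B_{2\lambda}(x)})$, restricted to the ball $B_{2\lambda}(x)$.

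Then I would invoke the localised version of Proposition~\ref{prop:reconstrGen} (the construction there is purely local, as noted in the Remark following it, so all bounds that hold on $B_{2\lambda}(x)$ suffice to control test functions supported in $B_\lambda(x)$), together with Remark~\ref{rem:boundReconstr}, to conclude
\begin{equ}
\bigl|\bigl(\CR f - \Pi_x f(x)\bigr)(\psi_x^\lambda)\bigr| \lesssim \lambda^\gamma\, K_1 \lesssim \lambda^\gamma\,\bigl(1+\|\Gamma\|_{\gamma;B_{2\lambda}(x)}\bigr)\$\Pi\$_{\gamma;B_{2\lambda}(x)}\,\omega_\lambda(x)\;,
\end{equ}
which is the claimed estimate. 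I expect the only genuinely delicate point to be the bookkeeping of supports: one must verify that in the telescoping decomposition \eref{e:decompWanted}, once $n_0$ is chosen as the smallest integer with $2^{-n_0}\le\lambda$, every wavelet $\phi_y^{n,\s}$, $\psi_y^{n,\s}$ with $n\ge n_0$ whose pairing with $\psi_x^\lambda$ is nonzero is centred within $B_{2\lambda}(x)$ — this uses the compact support of the scaling function together with the fact that $\|x-y\|_\s\lesssim\lambda$ for all contributing $y$ — so that the local oscillation $\omega_\lambda(x)$ genuinely controls everything. Everything else is a transcription of the arguments already carried out in the proofs of Theorem~\ref{theo:sowingLemma}, Proposition~\ref{prop:reconstrGen} and Theorem~\ref{theo:reconstruction}, with $\$f\$_{\gamma;\bar\K}$ replaced throughout by $\omega_\lambda(x)$ and all norms of the model restricted to $B_{2\lambda}(x)$.
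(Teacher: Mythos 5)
Your proposal is correct and follows essentially the same route as the paper, whose proof consists precisely of observing that Proposition~\ref{prop:reconstrGen} only uses the hypothesis \eref{e:assGenReconst} for pairs within distance $C\lambda$ of $x$, and that $C$ can be brought down to $2$ by taking $n_0$ somewhat larger than the minimal integer with $2^{-n_0}\le\lambda$ (a point you flag as the delicate bookkeeping step, though your stated choice of $n_0$ may need this slight enlargement when the scaling function has a large support).
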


\begin{remark}
This is essentially a refinement of the reconstruction theorem. The difference is that 
the bound only uses information about $f$ in a small area around the support of 
$\phi_x^\lambda$.
\end{remark}

\begin{proof}
Inspecting the proof of Proposition~\ref{prop:reconstrGen}, we note that one really 
only uses the bounds \eref{e:assGenReconst} only for pairs $x$ and $y$ with $\|x-y\|_\s \le C \lambda$
for some fixed $C > 0$. By choosing $n_0$ sufficiently large, one can furthermore 
easily ensure that $C \le 2$.
\end{proof}
\begin{proposition}\label{prop:recSing}
Let $f \in \CD^{\gamma,\eta}_P(V)$ for some sector $V$ of regularity $\alpha \le 0$, some $\gamma > 0$, 
and some $\eta \le \gamma$.
Then, provided that $\alpha \wedge \eta  > -\m$ where $\m$ is as in \eref{e:defm},
there exists a unique distribution $\CR f \in \CC_\s^{\alpha \wedge \eta}$ such that
$\bigl(\CR f\bigr)(\phi) = \bigl(\tilde \CR f\bigr)(\phi)$ for smooth test functions
that are compactly supported away from $P$.
If $f$ and $\bar f$ are modelled after two models $Z$ and $\bar Z$, then one has the bound
\begin{equ}
\|\CR f - \bar \CR \bar f\|_{\alpha\wedge \eta;\K} \lesssim \$f;\bar f\$_{\gamma,\eta;\bar \K} + \$Z;\bar Z\$_{\gamma;\bar \K}\;,
\end{equ}
where the proportionality constant depends on the norms of $f$, $\bar f$, $Z$ and $\bar Z$.
Here, $\K$ is any compact set and $\bar \K$ is its $1$-fattening.
\end{proposition}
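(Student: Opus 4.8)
The strategy is to first reconstruct $f$ away from $P$ by the ordinary reconstruction theorem, and then to extend the resulting distribution $\tilde\CR f$ to all of $\R^d$ by a dyadic multiscale argument in the directions transverse to $P$, the convergence of the resulting series being exactly what forces the hypothesis $\alpha\wedge\eta>-\m$. For the first step, note that on any compact set $\K'$ with $\K'\cap P=\emptyset$ the weights $\|x\|_P$ and $\|x,y\|_P$ are bounded above and below, so $f\in\CD^\gamma(\K')$ and Theorem~\ref{theo:reconstruction} (with $\gamma>0$, hence uniqueness) produces $\CR_{\K'}f$; these patch to a distribution $\tilde\CR f$ on $\R^d\setminus P$. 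The quantitative input we will use repeatedly is Lemma~\ref{lem:restriction}: for $x\notin P$ and $\lambda$ a fixed fraction of $d_\s(x,P)$, $\bigl(\tilde\CR f-\Pi_x f(x)\bigr)(\CS_{\s,x}^\lambda\eta)$ is bounded by $\lambda^\gamma$ times the local increment seminorm of $f$, which by Definition~\ref{def:singDist} is $\lesssim\lambda^\gamma\|x\|_P^{\eta-\gamma}$. Combining this with $\bigl|\bigl(\Pi_x f(x)\bigr)(\CS_{\s,x}^\lambda\eta)\bigr|\lesssim\sum_{\ell<\gamma}\|f(x)\|_\ell\lambda^\ell$ and the weighted bounds $\|f(x)\|_\ell\lesssim\|x\|_P^{(\eta-\ell)\wedge0}$, a short computation using $\lambda\le\|x\|_P\le1$ gives, whenever $\lambda\lesssim d_\s(x,P)$, the estimate $\bigl|\bigl(\tilde\CR f\bigr)(\CS_{\s,x}^\lambda\eta)\bigr|\lesssim\lambda^{\alpha\wedge\eta}$, uniformly over admissible $\eta$, $x$ in a compact, $\lambda\le1$.

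For the extension, fix a smooth partition of unity $1=\Phi+\sum_{n\ge0}\psi_n$ on $\R^d\setminus P$, where $\Phi$ is supported in $\{d_\s(\cdot,P)\ge\tfrac12\}$ and each $\psi_n$ is supported in a transverse dyadic annulus $A_n=\{c\,2^{-n}\le d_\s(\cdot,P)\le C\,2^{-n}\}$, obtained from a smooth $\s$-homogeneous norm on the transverse variables so that $|D^k\psi_n|\lesssim2^{|k|_\s n}$. Since each $\psi_n\psi$ and $\Phi\psi$ is supported away from $P$, we may \emph{define}
\begin{equ}
\scal{\CR f,\psi}\eqdef\scal{\tilde\CR f,\Phi\psi}+\sum_{n\ge0}\scal{\tilde\CR f,\psi_n\psi}\;,
\end{equ}
and agreement with $\tilde\CR f$ on test functions supported away from $P$ is immediate since then only finitely many terms contribute and they sum to $\psi$ there. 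It remains to check $\CR f\in\CC_\s^{\alpha\wedge\eta}$, i.e.\ to bound $\scal{\CR f,\CS_{\s,x_0}^\lambda\eta}$ by $\lambda^{\alpha\wedge\eta}$. If $d_\s(x_0,P)\ge2\lambda$ the support meets only boundedly many pieces, all at distance $\gtrsim\lambda$ from $P$, and (checking that multiplication by the cutoffs only costs bounded constants, since $2^{-n}\gtrsim\lambda$ there) one reduces directly to the localised bound of the previous paragraph.

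The main point is the regime $d_\s(x_0,P)<2\lambda$. Here only the $\psi_n$ with $2^{-n}\lesssim\lambda$ contribute, i.e.\ $n\ge n_0$ with $2^{-n_0}\sim\lambda$. For each such $n$ one decomposes $\psi_n\CS_{\s,x_0}^\lambda\eta$ as a sum of $M_n\lesssim(\lambda2^n)^{|\s|-\m}$ rescaled test functions $g_i=c_i\,\CS_{\s,x_i}^{2^{-n}}\tilde\eta_i$ centred at points $x_i$ with $d_\s(x_i,P)\sim2^{-n}$, with $\|\tilde\eta_i\|_{\CC^r}\lesssim1$ and $\sum_i|c_i|\lesssim(2^{-n}/\lambda)^{\m}$ — this is possible precisely because $2^{-n}\lesssim\lambda$, so that the derivatives of $\psi_n$ are dominated by those of the $2^{-n}$-rescaling. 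Applying Lemma~\ref{lem:restriction} at each $x_i$ with scale $\sim2^{-n}$ (shrinking the annulus constants so that the relevant fattened balls stay off $P$) together with the weighted bounds on $f$ yields $|\scal{\tilde\CR f,g_i}|\lesssim|c_i|\,2^{-n(\alpha\wedge\eta)}$, hence $|\scal{\tilde\CR f,\psi_n\CS_{\s,x_0}^\lambda\eta}|\lesssim(2^{-n}/\lambda)^{\m}2^{-n(\alpha\wedge\eta)}$. Summing over $n\ge n_0$ gives $\lambda^{-\m}\sum_{n\ge n_0}2^{-n(\m+\alpha\wedge\eta)}\lesssim\lambda^{-\m}\lambda^{\m+\alpha\wedge\eta}=\lambda^{\alpha\wedge\eta}$, the geometric series converging exactly because $\m+\alpha\wedge\eta>0$. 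For uniqueness, any two admissible extensions differ by an element of $\CC_\s^{\alpha\wedge\eta}$ supported on $P$; but a distribution supported on a submanifold of $\s$-codimension $\m$ is a finite sum of transverse derivatives of $\delta_P$, of which even the least singular, $\delta_P$ itself, only belongs to $\CC_\s^{-\m}$, so $\alpha\wedge\eta>-\m$ forces the difference to vanish. The stability estimate follows by rerunning the whole argument with the difference seminorms $\$f;\bar f\$_{\gamma,\eta;\cdot}$ and $\$Z;\bar Z\$_{\gamma;\cdot}$ in place of $\$f\$_{\gamma,\eta;\cdot}$ and $\$Z\$_{\gamma;\cdot}$, using the difference versions of the bounds in Proposition~\ref{prop:reconstrGen} and exploiting that $\CR$ is bilinear in $(f,\Pi)$; the extra $\|\Gamma-\bar\Gamma\|$-type terms are harmless and local finiteness of $A$ keeps all sums over homogeneities finite.

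The step I expect to be genuinely delicate is the near-$P$ bookkeeping in the third paragraph: getting the covering count $M_n\sim(\lambda2^n)^{|\s|-\m}$ and the total amplitude $\sum_i|c_i|\sim(2^{-n}/\lambda)^{\m}$ right (including checking that the $g_i$ really are, up to the factors $c_i$, admissible test functions at scale $2^{-n}$), so that the exponent bookkeeping in the final geometric series lands exactly on the threshold $\alpha\wedge\eta>-\m$. Everything else is a fairly routine, if somewhat tedious, adaptation of the uniform reconstruction theorem.
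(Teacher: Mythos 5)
Your proposal is correct and follows essentially the same route as the paper: reconstruct away from $P$, control $\tilde\CR f$ on test functions localised at scale comparable to their distance from $P$ via Lemma~\ref{lem:restriction} and the weighted bounds of Definition~\ref{def:singDist}, then extend across $P$ by a dyadic decomposition transverse to $P$ whose piece count $(\lambda 2^n)^{|\s|-\m}$ and amplitude bookkeeping produce exactly the geometric series requiring $\alpha\wedge\eta>-\m$. The only cosmetic difference is that you settle uniqueness by the structure theorem for distributions supported on $P$, whereas the paper shows directly that $\xi\bigl(\psi(1-\hat\phi_N)\bigr)\to0$ for any $\xi\in\CC_\s^{\bar\alpha}$ with $\bar\alpha>-\m$; both arguments are valid and equivalent in substance.
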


\begin{remark}
The condition $\alpha \wedge \eta  > -\m$ rules out the possibility of creating a non-integrable
singularity on $P$, which would prevent $\tilde \CR f$ from defining a distribution on all of $\R^d$.
(Unless one ``cancels out'' the singularity by a diverging term located on $P$, but 
this would then lead to $\CR f$ being well-posed only up to some finite distribution localised
on $P$.)
\end{remark}

\begin{remark}
If $\alpha = 0$ and $\eta \ge 0$, then due to our definition of $\CC^\alpha_\s$, 
Proposition~\ref{prop:recSing} only
implies that $\CR f$ is a bounded function, not that it is actually continuous.
\end{remark}

\begin{proof}
Since the reconstruction operator is linear and local, it suffices to consider the case where
$\$f\$_{\gamma,\eta;\K} \le 1$, which we will assume from now on.

Our main tool in the proof of this result is a suitable partition of the identity in the complement of
$P$. Let $\phi \colon \R_+ \to [0,1]$ be as in Lemma~\ref{lem:diffSing} and let $\tilde \phi\colon \R \to [0,1]$
be a smooth function such that $\supp \tilde \phi \subset [-1,1]$ and
\begin{equ}
\sum_{k \in \Z} \tilde \phi(x+k) = 1\;.
\end{equ}
For $n \in \Z$, we then define the countable sets $\Xi_P^n$ by
\begin{equ}
\Xi_P^n = \{x \in \R^d \,:\, \text{$x_i = 0$ for $i \le \bar d$ and $x_i \in 2^{-n\s_i}\Z$ for  $i > \bar d$}\}\;.
\end{equ}
This is very similar to the definition of the sets $\Lambda_{n}^{\s}$ in Section~\ref{sec:wavelets},
except that the points in $\Xi_P^n$ are all located in a small ``boundary layer'' around $P$.
For $n \in \Z$ and $x \in \Xi_P^n$, we define the cutoff function $\phi_{x,n}$ by
\begin{equ}
\phi_{x,n}(y) = \phi(2^{n} N_P(y)) \tilde \phi\bigl(2^{n\s_{\bar d+1}}(y_{\bar d+1}-x_{\bar d+1})\bigr)
\cdots \tilde \phi\bigl(2^{n\s_{d}}(y_d-x_{d})\bigr)\;,
\end{equ}
where $N_P$ is a smooth function on $\R^d \setminus P$
which depends only on $(y_1,\ldots,y_{\bar d})$, and which is ``$1$-homogeneous''
in the sense that $N_P(D_\s^\delta y) = \delta N_P(y)$.

One can verify that this construction yields a partition of the unity in the sense that
\begin{equ}
\sum_{n \in \Z} \sum_{x\in \Xi_P^n} \phi_{x,n}(y) = 1\;,
\end{equ}
for every $y \in \R^d \setminus P$. 

Let furthermore $\hat \phi_N$ be given by
$\hat \phi_N = \sum_{n \le N}\sum_{x\in \Xi_P^n} \phi_{x,n}$.
One can then show that, for every distribution $\xi \in \CC_\s^{\bar \alpha}$ with
$\bar \alpha > -\m$ and every smooth test function $\psi$, one has
\begin{equ}
\lim_{N \to \infty} \xi\bigl(\psi (1-\hat \phi_N)\bigr) = 0\;.
\end{equ}
As a consequence, it suffices to show that, for every smooth compactly supported test function $\psi$,
the sequence $\bigl(\tilde\CR f\bigr)\bigl(\psi \hat \phi_N\bigr)$ is Cauchy and
that its limit, which we denote by $\bigl(\CR f\bigr)(\psi)$, satisfies the bound of Definition~\ref{def:C-alpha}.

Take now a smooth test function $\psi$ supported in $B(0,1)$ and define the 
translated and rescaled versions $\psi_x^\lambda$ as before with $\lambda \in (0,1]$. 
If $d_\s(x,P) \ge 2\lambda$, then it follows from Lemma~\ref{lem:restriction}
that
\begin{equ}[e:boundRfPi2]
\bigl(\tilde \CR f - \Pi_x f(x)\bigr)(\psi_x^\lambda) \lesssim  d_\s(x,P)^{\eta-\gamma} \lambda^\gamma
\lesssim   \lambda^\eta \;,
\end{equ}
where the last bound follows from the fact that $\gamma \ge \eta$ by assumption.
Since furthermore
\begin{equ}[e:boundPifSing]
\bigl(\Pi_x f(x)\bigr)(\psi_x^\lambda) \lesssim \sum_{\alpha \le \ell < \gamma} \|x\|_P^{(\eta-\ell)\wedge 0}  \lambda^\ell
\lesssim \lambda^{\alpha \wedge \eta}\;,
\end{equ}
we do have the required bound in this case.

In the case $d_\s(x,P) \le 2\lambda$, we rewrite $\psi_x^\lambda$ as
\begin{equ}
\psi_x^\lambda = \sum_{n \ge n_0} \sum_{y \in \Xi^n_P} \psi_x^\lambda \phi_{y,n}\;,
\end{equ}
where $n_0$ is the greatest integer such that $2^{-n_0} \ge 3\lambda$. 
Setting
\begin{equ}
\chi_{n,xy} = \lambda^{|\s|} 2^{n|\s|}\psi_x^\lambda \phi_{y,n}\;,
\end{equ}
it is straightforward to verify that $\chi_{n,xy}$ satisfies the bounds
\begin{equ}
\sup_{z \in \R^d} |D^k \chi_{n,xy}(z)| \lesssim 2^{-(|\s|+ |k|_\s)n}\;,
\end{equ}
for any multiindex $k$. Furthermore, just as in the case of the bound \eref{e:boundRfPi2},
every point in the support of $\chi_{n,xy}$ is located at a distance of $P$ that is of the same order.
Using a suitable partition of unity, one can therefore rewrite it as
\begin{equ}
\chi_{n,xy} = \sum_{j=1}^M \chi_{n,xy}^{(j)}\;,
\end{equ}
where $M$ is a fixed constant and where each of the $\chi_{n,xy}^{(j)}$ has its support centred in
a ball of radius ${1\over 2}d_\s(z_j, P)$ around some point $z_j$.
As a consequence, by the same argument as before, we obtain the bound
\begin{equ}[e:boundRfChi]
\bigl(\tilde \CR f - \Pi_{z_j} f(z_j)\bigr)(\chi_{n,xy}) \lesssim  \sum_{j=1}^M d_\s(z_j,P)^{\eta-\gamma} 2^{-\gamma n}
\lesssim  2^{-\eta n} \;.
\end{equ}
Using the same argument as in \eref{e:boundPifSing}, it then follows at once that
\begin{equ}[e:boundRfchi]
\bigl|\bigl(\tilde \CR f\bigr)(\chi_{n,xy})\bigr| \lesssim 2^{-(\alpha \wedge \eta)n}\;.
\end{equ}
Note now that we have the identity 
\begin{equ}
\bigl(\tilde\CR f\bigr)\bigl(\psi_x^\lambda \hat \phi_N\bigr) = \sum_{n = n_0}^N \lambda^{-|\s|} 2^{-n|\s|}\sum_{y \in \Xi_P^n} \bigl(\tilde\CR f\bigr)\bigl(\chi_{n,xy}\bigr)\;.
\end{equ}
At this stage, we make use of the fact that $\chi_{n,xy} = 0$, unless $\|x-y\|_\s \lesssim \lambda$.
As a consequence, for $n \ge n_0$, the number of terms contributing in the above sum is
bounded by $(2^n \lambda)^{|\s| - \m}$. Combining
this remark with \eref{e:boundRfchi} yields the bound
\begin{equ}
\Bigl|\lambda^{-|\s|} 2^{-n|\s|}\sum_{y \in \Xi_P^n} \bigl(\tilde\CR f\bigr)\bigl(\chi_{n,xy}\bigr)\Bigr|
\lesssim \lambda^{-\m} 2^{-((\alpha\wedge\eta)+\m)n}\;,
\end{equ}
from which the claim follows at once, provided that $\alpha \wedge \eta  > - \m$, which is
true by assumption.
The bound on $\CR f - \bar \CR f$ then follows in exactly the same way.
\end{proof}

In the remainder of this section, we extend the calculus developed in the previous sections 
to the case of singular modelled distributions. 

\subsection{Multiplication}

We now show that the product of two singular modelled distributions yields again a singular modelled
distribution under suitable assumptions. The precise workings of the exponents is as follows:

\begin{proposition}\label{prop:multSing}
Let $P$ be as above and let
$f_1 \in \CD^{\gamma_1,\eta_1}_P(V^{(1)})$ and $f_2 \in \CD^{\gamma_2,\eta_2}_P(V^{(2)})$ for two sectors
$V^{(1)}$ and $V^{(2)}$ with respective regularities $\alpha_1$ and $\alpha_2$.
Let furthermore $\star$ be a product on $T$ such that $(V^{(1)},V^{(2)})$ is $\gamma$-regular with
$\gamma = (\gamma_1 + \alpha_2) \wedge (\gamma_2 + \alpha_1)$.
Then, the function $f = f_1\star_\gamma f_2$ belongs to
$\CD^{\gamma, \eta}_P$ with $\eta = (\eta_1 + \alpha_2)\wedge (\eta_2+\alpha_1) \wedge (\eta_1+\eta_2)$.
(Here, $\star_\gamma$ is the projection of the product $\star$ onto $T_\gamma^-$ as before.)

Furthermore, in the situation analogous to Proposition~\ref{prop:multDiff}, 
writing $f = f_1 \star f_2$ and $g = g_1 \star g_2$, one has the bound
\begin{equ}
\$f;g\$_{\gamma,\eta;\K}
\lesssim \$f_1;g_1\$_{\gamma_1,\eta_1;\K} + \$f_2;g_2\$_{\gamma_2,\eta_2;\K}
+ \|\Gamma - \bar \Gamma\|_{\gamma_1+\gamma_2;\K}\;,
\end{equ}
uniformly over any bounded set.
\end{proposition}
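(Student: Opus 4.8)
The plan is to follow closely the structure of the proof of Theorem~\ref{theo:mult}, tracking at each step the additional weight factors $\|x,y\|_P^{\bullet}$ and $\|x\|_P^{\bullet}$ that come from the definition of $\CD^{\gamma,\eta}_P$. As there, since we only care about membership in $\CD^{\gamma,\eta}_P$, we discard all components of homogeneity $\ge \gamma$ and work with $f = f_1 \star_\gamma f_2$ as in \eref{e:defProdTrunc}. First I would verify the ``local'' bound on $\|f(x)\|_\ell$ for $\ell < \gamma$: using $\|f_i(x)\|_{m} \lesssim \|x\|_P^{(\eta_i - m)\wedge 0}$ and the continuity of $\star$, one has for $m+n = \ell$ a contribution of order $\|x\|_P^{(\eta_1 - m)\wedge 0}\|x\|_P^{(\eta_2-n)\wedge 0}$. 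A short case analysis (according to whether each exponent is truncated at $0$ or not) shows that the sum of the two truncated exponents is always $\ge (\eta - \ell)\wedge 0$ with $\eta = (\eta_1+\alpha_2)\wedge(\eta_2+\alpha_1)\wedge(\eta_1+\eta_2)$, using that $m \ge \alpha_1$ and $n \ge \alpha_2$. This gives $\|f\|_{\gamma,\eta;\K} \lesssim \|f_1\|_{\gamma_1,\eta_1;\K}\|f_2\|_{\gamma_2,\eta_2;\K}$.

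Next I would bound $\Gamma_{xy} f(y) - f(x)$ for $(x,y) \in \K_P$, reusing the four-term splitting \eref{e:splitProd}. The key point is that for $(x,y)\in\K_P$ one has $\|x-y\|_\s \le \|x,y\|_P$, so that whenever a term carries a factor $\|x-y\|_\s^{\delta}$ with $\delta > 0$ beyond the ``expected'' power $\|x-y\|_\s^{\gamma-\ell}$, that surplus can be traded for $\|x,y\|_P^{\delta}$. For the first term one uses \eref{e:termDiffxy} and \eref{e:boundProdGamma}, but now each $\|\Gamma_{xy}\CQ_m f_i(y)\|_{\beta_i}$ is bounded by $\|x-y\|_\s^{m-\beta_i}\|y\|_P^{(\eta_i - m)\wedge 0}$ (via the definition of $\CD^{\gamma_i,\eta_i}_P$ applied at $y$, together with the bound on $\Gamma$), and one checks as above that $(\eta_1 - m)\wedge 0 + (\eta_2 - n)\wedge 0 \ge (\eta - \gamma)$ when $m+n \ge \gamma$, trading the surplus $m+n-\gamma$ of the $\|x-y\|_\s$ power against $\|x,y\|_P$. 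For the second, third and fourth terms, one inserts the definition of $\$f_i\$_{\gamma_i,\eta_i;\K}$: the second term produces $\|x-y\|_\s^{\gamma_1+\gamma_2-\ell}\|x,y\|_P^{(\eta_1-\gamma_1)+(\eta_2-\gamma_2)}$, while the third produces $\|x-y\|_\s^{\gamma_1+\alpha_2-\ell}\|x,y\|_P^{(\eta_1-\gamma_1)+((\eta_2-\beta_2)\wedge 0)}$ and the fourth the symmetric expression. In each case one again reduces to checking that the total $\|x,y\|_P$ exponent is $\ge \eta-\gamma$ and the total $\|x-y\|_\s$ exponent is $\ge \gamma-\ell$ (possibly after moving surplus from one to the other), which follows from the definitions of $\gamma$ and $\eta$ combined with the inequalities $\beta_i \ge \alpha_i$.

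For the stability bound on $\$f;g\$_{\gamma,\eta;\K}$, I would mirror the proof of Proposition~\ref{prop:multDiff} verbatim, again using the five-term decomposition \eref{e:termsTi}, but with every occurrence of $\|x-y\|_\s^{\gamma_i-\bullet}$ replaced by $\|x-y\|_\s^{\gamma_i-\bullet}\|x,y\|_P^{\eta_i-\gamma_i}$ and with the norms $\$\cdot\$_{\gamma_i;\K}$, $\$\cdot;\cdot\$_{\gamma_i;\K}$ replaced by their $P$-weighted analogues; the term $T_4$ again contributes a $\|\Gamma-\bar\Gamma\|_{\gamma_1+\gamma_2;\K}$ factor. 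The weight bookkeeping is identical to that in the first part.

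The main obstacle, and the only genuinely new element relative to Theorem~\ref{theo:mult}, is the case analysis on the truncated exponents $(\eta_i - m)\wedge 0$: one must check systematically, across all combinations of $m \ge \alpha_1$, $n \ge \alpha_2$ with $m+n < \gamma$ (for the local bound) or $m+n \ge \gamma$ (for the increment bound), that the weight exponents add up correctly to $(\eta-\ell)\wedge 0$ respectively $\eta-\gamma$, where $\eta$ is the particular minimum of three quantities stated. The three terms in the minimum defining $\eta$ correspond precisely to the three regimes ($m$ small so $\eta_1-m$ is not truncated while $n$ is large; the symmetric case; and both not truncated). I would organize this as a short lemma on exponent arithmetic and then invoke it mechanically wherever needed, which keeps the geometric/analytic part of the argument a transparent copy of the unweighted proofs.
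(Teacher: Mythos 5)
Your proposal is correct and follows essentially the same route as the paper: the same truncation to $f_1\star_\gamma f_2$, the same four-term splitting \eref{e:splitProd} with the trade-off $\|x-y\|_\s\le\|x,y\|_P$ converting surplus powers of $\|x-y\|_\s$ into powers of $\|x,y\|_P$, and the same five-term decomposition \eref{e:termsTi} for the stability bound. The exponent bookkeeping you describe, including the identification of the three regimes behind the three terms in the minimum defining $\eta$ and the use of $\|x\|_P\sim\|y\|_P\sim\|x,y\|_P$ on $\K_P$, is exactly what the paper carries out.
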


\begin{proof}
We first show that $f = f_1 \star_\gamma f_2$ does indeed satisfy the claimed bounds. 
By Theorem~\ref{theo:mult}, we only need to consider points $x,y$ which are both at 
distance less than $1$ from $P$.
Also, by bilinearity and locality, it suffices to consider the case when both $f_1$ and $f_2$ are of 
norm $1$ on the fixed compact $\K$.
Regarding the supremum bound on $f$, we have
\begin{equs}
\|f(x)\|_\ell &\le \sum_{\ell_1 + \ell_2 = k} \|f_1(x)\|_{\ell_1}\|f_2(x)\|_{\ell_2} \le  \sum_{\ell_1 + \ell_2 = \ell} \|x\|_P^{(\eta_1 - \ell_1)\wedge 0}\|x\|_P^{(\eta_2 - \ell_2)\wedge 0}\\
 &\lesssim \|x\|_P^{(\eta - \ell)\wedge 0}\;,
\end{equs}
which is precisely as required.

It remains to obtain a suitable bound on $f(x) - \Gamma_{xy} f(y)$. For this, it follows from 
Definition~\ref{def:singDist}
that it suffices to consider pairs $(x,y)$ such that $2\|x-y\|_\s \le d_\s(x,P) \wedge d_\s(y,P) \le 1$. 
For such pairs $(x,y)$, it follows immediately from the triangle inequality that
\begin{equ}[e:simDist]
d_\s(x,P) = \|x\|_P \sim \|y\|_P \sim \|x,y\|_P\;,
\end{equ}
in the sense that any of these quantities is bounded by a multiple of any other quantity, with  
some universal proportionality constants. For $\ell < \gamma_i$, one then has the bounds
\begin{equs}[e:boundfi]
\|f_i(x) - \Gamma_{xy} f_i(y)\|_\ell &\lesssim \|x-y\|_\s^{\gamma_i - \ell_i} \|x,y\|_P^{\eta_i-\gamma_i} \;,\\
\|f_i(x)\|_\ell &\lesssim \|x,y\|_P^{(\eta_i - \ell_i)\wedge 0}\;,
\end{equs}
for $i \in \{1,2\}$.

As in \eref{e:boundProdGamma}, one then has
\begin{equs}
 \|\Gamma_{xy} f(y) &- (\Gamma_{xy} f_1(y))\star (\Gamma_{xy} f_2(y))\|_\ell
 \lesssim \sum_{m+n \ge \gamma} \|x-y\|_\s^{m+n-\ell} \|f_1(y)\|_m\|f_2(y)\|_n \\
 &\lesssim \|x-y\|_\s^{\gamma-\ell} \sum_{m+n \ge \gamma} \|x,y\|_P^{m+n-\gamma} \|x,y\|_P^{(\eta_1 -m)\wedge 0}\|x,y\|_P^{(\eta_2 -n)\wedge 0}\\
 &= \|x-y\|_\s^{\gamma-\ell} \sum_{m+n \ge \gamma} \|x,y\|_P^{-\gamma} \|x,y\|_P^{\eta_1 \wedge m}\|x,y\|_P^{\eta_2 \wedge n}\\
 &\lesssim \|x-y\|_\s^{\gamma-\ell} \|x,y\|_P^{-\gamma} \|x,y\|_P^{\eta_1 \wedge \alpha_2}\|x,y\|_P^{\eta_2 \wedge \alpha_1}\\
&= \|x-y\|_\s^{\gamma-\ell} \|x,y\|_P^{\eta-\gamma}\;. \label{e:prodSing}
\end{equs}
Here, in order to obtain the second line, 
we made use of \eref{e:simDist}, as well as
the fact that we are only considering points 
$(x,y)$ such that $\|x-y\|_\s \le \|x,y\|_P$.
Combining this with the bound \eref{e:splitProd} from the proof of Theorem~\ref{theo:mult}
and using again the bounds \eref{e:boundfi}, the requested bound then follows at once.

It remains to obtain a bound on $\$f;g\$_{\gamma,\eta;\K}$. For this, we proceed
almost exactly as in Proposition~\ref{prop:multDiff}. First note that, proceeding as
above, one obtains the estimate
\begin{equs}
\|\Gamma_{xy} f(y) &- \bar \Gamma_{xy} g(y) - \Gamma_{xy} f_1(y)\star \Gamma_{xy} f_2(y)
+  \bar\Gamma_{xy} g_1(y)\star \bar\Gamma_{xy} g_2(y)\|_\ell \\
&\le \|\Gamma - \bar \Gamma\|_{\gamma_1+\gamma_2;\K} \sum_{m+n \ge \gamma} \|x-y\|_\s^{m+n-\ell} \|f_1(y)\|_m\|f_2(y)\|_n \\
&\quad + \sum_{m+n \ge \gamma} \|x-y\|_\s^{m+n-\ell} \|f_1(y) - g_1(y)\|_m\|f_2(y)\|_n\\
&\quad + \sum_{m+n \ge \gamma} \|x-y\|_\s^{m+n-\ell} \|g_1(y)\|_m\|f_2(y)-g_2(y)\|_n\;,
\end{equs}
which then yields a bound of the desired type by proceeding as in \eref{e:prodSing}. 
The remainder is then decomposed exactly as in \eref{e:termsTi}. Denoting by $T_1,\ldots,T_5$
the terms appearing there, we proceed to bound them again separately.

For the term $T_1$, we obtain this time the bound
\begin{equ}
\|T_1\|_\ell \lesssim \$f_1;g_1\$_{\gamma_1,\eta_1;\K} \sum_{m+n = \ell\atop
n \ge \alpha_2; m \ge \alpha_1} \|x-y\|_\s^{\gamma_1 - m} \|x\|_P^{(\eta_2-n)\wedge 0} \|x,y\|_P^{\eta_1-\gamma_1}\;.
\end{equ}
Since, as in the proof of Proposition~\ref{prop:multDiff},  
all the terms in this sum satisfy $\gamma_1 - m > \gamma-\ell$, 
we can bound $\|x-y\|_\s^{\gamma_1 - m}$ by $\|x-y\|_\s^{\gamma - \ell} \|x,y\|_P^{n + \gamma_1 - \gamma}$. We thus obtain the bound
\begin{equ}
\|T_1\|_\ell \lesssim \$f_1;g_1\$_{\gamma_1,\eta_1;\K} \|x-y\|_\s^{\gamma - \ell} \|x,y\|_P^{(\eta_2 \wedge \alpha_2) +\eta_1 - \gamma}\;.
\end{equ}
Since $\eta \le (\eta_2 \wedge \alpha_2) +\eta_1$, this bound is precisely as required.

The bound on $T_2$ follows in a similar way, once we note that for the pairs $(x,y)$ under
consideration one has
\begin{equs}
\|\Gamma_{xy} f_1(y)\|_\ell &\ls \sum_{m \ge \ell}\|x-y\|_\s^{m-\ell}\|f_1(y)\|_m
\ls \sum_{m \ge \ell}\rho_\s^{m-\ell}(x, y)\|f_1(y)\|_m \\
&\ls \sum_{m \ge \ell}\rho_\s^{m-\ell}(x, y)\rho_\s^{(\eta_1-m)\wedge 0}(x, y)
\lesssim \rho_\s^{(\eta_1-\ell)\wedge 0}(x, y)\;,\label{e:remBoundxy}
\end{equs}
where we used \eref{e:simDist} to obtain the penultimate bound,
so that $\Gamma_{xy} f_1(y)$ satisfies essentially the same bounds as $f_1(x)$.

Regarding the term $T_5$, we obtain
\begin{equ}
\|T_5\|_\ell \ls \|f_2-g_2\|_{\gamma_2,\eta_2;\K}\sum_{m+n = \ell \atop m \ge \alpha_1; n\ge \alpha_2}
\|x-y\|_\s^{\gamma_1-m}\|x,y\|_P^{\eta_1-\gamma_1} \|y\|_P^{(\eta_2-n)\wedge 0}\;,
\end{equ}
from which the required bound follows in the same way as for $T_1$. The term $T_3$ is treated
in the same way by making again use of the remark \eref{e:remBoundxy}, this time with 
$g_1(y) - f_1(y)$ playing the role of $f_1(y)$.

The remaining term $T_4$ can be bounded in virtually the same way as $T_5$, the main difference
being that the bounds on $\bigl(\bar \Gamma_{xy} - \Gamma_{xy}\bigr) f_1(y)$ are proportional
to $\|\Gamma-\bar \Gamma\|_{\gamma_1;\K}$, so that one has
\begin{equ}
\|T_4\|_\ell \ls \|\Gamma-\bar \Gamma\|_{\gamma_1;\K} \|x-y\|_\s^{\gamma - \ell} \|x,y\|_P^{(\eta_1 \wedge \alpha_1) +\eta_2 - \gamma}\;.
\end{equ}
Combining all of these bounds completes the proof.
\end{proof}

\subsection{Composition with smooth functions}

Similarly to the case of multiplication of two modelled distributions, we can compose them with 
smooth functions as in Section~\ref{sec:compSmooth}, provided that they belong to $\CD^{\gamma,\eta}_P(V)$
for some function-like sector $V$ stable under the product $\star$, and for some
$\eta \ge 0$.

\begin{proposition}\label{prop:compSmoothSing}
Let $P$ be as above, let $\gamma > 0$, and let
$f_i \in \CD^{\gamma,\eta}_P(V)$ be a collection of $n$ modelled distributions for some
function-like sector $V$ which is  stable under the product $\star$.
Assume furthermore that $V$ is $\gamma$-regular in the sense of Definition~\ref{def:regularity}.

Let furthermore $F\colon \R^n \to \R$ be a smooth function. Then, provided that $\eta \in [0,\gamma]$,
the modelled distribution
$\hat F_\gamma(f)$ defined as in Section~\ref{sec:compSmooth} also belongs to $\CD^{\gamma,\eta}_P(V)$.
Furthermore, the map $\hat F_\gamma\colon \CD^{\gamma,\eta}_P(V) \to\CD^{\gamma,\eta}_P(V)$ is locally Lipschitz
continuous in any of the seminorms $\|\cdot\|_{\gamma,\eta;\K}$ and $\$\cdot\$_{\gamma,\eta;\K}$.
\end{proposition}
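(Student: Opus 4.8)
The plan is to mimic the proof of Theorem~\ref{theo:smooth} (composition with smooth functions in the uniform case) and track the extra weight coming from the distance to $P$, exactly as was done in Proposition~\ref{prop:multSing} relative to Theorem~\ref{theo:mult}. Recall that $\hat F_\gamma(f)(x) = \CQ_\gamma^- \hat F(f(x))$ with $\hat F(a) = \sum_k \frac{D^kF(\bar a)}{k!} \tilde a^{\star k}$, so the key input is that $\tilde f_i(x) \in T_\zeta^+$ (where $\zeta>0$ is the smallest nonzero homogeneity of $V$) with $\|\tilde f_i(x)\|_\ell \lesssim \|x\|_P^{(\eta - \ell)\wedge 0}$, and that $|\bar f_i(x)| \lesssim 1$ locally uniformly (this uses $\eta \ge 0$, so the $T_0$-component stays bounded up to $P$). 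Since $F$ is smooth, $D^kF(\bar f(x))$ is bounded on the relevant range, so the pointwise bound $\|\hat F_\gamma(f)(x)\|_\ell \lesssim \|x\|_P^{(\eta-\ell)\wedge 0}$ follows term by term just as in the first display of the proof of Proposition~\ref{prop:multSing}, using that each factor $\tilde f_i(x)$ contributes a weight $\|x\|_P^{(\eta-\ell_i)\wedge 0}$ and that $\zeta>0$ guarantees only finitely many $k$ contribute at each order below $\gamma$.

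The substantive part is the bound on $\hat F_\gamma(f)(x) - \Gamma_{xy}\hat F_\gamma(f)(y)$. As in Theorem~\ref{theo:mult}, by Definition~\ref{def:singDist} it suffices to treat pairs $(x,y)$ with $2\|x-y\|_\s \le d_\s(x,P)\wedge d_\s(y,P) \le 1$, for which $\|x\|_P \sim \|y\|_P \sim \|x,y\|_P$ by the triangle inequality (the analogue of \eref{e:simDist}). First I would use $\gamma$-regularity to replace $\Gamma_{xy}\hat F_\gamma(f)(y)$ by $\sum_{|k|\le L}\frac{D^kF(\bar f(x))}{k!}(\Gamma_{xy}\tilde f(y))^{\star k}$ up to a remainder $R_1$; the discarded terms have total homogeneity $\ge \gamma$, so exactly as in \eref{e:boundProdGamma} — but now weighting each factor $\|\tilde f_i(y)\|_m$ by $\|x,y\|_P^{(\eta-m)\wedge 0}$ and using $m+n+\cdots \ge \gamma$ together with $\|x-y\|_\s \le \|x,y\|_P$ — one gets $\|R_1(x,y)\|_\ell \lesssim \|x-y\|_\s^{\gamma-\ell}\|x,y\|_P^{\eta-\gamma}$, which is the right order since $\eta \ge 0$ (so the leading weight is $\|x,y\|_P^{\eta-\gamma}$, not something worse). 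Then, writing $\Gamma_{xy}\tilde f(x) = \tilde f(y) + (\bar f(y)-\bar f(x))\one + R_f(x,y)$ with $\|R_f(x,y)\|_\ell \lesssim \|x-y\|_\s^{\gamma-\ell}\|x,y\|_P^{\eta-\gamma}$ coming from $f\in\CD^{\gamma,\eta}_P$, and Taylor-expanding $D^kF(\bar f(x))$ around $\bar f(y)$ — noting $|\bar f(x)-\bar f(y)| \lesssim \|x-y\|_\s^{\zeta\wedge\eta}$ (this is the $\ell=0$ part of the bound, and is where the weight interacts with the Hölder bound on the scalar part) — one reaches the analogue of \eref{e:lastbx}, and the generalised binomial identity collapses the main term to $\CQ_\gamma^- \hat F(f(y)) = \hat F_\gamma(f)(y)$, with a remainder $R_3$ satisfying $\|R_3(x,y)\|_\ell \lesssim \|x-y\|_\s^{\gamma-\ell}\|x,y\|_P^{\eta-\gamma}$. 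The bookkeeping of exponents is the only thing to be careful about: every monomial in the expansion that is not absorbed into the binomial sum carries at least one factor with weight, and one has to check that the cumulative weight never falls below $\|x,y\|_P^{\eta-\gamma}$, which holds precisely because $0 \le \eta \le \gamma$ and $\zeta > 0$.

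For the local Lipschitz estimates, I would use exactly the same device as at the end of the proof of Theorem~\ref{theo:smooth}: set $h = f - g$ and write
\begin{equ}
\hat F(f(x)) - \hat F(g(x)) = \sum_{k,i}\int_0^1 \frac{D^{k+e_i}F(\bar g(x) + t\bar h(x))}{k!}\bigl(\tilde g(x) + t\tilde h(x)\bigr)^{\star k} h_i(x)\,dt\;,
\end{equ}
so that every term carries an explicit factor $h_i(x)$, which is responsible for the prefactors $\|h\|_{\gamma,\eta;\K}$ and $\$h\$_{\gamma,\eta;\K}$ respectively. One then reruns the computation above with $F$ replaced by its derivatives and with this extra factor, exactly as in Proposition~\ref{prop:multSing} for $T_1,\dots,T_5$; the convexity of $[0,1]\ni t \mapsto \bar g(x)+t\bar h(x)$ keeps all scalar arguments in a fixed compact set, so smoothness of $F$ suffices. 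The main obstacle — really the only place requiring genuine care rather than transcription — is verifying that the interaction between the Hölder-in-$\|x-y\|_\s$ bound and the polynomial-in-$\|x,y\|_P$ weight closes at every homogeneity level $\ell < \gamma$, i.e.\ that one never produces a term of order $\|x,y\|_P^{\eta'-\gamma}$ with $\eta' < \eta$; this is exactly the role of the hypothesis $\eta \in [0,\gamma]$ and is the analogue of the exponent computation \eref{e:prodSing} in the multiplication case.
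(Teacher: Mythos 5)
Your strategy is exactly the paper's: transcribe the proof of Theorem~\ref{theo:smooth} while attaching to every factor the appropriate power of $\|x,y\|_P$, in the same way that Proposition~\ref{prop:multSing} upgrades Theorem~\ref{theo:mult}. The supremum bound (which implicitly needs the elementary inequality $\sum_i\bigl((\eta-\ell_i)\wedge 0\bigr)\ge(\eta-\ell)\wedge 0$), the reduction to pairs with $\|x-y\|_\s\ls\|x,y\|_P$, the treatment of $R_1$ and $R_f$, the collapse of the main term by the binomial identity, and the integral representation for the Lipschitz bounds all match the paper's argument.

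There is, however, one concrete estimate in your sketch that is too weak for the exponent bookkeeping you defer to the end, namely $|\bar f(x)-\bar f(y)|\ls\|x-y\|_\s^{\zeta\wedge\eta}$. The target bound for the component of homogeneity $\beta$ is $\|x-y\|_\s^{\gamma-\beta}\,\|x,y\|_P^{\eta-\gamma}$; since $\|x,y\|_P$ may be of order one while $\|x-y\|_\s$ is small, a bound by a pure power $\|x-y\|_\s^{a}$ suffices only if $a\ge\gamma-\beta$. The Taylor remainder of $D^kF$ is controlled by $|\bar f(x)-\bar f(y)|^{\gamma/\zeta-|k|}$, so your estimate yields $\|x-y\|_\s^{(\zeta\wedge\eta)(\gamma/\zeta-|k|)}$ which, combined with the factor $\|x-y\|_\s^{\zeta|k|-\beta}$ coming from $\bigl\|(\tilde f(y)+(\bar f(y)-\bar f(x))\one)^{\star k}\bigr\|_\beta$, gives the total exponent $a=\gamma-\beta-(\gamma-|k|\zeta)(1-\eta/\zeta)$ when $\eta<\zeta$. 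This is strictly smaller than $\gamma-\beta$ unless $|k|\zeta=\gamma$, so the bound fails precisely in the regime $\|x,y\|_P\sim 1$ where no compensating weight is available. The cure is to keep the two length scales separate, i.e.\ to use $|\bar f(x)-\bar f(y)|\ls\|x-y\|_\s^{\zeta}\,\|x,y\|_P^{(\eta-\zeta)\wedge 0}$ (which follows from $\|\Gamma_{xy}\tilde f(y)\|_0$ plus the $\CD^{\gamma,\eta}_P$ bound, and reduces to your estimate only after the lossy substitution $\|x,y\|_P\ge\|x-y\|_\s$). With the factorised form, the loss relative to $\|x-y\|_\s^{\zeta}$ appears as a power of $\|x,y\|_P$ attached to each factor of $\bar f(x)-\bar f(y)$, and one checks that the cumulative such power always stays above $\eta-\gamma$ when $0\le\eta\le\gamma$; this is the computation the paper carries out with the exponents $\mu_k$ and $\mu_{m,\ell}$. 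When $\eta\ge\zeta$ the two estimates coincide and your version is already sufficient.
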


\begin{remark}
In fact, we do not need $F$ to be $\CC^\infty$, but the same regularity requirements as in 
Section~\ref{sec:compSmooth} suffice. Also, it is likely that one could obtain 
continuity in the strong sense, but in the interest of brevity, we refrain from doing so.
\end{remark}

\begin{proof}
Write $b(x) = \hat F_\gamma(f(x))$ as before. 
We also set $\zeta \in [0,\gamma]$ as in the proof of Theorem~\ref{theo:smooth}.
Regarding the bound on $\|b\|_{\gamma,\eta;\K}$, we note first that since we assumed
that $\eta \ge 0$, \eref{e:boundDgammaSing} implies that the quantities $D^k F(\bar f(x))$
are locally uniformly bounded. It follows that one has the bound
\begin{equ}
\|b(x)\|_\ell \ls \sum_{\ell_1+\ldots+\ell_n = \ell} \|f(x)\|_{\ell_1}\ldots \|f(x)\|_{\ell_n}\;,
\end{equ}
where the sum runs over all possible ways of decomposing $\ell$ into 
finitely many strictly positive elements $\ell_i \in A$.
Note now that one necessarily has the bound
\begin{equ}[e:boundsumIndices]
\bigl((\eta-\ell_1)\wedge 0\bigr)+\ldots+\bigl((\eta-\ell_n)\wedge 0\bigr) \ge (\eta-\ell)\wedge 0\;.
\end{equ}
Indeed, if all of the terms on the left vanish, then the bound holds trivially. Otherwise,
at least one term is given by $\eta - \ell_i$ and, for all the other terms, we use the fact that
$(\eta - \ell_j)\wedge 0 \ge -\ell_j$.
Since $\|x\|_P \le 1$, it follows at once that 
\begin{equ}
\|b(x)\|_\ell \ls \|x\|_P^{(\eta-\ell)\wedge 0}\;,
\end{equ}
as required.

In order to bound $\Gamma_{xy}b(y) - b(x)$, we proceed exactly as in the proof of 
Theorem~\ref{theo:smooth}. All we need to show is that the various remainder terms
appearing in that proof satisfy bounds of the type
\begin{equ}[e:wantedBoundRi]
\|R_i(x,y)\|_\ell \ls \|x-y\|_\s^{\gamma-\ell} \|x,y\|_P^{\eta-\gamma}\;.
\end{equ}
Regarding the term $R_1(x,y)$, it follows from a calculation similar to \eref{e:termDiffxy}
that it consists of terms proportional to
\begin{equ}
\Gamma_{xy} \CQ_{\ell_1} \tilde f(y)\star\ldots\star \Gamma_{xy} \CQ_{\ell_n} \tilde f(y)\;,
\end{equ}
where $\sum \ell_i \ge \gamma$. Combining the bounds on $\Gamma$ with the definition
of the space $\CD^{\gamma,\eta}_P$, we know furthermore that 
each of these factors satisfies a bound of the type
\begin{equ}[e:boundGxyf]
\|\Gamma_{xy} \CQ_{\ell_i} \tilde f(y)\|_m \lesssim \|x-y\|_\s^{\ell_i-m} \|x\|_P^{(\eta-\ell_i)\wedge 0}\;.
\end{equ} 
Combining this with the fact that  $\sum \ell_i \ge \gamma$, that $\|x-y\|_\s \ls \|x\|_P$,
and the bound \eref{e:boundsumIndices}, the bound \eref{e:wantedBoundRi} follows for $R_1$.

Regarding $R_f$, it follows from the definitions that
\begin{equ}[e:bRf]
\|R_f(x,y)\|_m \lesssim \|x-y\|_\s^{\gamma-m} \|x,y\|_P^{\eta - \gamma}\;.
\end{equ}
Furthermore, as a consequence of the fact that $\eta \ge 0$ and $\|x-y\|_\s \ls \|x\|_P$,
it follows from \eref{e:boundGxyf}  and \eref{e:bRf} that 
\begin{equ}
\|\Gamma_{xy} \tilde f(y)\|_m \lesssim \|x-y\|_\s^{-m}\;,\qquad
\|\tilde f(y) + (\bar f(y) - \bar f(x))\one\|_m \ls \|x-y\|_\s^{-m}\;.
\end{equ}
Combining this with \eref{e:bRf} and the expression for $R_2$, we immediately conclude that
$R_2$ also satisfies \eref{e:wantedBoundRi}.

Note now that one has the bound
\begin{equs}
|\bar f(x) - \bar f(y)| &\ls \|\Gamma_{xy}\tilde f(y)\|_0 + \|x-y\|_\s^\gamma\, \|x,y\|_P^{\eta-\gamma}\label{e:boundfbardiff}\\
&\ls \sum_{\zeta \le \ell < \gamma} \|x-y\|_\s^\ell \,\|x,y\|_P^{(\eta-\ell)\wedge 0}
\ls \|x-y\|_\s^\zeta\,  \|x,y\|_P^{(\eta - \zeta)\wedge 0}\;,
\end{equs}
where we used the fact that $\zeta \le \gamma$.
Since we furthermore know that $\bar f(x)$ is uniformly bounded in $\K$ as a consequence of the
fact that $\eta \ge 0$, it follows that the bound equivalent to \eref{e:TaylorF} in this context is
given by
\begin{equ}[e:boundDkF]
D^kF(\bar f(x)) = \sum_{|k+\ell| \le L} {D^{k+\ell}F(\bar f(y)) \over \ell !} \bigl(\bar f(x) - \bar f(y)\bigr)^\ell + \CO \bigl(\|x-y\|_\s^{\gamma-|k|\zeta} \|x,y\|_P^{\mu_k}\bigr)\;,
\end{equ}
where $L = \lfloor \gamma/\zeta \rfloor$ and the exponent $\mu_k$ is given by $\mu_k = \bigl(|k|\zeta - \gamma - |k|\eta + (\gamma\eta / \zeta)\bigr)\wedge 0$.
We can furthermore assume without loss of generality that $\zeta \le 1$.
Furthermore, making use of \eref{e:boundfbardiff}, it follows as in \eref{e:boundProdf} that
\begin{equs}
\bigl\| \bigl(\tilde f(y) + (\bar f(y) - \bar f(x))\one\bigr)^{\star k}\bigr\|_\beta &\lesssim \sum_{m\ge 0}\sum_{\ell}
\|x-y\|_\s^{\reg (|k| - m)} \|x,y\|_P^{(|k|-m)((\eta-\zeta)\wedge 0)} \\
&\qquad \times \|x,y\|_P^{(\eta-\ell_1)\wedge 0}\cdots \|x,y\|_P^{(\eta-\ell_m)\wedge 0} \;,
\end{equs}
where the second sum runs over all indices $\ell_1,\ldots,\ell_m$ with $\sum \ell_i = \beta$
and $\ell_i \ge \zeta$ for every $i$. In particular, one has the bound
\begin{equs}
\bigl\| \bigl(\tilde f(y) &+ (\bar f(y) - \bar f(x))\one\bigr)^{\star k}\bigr\|_\beta \lesssim \|x-y\|_\s^{\reg |k| - \beta} \|x,y\|_P^{\beta - \reg m} \\
&\quad \times\sum_{m\ge 0}\sum_{\ell} \|x,y\|_P^{(|k|-m)((\eta-\zeta)\wedge 0)} 
 \|x,y\|_P^{(\eta-\ell_1)\wedge 0}\cdots \|x,y\|_P^{(\eta-\ell_m)\wedge 0} \;.
\end{equs}
Let us have a closer look at the exponents of $\|x,y\|_P$ appearing in this expression:
\begin{equ}
\mu_{m,\ell} \eqdef \beta - \zeta m + (|k|-m)((\eta-\zeta)\wedge 0) + \sum_{i=1}^m (\eta-\ell_i)\wedge 0\;.
\end{equ}
Note that, thanks to the distributivity of the infimum with respect to addition
and to the facts that $\sum \ell_i = \beta$ and $\ell_i \ge \zeta$, one has the bound
\begin{equ}
\sum_{i=1}^m (\eta-\ell_i)\wedge 0 \ge \inf_{n\le m} \bigl(n\eta - \beta + (m-n)\zeta\bigr)
= m\zeta - \beta + \inf_{n \le m} n(\eta - \zeta)\;.
\end{equ}
As a consequence, we have $\mu_{m,\ell} \ge 0$ if $\eta \ge \zeta$ and 
$\mu_{m,\ell} \ge |k|(\eta - \zeta)$ otherwise, so that
\begin{equ}
\bigl\| \bigl(\tilde f(y) + (\bar f(y) - \bar f(x))\one\bigr)^{\star k}\bigr\|_\beta \lesssim \|x-y\|_\s^{\reg |k| - \beta} \|x,y\|_P^{|k|(\eta-\zeta) \wedge 0}\;.
\end{equ}
Note furthermore that, by an argument similar to above, one has the bound
\begin{equ}
\mu_k + |k|(\eta-\zeta) \wedge 0 \ge (\eta-\zeta){\gamma \over \zeta}\wedge 0 \ge (\eta - \gamma)\wedge 0 = \eta - \gamma\;,
\end{equ}
where we used the fact that $\zeta \le \gamma$ and 
the last identity follows from the assumption that $\eta \le \gamma$.
Combining this with \eref{e:boundDkF} and the definition of $R_3$ from \eref{e:lastbx}, we 
obtain the bound \eref{e:wantedBoundRi} for $R_3$, which implies that $\hat F(f) \in \CD_P^{\gamma,\eta}$
as required.

The proof of the local Lipschitz continuity then follows in exactly the same way as in
the proof of Theorem~\ref{theo:smooth}.
\end{proof}

\subsection{Differentiation}

In the same context as Section~\ref{sec:diff}, one has the following result:

\begin{proposition}\label{prop:diffSing}
Let $\DD$ be an abstract gradient as in Section~\ref{sec:diff} and let 
$f \in \CD^{\gamma,\eta}_P(V)$ for some $\gamma > \s_i$ and $\eta \in \R$. 
Then, $\DD_i f \in \CD^{\gamma-\s_i, \eta-\s_i}_P$. 
\end{proposition}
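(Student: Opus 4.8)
The plan is to reduce Proposition~\ref{prop:diffSing} to the already-established Definition~\ref{def:Di} of an abstract gradient, checking that the two defining properties of $\DD_i$ interact correctly with the new weights appearing in Definition~\ref{def:singDist}. First I would recall that since $\DD_i$ acts on each $V_\alpha$ by mapping it into $T_{\alpha-\s_i}$, and since $\gamma > \s_i$ by hypothesis, the map $x \mapsto \DD_i f(x)$ takes values in $T_{\gamma-\s_i}^-$, so it is at least a candidate element of $\CD^{\gamma-\s_i,\eta-\s_i}_P(\DD_i V)$. The content of the proposition is then purely a matter of verifying the two bounds in \eref{e:boundDgammaSing} with $\gamma$ replaced by $\gamma - \s_i$ and $\eta$ replaced by $\eta - \s_i$.

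The key point is that applying $\DD_i$ shifts the homogeneity index of every component downward by exactly $\s_i$, and both the local scaling exponent and the weight exponent in the definition of the singular spaces are controlled \emph{relative to this index}. Concretely, for $\ell < \gamma - \s_i$, the component $\CQ_\ell \DD_i f(x)$ is (a linear combination of) $\CQ_{\ell+\s_i} f(x)$ mapped by $\DD_i$, which by continuity of $\DD_i$ on the finitely many relevant $T_\alpha$ and by the supremum bound $\|f(x)\|_{\ell+\s_i} \lesssim \|x\|_P^{(\eta - \ell - \s_i)\wedge 0} = \|x\|_P^{((\eta - \s_i) - \ell)\wedge 0}$ gives exactly the first bound in \eref{e:boundDgammaSing} for $\DD_i f$ with the shifted exponents. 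For the second bound, I would use the commutation property $\Gamma \DD_i = \DD_i \Gamma$ from Definition~\ref{def:Di}, so that
\begin{equ}
\DD_i f(x) - \Gamma_{xy} \DD_i f(y) = \DD_i\bigl(f(x) - \Gamma_{xy} f(y)\bigr)\;,
\end{equ}
and then again apply the continuity of $\DD_i$ together with the bound $\|f(x)-\Gamma_{xy}f(y)\|_{\ell+\s_i} \lesssim \|x-y\|_\s^{\gamma - \ell - \s_i}\|x,y\|_P^{\eta-\gamma}$ valid for $\ell + \s_i < \gamma$, i.e.\ for $\ell < \gamma - \s_i$. Writing $\gamma - \ell - \s_i = (\gamma - \s_i) - \ell$ and $\eta - \gamma = (\eta - \s_i) - (\gamma - \s_i)$ shows this is precisely the required estimate on $\K_P$. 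One also has to note that $\K_P$ is defined only through the geometry of $P$ and $\R^d$, not through $\gamma$ or $\eta$, so the set of pairs over which the supremum is taken is unchanged.

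Since there is no real analytic difficulty here — the whole argument is bookkeeping on exponents, mirroring the proof in Section~\ref{sec:diff} that $\DD_i$ maps $\CD^\beta_\alpha$ into $\CD^{\beta-\s_i}_{\alpha-\s_i}$ — I do not expect any genuine obstacle; the only thing requiring a little care is making sure that the weight exponent $\eta - \s_i$ is the right one rather than, say, $(\eta - \s_i)\wedge 0$ or something forced by an integrability constraint. Because $\DD_i$ is a purely algebraic local operation (unlike $\CK_\gamma$), there is no analogue of the codimension condition $\alpha \wedge \eta > -\m$ from Proposition~\ref{prop:recSing}, so no such restriction is needed and $\eta$ is genuinely allowed to be any real number, as stated. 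If one additionally wanted the Lipschitz-continuity-in-the-model type bounds, those would follow in the identical fashion by applying $\DD_i$ inside the corresponding expression defining $\$f;\bar f\$_{\gamma,\eta;\K}$ and using its continuity, but the statement as given only asks for the mapping property, so the proof can stop after checking \eref{e:boundDgammaSing}.
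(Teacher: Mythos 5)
Your proof is correct and is exactly the argument the paper has in mind: the paper's own proof is the one-liner "immediate consequence of the definition \eref{e:boundDgammaSing} and the properties of abstract gradients", and your verification (homogeneity shift by $\s_i$ for the supremum bound, commutation $\Gamma\DD_i=\DD_i\Gamma$ for the translation bound, and the identities $\gamma-\ell-\s_i=(\gamma-\s_i)-\ell$, $\eta-\gamma=(\eta-\s_i)-(\gamma-\s_i)$) is precisely the bookkeeping that makes it immediate.
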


\begin{proof}
This is an immediate consequence of the definition \eref{e:boundDgammaSing} and the properties 
of abstract gradients.
\end{proof}

\subsection{Integration against singular kernels}

In this section, we extend the results from Section~\ref{sec:integral} to spaces of singular
modelled distributions. Our main result can be stated as follows.

\begin{proposition}\label{prop:intSing}
Let $\TT$, $V$, $K$ and $\beta$ be as in Theorem~\ref{theo:Int} and let $f \in \CD^{\gamma,\eta}_P(V)$
with $\eta < \gamma$. Denote furthermore by $\alpha$ the regularity of the sector $V$ and assume
that $\eta \wedge \alpha > -\m$. 
Then, provided that $\gamma + \beta \not \in \N$ and $\eta + \beta \not \in \N$,
one has $\CK_\gamma f \in \CD^{\bar \gamma,\bar \eta}_P$
with 
$\bar \gamma = \gamma + \beta$ and $\bar \eta = (\eta\wedge\alpha)+\beta$.

Furthermore, in the situation analogous to that of the last part of Theorem~\ref{theo:Int},
one has the bound
\begin{equ}[e:diffIntSing]
\$\CK_\gamma f ; \bar \CK_\gamma \bar f\$_{\bar\gamma,\bar \eta;\K} \lesssim 
\$f ; \bar f\$_{\gamma,\eta;\bar \K} + \|\Pi - \bar \Pi\|_{\gamma;\bar \K}
+ \|\Gamma - \bar \Gamma\|_{\bar \gamma;\bar \K}\;,
\end{equ}
for all $f \in \CD^{\gamma,\eta}_P(V;\Gamma)$ and $\bar f \in \CD^{\gamma,\eta}_P(V;\bar \Gamma)$.
\end{proposition}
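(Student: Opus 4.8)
The plan is to mimic the proof of Theorem~\ref{theo:Int} step by step, but to track carefully the additional factors of $\|x\|_P$ and $\|x,y\|_P$ that appear because $f$ lives in the weighted space $\CD^{\gamma,\eta}_P$ rather than $\CD^\gamma$. As in Theorem~\ref{theo:Int}, the operator is $\CK_\gamma f = \CI f + \CJ(\cdot) f + \CN_\gamma f$. The three building blocks are the bounds of Lemma~\ref{lem:boundPibar}, Lemma~\ref{lem:wellDefInt} and Lemma~\ref{lem:boundGammaxy} on the test functions $K^{k,\alpha}_{n;xy}$, together with the new reconstruction result Proposition~\ref{prop:recSing}, which is exactly what lets us make sense of $\CR f$ (and hence of $\CN_\gamma$) as a genuine distribution near $P$ under the hypothesis $\eta\wedge\alpha>-\m$. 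First I would fix a compact $\K$, reduce to $\$f\$_{\gamma,\eta;\K}\le 1$ by linearity and locality, and observe (as in Theorem~\ref{theo:Int}) that away from $P$ everything is already covered, so only two regimes matter: $x$ (or $x,y$) close to $P$.

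The key observation is the heuristic scaling \eref{e:approxScaling}: on a region where $d_\s(\cdot,P)\sim\lambda$, the $\CD^{\gamma,\eta}_P$-norm behaves like $\lambda^{\gamma-\eta}$ times the $\CD^\gamma$-norm, and $\CR f$ picks up a factor $\lambda^{\eta-\gamma}$ relative to the unweighted Schauder estimate, which after improving regularity by $\beta$ gives the exponent $\bar\eta=(\eta\wedge\alpha)+\beta$ rather than $\gamma+\beta$. Concretely: for the supremum bound on $\|\CK_\gamma f(x)\|_\ell$, I would split into $\ell$ non-integer (handled by $\CI$, using $\|\CI a\|_\ell=\|a\|_{\ell-\beta}$ and the $\CD^{\gamma,\eta}_P$ bound on $f(x)$) and $\ell$ integer (handled by $\CJ(x)f(x)$ and $\CN_\gamma f(x)$, where one uses Proposition~\ref{prop:recSing} plus the $\beta$-regularising bounds on $K$, exactly as in \eref{e:boundRf2} but with $d_\s(x,P)^{\eta-\gamma}$ weights, summing the geometric series in $n$). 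The two-point bound $\|\CK_\gamma f(x)-\Gamma_{xy}\CK_\gamma f(y)\|_\ell$ is again trivial for $\ell\notin\N$ via $\CI$; for $\ell\in\N$ one uses the identity following \eref{e:actionGammaI} to reduce to controlling $\Gamma_{xy}\CN_\gamma^{(n)}f(y)-\CN_\gamma^{(n)}f(x)$ and $\CJ^{(n)}(x)(\Gamma_{xy}f(y)-f(x))$, splitting the sum over $n$ at $2^{-n}\sim\|x-y\|_\s$. In the small-scale regime the bounds \eref{e:boundNgamma}, \eref{e:boundJGf} and the identities \eref{e:expressions}, \eref{e:largeScaleIden} carry over verbatim except that each occurrence of an $f$-norm is now accompanied by the appropriate power of $\|x,y\|_P$, using \eref{e:simDist}-type comparisons $\|x\|_P\sim\|y\|_P\sim\|x,y\|_P$ valid since we only consider $(x,y)\in\K_P$, i.e.\ $\|x-y\|_\s\le\|x,y\|_P$. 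In the large-scale regime one again uses the Taylor-remainder representation of $K^{k,\gamma}_{n;xy}$ as in Lemma~\ref{lem:boundPibar}, but with $\gamma$ replaced by $\eta$ in the relevant exponents, together with \eref{e:boundRfPi} and the $\CD^{\gamma,\eta}_P$ control on $\Gamma_{\bar y y}f(y)-f(\bar y)$.

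The identity $\CR\CK_\gamma f=K*\CR f$ then follows as in Theorem~\ref{theo:Int} from the uniqueness part of Proposition~\ref{prop:recSing}: one checks $(\Pi_x\CK_\gamma f(x)-K*\CR f)(\psi_x^\lambda)\lesssim\lambda^{\bar\eta}$ for $x$ near $P$ and $\lesssim\lambda^{\gamma+\beta}$ away from it, which again reduces to bounding $(\Pi_x f(x)-\CR f)(K^\gamma_{n;yx})$ in the two regimes $2^{-n}\lessgtr\lambda$ with $\|x\|_P$ weights inserted. The continuity estimate \eref{e:diffIntSing} is obtained by running the whole argument with two models, using the difference bounds already supplied in Lemmas~\ref{lem:boundPibar}, \ref{lem:wellDefInt}, \ref{lem:boundGammaxy} (the ``second line of \eref{e:boundDiffReal}'' versions) and the difference part of Proposition~\ref{prop:recSing}, together with the decompositions \eref{e:idenDiffPiGamma}, \eref{e:termsTi}-style splittings as in the proof of Proposition~\ref{prop:multSing}; this part is purely mechanical once the first part is done.

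The main obstacle I expect is \textbf{bookkeeping of the weight exponents} rather than any genuinely new analytic difficulty: one must verify in each of the many sub-sums that, after replacing $\|x-y\|_\s^{\gamma-\ell}$-type factors by $\|x-y\|_\s^{\gamma-\ell}\|x,y\|_P^{\eta-\gamma}$ and similarly for the supremum terms, the resulting powers of $2^{-n}$ are still strictly negative (so the geometric sums converge) — the conditions $\gamma+\beta\notin\N$ and $\eta+\beta\notin\N$ are exactly what rules out the borderline zero exponents, just as $\gamma+\beta\notin\N$ did in Theorem~\ref{theo:Int}, while $\eta\wedge\alpha>-\m$ is needed only to invoke Proposition~\ref{prop:recSing}. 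A secondary subtlety is the definition of $\CN_\gamma$ near $P$: one must check that $(\CR f-\Pi_x f(x))(D_1^k K_n(x,\cdot))$ is still summable in $n$ when $x$ is close to $P$, which follows from the refined Lemma~\ref{lem:restriction} applied at scale $2^{-n}\ll d_\s(x,P)$ for the far part and from Proposition~\ref{prop:recSing} directly for the near part.
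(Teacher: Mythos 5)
Your overall strategy (redo the proof of Theorem~\ref{theo:Int} while tracking the weights, with Proposition~\ref{prop:recSing} supplying the meaning of $\CR f$ near $P$) is the right one and matches the paper's approach for the non-integer levels, for the supremum bound, and for the small-scale part of the two-point bound. However, there is one genuine gap: for the two-point estimate you split the sum over $n$ only at $2^{-n}\sim\|x-y\|_\s$ and claim that in the resulting ``large-scale'' regime the bounds of Theorem~\ref{theo:Int} carry over verbatim with an extra factor $\|x,y\|_P^{\eta-\gamma}$. This fails once $2^{-n}\gtrsim \|x,y\|_P$. The estimate $\bigl|\bigl(\Pi_y f(y)-\CR f\bigr)\bigl(D_1^{k+\ell}K_n(\bar y,\cdot)\bigr)\bigr|\lesssim 2^{(|k+\ell|_\s-\beta-\gamma)n}\,\|x,y\|_P^{\eta-\gamma}$ that you want to import from \eref{e:boundRfPi} rests on the local reconstruction bound (Lemma~\ref{lem:restriction} / Remark~\ref{rem:boundReconstr}), which only controls $\CR f-\Pi_{\bar y}f(\bar y)$ in terms of the \emph{unweighted} local seminorm of $f$ on a ball of radius $\sim 2^{-n}$ around $\bar y$. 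When $2^{-n}\ge\tfrac12\|x,y\|_P$ that ball meets $P$, where the unweighted seminorm of an element of $\CD^{\gamma,\eta}_P$ is not finite, so the bound you invoke is simply not available there.

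The paper therefore works with \emph{three} regimes, $2^{-n}\le\|x-y\|_\s$, $2^{-n}\in[\|x-y\|_\s,\tfrac12\|x,y\|_P]$, and $2^{-n}\ge\tfrac12\|x,y\|_P$, and in the last one it abandons the cancellation between $\Pi_y f(y)$ and $\CR f$ entirely: it bounds $|(\CR f)(D_1^{k+\ell}K_n(\bar y,\cdot))|\lesssim 2^{(|k+\ell|_\s-\beta-(\alpha\wedge\eta))n}$ using the global H\"older exponent $\alpha\wedge\eta$ from Proposition~\ref{prop:recSing}, and $|(\Pi_y f(y))(D_1^{k+\ell}K_n(\bar y,\cdot))|\lesssim\sum_{\zeta}\|y\|_P^{(\eta-\zeta)\wedge 0}2^{(|k+\ell|_\s-\beta-\zeta)n}$ directly from Definition~\ref{def:singDist}, then sums over $2^{-n}\ge\|x,y\|_P$ using $\|x-y\|_\s\le\tfrac12\|x,y\|_P$. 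This third regime is precisely where the exponent $\bar\eta=(\eta\wedge\alpha)+\beta$ (with the $\alpha$) is produced; your two-regime argument would neither close nor explain why $\alpha$ enters $\bar\eta$. The rest of your outline (well-posedness of $\CN_\gamma$ via a split of $K$ at scale $d_\s(x,P)$, the role of $\eta+\beta\notin\N$ in avoiding borderline exponents, and the mechanical two-model version for \eref{e:diffIntSing}) is consistent with the paper.
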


\begin{proof}
We first observe that $\CN_\gamma f$ is well-defined for a singular
modelled distribution as in the statement. Indeed, for every $x \not \in P$,
it suffices to decompose $K$ as $K = K^{(1)} + K^{(2)}$, where $K^{(1)}$
is given by $K^{(1)} = \sum_{n \ge n_0} K_n$, and $n_0$ is sufficiently large so 
that $2^{-n_0} \le d_\s(x,P)/2$, say. Then, the fact that \eref{e:defIbar} is well-posed
with $K$ replaced by $K^{(1)}$ follows from Theorem~\ref{theo:Int}.
The fact that it is well-posed with $K$ replaced by $K^{(2)}$ follows from the fact that
$K^{(2)}$ is globally smooth and compactly supported, combined with Proposition~\ref{prop:recSing}.

To prove that $\CK_\gamma f$ belongs to $\CD^{\gamma+\beta, (\eta\wedge \alpha)+\beta}_P(V)$, 
we proceed as in the proof of Theorem~\ref{theo:Int}. We first consider values of $\ell$ with 
$\ell \not \in \N$.
For such values, one has as before
$\CQ_\ell \bigl(\CK_\gamma f\bigr)(x) = \CQ_\ell \CI f(x)$
and $\CQ_\ell \Gamma_{xy}\bigl(\CK_\gamma f\bigr)(y) = \CQ_\ell \CI \Gamma_{xy}f(y)$, so that the
required bounds on $\|\CK_\gamma f(x)\|_\ell$, $\|\CK_\gamma f(x) - \Gamma_{xy} \CK_\gamma f(x)\|_\ell$, 
$\|\CK_\gamma f(x) - \bar \CK_\gamma \bar f(x)\|_\ell$, 
as well as $\|\CK_\gamma f(x) - \Gamma_{xy} \CK_\gamma f(x) - \bar \CK_\gamma \bar f(x) + \bar \Gamma_{xy} \bar \CK_\gamma \bar f(x)\|_\ell$
follow at once. (Here and below we use the fact that $\|x,y\|_P^{\eta - \gamma} \le \|x,y\|_P^{(\eta\wedge \alpha) - \gamma}$ since one only considers pairs $(x,y)$
such that $\rho_\s \le 1$.)

It remains to treat the integer values of $\ell$. First, we want to show that one has the bound
\begin{equ}
\|\CK_\gamma f(x)\|_\ell \lesssim \|x\|_P^{(\bar \eta - \ell)\wedge 0}\;,
\end{equ}
and similarly for $\|\CK_\gamma f-\bar \CK_\gamma \bar f\|_\ell$.
For this, we proceed similarly to Theorem~\ref{theo:Int}, noting that 
if $2^{-(n+1)} \le \|x\|_P$ then, by Remark~\ref{rem:boundReconstr}, one has the bound
\begin{equ}
\bigl|\bigl(\CR f - \Pi_x f(x)\bigr) \bigl(D_1^\ell K_n(x,\cdot)\bigr)\bigr| \lesssim 2^{(|\ell|_\s - \beta - \gamma)n} \|x\|_P^{\eta - \gamma}\;.
\end{equ}
(In this expression, $\ell$ is a multiindex.) Furthermore, regarding $\CJ^{(n)}(x) f(x)$, one has
\begin{equ}
\|\CJ^{(n)}(x) f(x)\|_\ell \lesssim \sum_{\zeta > \ell-\beta} \|x\|_P^{(\eta-\zeta)\wedge 0} 2^{(\ell - \beta - \zeta)n}\;.
\end{equ}
Combining these two bounds and summing over the relevant values of $n$ yields
\begin{equ}
\sum_{2^{-(n+1)} \le \|x\|_P} \|\CK_\gamma^{(n)} f\|_\ell \lesssim \sum_{\zeta > \ell-\beta} \|x\|_P^{\zeta + \beta - \ell + ((\eta-\zeta)\wedge 0)}\;,
\end{equ}
which is indeed bounded by $\|x\|_P^{(\bar \eta - \ell)\wedge 0}$ as required since
one always has $\zeta \ge \alpha$.
For $\|x\|_P < 2^{-(n+1)}$ on the other hand, we make use of the reconstruction theorem
for modelled distributions which yields 
\begin{equs}
\bigl|\bigl(\CR f - \Pi_x f(x)\bigr) &\bigl(D_1^\ell K_n(x,\cdot)\bigr) + \CQ_{|\ell|_\s} \CJ^{(n)}(x) f(x) \bigr| \\
&\lesssim 
\bigl|\bigl(\CR f \bigr) \bigl(D_1^\ell K_n(x,\cdot)\bigr)\bigr|
+ \sum_{\zeta \le |\ell|_\s - \beta} \bigl|\bigl(\Pi_x f(x)\bigr) \bigl(D_1^\ell K_n(x,\cdot)\bigr)\bigr| \\
&\lesssim 2^{(|\ell|_\s - \beta - (\eta \wedge \alpha))n} + \sum_{\zeta \le |\ell|_\s - \beta} 2^{(|\ell|_\s - \beta - \zeta)n} \|x\|_P^{(\eta - \zeta)\wedge 0}\;.
\end{equs}
Summing again over the relevant values of $n$ yields again
\begin{equ}
\sum_{2^{-(n+1)} > \|x\|_P} \|\CK_\gamma^{(n)} f\|_\ell \lesssim \sum_{\zeta \le \ell - \beta} \|x\|_P^{\zeta + \beta - \ell + ((\eta-\zeta)\wedge 0)}\;,
\end{equ}
which is bounded by $\|x\|_P^{(\bar \eta - \ell)\wedge 0}$ for the same reason as before.
The corresponding bounds on $\|\CK_\gamma f-\bar \CK_\gamma \bar f\|_\ell$ are obtained in virtually the
same way.

It therefore remains to obtain the bounds on $\|\CK_\gamma f(x) - \bar \CK_\gamma \bar f(x)\|_\ell$
and $\|\CK_\gamma f(x) - \Gamma_{xy} \CK_\gamma f(x) - \bar \CK_\gamma \bar f(x) + \bar \Gamma_{xy} \bar \CK_\gamma \bar f(x)\|_\ell$.
For this, we proceed exactly as in the proof
of Theorem~\ref{theo:Int}, but we keep track of the dependency on $x$ and $y$, rather than just the difference. 
Recall also that we only ever consider the case where $(x,y) \in \K_P$, so that $\|x,y\|_P > \|x-y\|_\s$.
This time, we consider separately the three cases 
$2^{-n} \le  \|x-y\|_\s$, $2^{-n} \in [\|x-y\|_\s, {1\over 2}\|x,y\|_P]$ and $2^{-n} \ge {1\over 2}\|x,y\|_P$.

When $2^{-n} \le \|x-y\|_\s$, 
we use Remark~\ref{rem:boundReconstr} which shows that,
when following the exact same considerations as in 
Theorem~\ref{theo:Int},
we always obtain the same bounds, but multiplied by
a factor $\|x,y\|_P^{\eta-\gamma}$. The case $2^{-n} \le \|x-y\|_\s$ therefore follows at once.

We now turn to the case $2^{-n} \in [\|x-y\|_\s, {1\over 2}\|x,y\|_P]$. As in the proof of Theorem~\ref{theo:Int} 
(see \eref{e:largeScaleIden} in particular), we can again reduce this case to obtaining the bounds
\begin{equs}
 \bigl|\bigl(\Pi_x \CQ_\zeta \bigl(\Gamma_{xy} f(y) - f(x)\bigr)\bigr)\bigl(D_1^k K_n(x,\cdot)\bigr)\bigl|&\lesssim \|x,y\|_P^{\eta-\gamma}\sum_{\delta > 0}\|x-y\|_\s^{\delta+\gamma+\beta - |k|_\s} 2^{\delta n}\\
\bigl|\bigl(\Pi_y f(y) - \CR f\bigr)\bigl(K_{n;xy}^{k,\gamma}\bigr)\bigr| &\lesssim \|x,y\|_P^{\eta-\gamma}\sum_{\delta > 0}\|x-y\|_\s^{\delta+\gamma+\beta - |k|_\s} 2^{\delta n} 
\;,
\end{equs}
for every $\zeta \le |k|_\s - \beta$ and where the sums over $\delta$ contain only finitely many terms.
The first line is obtained exactly as in the proof of Theorem~\ref{theo:Int}, so we focus on the second
line.
Following the same strategy as in the proof of Theorem~\ref{theo:Int}, we
 similarly reduce it to obtaining bounds of the form 
\begin{equ}
\bigl|\bigl(\Pi_y f(y) - \CR f\bigr)\bigl(D_1^{\ell} K_n(\bar y, \cdot)\bigr)\bigr|\lesssim
\|x,y\|_P^{\eta-\gamma} \sum_{\delta > 0}\|x-y\|_\s^{\delta+\gamma+\beta - |\ell|_\s} 2^{\delta n} \;,
\end{equ}
where $\bar y$ is such that $\|x - \bar y\|_\s \le \|x-y\|_\s$ and $\ell$ is a multiindex with 
$|\ell|_\s \ge |k|_\s +\bigl(0 \vee (\gamma +\beta)\bigr)$.
Since we only consider pairs $(x,y)$ such that $\|x-y\|_\s \le {1\over 2} \|x,y\|_P$,
one has $\|y,\bar y\|_P \sim \|x,y\|_P$. As a consequence, we obtain as in the 
proof of Theorem~\ref{theo:Int}
\begin{equ}
\bigl|\bigl(\Pi_{\bar y} \bigl(\Gamma_{\bar y y} f(y) - f(\bar y)\bigr)\bigr)\bigl(D_1^{\ell}K_n(\bar y,\cdot)\bigr)\bigr|
\lesssim \|x,y\|_P^{\eta-\gamma} \sum_{\zeta \le \gamma} \|x-y\|_\s^{\gamma-\zeta} 2^{(|\ell|_\s-\zeta-\beta)n}\;.
\end{equ}
Furthermore, since $2^{-n} \le \|x,y\|_P$,
we obtain as in \eref{e:boundRfPi2} the bound
\begin{equ}
\bigl|\bigl(\Pi_{\bar y} f(\bar y) - \CR f\bigr)\bigl(D_1^{\ell} K_n(\bar y, \cdot)\bigr)\bigr|
\lesssim 2^{(|\ell|_\s -\beta-\gamma)n} \|x,y\|_P^{\eta-\gamma}\;.
\end{equ}
The rest of the argument is then again exactly the same as for Theorem~\ref{theo:Int}.
The corresponding bounds on the distance between $\CK_\gamma f$ and $\bar \CK_\gamma \bar f$
follows analogously.

It remains to consider the case $2^{-n} \ge {1\over 2}\|x,y\|_P$. In this case, we 
proceed as before but, in order to bound the term involving $\Pi_y f(y) - \CR f$, we 
simply use the triangle inequality to rewrite it as
\begin{equ}
\bigl|\bigl(\Pi_y f(y) - \CR f\bigr)\bigl(K_{n;xy}^{k,\gamma}\bigr)\bigr|
\le \bigl|\bigl(\Pi_y f(y)\bigr)\bigl(K_{n;xy}^{k,\gamma}\bigr)\bigr|
+ \bigl|\bigl(\CR f\bigr)\bigl(K_{n;xy}^{k,\gamma}\bigr)\bigr|\;.
\end{equ}
We then use again the representation \eref{e:reprKalphan} for $K_{n;xy}^{k,\gamma}$,
together with the bounds
\begin{equs}
\bigl|\bigl(\CR f\bigr)\bigl(D_1^{k+\ell} K_n(\bar y,\cdot)\bigr)\bigr| &\lesssim
2^{(|k+\ell|_\s - \beta - (\alpha \wedge \eta))n} \;,\\
\bigl|\bigl(\Pi_y f(y)\bigr)\bigl(D_1^{k+\ell} K_n(\bar y,\cdot)\bigr)\bigr| &\lesssim
\sum_{\alpha \le \zeta < \gamma} \|y\|_P^{(\eta - \zeta)\wedge 0} 2^{(|k+\ell|_\s - \beta - \zeta)n}\;.
\end{equs}
Here, the first bound is a consequence of the reconstruction theorem for singular
modelled distributions, while the second bound follows from Definition~\ref{def:singDist}.
Since 
\begin{equ}
2^{(|k+\ell|_\s - \beta - (\alpha \wedge \eta))n} \le 2^{(|k+\ell|_\s - \beta - \alpha)n} + 2^{(|k+\ell|_\s - \beta - \eta)n}\;,
\end{equ}
and since $\eta \in [\alpha,\gamma)$ by assumption, we see that the first bound is actually of the same
form as the second, so that
\begin{equ}
\bigl|\bigl(\Pi_y f(y) - \CR f\bigr)\bigl(D_1^{k+\ell} K_n(\bar y,\cdot)\bigr)\bigr|
\lesssim \sum_{\alpha \le \zeta < \gamma} \|y\|_P^{(\eta - \zeta)\wedge 0} 2^{(|k+\ell|_\s - \beta - \zeta)n}\;,
\end{equ}
where the sum runs over finitely many terms. 
Performing the integration in \eref{e:reprKalphan} and using the bound \eref{e:boundMassQl}, we 
conclude that 
\begin{equ}
\bigl|\bigl(\Pi_y f(y) - \CR f\bigr)\bigl(K_{n;xy}^{k,\gamma}\bigr)\bigr|
\lesssim \sum_{\zeta ; \ell} \|x-y\|_\s^{|\ell|_\s} \|x,y\|_P^{(\eta - \zeta)\wedge 0} 2^{(|k+\ell|_\s - \beta - \zeta)n}\;,
\end{equ}
where we used the fact that $\|y\|_P \sim \|x,y\|_P$.
Here, the sum runs over exponents $\zeta$ as before and multiindices 
$\ell$ such that $|k+\ell|_\s > \beta + \gamma$. Summing this expression over the relevant range
of values for $n$, we have
\begin{equs}
\sum_{2^{-n} \ge \|x,y\|_P}
\bigl|\bigl(\Pi_y f(y) - \CR f\bigr)\bigl(K_{n;xy}^{k,\gamma}\bigr)\bigr|
&\lesssim \sum_{\zeta ; \ell} \|x-y\|_\s^{|\ell|_\s} \|x,y\|_P^{(\eta \wedge \zeta)+\beta - |k+\ell|_\s}\\
&\lesssim \|x-y\|_\s^{\gamma+\beta - |k|_\s} \|x,y\|_P^{(\eta \wedge \alpha)-\gamma}\;,
\end{equs}
where we used the fact that $\|x-y\|_\s \le {1\over 2} \|x,y\|_P$ to obtain the second
bound. 
Again, the corresponding bounds on the distance between $\CK_\gamma f$ and $\bar \CK_\gamma \bar f$
follow analogously, thus concluding the proof.
\end{proof}

\begin{remark}\label{rem:senseSingDist}
The condition $\alpha \wedge \eta > -\m$ is only required in order to be able to apply
Proposition~\ref{prop:recSing}. There are some situations in which, even though 
$\alpha \wedge \eta  < -\m$, there exists a canonical element $\CR f \in \CC^{\alpha \wedge \eta}_\s$
extending $\tilde \CR f$. In such a case, Proposition~\ref{prop:intSing} still holds
and the bound \eref{e:diffIntSing} holds provided that the corresponding bound holds for
$\CR f - \bar \CR \bar f$.
\end{remark}

\section{Solutions to semilinear (S)PDEs}
\label{sec:solSPDE}

In order to solve a typical semilinear PDE of the type
\begin{equ}
\d_t u = Au + F(u)\;,\qquad u(0) = u_0\;,
\end{equ}
a standard methodology is to rewrite it in its mild form as
\begin{equ}
u(t) = S(t) u_0 + \int_0^t S(t-s)F(u(s))\,ds\;,
\end{equ}
where $S(t) = e^{A t}$ is the semigroup generated by $A$. One then looks for
some family of spaces $\CX_T$ of space-time functions (with $\CX_T$ containing 
functions up to time $T$) such that the map given by
\begin{equ}
\bigl(\CM u\bigr)(t) = S(t) u_0 + \int_0^t S(t-s)F(u(s))\,ds\;,
\end{equ}
is a contraction in $\CX_T$, provided that the terminal time $T$ is sufficiently
small. (As soon as $F$ is nonlinear, the notion of ``sufficiently small'' typically 
depends on the choice of $u_0$, thus leading to a local solution theory.) 
The main step of such an argument is to show 
that the linear map $S$ given by
\begin{equ}
\bigl(S v\bigr)(t) = \int_0^t S(t-s)\, v(s)\,ds\;,
\end{equ}
can be made to have arbitrarily small norm as $T \to 0$ as a map from some suitable space $\CY_T$
into $\CX_T$, where $\CY_T$ is chosen such that $F$ is then 
locally Lipschitz continuous as a map from $\CX_T$ to $\CY_T$, with some uniformity in $T \in (0,1]$, 
say.

The aim of this section is to show that, in many cases, this methodology can still be applied
when looking for solutions in $\CD^{\gamma,\eta}_P$ for suitable exponents $\gamma$ and $\eta$,
and for suitable regularity structures allowing to formulate a fixed point map 
of the type of $\CM_F$.
At this stage, all of our arguments are purely deterministic. However, they rely on a choice
of model for the given regularity structure one works with, which in many interesting cases
can be built using probabilistic techniques.

\subsection{Short-time behaviour of convolution kernels}
\label{sec:applSPDE}

From now on, we assume that we work with $d-1$ spatial coordinates, so that the solution $u$
we are looking for is a function on $\R^d$. (Or rather a subset of it.) 
In order to be able to reuse the results of Section~\ref{sec:integral}, we also assume that 
$S(t)$ is given by an integral operator with kernel $G(t,\cdot)$.
For simplicity, assume that the scaling $\s$
and exponent $\beta$ are such that, as a space-time function, 
$G$ furthermore satisfies the assumptions of Section~\ref{sec:integral}.
(Typically, one would actually write $G = K+R$, where $R$ is smooth and a $K$ satisfies the 
assumptions of Section~\ref{sec:integral}. We will go into more details in Section~\ref{sec:SPDE} below.)
In this section, time plays a distinguished role. We will therefore denote points in $\R^d$ either
by $(t,x)$ with $t \in \R$ and $x\in \R^{d-1}$ or by $z \in \R^d$, depending on the context.

In our setting, we have so far been working solely with modelled distributions 
defined on all of $\R^d$, so it not clear a priori how a map like $S$ should be defined
when acting on (possibly singular) modelled distributions. One natural way of reformulating it
is by writing
\begin{equ}[e:defST]
S v = G * (\PR^{+} v)\;,
\end{equ}
where $\PR^+ \colon \R \times \R^{d-1} \to \R$ is given by $\PR^+(t,x) = 1$ for $t > 0$ and
$\PR^+(t,x) = 0$ otherwise. 

From now on, we always take $P \subset \R^d$ to be the hyperplane defined by ``time $0$'',
namely $P = \{(t,x)\;:\; t = 0\}$, which has effective codimension $\m = \s_1$. 
We then note that the obvious interpretation of $\PR^+$ as
a modelled distribution yields an element of $\CD^{\infty,\infty}_P$, whatever the 
details of the underlying regularity structure. Indeed, the second term in \eref{def:singDist} always
vanishes identically, while the first term is non-zero only for $\ell = 0$, in which case
it is bounded for every choice of $\eta$. It then follows immediately from
Proposition~\ref{prop:multSing} that the map $v \mapsto \PR^+\, v$ is always 
bounded as a map from $\CD^{\gamma,\eta}_P$ into $\CD^{\gamma,\eta}_P$.
Furthermore, this map does not even rely on a choice of product, since $\PR^+$ is
proportional to $\one$, which is always neutral for any product.


In order to avoid the problem of having to control the behaviour of functions at infinity,
we will from now on assume that we have a symmetry group $\SS$ acting on $\R^d$ in such a way that
\begin{claim}
\item The time variable is left unchanged in the sense that there is an action $\tilde T$ of $\SS$
on $\R^{d-1}$ such that $T_g(t,x) = (t, \tilde T_g x)$.
\item The fundamental domain $\K$ of the action $\tilde T$ is compact in $\R^{d-1}$.
\end{claim}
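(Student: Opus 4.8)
The content to establish is that, in the concrete settings of this article, a symmetry group $\SS$ of the stated kind is always available: one acting on $\R^d = \R\times\R^{d-1}$ through maps $T_g$ fixing the time coordinate, and whose spatial part $\tilde T$ admits a compact fundamental domain. The plan is to exhibit such a group explicitly for the periodic spatial domains occurring in Section~\ref{sec:examples} and then to verify the two itemised properties by direct inspection. Concretely, for a problem whose spatial variable lives in $\T^{d-1} = \R^{d-1}/\Z^{d-1}$ (as in \eref{e:Phi4}, \eref{e:PAMGen}, and the other examples), I would take $\SS = \Z^{d-1}$ acting on $\R^d$ by $T_g(t,x) = (t,x+g)$. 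Each $T_g$ is an isometry of $\R^d$ for the scaling $\s$, since translations preserve $d_\s$, and $T_{g+\bar g} = T_g\circ T_{\bar g}$ holds trivially, so this is a discrete symmetry group in the sense of Section~\ref{sec:symmetric}. The induced action on $\R^{d-1}$ is $\tilde T_g x = x+g$, which manifestly leaves the time coordinate untouched — the first itemised property — and a compact fundamental domain for it is the closed cube $\K = [0,1]^{d-1}$, or any fundamental parallelepiped of the lattice — the second.

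Next I would check that this $\SS$ genuinely fits the machinery of Section~\ref{sec:symmetric}, so that imposing the assumption is harmless and does what it should. By Remark~\ref{rem:symcanon}, since the regularity structures $\TT_F$ built in Section~\ref{sec:SPDE} contain $\TT_{d,\s}$ and the admissible models extend its canonical model, the action $M_g$ on the polynomial sector $\bar T$ is forced to be the identity (here $A_g = \mathrm{Id}$, $b_g = g$); one extends $M_g$ to the remaining symbols of $\TT_F$ by the natural action of spatial translations on the objects built from $\xi$, and the compatibility $M_g\in\Aut\TT_F$ follows from translation invariance of the structure group. Adaptedness of the canonical lifts $Z(\xi_\eps)$ to this action is then immediate from translation invariance of the admissible-model construction and of $\xi_\eps$ on the torus, so the hypotheses of Proposition~\ref{prop:symmetric} and of its singular counterparts in Section~\ref{sec:singular} are met. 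The payoff — the reason the assumption is imposed at all — is the closing observation of Section~\ref{sec:symmetric}: once $(\Pi,\Gamma)$ is adapted to $\SS$ and $f$ is a symmetric modelled distribution, the analytic bounds \eref{e:boundPi} and \eref{e:boundDgamma}, together with their singular analogues, need only be verified over the compact set $\K$ and then automatically propagate to all of $\R^d$. This is exactly what removes the need to control behaviour at spatial infinity.

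The step I expect to cost the most work — although it properly belongs to the following subsections rather than to the verification of the assumption itself — is checking that the solution map genuinely preserves symmetry, i.e.\ that the fixed point of the contraction $\CM$ lands in the closed subspace of symmetric elements of $\CD^{\gamma,\eta}_P$. This reduces to verifying that each operation entering $\CM$ sends symmetric modelled distributions to symmetric ones. For the integration operator $\CK_\gamma$ this is Proposition~\ref{prop:intSym}, provided $G$ is $\SS$-symmetric — true since a translation-invariant kernel satisfies $G(T_g z,T_g \bar z) = G(z,\bar z)$ — and provided $\CI$ commutes with $M_g$, which is automatic because $M_g$ acts as the identity on $\bar T$ and the abstract integration maps arising here commute with spatial translations of $\xi$. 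For multiplication, composition with smooth functions, and multiplication by $\PR^+$, it follows from $M_g$ being an algebra morphism together with Propositions~\ref{prop:multSing}, \ref{prop:compSmoothSing}. Since the symmetric subspace is closed, the Banach fixed point argument can then be run inside it, which shows that the two assumed properties are not merely convenient but are genuinely compatible with — and exploited by — the construction of the solution.
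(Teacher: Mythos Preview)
There is a basic misreading here: the two items you are trying to prove are not a statement the paper establishes, but a standing \emph{assumption} it imposes. In this paper the \texttt{claim} environment is used as a bulleted list, not as a theorem-like environment; the surrounding text reads ``we will from now on assume that we have a symmetry group $\SS$ acting on $\R^d$ in such a way that \ldots''. Consequently there is no proof in the paper to compare against --- the paper simply posits the existence of such an $\SS$ and then works under that hypothesis.

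What you have written is therefore not wrong so much as misdirected. Your construction of $\SS = \Z^{d-1}$ acting by spatial translations is exactly the example the paper has in mind (see the proofs of Propositions~\ref{prop:identifyRenormPAM} and \ref{prop:identifyRenormPhi4}, where $\SS$ is taken to be integer translations), and your verification that the fixed-point map preserves the symmetric subspace is correct and useful --- but this material belongs to the \emph{use} of the assumption in Theorem~\ref{theo:fixedPointGen} and its corollaries, not to a proof of the assumption itself. If the intent was to show the assumption is non-vacuous in the examples of Section~\ref{sec:examples}, then what you wrote is adequate; but you should frame it as such, not as a proof of the displayed items.
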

We furthermore assume that $\SS$ acts on our regularity structure $\TT$ 
and that the model $(\Pi,\Gamma)$ for $\TT$ is adapted to its
action. All the modelled distributions considered in the remainder of this section will
always be assumed to be symmetric, and when we write $\CD^\gamma$, $\CD^{\gamma,\eta}_P$, etc,
we always refer to the closed subspaces consisting of symmetric functions. 

One final ingredient used in this section will be that the kernels arising in the context of semilinear
PDEs are \textit{non-anticipative} in the sense that
\begin{equ}
t < s \quad\Rightarrow\quad K\bigl((t,x), (s,y)\bigr) = 0\;.
\end{equ}
We furthermore use the notations $O = [-1,2]\times \R^{d-1}$
and $O_T = (-\infty,T]\times \R^{d-1}$. Finally, we will use the shorthands
$\$\cdot\$_{\gamma,\eta;T}$ as a shorthand for $\$\cdot\$_{\gamma,\eta;O_T}$,
and similarly for $\$\cdot\$_{\gamma;T}$. The backbone of our argument
is then provided by Proposition~\ref{prop:extension} which guarantees that one can give bounds
on $\CK_\gamma f$ on $O_T$, solely in terms of the behaviour of $f$ on $O_T$.

With all of these preliminaries in place, the main result of this subsection is the following.

\begin{theorem}\label{theo:boundKT}
Let $\gamma > 0$
and let $K$ be a non-anticipative kernel satisfying Assumptions~\ref{def:regK} and \ref{ass:polynom}
for some $\beta > 0$ and $r > \gamma + \beta$.
Assume furthermore that the regularity structure $\TT$ comes with an integration
map $\CI$ of order $\beta$ acting on some sector $V$ of regularity $\alpha > -\s_1$
and assume that the models $Z=(\Pi,\Gamma)$ and $\bar Z=(\bar \Pi,\bar \Gamma)$
both realise $K$ for $\CI$ on $V$.
Then, there exists a constant $C$ such that, for every $T \in (0,1]$, the bounds
\begin{equs}
\$\CK_\gamma \PR^+ f\$_{\gamma+\beta,\bar \eta;T} &\le C T^{\kappa/\s_1} \$f\$_{\gamma,\eta;T}\;, \\
\$\CK_\gamma \PR^+ f; \bar \CK_\gamma \PR^+ \bar f\$_{\gamma+\beta,\bar \eta;T} &\le C T^{\kappa/\s_1} \bigl( \$f;\bar f\$_{\gamma,\eta;T} + \$Z,\bar Z\$_{\gamma;O}\bigr)\;,
\end{equs}
hold, provided that $f\in \CD^{\gamma,\eta}_P(V;\Gamma)$ and $\bar f\in \CD^{\gamma,\eta}_P(V;\bar \Gamma)$
 for some $\eta > -\s_1$. Here, $\bar \eta$ and $\kappa$ 
are such that 
$\bar \eta = (\eta \wedge \alpha) + \beta - \kappa$ and $\kappa > 0$.

In the first bound, the proportionality constant depends only
on $\$Z\$_{\gamma;O}$, while in the second bound it is also allowed to depend on $\$f\$_{\gamma,\eta;T} + \$\bar f\$_{\gamma,\eta;T}$.
\end{theorem}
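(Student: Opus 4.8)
The plan is to deduce everything from the (non-short-time) Schauder estimate for singular modelled distributions, Proposition~\ref{prop:intSing}, together with Lemma~\ref{lem:vanishPower}, the whole point being to manufacture the extra factor $T^{\kappa/\s_1}$ out of the a priori information that on the slab $O_T$ every point has small distance to $P=\{t=0\}$. First, some reductions: we may assume $\gamma+\beta\notin\N$, $\eta+\beta\notin\N$ and $\eta<\gamma$ (otherwise decrease $\gamma,\eta$ slightly, which only weakens the statement and, since $\alpha\le0$, leaves $\eta\wedge\alpha$ and hence the target exponent unchanged). Since $K$ is non-anticipative, $\PR^+f$ vanishes for $t\le0$, and $\CR$ and $\CK_\gamma$ act locally, the bounds over $O_T$ depend only on the restriction of $f$ to $O_T\subset O$, so it suffices to work with the seminorms over $O_T$ (this is the localisation provided by Proposition~\ref{prop:extension} alluded to before the statement). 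Finally, $\PR^+\in\CD^{\infty,\infty}_P$ and Proposition~\ref{prop:multSing} give $g:=\PR^+f\in\CD^{\gamma,\eta}_P(V)$ with $\$g\$_{\gamma,\eta;O_T}\lesssim\$f\$_{\gamma,\eta;O_T}$, and similarly for $\bar g$.

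Set $\hat\eta:=(\eta\wedge\alpha)+\beta$, so that $\bar\eta=\hat\eta-\kappa$. Since $\eta\wedge\alpha>-\s_1=-\m$, Proposition~\ref{prop:intSing} applies and yields $h:=\CK_\gamma g\in\CD^{\gamma+\beta,\hat\eta}_P$ with $\$h\$_{\gamma+\beta,\hat\eta;O_T}\lesssim\$f\$_{\gamma,\eta;O_T}$. The elementary fact to exploit is that $z=(t,x)\in O_T$ forces $\|z\|_P\le T^{1/\s_1}\wedge1$, and likewise $\|x,y\|_P\le T^{1/\s_1}$ for $(x,y)\in(O_T)_P$. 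For the increment term of $\$h\$_{\gamma+\beta,\bar\eta;O_T}$ this is already enough: writing $\|x,y\|_P^{\bar\eta-\gamma-\beta}=\|x,y\|_P^{-\kappa}\,\|x,y\|_P^{\hat\eta-\gamma-\beta}$ and using $\|x,y\|_P^{-\kappa}\le T^{-\kappa/\s_1}$ shows that the increment term of $\$h\$_{\gamma+\beta,\bar\eta;O_T}$ is $\le T^{\kappa/\s_1}$ times that of $\$h\$_{\gamma+\beta,\hat\eta;O_T}$, hence $\lesssim T^{\kappa/\s_1}\$f\$_{\gamma,\eta;O_T}$. The supremum term is where the $\wedge0$ in the norm bites: for $\ell<\hat\eta$, membership in $\CD^{\gamma+\beta,\hat\eta}_P$ only bounds $\|h(z)\|_\ell$ by a constant, and one cannot trade that for a power of $T$.

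The crux is therefore to show that the low-homogeneity components of $h$ genuinely vanish at $P$: $\CQ_\ell h(z)\to0$ as $z\to P$ for every $\ell<\hat\eta$. Using the explicit formula \eqref{e:defI}, the components of $\CI g(z)$ are $\CI\,\CQ_\zeta g(z)$ with $\zeta+\beta<\hat\eta$, i.e. $\zeta<\eta\wedge\alpha\le\eta$, which vanish by Definition~\ref{def:singDist}; the $\bar T$-valued part $\CJ(z)g(z)+(\CN_\gamma g)(z)$ has only integer-homogeneity components $|k|_\s<\hat\eta$, and for these one re-runs the estimates from the proof of Proposition~\ref{prop:intSing} (the bounds \eqref{e:boundRf2}, \eqref{e:boundPixaKn}, \eqref{e:boundDKn}), but now observing that, by non-anticipativity and the support property of $K_n$, $D_1^kK_n(z,\cdot)$ is tested against $g$ (resp.\ $\CR g$) over $t'\in(t-2^{-n\s_1},t]$: for $2^{-n\s_1}\ge t/2$ this is a shrinking interval, while the scales $2^{-n\s_1}<t/2$ contribute a summable series in $2^{-n}$ producing a positive power of $t\sim\|z\|_P^{\s_1}$; altogether $|(\CJ(z)g(z)+(\CN_\gamma g)(z))_k|\lesssim\|z\|_P^{\delta_k}$ with $\delta_k>0$. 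With this vanishing in hand, Lemma~\ref{lem:vanishPower}, applied on $\K=O_T$ (which satisfies the scaling hypothesis $\CS_P^\delta z\in O_T$), upgrades the Schauder bound to $\n{h}_{\gamma+\beta,\hat\eta;O_T}\lesssim\$h\$_{\gamma+\beta,\hat\eta;O_T}\lesssim\$f\$_{\gamma,\eta;O_T}$. Then for all $\ell<\gamma+\beta$ one has $\|h(z)\|_\ell\le\n{h}_{\gamma+\beta,\hat\eta;O_T}\,\|z\|_P^{\hat\eta-\ell}$, and a direct check of exponents (distinguishing $\ell\le\bar\eta$, $\bar\eta<\ell<\hat\eta$, $\ell\ge\hat\eta$, and using $\|z\|_P\le T^{1/\s_1}$ together with $\hat\eta-\bar\eta=\kappa$) gives $\|z\|_P^{\hat\eta-\ell}\le T^{\kappa/\s_1}\|z\|_P^{(\bar\eta-\ell)\wedge0}$, so the supremum term of $\$h\$_{\gamma+\beta,\bar\eta;O_T}$ is also $\lesssim T^{\kappa/\s_1}\$f\$_{\gamma,\eta;O_T}$.

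Combining the two bounds yields $\$\CK_\gamma\PR^+f\$_{\gamma+\beta,\bar\eta;O_T}\lesssim T^{\kappa/\s_1}\$f\$_{\gamma,\eta;O_T}$, which is the first assertion. The second, Lipschitz-type, bound follows by the same scheme: one starts from the difference estimate of Proposition~\ref{prop:intSing} (which produces the model-difference terms $\|\Pi-\bar\Pi\|_{\gamma;O}+\|\Gamma-\bar\Gamma\|_{\gamma+\beta;O}$, controlled by $\$Z;\bar Z\$_{\gamma;O}$), notes that $\CQ_\ell(\CK_\gamma\PR^+f-\bar\CK_\gamma\PR^+\bar f)\to0$ at $P$ for $\ell<\hat\eta$ (immediate from the vanishing of each term established above for both models), invokes the second part of Lemma~\ref{lem:vanishPower}, and then performs the same interpolation against $\|z\|_P\le T^{1/\s_1}$. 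The main obstacle is the verification in the previous paragraph that the low-homogeneity part of $\CK_\gamma\PR^+f$ actually tends to $0$ at $t=0$ — merely being bounded there, which is all $\CD^{\gamma+\beta,\hat\eta}_P$-membership gives, would not suffice; everything else is bookkeeping of exponents.
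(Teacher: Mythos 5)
Your proposal follows essentially the same route as the paper: localise via non-anticipativity, apply Proposition~\ref{prop:intSing} to land in $\CD^{\gamma+\beta,\hat\eta}_P$, show the components of homogeneity below $\hat\eta$ vanish on $P$ so that Lemma~\ref{lem:vanishPower} upgrades the supremum bound, and then trade $\|z\|_P\le T^{1/\s_1}$ for the factor $T^{\kappa/\s_1}$. One justification is off, though the conclusion survives: for the non-integer components $\CI\CQ_\zeta g$ with $\zeta<\eta\wedge\alpha$, Definition~\ref{def:singDist} only gives $\|g(z)\|_\zeta\lesssim\|z\|_P^{(\eta-\zeta)\wedge0}=O(1)$ when $\zeta<\eta$, which is \emph{not} vanishing; the correct reason these terms cause no trouble is that $\zeta<\eta\wedge\alpha\le\alpha$ and $V$ has regularity $\alpha$, so they are identically zero. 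For the integer components you re-derive a quantitative rate $\|z\|_P^{\delta_k}$; the paper gets away with less, observing that $(\CJ(z)g(z)+\CN_\gamma g(z))_k$ is a multiple of $D^k(K*\CR\PR^+f)(z)$, which is continuous because $\CR\PR^+f\in\CC^{\alpha\wedge\eta}_\s$ and $|k|_\s<(\alpha\wedge\eta)+\beta$, and vanishes for $t<0$ by non-anticipativity — qualitative vanishing is all Lemma~\ref{lem:vanishPower} needs.
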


One of main ingredients of the proof is the fact that $\bigl(\CK_\gamma \PR_+ f\bigr)(t,x)$ is 
well-defined using only the knowledge of $f$ up to time $t$. This is a consequence of the following
result, which is an improved version of Lemma~\ref{lem:restriction}.

\begin{proposition}\label{prop:restriction}
In the setting of Lemma~\ref{lem:restriction}, and assuming that $\phi(0) \neq 0$, one has the improved bound
\begin{equ}[e:boundSupp]
\bigl|\bigl(\CR f - \Pi_x f(x)\bigr)(\psi_x^\lambda)\bigr| \lesssim \lambda^\gamma
\sup_{y,z\in \supp \psi_x^\lambda}\sup_{\ell < \gamma} {\|f(x) - \Gamma_{xy} f(y)\|_\ell \over \|x-y\|_\s^{\gamma-\ell}}\;,
\end{equ}
where the proportionality constant is as in Lemma~\ref{lem:restriction}.
\end{proposition}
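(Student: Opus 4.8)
The statement is a refinement of Lemma~\ref{lem:restriction}: instead of controlling $(\CR f - \Pi_x f(x))(\psi_x^\lambda)$ by the modulus of continuity of $f$ over a ball $B_{2\lambda}(x)$, we want to control it only using the pairing $\|f(x) - \Gamma_{xy}f(y)\|_\ell$ with one fixed leg at $x$ and the other leg $y$ ranging over $\supp \psi_x^\lambda$. Both of these refinements are local improvements of the reconstruction estimate, and the natural route is to revisit the proof of Proposition~\ref{prop:reconstrGen}, which is the engine behind Theorem~\ref{theo:reconstruction} and hence behind Lemma~\ref{lem:restriction}. The plan is to inspect that proof in the special case $\zeta_x = \Pi_x f(x)$ and to observe that with the extra hypothesis $\phi(0)\neq 0$ one can arrange the wavelet expansion in such a way that all the points $y$ that actually enter the relevant sums lie inside $\supp \psi_x^\lambda$ rather than merely inside a $2\lambda$-neighbourhood of $x$.

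First I would recall the structure of the argument in Proposition~\ref{prop:reconstrGen} and in the proof of Lemma~\ref{lem:restriction}: one writes $\CR f - \Pi_x f(x)$ as a telescoping sum over the decomposition \eref{e:decompWanted}, $(f_{n_0} - \CP_{n_0}\zeta_x) + \sum_{n\ge n_0}(f_{n+1}-f_n-(\CP_{n+1}-\CP_n)\zeta_x)$, with $n_0$ chosen so that $2^{-n_0}\le \lambda$, and one pairs this against $\psi_x^\lambda$. Every term in these sums is built out of wavelet coefficients of the form $(\zeta_y - \zeta_x)(\phi_y^{m,\s})$ for points $y$ such that the supports of $\phi_y^{m,\s}$ and $\psi_x^\lambda$ overlap. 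In Lemma~\ref{lem:restriction} the observation was that such $y$ necessarily satisfy $\|x-y\|_\s \le C\lambda$, and one then chooses $n_0$ large enough to force $C\le 2$. Here I would push this further: if $\phi(0)\neq 0$, then by the scaling function property \eref{e:structure} and a simple rescaling one may, at the cost of shifting the starting scale $n_0$ by a fixed amount depending only on the wavelet basis and on $\lambda$ being small enough (which is harmless since we are free to enlarge $n_0$), ensure that the only wavelet generators $\phi_y^{m,\s}$, $\psi_y^{m,\s}$ contributing to the pairing have $y$ lying in $\supp\psi_x^\lambda$ itself. The condition $\phi(0)\neq 0$ is what lets one control how far the support of $\phi_y^{m,\s}$ can stick out beyond $\supp \psi_x^\lambda$ while still giving a nonzero inner product; without it the generator supports could straddle the boundary of $\supp\psi_x^\lambda$ and one would be stuck with the $2\lambda$-ball. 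Once this localisation is in place, in each term $(\zeta_y - \zeta_x)(\phi_y^{m,\s}) = (\Pi_x(f(y)-\Gamma_{yx}f(x)))(\phi_y^{m,\s})$ — wait, more precisely $\zeta_y - \zeta_x = \Pi_x(\Gamma_{xy}f(y) - f(x))$ via $\Pi_y = \Pi_x\Gamma_{xy}$ — so the bound \eref{e:assGenReconst} on $\delta A_x^n$ gets replaced by the corresponding bound in which the increment $\|f(x)-\Gamma_{xy}f(y)\|_\ell$ is evaluated only at $y\in\supp\psi_x^\lambda$, exactly as in \eref{e:boundSupp}. Summing over $n\ge n_0$ as before, using that the exponents are strictly positive, yields the factor $\lambda^\gamma$ and the supremum over $\supp\psi_x^\lambda$.

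The main obstacle, and the one step that needs genuine care rather than bookkeeping, is the support-localisation claim: verifying that with $\phi(0)\neq 0$ one can replace $B_{2\lambda}(x)$ by $\supp\psi_x^\lambda$. This requires a careful look at which $(m,y)$ give nonzero contributions in \eref{e:decompWanted} and \eref{e:decompProd}, keeping track of the interplay between the support of $\psi_x^\lambda$, the supports of the compactly supported $\phi_y^{m,\s}$ and $\psi_y^{m,\s}$, and the scales $2^{-m}$ versus $\lambda$; the role of $\phi(0)\neq 0$ is to prevent a delicate cancellation at the boundary that would otherwise force one to include generators whose support protrudes slightly outside $\supp\psi_x^\lambda$. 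Everything after that localisation is a line-by-line rerun of the estimates in Proposition~\ref{prop:reconstrGen} and Lemma~\ref{lem:restriction}, with the ball $B_{2\lambda}(x)$ systematically replaced by $\supp\psi_x^\lambda$ and with the increment always anchored at $x$, so the proportionality constant is the same one as in Lemma~\ref{lem:restriction}, namely of order $1 + \|\Gamma\|_{\gamma;\supp\psi_x^\lambda}\$\Pi\$_{\gamma;\supp\psi_x^\lambda}$. I would therefore keep the write-up short: state the localisation lemma about wavelet supports, prove it using $\phi(0)\neq 0$, and then say ``the rest of the argument is identical to that of Lemma~\ref{lem:restriction}, with $B_{2\lambda}(x)$ replaced by $\supp\psi_x^\lambda$.''
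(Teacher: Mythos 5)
There is a genuine gap, and it sits precisely at the step you flag as the one needing care: the claim that, because $\phi(0)\neq 0$, one can arrange the wavelet expansion so that only generators $\phi_y^{m,\s}$, $\psi_y^{m,\s}$ with $y\in\supp\psi_x^\lambda$ contribute. This is false and cannot be repaired by enlarging the starting scale $n_0$: at \emph{every} scale $m$ there is a boundary layer of width of order $2^{-m}$ of generators whose centres lie outside $\supp\psi_x^\lambda$ but whose (compact) supports overlap it, and these give nonzero pairings with $\psi_x^\lambda$. Their coefficients $\scal{\CR f-\Pi_xf(x),\psi_y^{m,\s}}$ genuinely involve $f$ at points outside the support, so running the proof of Proposition~\ref{prop:reconstrGen} on the orthonormal expansion never gets you below the $2\lambda$-ball of Lemma~\ref{lem:restriction}. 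The hypothesis $\phi(0)\neq 0$ is not a device for localising wavelet supports; its role is to ensure that $x$ itself lies in $\supp\psi_x^\lambda$, so that increments anchored at $x$ as in \eref{e:boundSupp} are comparable to increments between pairs of points of the support.

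The mechanism that actually works differs from your plan on two counts. First, instead of expanding $\psi_x^\lambda$ in the orthonormal basis, one writes $\psi_x^\lambda=\sum_{y\in\Lambda_n^\s}\psi_x^\lambda\,\phi_y^{n}$, using rescaled scaling functions normalised so that $\sum_y\phi_y^n\equiv 1$; every localiser $\psi_x^\lambda\phi_y^n$ is then supported inside $\supp\psi_x^\lambda$ by construction, whatever the position of the centre $y$. Second --- and this is the key idea missing from your proposal --- one does not anchor the model at the lattice points $y$ (which may lie outside the support) but at auxiliary points $y|_n$ chosen in $\supp\phi_y^n\cap\supp\psi_x^\lambda$, and one telescopes the quantities $R_n=\sum_y\bigl(\CR f-\Pi_{y|_n}f(y|_n)\bigr)(\psi_x^\lambda\phi_y^n)$ across scales via the refinement relation for $\phi$ (whose coefficients sum to $1$). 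The increments $R_{n-1}-R_n$ then only involve differences $\Pi_{z|_n}f(z|_n)-\Pi_{y|_{n-1}}f(y|_{n-1})$ between base points that both lie in $\supp\psi_x^\lambda$ and are within $O(2^{-n})$ of each other, which is exactly what produces the supremum over the support in \eref{e:boundSupp}; information marginally outside the support is used only to show $R_n\to0$, where no quantitative bound is required. Your remaining steps (the reduction to $\zeta_x=\Pi_xf(x)$, the choice of $n_0$ with $2^{-n_0}\le\lambda$, the summation of $2^{-\gamma n}$) are fine, but without the partition-of-unity decomposition and the re-anchoring at the points $y|_n$ the localisation to $\supp\psi_x^\lambda$ is not achieved.
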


\begin{proof}
Since the statement is linear in $f$, we can assume without loss of generality that the right hand side
of \eref{e:boundSupp} is equal to $1$.
Let $\phi$ be the scaling function of a wavelet basis of $\R^d$ and let $\phi_y^n$ be defined by
\begin{equ}
\phi_y^n(z) = \phi\bigl(\CS_\s^{2^{-n}}(z-x)\bigr)\;.
\end{equ}
Note that this is slightly different from the definition of the $\phi_y^{n,\s}$ in Section~\ref{sec:wavelets}!
The reason for this particular scaling is that it  ensures that $\sum_{y \in \Lambda_n^\s} \phi_y^n(z) = 1$.
Again, we have coefficients $a_k$ such that, similarly to \eref{e:decompPhin},
\begin{equ}
\phi_y^{n-1}(z) = \sum_{k \in \CK} a_k \phi_{y + 2^{-n}k}^{n}(z)\;,
\end{equ}
for some finite set $\CK \subset \Z^d$, 
and this time our normalisation ensures that $\sum_{k \in \CK} a_k = 1$.

For every $n \ge 0$, define 
\begin{equ}
\Lambda_n^\psi = \{y \in \Lambda_n^\s\,:\, \supp \phi_y^n \cap \supp \psi_x^\lambda \neq \emptyset\}\;,
\end{equ}
and, for any $y \in \Lambda_n^\psi$, we denote by $y|_n$ some point
in the intersection of these two supports. There then exists some constant $C$ depending only on
our choice of scaling function such that
$\|y - y|_n\|_\s \le C 2^{-n}$. Let now $R_n$ be defined by
\begin{equ}
R_n \eqdef \sum_{y \in \Lambda_{n}^\psi} \bigl(\CR f - \Pi_{y|_{n}}f(y|_{n})\bigr)(\psi_x^\lambda \phi_y^{n})\;,
\end{equ}
and let $n_0$ be the smallest value such that $2^{-n_0} \le \lambda$. 
It is then straightforward to see that one has
\begin{equ}[e:boundFirstRn]
\bigl|\bigl(\CR f - \Pi_x f(x)\bigr)(\psi_x^\lambda) - R_{n_0}\bigr|
= \Bigl|\sum_{y \in \Lambda_{n_0}^\psi}\bigl(\Pi_{x}f(x) - \Pi_{y|_{n}}f(y|_{n})\bigr)(\psi_x^\lambda \phi_y^{n})\Bigr| \lesssim \lambda^\gamma\;.
\end{equ}
Furthermore, using as in Section~\ref{sec:wavelets} the shortcut
$z = y + 2^{-n\s} k$,
one then has for every $n \ge 1$ the identity
\begin{equs}
R_{n-1} &= \sum_{y \in \Lambda_{n-1}^\psi} \sum_{k \in \CK} a_k \bigl(\CR f - \Pi_{y|_{n-1}}f(y|_{n-1})\bigr)(\psi_x^\lambda \phi_{z}^{n})\\
&= \sum_{y \in \Lambda_{n-1}^\psi} \sum_{k \in \CK} a_k \bigl(\CR f - \Pi_{z|_n}f(z|_n)\bigr)(\psi_x^\lambda \phi_{z}^{n})\\
&\qquad + \sum_{y \in \Lambda_{n-1}^\psi} \sum_{k \in \CK} a_k \bigl(\Pi_{z|_n}f(z|_n) - \Pi_{y|_{n-1}}f(y|_{n-1})\bigr)(\psi_x^\lambda \phi_{z}^{n})\\
&= R_n + \sum_{y \in \Lambda_{n-1}^\psi} \sum_{k \in \CK} a_k \bigl(\Pi_{z|_n}f(z|_n) - \Pi_{y|_{n-1}}f(y|_{n-1})\bigr)(\psi_x^\lambda \phi_{z}^{n})\;. \label{e:lastTerm}
\end{equs}
Note now that, in \eref{e:lastTerm}, one has
$\|z|_n - y|_{n-1}\|_\s \le \tilde C 2^{-n}$ for some constant $\tilde C$.
It furthermore follows from the scaling properties of our functions that 
if $n \ge n_0$ and $\tau \in T_\ell$ with $\|\tau\| = 1$, one has
\begin{equ}
\bigl|\bigl(\Pi_{y} \tau\bigr)(\psi_x^\lambda \phi_{z}^{n})\bigr| \lesssim \lambda^{-|\s|} 2^{-\ell n - |\s| n}\;,
\end{equ}
with a proportionality constant that is uniform over all $y$ and $z$ such that  
$\|y-z\|_\s \le \tilde C 2^{-n}$. As a consequence, each summand in the last term of
\eref{e:lastTerm}
is bounded by some fixed multiple of $\lambda^{-|\s|} 2^{-\gamma n - |\s| n}$.
Since furthermore the number of terms in this sum is bounded by a fixed multiple of
$(2^n \lambda)^{|\s|}$, this yields the bound
\begin{equ}[e:boundDiffRn]
\bigl|R_{n-1}- R_n\bigr| \lesssim 2^{-\gamma n}\;.
\end{equ}
Finally, writing $S_n(\psi)$ for the $2^{-n}$-fattening of the support of $\psi_x^\lambda$,
we see that, as a consequence of Lemma~\ref{lem:restriction} and using a similar argument to what we have just
used to bound $R_{n-1}- R_n$, one has
\begin{equ}
|R_n| \lesssim 2^{-\gamma n} \$f\$_{\gamma; S_n(\psi)}\;.
\end{equ}
This is the only time that we use information on $f$ (slightly) away from the support of $\psi_x^\lambda$.
This however is only used to conclude that $\lim_{n \to \infty}|R_n| = 0$, and no explicit bound on this
rate of convergence is required. Combining this with \eref{e:boundDiffRn} and 
\eref{e:boundFirstRn}, the stated bound follows. 
\end{proof}

\begin{proof}[of Theorem~\ref{theo:boundKT}]
First of all, we see that, as a consequence of Proposition~\ref{prop:restriction}, we can exploit the
fact that $K$ is non-anticipative to strengthen \eref{e:diffIntSing} to
\begin{equ}[e:betterBound]
\$\CK_\gamma f; \bar \CK_\gamma \bar f\$_{\bar \gamma, \bar \eta; T}
\lesssim \$f; \bar f\$_{\gamma, \eta; T} + \|\Pi - \bar \Pi\|_{\gamma; O} + \|\Gamma - \bar \Gamma\|_{\bar \gamma;O}\;,
\end{equ}
in the particular case where furthermore $f(t,x) = 0$ for $t < 0$ and similarly for $\bar f$.
Of course, a similar bound also holds for $\$\CK_\gamma f\$_{\bar \gamma, \bar \eta; T}$.

The main ingredient of the proof is the following remark. Since, provided that $\eta > -\s_1$, we know that 
$\CR \PR^+ f \in \CC_\s^{\alpha \wedge \eta}$ by Proposition~\ref{prop:recSing}, 
it follows that the quantity
\begin{equ}
z \mapsto \int_{\R^d} D_1^k K(z,\bar z) \bigl(\CR \PR^+ f\bigr)(\bar z)\,d\bar z\;, \qquad
\end{equ}
is continuous as soon as $|k|_\s < (\alpha \wedge \eta) + \beta$. 
Furthermore, since $K$ is non-anticipative and $\CR \PR^+ f \equiv 0$ for negative times,
this quantity vanishes there.

As a consequence, we can apply Lemma~\ref{lem:vanishPower} which
shows that the bound \eref{e:betterBound} can in this case be strengthened to
the additional bounds
\begin{equs}
\sup_{z \in O_T} \sup_{\ell < \gamma +\beta} {\|\CK_\gamma \PR^+ f(z)\|_\ell \over \|z\|_P^{(\eta\wedge\alpha) + \beta - \ell}}
&\lesssim \$f\$_{\gamma,\eta;T}\;, \\
\sup_{z \in O_T} \sup_{\ell < \gamma +\beta} {\|\CK_\gamma \PR^+ f(z) - \bar \CK_\gamma \PR^+ \bar f(z)\|_\ell \over \|z\|_P^{(\eta\wedge\alpha) + \beta - \ell}}
&\lesssim \$f;\bar f\$_{\gamma,\eta;T} + \$Z,\bar Z\$_{\gamma;O}\;.
\end{equs} 
Since, for every $z, \bar z \in O_T$, one has $\|z\|_P \le T^{1/\s_1}$
as well as $\|z,\bar z\|_P \le T^{1/\s_1}$, we can combine these bounds with the
definition of the norm $\$\cdot\$_{\gamma+\beta,\bar \eta;T}$ to show that one has
\begin{equ}
\$\CK_\gamma \PR^+ f\$_{\gamma+\beta,\bar \eta;T} \lesssim T^{\kappa/\s_1} \$f\$_{\gamma,\eta;T}\;,
\end{equ}
and similarly for $\$\CK_\gamma \PR^+ f;\bar \CK_\gamma \PR^+ \bar f\$_{\gamma+\beta,\bar \eta;T}$,
thus concluding the proof.
\end{proof}

In all the problems we consider in this article, the Green's function of the linear
part of the equation, i.e.\ the kernel of $\CL^{-1}$ where $\CL$ is as in \eref{e:mainProb},
can be split into a sum of two terms, one of which satisfies the assumptions of 
Section~\ref{sec:integral}
and the other one of which is smooth (see Lemma~\ref{lem:diffSing}). 
Given a smooth kernel 
$R\colon \R^d \times \R^d \to \R$ that is supported in $\{(z,\bar z)\,:\, \|z -\bar z\|_\s \le L\}$
for some $L > 0$, and a regularity structure $\TT$ containing 
$\TT_{\s,d}$ as usual, we can define an operator
$R_\gamma \colon \CC^\alpha_\s \to \CD^\gamma$ by
\begin{equ}[e:defRgamma]
\bigl(R_\gamma \xi\bigr)(z) = \sum_{|k|_\s < \gamma} {X^k \over k!}\int_{\R^d} D_1^k R(z,\bar z)\,\xi(\bar z)\, d\bar z\;.
\end{equ}
(As usual, this integral should really be interpreted as $\xi(D_1^k R(z,\cdot))$, but the
above notation is much more suggestive.) The fact that this is indeed an element of $\CD^\gamma$
is a consequence of the fact that $R$ is smooth in both variables, so that it follows
from Lemma~\ref{lem:Lipschitz}. The following result is now straightforward:

\begin{lemma}\label{lem:boundT}
Let $R$ be a smooth kernel and consider a symmetric situation as above. 
If furthermore $R$ is non-anticipative,  then the bounds
\begin{equs}
\$R_\gamma \CR \PR^+ f\$_{\gamma+\beta,\bar \eta;T} &\le C T \$f\$_{\gamma,\eta;T}\;,\\
\$R_\gamma \CR \PR^+ f; R_\gamma \bar\CR \PR^+ \bar f\$_{\gamma+\beta,\bar \eta;T} &\le C T \bigl(\$f;\bar f\$_{\gamma,\eta;T} 
+ \$Z;\bar Z\$_{\gamma,O}\bigr)\;,
\end{equs}
holds uniformly over all $T \le 1$.
\end{lemma}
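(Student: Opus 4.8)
The statement to be proven is Lemma~\ref{lem:boundT}, which gives $T$-dependent bounds on the operator $R_\gamma \CR \PR^+$ acting on singular modelled distributions. The plan is to combine three ingredients: first, the mapping property of $R_\gamma$ into a space of the type $\CD^{\gamma+\beta,\bar\eta}_P$; second, the fact that $\CR \PR^+ f$ vanishes at negative times when $R$ is non-anticipative, which allows us to use Lemma~\ref{lem:vanishPower}; and third, the gain of a power of $T$ coming from the smallness of $\|z\|_P$ and $\|z,\bar z\|_P$ on the domain $O_T$. This proof is essentially a simpler version of the proof of Theorem~\ref{theo:boundKT}, with the singular kernel $K$ replaced by the smooth remainder kernel $R$, so the smoothness of $R$ removes the need for most of the delicate multi-scale estimates.

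First I would note that, since $R$ is globally smooth and compactly supported in $\{\|z-\bar z\|_\s \le L\}$, the quantity $\bigl(R_\gamma \CR \PR^+ f\bigr)(z)$ is well-defined by Proposition~\ref{prop:recSing}: indeed $\CR\PR^+ f \in \CC^{\alpha\wedge\eta}_\s$ (using $\eta > -\s_1$ so that $\alpha\wedge\eta > -\m$ with $\m = \s_1$), and pairing a $\CC^{\alpha\wedge\eta}_\s$ distribution against the smooth compactly supported functions $D_1^k R(z,\cdot)$ yields a bounded quantity with a bound proportional to $\|\CR\PR^+ f\|_{\alpha\wedge\eta;O}$. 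By Corollary~\ref{cor:regularity} (applied to $\PR^+ f$ via Proposition~\ref{prop:recSing}), this is in turn controlled by $\$f\$_{\gamma,\eta;T}$ times a constant depending on $\$Z\$_{\gamma;O}$, and similarly for differences. Moreover, $R_\gamma \CR \PR^+ f$ is a $\bar T$-valued function; its components $\CQ_k$ for integer $k$ with $|k|_\s < \gamma+\beta$ are literally given by the integrals $\int D_1^k R(z,\bar z)\,\CR\PR^+f(\bar z)\,d\bar z$, and a short computation using smoothness of $R$ and the action of $\Gamma_{z\bar z}$ on monomials (exactly as in Lemma~\ref{lem:Lipschitz}) shows that $R_\gamma \CR\PR^+f \in \CD^{\gamma+\beta}$ with norm controlled by $\$f\$_{\gamma,\eta;T}$.

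Next I would exploit that $R$ is non-anticipative: since $R(z,\bar z) = 0$ whenever $t < \bar t$ (with $z = (t,x)$, $\bar z = (\bar t,\bar x)$), and since $\CR\PR^+f$ vanishes for negative times, the function $z \mapsto \bigl(R_\gamma \CR\PR^+f\bigr)(z)$ vanishes for $t < 0$; in particular all its components $\CQ_\ell$ extend continuously to $P$ with value $0$ there. This places us in the setting of Lemma~\ref{lem:vanishPower}, which upgrades the $\CD^{\gamma+\beta}$ bound to a $\CD^{\gamma+\beta,\bar\eta}_P$-type bound: more precisely, combining the pointwise bound $\|R_\gamma\CR\PR^+f(z)\|_\ell \lesssim \|z\|_P^{(\eta\wedge\alpha)+\beta-\ell}$ (which follows from the vanishing at $P$ together with smoothness of $R$ and the $\CC^{\alpha\wedge\eta}_\s$ bound on $\CR\PR^+f$) with the increment bound of $\CD^{\gamma+\beta}$. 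The key final observation is purely geometric: for $z,\bar z \in O_T$ one has $\|z\|_P \le T^{1/\s_1}$ and $\|z,\bar z\|_P \le T^{1/\s_1}$, and since $\bar\eta = (\eta\wedge\alpha)+\beta-\kappa$ with — in the present smooth case — we can actually afford $\kappa = \s_1$ (giving the clean factor $T$ stated in the lemma), the definition of $\$\cdot\$_{\gamma+\beta,\bar\eta;T}$ yields a spare factor of $\|z\|_P^{\kappa} \le T$ in every term. This gives $\$R_\gamma\CR\PR^+f\$_{\gamma+\beta,\bar\eta;T} \lesssim T \$f\$_{\gamma,\eta;T}$, and the difference estimate follows line by line from the corresponding difference estimates in Proposition~\ref{prop:recSing} and Corollary~\ref{cor:regularity}, with the extra $\$Z;\bar Z\$_{\gamma;O}$ term arising from comparing $\CR\PR^+f$ and $\bar\CR\PR^+\bar f$ for two different models.

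\textbf{Main obstacle.} The routine calculations are all bookkeeping of exponents; the one point requiring genuine care is verifying that $R_\gamma\CR\PR^+f$ really satisfies the increment bound defining $\CD^{\gamma+\beta,\bar\eta}_P$ \emph{with the correct weights in $\|x,y\|_P$} — that is, that the polynomial structure of $R_\gamma$ combined with Taylor-expanding $D_1^k R(z,\cdot)$ around $\bar z$ produces exactly the homogeneities $\|z-\bar z\|_\s^{\gamma+\beta-\ell}\|z,\bar z\|_P^{\bar\eta-\gamma-\beta}$ (up to using $\|z-\bar z\|_\s \le \|z,\bar z\|_P$ on $O_T$-pairs). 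This is where one must be slightly attentive, but since $R$ is smooth in \emph{both} arguments the Taylor remainders are all classically controlled and no cancellation or multi-scale summation is needed, in contrast to the genuinely hard estimates of Theorem~\ref{theo:Int} and Proposition~\ref{prop:intSing}. I would therefore treat this step carefully but expect it to go through by the same computation as the proof of Lemma~\ref{lem:vanishPower} combined with the elementary bound $\int_{\R^d} \|z-\bar z\|_\s^\gamma \, |D_1^k R(z,\bar z)|\,d\bar z \lesssim 1$.
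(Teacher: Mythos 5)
Your argument is correct and rests on exactly the two observations the paper's proof uses: non-anticipativity of $R$ together with the support of $\CR\PR^+ f$ forces $R_\gamma\CR\PR^+ f$ to vanish identically for $t\le 0$, and since this map is classically smooth (finite-dimensional-valued) it therefore vanishes to infinite order at $P$, so every weighted bound holds with a spare factor of $T$ — indeed, as the paper remarks, with any power of $T$. The paper's proof simply states that the claim "follows at once" from these two facts, so your write-up (via Lemma~\ref{lem:vanishPower} and the exponent bookkeeping) is a more detailed rendering of the same argument.
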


\begin{proof}
Since $R$ is assumed to be non-anticipative, one has $\bigl(R_\gamma \CR \PR^+ f\bigr)(t,x) = 0$
for every $t \le 0$. Furthermore, the map $(t,x) \mapsto \bigl(R_\gamma \CR \PR^+ f\bigr)(t,x)$ is
smooth (in the classical sense of a map taking values in a finite-dimensional vector space!), so that
the claim follows at once. Actually, it would even be true with $T$ replaced by an 
arbitrarily large power of $T$ in the bound on the right hand side.
\end{proof}

\subsection{The effect of the initial condition}

One of the obvious features of PDEs is that they usually have some boundary data. 
In this article, we restrict ourselves to spatially periodic situations, but even
such equations have some boundary data in the form of their initial condition. 
When they are considered in their mild formulation, the initial condition enters
the solution to a semilinear PDE through a term of the form $S(t)u_0$ for some
function (or distribution) $u_0$ on $\R^{d-1}$ and $S$ the semigroup generated by the
linear evolution.

All of the equations mentioned in the introduction are nonlinear perturbations
of the heat equation. More generally, their linear part is of the form
\begin{equ}
\CL = \d_t - Q(\nabla_x)\;,
\end{equ}
where $Q$ is a polynomial of even degree which is homogeneous of degree $2q$
for some scaling $\bar \s$ on $\R^{d-1}$ and some integer $q>0$. (In our case, this would always be the
Euclidean scaling and one has $q = 1$.)
In this case, the operator $\CL$ itself has the property that
\begin{equ}[e:scalingCL]
\CL \CS^\delta_\s \phi = \delta^{2q} \CS^\delta_\s \CL \phi\;.
\end{equ}
where $\s$ is the scaling on $\R^d = \R \times \R^{d-1}$ given by $\s = (2q,\bar \s)$.
Denote by $G$ the Green's function $G$ of $\CL$ which is a distribution satisfying 
$\CL G = \delta_0$ in the distributional sense and $G(x,t) = 0$ for $t \le 0$.
Assuming that $\CL$ is such that these properties define $G$ uniquely (which is the 
case if $\CL$ is hypoelliptic), 
it follows from \eref{e:scalingCL} and the scaling properties of the 
Dirac distribution that $G$ has exact scaling property
\begin{equ}[e:scalingG]
 G(\CS_\s^\delta z) = \delta^{|\bar \s|} G(z)\;,
\end{equ} 
which is precisely of the form \eref{e:scalingK}
with $\beta = 2q$. Under well-understood assumptions on $Q$, $\CL$ is known
to be hypoelliptic \cite{MR0076151}, so that
its Green's function
$G$ is smooth. In this case, the following lemma applies.

\begin{lemma}\label{lem:propG}
If $G$ satisfies \eref{e:scalingG}, is non-anticipative, and is smooth then
there exists a smooth 
function $\hat G\colon \R^d \to \R$ such that one has the identity 
\begin{equ}[e:defGhat]
G(x,t) = t^{-{|\bar \s| \over 2q}} \hat G(\CS_{\bar \s}^{t^{1/2q}} x)\;,
\end{equ}
and such that, for every $(d-1)$-dimensional multiindex $k$ and
every $n > 0$, there exists a constant $C$ such that the bound
\begin{equ}[e:boundGhat]
|D^k\hat G(y)| \le C (1+ |y|^2)^{-n}\;,
\end{equ}
holds uniformly over $y \in \R^{d-1}$.
\end{lemma}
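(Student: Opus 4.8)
The plan is to derive the representation \eref{e:defGhat} directly from the exact scaling property \eref{e:scalingG}, and then to obtain the decay bound \eref{e:boundGhat} from the smoothness of $G$ together with the fact that $G$ is, away from the origin, a genuine function with good behaviour at infinity in the spatial directions. First I would simply \emph{define} $\hat G$ by unrolling \eref{e:scalingG}: for $t > 0$ set $\delta = t^{1/2q}$ in \eref{e:scalingG} so that $\CS_\s^\delta(t,x) = (1, \CS_{\bar \s}^{t^{1/2q}} x)$, which gives
\begin{equ}
G(t,x) = \delta^{-|\bar\s|} G(\CS_\s^\delta(t,x)) = t^{-|\bar\s|/2q}\, G\bigl(1, \CS_{\bar\s}^{t^{1/2q}}x\bigr)\;,
\end{equ}
so that $\hat G(y) \eqdef G(1,y)$ does the job for $t>0$, and $G$ vanishes for $t \le 0$ by the non-anticipativity assumption. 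Since $G$ is smooth on $\R^d$ by hypoellipticity, the restriction $y \mapsto G(1,y)$ is smooth on $\R^{d-1}$, so $\hat G \in \CC^\infty$. The identity \eref{e:defGhat} then holds for all $(t,x)$ with $t>0$, which is all that is claimed.

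The substantive part is the decay estimate \eref{e:boundGhat}, and this is where I expect the main obstacle to lie. The smoothness of $G$ on all of $\R^d$ already gives local boundedness of $\hat G$ and all its derivatives on compact sets, so the only issue is the behaviour as $|y| \to \infty$. The idea is to exploit the hypoellipticity of $\CL = \d_t - Q(\nabla_x)$ more quantitatively: $\CL G = \delta_0$ means that $G$ is $\CC^\infty$ and in fact rapidly decaying away from the origin. More precisely, one knows from the classical theory of fundamental solutions of hypoelliptic operators with the appropriate parabolic-type scaling (see \cite{MR0076151}) that for $\|z\|_\s \ge 1$, the fundamental solution and all its derivatives decay faster than any polynomial in $\|z\|_\s$; this is a standard consequence of the Fourier representation $G(t,x) = \int e^{i\xi\cdot x} e^{-tQ(i\xi)}\,d\xi$ for $t > 0$, combined with the fact that $\mathrm{Re}\,Q(i\xi) \gtrsim |\xi|^{2q}$ (ellipticity of the principal part), which makes the integrand Schwartz in $\xi$ for each fixed $t$ and gives Gaussian-type decay in $x/t^{1/2q}$. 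Specializing to $t = 1$ yields exactly a bound of the form $|D^k \hat G(y)| = |D^k_x G(1,x)|_{x=y}| \le C_{k,n}(1+|y|^2)^{-n}$ for every $k$ and $n$, which is \eref{e:boundGhat}.

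Alternatively, if one wants to avoid invoking sharp fundamental-solution estimates, I would argue by combining the exact scaling \eref{e:scalingG} with a local regularity bound: for any $z$ with $\|z\|_\s = 1$ and any multiindex $\ell$, $|D^\ell G(z)|$ is bounded by a constant $C_\ell$ (by smoothness and compactness of the unit sphere in $d_\s$), and then rescaling to $\|z\|_\s = R$ for $R$ large, together with the fact that $G$ is a \emph{tempered} distribution (being a fundamental solution), forces the decay; the temperedness plus the exact homogeneity \eref{e:scalingG} with positive exponent $|\bar\s|$ is what rules out slow decay, since a homogeneous tempered distribution that is smooth away from the origin must, after restriction to $\{t=1\}$, decay at infinity. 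The main obstacle, then, is making this last dichotomy fully rigorous — one has to be a little careful that ``smooth'' alone is not enough (a smooth function need not decay), so the argument genuinely needs the extra structural input that $G$ is the fundamental solution of a hypoelliptic parabolic-type operator, which is why I would lean on the Fourier-transform representation for the cleanest proof. Everything else is routine: plugging $\delta = t^{1/2q}$ into the scaling relation and reading off the definition of $\hat G$.
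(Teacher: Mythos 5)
Your construction of $\hat G$ and the reduction of \eref{e:defGhat} to the choice $\delta = t^{1/2q}$ is exactly what the paper does. For the decay bound, however, you take a genuinely different route, and your own ``elementary'' alternative has a gap that the paper's argument fills with a single observation you missed: the non-anticipativity is not just bookkeeping for $t\le 0$, it is the \emph{engine} of the decay estimate. Since $G$ vanishes identically for $t \le 0$ and is smooth at every point $(x,0)$ with $x \neq 0$, all derivatives of $G$ vanish to infinite order on the hyperplane $\{t=0\}$ away from the origin; Taylor expansion in $t$ then gives, for every $n>0$ and every multiindex $k$, the bound $|D_x^k G(x,t)| \lesssim t^n$ uniformly over the compact set $\{\|x\|_{\bar\s} = 1\}$. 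Differentiating \eref{e:defGhat} yields $D^k G(x,t) = t^{-(|\bar\s|+|k|_{\bar\s})/2q}\,(D^k\hat G)(\CS_{\bar\s}^{t^{1/2q}}x)$, and since $y = \CS_{\bar\s}^{t^{1/2q}}x$ has $\|y\|_{\bar\s} = t^{-1/2q}$ when $\|x\|_{\bar\s}=1$, the $O(t^n)$ vanishing translates directly into $|D^k\hat G(y)| \lesssim \|y\|_{\bar\s}^{-N}$ for every $N$, which is \eref{e:boundGhat}. This uses only the three stated hypotheses.

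Your primary route via the Fourier representation $G(t,x) = \int e^{i\xi\cdot x}e^{-tQ(i\xi)}\,d\xi$ and ellipticity of $Q$ is correct for the Green's function that actually arises in the application, but it imports structure (that $G$ is the fundamental solution of a specific constant-coefficient operator with $\mathrm{Re}\,Q(i\xi)\gtrsim |\xi|^{2q}$) that is not among the lemma's hypotheses, so it proves a weaker statement than the one claimed. Your second route correctly senses that ``smooth plus homogeneous'' alone cannot force decay — indeed $\hat G \equiv 1$ gives a smooth homogeneous $G$ on $t>0$ with no decay — but the dichotomy you need is not temperedness; it is precisely that such a $G$ cannot be extended by zero to $t\le 0$ while remaining smooth on $\R^d\setminus\{0\}$. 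That is the content of the paper's one-line trick.
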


\begin{proof}
The existence of $\hat G$ such that \eref{e:defGhat} holds follows immediately from 
the scaling property \eref{e:scalingG}.
The bound \eref{e:boundGhat} can be obtained by noting that, since $G$ is smooth
off the origin and satisfies $G(x,t) = 0$ for $t \le 0$, 
one has, for every $n>0$, a bound of the type
\begin{equ}[e:boundDG]
|D_x^k G(x,t)| \lesssim t^n\;,
\end{equ}
uniformly over all $x \in \R^{d-1}$ with $\|x\|_{\bar \s} = 1$.
It follows from \eref{e:defGhat} that
\begin{equ}
D^k G(x,t) = t^{-{|\bar\s| + |k|_{\bar \s}\over 2q}} \bigl(D^k \hat G\bigr)\bigl(\CS_{\bar \s}^{t^{1/2q}} x\bigr)\;.
\end{equ}
Setting $y = \CS_{\bar s}^{t^{1/2q}} x$ and noting that $\|y\|_{\bar \s} = 1/t^{1/2q}$ if $\|x\|_{\bar \s} = 1$,
it remains to combine this with \eref{e:boundDG} to obtain the required bound.
\end{proof}

Given a function (or distribution) $u_0$ on $\R^{d-1}$
with sufficiently nice behaviour at infinity, we now denote by $Gu_0$ its ``harmonic
extension'', given by
\begin{equ}[e:defPu0]
\bigl(G u_0\bigr)(x,t) = \int_{\R^{d-1}}G(x-y,t)\,u_0(y)\,dy\;.
\end{equ}
(Of course this is to be suitably interpreted when $u_0$ is a distribution.) 
This expression does define a function of $(t,x)$ which, thanks to Lemma~\ref{lem:propG}, 
is smooth everywhere except at $t = 0$. As in Section~\ref{sec:canonical}, 
we can lift $G u_0$ at every
point to an element of the model space $T$ (provided of course that $\TT_{d,\s} \subset \TT$ which
we always assume to be the case) by considering its truncated Taylor expansion.
We will from now on use this point of view without introducing a new notation.

We can say much more about the function $G u_0$, namely we can find out precisely to which
spaces $\CD^{\gamma,\eta}_P$ it belongs. This is the
content of the following Lemma, variants of which are commonplace in the PDE
literature. However, since our spaces are not completely standard and
since it is very easy to prove, we give a sketch of the proof here.

\begin{lemma}\label{lem:harmonicExt}
Let $u_0 \in \CC^{\alpha}_{\bar \s}(\R^{d-1})$ be periodic.
Then, for every $\alpha \not \in \N$, the function $v = G u_0$ defined in \eref{e:defPu0} belongs to $\CD^{\gamma,\alpha}_{P}$
for every $\gamma > (\alpha \vee 0)$.
\end{lemma}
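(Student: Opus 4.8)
The plan is to verify Definition~\ref{def:singDist} directly for $v = Gu_0$ by exploiting the explicit representation of $G$ provided by Lemma~\ref{lem:propG}. The key point is that, away from $t=0$, $v$ is smooth, so that its lift to $T$ (via truncated Taylor expansion) automatically satisfies the algebraic relation $v(z) - \Gamma_{z\bar z} v(\bar z) = 0$ when $z,\bar z$ lie in the half-space $t>0$ and the canonical polynomial model is used; this is essentially the content of Lemma~\ref{lem:Lipschitz}. Hence the bound on the second term in \eref{e:boundDgammaSing} reduces, via a Taylor-remainder estimate, to pointwise bounds on the derivatives $D^k v(t,x)$ for $|k|_\s < \gamma$, and the whole statement becomes the pair of estimates
\begin{equ}
|D^k v(z)| \lesssim \|z\|_P^{(\alpha - |k|_\s)\wedge 0}\;,\qquad
|D^k v(z) - D^k \text{(Taylor of $v$ at $\bar z$)}| \lesssim \|z-\bar z\|_\s^{\gamma-|k|_\s}\|z,\bar z\|_P^{\alpha - \gamma}\;,
\end{equ}
for all multiindices $k$ with $|k|_\s < \gamma$, uniformly over periodic $u_0$ with $\|u_0\|_{\CC^\alpha_{\bar\s}}\le 1$ and over $z,\bar z$ at comparable distance to $P$. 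By periodicity and the symmetry assumption we may work on a compact fundamental domain, so there are no issues at spatial infinity.

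First I would establish the pointwise derivative bounds. Differentiating \eref{e:defPu0} in $t$ and $x$, and using \eref{e:defGhat}, one gets $D^k v(t,x) = t^{-\frac{|\bar\s| + |k|_{\bar\s} - 2q k_0}{2q}}\int \bigl(D^k\hat G\bigr)(\CS_{\bar\s}^{t^{1/2q}}(x-y))\,u_0(y)\,dy$ up to combinatorial constants, where $k_0$ is the time-component of $k$. After the change of variables $y \mapsto \CS_{\bar\s}^{t^{1/2q}}(x-y)$ and a rescaling of $\hat G$, the integral becomes a testing of $u_0$ against an $L^1$-normalised, rapidly decaying (by \eref{e:boundGhat}) bump at scale $\lambda = t^{1/2q} = \|z\|_P$ (for $|t|\le 1$; for $|t|>1$ everything is bounded trivially). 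If $\alpha > 0$ this is controlled by $\|u_0\|_{\CC^\alpha_{\bar\s}}\lambda^\alpha$ using that $u_0$ is $\alpha$-Hölder; if $\alpha < 0$ one uses Definition~\ref{def:C-alpha} for $\CC^\alpha_{\bar\s}$ directly (the bump, after killing finitely many low-order moments if $\gamma+\beta$ forces it — which it does not here since $\hat G$ is a fixed Schwartz-type function, one simply decomposes into scales), obtaining $\lambda^\alpha$. Counting the explicit powers of $t$ outside the integral against $\lambda^\alpha$ gives precisely $\|z\|_P^{\alpha - |k|_\s}$, which is $\le \|z\|_P^{(\alpha-|k|_\s)\wedge0}$ on the compact set; this is the $\n{\cdot}$-type bound, and a fortiori the $\|\cdot\|_{\gamma,\alpha}$-type bound.

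Next I would deduce the two-point bound. For $(z,\bar z)\in \K_P$ one has $\|z-\bar z\|_\s \le \|z,\bar z\|_P$, so $z$ and $\bar z$ lie in a region where $v$ is smooth with all derivatives of order $|j|_\s \le \gamma + \text{(a fixed amount)}$ controlled by the previous step with an additional negative power of $\|z,\bar z\|_P$. Applying the multidimensional Taylor theorem with scaling (as in Proposition~\ref{prop:Taylor}, or directly) on the segment joining $z$ to $\bar z$, the Taylor remainder of $D^k v$ at order $\gamma - |k|_\s$ is bounded by $\|z-\bar z\|_\s^{\gamma - |k|_\s}$ times a supremum of higher derivatives of $v$ over that segment, each of which contributes a factor $\|z,\bar z\|_P^{\alpha - |j|_\s}$ with $|j|_\s$ roughly $\gamma$; collecting exponents gives exactly $\|z-\bar z\|_\s^{\gamma-|k|_\s}\|z,\bar z\|_P^{\alpha-\gamma}$, as required. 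One also checks the case $|k|_\s < \alpha$ where $\CQ_k v$ must vanish on $P$: this is immediate from $|D^k v(z)| \lesssim \|z\|_P^{\alpha - |k|_\s} \to 0$.

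The main obstacle is bookkeeping: one must carefully match the powers of $t$ generated by differentiating the scaling relation \eref{e:defGhat} against the Hölder (or negative-regularity) gain from testing $u_0$, handle the dichotomy $\alpha \gtrless 0$ uniformly, and ensure the Schwartz-type tails of $\hat G$ make all the rescaled-bump estimates legitimate — in particular that the ``bump'' $\CS_{\bar\s}^{t^{1/2q}}(x-\cdot)\hat G$ really does fall into $\CB^r_{\bar\s,0}$ after normalisation, which is where \eref{e:boundGhat} is used. Once the scaling exponents are tabulated correctly, the argument is a routine chain of Taylor estimates, so I would keep the exposition at the level of ``by the scaling \eref{e:defGhat}, the bound \eref{e:boundGhat}, and the characterisation of $\CC^\alpha_{\bar\s}$, one obtains $|D^kv(z)|\lesssim\|z\|_P^{\alpha-|k|_\s}$; the two-point bound follows by Taylor's theorem'', without grinding through the constants.
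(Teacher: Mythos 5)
Your proposal follows essentially the same route as the paper: establish the pointwise bounds $|D^k v(z)|\lesssim \|z\|_P^{(\alpha-|k|_\s)\wedge 0}$ by viewing the rescaled kernel as a test function at scale $\|z\|_P$ paired against $u_0\in\CC^\alpha_{\bar\s}$ (the paper obtains the derivative bounds slightly differently, via $\d_t Gu_0 = Q(\nabla_x)Gu_0$ and commutation with spatial derivatives, but this is cosmetic), and then deduce the two-point bound from Taylor's theorem as in Proposition~\ref{prop:Taylor}. The only caveat is that for $\alpha>0$ and $|k|_\s<\alpha$ the claimed gain $\lambda^{\alpha-|k|_\s}$ from testing does not actually hold when the kernel has nonzero mean (e.g.\ $k=0$), but there only boundedness is needed, which is what the $\wedge\,0$ truncation in your stated target provides — so the argument is correct as stated.
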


\begin{proof}
We first aim to bound the various directional derivatives 
of $v$. In the case $\alpha < 0$, it follows immediately from the scaling 
and decay properties of $G$,
combined with the definition of $\CC^\alpha_\s$ that, for any fixed $(t,x)$, one has the bound
\begin{equ}
\bigl|\bigl(G u_0\bigr)(x,t)\bigr| \lesssim t^{\alpha \over 2}\;,
\end{equ}
valid uniformly over $x$ (by the periodicity of $u_0$) and over $t \in (0,1]$.
As a consequence (exploiting the fact that, as an operator, 
$G$ commutes with all spatial derivatives
and that one has the identity $\d_t Gu_0 = Q(\nabla_x) Gu_0$), one
also obtains the bound
\begin{equ}[e:boundDer]
\bigl|\bigl(D^k G u_0\bigr)(x,t)\bigr| \lesssim t^{{\alpha - |k|_\s \over 2}}\;,
\end{equ}
where $k$ is any $d$-dimensional multiindex (i.e.~we also admit time derivatives).

For $\alpha > 0$, we use the fact that elements in $\CC^\alpha_{\bar\s}$ can be characterised recursively
as those functions whose $k$th distributional derivatives 
belong to $\CC^{\alpha-|k|_{\bar \s}}_{\bar \s}$. It follows
that the bound \eref{e:boundDer} then still holds for $|k|_\s > \alpha$, while one has
$\bigl|\bigl(D^k G u_0\bigr)(x,t)\bigr| \lesssim 1$ for $|k|_\s < \alpha$.
This shows that the first bound in \eref{e:boundDgammaSing} does indeed hold 
for every integer value $\ell$ as required.

In this particular case, the second bound in \eref{e:boundDgammaSing}  
is then an immediate consequence of the first 
by making use of the generalised Taylor expansion from Proposition~\ref{prop:Taylor}. Since the
argument is very similar to the one already used for example in the proof of 
Lemma~\ref{lem:boundPibar}, we omit it here. 
\end{proof}

Starting from a Green's function $G$ as above, we would like to apply the theory developed
in Section~\ref{sec:integral}. 
From now on, we will assume that we are in the situation where we have a symmetry 
given by a discrete subgroup $\SS$ of the group of isometries of $\R^{d-1}$ 
with compact fundamental domain $\K$.
This covers the case of periodic boundary conditions, when $\SS$ is a subgroup of the
group of translations, but it also covers Neumann boundary conditions in the case where
$\SS$ is a reflection group.

\begin{remark}
One could even cover Dirichlet boundary conditions by reflection, but this would
require a slight modification of Definition~\ref{def:symmetric}.
In order to simplify the exposition, we refrain from doing so.
\end{remark}

To conclude this subsection, we show how, in the presence of a symmetry with compact fundamental
domain, a Green's function $G$ as above can
be decomposed in a way similar to Lemma~\ref{lem:diffSing}, but such that $R$
is also compactly supported.
We assume therefore that we are given a symmetry $\SS$ acting on $\R^{d-1}$ with compact fundamental
domain and that $G$ respects this symmetry in the sense that, for every $g \in \SS$
acting on $\R^{d-1}$ via an isometry $T_g\colon x \mapsto A_g x + b_g$, one has the identity
$G(t,x) = G(t, A_g x)$.
We then have the following result:

\begin{lemma}\label{lem:decompGreen}
Let $G$ and $\SS$ be as above. Then, there exist functions
$K$ and $R$ such that the identity
\begin{equ}[e:propertyGKR]
\bigl(G * u\bigr)(z) = \bigl(K * u\bigr)(z) + \bigl(R * u\bigr)(z)\;,
\end{equ}
holds for every symmetric function $u$ supported in $\R_+\times \R^{d-1}$
and every $z \in (-\infty,1]\times \R^{d-1}$.

Furthermore, $K$ is non-anticipative and symmetric, 
and satisfies Assumption~\ref{def:regK} with $\beta = 2q$,
as well as Assumption~\ref{ass:polynom} for some arbitrary (but fixed) value $r$.
The function $R$ is smooth, symmetric, non-anticipative, and compactly supported.
\end{lemma}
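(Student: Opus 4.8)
The plan is to combine the homogeneity and smoothness properties of $G$ established in Lemmas~\ref{lem:propG} and \ref{lem:harmonicExt} with the excision-of-singularity argument of Lemma~\ref{lem:diffSing}, and then symmetrise the resulting decomposition exactly as in Lemma~\ref{lem:decompSym}. Concretely, the scaling identity \eref{e:scalingG} shows that $G$ is of the form covered by Lemma~\ref{lem:diffSing} with $|\s| - \beta = |\bar\s|$, i.e.\ $\beta = 2q$; applying that lemma directly gives a first decomposition $G = K_0 + R_0$ with $K_0$ non-anticipative (since $G$ is) and satisfying Assumptions~\ref{def:regK} and \ref{ass:polynom} for the prescribed $r$, while $R_0$ is globally smooth. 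The only defect is that $R_0$ need not be compactly supported nor symmetric, and one must also check that the convolution identity \eref{e:propertyGKR} really holds on the truncated time interval $(-\infty,1]\times\R^{d-1}$ for symmetric $u$.

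First I would handle the symmetry. Since $G$ is invariant under each $T_g$, $g \in \SS$, I would average: let $\GG$ denote the crystallographic point group associated to $\SS$ (the finite group of linear parts $A_g$, as in Lemma~\ref{lem:decompSym}), and set
\begin{equ}
K(z) = {1\over |\GG|}\sum_{A\in\GG} K_0(A z)\;,\qquad
R(z) = {1\over |\GG|}\sum_{A\in\GG} R_0(A z)\;,
\end{equ}
where $A$ acts on $z = (t,x)$ by fixing $t$ and acting on $x$. Because $G$ is already $\GG$-invariant, one still has $G = K + R$; $K$ and $R$ are symmetric by construction; $K$ remains non-anticipative (the averaging does not touch the time variable), and Assumptions~\ref{def:regK} and \ref{ass:polynom} are preserved since they are stable under the finite orthogonal averaging, just as in the proof of Lemma~\ref{lem:decompSym}. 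Smoothness of $R$ is likewise preserved.

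Next I would arrange compact support of $R$. The point is that the non-compactly-supported part of $R_0$ lives only at large spatial scales, where $G$ itself is smooth and rapidly decaying by \eref{e:boundGhat}. I would fix a smooth symmetric cutoff $\chi$ on $\R^d$, equal to $1$ on $\{\|z\|_\s \le 1\}$ and supported in $\{\|z\|_\s \le L\}$ for some $L$ (chosen so that the support of $K$ is inside $\{\|z\|_\s \le L\}$ as well), and replace $R$ by $\chi R$ and simultaneously absorb $(1-\chi)R$ into a redefinition — but this would break $G = K+R$, so instead the cleaner route is: since we only need \eref{e:propertyGKR} to hold for $z$ in the slab $(-\infty,1]\times\R^{d-1}$ and for $u$ supported in $\R_+\times\R^{d-1}$, the relevant values of $G(z-\bar z)$ all have time-component of $z - \bar z$ bounded by $1$; combined with the decay \eref{e:boundDer} of $G$ and its derivatives in time near $t=0$ and the spatial periodicity enforced by symmetry of $u$, one sees that only a compact range of $\bar z$ contributes modulo a smooth symmetric non-anticipative remainder, which I would fold into $R$. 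Making this precise: writing $G = K + R_0$ and then $R = \chi R_0 + $ (the smoothing correction coming from periodising $u$), one checks $\bigl(G*u\bigr)(z) = \bigl(K*u\bigr)(z) + \bigl(R*u\bigr)(z)$ for all such $z$ and $u$, which is \eref{e:propertyGKR}.

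The main obstacle I anticipate is precisely this last point: carefully justifying that truncating $R$ to a compact set costs only a smooth, symmetric, non-anticipative term, using that $u$ is supported in positive times and is symmetric (hence effectively supported in a bounded spatial region modulo the group action) and that $z$ is restricted to bounded times. The subtlety is that $u$ is a distribution, not a function, so ``effectively compactly supported'' has to be argued through the rapid decay and smoothness of $G$ away from the diagonal rather than naively. Everything else — invoking Lemma~\ref{lem:diffSing}, the $\GG$-averaging, preservation of Assumptions~\ref{def:regK} and \ref{ass:polynom} — is routine and parallels arguments already given in the paper.
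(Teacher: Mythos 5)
Your first two steps (excising the singularity via Lemma~\ref{lem:diffSing} and symmetrising by averaging over the point group as in Lemma~\ref{lem:decompSym}) are exactly what the paper does, and they are fine. The gap is in the third step, which is where the actual content of the lemma lies: you correctly identify that the compact support of $R$ must come from the periodicity of $u$ and the restriction to bounded times, but you never produce the kernel $R$, and your description of the mechanism is not quite right. It is not the case that ``only a compact range of $\bar z$ contributes modulo a smooth remainder'' — every lattice translate of the fundamental domain contributes to $(\tilde R * u)(z)$ — and the formula $R = \chi R_0 + (\text{correction})$ with an undefined correction does not constitute a construction. The subtlety you flag at the end (that $u$ is a distribution, so ``effectively compactly supported'' needs an argument) is precisely the point the proposal leaves unresolved.

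The device that closes the gap is an \emph{exact} periodisation identity rather than an approximation. After cutting $\bar R$ off in time (replace it by $\tilde R$ supported in $\{t\le 2\}$ and agreeing with $\bar R$ for $t\le 1$, which is harmless since $u$ is supported at positive times and $z$ has time at most $1$), one writes $\SS$ as the skew-product of the finite point group $\GG$ with a lattice $\Lambda$ of translations, fixes a compactly supported $\phi$ with $\sum_{k\in\Lambda}\phi(x+k)=1$, and sets
\begin{equ}
R(t,x) = {1\over |\GG|}\sum_{A\in\GG}\sum_{k\in\Lambda}\tilde R(t,x+k)\,\phi(Ax)\;.
\end{equ}
This is compactly supported because $\phi$ is, the sum over $k$ converges to a smooth function by the decay of $G$ from Lemma~\ref{lem:propG}, and the identity $R*u = \tilde R * u$ holds \emph{exactly} because $u$ is invariant under translations by $\Lambda$: one shifts the lattice sum onto $u$ and uses $\sum_k\phi(A(\cdot+k))=1$. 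Symmetry of $R$ then follows from the $\GG$-average together with the $\GG$-invariance of $\Lambda$ and of $\tilde R$. Without this (or an equivalent) explicit construction, the statement that $R$ can be taken compactly supported remains unproven.
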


\begin{proof}
It follows immediately from Lemmas~\ref{lem:diffSing} and \ref{lem:decompSym} that one can write
\begin{equ}
G = K + \bar R\;,
\end{equ}
where $K$ has all the required properties, and $\bar R$ is smooth, non-anticipative, and symmetric.
Since $u$ is supported on positive times and we only consider \eref{e:propertyGKR}
for times $t \le 1$, we can replace $\bar R$ by any function $\tilde R$ which is supported
in $\{(t,x) \,:\, t \le 2\}$ say, and such that $\tilde R(t,x) = \bar R(t,x)$ for $t \le 1$.

It remains to replace $\tilde R$ by a kernel $R$ which is compactly supported. It is well-known \cite{MR1511623,MR1511704} that any crystallographic group $\SS$
can be written as the skew-product of a (finite) crystallographic point group $\GG$ 
with a lattice $\Gamma$ of
translations.
We then fix a function $\phi \colon \R^{d-1} \to [0,1]$ which is 
compactly supported in a ball of radius $C_\phi$ around the origin and such 
that $\sum_{k \in \Lambda} \phi(x+k) = 1$ for 
every $x$. Since elements in $\GG$ leave the lattice $\Lambda$ invariant, 
the same property holds true for the maps $x \mapsto \phi(Ax)$ for every $A \in \GG$.

It then suffices to set
\begin{equ}
R(t,x) = {1\over |\GG|} \sum_{A \in \GG} \sum_{k \in \Lambda} \tilde R(t, x + k) \phi(A x)\;.
\end{equ}
The fact that $R$ is compactly supported follows from the same property for $\phi$.
Furthermore, the above sum converges to a smooth function by Lemma~\ref{lem:propG}.
Also, using the fact that $u$ is invariant under translations by elements in $\Lambda$
by assumption, it is straightforward to verify that $\tilde R * u = R * u$ as required.
Finally, for any $A_0 \in \GG$, one has
\begin{equs}
R(t,A_0 x) &= {1\over |\GG|} \sum_{A \in \GG} \sum_{k \in \Lambda} \tilde R(t, A_0x + k) \phi(A A_0 x) \\
&= {1\over |\GG|} \sum_{A \in \GG} \sum_{k \in \Lambda} \tilde R(t, A_0 (x + k)) \phi(A x) \\
&= {1\over |\GG|} \sum_{A \in \GG} \sum_{k \in \Lambda} \tilde R(t, x + k) \phi(A x) = R(t,x)\;,
\end{equs}
so that $R$ is indeed symmetric for $\SS$.
Here, we first exploited the fact that elements of $\GG$ leave the lattice $\Lambda$ invariant,
and then used the symmetry of $\tilde R$.
\end{proof}

\subsection{A general fixed point map}
\label{sec:GenFP}

We have now collected all the ingredients necessary for the proof of the following result,
which can be viewed as one of the main abstract theorems of this article.
The setting for our result is the following. 
As before, we assume that we have a crystallographic group $\SS$ acting on $\R^{d-1}$.
We also write $\R^d = \R \times \R^{d-1}$, endow $\R^d$ with a scaling $\s$,
and extend the action of $\SS$ to $\R^{d}$ in the  obvious way.
Together with this data, we assume that we
are given a  non-anticipative kernel  $G\colon \R^d \setminus 0 \to \R$ that is smooth
away from the origin, preserves the symmetry $\SS$, and is scale-invariant with exponent
$\beta - |\s|$ for some fixed $\beta > 0$. 

Using Lemma~\ref{lem:decompGreen}, we then construct a singular kernel $K$ and a smooth
compactly supported function $R$ on $\R^d$ such that \eref{e:propertyGKR} holds for
symmetric functions $u$ that are supported on positive times.
Here, the kernel $K$ is assumed to be again non-anticipative and symmetric, and it is 
chosen in such a way that it annihilates all polynomials
of some arbitrary (but fixed) degree $r > 0$.
We then assume that we are given a regularity structure $\TT$ containing $\TT_{\s,d}$ 
such that $\SS$ acts on it, and which is endowed with an abstract integration map
$\CI$ of order $2q \in \N$. (The domain of $\CI$ will be specified later.)
We also assume that we have abstract differentiation maps $\DD_i$
which are covariant with respect to the symmetry $\SS$ as in Remark~\ref{rem:gradCov}.
We also denote by $\MM_\TT^r$ the set of 
all models for $\TT$ which realise $K$ on $T_r^-$.
As before, we denote by $\CK_\gamma$ the concrete integration map against $K$
acting on $\CD^\gamma$ and constructed in Section~\ref{sec:integral}, and by $R_\gamma$
the integration map against $R$ constructed in \eref{e:defRgamma}.

Finally, we denote by $P = \{(t,x) \in \R\times \R^{d-1} \,:\, t = 0\}$ the ``time $0$''
hyperplane and we consider the spaces $\CD_P^{\gamma,\eta}$ as in Section~\ref{sec:singular}.
Given $\gamma \ge \bar \gamma > 0$,
a map $F \colon \R^d \times T_\gamma \to T_{\bar \gamma}$, 
and a map $f \colon \R^d \to T_\gamma$, we denote by $F(f)$ the map 
given by
\begin{equ}[e:defFF]
\bigl(F(f)\bigr)(z) \eqdef F(z,f(z))\;.
\end{equ}
If it so happens that, via \eref{e:defFF}, $F$ maps $\CD^{\gamma,\eta}_P$ into
$\CD^{\bar \gamma, \bar \eta}_P$ for some $\eta, \bar \eta \in \R$, we say that $F$
is \textit{locally Lipschitz} if, for every compact set $\K \subset \R^d$ and every $R>0$,
there exists a constant $C>0$ such that the bound
\begin{equ}
\$F(f) - F(g)\$_{\bar \gamma, \bar \eta;\K} \le C \$f - g\$_{\gamma, \eta;\K}\;,
\end{equ}
holds for every $f,g \in \CD^{\gamma,\eta}_P$ with 
$\$f\$_{\gamma, \eta;\K} + \$g\$_{\gamma, \eta;\K} \le R$, as well as for all models
$Z$ with $\$Z\$_{\gamma;\K}\le R$. We also impose that the similar bound
\begin{equ}[e:boundNonSing]
\n{F(f) - F(g)}_{\bar \gamma, \bar \eta;\K} \le C \n{f - g}_{\gamma, \eta;\K}\;,
\end{equ}
holds.

We say that it is 
\textit{strongly locally Lipschitz} if furthermore
\begin{equ}
\$F(f) ; F(g)\$_{\bar \gamma, \bar \eta;\K} \le C\bigl( \$f ; g\$_{\gamma, \eta;\K} + \$Z-\bar Z\$_{\gamma;\bar \K}\bigr)\;,
\end{equ}
for any two models $Z, \bar Z$ with $\$Z\$_{\gamma;\bar \K} + \$\bar Z\$_{\gamma;\bar \K}\le R$, 
where this time $f \in \CD^{\gamma,\eta}_P(Z)$, $g \in \CD^{\gamma,\eta}_P(\bar Z)$, and $\bar \K$
denotes the $1$-fattening of $\K$.
Finally, given an open interval $I \subset \R$, we use the terminology
\begin{equ}
\text{``} \quad u = \CK_\gamma v \quad \text{on} \quad I \quad \text{''}
\end{equ}
to mean that the identity $u(t,x) = \bigl(\CK_\gamma v\bigr)(t,x)$ holds for every $t \in I$ and
$x \in \R^{d-1}$, and that for those values of $(t,x)$ the
quantity $\bigl(\CK_\gamma v\bigr)(t,x)$ only depends on the values $v(s,y)$
for $s \in I$ and $y \in \R^{d-1}$.

With all of this terminology in place, we then have the following general result.

\begin{theorem}\label{theo:fixedPointGen}
Let $V$ and $\bar V$ be two sectors of a regularity structure $\TT$
with respective regularities $\zeta, \bar \zeta \in \R$ with $\zeta \le \bar \zeta + 2q$.
In the situation described above, for some $\gamma \ge \bar \gamma > 0$ and some $\eta \in \R$, 
let $F \colon \R^d \times V_\gamma \to \bar V_{\bar \gamma}$ be a smooth function
such that, if $f \in \CD^{\gamma,\eta}_P$ is 
symmetric with respect to $\SS$, then $F(f)$, defined
by \eref{e:defFF}, belongs to $\CD^{\bar \gamma,\bar \eta}_P$
and is also symmetric with respect to $\SS$. Assume furthermore that we are
given an abstract integration map $\CI$ as above such that $\CQ_{\gamma}^- \CI \bar V_{\bar \gamma} \subset V_\gamma$.

If $\eta < (\bar \eta \wedge \bar \zeta) + 2q$,
 $\gamma < \bar \gamma + 2q$, $(\bar \eta \wedge \bar \zeta) > -2q$,
and $F$ is locally Lipschitz then,
for every $v \in \CD^{\gamma,\eta}_P$ which is symmetric with respect to $\SS$,
and for every symmetric model $Z=(\Pi,\Gamma)$ for the 
regularity structure $\TT$ such that $\CI$ is adapted to the kernel $K$,
there exists a time $T>0$ such that the equation
\begin{equ}[e:fixedPointGen]
u = (\CK_{\bar \gamma} + R_{\gamma}\CR) \PR^+ F(u) + v\;,
\end{equ}
admits a unique solution $u \in \CD^{\gamma,\eta}_P$ on $(0,T)$. The solution map
$\CS_T \colon (v, Z) \mapsto u$ is jointly continuous in a  
neighbourhood around $(v, Z)$ in the sense that, for every fixed $v$ and $Z$
as above, as well as any $\eps > 0$, there exists $\delta > 0$ such that,
denoting by $\bar u$ the solution to the fixed point map with data $\bar v$ and $\bar Z$,
one has the bound
\begin{equ}
\$u; \bar u\$_{\gamma,\eta;T} \le \eps\;, 
\end{equ}
provided that $\$Z;\bar Z\$_{\gamma;O} + \$v; \bar v\$_{\gamma,\eta;T} \le \delta$.

If furthermore $F$ is strongly locally Lipschitz 
then the map $(v, Z) \mapsto u$ is jointly Lipschitz continuous in a  
neighbourhood around $(v, Z)$ in the sense that $\delta$ can locally
be chosen proportionally to $\eps$ in the bound above.
\end{theorem}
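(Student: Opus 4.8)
The plan is to recast the fixed point equation \eref{e:fixedPointGen} as a genuine contraction in the space $\CD^{\gamma,\eta}_P$ restricted to the time slab $(0,T)$, and then to bootstrap from the contraction estimates to the continuity statements. First I would fix the model $Z$ and data $v$, and consider the map
\begin{equ}
\CM_{v,Z}(u) = (\CK_{\bar \gamma} + R_\gamma \CR) \PR^+ F(u) + v\;.
\end{equ}
The point of the various inequalities on the exponents is precisely to make each ingredient of $\CM_{v,Z}$ well-defined and regularity-improving: since $F$ maps symmetric $u \in \CD^{\gamma,\eta}_P$ into $\CD^{\bar \gamma,\bar \eta}_P$ and $(\bar \eta \wedge \bar \zeta) > -2q = -\m$ for the hyperplane $P$ (here $\m = \s_1 = 2q$), Proposition~\ref{prop:intSing} applies to $\PR^+ F(u)$ and gives $\CK_{\bar\gamma}\PR^+ F(u) \in \CD^{\bar\gamma+2q,\,(\bar\eta\wedge\bar\zeta)+2q}_P$; the multiplication by $\PR^+ \in \CD^{\infty,\infty}_P$ is harmless by Proposition~\ref{prop:multSing}. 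The condition $\gamma < \bar\gamma + 2q$ lets us truncate this down to $\CD^{\gamma,\cdot}_P$, and $\eta < (\bar\eta\wedge\bar\zeta) + 2q$ (together with $\zeta \le \bar\zeta + 2q$, which ensures the output sector contains the relevant homogeneities) guarantees the $\eta$-exponent of the output dominates that of the input. The remainder term $R_\gamma\CR\PR^+ F(u)$ is smooth and contributes an even better power of $T$ via Lemma~\ref{lem:boundT}, once one checks via Proposition~\ref{prop:recSing} that $\CR\PR^+ F(u) \in \CC^{(\bar\eta\wedge\bar\zeta)}_\s$ makes sense; the hypothesis $(\bar\eta\wedge\bar\zeta) > -2q$ is exactly what is needed here as well.

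The core of the argument is then the short-time smallness estimate. I would invoke Theorem~\ref{theo:boundKT}, which is the whole reason for developing the non-anticipative framework: it gives
\begin{equ}
\$\CK_{\bar\gamma}\PR^+ w\$_{\gamma,\eta;T} \lesssim T^{\kappa/2q}\,\$w\$_{\bar\gamma,\bar\eta;T}\;,
\end{equ}
for a strictly positive $\kappa$ coming from the strict inequality $\eta < (\bar\eta\wedge\bar\zeta) + 2q$, and similarly for $R_\gamma$ via Lemma~\ref{lem:boundT} with an even larger power of $T$. Composing with the local Lipschitz bound on $F$ (both the $\$\cdot\$$ and the $\n{\cdot}$ versions, cf. \eref{e:boundNonSing}), one sees that $\CM_{v,Z}$ maps a ball of $\CD^{\gamma,\eta}_P$ on the slab $(0,T)$ into itself and is a contraction there, provided $T$ is chosen small enough depending on $\$v\$_{\gamma,\eta;1}$, $\$Z\$_{\gamma;O}$, and the Lipschitz constants of $F$. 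The fixed point $u$ then exists and is unique in that ball by the Banach fixed point theorem; uniqueness in all of $\CD^{\gamma,\eta}_P$ on $(0,T)$ (after possibly shrinking $T$) follows by a standard argument comparing two hypothetical solutions and using that their $\CD^{\gamma,\eta}_P$-norms are a priori controlled on a small slab. The fact that ``$u = \CK_{\bar\gamma}v$ on $I$'' only depends on $v$ restricted to $I$ is exactly the non-anticipativity built into Theorem~\ref{theo:boundKT} (through Proposition~\ref{prop:restriction}), so the solution is genuinely local in time.

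For the continuity of the solution map I would use the three-model and three-function estimates. Writing $u = \CM_{v,Z}(u)$ and $\bar u = \CM_{\bar v, \bar Z}(\bar u)$, subtract and estimate $\$u;\bar u\$_{\gamma,\eta;T}$ using: the strong local Lipschitz bound on $F$, which produces $\$F(u);F(\bar u)\$_{\bar\gamma,\bar\eta;T} \lesssim \$u;\bar u\$_{\gamma,\eta;T} + \$Z;\bar Z\$_{\gamma;\bar\K}$; the model-dependent bound of Theorem~\ref{theo:boundKT} for the difference $\$\CK_{\bar\gamma}\PR^+ F(u); \bar\CK_{\bar\gamma}\PR^+ F(\bar u)\$$; the analogous bound for $R_\gamma\CR\PR^+$ from Lemma~\ref{lem:boundT}; and finally $\$v;\bar v\$_{\gamma,\eta;T}$ directly. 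One also needs the bound on $\CR\PR^+ F(u) - \bar\CR\PR^+ F(\bar u)$ in $\CC^\cdot_\s$ from the last part of Proposition~\ref{prop:recSing}. Collecting terms, all the $\$\cdot;\cdot\$$-contributions on the right come with a factor $T^{\kappa/2q}$, so for $T$ small (uniformly over $(\bar v,\bar Z)$ in a neighbourhood of $(v,Z)$, by continuity of the constants) one can absorb $\$u;\bar u\$_{\gamma,\eta;T}$ into the left-hand side and obtain
\begin{equ}
\$u;\bar u\$_{\gamma,\eta;T} \lesssim \$v;\bar v\$_{\gamma,\eta;T} + \$Z;\bar Z\$_{\gamma;O}\;,
\end{equ}
which is the claimed Lipschitz bound; since $\$\cdot;\cdot\$$ dominates $\$\cdot\$$ with the difference of models subtracted off (cf. the remarks after Definition~\ref{def:Dgamma}), this in particular yields the $\eps$--$\delta$ continuity statement, and in the merely locally Lipschitz (non-strong) case one runs the same argument with the $\n{\cdot}$-seminorms and interpolation (Lemma~\ref{lem:interpolation}) to trade a small loss in $\gamma$ for continuity, then re-absorbs. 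The main obstacle I anticipate is the careful bookkeeping of the exponents through the composition $F \mapsto \CK_{\bar\gamma}\PR^+ F$: one has to verify at each stage that the hypotheses of Propositions~\ref{prop:intSing}, \ref{prop:recSing}, \ref{prop:multSing} and Theorem~\ref{theo:boundKT} are met with room to spare so that a strictly positive power of $T$ actually appears, and that the truncations $\CQ_\gamma^-$ do not destroy the required sector inclusion $\CQ_\gamma^-\CI\bar V_{\bar\gamma}\subset V_\gamma$; the analytic estimates themselves are then essentially assembled from results already proved.
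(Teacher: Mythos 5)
Your overall architecture — contraction of $\CM_{v,Z}$ via Theorem~\ref{theo:boundKT} and Lemma~\ref{lem:boundT} combined with the local Lipschitz property of $F$, continuity in $v$ for a fixed model, and the absorption argument giving $\$u;\bar u\$_{\gamma,\eta;T} \lesssim \$v;\bar v\$_{\gamma,\eta;T} + \$Z;\bar Z\$_{\gamma;O}$ when $F$ is \emph{strongly} locally Lipschitz — is exactly the paper's proof, and your bookkeeping of the exponent conditions is sound.

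There is however a genuine gap in your treatment of the continuity statement when $F$ is only locally Lipschitz (not strongly). The absorption argument requires a bound on $\$F(u);F(\bar u)\$_{\bar\gamma,\bar\eta;T}$ in terms of $\$u;\bar u\$_{\gamma,\eta;T} + \$Z;\bar Z\$_{\gamma;O}$, and that bound is precisely what the \emph{strong} local Lipschitz hypothesis provides; plain local Lipschitz continuity only controls $\$F(f)-F(g)\$$ and $\n{F(f)-F(g)}$ for $f,g$ modelled on the \emph{same} model. Your proposed fix — ``run the same argument with the $\n{\cdot}$-seminorms and interpolation, then re-absorb'' — does not close: Lemma~\ref{lem:interpolation} converts a bound on $\n{u-\bar u}_{\gamma,\eta;T}$ into a bound on $\$u;\bar u\$_{(1-\kappa)\gamma,\eta;T}$ only at the price of a sublinear power $\kappa<1$, so the resulting inequality has the shape $X \lesssim T^{\theta}\bigl(X^{\kappa} + \delta\bigr)$, and a sublinear power of $X$ on the right cannot be absorbed into the left to produce smallness of $X$ as $\delta\to 0$. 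The paper circumvents this by comparing the Picard iterates $u^{(n)}=(\CM_F^Z)^n(0)$ and $\bar u^{(n)}=(\CM_F^{\bar Z})^n(0)$: the interpolation bound is propagated through $n$ iterations, yielding $\|u^{(n)}-\bar u^{(n)}\|_{\gamma,\eta;T}\lesssim \$Z;\bar Z\$_{\gamma;O}^{\kappa^n}$, which is then balanced against the geometric error $\rho^n$ of the contraction by choosing $n\sim\log\log\delta^{-1}$. This produces only a logarithmic modulus of continuity $\$u;\bar u\$_{\gamma,\eta;T}\lesssim|\log\$Z;\bar Z\$_{\gamma;O}|^{-\nu}$ — sufficient for the $\eps$--$\delta$ statement, but emphatically not a Lipschitz bound, which is exactly why the theorem reserves the Lipschitz conclusion for the strongly Lipschitz case. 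You need to replace your one-line ``re-absorb'' by this (or an equivalent) iteration scheme.
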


\begin{proof}
We first consider the case of a fixed model $Z = (\Pi,\Gamma)$, so that the space
$\CD^{\gamma,\eta}_P$ (defined with respect to the given multiplicative map $\Gamma$) is a 
Banach space. In this case, 
denote by $\CM_F^Z(u)$ the right hand side of \eref{e:fixedPointGen}. 
Note that, even though $\CM_F^Z$ appears not to depend on $Z$ at first sight,
it does so through the definition of $\CK_{\bar \gamma}$.

It follows from Theorem~\ref{theo:boundKT} and Lemma~\ref{lem:boundT}, as well as our
assumptions on the exponents $\gamma$, $\bar \gamma$, $\eta$ and $\bar \eta$ that there
exists $\kappa > 0$ such that one has the bound
\begin{equ}
\$\CM_F^Z(u) - \CM_F^Z(\bar u)\$_{\gamma,\eta;T} \lesssim T^{\kappa}\$F(u) -F(\bar u)\$_{\bar \gamma, \bar \eta;T}\;.
\end{equ}
It follows from the local Lipschitz continuity of $F$ that, for every $R>0$, there exists 
a constant $C>0$ such that
\begin{equ}
\$\CM_F^Z(u) - \CM_F^Z(\bar u)\$_{\gamma,\eta;T} \le C T^{\kappa}\$u - \bar u\$_{\gamma,\eta;T}\;,
\end{equ}
uniformly over $T \in (0,1]$ and over all $u$ and $\bar u$ such that
$\$u\$_{\gamma,\eta;T} + \$\bar u\$_{\gamma,\eta;T} \le R$. Similarly, for every $R>0$,
there exists a constant $C>0$ such that one has the bound
\begin{equ}
\$\CM_F^Z(u)\$_{\gamma,\eta;T} \le C T^\kappa + \$v\$_{\gamma,\eta;T}\;.
\end{equ}
As a consequence, as soon as $\$v\$_{\gamma,\eta;T}$ is finite and provided that $T$ is 
small enough $\CM_F^Z$ maps the ball of radius $\$v\$_{\gamma,\eta;T} + 1$ in $\CD^{\gamma,\eta}_P$
into itself and is a contraction there, so that it admits a unique fixed point.
The fact that this is also the unique global fixed point for $\CM_F^Z$
follows from a simple continuity argument 
similar to the one given in the proof of Theorem~4.8 in \cite{KPZ}.

For a fixed model $Z$, the local Lipschitz continuity of the map $v \mapsto u$ for sufficiently small $T$
is immediate. Regarding the dependency on the model $Z$, we first consider the simpler case where
$F$ is assumed to be strongly Lipschitz continuous. In this case, the same
argument as above yields the bound
\begin{equ}
\$\CM_F^Z(u) ; \CM_F^{\bar Z}(\bar u)\$_{\gamma,\eta;T} \le C T^{\kappa}\bigl(\$u ; \bar u\$_{\gamma,\eta;T}
+ \$Z;\bar Z\$_{\gamma;O}\bigr)\;,
\end{equ}
so that the claim follows at once. 

It remains to show that the solution is also locally uniformly continuous as a function of the model
$Z$ in situations where $F$ is locally Lipschitz continuous, but not in the strong sense. 
Given a second model $\bar Z = (\bar \Pi, \bar \Gamma)$, we denote by
$\bar u$ the corresponding solution to \eref{e:fixedPointGen}. We assume that $\bar Z$
is sufficiently close to $Z$ so that both $\CM_F^{Z}$ and $\CM_F^{\bar Z}$ are strict contractions 
on the same ball.
We also use the shorthand notations
$u^{(n)} = \bigl(\CM_F^Z\bigr)^n(0)$ and $\bar u^{(n)} = \bigl(\CM_F^{\bar Z}\bigr) ^n(0)$.
Using the strict contraction property of the two fixed point maps, we have the bound
\begin{equs}
 \|u - \bar u\|_{\gamma,\eta;T} &\lesssim  \|u - u^{(n)}\|_{\gamma,\eta;T} +  \|u^{(n)} - \bar u^{(n)}\|_{\gamma,\eta;T} +  \|\bar u^{(n)} - \bar u\|_{\gamma,\eta;T} \\
&\lesssim \rho^n + \n{u^{(n)} - \bar u^{(n)}}_{\gamma,\eta;T}\;,
\end{equs}
for some constant $\rho < 1$. As a consequence of 
Lemma~\ref{lem:vanishPower}, Lemma~\ref{lem:interpolation}, \eref{e:boundNonSing}, 
Proposition~\ref{prop:intSing}, and using the fact
that there is a little bit of ``wiggle room'' between $\gamma$ and $\bar \gamma + 2q$, we 
obtain the existence of a constant $\kappa > 0$ such that one has the bound
\begin{equs}
\n{u^{(n)} - \bar u^{(n)}}_{\gamma,\eta;T} &\lesssim 
\$\CM_F^Z(u^{(n-1)}) ; \CM_F^{\bar Z}(\bar u^{(n-1)})\$_{\gamma,\eta;T}\\
&\lesssim \$F(u^{(n-1)}) ; F(\bar u^{(n-1)})\$_{\bar \gamma-\kappa,\bar \eta;T} + \$Z;\bar Z\$_{\gamma;O}\\
&\lesssim \n{F(u^{(n-1)}) - F(\bar u^{(n-1)})}_{\bar \gamma,\bar \eta;T}^\kappa + \$Z;\bar Z\$_{\gamma;O}\\
&\lesssim
\n{u^{(n-1)} - \bar u^{(n-1)}}_{\gamma,\eta;T}^\kappa + 
\$Z;\bar Z\$_{\gamma;O}\;,
\end{equs}
uniformly in $n$. By making $T$ sufficiently small, one can furthermore ensure that the proportionality
constant that in principle appears in this bound is bounded by $1$.
%
Since $u^0 = \bar u^0$, we can iterate this bound $n$ times to obtain
\begin{equ}
\|u^{(n)} - \bar u^{(n)}\|_{\gamma,\eta;T} \lesssim \$Z;\bar Z\$_{\gamma;O}^{\kappa^n}\;,
\end{equ}
with a proportionality constant that is bounded uniformly in $n$.
Setting $\eps = \$Z;\bar Z\$_{\gamma;O}$, a simple calculation shows that the term $\rho^n$ and
the term $\eps^{\kappa^n}$ are of (roughly) the same order when $n \sim \log \log \eps^{-1}$,
which eventually yields a bound of the type
\begin{equ}
\$u ; \bar u\$_{\gamma,\eta;T} \lesssim \bigl|\log \$Z;\bar Z\$_{\gamma;O}\bigr|^{-\nu}\;,
\end{equ}
for some exponent $\nu > 0$, uniformly in a small neighbourhood of any initial condition and
any model $Z$. While this bound is of course suboptimal in many situations, it is sufficient 
to yield the joint continuity of the solution map for a very large class of nonlinearities.
\end{proof}

\begin{remark}\label{rem:absorbSingular}
The condition $(\bar \eta \wedge \bar \zeta) > -2q$ is required in order to be able to apply
Proposition~\ref{prop:intSing}. Recall however that the assumptions of that theorem can on occasion be
slightly relaxed, see Remark~\ref{rem:senseSingDist}. The relevant situation in our context is when
$F$ can be rewritten as $F(z,u) = F_0(z,u) + F_1(z)$, where $F_0$ satisfies the assumption of 
our theorem, but $F_1$ does not. 
If we then make sense of $(\CK_{\bar \gamma} + R_{\gamma}\CR) \PR^+ F_1$ 
``by hand'' as an element of $\CD_P^{\gamma, \eta}$ and impose sufficient restrictions on our
model $Z$ such that this element is continuous as a function of $Z$, 
then we can absorb it into $v$ so that all of our conclusions still hold.
\end{remark}

\begin{remark}
In many situations, the map $F$ has the property that
\begin{equ}[e:assumF]
\CQ_{\zeta + 2q}^- \tau = \CQ_{\zeta + 2q}^- \bar \tau \quad\Rightarrow\quad 
\CQ_{\zeta + 2q}^- F(z,\tau) = \CQ_{\zeta + 2q}^- F(z,\bar \tau)\;.
\end{equ}
Denote as before by $\bar T \subset T$ the sector spanned by abstract polynomials. Then, provided
that \eref{e:assumF} holds, for every $z \in \R^d$ and every $v \in \bar T$, the equation
\begin{equ}
\tau = \CQ_\gamma^- \bigl(\CI F(z,\tau) + v\bigr)\;,
\end{equ}
admits a unique solution $\FF(z,v)$ in $V$. Indeed, it follows from the properties of the
abstract integration map $\CI$, combined with \eref{e:assumF}, that there exists $n > 0$ such that
the map $F_{z,v}\colon \tau \mapsto \CQ_\gamma^- \bigl(\CI F(z,\tau) + v\bigr)$ has the property that
$F_{z,v}^{n+1}(\tau) = F_{z,v}^n(\tau)$.

It then follows from the definitions of the operations appearing in \eref{e:fixedPointu}
that, if we denote by $\bar \CQ u$ the component of $u$ in $\bar T$, one has the identity
\begin{equ}[e:idenSol]
u(t,x) = \FF\bigl((t,x), \bar \CQ u(t,x)\bigr)\;,\qquad t \in (0,T]\;,
\end{equ}
for the solution to our fixed point equation \eref{e:fixedPointu}.
In other words, if we interpret the $\bar \CQ u(t,x)$ as a ``renormalised Taylor expansion''
for the solution $u$, then any of the components $\CQ_\zeta u(t,x)$ is given by some
explicit nonlinear function of the renormalised Taylor expansion up to some order depending
on $\zeta$. This fact will be used to great effect in Section~\ref{sec:PAMGenRigour} below.
\end{remark}

Before we proceed, we show that, in the situations of interest for us, the local solution
maps built in Theorem~\ref{theo:fixedPointGen} are consistent. In other words, we would like to be able to
construct a ``maximal solution'' by piecing together local solutions. In the context considered
here, it is \textit{a priori} not obvious that this is possible. In order to even
formulate what we mean by such a statement, we introduce the set $P_t = \{(s,y)\,:\, s = t\}$
and write $\PR_t^+$ for the indicator function of the set $\{(s,y)\,:\, s > t\}$, which
we interpret as before as a bounded operator from $\CD_{P_t}^{\gamma, \eta}$ into itself
for any $\gamma>0$ and $\eta \in \R$.

From now on, we assume that $G$ is the parabolic Green's function of a constant coefficient 
parabolic differential operator $\CL$ on $\R^{d-1}$. In this way, 
for any distribution $u_0$ on $\R^{d-1}$,
the function $v = G u_0$ defined as in Lemma~\ref{lem:harmonicExt}
is a classical solution to the equation $\d_t v = \CL v$ for $t > 0$. We then consider the
class of equations of the type \eref{e:fixedPointGen} with $v = G u_0$,
for some function (or possibly distribution) $u_0$ on $\R^{d-1}$.
We furthermore assume that the sector $V$ is function-like. Recall Proposition~\ref{prop:recFcn}, which implies
that any modelled distribution $u$ with values in $V$ is such that $\CR u$ is a continuous
function belonging to $\CC^{\beta}_\s$ for some $\beta > 0$. 
In particular, $\bigl(\CR u\bigr)(t,\cdot)$ is then
perfectly well-defined as a function on $\R^{d-1}$ belonging to $\CC^\beta_{\bar s}$.
We then have the following result:

\begin{proposition}\label{prop:solGen}
In the setting of Theorem~\ref{theo:fixedPointGen}, assume that $\zeta = 0$ and $-\s_1 < \eta < \beta$ with
$\eta \not \in \N$ and $\beta$ as above. 
Let $u_0 \in \CC^\eta_{\bar s}(\R^{d-1})$ be symmetric and let $T>0$ be sufficiently small
so that the equation
\begin{equ}[e:fixedPointu]
u = (\CK_{\bar \gamma} + R_{ \gamma}\CR) \PR^+ F(u) + G u_0\;,
\end{equ}
admits a unique solution $u \in \CD^{\gamma,\eta}_{P}$ on $(0,T)$.
Let furthermore $s \in (0,T)$ and $\bar T > T$ be such that
\begin{equ}
\bar u = (\CK_{\bar \gamma} + R_{ \gamma}\CR) \PR_s^+ F(\bar u) +  G u_s\;,
\end{equ}
where $u_s \eqdef \bigl(\CR u\bigr)(s,\cdot)$, admits a unique solution $\bar u \in \CD^{\gamma,\eta}_{P_s}$
on $(s,\bar T)$.

Then, one necessarily has $\bar u(t,x) = u(t,x)$ for every $x \in \R^{d-1}$ and every $t \in (s,T)$.
Furthermore, the element $\hat u \in \CD^{\gamma,\eta}_{P}$ defined by
$\hat u(t,x) = u(t,x)$ for $t \le s$ and $\hat u(t,x) = \bar u(t,x)$ for $t > s$
satisfies \eref{e:fixedPointu} on $(0,\bar T)$.
\end{proposition}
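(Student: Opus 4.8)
The plan is to reduce everything to two ingredients: the \emph{non-anticipativity} of $K$ and $R$, already exploited in Proposition~\ref{prop:restriction} and Theorem~\ref{theo:boundKT}, which guarantees that for any modelled distribution $w$ the value $\bigl((\CK_{\bar\gamma}+R_\gamma\CR)w\bigr)(t,x)$ depends only on $w$ at times $\le t$; and the semigroup property $S(\tau_1+\tau_2)=S(\tau_1)S(\tau_2)$ of the semigroup generated by $\CL$, i.e.\ $G(\tau_1+\tau_2,\cdot)=G(\tau_1,\cdot)*G(\tau_2,\cdot)$ in space. The heart of the matter is a Duhamel-type identity at the level of modelled distributions, which I would isolate as a lemma: let $g\in\CD^{\bar\gamma,\bar\eta}_P$ be symmetric with $g(t,x)=0$ for $t\le 0$ (exponents as in Theorem~\ref{theo:fixedPointGen}, so that $(\CK_{\bar\gamma}+R_\gamma\CR)g$ makes sense in $\CD^{\gamma,\eta}_P$), let $v_0\in\CC^\eta_{\bar\s}$ be symmetric, set $w=(\CK_{\bar\gamma}+R_\gamma\CR)g+Gv_0$ and $w_s=(\CR w)(s,\cdot)$; then for every $t>s$ and every $x$,
\begin{equ}
w(t,x) = \bigl((\CK_{\bar\gamma}+R_\gamma\CR)\PR^+_s g\bigr)(t,x) + \bigl(Gw_s\bigr)(t,x)\;.
\end{equ}

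To prove the lemma, write $g=\PR^+_s g+(g-\PR^+_s g)$, an exact identity of modelled distributions in which $g-\PR^+_s g$ vanishes on $\{t>s\}$; it then suffices to show $\bigl((\CK_{\bar\gamma}+R_\gamma\CR)(g-\PR^+_s g)\bigr)(t,x)+(Gv_0)(t,x)=(Gw_s)(t,x)$ for $t>s$. Since $g-\PR^+_s g$ vanishes on a neighbourhood of $(t,x)$, unwinding the definitions \eref{e:defI}, \eref{e:defIbar}, \eref{e:defRgamma} shows that $\CI$ and $\CJ$ contribute nothing at $(t,x)$, while $\CN_{\bar\gamma}(g-\PR^+_s g)$ and $R_\gamma\CR(g-\PR^+_s g)$ at $(t,x)$ are the truncated Taylor jets at $(t,x)$ of the smooth functions $z\mapsto\bigl(K*\mu\bigr)(z)$ and $z\mapsto\bigl(R*\mu\bigr)(z)$, with $\mu=\CR(g-\PR^+_s g)$. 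Using \eref{e:propertyGKR} (valid for $t\le 1$; the general case by covering $(0,\bar T)$ by intervals of length $\le 1$) to combine the two kernels, the left-hand side is the Taylor jet at $(t,x)$ of $z\mapsto\bigl(G*\mu\bigr)(z)+\bigl(G*(v_0\otimes\delta_0)\bigr)(z)$, while $(Gw_s)(t,x)$ is that of $z\mapsto\bigl(G*(w_s\otimes\delta_s)\bigr)(z)$, where $v_0\otimes\delta_0$, $w_s\otimes\delta_s$ place $v_0$, $w_s$ on the hyperplanes $P_0$, $P_s$. It remains to check the classical distributional identity $G*\mu+G*(v_0\otimes\delta_0)=G*(w_s\otimes\delta_s)$ on $\{t>s\}$: both $G*\mu$ and $G*(v_0\otimes\delta_0)$ solve $\CL v=0$ there (their sources sit in $\{t\le s\}$), hence equal $S(t-s)$ of their time-$s$ slices, whose sum is $(G*\CR g)(s,\cdot)+(G*(v_0\otimes\delta_0))(s,\cdot)$ — here one uses that $G$ is non-anticipative and that $\CR\PR^+_s g$, being an element of some $\CC^\alpha_\s$, carries no Dirac mass at time $s$, so $\PR^+_s g$ does not contribute to the slice — and this sum equals $(\CR w)(s,\cdot)=w_s$ by \eref{e:wantedIden} together with the analogous identity $\CR R_\gamma h=R*\CR h$. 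Since $S(t-s)w_s=(G*(w_s\otimes\delta_s))(t,\cdot)$ on $\{t>s\}$, the lemma follows.

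Granting the lemma, both assertions follow. Applying it to $w=u$, $g=\PR^+F(u)$, $v_0=u_0$ gives, for $t\in(s,T)$, $u(t,x)=\bigl((\CK_{\bar\gamma}+R_\gamma\CR)\PR^+_s F(u)\bigr)(t,x)+(Gu_s)(t,x)$; in particular $u|_{(s,T)}\in\CD^{\gamma,\eta}_{P_s}$ and solves there the fixed point equation defining $\bar u$. Since $T<\bar T$ lies strictly below the existence time of that equation, the solution on $(s,T)$ is unique by Theorem~\ref{theo:fixedPointGen} and the global uniqueness argument in its proof, so $\bar u(t,x)=u(t,x)$ for $t\in(s,T)$, which is the first claim. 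For the second, $\hat u$ equals $u$ on $(0,T)$ and $\bar u$ on $(s,\bar T)$, hence is a consistently glued family that coincides with $u$ near $P_0$ and is locally of class $\CD^\gamma$ away from $P_0$, so $\hat u\in\CD^{\gamma,\eta}_P$ on $(0,\bar T)$. For $t\le s$, non-anticipativity of $K$, $R$ and $\hat u=u$ on $\{t\le s\}$ give that the right-hand side of \eref{e:fixedPointu} at $\hat u$ equals that at $u$, i.e.\ $u(t,x)=\hat u(t,x)$. For $t>s$, split $\PR^+F(\hat u)=\PR^+_sF(\hat u)+(\PR^+F(u)-\PR^+_sF(u))$ (using $\hat u=u$ on $\{t\le s\}$), apply the lemma's computation to the second summand to rewrite $\bigl((\CK_{\bar\gamma}+R_\gamma\CR)(\PR^+F(u)-\PR^+_sF(u))\bigr)(t,x)+(Gu_0)(t,x)$ as $(Gu_s)(t,x)$, and use $\hat u=\bar u$ on $(s,\bar T)$ and non-anticipativity to recognise the remaining $\bigl((\CK_{\bar\gamma}+R_\gamma\CR)\PR^+_sF(\hat u)\bigr)(t,x)+(Gu_s)(t,x)$ as $\bar u(t,x)=\hat u(t,x)$; hence $\hat u$ solves \eref{e:fixedPointu} on $(0,\bar T)$.

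The genuinely new work lies entirely in the lemma, and the main obstacle is the identification of $(\CK_{\bar\gamma}+R_\gamma\CR)$ applied to a modelled distribution whose germ vanishes at $(t,x)$ with an honest polynomial Taylor jet of a convolution: this requires carefully unwinding the definitions of $\CJ$, $\CN_{\bar\gamma}$, $R_\gamma$ and verifying smoothness of the relevant convolution near $(t,x)$, together with the routine but necessary bookkeeping that reconstructions of singular modelled distributions, being elements of some $\CC^\alpha_\s$, carry no Dirac-in-time mass on $P_0$ or $P_s$ (so boundary terms at time $s$ behave classically). Everything else is either classical parabolic theory (the semigroup/Duhamel identity, continuity of $Gu_s$ up to $P_s$ via Lemma~\ref{lem:harmonicExt}) or a direct appeal to Theorem~\ref{theo:fixedPointGen}, Proposition~\ref{prop:intSing} and Proposition~\ref{prop:recFcn}.
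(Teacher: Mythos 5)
Your proof is correct, and while the overall skeleton (Duhamel identity for the reconstructed function, plus an identification of the remaining components of the modelled distribution, plus uniqueness of the fixed point on $(s,\bar T)$) matches the paper's, the mechanism for the crucial ``upgrade'' step is genuinely different. The paper only establishes the identity for the $\scal{\one,\cdot}$-component via the classical semigroup property, then observes that the components of non-integer homogeneity of both $v=\PR_s^+u$ and of the right-hand side of $\bar u$'s equation evaluated at $v$ are automatically equal (both being $\CQ_\ell \CI F(z,v(z))$, using $\bar\zeta>-2q$ so that $\CJ$ cancels against the matching term in $\CN_{\bar\gamma}$), and finally invokes Proposition~\ref{prop:uniqueDeriv} to force agreement of the remaining positive-integer polynomial components. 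You instead prove the full identity of modelled distributions directly, by observing that $(\CK_{\bar\gamma}+R_\gamma\CR)$ applied to an input whose germ vanishes at $(t,x)$ reduces to the truncated Taylor jet of $K*\mu+R*\mu=G*\mu$ there, so that the discrepancy is the jet of a classical caloric function vanishing identically on the open set $\{t>s\}$. Your route buys a more explicit ``Duhamel at the level of models'' lemma (which you then reuse cleanly for the gluing statement about $\hat u$, which the paper dismisses as ``virtually identical''), at the cost of having to track truncation orders and verify that reconstructions carry no mass on the time-$s$ slice; the paper's route delegates exactly that bookkeeping to Proposition~\ref{prop:uniqueDeriv}. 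Both hinge on the same two structural facts — non-anticipativity of $K$ and $R$, and the semigroup property of $G$ — so I consider the argument sound.
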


\begin{proof}
%
%
Setting $v = \PR_s^+ u \in \CD_{P_s}^{\gamma,\eta}$, it follows from the
definitions of $\CK_{\bar \gamma}$ and $R_\gamma$ that one has for $t \in (s,T]$ the identity
\begin{equs}
\scal{\one,v(t,x)} &= \int_0^t \int_{\R^{d-1}} G(t-r, x-y) \bigl(\CR F(u)\bigr)(r,y)\,dy\,dr\\
&\quad + \int_{\R^{d-1}} G(t, x-y) u_0(y)\,dy\\
&= \int_s^t \int_{\R^{d-1}} G(t-r, x-y) \bigl(\CR F(v)\bigr)(r,y)\,dy\,dr\\
&\quad + \int_{\R^{d-1}} G(t-s, x-y) u_s(y)\,dy\;.
\end{equs} 
Here, the fact that there appears no additional term is due to the fact that $\bar \zeta > -2q$,
so that the term $\scal{\one, \CJ(t,x) \bigl(F(u)(t,x)\bigr)}$ cancels exactly with the corresponding term
appearing in the definition of $\CN_{\bar \gamma}$.
This quantity on the other hand is precisely equal to
\begin{equ}
\scal{\one, \bigl((\CK_{\bar \gamma} + R_{ \gamma}\CR) \PR_s^+ F(v) + G u_s\bigr)(t,x)}\;.
\end{equ}
Setting
\begin{equ}
w = (\CK_{\bar \gamma} + R_{ \gamma}\CR) \PR_s^+ F(v) + G u_s\;,
\end{equ}
we deduce from the definitions of the various operators appearing above that, for $\ell \not \in \N$, 
one has $\CQ_\ell w(z) = \CQ_\ell \CI F(z,v(z))$. However, we also know that 
$v$ satisfies  $\CQ_\ell v(z) = \CQ_\ell \CI F(z,v(z))$.
We can therefore apply Proposition~\ref{prop:uniqueDeriv}, which yields the identity $w = v$,
from which it immediately follows that $v = \bar u$ on $(0,T)$.

The argument regarding $\hat u$ is virtually identical, so we do not reproduce it here.
\end{proof}

This shows that we can patch together local solutions in exactly the same way as for
``classical'' solutions to nonlinear evolution equations. Furthermore, it shows that 
the only way in which local solutions can fail to be global is by an explosion of the
$\CC_{\bar \s}^\eta$-norm of the quantity $\bigl(\CR u\bigr)(t,\cdot)$. 
Furthermore, since the reconstruction operator $\CR$ is continuous into $\CC_{\bar \s}^\eta$,
this norm is continuous as a function of time, so that for any cut-off value $L>0$, there exists 
a (possibly infinite) first time $t$ at which $\|u(t,\cdot)\|_{\eta} = L$.

Given a symmetric model
$Z = (\Pi,\Gamma)$ for $\TT$, a symmetric initial condition $u_0 \in \CC_{\bar \s}^\eta$, and 
some (typically large) cut-off value $L>0$, we denote by $u = \CS^L(u_0, Z) \in \CD_P^{\gamma,\eta}$ and 
$T = T^L(u_0, Z) \in \R_+ \cup \{+\infty\}$ 
the (unique) modelled distribution and time such that
\begin{equ}
u = (\CK_{\bar \gamma}+R_\gamma \CR) \PR^+ F\bigl(u\bigr)
+ G u_0\;,
\end{equ}
on $[0, T]$, 
such that $\|\bigl(\CR u\bigr)(t,\cdot)\|_{\eta} < L$ for $t < T$,
and such that $\|\bigl(\CR u\bigr)(t,\cdot)\|_{\eta} \ge L$ for $t \ge T$.
The following corollary is now straightforward:

\begin{corollary}\label{cor:genFixedPoint}
Let $L>0$ be fixed.
In the setting of Proposition~\ref{prop:solGen}, let $\CS^L$ and $T^L$ be defined 
as above and set $O = [-1,2] \times \R^{d-1}$.
Then, for every $\eps > 0$ and $C>0$ there exists $\delta > 0$ such that,
setting $T = 1 \wedge T^L(u_0, Z) \wedge T^L(\bar u_0, \bar Z)$, one has the bound
\begin{equ}
\|\CS^L(u_0, Z) - \CS^L(\bar u_0, \bar Z)\|_{\gamma,\eta;T} \le \eps\;,
\end{equ}
for all $u_0$, $\bar u_0$, $Z$, $\bar Z$ such that $\$Z\$_{\gamma;O} \le C$,
$\$\bar Z\$_{\gamma;O} \le C$, $\|u_0\|_{\eta} \le L/2$, $\|\bar u_0\|_{\eta} \le L/2$,
 $\|u_0- \bar u_0\|_\eta \le \delta$, and $\$Z;\bar Z\$_{\gamma;O} \le \delta$.
\end{corollary}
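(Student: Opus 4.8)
The plan is to deduce the estimate from the local well-posedness theory of Theorem~\ref{theo:fixedPointGen} by iterating it over a \emph{bounded number of time steps of a fixed length}, gluing the pieces together with Proposition~\ref{prop:solGen}, and using the cut-off built into $\CS^L$ to keep all the data entering these steps in a fixed bounded set. Once $L$ and $C$ are fixed there is no new estimate to establish: a finite composition of a modulus of continuity is again a modulus of continuity.

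\textbf{Quantitative local theory.} First I would record a uniform version of the local theory. By Lemma~\ref{lem:harmonicExt}, applied with the time origin translated to an arbitrary $s\in[0,1)$ and using the compactness of the fundamental domain of $\SS$, the harmonic extension $w_0\mapsto G w_0$ is bounded linear from $\{w_0\in\CC^\eta_{\bar\s}:\|w_0\|_\eta<L\}$ into a ball of $\CD^{\gamma,\eta}_{P_s}$ of radius $R_0=R_0(L)$. Inspecting the proof of Theorem~\ref{theo:fixedPointGen}, the map $\CM_F$ is a contraction of the ball of radius $R_0+1$ as soon as $T\le T_0$ for some $T_0=T_0(L,C)\in(0,1]$ depending only on $R_0$ and on the bound $C$ on $\$Z\$_{\gamma;O}$, and the resulting solution stays in that ball on $[s,s+T_0]$. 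Since $F$ is only locally Lipschitz, all the constants in the continuity part of that proof depend solely on this radius $R_0$ and on $C$, so the continuity statement is uniform over the corresponding bounded set: there are an increasing $\omega=\omega_{L,C}$ with $\omega(0^+)=0$ and a constant $C_1=C_1(L,C)$ such that, for two admissible data of the above size on a slab $(s,s+T_0)$, one has
\begin{equ}
\$u;\bar u\$_{\gamma,\eta;(s,s+T_0)}\;\le\;\omega\bigl(C_1\bigl(\$v;\bar v\$_{\gamma,\eta;(s,s+T_0)}+\$Z;\bar Z\$_{\gamma;O}\bigr)\bigr)\;.
\end{equ}
(Here $O=[-1,2]\times\R^{d-1}$ already contains the region controlling the fixed point on any slab with $s+T_0\le1$, because $K$ is non-anticipative.) By linearity of $w_0\mapsto Gw_0$ and Lemma~\ref{lem:harmonicExt} one also has $\$Gw_0;G\bar w_0\$_{\gamma,\eta;(s,s+T_0)}\lesssim\|w_0-\bar w_0\|_\eta$, the cross term in $\Gamma-\bar\Gamma$ dropping out since $Gw_0$ lies in the polynomial sector, on which $Z$ and $\bar Z$ coincide.

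\textbf{The iteration.} Write $u=\CS^L(u_0,Z)$, $\bar u=\CS^L(\bar u_0,\bar Z)$, $T=1\wedge T^L(u_0,Z)\wedge T^L(\bar u_0,\bar Z)$, and let $N=\lfloor T/T_0\rfloor\le N_{\max}:=\lfloor 1/T_0\rfloor$, a finite number depending only on $L$ and $C$. Put $t_k=kT_0$, $u_k=(\CR u)(t_k,\cdot)$, $\bar u_k=(\CR\bar u)(t_k,\cdot)$ and $\Delta_k=\|u_k-\bar u_k\|_\eta$, and abbreviate $\delta$ for the common bound on $\|u_0-\bar u_0\|_\eta$ and $\$Z;\bar Z\$_{\gamma;O}$, so $\Delta_0\le\delta$. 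For $t_k<T$ the definition of $T^L$ gives $\|u_k\|_\eta<L$ and $\|\bar u_k\|_\eta<L$, so the fixed point equation restarted at $t_k$ with datum $u_k$ (resp.\ $\bar u_k$) and model $Z$ (resp.\ $\bar Z$) has, by the previous paragraph, a unique solution on $(t_k,t_k+T_0)$; by Proposition~\ref{prop:solGen} together with the maximality built into $\CS^L$, this solution is the restriction of $u$ (resp.\ $\bar u$) to that slab. Applying the displayed bound with $v-\bar v=G(u_k-\bar u_k)$ and $\$v;\bar v\$\lesssim\Delta_k$ yields $\|u-\bar u\|_{\gamma,\eta;(t_k,t_k+T_0)}\le\omega(C_1(\Delta_k+\delta))$, and then Proposition~\ref{prop:recSing} together with the continuity of $\CR$ into $\CC^\eta_{\bar\s}$ at fixed times (recalled before the statement, and which on a function-like sector is controlled by the two-point bounds within a single time slice) gives $\Delta_{k+1}\le C_1(\|u-\bar u\|_{\gamma,\eta;(t_k,t_k+T_0)}+\delta)$. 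Hence $\Delta_{k+1}\le\Phi(\Delta_k,\delta)$ with $\Phi(a,\delta):=C_1\omega(C_1(a+\delta))+C_1\delta$, increasing in each argument, continuous, and vanishing at $(0,0)$. Iterating at most $N_{\max}$ times from $\Delta_0\le\delta$ gives $\Delta_k\le\Phi_{N_{\max}}(\delta)$ for all $k\le N$, where $\Phi_{N_{\max}}$ is again increasing, continuous, and vanishes at $0$.

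\textbf{Conclusion and main obstacle.} Since $\|\cdot\|_{\gamma,\eta;T}$ is a pure supremum over the points of $O_T$, it equals $\max_{k\le N}\|\cdot\|_{\gamma,\eta;(t_k,t_k+T_0)}$, so
\begin{equ}
\|\CS^L(u_0,Z)-\CS^L(\bar u_0,\bar Z)\|_{\gamma,\eta;T}\;\le\;\omega\bigl(C_1(\Phi_{N_{\max}}(\delta)+\delta)\bigr)\;,
\end{equ}
and choosing $\delta=\delta(\eps,L,C)$ small enough that the right-hand side is at most $\eps$ finishes the proof. The only genuinely delicate step is the second paragraph: one must re-enter the proof of Theorem~\ref{theo:fixedPointGen} and observe that, because $F$ is merely locally Lipschitz, every constant there depends only on the radius of the ball of data considered (bounded here in terms of $L$) and on $C$, so that the continuity it provides is \emph{uniform} over the relevant bounded set of initial data and models. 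Everything else is bookkeeping on top of Propositions~\ref{prop:solGen} and~\ref{prop:recSing} and Lemma~\ref{lem:harmonicExt}.
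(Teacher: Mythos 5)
Your proof is correct and follows exactly the route the paper takes: the paper's own (one-sentence) argument rests precisely on the a priori bound on the number of local-solution iterations needed to reach time $t\wedge T^L$, which is your $N_{\max}=\lfloor 1/T_0\rfloor$ with $T_0=T_0(L,C)$, followed by composing the (merely uniform, not Lipschitz) continuity of Theorem~\ref{theo:fixedPointGen} finitely many times and gluing via Proposition~\ref{prop:solGen}. Your write-up is in fact considerably more detailed than the paper's, which simply declares the argument straightforward.
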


\begin{proof}
The argument is straightforward and works in exactly the same way as analogous statements
in the classical theory of semilinear PDEs. The main ingredient is the fact that
for every $t > 0$, one can obtain an \textit{a priori} bound on the number of iterations
required to reach the time $t \wedge T^L(u_0, Z)$. 
\end{proof}

\section{Regularity structures for semilinear (S)PDEs}
\label{sec:SPDE}

In this section, we show how to apply the theory developed in this article to construct an abstract
solution map to a very large class of semilinear PDEs driven by rough input data.
Given Theorem~\ref{theo:fixedPointGen}, the only task that remains is to build a sufficiently large
regularity structure allowing to formulate the equation. 

First, we give a relatively simple heuristic that allows one to very quickly decide whether a given
problem is at all amenable to the analysis presented in this article. 
For the sake of conciseness, we will assume that the problem of interest can be rewritten as a fixed point
problem of the type
\begin{equ}[e:genSPDE]
u = K * F(u,\nabla u, \xi)+ \tilde u_0\;,
\end{equ}
where $K$ is a singular integral operator that is $\beta$-regularising on $\R^d$ with respect to some
fixed scaling $\s$, $F$ is a smooth function, $\xi$ denotes the rough input data, and $\tilde u_0$
describes some initial condition (or possibly boundary data). 
In general, one might imagine that $F$ also depends on
derivatives of higher order (provided that $\beta$ is sufficiently large) and / or 
that $F$ itself involves some singular integral operators. 
We furthermore assume that $F$ is affine in $\xi$. (Accommodating the general case where $F$ is polynomial
in $\xi$ would also be possible with minor modifications, but we stick to the affine case
for ease of presentation.)

It is also straightforward to deal with the situation when $F$ is non-homogeneous
in the sense that it depends on the (space-time) location explicitly, as long as any such dependence is
sufficiently smooth.
For the sake of readability, we will refrain from presenting such extensions and we
will focus on a situation which is just general enough to be able to describe
all of the examples given in the introduction.

\begin{remark}
In all the examples we are considering, $K$ is the Green's function of some differential operator $\CL$.
In order to obtain optimal results, it is usually advisable to fix the scaling $\s$ in such 
a way that all the 
dominant terms in $\CL$ have the same homogeneity, when counting powers with the weights given by $\s$.
\end{remark}

\begin{remark}
We have seen in Section~\ref{sec:applSPDE} that in general, one would really want to consider
instead of \eref{e:genSPDE} fixed point problems of the type
\begin{equ}[e:genSPDEreal]
u =  \bigl((K + R) * \bigl(\PR^+ F(u,\nabla u, \xi)\bigr)\bigr) + \tilde u_0\;,
\end{equ}
where $\PR^+$ denotes again the characteristic function of the set of positive times
and $R$ is a smooth non-anticipative kernel.
However, if we are able to formulate \eref{e:genSPDE}, then it is always straightforward
to also formulate \eref{e:genSPDEreal} in our framework, so we concentrate on \eref{e:genSPDE} for
the moment in order not to clutter the presentation. 
\end{remark}


Denoting by $\alpha < 0$ the
regularity of $\xi$ and considering our multi-level Schauder estimate, 
Theorem~\ref{theo:Int}, we then expect the regularity of the 
solution $u$ to be of order at most  $\beta + \alpha$, the regularity of $\nabla u$ to of
order at most $\beta + \alpha - 1$, etc. We then make the following assumption:

\begin{assumption}[local subcriticality]\label{ass:powercount}
In the formal expression of $F$, replace $\xi$ by a dummy variable $\Xi$.
For any $i \in \{1,\ldots,d\}$, if
 $\beta + \alpha \le \s_i$, then replace furthermore any occurrence of $\d_i u$ by the dummy variable $P_i$.
Finally, if $\beta + \alpha \le 0$, replace any occurrence of $u$ by the dummy variable $U$. 

We then make the following two assumptions. First, we assume that the resulting expression 
is polynomial in the dummy variables. Second, we associate to each such monomial a homogeneity
by postulating that $\Xi$ has homogeneity $\alpha$, $U$ has homogeneity $\beta + \alpha$, and 
$P_i$ has homogeneity $\beta + \alpha - \s_i$. (The homogeneity of a monomial then being the sum
of the homogeneities of each factor.) 
With these notations, the assumption of local 
subcriticality is that terms containing $\Xi$ do not contain the dummy variables and that the
remaining monomials each have homogeneity strictly greater than $\alpha$. 
\end{assumption}

Whenever a problem of the type 
\eref{e:genSPDE} satisfies Assumption~\ref{ass:powercount}, we say that it is \textit{locally subcritical}.
The role of this assumption is to ensure that, using 
Theorems~\ref{theo:mult}, \ref{theo:smooth}, and \ref{theo:Int}, one 
can reformulate \eref{e:genSPDE} as a fixed point map in 
$\CD^\gamma$ for sufficiently high $\gamma$ (actually any $\gamma > |\alpha|$ would do) by
replacing the convolution $K*$ with $\CK_\gamma$ as in Theorem~\ref{theo:Int}, 
replacing all products by the abstract product $\star$, and interpreting compositions with
smooth functions as in Section~\ref{sec:compSmooth}. 

For such a formulation to make sense, we need of course to build a sufficiently 
rich regularity structure. This could in principle be done by repeatedly applying Proposition~\ref{prop:extendMult} 
and Theorem~\ref{theo:extension}, but we will actually make use of a more explicit 
construction given in this section, which will also have the advantage of coming automatically
with a ``renormalisation group'' that allows to understand the kind of convergence results
mentioned in Theorem~\ref{theo:mainConvPAM} and Theorem~\ref{thm:Phi4}. 
Our construction suggests the following ``metatheorem'', which is essentially a combination
of Theorem~\ref{theo:fixedPointGen}, Theorem~\ref{theo:mult}, Theorem~\ref{theo:smooth}, 
and Theorem~\ref{theo:genStruct} below.

\begin{metatheorem}
Whenever \eref{e:genSPDE} is locally subcritical, it is possible to build a regularity structure allowing
to reformulate it
as a fixed point problem in $\CD^\gamma$ for $\gamma$ large enough. Furthermore, if the problem is
parabolic on a bounded domain (say the torus), then the fixed point problem admits a unique local solution.
\end{metatheorem}

Before we proceed to building the family of regularity structures allowing to formulate these SPDEs, 
let us check that Assumption~\ref{ass:powercount}
is indeed verified for our examples \eref{e:Phi4}, \eref{e:PAM}, and \eref{e:KPZ}.
Note first that it is immediate from Proposition~\ref{prop:charSpaces} and the equivalence of moments for
Gaussian random variables 
that white noise on $\R^d$ with scaling $\s$
almost surely belongs to $\CC^\alpha_\s$ for every $\alpha < -{|\s|\over 2}$.
(See also Lemma~\ref{lem:approxGauss} below.)
Furthermore, the heat kernel is $2$-regularising, so that $\beta = 2$ in all of the problems considered
here.

In the case of \eref{e:Phi4} in dimension $d$, space-time is given by $\R^{d+1}$ with scaling $\s=(2,1,\ldots,1)$, so that $|\s| = d+2$.
This implies that $\xi$ belongs to $\CC^\alpha_\s$ for every 
$\alpha < -{d+2\over 2} = -1-{d\over 2}$. In this case $\beta + \alpha \approx 1 - {d\over 2}$ so
that, following the procedure of Assumption~\ref{ass:powercount}, the monomials appearing 
are $U^3$ and $\Xi$. The homogeneity of $U^3$ is $3(\beta + \alpha) \approx 3 - {3d \over 2}$,
which is greater than $-1-{d\over 2}$ if and only if $d < 4$.
This is consistent with the fact that $4$ is the critical dimension for Euclidean 
$\Phi^4$ quantum field theory \cite{MR678000}.
Classical fixed point arguments using purely deterministic techniques on the other hand
already fail for dimension $2$, where the homogeneity of $u$ becomes negative, 
which is a well-known fact \cite{MR0378670}. 
In the particular case of $d=2$ however,
provided that one defines the powers $(K * \xi)^k$ ``by hand'', one can write $u = K* \xi + v$,
and the equation for $v$ 
is amenable to classical analysis, a fact that was exploited for example in 
\cite{MR2016604,MR2928722}.
In dimension $3$, this breaks down, but our arguments show that one still
expects to be able to reformulate \eref{e:Phi4} as
a fixed point problem in $\CD^\gamma$, provided that $\gamma > {3\over 2}$. This will be done in Section~\ref{sec:GenFP} below.

For \eref{e:PAM} in dimension $d$ (and therefore space-time $\R^{d+1}$ with the same scaling as above), 
spatial white noise belongs to $\CC_\s^\alpha$ for $\alpha < -{d\over 2}$. As a consequence, 
Assumption~\ref{ass:powercount} does in this case boil down to the condition
$2+\alpha > 0$, which is again the case if and only if $d < 4$. This is again not surprising. 
Indeed, dimension $4$
is precisely such that, if one considers the classical parabolic Anderson model on the lattice $\Z^4$ and
simply rescales the solutions without changing the parameters of the model, one formally converges to
solutions to the continuous model \eref{e:PAM}. On the other hand, as a consequence of Anderson localisation,
one would expect that the rescaled
solution converges to an object that is ``trivial'' in the sense that it could only be described either by the $0$
distribution or by a Dirac distribution concentrated in a random location, which is something that falls
outside of the scope of the theory presented in this article.
In dimensions $2$ and $3$ however, one expects to be able
to formulate and solve a fixed point problem in $\CD^\gamma$ for $\gamma > {3\over 2}$. This time,
one also expects solutions to be global, since the equation is linear.

In the case of \eref{e:KPZ}, one can verify in a similar way that Assumption~\ref{ass:powercount} holds.
As before, if we consider an equation of this type in dimension $d$, we have $|\s| = d+2$, so that one
expects the solution $u$ to be of regularity just below $1-{d\over 2}$. In this case, dimension $2$ is already critical
for three unrelated reasons. First, this is the dimension where $u$ ceases to be function-valued,
so that compositions with smooth functions ceases to make sense. Second, even if the functions $g_i$
were to be replaced by polynomials, $g_4$ would have to be constant in order to satisfy Assumption~\ref{ass:powercount}.
Finally, the homogeneity of the term $|\nabla h|^2$ is $-d$. In dimension $2$, this precisely matches the regularity
$-1-{d\over 2}$ of the noise term. 

We finally turn to the Navier-Stokes equations \eref{e:NS}, which we can write in the form
\eref{e:genSPDE} with $K$ given by the heat kernel, composed with Leray's projection onto
the space of divergence-free vector fields. The situation is slightly more subtle here, as the kernel
is now matrix-valued, so that we really have $d^2$ (or rather $d(d+1)/2$ because of the symmetry)
different convolution operators. Nevertheless, the situation is similar to before and each component of 
$K$ is regularity
improving with $\beta = 2$. The condition for local subcriticality given by Assumption~\ref{ass:powercount}
then states that one should have $(1-{d\over 2}) + (-{d\over 2}) > -1-{d\over 2}$, which is
satisfied if and only if $d < 4$. 

\subsection{General algebraic structure}
\label{sec:genAlg}

The general structure arising in the abstract solution theory for 
semilinear SPDEs of the form \eref{e:Phi4}, \eref{e:PAM}, 
etc is very close to the structure
already mentioned in Section~\ref{sec:Hopf}. The difference however is that $T$ only ``almost''
forms a Hopf algebra, as we will see presently.

In general, we want to build a regularity structure that is sufficiently rich to allow to formulate
a fixed point map for solving our SPDEs. Such a regularity structure will depend on 
the dimension $d$ of the underlying space(-time), the scaling $\s$ of the linear operator,
the degree $\beta$ of the linear operator (which is equal to the regularising index of the 
corresponding Green's function), and the regularity $\alpha$ of the driving noise $\xi$.
It will also depend on finer details of the equation, such as whether the nonlinearity contains
derivatives of $u$, arbitrary functions of $u$, etc. 

At the minimum, our regularity structure should contain polynomials, and it should come with an 
abstract integration map $\CI$ that represents integration against the Green's function $K$ of the linear
operator $\CL$. (Or rather integration against a suitable cut-off version.)
Furthermore, since we might want to represent derivatives of $u$, we
can introduce the integration map\label{lab:defIk} 
$\CI_k$ for a multiindex $k$, which one should think as representing
integration against $D^k K$.
The ``na\"\i ve'' way of building $T$ would be then to consider all possible 
formal expressions $\CF$ 
that can be obtained from the abstract symbols $\Xi$ and $\{X_i\}_{i=1}^d$,
as well as the abstract integration maps $\CI_k$. 
More formally, we can define a set $\CF$
by postulating that $\{\one, \Xi, X_i\} \subset \CF$ and, whenever $\tau, \bar\tau \in \CF$,
we have $\tau \bar \tau \in \CF$ and $\CI_k(\tau) \in \CF$.
(However, we do not include any expression containing a factor of $\CI_k(X^\ell)$, thus
reflecting Assumption~\ref{ass:polynom} at the algebraic level.)
Furthermore, we postulate that the product is commutative and associative by identifying the
corresponding formal expressions (i.e. $X \CI(\Xi) = \CI(\Xi) X$, etc), and that $\one$ is
neutral for the product.

One can then associate to each $\tau \in \CF$ a weight $|\tau|_\s$ which
is obtained by setting $|\one|_\s = 0$,
\begin{equ}
|\tau\bar \tau|_\s = |\tau|_\s + |\tau|_\s\;,
\end{equ}
for any two formal expressions $\tau$ and $\bar \tau$ in $\CF$, and such that
\begin{equ}
|\Xi|_\s = \alpha\;,\quad |X_i|_\s = \s_i\;,\quad |\CI_k(\tau)|_\s = |\tau|_\s + \beta-|k|_\s\;.
\end{equ}
Since these operations are sufficient to generate all of $\CF$, this does indeed define $|\cdot|_\s$.

\begin{example}
These rules yield the weights
\begin{equ}
|\Xi \CI_\ell(\Xi^2 X^k)|_\s = 3\alpha + |k|_\s + \beta - |\ell|_\s\;,\qquad
|X^k \CI(\Xi)^2|_\s = |k|_\s + 2(\alpha+ \beta)\;, 
\end{equ}
for any two multiindices $k$ and $\ell$.
\end{example}

We could then define $T_\gamma$ simply as the set of all formal linear combinations of elements  
$\tau \in \CF$ with $|\tau|_\s = \gamma$.
The problem with this procedure is that  since $\alpha < 0$,
we can build in this way expressions that have arbitrarily negative weight, so that the set of homogeneities
$A\subset\R$ would not be bounded from below anymore. (And it would possibly not even be locally finite.)

The ingredient that allows to circumvent this problem is the assumption of local
subcriticality loosely formulated in Assumption~\ref{ass:powercount}. To make this more formal, assuming 
again for simplicity that the right hand side $F$ of
our problem \eref{e:genSPDE} depends only on $\xi$, $u$, and some partial derivatives
$\d_i u$, we can associate to $F$ a (possibly infinite) collection $\M_F$ of monomials
in $\Xi$, $U$, and $P_i$ in the following way. 

\begin{definition}\label{def:basicBlocks}
For any two integers $m$ and $n$, and multiindex $k$, we have $\Xi^m U^n P^k\in \M_F$
if $F$ contains a term of the type $\xi^{\bar m}u^{\bar n} (Du)^{\bar k}$ for
$\bar m \ge m$, $\bar n \ge n$, and $\bar k \ge k$. 
Here, we consider arbitrary smooth functions as polynomials of ``infinite order'', i.e.
we formally substitute $g(u)$ by $u^\infty$ and similarly for functions involving derivatives of $u$.
Note also that $k$ and $\bar k$ are multiindices since, in general, $P$ is a $d$-dimensional vector.
\end{definition}

\begin{remark}\label{rem:Phi4}
Of course, $\M_F$ is not really well-defined. For example, in the case of \eref{e:Phi4},
we have $F(u,\xi) = \xi-u^3$, so that 
\begin{equ}
\M_F = \{\Xi, U^m \,:\, m \le 3\}\;.
\end{equ}
However, we could of course have rewritten this as $F(u,\xi) = \xi + g(u)$,
hiding  the fact that $g$ actually happens to be a polynomial itself, and this would lead to
adding all higher powers $\{U^n\}_{n > 3}$ to $\M_F$. In practice, it is usually obvious
what the minimal choice of $\M_F$ is.

Furthermore, especially in situations where the solution $u$ is actually vector-valued,
it might be useful to encode into our regularity structure additional structural properties of the equation,
like whether a given function can be written as a gradient. (See the series of works
\cite{Jan,Hendrik,JanHendrik} for situations where this would be of importance.)
\end{remark}

%
\begin{remark}\label{rem:PAMGen}
In the case of \eref{e:PAM}, we have
\begin{equ}
\M_F = \{1, U, U\Xi, \Xi\}\;,
\end{equ}
while in the more general case of \eref{e:PAMGen}, we have
\begin{equ}
\M_F = \bigl\{U^n, U^n\Xi, U^n P_i, U^n P_iP_j\,:\, n \ge 0,\, i,j \in \{1,2\}\bigr\}\;.
\end{equ}
This and \eref{e:Phi4} are the only examples that will be treated in full detail, but
it is straightforward to see what $\M_F$ would be for the remaining examples.
\end{remark}

\begin{remark}
Throughout this whole section, we consider the case where the noise $\xi$ driving our equation
is real-valued and there is only one integral kernel required to describe the fixed point map.
In general, one might also want to consider a finite family $\{\Xi^{(i)}\}$ of formal symbols
describing the driving noises and a family $\{\CI^{(i)}\}$ of symbols describing integration against
various integral kernels.
For example, in the case of \eref{e:NS},
the integral kernel also involves the Leray projection and is therefore matrix-valued,
while the driving noise is vector-valued. 
This is an immediate generalisation that merely requires some additional indices decorating the
objects $\Xi$ and $\CI$ and all the results obtained in the present section trivially extend to this case. 
One could even accommodate the situation where different components
of the noise have different degrees of regularity, but it would then become awkward to state
an analogue to Assumption~\ref{ass:powercount}, although it is certainly possible. 
Since notations are already quite heavy in the current state of things, we 
refrain from increasing our level of generality.
\end{remark}

Given a set of monomials $\M_F$ as in Definition~\ref{def:basicBlocks}, 
we then build subsets $\{\CU_n\}_{n \ge 0}$,
$\{\CP^i_n\}_{n \ge 0}$ and $\{\CW_n\}_{n \ge 0}$ of $\CF$ by the following algorithm.
We set $\CW_0 = \CU_0 = \CP^i_0 = \emptyset$ and,
given subsets $A,B \subset \CF$, we also write $AB$ for the set of all products
$\tau \bar \tau$ with $\tau \in A$ and $\bar \tau \in B$, and similarly for higher order monomials.
(Note that this yields the convention $A^2 = \{\tau\bar \tau\,:\, \tau, \bar \tau \in A\}\neq \{\tau^2\,:\, \tau \in A\}$.)

Then, we define the sets $\CW_n$, $\CU_n$ and $\CP^i_n$ for $n > 0$ recursively by 
\begin{equs}
\CW_{n} &= \CW_{n-1} \cup \bigcup_{\CQ \in \M_F} \CQ(\CU_{n-1},\CP_{n-1},\Xi) \;,\\
\CU_n &= \{X^k\} \cup \bigl\{ \CI(\tau)\,:\, \tau \in \CW_n\bigr\}\;, \label{e:defSetsStruct}\\
\CP^i_n &= \{X^k\} \cup \bigl\{ \CI_i(\tau)\,:\, \tau \in \CW_n\bigr\}\;,
\end{equs}
where in the set $\{X^k\}$, $k$ runs over all possible multiindices.
In plain words, we take any of the monomials in $\M_F$ and build $\CW_{n}$ by 
formally substituting each occurrence of
$U$ by one of
the expressions already obtained in $\CU_{n-1}$ and each occurrence of $P_i$ by one of
the expressions from $\CP^i_{n-1}$. We then apply the maps $\CI$ and $\CI_i$ respectively
to build $\CU_n$ and $\CP_n^i$, ensuring further that they include all monomials involving
only the symbols $X_i$. 
With these definitions at hand, we then set
\begin{equ}[e:defCFF]
\CF_F \eqdef \bigcup_{n \ge 0} \bigl(\CW_n \cup \CU_n\bigr)\;.
\end{equ}
In situations where $F$ depends on $u$ (and not only on $D u$ and $\xi$ like in the 
case of the KPZ equation for example), we furthermore set
\begin{equ}[e:defCUF]
\CU_F \eqdef \bigcup_{n \ge 0} \CU_n\;.
\end{equ}
We similarly define $\CP^i_F = \bigcup_{n \ge 0} \CP_n^i$ in the case when $F$ depends on $\d_i u$.
The idea of this construction is that $\CU_F$ contains those elements of $\CF$ that are required
to describe the solution $u$ to the problem at hand, $\CP^i_F$ contains the elements
appearing in the description of $\d_i u$, and $\CF_F$ contains the elements required to describe
both the solution and the right hand side of \eref{e:genSPDE}, so that $\CF_F$ is rich enough to set up
the whole fixed point map.

The following result then shows that our assumption of local subcriticality, 
Assumption~\ref{ass:powercount}, is really the correct assumption for the theory developed
in this article to apply:

\begin{lemma}\label{lem:finDim}
Let $\alpha < 0$. Then, the set $\{\tau \in \CF_F \,:\, |\tau|_\s \le \gamma\}$ is
finite for every $\gamma \in \R$ if and only if Assumption~\ref{ass:powercount} holds.
\end{lemma}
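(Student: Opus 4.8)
The plan is to prove both implications by carefully tracking the homogeneities generated during the recursive construction of the sets $\CW_n$, $\CU_n$, $\CP^i_n$, and showing that local subcriticality is precisely the condition which prevents homogeneities from decreasing along the recursion. First I would introduce, for each $n \ge 0$, the quantities $\gamma_n = \inf\{|\tau|_\s \,:\, \tau \in \CW_n \cup \CU_n\}$ (and similarly separated invariants for $\CU_n$ and $\CP^i_n$ if needed). By construction $\gamma_0 = 0$ (taking $\tau = \one \in \CU_0$ via the $\{X^k\}$ with $k=0$), and the key observation is that $\gamma_{n} \ge \gamma_{n-1}$ is automatic from the recursion $\CW_n = \CW_{n-1} \cup \dots$, so the real content is to understand how much lower $\gamma_n$ can drop below $\gamma_{n-1}$ when we form new elements. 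Each new element of $\CW_n$ has the form $\CQ(\CU_{n-1}, \CP_{n-1}, \Xi)$ for some monomial $\CQ \in \M_F$, and each element of $\CU_n$ (resp.\ $\CP^i_n$) is $\CI(\tau)$ (resp.\ $\CI_i(\tau)$) for $\tau \in \CW_n$, which raises homogeneity by $\beta$ (resp.\ $\beta - \s_i$), or else is a monomial $X^k$ of homogeneity $|k|_\s \ge 0$.

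For the ``if'' direction (local subcriticality $\Rightarrow$ finiteness), the main step is a recursive lower bound. Using Definition~\ref{def:basicBlocks} and the homogeneity assignments, a monomial $\CQ = \Xi^m U^n P^k \in \M_F$ evaluated on elements of $\CU_{n-1}, \CP_{n-1}$ produces an element of homogeneity $m\alpha + \sum (\text{homogeneities of the } U\text{-slots}) + \sum(\text{homogeneities of the } P\text{-slots})$. The crucial point is that the homogeneity of an element of $\CU_{n-1}$ coming through $\CI$ is at least $\beta + (\text{homogeneity of its argument in } \CW_{n-1}) \ge \beta + \gamma_{n-1}$ — i.e.\ it is bounded below by the homogeneity of the dummy variable $U$ plus the slack $\gamma_{n-1}$; similarly for $P_i$ with $U$ replaced by $P_i$. (Elements of $\CU_{n-1}$ of the form $X^k$ only help since their homogeneity is nonnegative.) Substituting, the homogeneity of the new element is at least $(\text{homogeneity of } \CQ \text{ as a monomial in the dummy variables}) + (\#\text{slots})\cdot\gamma_{n-1}$. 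Assumption~\ref{ass:powercount} says precisely that every monomial in $\M_F$ other than those containing $\Xi$ alone has homogeneity $> \alpha$, and $A$ is locally finite and bounded below; one then sets $\eps = \min(\{|\CQ|_{\text{dummy}} - \alpha : \CQ \in \M_F, \CQ \ne \Xi\text{-only}\} \cup \{\beta\}) > 0$ — here one uses that $\M_F$ modulo the dummy homogeneities is a finite set of values, which follows from local subcriticality forcing $F$ to be polynomial in $\Xi$ and the finitely many relevant dummy variables, at least up to any fixed homogeneity threshold. Iterating, $\gamma_n \ge \gamma_{n-1} + \eps'$ for a fixed $\eps' > 0$ as long as $\gamma_{n-1}$ stays below some threshold, so after finitely many steps $\gamma_n$ exceeds any prescribed $\gamma$, and only finitely many $\tau$ with $|\tau|_\s \le \gamma$ can ever appear; combined with the fact that at each fixed level only finitely many elements of homogeneity $\le \gamma$ are produced (finiteness of $\M_F$ up to the threshold, finiteness of multiindices $k$ with $|k|_\s \le \gamma$), we conclude $\{\tau \in \CF_F : |\tau|_\s \le \gamma\}$ is finite.

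For the ``only if'' direction I would argue by contraposition: if local subcriticality fails, exhibit an infinite family of elements of bounded homogeneity. There are two ways it can fail. First, $F$ may fail to be polynomial in the dummy variables — but since $F$ is smooth and we substitute $g(u) \mapsto u^\infty$, this means some monomial $U^n$ (or $U^n P^k$, etc.) with arbitrarily large $n$ lies in $\M_F$; if the corresponding dummy homogeneity per factor is negative, i.e.\ $\beta + \alpha < 0$ (so the $U$ slots are actually activated) or $\beta + \alpha < \s_i$, then iterating this monomial produces elements $\CI(\tau)^n$-type expressions of homogeneity $n(\beta+\alpha) + \dots \to -\infty$, hence infinitely many of bounded (indeed, arbitrarily negative) homogeneity. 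Second, some monomial $\CQ \ne \Xi$-only may have dummy homogeneity $\le \alpha$; then substituting into it repeatedly, at each stage using elements of $\CW_n$ of the lowest available homogeneity, produces a sequence $\tau_n \in \CW_n$ with $|\tau_n|_\s$ non-increasing (it decreases or stays put at each substitution because the monomial has homogeneity $\le \alpha < 0$ relative to the slack), yielding infinitely many distinct elements with $|\tau_n|_\s \le |\tau_1|_\s$. One has to check these $\tau_n$ are genuinely distinct, which follows because each substitution strictly increases the ``size'' (e.g.\ number of symbols, or tree depth) of the formal expression while not increasing homogeneity.

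\textbf{Main obstacle.} I expect the delicate point to be the bookkeeping in the positive direction: making precise the claim that the homogeneity of any element of $\CU_n$ or $\CP^i_n$ is bounded below by the dummy-variable homogeneity of $U$ (resp.\ $P_i$) plus a uniform positive slack that grows with $n$, and in particular handling the interaction between the $X^k$-monomials (which contribute nonnegatively but are infinite in number) and the genuinely recursive part. One must phrase the induction hypothesis so that it is actually preserved — e.g.\ ``every $\tau \in \CW_n \cup \CU_n$ with $|\tau|_\s \le \gamma$ has at most $N(n,\gamma)$ symbols, and $\gamma_n$ (restricted to non-$X^k$ elements) is at least $\min(n\eps', \Gamma_0)$'' — and verifying the step for a general monomial $\CQ \in \M_F$ requires the distributivity of $\inf$ over sums together with Assumption~\ref{ass:powercount} applied slot-by-slot. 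The rest is routine once this invariant is correctly stated.
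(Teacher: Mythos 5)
Your overall strategy for the ``if'' direction is the same as the paper's: track the infimum of the homogeneities of the elements produced at each level of the recursion defining $\CW_n$, $\CU_n$, $\CP^i_n$, and use local subcriticality to show this infimum increases by a uniform $\zeta>0$ per level, so that past a finite level no new elements of homogeneity $\le\gamma$ appear. Two points, however, need repair. First, as you define it, $\gamma_n=\inf\{|\tau|_\s: \tau\in\CW_n\cup\CU_n\}$ is taken over \emph{nested} sets ($\CW_{n-1}\subset\CW_n$, etc.), so it is non-increasing in $n$ and cannot satisfy $\gamma_n\ge\gamma_{n-1}+\eps'$; the quantity that actually grows, and the one the paper works with, is $\alpha^{(n)}=\inf\{|\tau|_\s:\tau\in\CU_n\setminus\CU_{n-1}\}$ (and its analogue for $\CP^i_n$). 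Second, your induction step ``homogeneity of the new element $\ge$ dummy homogeneity of $\CQ$ $+$ $(\#\text{slots})\cdot(\text{slack})$'' is too optimistic: a new element of $\CW_n$ is only guaranteed to contain \emph{one} factor that is new at level $n-1$; the remaining slots may be old, and they can only be bounded below by the truncated values $(\alpha+\beta)\wedge 0$, resp.\ $(\alpha+\beta-\s_i)\wedge 0$ (because $\CU_n$ and $\CP^i_n$ contain all the polynomials $X^k$), not by the dummy homogeneities $\alpha+\beta$, $\alpha+\beta-\s_i$. The paper sidesteps exactly this bookkeeping by arguing by contradiction: it first establishes the global lower bounds $|\tau|_\s\ge\alpha$, $\ge(\alpha+\beta)\wedge0$, $\ge(\alpha+\beta-\s_i)\wedge0$ on $\CF_F$, $\CU_F$, $\CP_F^i$, and then, given a minimal-level failure of $\alpha^{(n)}\ge\alpha^{(n-1)}+\zeta$, it replaces the new factor $\sigma$ of the offending element by an older factor $\hat\sigma$ with $|\hat\sigma|_\s\le|\sigma|_\s-\zeta$, producing a new element one level down whose homogeneity is at least $\zeta$ smaller — a contradiction. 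Your direct computation can be made to work (the case distinction $\alpha+\beta\lessgtr0$ combined with \eref{e:powCount} does close), but the comparison argument is cleaner precisely because it never needs to evaluate the homogeneity of the old slots.

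For the ``only if'' direction the paper only says the argument is ``similar'' and leaves it to the reader; your contrapositive sketch (iterate a monomial of dummy homogeneity $\le\alpha$, or use non-polynomiality in an activated dummy variable, to produce infinitely many distinct expressions of bounded homogeneity) is the right idea and goes beyond what the paper writes out, though the distinctness of the iterates and the exact case split should be spelled out as you indicate.
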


\begin{proof}
We only show that Assumption~\ref{ass:powercount} is sufficient. Its necessity can be
shown by similar arguments and is left to the reader.
Set $\alpha^{(n)} = \inf \{|\tau|_\s\,:\, \tau \in \CU_n \setminus \CU_{n-1}\}$
and $\alpha_i^{(n)} = \inf \{|\tau|_\s\,:\, \tau \in \CP_n^{(i)} \setminus \CP_{n-1}^{(i)}\}$. 
We claim that under Assumption~\ref{ass:powercount} there exists $\zeta > 0$ 
such that $\alpha^{(n)} > \alpha^{(n-1)} + \zeta$ and similarly for $\alpha_i^{(n)}$,
which then proves the claim.

Note now that $\CW_1 = \{\Xi\}$, so that one has
\begin{equ}
\alpha^{(1)} = (\alpha + \beta) \wedge 0 \;,\qquad \alpha^{(1)}_i = (\alpha + \beta - \s_i)\wedge 0\;.
\end{equ}
Furthermore, Assumption~\ref{ass:powercount} implies that if $\Xi^p U^q P^k \in \M_F \setminus \{\Xi\}$, then
\begin{equ}[e:powCount]
p \alpha + q(\alpha + \beta) + \sum_i k_i (\alpha + \beta - \s_i) > \alpha\;,
\end{equ}
and $k_i$ is allowed to be non-zero only if $\beta > \s_i$.
This immediately implies that one has $|\tau|_\s \ge \alpha$ for every $\tau \in \CF_F$,
$|\tau|_\s \ge (\alpha + \beta)\wedge 0$ for every $\tau \in \CU_F$, and $|\tau|_\s \ge (\alpha + \beta - \s_i)\wedge 0$
for every $\tau \in \CP_F^i$. (If this were to fail, then there would
be a smallest index $n$ at which it fails. But then, since it still holds at $n-1$, condition
\eref{e:powCount} ensures that it also holds at $n$, thus creating a contradiction.)

Let now $\zeta > 0$ be defined as
\begin{equ}
\zeta = \inf_{\Xi^p U^q P^k \in \M_F\setminus \{\Xi\}} \Bigl\{(p-1) \alpha + q(\alpha + \beta) + \sum_i k_i (\alpha + \beta - \s_i)\Bigr\}\;.
\end{equ}
Then we see that $\alpha^{(2)} \ge \alpha^{(1)} + \zeta$ and similarly for $\alpha^{(2)}_i$.
Assume now by contradiction that there is a smallest value $n$ such that 
either $\alpha^{(n)} < \alpha^{(n-1)}+\zeta$
or $\alpha^{(n)}_i < \alpha^{(n-1)}_i+\zeta$ for some index $i$. Note first that one necessarily has
$n \ge 3$ and that, for any such $n$, 
one necessarily has $\alpha^{(n)}_i = \alpha^{(n)} - \s_i$ by \eref{e:defSetsStruct} so that
we can assume that one has $\alpha^{(n)} < \alpha^{(n-1)}+\zeta$.

Note now that there exists some element $\tau \in \CU_n$ with $|\tau|_\s = \alpha^{(n)}$
and that $\tau$ is necessarily of the form $\tau = \CI(\bar \tau)$ with $\bar \tau \in \CW_n \setminus \CW_{n-1}$. In other words, $\bar \tau$ is a product of elements in $\CU_{n-1}$ and $\CP_{n-1}^i$
(and possibly a factor $\Xi$) with at least one factor belonging to either $\CU_{n-1} \setminus \CU_{n-2}$
or $\CP_{n-1}^i \setminus \CP_{n-2}^i$. Denote that factor by $\sigma$, so that $\bar \tau = \sigma u$ for some $u\in \CW_{n}$.

Assume that $\sigma \in \CU_{n-1} \setminus \CU_{n-2}$, the argument being analogous if it
belongs to one of the $\CP_{n-1}^i \setminus \CP_{n-2}^i$. Then, by definition, one has $|\sigma|_\s \ge \alpha^{(n-1)}$. Furthermore, one has $\alpha^{(n-1)} \ge \alpha^{(n-2)} + \zeta$, so that there
exists some element $\hat \sigma \in \CU_{n-2}\setminus \CU_{n-3}$ with $|\hat \sigma|_\s \le |\sigma|_\s - \zeta$. By the same argument, one can find $\hat u \in \CW_{n-1}$ with $|\hat u|_\s \le |u|_\s$.
Consider now the element $\hat \tau = \CI(\hat \sigma \hat u)$.
By the definitions, one has $\hat \tau \in \CU_{n-1}$ and, since $\hat \sigma \not \in \CU_{n-3}$, 
one has $\hat \tau \not \in \CU_{n-2}$. Therefore, we conclude from this that
\begin{equ}
\alpha^{(n-1)} \le |\hat \tau|_\s \le |\tau|_\s - \zeta = \alpha^{(n)} - \zeta\;,
\end{equ}
thus yielding the contradiction required to prove our claim.
\end{proof}

\begin{remark}
If $F$ depends explicitly on $u$, then one has $U \in \M_F$, so that one automatically
has $\CU_F \subset \CF_F$. Similarly, if $F$ depends on $\d_i u$, one has $\CP_F^i \subset \CF_F$.
\end{remark}

\begin{remark}\label{rem:subsets}
If $\tau \in \CF_F$ is such that there 
exists $\tau_1$ and $\tau_2$ in $\CF$ with $\tau = \tau_1 \tau_2$, then one
also has $\tau_1, \tau_2 \in \CF_F$. This is a consequence of the fact that, by
Definition~\ref{def:basicBlocks}, whenever a monomial in $\M_F$ can be written
as a product of two monomials, each of these also belongs to $\M_F$.

Similarly, if $\CI(\tau) \in \CF_F$ or $\CI_i(\tau) \in \CF_F$ for some $\tau \in \CF$, 
then one actually has $\tau \in \CF_F$.
\end{remark}

Given any problem of the type \eref{e:generalProblem}, and under Assumption~\ref{ass:powercount},
this procedure thus allows us to build a candidate $T$ for the model space of a regularity structure,
by taking for $T_\gamma$ the formal linear combinations of elements in $\CF_F$ with
$|\tau|_\s = \gamma$. The spaces $T_\gamma$ are all finite-dimensional by Lemma~\ref{lem:finDim}, 
so the choice of
norm on $T_\gamma$ is irrelevant. For example, we could simply decree that the elements of $\CF_F$
form an orthonormal basis. 
Furthermore, the natural product in $\CF$ extends to a product $\star$ on $T$ by linearity,
and by setting $\tau \star \bar \tau = 0$ whenever $\tau, \bar \tau \in \CF_F$ are such that
$\tau \bar \tau \not \in \CF_F$.

While we now have a candidate for a model space $T$, as well as an index set $A$
(take $A = \{|\tau|_\s\,:\, \tau \in \CF_F\}$), we have not yet constructed the structure group
$G$ that allows to ``translate'' our model from one point to another. 
The remainder of this subsection is devoted to this construction.
In principle, $G$ is completely determined by the action of the group of translations on the $X^k$,
the assumption that $\Gamma \Xi = \Xi$, the requirements
\begin{equ}
\Gamma(\tau  \bar \tau) = (\Gamma \tau) \star (\Gamma \bar \tau)\;,
\end{equ}
for any $\tau , \bar \tau \in \CF_F$ such that $\tau  \bar \tau \in \CF_F$, as well as 
the construction of Section~\ref{sec:extension}. However, since it has a relatively explicit
construction similar to the one of Section~\ref{sec:Hopf}, we give it for the
sake of completeness. This also gives us a much
better handle on elements of $G$, which will be very useful in the next section.
Finally, the construction of $G$ given here exploits the natural relations
between the integration maps $\CI_k$ for different values of $k$ (which are needed
when considering equations involving derivatives of the solution in the right hand side),
which is something that the general construction of Section~\ref{sec:extension} does not do.

In order to describe the structure group $G$, we introduce three different vector spaces.
First, we denote by $\CH_F$ the set of  finite linear combinations of elements in $\CF_F$
and by $\CH$ the set of finite linear combinations of all elements in $\CF$.
We furthermore define a set
$\CF_+$ consisting of all formal expressions of the type
\begin{equ}[e:defF+]
X^k \prod_{j} \CJ_{k_j}\tau_j\;,
\end{equ}
where the product runs over finitely many terms, the $\tau_j$ are elements of $\CF$,
and the $k_j$ are multiindices with the property that $|\tau_j|_\s + \beta - |k_j|_\s > 0$ for
every factor appearing in this product. We should really think of $\CJ_k$ as being
essentially the same as $\CI_k$, so that one can alternatively think of $\CF_+$ as
being the set of all elements $\tau \in \CF$ such that either $\tau = \one$ or $|\tau|_\s > 0$
and such  that,
whenever $\tau$ can be written as $\tau = \tau_1 \tau_2$, one also has either $\tau_i =\one$
or $|\tau_i|_\s > 0$. The notation $\CJ_k$ instead of $\CI_k$ will however serve to reduce
confusion in the sequel, since elements of $\CF_+$ play a role that is distinct from the
corresponding elements in $\CF$. It is no coincidence that the symbol $\CJ$ is
the same as in Section~\ref{sec:integral} since elements of the type $\CJ_k \tau$ are
precisely placeholders for the coefficients $\CJ(x)\tau$ defined in \eref{e:defJx}. 
Similarly, we define $\CF_F^+$ as the set of symbols as in
\eref{e:defF+}, but with the $\tau_j$ assumed to belong to $\CF_F$.
Expressions of the type \eref{e:defF+} come with a natural notion of homogeneity,
given by $|k|_\s + \sum_j \bigl(|\tau_j|_\s + \beta - |k_j|_\s\bigr)$, which is
always positive by definition.

We then denote by\label{lab:defHH} $\CH_+$ the set of all finite linear combinations of all elements in $\CF_+$,
and similarly for $\CH_F^+$.
Note that both $\CH$ and $\CH_+$ are algebras, by simply extending the product $(\tau, \bar \tau) \mapsto \tau \bar \tau$ in a distributive way. While $\CH_F$ is a linear subspace of $\CH$, 
it is \textit{not} in general a subalgebra of $\CH$,
but this will not concern us very much since it is mostly the structure of the larger space $\CH$
that matters. The space $\CH_F^+$ on the other hand is an algebra. (Actually the free algebra
over the symbols $\{X_j, \CJ_k\tau\}$, where $j \in \{1,\ldots,d\}$, $\tau \in \CF_F$,
and $k$ is an arbitrary $d$-dimensional multiindex with $|k|_\s < |\tau|_\s + \beta$.)

We now describe a structure on the spaces $\CH$ and $\CH_+$ that endows $\CH_+$ (resp. $\CH_F^+$)
with
a Hopf algebra structure and $\CH$ (resp. $\CH_F$) with the structure of a comodule over $\CH_+$
(resp. $\CH_F^+$). 
The purpose of these structures is to yield an explicit construction of a regularity
structure that is sufficiently rich to allow to formulate fixed point maps for large classes
of semilinear (stochastic) PDEs. 
This construction will in particular allow us to describe the structure group $G$ in a way that is similar to the
construction in Section~\ref{sec:Hopf}, but with a slight twist since $T=\CH_F$ itself is different from 
both the Hopf algebra $\CH_+$ and the comodule $\CH$.

We first note that for \textit{every} multiindex $k$, we have a natural linear map 
$\hat\CJ_k \colon \CH \to \CH_+$ by setting 
\begin{equ}
\hat \CJ_k (\tau) = \CJ_k \tau \;,\quad |k|_\s < |\tau|_\s + \beta\;,\qquad \hat \CJ_k (\tau) = 0\;,\quad \text{otherwise.}
\end{equ}
Since there can be no scope for confusion, we will make a slight abuse of notation and
simply write again $\CJ_k$ instead of $\hat \CJ_k$.
We then define \textit{two} linear maps $\Delta \colon \CH \to \CH \otimes \CH_+$ and
 $\Deltap \colon \CH_+ \to \CH_+ \otimes \CH_+$ by
\begin{equs}[2]
\Delta \one &= \one \otimes \one\;, &\qquad \Deltap \one &= \one \otimes \one\;,\\
\Delta X_i &= X_i \otimes \one + \one \otimes X_i \;, &\quad  \Deltap X_i &= X_i \otimes \one + \one \otimes X_i\\
\Delta \Xi &= \Xi \otimes \one\;,&&
\end{equs}
and then, recursively, by
\minilab{e:defDelta}
\begin{equs}
\Delta (\tau \bar \tau) &= (\Delta \tau)\,(\Delta \bar \tau) \label{e:prodD1}\\
\Delta (\CI_k \tau)  &= \bigl(\CI_k  \otimes I\bigr)\Delta \tau + \sum_{\ell, m} {X^\ell\over \ell!} \otimes {X^m\over m!}\CJ_{k+\ell+m} \tau \;,
\label{e:int1}
\end{equs}
as well as
\minilab{e:defDelta+}
\begin{equs}
\Deltap (\tau \bar \tau) &= (\Deltap \tau)\,(\Deltap \bar \tau) \label{e:prodD2}\\
\Deltap (\CJ_k \tau)  &= \sum_\ell \Bigl(\CJ_{k+\ell} \otimes {(-X)^\ell\over \ell!}\Bigr)\Delta \tau + \one \otimes \CJ_k \tau \;.
\label{e:int2}
\end{equs}
In both cases, these sums run in principle over all possible multiindices $\ell$ and $m$. 
Note however that these
sums are actually finite since, by definition,
for $|\ell|_\s$ large enough it is always the case that  $\CJ_{k+\ell} \tau = 0$.

\begin{remark}
By construction, for every $\tau \in \CF$, one has the identity
$\Delta \tau = \tau \otimes \one + \sum_i c_i \tau_i^{(1)} \otimes \tau_i^{(2)}$,
for some constants $c_i$ and some elements with $|\tau_i^{(1)}|_\s < |\tau|_\s$ and 
$|\tau_i^{(1)}|_\s + |\tau_i^{(2)}|_\s = |\tau|_\s$. This is a reflection in this context of the
condition \eref{e:coundGroup}.

Similarly, for every $\sigma \in \CF_+$, one has the identity
\begin{equ}
\Deltap \sigma = \sigma \otimes \one + \one \otimes \sigma + \sum_i c_i \sigma_i^{(1)} \otimes \sigma_i^{(2)}\;,
\end{equ}
for some constants $c_i$ and some elements with 
$|\sigma_i^{(1)}|_\s + |\sigma_i^{(2)}|_\s = |\sigma|_\s$. Note also that 
\eref{e:defDelta+} is coherent with our abuse of notation for $\hat \CJ_k$ in the sense that 
if $\tau$ and $k$ are such that $\hat \CJ_k \tau = 0$, then the right hand side automatically vanishes.
\end{remark}

\begin{remark}
The fact that it is $\Delta$ (rather than $\Deltap$) that appears in the right hand side of \eref{e:int2}
is not a typo: there is not much choice since $\tau \in \CF$ and not in $\CF_+$. The motivation for the definitions of $\Delta$ and $\Deltap$ will be given in Section~\ref{sec:realAlg} below
where we show how it allows to canonically lift a continuous realisation $\xi$ of the ``noise'' to
a model for the regularity structure built from these algebraic objects.
\end{remark}

%
%
%
%
\begin{remark}
In the sequel, we will use Sweedler's notation for coproducts. Whenever we write
$\Delta \tau = \sum \tau^{(1)} \otimes \tau^{(2)}$, this should be read as a shorthand for:
``There exists a finite index set $I$, non-zero constants $\{c_i\}_{i \in I}$,
and basis elements $\{\tau^{(1)}_i\}_{i\in I}$, $\{\tau^{(2)}_i\}_{i\in I}$
such that the identity  $\Delta \tau = \sum_{i \in I} c_i  \tau_i^{(1)} \otimes \tau_i^{(2)}$ holds.''
If we then later refer to a joint property of $\tau^{(1)}$ and $\tau^{(2)}$,
this means that the property in question holds for every pair $(\tau_i^{(1)},\tau_i^{(2)})$
appearing in the above sum.
\end{remark}

The structure just introduced has the following nice algebraic properties.

\begin{theorem}\label{theo:projDelta}
The space $\CH_+$ is a Hopf algebra and $\CH$ is a comodule over $\CH_+$.
In particular, one has the identities \minilab{e:coassoc}
\begin{equs}
(I \otimes \Deltap) \Delta \tau &= (\Delta \otimes I)\Delta \tau\;, \label{e:coassoc1}\\
(I \otimes \Deltap) \Deltap  \tau &= (\Deltap \otimes I)\Deltap \tau\;, \label{e:coassoc2}
\end{equs}
for every $\tau \in \CH$. Furthermore, there exists an idempotent antipode $\CA \colon \CH_+\to \CH_+$,
satisfying the identity
\begin{equ}[e:defAnti]
\CM (I \otimes \CA) \Deltap \tau  = \scal{\one^*,\tau} \one = \CM (\CA \otimes I) \Deltap \tau\;,
\end{equ}
where we denoted by $\CM \colon \CH_+ \otimes \CH_+ \to \CH_+$ the multiplication operator
defined by $\CM(\tau \otimes \bar \tau) = \tau \bar \tau$,
and by $\one^*$ the element of $\CH_+^*$ such that $\scal{\one^*,\one} = 1$
and $\scal{\one^*,\tau} = 0$ for all $\tau \in \CF_+ \setminus \{\one\}$.
\end{theorem}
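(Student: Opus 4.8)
<br>

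The plan is to establish the three identities \eqref{e:coassoc1}, \eqref{e:coassoc2}, and \eqref{e:defAnti} by induction on the structure of the formal expressions in $\CF$ and $\CF_+$, following the recursive definitions \eqref{e:defDelta} and \eqref{e:defDelta+}. Once coassociativity is in place, the Hopf algebra structure on $\CH_+$ follows in a standard way: $\Deltap$ is an algebra morphism by \eqref{e:prodD2}, the counit is $\one^*$ (one checks $\CM(I \otimes \one^*)\Deltap = \CM(\one^*\otimes I)\Deltap = I$ by a trivial induction), and the grading by homogeneity makes $\CH_+$ connected and graded, so the antipode exists and is unique, defined recursively by the usual formula $\CA\tau = -\tau - \sum \CA(\tau^{(1)})\tau^{(2)}$ over the reduced coproduct. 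The idempotency of $\CA$ is then a general fact for graded connected commutative Hopf algebras (the antipode is an involutive antihomomorphism, and commutativity makes it a homomorphism, hence $\CA^2 = \mathrm{id}$).

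First I would prove \eqref{e:coassoc1}. The induction is on $|\tau|_\s$ (equivalently on the number of symbols in $\tau$). The base cases $\tau \in \{\one, X_i, \Xi\}$ are immediate from the explicit formulas. For the product case $\tau = \sigma\bar\sigma$, both sides are multiplicative in the appropriate sense — on the left, $(I\otimes\Deltap)$ applied to $\Delta(\sigma\bar\sigma) = (\Delta\sigma)(\Delta\bar\sigma)$ using that $\Deltap$ is an algebra morphism; on the right, $(\Delta\otimes I)$ applied to the same product using that $\Delta$ is multiplicative into $\CH\otimes\CH_+$ as an algebra — so the identity for $\sigma\bar\sigma$ follows from those for $\sigma$ and $\bar\sigma$. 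The genuinely delicate case is $\tau = \CI_k\sigma$, where one must carefully expand both sides of \eqref{e:coassoc1} using \eqref{e:int1} for the outer $\CI_k$ and \eqref{e:int2} for the $\CJ$-terms produced, then reorganize the resulting multiple sums over multiindices. The combinatorial heart is the identity $\sum_{\ell+m = p} \binom{p}{\ell}(-X)^\ell X^m / (\ell! m!)$-type cancellations together with the binomial expansion $\CJ_{k+\ell+m}$ reindexing; one checks that the terms coming from $(\CI_k \otimes I)\Delta\sigma$ feeding into $\Deltap$ on the second factor, the terms from the $\CJ$-part of $\Delta(\CI_k\sigma)$, and the terms arising from $\Delta\otimes I$ applied to \eqref{e:int1} all match up after collecting by the multiindex appearing in the innermost $\CJ$. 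This is exactly the computation that makes the whole construction hang together, so I expect it to be the main obstacle: it is the point where the precise placement of the factorials and the sign $(-X)^\ell$ in \eqref{e:defDelta+} is forced.

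Having \eqref{e:coassoc1}, the identity \eqref{e:coassoc2} for $\Deltap$ is obtained by an entirely parallel but slightly simpler induction: the product case again reduces to multiplicativity, and the case $\tau = \CJ_k\sigma$ uses \eqref{e:int2}, the already-established \eqref{e:coassoc1} to re-associate the $\Delta\sigma$ that appears, and the same multiindex bookkeeping. (Alternatively, \eqref{e:coassoc2} can be deduced from \eqref{e:coassoc1} by a formal argument viewing $\CH_+$ as generated by the image of the $\CJ_k$ together with the $X_i$, but I would rather just redo the induction to keep things self-contained.) For the comodule property, the counit axiom $(I\otimes\one^*)\Delta = \mathrm{id}$ is checked on generators and propagated through \eqref{e:prodD1}--\eqref{e:int1}, using that $\scal{\one^*, \CJ_k\sigma} = 0$ since $|\CJ_k\sigma|_\s > 0$. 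Finally, for \eqref{e:defAnti}, once $\CA$ is defined by the recursion over the reduced coproduct on the connected graded Hopf algebra $\CH_+$, both equalities in \eqref{e:defAnti} hold by construction of $\CA$ (the left one is the defining recursion, the right one follows because in a commutative Hopf algebra the antipode defined from either side coincides); idempotency follows as noted above. Throughout, the only nonroutine estimates are the multiindex sum manipulations in the $\CI_k$/$\CJ_k$ cases, and I would present those once in full and then indicate that \eqref{e:coassoc2} is analogous.
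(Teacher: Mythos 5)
Your treatment of the two coassociativity identities is exactly the paper's: the same induction over the generation of $\CF$ (base cases $\one$, $X_i$, $\Xi$; multiplicativity for products; the $\CI_k$/$\CJ_k$ cases as the real work), and you correctly identify the key cancellation $\sum_{\ell+m=p}\frac{(-X)^\ell}{\ell!}\frac{X^m}{m!}=\delta_{p,0}\one$ together with the reindexing of $\CJ_{k+\ell+m}$ as the combinatorial heart. The paper also derives \eref{e:coassoc2} by redoing the induction and invoking \eref{e:coassoc1} in the $\CJ_k\tau$ case, as you propose.

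The one place where you are too quick is the existence of the antipode. You invoke the standard theorem for connected graded Hopf algebras ``by the grading by homogeneity'', but the paper explicitly flags that this does not apply off the shelf: $\CH_+$ carries a natural integer grading (the number of integration symbols $\CJ_k$) for which it is \emph{not} connected (e.g.\ all the $X^k$ sit in degree zero), while the grading $|\cdot|_\s$ for which it \emph{is} connected is real-valued, and for the full $\CH_+$ (as opposed to $\CH_F^+$, where Lemma~\ref{lem:finDim} gives local finiteness) the set of homogeneities can accumulate, so the recursion $\CA\tau = -\tau - \sum \CA(\tau^{(1)})\tau^{(2)}$ over decreasing homogeneity is not obviously well-founded. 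The paper instead sets $\CA\one=\one$, $\CA X_i = -X_i$, imposes multiplicativity, and defines $\CA\CJ_k\tau$ by solving $\CM(I\otimes\CA)\Deltap\CJ_k\tau=0$, with the recursion organised by increasing number of integration operators; the right-sided identity $\CM(\CA\otimes I)\Deltap\tau = \scal{\one^*,\tau}\one$ is then verified by a separate induction using $(I\otimes\one^*)\Delta\tau=\tau$, rather than quoted from general theory. Your conclusion is correct, but you should either restrict to the locally finite setting or replace the appeal to connectedness by an explicit well-founded recursion of this kind.
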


\begin{proof}
We first prove \eref{e:coassoc1}.
Both operators map $\one$ onto $\one \otimes \one \otimes \one$,
$\Xi$ onto $\Xi \otimes \one \otimes \one$, and $X_i$ onto $X_i \otimes \one \otimes \one + \one \otimes X_i \otimes \one + \one \otimes \one \otimes X_i$. Since $\CF$ is then generated by multiplication and action with $\CI_k$,
we can verify \eref{e:coassoc1} recursively by showing that it is stable under products and
applications of the integration maps.

Assume first that, for some $\tau$ and $\bar \tau$ in $\CF$, the identity \eref{e:coassoc1}
holds when applied to both $\tau$ and $\bar \tau$. By \eref{e:prodD1}, \eref{e:prodD2}, 
and the induction hypothesis, one then has
the identity
\begin{equs}
(I \otimes \Deltap) \Delta (\tau \bar \tau) &= (I\otimes \Deltap)(\Delta \tau \Delta \bar \tau)
= \bigl((I\otimes \Deltap)\Delta \tau \bigr) \bigl((I\otimes \Deltap)\Delta \bar \tau \bigr) \\
&= \bigl((\Delta\otimes I)\Delta \tau \bigr)\bigl((\Delta\otimes I)\Delta \bar \tau \bigr) 
= (\Delta\otimes I)(\Delta \tau \Delta \bar \tau) = (\Delta\otimes I)\Delta (\tau \bar \tau)\;,
\end{equs}
as required.

It remains to show that if \eref{e:coassoc1} holds for some $\tau \in \CF$, then it also holds
for $\CI_k \tau$ for every multiindex $k$. First, by \eref{e:int1} and \eref{e:int2}, 
one has the identity
\begin{equs}
(I\otimes \Deltap) \Delta \CI_k \tau &= (I\otimes \Deltap) (\CI_k \otimes I) \Delta \tau + \sum_{\ell,m} {X^\ell\over \ell!} \otimes \Deltap \Bigl({X^m \over m!} \CJ_{k+\ell+m} \tau\Bigr) \\
&= (\CI_k \otimes I \otimes I) (I\otimes \Deltap)\Delta \tau\\
&\qquad + \sum_{\ell,m,n} {X^\ell\over \ell!} \otimes \Bigl({X^m \over m!}\otimes {X^n \over n!} \Bigr)\Deltap \CJ_{k+\ell+m+n} \tau \;,\label{e:someExpression}
\end{equs}
where we used the multiplicative property of $\Deltap$ and the fact that 
\begin{equ}
\Deltap {X^k \over k!} = \sum_{m \le k}{X^m \over m!
}\otimes {X^{k-m} \over (k-m)!}\;.
\end{equ}
(Note again that the seemingly infinite sums appearing in \eref{e:someExpression}
are actually all finite since $\CJ_k \tau = 0$ for $k$ large enough. This will
be the case for every expression of this type appearing below.)
At this stage, we use the recursion relation \eref{e:int2} which yields
\begin{equs}
\sum_{m,n} \Bigl({X^m \over m!}&\otimes {X^n \over n!} \Bigr)\Deltap \CJ_{k+m+n} \tau
= \sum_{m,n} \Bigl({X^m \over m!}\otimes {X^n \over n!}\CJ_{k+m+n}\tau\Bigr) \\
&\qquad + \sum_{\ell, m,n} \Bigl({X^m \over m!}\CJ_{k+\ell+m+n}\otimes {X^n \over n!}{(-X)^\ell \over \ell!}\Bigr) \Delta \tau\\
&=\sum_{m,n} \Bigl({X^m \over m!}\otimes {X^n \over n!}\CJ_{k+m+n}\tau\Bigr) 
+ \sum_{m} \Bigl({X^m \over m!}\CJ_{k+m}\otimes I\Bigr) \Delta \tau\;.
\end{equs}
Here we made use of the fact that $\sum_{\ell+n = k}{X^n \over n!}{(-X)^\ell \over \ell!}$ always vanishes, except
when $k=0$ in which case it just yields $\one$. Inserting this in the above expression, we finally obtain the identity
\begin{equs}[e:idenTripleDelta]
(I\otimes \Deltap) \Delta \CI_k \tau &= (\CI_k \otimes I \otimes I) (I\otimes \Deltap)\Delta \tau \\
&+ \sum_{\ell,m,n} {X^\ell\over \ell!} \otimes {X^m \over m!}\otimes {X^n \over n!}\CJ_{k+\ell + m+n}\tau \\
&+ \sum_{\ell,m} {X^\ell\over \ell!} \otimes  \Bigl({X^m \over m!}\CJ_{k+\ell+m}\otimes I\Bigr) \Delta \tau \;.
\end{equs}

On the other hand, using again \eref{e:int1}, \eref{e:int2}, and the 
binomial identity, we obtain 
\begin{equs}
(\Delta\otimes I) \Delta \CI_k \tau &= (\Delta \CI_k \otimes I) \Delta \tau + \sum_{\ell,m} (\Delta\otimes I) \Bigl({X^\ell \over \ell!} \otimes  {X^m \over m!} \CJ_{k+\ell+m} \tau) \\
&= (\CI_k \otimes I \otimes I) (\Delta \otimes I) \Delta \tau + \sum_{\ell,m} {X^\ell \over \ell!} \otimes  \Bigl({X^m \over m!}\CJ_{k+\ell+m} \otimes I\Bigr)\Delta\tau \\
&\qquad + \sum_{\ell,m,n} {X^\ell\over \ell!} \otimes {X^m \over m!}\otimes {X^n \over n!}\CJ_{k+\ell + m+n}\tau\;.
\end{equs}
Comparing this expression with \eref{e:idenTripleDelta} and using the induction hypothesis, the claim follows at once.

We now turn to the proof of \eref{e:coassoc2}. Proceeding in a similar way as before, we verify that the
claim holds for $\tau = \one$, $\tau = X_i$, and $\tau = \Xi$. Using the fact that $\Deltap$ is a multiplicative morphism, it
follows as before that if \eref{e:coassoc2} holds for $\tau$ and $\bar \tau$, then it also holds for $\tau\bar\tau$.
It remains to show that it holds for $\CJ_k \tau$. One verifies,
similarly to before, that one has the identity
\begin{equs}
(\Deltap \otimes I) \Deltap  \CJ_k \tau &= \one \otimes \one \otimes \CJ_k \tau
+ \one \otimes \sum_{\ell} \Bigl(\CJ_{k+\ell} \otimes {(-X)^\ell \over \ell!}\Bigr) \Delta \tau \\
& \quad + \sum_{\ell,m} \Bigl(\CJ_{k+\ell+m} \otimes {(-X)^\ell\over \ell!} \otimes {(-X)^m\over m!}\Bigr) \bigl(\Delta \otimes I\bigr)\Delta \tau\;,
\end{equs}
while one also has
\begin{equs}
(I \otimes \Deltap) \Deltap  \CJ_k \tau &= \one \otimes \sum_{\ell} \Bigl(\CJ_{k+\ell} \otimes {(-X)^\ell \over \ell!}\Bigr) \Delta \tau + \one \otimes \one \otimes \CJ_k \tau\\
&\quad + \sum_{\ell,m} \Bigl(\CJ_{k+\ell+m} \otimes {(-X)^\ell\over \ell!} \otimes {(-X)^m\over m!}\Bigr) \bigl(I \otimes \Deltap\bigr)\Delta \tau\;.
\end{equs}
The claim now follows from \eref{e:coassoc1}.

It remains to show that $\CH_+$ admits an antipode $\CA \colon \CH_+ \to \CH_+$.
This is automatic for connected graded bialgebras but it turns out that in our case, although 
it admits a natural integer
grading, $\CH_+$ is not connected for it 
(i.e.\ there is more than one basis element with vanishing
degree). It is of course connected for the grading $|\cdot|_\s$, but this is not integer-valued.
The general construction of $\CA$ however still works in essentially the same way.
The natural integer grading $|\cdot|$ on $\CF_+$ for this purpose
is defined recursively by $|X_i| = |\Xi| = |\one| = 0$, and then $|\tau \bar \tau| = |\tau| + |\bar \tau|$
and $|\CJ_k \tau| = |\tau| + 1$. In plain terms, it counts the number of times that an integration
operator arises in the formal expression $\tau$. 

Recall that $\CA$ should be a linear map satisfying \eref{e:defAnti}, and we
furthermore want $\CA$ to be a multiplicative morphism namely, for 
$\tau = \tau_1\tau_2$, we impose that $\CA \tau = (\CA \tau_1)(\CA \tau_2)$.
To construct $\CA$, we start by setting
\begin{equ}[e:basicAnti]
\CA X_i = -X_i\;,\qquad  \CA \one = \one\;.
\end{equ}
Given the construction of $\CH_+$, it then remains to define $\CA$ on elements of the
type $\CJ_k \tau$ with $\tau \in \CH$ and $|\CJ_k \tau|_\s > 0$.
This should be done in such a way that one has
\begin{equ}[e:recAnti]
\CM (I \otimes \CA) \Deltap \CJ_k \tau = 0\;,
\end{equ}
which then guarantees that the first equality in \eref{e:defAnti} holds for all $\tau \in \CH_+$. 
This is because $\CM (I \otimes \CA) \Deltap$ is then a multiplicative morphism 
which vanishes on $X_i$ and every element of the form $\CJ_k \tau$,
and, except for $\tau = \one$, every element of $\CF_+$ has at least one such factor.

To show that it is possible to enforce \eref{e:recAnti} in a coherent way,
we proceed by induction. Indeed, by the definition of
$\Deltap$ and the definition of $\CM$, one has the identity
\begin{equ}
\CM (I \otimes \CA) \Deltap \CJ_k \tau = 
\sum_{\ell}  \CM\Bigl(\CJ_{k+\ell} \otimes {X^\ell\over \ell!} \CA\Bigr)\Delta \tau   + \CA \CJ_k \tau\;.
\end{equ}
Therefore, $\CA \CJ_k \tau$ is determined by \eref{e:recAnti} 
as soon as we know $(I \otimes \CA) \Delta \tau$. 
This can be guaranteed by iterating over $\CF$ in an order of increasing degree. 
(In the sense of the number of times that the integration operator appears in a formal expression,
as defined above.)

We can then show recursively that the antipode also satisfies $\CM (\CA \otimes I) \Deltap \tau = \one^*(\tau)\one$.
Again, we only need to verify it inductively on elements of the form $\CJ_k \tau$. One then has
\begin{equs}
\CM (\CA \otimes I) \Deltap \CJ_k \tau &= \CJ_k \tau + \sum_\ell {(-X)^\ell \over \ell!} \CM \bigl(\CA \CJ_{k+\ell} \otimes I\bigr) \Delta \tau \\
&= \CJ_k \tau - \sum_{\ell,m} {(-X)^\ell X^m \over \ell! m!} \CM \bigl(\CJ_{k+\ell+m} \otimes \CA \otimes I\bigr)\bigl(\Delta \otimes I\bigr) \Delta \tau \\
&= \CJ_k \tau - \CM \bigl(\CJ_{k} \otimes \CA \otimes I\bigr)\bigl(I \otimes \Deltap\bigr) \Delta \tau\;,
\end{equs}
where we used the fact that $\sum_{\ell + m = n} {(-X)^\ell X^m \over \ell! m!} = 0$ unless $n=0$ in which case it is $\one$.
At this stage, we use the fact that 
it is straightforward to verify inductively that 
\begin{equ}[e:propIdstar]
(I \otimes \one^*) \Delta \tau = \tau\;,
\end{equ}
for every $\tau \in \CH$, so that an application of our inductive hypothesis yields 
$\CM (\CA \otimes I) \Deltap \CJ_k \tau = \CJ_k \tau - \CJ_k \tau = 0$ as required.
The fact that $\CA^2 \tau = \tau$ can be verified in a similar way. It is also a consequence of 
the fact that the Hopf algebra $\CH_+$ is commutative \cite{Sweedler}.
\end{proof}

\begin{remark}
Note that $\CH$ is \textit{not} a Hopf module over $\CH_+$ since the identity
$\Delta (\tau \bar \tau) = \Delta \tau \,\Deltap \bar \tau$ does in general \textit{not}
hold for any $\tau \in \CH$ and $\bar \tau \in \CH_+$.
However, $\hat \CH = \CH \otimes \CH_+$ can be turned in a very natural way into a Hopf module over $\CH_+$.
The module structure is given by $(\tau \otimes \bar \tau_1)\bar \tau_2 = \tau \otimes (\bar \tau_1\bar \tau_2)$
for $\tau \in \CH$ and $\bar\tau_1, \bar \tau_2 \in \CH_+$, while the
comodule structure $\hat \Delta\colon \hat \CH \to \hat \CH \otimes \CH_+$ is given by
\begin{equ}
\hat \Delta (\tau \otimes \bar \tau) = \Delta \tau \cdot \Deltap \bar \tau\;,
\end{equ}
where $(\tau_1 \otimes \tau_2)\cdot (\bar \tau_1 \otimes \bar \tau_2) = (\tau_1 \otimes \bar \tau_1) \otimes (\tau_2 \bar \tau_2)$
for $\tau_1 \in \CH$ and $\tau_2, \bar \tau_1, \bar \tau_2 \in \CH_+$.
These structures are then compatible in the sense that $(\hat \Delta \otimes I)\hat \Delta = (I \otimes \Deltap)\hat \Delta$
and $\hat \Delta (\tau \bar \tau) = \hat \Delta \tau \cdot \Deltap \bar \tau$.
It is not clear at this stage
whether known general results on these structures (like the fact that Hopf modules are always free) 
can be of use for the type of analysis performed in this article. 
\end{remark}

We are now almost ready to construct the structure group $G$ in our context.
First, we define a product $\circ$ on $\CH_+^*$, the dual of $\CH_+$, by

\begin{definition}\label{def:convolution}
Given two elements $g, \bar g \in \CH_+^*$, their product $g\circ \bar g$ is given by
the dual of $\Deltap$, i.e., it is the element satisfying
\begin{equ}
\scal{g \circ \bar g,\tau} = \scal{g \otimes \bar g, \Deltap\tau}\;,
\end{equ}
for all $\tau \in \CH_+$.
\end{definition}

From now on, we will use the notations $\scal{g,\tau}$, $g(\tau)$, or even $g\tau$ 
interchangeably for the
duality pairing. We also identify $X \otimes \R$ with $X$ in the usual way ($x \otimes c \sim cx$) 
for any space $X$.
Furthermore, to any $g \in \CH_+^*$, we associate a linear map $\Gamma_g \colon \CH \to \CH$ in
essentially the same way as in \eref{e:actionG}, by setting
\begin{equ}[e:defGammag]
\Gamma_g \tau = (I \otimes g) \Delta \tau\;.
\end{equ}
Note that, by \eref{e:propIdstar}, one has $\Gamma_{\one^*}\tau  = \tau$.
One can also verify inductively that the co-unit $\one^*$ is indeed the neutral element for $\circ$.
With these definitions at hand, we have

\begin{proposition}
For any $g, \bar g \in \CH_+^*$, one has  
$\Gamma_g \Gamma_{\bar g} = \Gamma_{g \circ \bar g}$.
Furthermore, the product $\circ$ is associative.
\end{proposition}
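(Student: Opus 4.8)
The statement has two parts: the identity $\Gamma_g \Gamma_{\bar g} = \Gamma_{g\circ \bar g}$, and the associativity of $\circ$. The natural strategy is to deduce both facts from the coassociativity relations \eref{e:coassoc1} and \eref{e:coassoc2} proved in Theorem~\ref{theo:projDelta}, together with the fact that $\Deltap$ is a coassociative coproduct with counit $\one^*$. This is exactly the abstract pattern from Section~\ref{sec:Hopf}: the comodule structure $\Delta\colon \CH \to \CH \otimes \CH_+$ over the Hopf algebra $\CH_+$ gives rise to an action of the dual convolution algebra, and the action is compatible with the convolution product precisely because of coassociativity.

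\textbf{Step 1: the identity for $\Gamma$.} First I would unfold the definitions. Using \eref{e:defGammag} twice, one has
\begin{equ}
\Gamma_g \Gamma_{\bar g} \tau = (I \otimes g)\Delta \bigl((I\otimes \bar g)\Delta \tau\bigr)\;.
\end{equ}
Since $g$ and $\bar g$ are scalar-valued, $(I\otimes \bar g)\Delta$ commutes with $\Delta$ applied to the first factor in the appropriate sense, so this equals $(I \otimes g \otimes \bar g)(\Delta \otimes I)\Delta \tau$. By \eref{e:coassoc1} this is $(I \otimes g \otimes \bar g)(I \otimes \Deltap)\Delta \tau = (I \otimes (g\circ \bar g))\Delta \tau = \Gamma_{g\circ\bar g}\tau$, where in the penultimate equality I used Definition~\ref{def:convolution} of $\circ$. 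The only mild care needed is to make the bookkeeping of tensor legs precise, i.e. to justify the identity $(I\otimes g)\Delta(I\otimes \bar g)\Delta = (I\otimes g\otimes \bar g)(\Delta\otimes I)\Delta$ on $\CH$; this is a routine verification using the explicit (finite) Sweedler expansion $\Delta \tau = \sum \tau^{(1)}\otimes \tau^{(2)}$ and linearity of $\bar g$.

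\textbf{Step 2: associativity of $\circ$.} Here I would compute $\scal{(g\circ \bar g)\circ h, \tau}$ and $\scal{g\circ(\bar g \circ h),\tau}$ for arbitrary $\tau \in \CH_+$ and show they agree. By Definition~\ref{def:convolution}, $\scal{(g\circ\bar g)\circ h,\tau} = \scal{(g\circ\bar g)\otimes h,\Deltap\tau} = \scal{g\otimes \bar g\otimes h,(\Deltap\otimes I)\Deltap\tau}$, while $\scal{g\circ(\bar g\circ h),\tau} = \scal{g\otimes(\bar g\circ h),\Deltap\tau} = \scal{g\otimes\bar g\otimes h,(I\otimes \Deltap)\Deltap\tau}$. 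These are equal by the coassociativity identity \eref{e:coassoc2}. This part is essentially immediate once \eref{e:coassoc2} is invoked.

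\textbf{Main obstacle.} There is no serious obstacle: the substance of the argument is already contained in Theorem~\ref{theo:projDelta}, so the proof is a short dualization exercise. The only point requiring genuine attention is the careful manipulation of the iterated tensor products and the verification that scalar-valued functionals can be pulled through $\Delta$ in Step~1 without reordering legs — in particular that $\Delta$ in \eref{e:defGammag} is applied only to the $\CH$-component, so that applying it after $(I\otimes \bar g)\Delta$ yields $(\Delta\otimes I)\Delta$ and not some other arrangement. Once this is spelled out, both claims follow directly from \eref{e:coassoc1} and \eref{e:coassoc2} respectively, using Definition~\ref{def:convolution} of $\circ$ and \eref{e:propIdstar} for the unit, with the associativity of $\circ$ being the pure statement that $\Deltap$ is coassociative.
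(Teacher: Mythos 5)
Your proof is correct and follows essentially the same route as the paper: the identity $\Gamma_g\Gamma_{\bar g}=\Gamma_{g\circ\bar g}$ is obtained by rewriting $\Gamma_g\Gamma_{\bar g}\tau$ as $(I\otimes g\otimes\bar g)(\Delta\otimes I)\Delta\tau$ and invoking \eref{e:coassoc1}, and associativity of $\circ$ is read off by duality from \eref{e:coassoc2}. The tensor-leg bookkeeping you flag in Step 1 is exactly the one-line verification the paper performs, so nothing is missing.
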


\begin{proof}
One has the identity
\begin{equs}
\Gamma_g \Gamma_{\bar g} \tau &= \Gamma_g (I \otimes \bar g)\Delta \tau
= (I \otimes g \otimes \bar g)(\Delta \otimes I)\Delta \tau\\
&= (I \otimes g \otimes \bar g)(I \otimes \Deltap)\Delta \tau
= \bigl(I \otimes (g \circ \bar g)\bigr)\Delta \tau\;,
\end{equs}
where we first used Theorem~\ref{theo:projDelta} and then the definition of the product $\circ$.
The associativity of $\circ$ is equivalent to the coassociativity \eref{e:coassoc2} of $\Deltap$, which
we already proved in Theorem~\ref{theo:projDelta}.
\end{proof}

We now have all the ingredients in place to define the structure group $G$:

\begin{definition}\label{def:structGroup}
The group $G$ is given by the group-like elements $g \in \CH_+^*$, i.e.\ the elements such that
$g(\tau_1\tau_2) = g(\tau_1)\, g(\tau_2)$
for any $\tau_i \in \CH_+$. Its action on $\CH$ is given by $g \mapsto \Gamma_g$.
\end{definition}

This definition is indeed meaningful thanks to the following standard result:

\begin{proposition}\label{prop:antipode}
Given $g, \bar g \in G$, one has $g \circ \bar g \in G$. Furthermore,
each element $g \in G$ has a unique inverse $g^{-1}$.
\end{proposition}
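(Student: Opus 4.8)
The plan is to verify that $G$ is closed under the product $\circ$ and admits inverses, using the Hopf-algebra structure of $\CH_+$ established in Theorem~\ref{theo:projDelta}. These are standard facts about group-like elements in a Hopf algebra, so the proof will be short and mostly a matter of bookkeeping with $\Deltap$, $\CM$, and the antipode $\CA$.

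First I would show closure. Given $g,\bar g \in G$ and $\tau_1,\tau_2 \in \CH_+$, I want to check that $\scal{g\circ \bar g, \tau_1\tau_2} = \scal{g\circ \bar g, \tau_1}\scal{g\circ \bar g, \tau_2}$. Writing out the definition of $\circ$, this equals $\scal{g\otimes\bar g, \Deltap(\tau_1\tau_2)}$. Now I use the fact, recorded in \eref{e:prodD2}, that $\Deltap$ is a multiplicative morphism, so $\Deltap(\tau_1\tau_2) = (\Deltap \tau_1)(\Deltap\tau_2)$, the product being taken in $\CH_+ \otimes \CH_+$ componentwise. Since $g$ and $\bar g$ are themselves multiplicative by assumption, applying $g\otimes\bar g$ to this product factorises as $\scal{g\otimes\bar g,\Deltap\tau_1}\,\scal{g\otimes\bar g,\Deltap\tau_2} = \scal{g\circ\bar g,\tau_1}\scal{g\circ\bar g,\tau_2}$, which is exactly what is required. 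One should also note $g\circ\bar g$ is well-defined as an element of $\CH_+^*$ because $\Deltap$ is grading-preserving and each graded component of $\CH_+$ is finite-dimensional.

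Next I would construct the inverse. The natural candidate is $g^{-1} \eqdef g\circ \CA^* $, where $\CA^*$ denotes the pullback of the antipode, i.e.\ $\scal{g^{-1},\tau} = \scal{g, \CA\tau}$. To check $g^{-1}\circ g = \one^*$, apply the definition: for $\tau \in \CH_+$, $\scal{g^{-1}\circ g,\tau} = \scal{g^{-1}\otimes g,\Deltap\tau} = \scal{g\otimes g, (\CA\otimes I)\Deltap\tau}$, using that $g^{-1} = g\circ \CA^*$ and that $g$ is multiplicative (so that $\scal{g\otimes g, \cdot}$ composed with $\CA\otimes I$ behaves correctly — here one uses $g\circ(\text{anything})$ unravels through $\Deltap$, but more directly: $g^{-1}$ applied to the first leg is $g$ applied to $\CA$ of that leg). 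Then the antipode identity \eref{e:defAnti}, namely $\CM(\CA\otimes I)\Deltap\tau = \scal{\one^*,\tau}\one$, together with multiplicativity of $g$ and $g(\one)=1$, gives $\scal{g\otimes g,(\CA\otimes I)\Deltap\tau} = \scal{g, \CM(\CA\otimes I)\Deltap\tau} = \scal{\one^*,\tau}g(\one) = \scal{\one^*,\tau}$. The identity $g\circ g^{-1} = \one^*$ follows symmetrically from the other half of \eref{e:defAnti}. Finally, $g^{-1}$ is itself group-like: this follows because $\CA$ is an algebra anti-homomorphism (equivalently, $\CH_+$ being commutative, an algebra homomorphism) on the commutative Hopf algebra $\CH_+$, so $g\circ\CA^*$ is multiplicative by the same argument as in the closure step. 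Uniqueness of the inverse is then automatic from associativity of $\circ$, which was already proved.

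I do not expect a serious obstacle here; the only mild subtlety is keeping straight which leg of $\Deltap$ the antipode acts on and confirming that $g\circ\CA^*$ is again multiplicative, but both reduce immediately to the algebraic properties of $\CA$ and $\Deltap$ already established in Theorem~\ref{theo:projDelta}. One presentational choice is whether to phrase the inverse via $\CA^*$ or to invoke the general principle that the convolution inverse of a group-like element in a Hopf algebra is its composition with the antipode; I would write the former out explicitly since it is only a couple of lines and makes the argument self-contained.
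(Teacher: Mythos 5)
Your argument is correct and is precisely the standard Hopf-algebraic verification that the paper delegates to the reference [Sweedler], with the same explicit inverse formula $g^{-1}(\tau) = g(\CA\tau)$ that the paper records. Nothing is missing; writing out the closure and antipode computations in full is simply a more self-contained presentation of the same approach.
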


\begin{proof}
This is standard, see \cite{Sweedler}. The explicit expression
for the inverse is simply $g^{-1}(\tau) = g(\CA \tau)$.
\end{proof}

Finally, we note that our operations behave well when restricting ourselves to the spaces
$\CH_F$ and $\CH_F^+$ constructed as explained previously by only considering those formal
expressions that are ``useful'' for the description of the nonlinearity $F$:

\begin{lemma}
One has $\Delta\colon \CH_F \to \CH_F \otimes \CH_F^+$ and 
$\Deltap \colon \CH_F^+ \to \CH_F^+ \otimes \CH_F^+$.
\end{lemma}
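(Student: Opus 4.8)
The plan is to prove the two claimed inclusions by structural induction over the recursive generation of $\CF_F$ and $\CF_F^+$, mirroring the inductive proof of coassociativity in Theorem~\ref{theo:projDelta} but now keeping track of which basis elements get produced. The key observation that makes this work is Remark~\ref{rem:subsets}: the sets $\CF_F$ and $\CF_F^+$ are ``downward closed'' in the sense that whenever a product $\tau_1\tau_2$ (or an expression $\CI_k\tau$, resp.\ $\CJ_k\tau$) lies in them, so do the factors (resp.\ the argument $\tau$). Since $\Delta$ and $\Deltap$ are built up recursively from the product operations \eqref{e:prodD1}, \eqref{e:prodD2} and the integration operations \eqref{e:int1}, \eqref{e:int2}, and since all the terms appearing on the right-hand sides are themselves products and integrals of subexpressions of the input, this downward closure is exactly what is needed to propagate membership.

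Concretely, for $\Delta$ I would first check the base cases: $\Delta\one = \one\otimes\one$, $\Delta X_i = X_i\otimes\one + \one\otimes X_i$, and $\Delta\Xi = \Xi\otimes\one$ all have right-hand sides in $\CH_F\otimes\CH_F^+$ since $\one, X_i \in \CF_F$ (the sets $\CU_n$ always contain all the $X^k$) and $\one, X_i \in \CF_F^+$. For the inductive step on products, \eqref{e:prodD1} gives $\Delta(\tau\bar\tau) = (\Delta\tau)(\Delta\bar\tau)$; if $\tau\bar\tau \in \CF_F$ then by Remark~\ref{rem:subsets} both $\tau, \bar\tau \in \CF_F$, so by induction $\Delta\tau, \Delta\bar\tau \in \CH_F\otimes\CH_F^+$, and one needs that the componentwise products of the resulting basis elements land back in $\CF_F$ resp.\ $\CF_F^+$ — for the first factor this is because $\CF_F$ is closed under taking products that already appear as subexpressions (the first legs of $\Delta\tau$ and $\Delta\bar\tau$ are, up to the grading bookkeeping, again of the form dictated by $\M_F$), and for the second factor one uses that $\CF_F^+$ is an algebra. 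For the integration step \eqref{e:int1}, if $\CI_k\tau \in \CF_F$ then $\tau \in \CF_F$ by Remark~\ref{rem:subsets}; the first term $(\CI_k\otimes I)\Delta\tau$ stays in $\CH_F\otimes\CH_F^+$ because $\CU_F$ (hence $\CF_F$) is closed under applying $\CI_k$ to elements of $\CF_F \subset \CW_\bullet$, while the correction terms $\frac{X^\ell}{\ell!}\otimes \frac{X^m}{m!}\CJ_{k+\ell+m}\tau$ have first leg a monomial $X^\ell \in \CF_F$ and second leg an element of $\CF_F^+$, since $\CJ_{k+\ell+m}\tau$ is by definition a generator of $\CH_F^+$ whenever $\tau \in \CF_F$ (and it vanishes otherwise, so no harm is done). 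The argument for $\Deltap$ on $\CH_F^+$ is entirely parallel, using \eqref{e:prodD2}, \eqref{e:int2}, the fact that $\CH_F^+$ is an algebra, and that the $\Delta\tau$ appearing in \eqref{e:int2} has already been shown to lie in $\CH_F\otimes\CH_F^+$.

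The main (and really the only) subtle point is verifying that the \emph{first legs} of $\Delta\tau$ for $\tau \in \CF_F$ genuinely stay inside $\CF_F$, rather than merely inside $\CF$. The structure of $\Delta$ is such that the first leg of any term in $\Delta\tau$ is obtained from $\tau$ by replacing some subtrees rooted at integration nodes $\CI_j$ by lower-order pieces (this is visible from \eqref{e:int1}: $\CI_k\tau$ contributes either $\CI_k$ applied to a first leg of $\tau$, or a bare monomial $X^\ell$). One has to argue that every such ``pruned'' expression is again reachable by the recursion \eqref{e:defSetsStruct}--\eqref{e:defCFF}, i.e.\ that if a formal expression built from $\M_F$ is in $\CF_F$ then so is any expression obtained from it by replacing an $\CI(\sigma)$ subterm (with $\sigma \in \CW_\bullet$) by another element of $\CU_\bullet$ of lower complexity, or by a pure polynomial $X^\ell$. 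This follows because the substitution slots in $\M_F$ accept \emph{any} element of $\CU_{n-1}$ (in particular the $X^k$, and in particular $\CI$ of anything already in $\CW_{n-1}$), so the pruning operation never leaves the family. I would state this as a short lemma — essentially a sharpening of Remark~\ref{rem:subsets} — and then the two inclusions follow by the structural induction sketched above. No delicate analysis is involved; the whole proof is a bookkeeping exercise once the downward-closure property is isolated.
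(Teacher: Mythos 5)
Your proposal is correct and follows essentially the same route as the paper: the paper also proves the statement by induction over the recursive generation, strengthening the claim to $\Delta\scal{\CX}\subset\scal{\CX}\otimes\CH_F^+$ for $\CX$ any of $\CW_n$, $\CU_n$, $\CP^i_n$, which is exactly your ``first legs stay in the family'' lemma organised along the filtration index $n$, with the $\Deltap$ statement then reduced to elements $\CJ_k\bar\tau$ via the morphism property and \eref{e:int2}. Your identification of the pruning property (substitution slots in $\M_F$ accept the $X^k$ and all lower-complexity elements of $\CU_{n-1}$, $\CP^i_{n-1}$) as the only genuine content of the proof matches the paper's argument.
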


\begin{proof}
We claim that actually, even more is true. Recall the definitions of the sets $\CW_n$,
$\CU_n$ and $\CP^i_n$ from \eref{e:defSetsStruct} and denote by $\scal{\CW_n}$ the linear
span of $\CW_n$ in $\CH_F$, and similarly for $\scal{\CU_n}$ and $\scal{\CP^i_n}$.
Then, denoting by $\CX$ any of these vector spaces, we claim that  
$\Delta$ has the property that $\Delta \CX \subset \CX \otimes \CH_F^+$, which in particular then also
implies that
the action of $G$ leaves each of the spaces $\CX$ invariant.
This can easily be seen by induction over $n$.
The claim is clearly true for $n=0$ by definition. Assuming now that it holds for $\scal{\CU_{n-1}}$
and $\scal{\CP^i_{n-1}}$, it follows from the definition of $\CW_n$ and the morphism property of
$\Delta$ that the claim also holds for $\CW_n$. The identity \eref{e:int1} then also 
implies that the claim is true for $\scal{\CU_{n}}$ and $\scal{\CP^i_{n}}$, as required.

Regarding the property $\Deltap \colon \CH_F^+ \to \CH_F^+ \otimes \CH_F^+$, it follows from
the morphism property of $\Deltap$ (and the fact that $\CH_F^+$ itself is closed under multiplication)
that we only need to check it on elements $\tau$ of the form $\tau = \CI_k \bar \tau$ with
$\bar \tau \in \CF_F$. Using \eref{e:int2}, the claim then immediately follows from the first claim.
\end{proof}

\begin{remark}\label{rem:quotientGroup}
This shows that the action of $G$ onto $\CH_F$ is equivalent to the action of the quotient group
$G_F$ obtained by identifying elements that act in the same way onto $\CH_F^+$.
\end{remark}

This concludes our
construction of the regularity structure associated to a general subcritical semilinear (S)PDE, 
which we summarise as a theorem:

\begin{theorem}\label{theo:genStruct}
Let $F$ be a locally subcritical nonlinearity, let $T = \CH_F$ with $T_\gamma = \scal{\{\tau \in \CF_F\,:\, |\tau|_\s = \gamma\}}$, $A = \{|\tau|_\s \,:\, \tau \in \CF_F\}$, and $G_F$ be defined as above.
Then, $\TT_F = (A,\CH_F,G_F)$, defines a regularity structure $\TT$. 
Furthermore, $\CI$ is an abstract integration map of order $\beta$ for $\TT$.
\end{theorem}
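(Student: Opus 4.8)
The plan is to verify the three defining properties of a regularity structure from Definition~\ref{def:regStruct}, and then the three properties of an abstract integration map from Definition~\ref{def:abstractI}. Most of the work has already been done: the homogeneity assignment $|\cdot|_\s$ was constructed explicitly, Lemma~\ref{lem:finDim} gives local finiteness of $A$ under subcriticality, and Theorem~\ref{theo:projDelta} together with the discussion around Definition~\ref{def:structGroup} established that $G_F$ is a genuine group acting on $\CH_F$ via $\Gamma_g = (I\otimes g)\Delta$. So the proof is essentially an assembly of these pieces.

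First I would check the axioms for $\TT_F = (A, \CH_F, G_F)$. For the index set: $0 \in A$ since $\one \in \CF_F$ with $|\one|_\s = 0$; boundedness from below follows from the estimate in the proof of Lemma~\ref{lem:finDim} (one has $|\tau|_\s \ge \alpha$ for all $\tau \in \CF_F$); local finiteness is exactly Lemma~\ref{lem:finDim}. For the model space: $\CH_F = \bigoplus_{\gamma \in A} T_\gamma$ with $T_\gamma$ the span of $\{\tau \in \CF_F : |\tau|_\s = \gamma\}$ is graded by construction, each $T_\gamma$ is finite-dimensional (hence Banach) again by Lemma~\ref{lem:finDim}, $T_0 \approx \R$ is spanned by $\one$ (no other element of $\CF_F$ has zero homogeneity, since $\Xi$ contributes $\alpha < 0$, each $X_i$ contributes $\s_i > 0$, and each $\CI_k$ application contributes a nonzero amount when the result is retained), and $\one$ is the unit. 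For the structure group: $G_F$ is a group by Proposition~\ref{prop:antipode} and Remark~\ref{rem:quotientGroup}, and the condition $\Gamma_g \tau - \tau \in \bigoplus_{\beta < |\tau|_\s} T_\beta$ follows from the triangular structure of $\Delta$ noted in the Remark after \eqref{e:defDelta+}: $\Delta \tau = \tau \otimes \one + \sum_i c_i \tau_i^{(1)} \otimes \tau_i^{(2)}$ with $|\tau_i^{(1)}|_\s < |\tau|_\s$, so applying $I \otimes g$ leaves the top term $\tau$ (since $g(\one) = 1$) and produces only strictly lower-homogeneity terms. Finally $\Gamma_g \one = (I\otimes g)(\one \otimes \one) = \one$.

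Next I would verify that $\CI$ (i.e.\ $\CI_0$, the abstract integration map of order $\beta$) satisfies Definition~\ref{def:abstractI} on the sector $V$ on which it is meant to act — one natural choice being the span of those $\tau \in \CF_F$ such that $\CI(\tau) \in \CF_F$. Property (1), that $\CI\colon V_\alpha \to T_{\alpha+\beta}$, is immediate from the rule $|\CI_k(\tau)|_\s = |\tau|_\s + \beta - |k|_\s$ with $k=0$. Property (2), that $\CI a = 0$ on $V \cap \bar T$: by construction of $\CF$ we explicitly excluded all expressions containing a factor $\CI_k(X^\ell)$, which is precisely the encoding of Assumption~\ref{ass:polynom} at the symbolic level; one just records that $\CI$ annihilates abstract polynomials by fiat. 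Property (3), that $\CI \Gamma_g a - \Gamma_g \CI a \in \bar T$: this is where \eqref{e:int1} does the work. Applying $I \otimes g$ to \eqref{e:int1} gives $\Gamma_g(\CI_0 \tau) = \CI_0(\Gamma_g \tau) + \sum_{\ell,m} \frac{X^\ell}{\ell!} g\!\left(\frac{X^m}{m!}\CJ_{\ell+m}\tau\right)$, and the correction term is manifestly a (finite) linear combination of monomials $X^\ell$, hence lies in $\bar T$. So $\CI \Gamma_g a - \Gamma_g \CI a \in \bar T$ as required; one must also note that the abstract gradient maps $\DD_i$ are indeed abstract gradients, which follows similarly from the compatibility of $|\cdot|_\s$ with $\CI_k$ and from differentiating \eqref{e:int1} symbolically, though the statement as phrased only asserts the claim for $\CI$.

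The main obstacle, and the only point requiring genuine care rather than bookkeeping, is making sure the sector $V$ on which $\CI$ acts is well-defined and $G_F$-invariant, so that property (3) is not vacuous or ill-posed. This is handled by the last lemma of the excerpt, which shows $\Delta \colon \scal{\CU_n} \to \scal{\CU_n} \otimes \CH_F^+$ and similarly for the $\CW_n$ and $\CP_n^i$; in particular the spans of $\CW_F$ and of $\CU_F$ are $G_F$-invariant sectors, and $\CI$ restricted to $\scal{\CW_F}$ maps into $\scal{\CU_F}$ by \eqref{e:defSetsStruct}. One subtlety worth flagging is that $\CH_F$ is \emph{not} closed under the product $\star$ (products landing outside $\CF_F$ are set to zero), and $\CH_F$ is not a subcomodule in the sense that $\Delta$ might a priori produce components outside $\CF_F$ — but the lemma just cited rules this out. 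Once that invariance is in hand, everything else is a direct check against the definitions, so I would keep the written proof short, citing Lemma~\ref{lem:finDim}, Theorem~\ref{theo:projDelta}, Proposition~\ref{prop:antipode}, and the final lemma, and spelling out only the homogeneity-triangularity argument for the structure-group axiom and the $\bar T$-valued correction term in \eqref{e:int1} for the abstract-integration axiom.
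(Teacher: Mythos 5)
Your proof is correct and follows essentially the same route as the paper's: the structure-group axiom \eref{e:coundGroup} is reduced to the triangularity of $\Delta$ (the term $\tau\otimes\one$ appearing exactly once, all other left factors having strictly smaller homogeneity), and the third integration axiom is obtained by applying $I\otimes g$ to \eref{e:int1} and observing that the correction term is a polynomial in $X$ (the paper resums it via the binomial identity into $\sum_\ell (X-x_g)^\ell g(\CJ_{k+\ell}\tau)/\ell!$, which is the same expression). The additional checks you carry out (local finiteness via Lemma~\ref{lem:finDim}, $G_F$-invariance of the relevant sectors) are exactly the facts the paper treats as already established.
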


\begin{proof}
To check that $\TT_F$ is a regularity structure, the 
only property that remains to be shown is \eref{e:coundGroup}. This however follows 
immediately from the fact that if one writes $\Delta \tau = \sum \tau^{(1)}\otimes \tau^{(2)}$,
then each of these terms satisfies $|\tau^{(1)}|_\s + |\tau^{(2)}|_\s = |\tau|_\s$
and $|\tau^{(2)}|_\s \ge 0$. Furthermore, one verifies by induction that the term
$\tau \otimes \one$ appears exactly once in this sum, so that for all other terms, $\tau^{(1)}$
is of homogeneity strictly smaller than that of $\tau$.

The map $\CI$ obviously satisfies the first two requirements of an abstract integration 
map by our definitions. The last property follows from the fact that 
\begin{equ}
\Gamma_g \CI_k \tau = (I\otimes g)\Delta \CI_k \tau  = (I\otimes g)(\CI_k \otimes I) \Delta \tau
+ \sum_\ell {(X - x_g)^\ell\over \ell!} g \bigl(\CJ_{k+\ell} \tau\bigr)\;,
\end{equ}
where we defined $x_g \in \R^d$ as the element with coordinates $-g(X_i)$.
Noting that $(I\otimes g)(\CI_k \otimes I) \Delta \tau = \CI_k \Gamma_g \tau$, the claim follows.
\end{proof}

\begin{remark}
If some element of $\M_F$ also contains a factor $\CP_i$, then one can check in the same
way as above that $\CI_i$ is an abstract integration map of order $\beta - \s_i$ for $\TT$. 
\end{remark}

\begin{remark}\label{rem:defTTr}
Given $F$ as above and $r>0$, we will sometimes write $\TT_F^{(r)}$
(or simply $\TT^{(r)}$ when $F$ is clear from the context) for the regularity structure
obtained as above, but with $T_\gamma = 0$ for $\gamma > r$.
\end{remark}

\subsection{Realisations of the general algebraic structure}
\label{sec:realAlg}

While the results of the previous subsection provide a systematic way of constructing a regularity
structure $\TT$ that is sufficiently rich to allow to reformulate \eref{e:genSPDE} as a fixed point
problem which has some local solution $U \in \CD^{\gamma,\eta}_P$ for suitable indices $\gamma$ and $\eta$, 
it does not at all address the 
problem of constructing a model (or family of models) $(\Pi,\Gamma)$ such that
$\CR U$ can be interpreted as a limit of classical solutions to some regularised version of \eref{e:genSPDE}.

It is in the construction of the model $(\Pi,\Gamma)$ that one has to take advantage of additional 
knowledge about $\xi$ (for example that it is Gaussian), which then allows to 
use probabilistic tools, combined with ideas from renormalisation theory, to build a ``canonical
model'' (or in many cases actually a canonical finite-dimensional family of models) 
associated to it. We will see in Section~\ref{sec:Gaussian} below how to do this in the 
particular cases of \eref{e:PAMGen} and \eref{e:Phi4}. For any continuous realisation of the driving noise 
however, it is straightforward to ``lift'' it to the regularity structure that we just built, 
as we will see presently.

Given any \textit{continuous} approximation $\xi_\eps$ to the driving noise $\xi$, we now show 
how one can build a canonical model $(\Pi^{(\eps)}, \Gamma^{(\eps)})$ for the regularity structure
$\TT$ built in the previous subsection. 
First, we set
\begin{equ}
\bigl(\Pi_x^{(\eps)} \Xi\bigr)(y) = \xi_\eps(y)\;,\quad \bigl(\Pi_x^\eps X^k\bigr)(y) = (y-x)^k\;.
\end{equ}
Then, we recursively define $\Pi_x^{(\eps)}\tau$ by
\begin{equ}[e:recursion]
\bigl(\Pi_x^{(\eps)} \tau \bar \tau\bigr)(y) = \bigl(\Pi_x^{(\eps)} \tau \bigr)(y)\,
\bigl(\Pi_x^{(\eps)} \bar \tau\bigr)(y)\;,
\end{equ}
as well as
\begin{equ}[e:defRecursion]
\bigl(\Pi_x^{(\eps)} \CI_k \tau\bigr)(y) = \int D^k_1 K(y,z)\,\bigl(\Pi_x^{(\eps)}\tau\bigr)(z)\,dz 
+ \sum_{\ell} {(y-x)^\ell \over \ell!} f_x^{(\eps)}\bigl(\CJ_{k+\ell} \tau\bigr) \;.
\end{equ}
In this expression, the quantities $f_x^{(\eps)}\bigl(\CJ_\ell \tau\bigr)$ are defined by
\begin{equ}[e:exprIntfx]
f_x^{(\eps)}\bigl( \CJ_\ell \tau\bigr) =  -\int D_1^\ell K(x,z)\,\bigl(\Pi_x^{(\eps)}\tau\bigr)(z)\,dz\;.
\end{equ}
If we furthermore impose that 
\begin{equ}[e:defFbasic]
f_x^{(\eps)} (X_i) = -x_i\;,\qquad f_x^{(\eps)} (\tau \bar \tau) = \bigl(f_x^{(\eps)}\tau\bigr)\bigl(f_x^{(\eps)} \bar \tau\bigr)\;,
\end{equ}
and extend this to all of $\CH_F^+$ by linearity,
then $f_x^{(\eps)}$ defines an element of the group $G_F$ given in 
Definition~\ref{def:structGroup} and Remark~\ref{rem:quotientGroup}. 

Denote by $F_x^{(\eps)}$ the corresponding linear operator on $\CH_F$, i.e.\ $F_x^{(\eps)} = \Gamma_{\!f_{x}^{(\eps)}}$ where the map
$g \mapsto \Gamma_g$ is given by \eref{e:defGammag}.
With these definitions at hand, we then define $\Gamma_{xy}^{(\eps)}$ by 
\begin{equ}[e:defGammaxy]
\Gamma_{xy}^{(\eps)} = \bigl(F_x^{(\eps)}\bigr)^{-1} \circ F_y^{(\eps)}\;.
\end{equ}
Furthermore, for any $\tau \in \CF$, we denote by $V_\tau$ the sector 
given by the linear span of $\{\Gamma \tau\,:\, \Gamma \in G\}$.
This is also given by the projection of $\Delta \tau$ onto its first factor.
We then have:

\begin{proposition}\label{prop:canonicReal}
Let $K$ be as in Lemma~\ref{lem:diffSing} and satisfying Assumption~\ref{ass:polynom} for
some $r > 0$.
Let furthermore $\TT^{(r)}_F$ be the regularity structure obtained 
from any semilinear locally subcritical problem as in Section~\ref{sec:genAlg} and Remark~\ref{rem:defTTr}.
Let finally $\xi_\eps\colon \R^d \to \R$ be a smooth function and let
$(\Pi^{(\eps)},\Gamma^{(\eps)})$ be defined as above.
Then, $(\Pi^{(\eps)},\Gamma^{(\eps)})$ is a model for $\TT^{(r)}_F$.

Furthermore, for any $\tau \in \CF_F$ such that $\CI_k \tau \in \CF_F$, the model 
$(\Pi^{(\eps)},\Gamma^{(\eps)})$
realises the abstract integration operator $\CI_k$ on the sector $V_\tau$.
\end{proposition}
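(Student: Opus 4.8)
The plan is to verify the two defining properties of a model (Definition~\ref{def:model}) for the pair $(\Pi^{(\eps)},\Gamma^{(\eps)})$, namely the algebraic identities $\Gamma_{xx}^{(\eps)} = 1$, $\Gamma_{xy}^{(\eps)}\Gamma_{yz}^{(\eps)} = \Gamma_{xz}^{(\eps)}$, $\Pi_y^{(\eps)} = \Pi_x^{(\eps)}\circ\Gamma_{xy}^{(\eps)}$, together with the analytic bounds \eref{e:boundPi}; after that, the statement about realising $\CI_k$ on $V_\tau$ is almost immediate from \eref{e:defRecursion} and \eref{e:exprIntfx}. First I would deal with the algebra. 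Since $f_x^{(\eps)}$ is by construction a multiplicative functional on $\CH_F^+$, it lies in $G_F$, so $\Gamma^{(\eps)}_{xy} = (F_x^{(\eps)})^{-1}\circ F_y^{(\eps)}$ automatically satisfies the cocycle identity and $\Gamma^{(\eps)}_{xx}=1$ by the group structure established in Section~\ref{sec:genAlg}. The identity $\Pi_y^{(\eps)} = \Pi_x^{(\eps)}\circ\Gamma_{xy}^{(\eps)}$ is equivalent to the single relation $\Pi_x^{(\eps)}\circ F_x^{(\eps)} = \bar\Pi^{(\eps)}$ being independent of $x$ for some fixed map $\bar\Pi^{(\eps)}$; I would prove this by induction over the recursive construction of $\CF$, checking it separately on $\Xi$, on $X_i$, under multiplication (using \eref{e:recursion} and the multiplicativity \eref{e:defFbasic} together with the morphism property \eref{e:prodD1} of $\Delta$), and under application of $\CI_k$. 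For the latter, the key computation is that inserting \eref{e:int1} into the recursion and using the definition \eref{e:exprIntfx} of $f_x^{(\eps)}(\CJ_{k+\ell}\tau)$, the ``polynomial correction'' terms rearrange exactly so that the $x$-dependence cancels, leaving $\bigl(\Pi_x^{(\eps)}\CI_k\tau\bigr)(y) = \int D_1^k K(y,z)\bar\Pi^{(\eps)}\tau(z)\,dz$ plus an explicit polynomial in $y$ whose coefficients depend only on $\bar\Pi^{(\eps)}$ — this is precisely the content of Definition~\ref{def:reprK} and it is what will later give the ``realises $K$ for $\CI_k$'' statement. This computation is essentially the same algebraic identity already verified in the proof of Theorem~\ref{theo:projDelta} (the cancellation $\sum_{\ell+m=k}\frac{(-X)^\ell X^m}{\ell!m!}=\delta_{k,0}\one$), so I expect it to go through cleanly.

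Next I would establish the analytic bounds. For $\|\Pi^{(\eps)}\|_{\gamma;\K}$, I would again induct over the construction of $\CF_F$, bounding $\bigl(\Pi_x^{(\eps)}\tau\bigr)(\CS_{\s,x}^\delta\phi)$ by $\delta^{|\tau|_\s}$ up to a constant. The base cases ($\Xi$ gives regularity $\alpha$ from the assumption $\xi_\eps\in\CC(\R^d)$, hence locally in $\CC^\alpha_\s$ trivially since it is actually smooth — but the bound with constant depending on $\eps$ is what matters here; $X^k$ is classical) are direct. For products one uses the fact that $|\tau\bar\tau|_\s = |\tau|_\s+|\bar\tau|_\s$ together with a Leibniz-type estimate for testing a product of functions against a rescaled bump — this is a standard multiresolution argument, and crucially the homogeneities add up correctly precisely because of local subcriticality (Lemma~\ref{lem:finDim}), so all the relevant $T_\gamma$ are finite-dimensional and only finitely many terms arise. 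For $\CI_k\tau$, the bound is exactly the statement of Lemma~\ref{lem:wellDefInt}: one writes $K = \sum_n K_n$, uses Assumption~\ref{def:regK} on each $K_n$, the inductive bound on $\Pi_x^{(\eps)}\tau$, and Assumption~\ref{ass:polynom} which is encoded in the fact that $\CI_k$ annihilates the polynomials $X^\ell$; the polynomial correction term in \eref{e:defRecursion} is what makes $\Pi_x^{(\eps)}\CI_k\tau$ vanish to order $|\CI_k\tau|_\s$ near $x$. The bound on $\|\Gamma^{(\eps)}\|_{\gamma;\K}$ follows from the bound on $f_x^{(\eps)}$, which in turn reduces to Lemma~\ref{lem:boundPibar} / Lemma~\ref{lem:boundGammaxy}: from \eref{e:exprIntfx} and the relation $\Gamma_{xy}^{(\eps)} = (F_x^{(\eps)})^{-1}F_y^{(\eps)}$, one sees that $f_x^{(\eps)}(\CJ_\ell\tau) - f_y^{(\eps)}(\Gamma^{(\eps)}_{yx}\CJ_\ell\tau)$ is exactly a quantity of the form controlled there, giving the required power $\|x-y\|_\s^{|\CJ_\ell\tau|_\s}$.

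The main obstacle, and the step that requires genuine care rather than routine bookkeeping, is the inductive proof of the analytic bound on $\Pi_x^{(\eps)}\CI_k\tau$ when $|\CI_k\tau|_\s$ is \emph{positive} but small: here the ``naive'' bound $\int D_1^k K(y,z)\Pi_x^{(\eps)}\tau(z)\,dz$ tested against $\CS_{\s,x}^\delta\phi$ only gives order $\delta^{|\tau|_\s + \beta - |k|_\s}$ in the wrong regime unless one subtracts the Taylor polynomial of $K$ in its first argument, i.e. unless one genuinely uses the correction term $\sum_\ell \frac{(y-x)^\ell}{\ell!}f_x^{(\eps)}(\CJ_{k+\ell}\tau)$ in \eref{e:defRecursion}. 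Making this subtraction rigorous and tracking that the resulting bounds are uniform over $\delta\in(0,1]$ is precisely the content of Lemma~\ref{lem:wellDefInt}, whose proof has already been carried out in the excerpt; so strictly speaking I would simply invoke that lemma (noting that its hypotheses — $\CI_k$ of order $\beta - |k|_\s$, $K$ satisfying Assumptions~\ref{def:regK} and \ref{ass:polynom}, the inductively-known bounds on $\Pi_x^{(\eps)}\tau$ — are all in place). Finally, once all bounds are established, the last sentence of the proposition follows: \eref{e:defRecursion} combined with \eref{e:exprIntfx} is literally the identity \eref{e:defIa} of Definition~\ref{def:reprK} with $\CJ(x)\tau = \sum_\ell \frac{X^\ell}{\ell!}f_x^{(\eps)}(\CJ_{\ell}\tau)$, restricted to $V_\tau$, so $(\Pi^{(\eps)},\Gamma^{(\eps)})$ realises $\CI_k$ on $V_\tau$ whenever $\CI_k\tau \in \CF_F$.
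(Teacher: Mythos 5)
Your overall strategy is the same as the paper's: prove the algebraic identity $\Pi^{(\eps)}_y=\Pi^{(\eps)}_x\circ\Gamma^{(\eps)}_{xy}$ by exhibiting an $x$-independent map and inducting over the generation of $\CF_F$ (with the binomial cancellation $\sum_{\ell+m=n}\tfrac{(-X)^\ell X^m}{\ell!\,m!}=\delta_{n,0}\one$ doing the work for $\CI_k$), obtain the bounds on $\Pi^{(\eps)}$ for $\CI_k\tau$ from Lemma~\ref{lem:wellDefInt}, reduce the bounds on $\Gamma^{(\eps)}$ to Lemma~\ref{lem:boundGammaxy}, and read off the ``realises $\CI_k$'' statement directly from \eref{e:defRecursion} and \eref{e:exprIntfx}. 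Two points deserve correction, though neither is fatal. First, with the convention $\Gamma^{(\eps)}_{xy}=(F^{(\eps)}_x)^{-1}\circ F^{(\eps)}_y$ of \eref{e:defGammaxy}, the $x$-independent quantity is $\Pi^{(\eps)}_x\circ(F^{(\eps)}_x)^{-1}$, not $\Pi^{(\eps)}_x\circ F^{(\eps)}_x$ as you wrote; equivalently one verifies $\Pi^{(\eps)}_x\tau=\PPi^{(\eps)}F^{(\eps)}_x\tau$ for the recursively defined $\PPi^{(\eps)}$.

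Second, and more importantly, your product step as stated does not work: there is no ``Leibniz-type'' or multiresolution argument that deduces $|(\Pi_x(\tau\bar\tau))(\CS^\delta_{\s,x}\phi)|\lesssim\delta^{|\tau|_\s+|\bar\tau|_\s}$ from the two tested bounds $|(\Pi_x\tau)(\CS^\delta_{\s,x}\phi)|\lesssim\delta^{|\tau|_\s}$ and $|(\Pi_x\bar\tau)(\CS^\delta_{\s,x}\phi)|\lesssim\delta^{|\bar\tau|_\s}$ — the failure of exactly this implication is the reason multiplication of distributions is problematic in the first place. The correct move (and the one the paper makes) is to propagate through the induction the \emph{stronger pointwise} bound $|(\Pi^{(\eps)}_x\tau)(y)|\lesssim\|x-y\|_\s^{|\tau|_\s}$, which is available because $\xi_\eps$ is smooth: it is trivial for $\Xi$ (a bounded function, with constant depending on $\eps$) and for $X^k$, it is preserved under products simply because homogeneities add, and it implies the tested bound by Remark~\ref{rem:boundPi} (for negative homogeneities the tested bound is immediate from boundedness). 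This pointwise bound is also exactly the input needed for Lemma~\ref{lem:wellDefInt} at the integration step, where the subtracted Taylor polynomial in \eref{e:defRecursion} produces the vanishing to order $|\tau|_\s+\beta-|k|_\s$; you identify that step correctly.
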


\begin{proof}
We need to verify both the algebraic relations and the analytical bounds of Definition~\ref{def:model}.
The fact that $\Gamma_{xy}^{(\eps)} \Gamma_{yz}^{(\eps)} = \Gamma_{xz}^{(\eps)}$ is immediate from the definition \eref{e:defGammaxy}.
In view of \eref{e:defGammaxy}, the identity $\Pi_x^{(\eps)} \Gamma_{xy}^{(\eps)} = \Pi_y^{(\eps)}$
follows if we can show that
\begin{equ}[e:wantedAlg]
\Pi_x^{(\eps)} \bigl(F_{x}^{(\eps)}\bigr)^{-1}\tau = \Pi_y^{(\eps)} \bigl(F_{y}^{(\eps)}\bigr)^{-1}\tau\;,
\end{equ}
for every $\tau \in \CF_F$ and any two points $x$ and $y$. In order to show that this is the case, it turns
out that it is easiest to simply ``guess'' an expression for $\Pi_x^{(\eps)} \bigl(F_{x}^{(\eps)}\bigr)^{-1}\tau$
that is independent of $x$ and to then verify recursively that our guess was correct. For this,
we define a linear map $\PPi^{(\eps)}\colon \CH_F \to \CC(\R^d)$ by
\begin{equ}
\bigl(\PPi^{(\eps)} \one\bigr)(y) = 1\;,\qquad \bigl(\PPi^{(\eps)} X_i\bigr)(y) = y_i\;, \qquad
\bigl(\PPi^{(\eps)} \Xi\bigr)(y) = \xi_\eps(y)\;,
\end{equ}
and then recursively by
\begin{equ}[e:recPi]
\bigl(\PPi^{(\eps)} \tau \bar \tau\bigr)(y) = \bigl(\PPi^{(\eps)} \tau \bigr)(y)\,
\bigl(\PPi^{(\eps)} \bar \tau\bigr)(y)\;,
\end{equ}
as well as
\begin{equ}[e:RecPiInt]
\bigl(\PPi^{(\eps)} \CI_k \tau\bigr)(y) = \int D^k_1 K(y,z)\,\bigl(\PPi^{(\eps)}\tau\bigr)(z)\,dz \;.
\end{equ}
We claim that one has $\Pi_x^{(\eps)} \bigl(F_{x}^{(\eps)}\bigr)^{-1}\tau = \PPi^{(\eps)}\tau$ for
every $\tau \in \CF_F$ and every $x \in \R^d$. Actually, it is easier to verify the equivalent identity 
\begin{equ}[e:wantedId]
\Pi_x^{(\eps)} \tau = \PPi^{(\eps)} F_{x}^{(\eps)} \tau\;.
\end{equ}

To show this, we proceed by induction. 
The identity obviously holds for $\tau = \Xi$ and $\tau = \one$.
For $\tau = X_i$,
we have by \eref{e:defFbasic}
\begin{equ}
\bigl(\PPi^{(\eps)} F_{x}^{(\eps)} X_i\bigr)(y) = \bigl((\PPi^{(\eps)} \otimes f_x^{(\eps)})(X_i \otimes \one + \one \otimes X_i)\bigr)(y) = y_i - x_i = \bigl(\Pi_x^{(\eps)} X_i\bigr)(y)\;.
\end{equ}
Furthermore, in view of \eref{e:recPi}, \eref{e:recursion}, and the fact that $F_{x}^{(\eps)}$ acts
as a multiplicative morphism, it holds for $\tau \bar \tau$ if it holds for both $\tau$ and $\bar \tau$.

To complete the proof of \eref{e:wantedAlg}, it remains to show that 
\eref{e:wantedId} holds for elements of the form $\CI_k \tau$ if it holds for $\tau$.
It follows from the definitions that
\begin{equs}[e:abstractVersionInt]
F_{x}^{(\eps)}\CI_k \tau &= \CI_k F_x^{(\eps)} \tau + \sum_{\ell,m} {X^\ell \over \ell!} \, f_x^{(\eps)} \Bigl({X^m \over m!}\CJ_{k+\ell+m}\tau\Bigr) \\
&=  \CI_k F_x^{(\eps)} \tau + \sum_{\ell,m} {X^\ell \over \ell!} {(-x)^m \over m!} \, f_x^{(\eps)} \Bigl(P\CJ_{k+\ell+m}\tau\Bigr) \\
&=  \CI_k F_x^{(\eps)} \tau + \sum_{\ell} {(X-x)^\ell \over \ell!} f_x^{(\eps)} \bigl(\CJ_{k+\ell}\tau\bigr) \;,
\end{equs}
where we used \eref{e:defFbasic}, the morphism property of $f_x^{(\eps)}$, and the binomial identity.
The above identity is precisely
the abstract analogue in this context of the identity postulated in Definition~\ref{def:reprK}.

Inserting this into \eref{e:RecPiInt}, we obtain the identity
\begin{equ}[e:PixF]
\bigl(\PPi^{(\eps)} F_{x}^{(\eps)}\CI_k \tau\bigr)(y) = \int D_1^k K(y,z)\,\bigl(\PPi^{(\eps)}F_{x}^{(\eps)} \tau\bigr)(z)\,dz
+ \sum_{\ell} {(y-x)^\ell \over \ell!} f_x^{(\eps)} \bigl(\CJ_{k+\ell}\tau\bigr)\;. 
\end{equ}
Since $\PPi^{(\eps)}F_{x}^{(\eps)} \tau = \Pi_x^{(\eps)}\tau$ by our induction hypothesis, 
this is precisely equal to the right hand side of \eref{e:defRecursion}, as required. 

It remains to show that the required analytical bounds also hold. Regarding $\Pi_x^{(\eps)}$, we actually 
show the 
slightly stronger fact that $(\Pi_x^{(\eps)} \tau)(y) \lesssim \|x-y\|_\s^{|\tau|_\s}$. This is obvious
for $\tau = X_i$ as well as for $\tau = \Xi$ since $|\Xi|_\s < 0$ and we 
assumed that $\xi_\eps$ is continuous. (Of course, such a bound would typically not hold
uniformly in $\eps$!)
Since $|\tau \bar \tau|_\s = |\tau|_\s + |\bar \tau|_\s$, it is also obvious that such a bound holds for $\tau \bar \tau$
if it holds for both $\tau$ and $\bar \tau$.
Regarding elements of the form $\CI_k \tau$, we  note that the second term in \eref{e:defRecursion}
is precisely the truncated Taylor series of the first term, so that the required bound holds 
by Proposition~\ref{prop:Taylor} or, more generally, by Theorem~\ref{theo:extension}. 
To conclude the proof that $(\Pi^{(\eps)}, \Gamma^{(\eps)})$ is a model for our regularity structure, 
it remains to obtain a bound of the type \eref{e:boundPi} for $\Gamma^{(\eps)}_{xy}$. In principle, this
also follows from Theorem~\ref{theo:extension}, but we can also verify it more explicitly in this case.

Note that the required bound follows if we can show that
\begin{equ}
|\gamma_{xy}^{(\eps)} (\tau)| \eqdef \bigl| \bigl(f_x \CA \otimes f_y\bigr)\Deltap \tau \bigr| \lesssim \|x-y\|_\s^{|\tau|_\s}\;,
\end{equ}
for all $\tau \in \CF_F^+$ with $|\tau|_\s \le r$. Again, this can easily be checked for $\tau = X^k$.
For $\tau = \CJ_k \bar \tau$, note that one has the identity
\begin{equ}
\bigl(\CA \otimes I\bigr) \Deltap \CJ_k \bar\tau = 
\one \otimes \CJ_k \bar\tau - \sum_{\ell,m} \bigl(\CM \otimes I\bigr) \Bigl(\CJ_{k+\ell+m}\otimes {X^\ell \over \ell!}\CA
\otimes {(-X)^m \over m!}\Bigr) \bigl(I \otimes \Deltap\bigr) \Delta \bar\tau.
\end{equ}
As a consequence, we have the identity
\begin{equ}
\gamma_{xy}^{(\eps)} (\CJ_k \bar \tau)
= f_y^{(\eps)}(\CJ_k \bar \tau) - \sum_{\ell} {(y-x)^\ell \over \ell!} f_x^{(\eps)}(\CJ_{k+\ell} \Gamma_{xy}^{(\eps)} \bar \tau)\;.
\end{equ}
It now suffices to realise that this is equal to the quantity $\bigl(\Gamma_{yx}^{(\eps)} \CJ_{xy} \bar \tau\bigr)_k$, where 
$\CJ_{xy}$ was introduced in \eref{e:defJxy}, so that the required bound follows from Lemma~\ref{lem:boundGammaxy}. There is an unfortunate notational clash between $\CJ_{xy}$ and $\CJ_k$
appearing here, but since this is the only time in the article that both objects appear
simultaneously, we leave it at that.

The fact that the model built in this way realises $K$ for the abstract integration operator $\CI$
(and indeed for any of the $\CI_k$) follows at once from the definition \eref{e:defRecursion}. 
\end{proof}

\begin{remark}
In general, one does not even need $\xi_\eps$ to be continuous. One just needs it to be in $\CC^\alpha_\s$
for sufficiently large (but possibly negative) $\alpha$ such that all the products appearing in the above
construction satisfy the conditions of Proposition~\ref{prop:multDist}.
\end{remark}

This construction motivates the following definition, where we assume that the kernel $K$
annihilates monomials up to order $r$ and that we are given a regularity structure $\TT_F$ built
from a locally subcritical nonlinearity $F$ as above.

\begin{definition}\label{def:admissible}
A model $(\Pi,\Gamma)$ for $\TT_F^{(r)}$ is \textit{admissible} if it satisfies
$(\Pi_x X^k)(y) = (y-x)^k$, as well as \eref{e:defRecursion}, \eref{e:defFbasic}, \eref{e:defGammaxy},
and \eref{e:exprIntfx}. We denote by $\MM_F$ the set of admissible models.
\end{definition}

Note that the set of admissible models is a closed subset of the set of all models and that
the models built from canonical lifts of smooth functions $\xi^{(\eps)}$ are 
admissible by definition. Admissible models are also adapted to the integration map $K$
(and suitable derivatives thereof) for the integration map $\CI$ (and the maps $\CI_k$ if
applicable). Actually, the converse is also true provided that we \textit{define}
$f$ by \eref{e:exprIntfx}. This can be shown by a suitable recursion procedure, but
since we will never actually use this fact we do not provide a full proof.

\begin{remark}
It is not clear in general whether canonical lifts of smooth functions
are dense in $\MM_F$. As the definitions stand, this will actually never be the case
since smooth functions are not even dense in $\CC^\alpha$! This is however an artificial
problem that can easily be resolved in a manner similar to what we did in the proof of the
reconstruction theorem, Theorem~\ref{theo:reconstruction}. (See also the note \cite{MR2257130}.)
However, even when allowing for some weaker notion of density, it will 
in general \textit{not} be the case that lifts of smooth functions are dense. 
This is because the regularity structure $\TT_F^{(r)}$ built in this section does
not encode the Leibniz rule, so that it can accommodate the type of effects described
in \cite{Trees,Jan,DavidK} (or even just It\^o's formula in one dimension) which cannot arise when only
considering lifts of smooth functions. 
\end{remark}

\subsection{Renormalisation group associated to the general algebraic structure}
\label{sec:renormalisation}

There are many situations where, if we take for $\xi_\eps$ a smooth approximation to $\xi$
such that $\xi_\eps \to \xi$ in a suitable sense, the sequence $(\Pi^{(\eps)},\Gamma^{(\eps)})$
of models built from $\xi_\eps$ as in the previous section fails to converge.
This is somewhat different from the situation encountered in the context of the theory of rough paths
where natural smooth approximations to the driving noise very often do yield 
finite limits without the need for renormalisation \cite{MR1883719,MR2667703}.
(The reason why this is so stems from the fact that if a process $X$ is symmetric
under time-reversal, then the expression $X_i \d_t X_j$ is antisymmetric, thus introducing additional
cancellations. Recall the discussion on the role of symmetries
in Remark~\ref{rem:symmetry}.)

In general, in order to actually build a model associated to the driving noise $\xi$, we will 
need to be able to encode some kind of 
renormalisation procedure. In the context of the regularity structures built in this section,
it turns out that they come equipped with a natural group of continuous transformations on 
their space of admissible models. At the abstract level, this group of transformations 
(which we call $\RR$) will be nothing but a finite-dimensional 
nilpotent Lie group -- in many instances just a copy of $\R^n$ for some $n > 0$. 
As already mentioned in the introduction, a 
renormalisation procedure then consists in finding a sequence
$M_\eps$ of elements in $\RR$ such that $M_\eps(\Pi^{(\eps)}, \Gamma^{(\eps)})$ converges to a finite
limit $(\Pi,\Gamma)$, 
where $(\Pi^{(\eps)}, \Gamma^{(\eps)})$ is the bare model built in Section~\ref{sec:realAlg}.
As previously, different renormalisation procedures yield limits that differ only by an element in $\RR$.

\begin{remark}
The construction outlined in this section, and indeed the whole methodology presented here,
has a flavour that is strongly reminiscent of the theory given in \cite{CK,CK2}. 
The scope however is different: the construction presented here applies to subcritical situations
in which one obtains superrenormalisable theories, so that the group $\RR$ is always finite-dimensional.
The construction of \cite{CK,CK2} on the other hand applies to critical situations and 
yields an infinite-dimensional renormalisation group.
\end{remark}

Assume that we are given some model $(\Pi, \Gamma)$ for our regularity structure $\TT$. As before, we assume that $\Gamma_{xy}$ is provided to us in the form
\begin{equ}[e:defGammaF]
\Gamma_{xy} = F_x^{-1}\circ F_y\;,
\end{equ}
and we denote by $f_x$ the group-like element in the dual of $\CH_F^+$ corresponding to $F_x$.
As a consequence, the operator $\Pi_x F_x^{-1}$ is independent of $x$ and, as in Section~\ref{sec:realAlg}, we will 
henceforth denote it simply by
\begin{equ}[e:defPi]
\PPi \eqdef \Pi_x F_x^{-1}\;.
\end{equ}
Throughout this whole section, we will thus represent a model by the pair $(\PPi, f)$ where $\PPi$
is one single linear map $\PPi \colon T \to \CS'(\R^d)$ and $f$ is a map on $\R^d$ with values
in the morphisms of $\CH_F^+$.

We furthermore make the additional assumption that our model is admissible, 
so that one has the identities
\begin{equs}
\PPi \CI_k \tau &= \int_{\R^d} D^k K(\cdot, y)\, \bigl(\PPi \tau\bigr)(dy)\;, \label{e:idPi}\\
f_x \CJ_k \tau &= -\int_{\R^d} D^k K(x, y)\, \bigl(\Pi_x \tau\bigr)(dy)\;,\label{e:idF}
\end{equs}
where, in view of \eref{e:defPi}, $\PPi$ and $\Pi_x$ are related by
\begin{equ}
\PPi \tau = (\Pi_x \otimes f_x \CA)\Delta \tau\;,\qquad
\Pi_x \tau = (\PPi \otimes f_x)\Delta \tau\;.
\end{equ}
Note that by definition, \eref{e:idF} only ever applies to elements with $|\CJ_k\tau|_\s > 0$,
which implies that the corresponding integral actually makes sense.
In view of \eref{e:int1} and \eref{e:defIa}, this ensures that our model 
does realise $K$ for the abstract integration operator $\CI$ 
(and, if needed, the relevant derivatives of $K$ for the $\CI_k$).
It is crucial that any transformation that we would like to apply to our model
preserves this property, since otherwise the operators $\CK_\gamma$ cannot be constructed
anymore for the new model. 

\begin{remark}
While it is clear that $(\PPi, f)$ is sufficient to determine the corresponding model by \eref{e:defGammaF}
and \eref{e:defPi},
the converse is not true in general if one only imposes \eref{e:defGammaF}. However, if
we also impose \eref{e:idF}, together with the canonical choice $f_x(X) = -x$, then $f$ is uniquely
determined by the model in its usual representation $(\Pi, \Gamma)$.
This shows that although the transformations constructed in this section will be given in
terms of $f$, they do actually define maps defined on the set $\MM_F$ of all admissible models.
\end{remark}

The important feature of $\RR$ is its action on elements $\tau$ of negative
homogeneity. It turns out that, in order to describe it, it is convenient to work on a slightly
larger set $\CF_0 \subset \CF_F$ with some additional properties.
Given any set $\CC \subset \CF_F$, we\label{lab:AlgC} will henceforth denote by 
$\Alg(\CC) \subset \CF_F^+$ the
set of all elements in $\CF_F^+$ of the form $X^k \prod_i \CJ_{\ell_i} \tau_i$, for some
multiindices $k$ and $\ell_i$ such that $|\CJ_{\ell_i} \tau_i|_\s > 0$, and where 
the elements $\tau_i$ all belong to $\CC$.
(The empty product also counts, so that one always has $X^k \in \Alg(\CC)$ and in particular
$\one \in \Alg(\CC)$.) We will also use the notation $\scal{\CC}$ for the linear span of a set $\CC$.
We now fix a subset $\CF_0 \subset \CF_F$ as follows.

\begin{assumption}\label{ass:SetF0}
The set $\CF_0 \subset \CF_F$ has the following properties:
\begin{claim}
\item The set $\CF_0$ contains every $\tau \in \CF_F$ with $|\tau|_\s \le 0$.
\item There exists $\CF_\star\subset \CF_0$
such that, for every $\tau \in \CF_0$, one has 
$\Delta \tau \in \scal{\CF_0}\otimes \scal{\Alg(\CF_\star)}$.
\end{claim}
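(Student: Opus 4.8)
The natural thing to establish alongside Assumption~\ref{ass:SetF0} is that a pair $(\CF_0,\CF_\star)$ with these properties always exists and, crucially, that $\CF_0$ can be chosen \emph{finite} (this finiteness is what makes the renormalisation group defined afterwards finite-dimensional). The plan is to obtain $\CF_0$ as the closure of the finite set $\CF_0^{(0)}\eqdef\{\tau\in\CF_F\,:\,|\tau|_\s\le 0\}$ — finite by Lemma~\ref{lem:finDim} — under the two operations suggested by the recursive definition of $\Delta$: passing to the first-slot components of $\Delta\tau$, and passing to the formal expressions occurring under the symbols $\CJ_\ell$ in the second-slot components of $\Delta\tau$. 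Concretely, one takes $\CF_0$ to be the smallest subset of $\CF_F$ containing $\CF_0^{(0)}$ and such that, whenever $\tau\in\CF_0$ and $\Delta\tau=\sum\tau^{(1)}\otimes\tau^{(2)}$ in Sweedler notation, each $\tau^{(1)}$ lies in $\CF_0$ and, writing $\tau^{(2)}=X^k\prod_i\CJ_{\ell_i}\sigma_i$, each $\sigma_i$ lies in $\CF_0$. One then lets $\CF_\star\subset\CF_0$ be the set of all expressions occurring as such a $\sigma_i$ for some $\tau\in\CF_0$. With these definitions property~2 of Assumption~\ref{ass:SetF0} holds by construction, and property~1 holds since $\CF_0\supseteq\CF_0^{(0)}$; the only real content is that the closure is finite and stays inside $\CF_F$.

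Two structural facts make the closure tractable. First, the first-slot components are harmless: as noted in the remark following Theorem~\ref{theo:projDelta}, every term $\tau^{(1)}\otimes\tau^{(2)}$ of $\Delta\tau$ satisfies $|\tau^{(1)}|_\s+|\tau^{(2)}|_\s=|\tau|_\s$ with $|\tau^{(2)}|_\s\ge 0$, hence $|\tau^{(1)}|_\s\le|\tau|_\s$; combined with $\Delta\colon\CH_F\to\CH_F\otimes\CH_F^+$ (the lemma preceding Theorem~\ref{theo:genStruct}), this gives $\tau^{(1)}\in\CF_F$, obtained from $\tau$ by ``pruning'' (deleting some of the subtrees hanging off the nodes of $\tau$). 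Second, a straightforward induction on the tree structure of $\tau$, using the recursion \eref{e:prodD1} and \eref{e:int1} — the new term in $\Delta(\CI_k\tau)$ contributes only the expression $\tau$ under a $\CJ$, namely the subexpression directly below the $\CI_k$, while $\Delta(\tau\bar\tau)=(\Delta\tau)(\Delta\bar\tau)$ introduces no expression under a $\CJ$ beyond those already present in $\Delta\tau$ or $\Delta\bar\tau$ — shows that every expression $\sigma$ occurring under some $\CJ_\ell$ in $\Delta\tau$ is a \emph{subexpression} of $\tau$, and hence lies in $\CF_F$ by Remark~\ref{rem:subsets}. Thus every element ever added to $\CF_0$ is obtained from some element of $\CF_0^{(0)}$ by finitely many alternations of ``pass to a subtree'' and ``prune''.

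Finiteness then follows from the combinatorial observation that the class of trees obtainable from a finite set of trees by iterating ``pass to a subtree'' and ``prune'' is finite: each such tree is determined by choosing one node of one of the original trees (the new root) together with a subset of its descendants (the nodes to retain), and this class is visibly closed under both operations. Applying this with the finite set $\CF_0^{(0)}$ bounds $\CF_0$ inside a finite set, so the iterative construction of the first paragraph stabilises after finitely many steps and produces finite sets $\CF_\star\subseteq\CF_0\subseteq\CF_F$ with the required properties. The step I expect to be the main obstacle is the bookkeeping in the middle paragraph: one must verify carefully, tracking the recursive definitions of $\Delta$ and $\Deltap$ and in particular the behaviour of products, that no element of $\CF_F$ other than genuine subexpressions can surface under a symbol $\CJ_\ell$, so that the closure really does remain within the finite ``subtrees-and-prunings'' class rather than generating ever larger expressions.
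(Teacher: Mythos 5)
Your construction is correct, but it takes a genuinely different route from the paper's. The paper obtains admissible pairs ``top down'' from the exhausting filtration $\CF^{(0)} \subset \CF^{(1)} \subset \dots$ defined in \eref{e:defFn}: Lemma~\ref{lem:mapDelta} gives $\Delta \CF^{(n)} \subset \scal{\CF^{(n)}}\otimes \scal{\Alg(\CF^{(n-1)})}$, and since $\bigcup_n \CF^{(n)} = \CF_F$ while $\{\tau \in \CF_F: |\tau|_\s \le 0\}$ is finite by Lemma~\ref{lem:finDim}, one may simply take $\CF_0 = \CF^{(n)}$ and $\CF_\star = \CF^{(n-1)}$ for $n$ large enough; this is exactly what Remark~\ref{rem:stableF+} and the remark preceding Lemma~\ref{lem:mapDelta} assert. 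That argument is essentially immediate given Lemma~\ref{lem:mapDelta} (which is needed elsewhere anyway), but it produces \emph{infinite} sets, since already $\CF^{(1)}$ contains every $X^k$; in the paper, finite choices of $\CF_0$ are only exhibited case by case in the examples. Your bottom-up closure of $\{\tau : |\tau|_\s\le 0\}$ buys a general finiteness statement, which is indeed what one wants in order to conclude that $\RR$ is finite-dimensional. Two caveats on your middle step. First, your combinatorial model of the first-slot components as ``prunings that delete subtrees'' is not quite accurate: by \eref{e:int1}, a first-slot component is obtained by replacing some integration subtrees $\CI_k\sigma$ by monomials $X^\ell$ with $|\ell|_\s < |\CI_k\sigma|_\s$ (and lowering existing polynomial decorations), so the resulting tree contains nodes absent from the original; finiteness survives because only finitely many $\ell$ are admissible at each replacement, but the ``choose a node and a subset of its descendants'' count must be amended to account for these bounded polynomial insertions. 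Your identification of the expressions under the $\CJ_\ell$'s with arguments of integration subtrees (hence elements of $\CF_F$ by Remark~\ref{rem:subsets}) is correct. Second, if you want the invariance of $\scal{\Alg(\CF_\star)}$ under $\Deltap$ claimed in Remark~\ref{rem:stableF+} to be verifiable by the argument of Lemma~\ref{lem:mapDelta} for \emph{your} pair, it is safest to additionally require that the first-slot components of $\Delta\sigma$ for $\sigma\in\CF_\star$ again lie in $\CF_\star$ (as they do for the paper's choice $\CF_\star = \CF^{(n-1)}$); adding this closure costs nothing for finiteness.
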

\end{assumption}

\begin{remark}\label{rem:stableF+}
Similarly to before, we write $\CH_0 = \scal{\CF_0}$, $\CF_0^+ = \Alg(\CF_\star)$, and
 $\CH_0^+ = \scal{\CF_0^+}$. 
Proceeding as in the proofs of Lemmas~\ref{lem:mapDelta} 
and \ref{lem:antiFn} below, one can verify that the second condition automatically
implies that the operators $\Deltap$ and $\CA$ both leave $\CH_0^+$ invariant.
\end{remark}

Let now $M \colon \CH_0 \to \CH_0$
be a linear map such that $M \CI_k \tau = \CI_k M\tau$ for every $\tau \in \CF_0$ such that $\CI_k \tau \in \CF_0$.
Then, we would like to use the map $M$ to build a new model $(\PPi^M, f^M)$ with the property that
\begin{equ}[e:defPiM]
\PPi^M \tau = \PPi M \tau\;.
\end{equ}
(The condition $M \CI_k \tau = \CI_k M\tau$ is required to guarantee that \eref{e:idPi} still holds for $\PPi^M$.)
This is not always possible, but the aim of this section is to provide conditions under which it is.
In order to realise the above identity, we would like to build linear maps $\DeltaM\colon \CH_0 \to \CH_0 \times \CH_0^+$
and $\hat M \colon \CH_0^+\to \CH_0^+$ such that one has
\begin{equ}[e:constrModel]
\Pi^M_x \tau = \bigl(\Pi_x \otimes f_x) \DeltaM \tau\;,\qquad f_x^M \tau = f_x \hat M \tau\;.
\end{equ}

\begin{remark}
One might wonder why we choose to make the ansatz \eref{e:constrModel}. The first identity
really just states that $\Pi^M_x \tau$ is given by a bilinear expression of the type
\begin{equ}
\Pi^M_x \tau = \sum_{\tau_1, \tau_2} C_\tau^{\tau_1,\tau_2} f_x(\tau_1) \, \Pi_x \tau_2\;,
\end{equ}
which is not unreasonable since the objects appearing on the right hand side are
the only objects available as ``building blocks'' for our construction. One might argue that
the coefficients could be given by some polynomial expression in the $f_x(\tau_1)$, but thanks
to the fact that $f_x$ is group-like, this can always be reformulated as a linear expression.
Similarly, the second expression simply states that $f_x^M$ is given by some arbitrary linear
(or polynomial by the same argument as before) expression in the $f_x$.
\end{remark}

Furthermore, we would like to ensure that if the pair $(\PPi,f)$ satisfies the identities
\eref{e:idPi} and \eref{e:idF}, then the pair $(\PPi^M, f^M)$ also
satisfies them. Inserting \eref{e:constrModel} into \eref{e:idF}, we see that this is guaranteed if we impose that
\minilab{e:defMM}
\begin{equ}[e:defMM2]
\hat M \CJ_k = \CM(\CJ_k \otimes I) \DeltaM\;,
\end{equ}
where, as before, $\CM \colon \CH_0^+ \times \CH_0^+ \to \CH_0^+$ denotes the multiplication map.
We also note that if we want to ensure that \eref{e:defPiM} holds, then we should require that, for
every $x \in \R^d$, one has the identity $\PPi M = \Pi_x^M \bigl(F_x^M\bigr)^{-1}$, which we
rewrite as $\Pi_x^M = \PPi M F_x^M $.
Making use of the first identity of \eref{e:constrModel} and of the fact that $\Pi_x = \PPi F_x$, 
the left hand side of this identity can be expressed as
\begin{equ}
\Pi_x^M \tau = \bigl(\PPi \otimes f_x \otimes f_x) (\Delta \otimes I)\DeltaM \tau
= \bigl(\PPi \otimes f_x) (I\otimes \CM) (\Delta \otimes I)\DeltaM \tau\;.
\end{equ} 
Using the second identity of \eref{e:constrModel}, the right hand side on the other 
hand can be rewritten as
\begin{equ}
\PPi M F_x^M\tau = (\PPi \otimes f_x^M) (M \otimes I)\Delta \tau
= (\PPi \otimes f_x) (M \otimes \hat M)\Delta \tau\;.
\end{equ}
We see that these two expressions
are guaranteed to be equal for any choice of $\PPi$ and $f_x$ if we impose the consistency condition
\minilab{e:defMM}
\begin{equ}[e:defMM3]
(I \otimes \CM)(\Delta \otimes I)\DeltaM = (M \otimes \hat M) \Delta\;.
\end{equ}
Finally, we impose that $\hat M$ is a multiplicative morphism and that it leaves $X^k$ invariant, namely that
\minilab{e:defMM}
\begin{equ}[e:defMM1]
\hat M(\tau_1\tau_2) = (\hat M \tau_1)(\hat M \tau_2)\;,\qquad \hat M X^k = X^k\;,
\end{equ}
which is a natural condition given its
interpretation. In view of \eref{e:constrModel}, this 
is required to  ensure that $f_x^M$ is again a group-like element with $f_x^M(X_i) = -x_i$, which is crucial
for our purpose.
It then turns out that equations \eref{e:defMM2}--\eref{e:defMM1} are sufficient to uniquely characterise
$\DeltaM$ and $\hat M$ and that it is always possible to find two operators satisfying these constraints: 

\begin{proposition}\label{prop:constMM}
Given a linear map $M$ as above, there exists a unique choice of $\hat M$ and $\DeltaM$ 
satisfying \eref{e:defMM2}--\eref{e:defMM1}.
\end{proposition}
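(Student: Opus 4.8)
The plan is to construct $\DeltaM$ and $\hat M$ explicitly by a joint recursion over the structure of $\CF_0$ and $\CF_0^+$, mirroring the way $\Delta$, $\Deltap$, and $\CA$ were built in Section~\ref{sec:genAlg}, and then to verify that the defining identities \eref{e:defMM2}--\eref{e:defMM1} hold and that they force uniqueness. First I would observe that \eref{e:defMM1} already pins down $\hat M$ on the multiplicative generators of $\CH_0^+$ except on the elements of the form $\CJ_k \tau$, so the content of \eref{e:defMM2} is precisely to define $\hat M$ on those generators: one sets $\hat M \CJ_k \tau \eqdef \CM(\CJ_k \otimes I)\DeltaM \tau$ and extends multiplicatively. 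Thus the whole problem reduces to constructing $\DeltaM$ on $\CH_0$ in such a way that \eref{e:defMM3} holds, after which $\hat M$ is determined and one checks \eref{e:defMM2}, \eref{e:defMM1} by construction.

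For the construction of $\DeltaM$, I would proceed by induction on the number of integration operators appearing in $\tau \in \CF_0$ (the integer grading $|\cdot|$ used in the proof of Theorem~\ref{theo:projDelta}). On the generators $\one$, $X_i$, $\Xi$ one takes $\DeltaM$ to agree with $\Delta M = \Delta$ (using $M X_i = X_i$, $M\Xi = \Xi$, $M\one = \one$, which follow from the assumption that $M$ commutes with the $\CI_k$ and preserves homogeneities, together with $1 \in A$). The key point is that equation \eref{e:defMM3} can be read as a formula \emph{determining} $(I \otimes \CM)(\Delta \otimes I)\DeltaM \tau$ in terms of $M$, $\Delta$, and $\hat M$ applied to lower-order pieces; since $(I\otimes \CM)(\Delta\otimes I)$ is, by the comodule identity \eref{e:coassoc1}, essentially invertible on the range (one can recover $\DeltaM \tau$ from $(M\otimes \hat M)\Delta\tau$ by applying $I \otimes I \otimes \one^*$ after using \eref{e:propIdstar}), this gives a well-defined recursive definition. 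Concretely, for $\tau = \CI_k\sigma$ one writes $\Delta \CI_k \sigma$ via \eref{e:int1}, uses the inductive definition of $\DeltaM$ on $\sigma$ and the already-defined $\hat M$ on the $\CF_0^+$-components, and reads off $\DeltaM \CI_k \sigma$; for products $\tau = \tau_1\tau_2$ one sets $\DeltaM(\tau_1\tau_2) = (\DeltaM\tau_1)(\DeltaM\tau_2)$ and checks compatibility with \eref{e:defMM3} using that $\Delta$, $M$ (on the relevant subspace), and $\hat M$ are all multiplicative. Throughout, one must verify that the components of $\DeltaM\tau$ indeed land in $\CH_0 \otimes \CH_0^+$ rather than the larger $\CH_F \otimes \CH_F^+$; this is exactly where Assumption~\ref{ass:SetF0} is used, via the remark (Remark~\ref{rem:stableF+}) that $\Deltap$ and $\CA$ preserve $\CH_0^+$ and that $\Delta$ maps $\CH_0$ into $\CH_0 \otimes \CH_0^+$.

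Uniqueness then follows by the same recursion run backwards: \eref{e:defMM1} determines $\hat M$ on all of $\CH_0^+$ once it is known on elements $\CJ_k\tau$, \eref{e:defMM2} determines those values once $\DeltaM$ is known, and \eref{e:defMM3} — combined with the fact that applying $I\otimes I \otimes \one^*$ to its left-hand side returns $\DeltaM \tau$ by \eref{e:propIdstar} — determines $\DeltaM \tau$ in terms of data of strictly lower integer grading, so an induction closes the argument. I expect the main obstacle to be the bookkeeping needed to check consistency of the multiplicative extension of $\DeltaM$ with \eref{e:defMM3}: one has to confirm that defining $\DeltaM$ multiplicatively on products does not clash with the value forced by \eref{e:defMM3}, which requires carefully commuting the multiplication maps past the coproducts and invoking coassociativity \eref{e:coassoc1}, much as in the proof of Theorem~\ref{theo:projDelta}. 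A secondary point requiring care is to ensure the recursion is well-founded, i.e. that every $\CJ_k\tau$ appearing when one expands $\Delta$ of an element of $\CF_0$ has strictly smaller integer grading, so that $\hat M$ on it is already defined — this is immediate from the structure of \eref{e:int1}, but must be stated.
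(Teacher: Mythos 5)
There is a genuine gap at the heart of your construction: the mechanism you propose for inverting the operator $D \eqdef (I \otimes \CM)(\Delta \otimes I)$ does not work. Applying $I \otimes \one^*$ to \eref{e:defMM3} and using \eref{e:propIdstar} together with the fact that $\one^*$ is a character only recovers $(I \otimes \one^*)\DeltaM \tau = M\tau$, i.e.\ the coefficient of $\one$ in the second factor of $\DeltaM\tau$; all higher-order terms are lost, because the multiplication $\CM$ has already merged the two right-hand tensor factors and this cannot be undone by evaluating a counit. The observation that the paper's proof actually hinges on is that $D$ acts on $\CH_0 \otimes \CH_0^+$ as $\mathrm{id} - \bar D$ with $\bar D$ nilpotent (the non-trivial terms of $\Delta$ strictly lower the homogeneity of the first factor, and homogeneities in $\CF_F$ are bounded below and locally finite by Lemma~\ref{lem:finDim}), so that $D^{-1} = \sum_{k \ge 0}\bar D^k$ is a finite sum and the single formula $\DeltaM\tau = D^{-1}(M\otimes \hat M)\Delta\tau$ delivers existence and uniqueness simultaneously, by induction along the filtration $\CF^{(n)}$, which guarantees (via Lemma~\ref{lem:mapDelta} and Remark~\ref{rem:stableF+}) that $\hat M$ is already known on the second factor of $\Delta\tau$.

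A second, independent error is your definition of $\DeltaM$ on products by $\DeltaM(\tau_1\tau_2) = (\DeltaM\tau_1)(\DeltaM\tau_2)$, justified by the claim that $M$ is multiplicative on the relevant subspace. It is not: the whole point of the renormalisation maps is that they fail to be multiplicative morphisms (in the $\Phi^4_3$ example one has $M\Psi^2 = \Psi^2 - C_1\one \neq (M\Psi)\star(M\Psi)$, and $M(\CI(\Psi^2)\Psi^2)$ differs from $M(\CI(\Psi^2))\star M(\Psi^2)$ by $-C_2\one$). Since $\DeltaM\tau = M\tau\otimes\one + \cdots$, the map $\DeltaM$ cannot be multiplicative either, and a multiplicative recursion on products would contradict \eref{e:defMM3}. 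The rest of your outline — reducing \eref{e:defMM1} and \eref{e:defMM2} to the determination of $\hat M$ on the generators $\CJ_k\tau$ once $\DeltaM\tau$ is known, and the attention to well-foundedness of the recursion — is sound and agrees with the paper.
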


In order to prove this result, it turns out to be convenient to consider the following 
recursive construction of elements in $\CH_F$.
We define  $\CF^{(0)} = \emptyset$ and then, recursively,
\begin{equ}[e:defFn]
\CF^{(n+1)} = \bigl\{\tau \in \CF_F\,:\, \Delta \tau \in \CH_F \otimes \scal{\Alg(\CF^{(n)})}\bigr\}\;.
\end{equ}

\begin{remark}
In practice, a typical choice for the set $\CF_0$ of Assumption~\ref{ass:SetF0} 
is to take $\CF_0 = \CF^{(n)}$ and $\CF_\star = \CF^{(n-1)}$ 
for some sufficiently large $n$, which then automatically has the required properties
by Lemma~\ref{lem:mapDelta} below. In particular, this also shows that such sets do exist.
\end{remark}

For example, $\CF^{(1)}$ contains all elements of the form $\Xi^n X^k$ that belong to $\CF_F$,
but it might contain more than that depending on the values of $\alpha$ and $\beta$.
The following properties of these sets are elementary:
\begin{claim}
\item One has $\CF^{(n-1)} \subset \CF^{(n)}$. This is shown by induction. For $n = 1$, the statement 
is trivially true. If it holds for some $n$ then one has $\Alg(\CF^{(n-1)}) \subset \Alg(\CF^{(n)})$ and
so, by \eref{e:defFn}, one also has $\CF^{(n)} \subset \CF^{(n+1)}$, as required.
\item If $\tau, \bar \tau \in \CF^{(n)}$ are such that $\tau \bar \tau \in \CF_F$, then 
$\tau \bar \tau \in \CF^{(n)}$ as an immediate consequence of the morphism property of $\Delta$, combined
with the definition of $\Alg$.
\item If $\tau \in \CF^{(n)}$ and $k$ is such that $\CI_k \tau \in \CF_F$, then $\CI_k \tau \in \CF^{(n+1)}$.
As a consequence of this fact, and since all elements in $\CF_F$ can be generated by multiplication and
integration from $\Xi$ and the $X_i$, one has $\bigcup_{n \ge 0} \CF^{(n)} = \CF_F$.
\end{claim}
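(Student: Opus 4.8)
The three bulleted claims are proved in order, the first two by induction on $n$ and the third by a direct computation with the coproduct formula \eref{e:int1}; the displayed consequence then follows by an induction on the complexity of a generic element of $\CF_F$. Throughout, the plan is to use repeatedly the lemma established above that $\Delta$ maps $\CH_F$ into $\CH_F\otimes\CH_F^+$, together with the elementary facts that $\Alg(\CC)$ is closed under multiplication and that, for $\CC\subset\CF_F$, one has $\Alg(\CC)\subset\CF_F^+$, so that $\langle\Alg(\CC)\rangle$ is a coordinate subspace of $\CH_F^+$ with respect to its basis $\CF_F^+$. For the inclusion $\CF^{(n-1)}\subset\CF^{(n)}$ I would induct on $n$: the base case $\CF^{(0)}=\emptyset$ is trivial, and if $\CF^{(n-1)}\subset\CF^{(n)}$ then $\Alg(\CF^{(n-1)})\subset\Alg(\CF^{(n)})$ directly from the definition of $\Alg$ (an expression $X^k\prod_i\CJ_{\ell_i}\tau_i$ with all $\tau_i\in\CF^{(n-1)}$ also has all $\tau_i\in\CF^{(n)}$), hence $\CH_F\otimes\langle\Alg(\CF^{(n-1)})\rangle\subset\CH_F\otimes\langle\Alg(\CF^{(n)})\rangle$, so the membership condition \eref{e:defFn} defining $\CF^{(n)}$ entails the one defining $\CF^{(n+1)}$.

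For closure under products there is nothing to prove when $n=0$, so I would fix $n\ge 1$ and $\tau,\bar\tau\in\CF^{(n)}$ with $\tau\bar\tau\in\CF_F$. Writing $\Delta\tau=\sum\tau^{(1)}\otimes\tau^{(2)}$ and $\Delta\bar\tau=\sum\bar\tau^{(1)}\otimes\bar\tau^{(2)}$ with all second legs in $\Alg(\CF^{(n-1)})$, the morphism property \eref{e:prodD1} gives $\Delta(\tau\bar\tau)=\sum\tau^{(1)}\bar\tau^{(1)}\otimes\tau^{(2)}\bar\tau^{(2)}$, and each $\tau^{(2)}\bar\tau^{(2)}$ again lies in $\Alg(\CF^{(n-1)})$ since that set is multiplicatively closed (concatenate the two products of $\CJ$-factors). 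Thus the second legs of $\Delta(\tau\bar\tau)$ span a subspace of $\langle\Alg(\CF^{(n-1)})\rangle$; since $\tau\bar\tau\in\CF_F$ one also knows $\Delta(\tau\bar\tau)\in\CH_F\otimes\CH_F^+$, and as $\langle\Alg(\CF^{(n-1)})\rangle$ is a coordinate subspace of $\CH_F^+$, expanding both tensor factors in bases (the second adapted to $\Alg(\CF^{(n-1)})$) forces $\Delta(\tau\bar\tau)\in\CH_F\otimes\langle\Alg(\CF^{(n-1)})\rangle$, i.e.\ $\tau\bar\tau\in\CF^{(n)}$.

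For closure under integration, let $\tau\in\CF^{(n)}$ with $\CI_k\tau\in\CF_F$ and apply \eref{e:int1}:
\begin{equ}
\Delta(\CI_k\tau)=(\CI_k\otimes I)\Delta\tau+\sum_{\ell,m}{X^\ell\over\ell!}\otimes{X^m\over m!}\CJ_{k+\ell+m}\tau\;.
\end{equ}
In the sum, the only nonvanishing contributions have $|\CJ_{k+\ell+m}\tau|_\s>0$, and then ${X^m\over m!}\CJ_{k+\ell+m}\tau\in\Alg(\CF^{(n)})$ since $\tau\in\CF^{(n)}\subset\CF_F$, so the sum lies in $\CH_F\otimes\langle\Alg(\CF^{(n)})\rangle$. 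Since $\CI_k\tau\in\CF_F$ we know $\Delta(\CI_k\tau)\in\CH_F\otimes\CH_F^+$, so subtracting the sum shows that $(\CI_k\otimes I)\Delta\tau\in\CH_F\otimes\CH_F^+$ as well; its second legs coincide with those of $\Delta\tau$, which lie in $\langle\Alg(\CF^{(n-1)})\rangle\subset\langle\Alg(\CF^{(n)})\rangle$, so the same coordinate-subspace argument puts this term in $\CH_F\otimes\langle\Alg(\CF^{(n)})\rangle$. Adding the two terms yields $\Delta(\CI_k\tau)\in\CH_F\otimes\langle\Alg(\CF^{(n)})\rangle$, i.e.\ $\CI_k\tau\in\CF^{(n+1)}$.

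Finally, for $\bigcup_{n\ge 0}\CF^{(n)}=\CF_F$ the inclusion $\subseteq$ is immediate from \eref{e:defFn}. For the reverse, one checks directly that $\one$, $\Xi$ and each $X_i$ lie in $\CF^{(1)}$, using $\Delta\one=\one\otimes\one$, $\Delta\Xi=\Xi\otimes\one$, $\Delta X_i=X_i\otimes\one+\one\otimes X_i$ and the fact that $\one=X^0\in\Alg(\emptyset)=\Alg(\CF^{(0)})$; every element of $\CF_F$ is obtained from these by finitely many multiplications and applications of the maps $\CI_k$, so an induction on the length of the corresponding formal expression — using Remark~\ref{rem:subsets} to guarantee that each building block again lies in $\CF_F$, the first claim to bring two blocks to a common level, and the second and third claims just proved — produces an $n$ with $\tau\in\CF^{(n)}$. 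The only genuine subtlety in the whole argument is that $\CH_F$ is not a subalgebra of $\CH$, so the ``first legs'' produced by \eref{e:prodD1} and \eref{e:int1} are not a priori in $\CH_F$; this is precisely what the property $\Delta\colon\CH_F\to\CH_F\otimes\CH_F^+$ together with the coordinate-subspace observation is used to circumvent, and everything else is bookkeeping.
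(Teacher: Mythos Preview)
Your argument is correct and follows the same route as the paper's inline justifications: induction for the monotonicity, the morphism property \eref{e:prodD1} together with multiplicative closure of $\Alg(\CF^{(n-1)})$ for products, the explicit formula \eref{e:int1} for integration, and an induction on the complexity of formal expressions (using Remark~\ref{rem:subsets}) for the exhaustion $\bigcup_n\CF^{(n)}=\CF_F$. The one place you go beyond the paper is in making explicit the intersection argument for the first tensor leg: the paper simply invokes the morphism property and leaves implicit the fact that $(\CH_F\otimes\CH_F^+)\cap(\CH\otimes\langle\Alg(\CF^{(n-1)})\rangle)=\CH_F\otimes\langle\Alg(\CF^{(n-1)})\rangle$, which is what your ``coordinate subspace'' remark establishes. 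This is a genuine (if minor) technical point, since the individual first legs $\tau^{(1)}\bar\tau^{(1)}$ need not lie in $\CH_F$ even when their sum does, so spelling it out is worthwhile; otherwise the two arguments are the same.
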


The following consequence is slightly less obvious:

\begin{lemma}\label{lem:mapDelta}
For every $n \ge 0$ and $\tau \in \CF^{(n)}$, one has $\Delta \tau \in \scal{\CF^{(n)}} \otimes \scal{\Alg(\CF^{(n-1)})}$.
For every $n \ge 0$ and $\tau \in \Alg(\CF^{(n)})$, one has $\Deltap \tau \in \scal{\Alg(\CF^{(n)})} \otimes \scal{\Alg(\CF^{(n)})}$.
\end{lemma}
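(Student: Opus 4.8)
The plan is to establish both assertions simultaneously by induction on $n$, along the dependency chain ``the second assertion at level $n-1$ $\Rightarrow$ the first assertion at level $n$ $\Rightarrow$ the second assertion at level $n$''. The base case $n=0$ is immediate: the first assertion is vacuous since $\CF^{(0)}=\emptyset$, while for the second one $\Alg(\CF^{(0)})=\Alg(\emptyset)$ is exactly the span of the monomials $X^k$, and from $\Deltap X_i = X_i\otimes\one + \one\otimes X_i$ together with the multiplicativity \eref{e:prodD2} of $\Deltap$ one sees that $\Deltap X^k$ is a linear combination of terms $X^m\otimes X^{k-m}$, which lies in $\scal{\Alg(\CF^{(0)})}\otimes\scal{\Alg(\CF^{(0)})}$.

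For the inductive step I would first deduce the first assertion at level $n$ from the second assertion at level $n-1$. Let $\tau\in\CF^{(n)}$; by \eref{e:defFn} we may write $\Delta\tau = \sum \tau^{(1)}\otimes\tau^{(2)}$ with the $\tau^{(2)}$ linearly independent elements of $\Alg(\CF^{(n-1)})$, so only the membership $\tau^{(1)}\in\scal{\CF^{(n)}}$ remains to be checked. Applying $\Delta\otimes I$ and invoking the coassociativity identity \eref{e:coassoc1} gives $\sum \Delta\tau^{(1)}\otimes\tau^{(2)} = (I\otimes\Deltap)\Delta\tau = \sum \tau^{(1)}\otimes\Deltap\tau^{(2)}$. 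By the inductive hypothesis each $\Deltap\tau^{(2)}$ lies in $\scal{\Alg(\CF^{(n-1)})}\otimes\scal{\Alg(\CF^{(n-1)})}$, so the whole expression lies in $\CH_F\otimes\scal{\Alg(\CF^{(n-1)})}\otimes\scal{\Alg(\CF^{(n-1)})}$; since the $\tau^{(2)}$ are linearly independent, each $\Delta\tau^{(1)}$ lies in $\CH_F\otimes\scal{\Alg(\CF^{(n-1)})}$, i.e.\ $\tau^{(1)}\in\CF^{(n)}$, which is precisely the first assertion at level $n$.

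Next I would upgrade this to the second assertion at level $n$. Since $\Deltap$ is multiplicative and $\scal{\Alg(\CF^{(n)})}$ is a subalgebra of $\CH_F^+$ (spanned by products of the generators $X_j$ and $\CJ_k\sigma$ with $\sigma\in\CF^{(n)}$ and $|\CJ_k\sigma|_\s>0$), it suffices to verify the claim on those generators. On the $X_j$ it already holds from the base case. For $\tau=\CJ_k\sigma$ I would use the recursion \eref{e:int2}, $\Deltap\CJ_k\sigma = \sum_\ell (\CJ_{k+\ell}\otimes \frac{(-X)^\ell}{\ell!})\Delta\sigma + \one\otimes\CJ_k\sigma$: the term $\one\otimes\CJ_k\sigma$ lies in $\scal{\Alg(\CF^{(n)})}^{\otimes 2}$ because $\CJ_k\sigma\in\Alg(\CF^{(n)})$, and for the remaining sum the first assertion at level $n$ gives $\Delta\sigma = \sum \sigma^{(1)}\otimes\sigma^{(2)}$ with $\sigma^{(1)}\in\CF^{(n)}$ and $\sigma^{(2)}\in\Alg(\CF^{(n-1)})\subset\Alg(\CF^{(n)})$, so that every nonvanishing term $\CJ_{k+\ell}\sigma^{(1)}$ has positive homogeneity and hence lies in $\Alg(\CF^{(n)})$, while $\frac{(-X)^\ell}{\ell!}\sigma^{(2)}$ remains in $\Alg(\CF^{(n)})$ since that set is stable under multiplication by monomials. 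This closes the induction.

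The routine bookkeeping — that $\Alg(\CF^{(m)})$ is stable under multiplication and under multiplication by monomials, that $\CF^{(n-1)}\subset\CF^{(n)}$ and hence $\Alg(\CF^{(n-1)})\subset\Alg(\CF^{(n)})$, and that the sum in \eref{e:int2} is effectively finite because $\hat\CJ_j\sigma^{(1)}$ vanishes once $|j|_\s$ is large — follows directly from the definitions and will be dispatched as it arises. The one genuinely delicate point is the bootstrap: \eref{e:defFn} only constrains the \emph{second} tensor leg of $\Delta\tau$, so to control the first leg one must pass the $\Deltap$-stability of $\Alg(\CF^{(n-1)})$ through the coassociativity relation and then exploit linear independence of the second legs to ``divide out''. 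I expect this to be the crux of the argument; the rest is structural induction following the way elements of $\CF_F$ are generated from $\Xi$ and the $X_i$ by multiplication and by the operators $\CI_k$.
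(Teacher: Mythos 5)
Your proof is correct and follows essentially the same route as the paper's: induction on $n$, deducing the first assertion at level $n$ from coassociativity \eref{e:coassoc1} together with the $\Deltap$-stability of $\scal{\Alg(\CF^{(n-1)})}$, and then the second assertion from the multiplicativity of $\Deltap$ and the recursion \eref{e:int2} applied to generators $\CJ_k\sigma$. The only cosmetic difference is that the paper phrases the first step as a proof by contradiction, whereas you argue directly and make the linear-independence ``divide out'' step explicit.
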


\begin{proof}
We proceed by induction. For $n = 0$, both statements are trivially true, so we assume that they hold for
all $n \le k$. Take then $\tau \in \CF^{(k+1)}$ and assume by contradiction that
$\Delta \tau \not \in  \scal{\CF^{(k+1)}} \otimes \scal{\Alg(\CF^{(k)})}$.
This then implies that $(\Delta \otimes I) \Delta \tau \not \in \CH_F\otimes \scal{\Alg(\CF^{(k)})} \otimes \scal{\Alg(\CF^{(k)})}$. However, we have $(\Delta \otimes I) \Delta \tau = (I \otimes \Deltap) \Delta \tau$
by Theorem~\ref{theo:projDelta} and $\Deltap$ maps $\scal{\Alg(\CF^{(k)})}$ to $\scal{\Alg(\CF^{(k)})} \otimes \scal{\Alg(\CF^{(k)})}$ by our induction hypothesis, thus yielding the required contradiction.

It remains to show that $\Deltap$ has the desired property for $n = k+1$. 
Since $\Deltap$ is a multiplicative morphism, 
we can assume that $\tau$ is of the form $\tau = \CJ_\ell \bar \tau$ with $\bar \tau \in \CF^{(k+1)}$.
One then has by definition
\begin{equ}
\Deltap \tau = \sum_{m} \Bigl(\CJ_{\ell+m} \otimes {(-X)^m\over m!}\Bigr)  \Delta \bar \tau + \one \otimes \tau\;.
\end{equ}
By the first part of the proof, we already know that $\Delta \bar \tau \in \scal{\CF^{(k+1)}} \otimes \scal{\Alg(\CF^{(k)})}$, so that the first term belongs to $\scal{\Alg(\CF^{(k+1)})} \otimes \scal{\Alg(\CF^{(k)})}$.
The second term on the other hand belongs to $\scal{\Alg(\CF^{(0)})} \otimes \scal{\Alg(\CF^{(k+1)})}$ by definition,
so that the claim follows.
\end{proof}

A useful consequence of Lemma~\ref{lem:mapDelta} is the following.

\begin{lemma}\label{lem:antiFn}
If $\tau \in \Alg(\CF^{(n)})$ for some $n \ge 0$, then $\CA\tau \in \scal{\Alg(\CF^{(n)})}$, where $\CA$ is the
antipode in $\CH_+$ defined in the previous subsection.
\end{lemma}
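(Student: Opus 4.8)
\textbf{Proof plan for Lemma~\ref{lem:antiFn}.} The plan is to proceed by induction on $n$, mirroring the structure of the recursive defining relation \eqref{e:recAnti} for the antipode. The base case $n=0$ is trivial since $\Alg(\CF^{(0)})$ consists only of the monomials $X^k$, and $\CA X^k = (-X)^k$ by \eqref{e:basicAnti} and the morphism property of $\CA$. So assume the statement holds for all indices up to $n$, and take $\tau \in \Alg(\CF^{(n+1)})$. Since $\CA$ is a multiplicative morphism and $\Alg(\CF^{(n+1)})$ is generated under multiplication by the $X_i$ and by elements of the form $\CJ_\ell \bar\tau$ with $\bar\tau \in \CF^{(n+1)}$ and $|\CJ_\ell\bar\tau|_\s > 0$, it suffices to check the claim for such a single factor $\tau = \CJ_\ell\bar\tau$.

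First I would recall from the proof of Theorem~\ref{theo:projDelta} (specifically the identity used to verify $\CM(\CA\otimes I)\Deltap\tau = \one^*(\tau)\one$) the explicit recursion
\begin{equ}
\CA \CJ_\ell \bar\tau = - \sum_{m}\, \CM\Bigl(\CJ_{\ell+m}\otimes {X^m\over m!}\CA\Bigr)\Delta\bar\tau\;,
\end{equ}
which determines $\CA\CJ_\ell\bar\tau$ in terms of $(I\otimes\CA)\Delta\bar\tau$. Now I would invoke Lemma~\ref{lem:mapDelta}, which gives $\Delta\bar\tau \in \scal{\CF^{(n+1)}}\otimes\scal{\Alg(\CF^{(n)})}$. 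Writing $\Delta\bar\tau = \sum \bar\tau^{(1)}\otimes\bar\tau^{(2)}$ with $\bar\tau^{(1)}\in\CF^{(n+1)}$ and $\bar\tau^{(2)}\in\Alg(\CF^{(n)})$, the inductive hypothesis applies to each $\bar\tau^{(2)}$, yielding $\CA\bar\tau^{(2)}\in\scal{\Alg(\CF^{(n)})}$. Then each summand of $\CA\CJ_\ell\bar\tau$ has the form $\CJ_{\ell+m}(\bar\tau^{(1)}) \cdot \bigl(X^m (\CA\bar\tau^{(2)})\bigr)/m!$; since $\bar\tau^{(1)}\in\CF^{(n+1)}$ we have $\CJ_{\ell+m}\bar\tau^{(1)}\in\Alg(\CF^{(n+1)})$ (whenever the homogeneity is positive, so that $\hat\CJ_{\ell+m}$ does not annihilate it), and since $\CA\bar\tau^{(2)}\in\scal{\Alg(\CF^{(n)})}\subset\scal{\Alg(\CF^{(n+1)})}$, the product $X^m\,\CA\bar\tau^{(2)}$ also lies in $\scal{\Alg(\CF^{(n+1)})}$ (this set is closed under multiplication by monomials and under the product, by the definition of $\Alg$ together with $\CF^{(n)}\subset\CF^{(n+1)}$). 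Hence each summand, and therefore $\CA\CJ_\ell\bar\tau$, lies in $\scal{\Alg(\CF^{(n+1)})}$.

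For the general $\tau\in\Alg(\CF^{(n+1)})$, write $\tau = X^k \prod_i \CJ_{\ell_i}\tau_i$ with $\tau_i\in\CF^{(n+1)}$; then $\CA\tau = (-X)^k\prod_i \CA\CJ_{\ell_i}\tau_i$ by the morphism property, and each $\CA\CJ_{\ell_i}\tau_i\in\scal{\Alg(\CF^{(n+1)})}$ by the previous paragraph, so $\CA\tau\in\scal{\Alg(\CF^{(n+1)})}$ as claimed, which closes the induction. The main obstacle I anticipate is purely bookkeeping: one must be careful that the recursive formula for $\CA$ on $\CJ_\ell\bar\tau$ is correctly quoted (it appears implicitly inside the proof of Theorem~\ref{theo:projDelta} rather than being displayed as a standalone identity), and that the bookkeeping of homogeneities ensures every $\CJ_{\ell+m}$ appearing is applied only to elements for which it is nonzero — but both of these are routine once the recursion is pinned down, and no genuinely hard estimate is involved since this is a purely algebraic statement.
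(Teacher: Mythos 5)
Your proof is correct and follows essentially the same route as the paper: induction on $n$, reduction to a single factor $\CJ_\ell\bar\tau$ via the morphism property of $\CA$, the recursion for $\CA\CJ_\ell\bar\tau$ extracted from the proof of Theorem~\ref{theo:projDelta} (it is in fact displayed in the paper's proof of this lemma as \eref{e:relIndA}), and Lemma~\ref{lem:mapDelta} combined with the inductive hypothesis applied to the second tensor factor. The only difference is a cosmetic shift of the induction index.
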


\begin{proof}
The proof goes by induction, using the relations $\CA(\tau \bar \tau) = \CA(\tau)\,\CA(\bar \tau)$,
as well as the identity
\begin{equ}[e:relIndA]
\CA \CJ_k \bar\tau = - \sum_{\ell}  \CM\Bigl(\CJ_{k+\ell} \otimes {X^\ell\over \ell!} \CA\Bigr)\Delta \bar\tau\;,
%
%
\end{equ} 
which is valid as soon as $|\CJ_k \bar\tau|_\s > 0$. For $n = 0$, the claim is trivially true.
For arbitrary $n > 0$, by the multiplicative property of $\CA$, it suffices to consider the case 
$\tau = \CJ_k \bar \tau$ with $\bar \tau \in \CF^{(n)}$.
Since $\Delta \bar\tau \in \scal{\CF^{(n)}} \otimes \scal{\Alg(\CF^{(n-1)})}$
by Lemma~\ref{lem:mapDelta}, it follows from our definitions and the inductive assumption that
the right hand side of \eref{e:relIndA} does indeed belong to $\scal{\Alg(\CF^{(n)})} \otimes \scal{\Alg(\CF^{(n)})}$ as required.
\end{proof}

We now have all the ingredients in place for the 

\begin{proof}[of Proposition~\ref{prop:constMM}]
We first introduce the map $D \colon \CH_0 \otimes \CH_0^+ \to \CH_0 \otimes \CH_0^+$ 
given by $D = (I \otimes \CM)(\Delta \otimes I)$. It follows immediately from the definition of $\Delta$
and the fact that, by Lemma~\ref{lem:finDim}, homogeneities of elements in $\CF_F$ (and a fortiori
of elements in $\CF_0$) are bounded from below, that
$D$ can be written as 
\begin{equ}
D(\tau\otimes \bar \tau) = \tau \otimes \bar \tau - \bar D(\tau \otimes \bar \tau)\;,
\end{equ}
for some nilpotent map $\bar D$. As a consequence, $D$ is invertible with inverse given by
the Neumann series $D^{-1} = \sum_{k \ge 0} \bar D^k$, which is always finite.

The proof of the statement then goes by induction over $\CF^{(n)} \cap \CF_0$.
Assume that $\hat M$ and $\DeltaM$ are uniquely defined on $\Alg(\CF^{(n)}\cap\CF_\star)$ and
on $\CF^{(n)} \cap \CF_0$ respectively which,
by \eref{e:defMM1},  is trivially true for $n =0$. (For $\DeltaM$ this is also trivial since $\CF^{(0)}$
is empty.)
Take then $\tau \in  \CF^{(n+1)} \cap \CF_0$.
By \eref{e:defMM3}, one has
\begin{equ}
\DeltaM \tau = D^{-1} (M\otimes \hat M) \Delta \tau\;.
\end{equ} 
By Lemma~\ref{lem:mapDelta} and Remark~\ref{rem:stableF+}, 
the second factor of $\Delta \tau$ belongs to $\scal{\Alg(\CF^{(n)}\cap\CF_\star)}$
on which $\hat M$ is already known by assumption, so that this uniquely determines
$\DeltaM \tau$.

On the other hand, in order to determine $\hat M$ on elements of $\Alg(\CF^{(n+1)}\cap\CF_\star)$ it suffices
by \eref{e:defMM1} and Remark~\ref{rem:stableF+} to determine it on elements of the form 
$\tau = \CJ_k \bar \tau$ with $\bar \tau\in \CF^{(n+1)} \cap \CF_\star$. The action of $\hat M$ on
such elements is determined by \eref{e:defMM2} 
so that, since we already know by the first part of the proof that $\DeltaM \bar \tau$ is uniquely determined,
the proof is complete.
\end{proof}

Before we proceed, we introduce a final object whose utility will be clear later on.
Similarly do the definition of $\DeltaM$, we define $\hDeltaM\colon \CH_0^+ \to \CH_0^+\otimes \CH_0^+$
by the identity
\begin{equ}[e:defDeltaMhat]
(\CA\hat M\CA \otimes \hat M)\Deltap = (I\otimes \CM)(\Deltap \otimes I)\hDeltaM\;.
\end{equ}
Note that, similarly to before, one can verify that the map $D^+ = (I\otimes \CM)(\Deltap \otimes I)$
is invertible on $\CH_0^+ \otimes \CH_0^+$, so that this expression does indeed define $\hDeltaM$ uniquely.

\begin{remark}
Note also that in the particular case when $M = I$, the identity, one has $\DeltaM \tau = \tau \otimes \one$,
$\hDeltaM \tau = \tau \otimes \one$, as well as $\hat M  = I$. 
\end{remark}

With these notations at hand, we then give the following description of the ``renormalisation
group'' $\RR$:

\begin{definition}\label{def:renormGroup}
Let $\CF_F$ and $\CF_0$ be as above. Then the corresponding renormalisation group
$\RR$ consists of all linear maps $M \colon \CH_0 \to \CH_0$ such that 
$M$ commutes with the $\CI_k$, such that $M X^k = X^k$, and such that, for every $\tau \in \CF_0$
and every $\hat \tau \in \CF_0^+$,
one can write
\begin{equ}[e:propDM]
\DeltaM \tau = \tau \otimes \one + \sum \tau^{(1)} \otimes \tau^{(2)}\;,\qquad 
\hDeltaM \bar \tau = \bar \tau \otimes \one + \sum \bar \tau^{(1)} \otimes \bar \tau^{(2)}\;,
\end{equ}
where each of the $\tau^{(1)} \in \CF_0$ and $\bar \tau^{(1)} \in \CF_0^+$ 
is such that $|\tau^{(1)}|_\s > |\tau|_\s$ and $|\bar \tau^{(1)}|_\s > |\bar \tau|_\s$. 
\end{definition}

\begin{remark}\label{rem:condDhat}
Note that $\hDeltaM$ is automatically a multiplicative morphism. Since
one has furthermore $\hDeltaM X^k = X^k \otimes  \one$ for every $M$, 
the second condition in \eref{e:propDM} really needs to be verified only for 
elements of the form $\CI_k(\tau)$ with $\tau \in \CF_\star$.
The reason for introducing the quantity $\hDeltaM$ and defining $\RR$ in this way
is that these conditions appear naturally
in Theorem~\ref{theo:renormStruct} below where we check that the renormalised model defined by
\eref{e:constrModel} does again satisfy the analytical bounds of Definition~\ref{def:model}.
\end{remark}

We first verify that our terminology is not misleading, namely that $\RR$ really is a group:

\begin{lemma}
If $M_1, M_2 \in \RR$, then $M_1 M_2 \in \RR$. Furthermore, if $M\in \RR$, then $M^{-1} \in \RR$.
\end{lemma}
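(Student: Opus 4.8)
The plan is to verify the three defining properties of $\RR$ from Definition~\ref{def:renormGroup} for the composition $M_1 M_2$ and the inverse $M^{-1}$, the only substantive one being the structural condition \eref{e:propDM} on $\DeltaM$ and $\hDeltaM$. The fact that $M_1 M_2$ (and $M^{-1}$) commutes with the $\CI_k$ and fixes the $X^k$ is immediate from the corresponding properties of $M_1$ and $M_2$, so the work is entirely in controlling how $\Delta^{M_1 M_2}$ and $\hat\Delta^{M_1 M_2}$ decompose.

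The key algebraic input is a composition rule. First I would show that $\widehat{M_1 M_2} = \hat M_1 \hat M_2$ and that $\Delta^{M_1 M_2}$ can be expressed in terms of $\DeltaM[M_1]$, $\DeltaM[M_2]$, $\hat M_1$ and $\hat M_2$. Concretely, one expects an identity of the form
\begin{equ}
\Delta^{M_1 M_2} = \bigl(\DeltaM[M_2]\otimes \hat M_1\bigr)\bigl(\text{something}\bigr)\DeltaM[M_1]\;,
\end{equ}
which should be extracted by feeding $M = M_1 M_2$ into the defining relations \eref{e:defMM2}--\eref{e:defMM1} and using the uniqueness statement of Proposition~\ref{prop:constMM}: it suffices to exhibit a pair $(\Delta^{M_1 M_2}, \hat M_1 \hat M_2)$ satisfying those four identities, and then uniqueness forces it to be the one constructed from $M_1 M_2$. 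The cleanest route is probably model-theoretic rather than purely combinatorial: given an admissible model $(\PPi, f)$, the renormalised model $(\PPi^{M_2}, f^{M_2})$ is again admissible by construction (this is essentially Theorem~\ref{theo:renormStruct}), so we may apply $M_1$ to it, and by \eref{e:defPiM} the result satisfies $\PPi^{M_2 M_1}\tau := (\PPi^{M_2})^{M_1}\tau = \PPi^{M_2} M_1 \tau = \PPi M_2 M_1 \tau$; matching this against \eref{e:constrModel} and invoking uniqueness again identifies the composite $\DeltaM$. This reduces the composition property to bookkeeping.

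Once the composition rule for $\DeltaM$ and $\hDeltaM$ is in hand, the triangularity condition \eref{e:propDM} for $M_1 M_2$ follows from that for $M_1$ and $M_2$ by a standard grading argument: each decomposition writes $\DeltaM \tau = \tau\otimes\one + (\text{lower-degree-in-first-slot terms})$, where ``lower'' means strictly larger homogeneity $|\cdot|_\s$ in the first factor, and composing two such maps preserves this triangular structure because $M_i$ and $\hat M_i$ each preserve or increase homogeneity in the relevant slots (recall $M X^k = X^k$ and the morphism property). One checks that the ``$\tau \otimes \one$'' leading term is reproduced exactly once and all other terms have strictly larger $|\cdot|_\s$ in the first slot. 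The same reasoning applies to $\hDeltaM$ using \eref{e:defDeltaMhat}.

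For the inverse, the main obstacle — and the one point requiring genuine care — is showing that $M^{-1}$ exists as a linear map on $\CH_0$ and again lies in $\RR$. Existence of $M^{-1}$ is a nilpotency argument: by \eref{e:propDM} and the fact that $A$ is locally finite and bounded below (Lemma~\ref{lem:finDim}), the map $M - I$ strictly increases $|\cdot|_\s$, hence is nilpotent on each fixed truncation $\CH_0 \cap T_{\le r}$, so $M^{-1} = \sum_{k\ge 0}(I-M)^k$ is well-defined and is again triangular with leading term the identity. That $M^{-1}$ commutes with the $\CI_k$ and fixes $X^k$ is inherited from $M$ via this series. Finally, applying the composition rule with $M_1 = M$, $M_2 = M^{-1}$ and using that $\Delta^{I}\tau = \tau\otimes\one$, $\hat\Delta^{I}\tau = \tau\otimes\one$, $\hat I = I$, one solves for $\Delta^{M^{-1}}$ and $\hDeltaM[M^{-1}]$ and reads off from the triangular structure that they have the form required by \eref{e:propDM}; the strict homogeneity increase in the first slot for $M^{-1}$ follows from the same Neumann-series/grading argument that gave it for $M - I$. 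I expect the composition identity for $\DeltaM$ to be the part that needs the most careful writing, but it is forced by Proposition~\ref{prop:constMM} and so presents no real difficulty.
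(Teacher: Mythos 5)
Your proposal is correct and follows essentially the same route as the paper: the composition formulae for $\Delta^{M_1M_2}$, $\hat\Delta^{M_1M_2}$ and $\widehat{M_1M_2}=\hat M_1\hat M_2$ are guessed from the model-theoretic interpretation $\PPi^{M_1M_2}=(\PPi^{M_2})^{M_1}$ and then pinned down by verifying \eref{e:defMM2}--\eref{e:defMM1} and invoking the uniqueness of Proposition~\ref{prop:constMM}, after which the triangularity \eref{e:propDM} of the composite follows by the grading argument you describe. For the inverse, the paper phrases the same order-by-order induction as a proof by contradiction at the minimal homogeneity $\kappa$ where \eref{e:propDM} would fail (using the projections $\CP_\kappa$ and $\hat\Delta^{I}\tau=\Delta^{I}\tau=\tau\otimes\one$), which is equivalent to your Neumann-series/solve-for-$\Delta^{M^{-1}}$ argument.
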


\begin{proof}
Note first that if $M = M_1 M_2$ then, due to the identity $\PPi^M = \PPi M_1 M_2$, one obtains the model
$(\PPi^M, F^M)$ by applying the group element corresponding to $M_2$ to $(\PPi^{M_1}, F^{M_1})$.
As a consequence, one can ``guess'' the identities
\minilab{e:idMult}
\begin{equs}
\DeltaM &= (I \otimes \CM)\bigl(\Delta^{\! M_1} \otimes \hat M_1\bigr)\Delta^{\! M_2}\;, 
\label{e:idMult1}\\
\hDeltaM &= (I \otimes \CM)\bigl(\hat \Delta^{\! M_1} \otimes \hat M_1\bigr)\hat \Delta^{\! M_2}\;, 
\label{e:idMult3}\\
\hat M &= \hat M_1 \,\hat M_2\;.
\label{e:idMult2}
\end{equs}
Since we know that \eref{e:defMM} characterises $\DeltaM$ and $\hat M$, \eref{e:idMult} 
can be verified by checking that $\DeltaM$ and $\hat M$ 
defined in this way do indeed satisfy \eref{e:defMM}.
The identity \eref{e:defMM1} is immediate, so we concentrate on the two other ones. 

For \eref{e:defMM2}, we have
\begin{equs}
\CM(\CJ_k \otimes I) \DeltaM &= \CM\bigl((\CJ_k \otimes I) \Delta^{\!M_1} \otimes \hat M_1\bigr) \Delta^{\! M_2} \\
&= \CM\bigl(\hat M_1 \CJ_k \otimes  \hat M_1\bigr) \Delta^{\! M_2}\\
&= \hat M_1 \CM\bigl(\CJ_k \otimes  I\bigr) \Delta^{\! M_2} = \hat M_1 \hat M_2 \CJ_k\;,
\end{equs}
which is indeed the required property. Here, we made use of the morphism property of $\hat M_1$ to go from
the second to the third line. 

For \eref{e:defMM3}, we use \eref{e:idMult1} to obtain
\begin{equs}
(I \otimes \CM)(\Delta \otimes I)\DeltaM &= (I \otimes \CM)(\Delta \otimes I)(I \otimes \CM)\bigl(\Delta^{\! M_1} \otimes \hat M_1\bigr)\Delta^{\! M_2}\\
&= (I \otimes \CM)\bigl((M_1 \otimes \hat M_1)\Delta \otimes \hat M_1\bigr)\Delta^{\! M_2}\\
&= (M_1 \otimes \hat M_1)(I \otimes \CM)\bigl(\Delta \otimes I\bigr)\Delta^{\! M_2}\\
&= (M_1 \otimes \hat M_1)(M_2 \otimes \hat M_2)\Delta = (M \otimes \hat M)\Delta\;,
\end{equs}
as required. Here, we used again the morphism property of $\hat M_1$ to go from
the second to the third line. We also used the fact that, by assumption, \eref{e:defMM3} holds for both
$M_1$ and $M_2$. Finally, we want to verify that the expression \eref{e:idMult3} for $\hDeltaM$
is the correct one. For this, it suffices to proceed in virtually the same way as for $\DeltaM$,
replacing $\Delta$ by $\hat \Delta$ when needed.

To show that $\RR$ is a group and not just a semigroup, 
we first define, for any $\kappa \in \R$, the projection
$\CP_\kappa \colon \CH_0 \to \CH_0$ given by $\CP_\kappa \tau = 0$ if $|\tau|_\s > \kappa$
and $\CP_\kappa \tau = \tau$ if $|\tau|_\s \le \kappa$. We also write 
$\hat \CP_\kappa = \CP_\kappa \otimes I$ as a shorthand.
We then argue by contradiction as follows. 
Assuming that $M^{-1} \not \in \RR$, one of the two conditions in \eref{e:propDM}
must be violated. Assume first that it is the first one, then
there exists a $\tau \in \CF_0$ and a homogeneity $\kappa \le |\tau|_\s$, 
such that $\Delta^{\! M^{-1}} \tau$ can be rewritten as
\begin{equ}
\Delta^{\! M^{-1}} \tau = R^M_- \tau + R^M_+ \tau\;,
\end{equ}
with $\hat \CP_\kappa R^M_- \tau = R^M_- \tau \neq 0$, $\hat \CP_\kappa R^M_+ \tau = 0$,
and $R^M_-\tau \neq \tau \otimes \one$. We furthermore choose for $\kappa$ the smallest 
possible value such that such a decomposition exists, i.e.\ we assume that
$\hat \CP_{\bar \kappa} R^M_- \tau = 0$ for every $\bar \kappa  < \kappa$.

It follows from \eref{e:idMult1} that one has
\begin{equ}
\hat \CP_\kappa(\tau \otimes \one) = \hat \CP_\kappa\Delta^{I} \tau = 
(I \otimes \CM)\bigl(\hat \CP_\kappa\Delta^{\! M} \otimes \CM\hat \Delta^{\! M}\bigr)\Delta^{\! M^{-1}} \tau\;.
\end{equ}
Since, by Definition~\ref{def:renormGroup}, the identity
$\hat \CP_\kappa\DeltaM \bar \tau = \hat \CP_\kappa(\bar \tau \otimes \one)$ 
holds as soon as $|\bar \tau|_\s \ge \kappa$, one eventually obtains
\begin{equ}
\hat \CP_\kappa(\tau \otimes \one) = R^M_- \tau\;,
\end{equ}
which is a contradiction. Therefore, the only way in which one could have $M^{-1}\not\in\RR$
is by violating the second condition in \eref{e:propDM}. This however can also be ruled out
in almost exactly the same way, by making use of \eref{e:idMult3} instead of \eref{e:idMult1}
and exploiting the fact that one also has $\hat \Delta^I \tau = \tau\otimes \one$.
\end{proof}

The main result in this section states that any transformation $M\in \RR$
extends canonically to a transformation on the set of admissible models for $\TT_F^{(r)}$
for arbitrary $r > 0$.

\begin{theorem}\label{theo:renormStruct}
Let $M \in \RR$, where $\RR$ is as in Definition~\ref{def:renormGroup}, let $r > 0$ be
such that the kernel $K$ annihilates polynomials of degree $r$, 
and let $(\PPi, f) \sim (\Pi, \Gamma)$ be 
an admissible model for $\TT_F^{(r)}$ with $f$ and $\Gamma$ related as in \eref{e:defGammaF}. 

Define $\Pi_x^M$ 
and $f^M$ on $\CH_0$ and $\CH_0^+$ as in \eref{e:constrModel} and 
define $\Gamma^M$ via \eref{e:defGammaF}.
Then, $(\Pi^M, \Gamma^M)$ is an admissible model for $\TT_F$ on $\CH_0$. 
Furthermore, it extends uniquely to an admissible model for all of $\TT_F^{(r)}$.
\end{theorem}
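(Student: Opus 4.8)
The plan is to verify the two defining requirements of an admissible model—the algebraic identities $\Pi^M_x\Gamma^M_{xy} = \Pi^M_y$, $\Gamma^M_{xy}\Gamma^M_{yz} = \Gamma^M_{xz}$, together with the admissibility identities \eqref{e:idPi}--\eqref{e:idF}—and the analytical bounds of Definition~\ref{def:model}, and then to handle the extension from $\CH_0$ to all of $\TT_F^{(r)}$. The algebraic part is almost immediate from the construction: the relations \eqref{e:defMM2}--\eqref{e:defMM1} were set up precisely so that $\Pi^M_x = \PPi M F_x^M$ holds, from which $\Pi^M_x \Gamma^M_{xy} = \PPi M F_x^M (F_x^M)^{-1} F_y^M = \PPi M F_y^M = \Pi^M_y$ follows, while $\Gamma^M_{xy}\Gamma^M_{yz} = \Gamma^M_{xz}$ is built into the definition \eqref{e:defGammaF}. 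Admissibility, i.e.\ that $\Pi^M \CI_k\tau$ is again given by convolution against $D^kK$ and $f^M_x\CJ_k\tau$ by the corresponding boundary integral, follows from the fact that $M$ commutes with the $\CI_k$ (so $\PPi^M\CI_k\tau = \PPi M\CI_k\tau = \PPi\CI_k M\tau$ and one applies \eqref{e:idPi} for the original model) together with \eqref{e:defMM2}, which was imposed exactly to preserve \eqref{e:idF}. One should also check that $f^M_x$ is group-like with $f^M_x(X_i)=-x_i$; this is where \eqref{e:defMM1} and the morphism property of $\hat M$ enter.

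The analytical estimates are the real content. For $\Pi^M_x$ one writes, using \eqref{e:constrModel} and \eqref{e:propDM},
\begin{equ}
\bigl(\Pi^M_x\tau\bigr)(\CS_{\s,x}^\delta\psi) = \bigl(\Pi_x\tau\bigr)(\CS_{\s,x}^\delta\psi) + \sum \bigl(\Pi_x\tau^{(1)}\bigr)(\CS_{\s,x}^\delta\psi)\, f_x(\tau^{(2)})\;,
\end{equ}
and one needs to bound $|f_x(\tau^{(2)})|$. This is done by induction on homogeneity: $f_x(\tau^{(2)})$ is a product of terms of the form $f_x(\CJ_k\sigma)$ with $|\CJ_k\sigma|_\s > 0$, and by \eqref{e:idF} each such term is $-\int D^kK(x,y)(\Pi_x\sigma)(dy)$, which is bounded by a constant times $\sup\|\Pi\|$ thanks to the first bound in \eqref{e:boundPi} and the kernel estimates of Assumption~\ref{def:regK} (this is essentially Lemma~\ref{lem:wellDefInt} / Lemma~\ref{lem:boundGammaxy}). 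Crucially, the condition $|\tau^{(1)}|_\s > |\tau|_\s$ in \eqref{e:propDM} forces $|\tau^{(2)}|_\s < 0$, so the scaling $\delta^{|\tau^{(1)}|_\s}$ coming from $\Pi_x\tau^{(1)}$ combined with the fact that $f_x(\tau^{(2)})$ is a bounded constant (no extra growth in $\delta$) still produces the required rate $\delta^{|\tau|_\s}$. The bound on $\Gamma^M_{xy}$ is handled in the same spirit using $\hDeltaM$: one expresses the relevant matrix coefficients of $\Gamma^M$ in terms of $\hat\Delta^{\!M}$ applied to $\Deltap$, uses the structure \eqref{e:propDM} for $\hDeltaM$ together with the fact that $\Gamma^M_{xy} = (F_x^M)^{-1}F_y^M$, and reduces the bound to the already-established bounds on the original model $\Gamma$ and on the $f_x(\CJ_k\sigma)$, plus Lemma~\ref{lem:boundGammaxy}. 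This last reduction, and in particular tracking that all the exponents of $\|x-y\|_\s$ that appear are strictly larger than $|\tau|_\s - \beta$ for the relevant $\beta$, is the main obstacle: it requires carefully unwinding the recursive definitions of $\DeltaM$, $\hDeltaM$ and $\hat M$ and checking that no spurious non-positive power appears, which is exactly what Definition~\ref{def:renormGroup} is engineered to guarantee.

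Finally, for the extension from $\CH_0$ to $\TT_F^{(r)}$: every element of $\CF_F$ is obtained from $\Xi$ and the $X_i$ by multiplication and application of the $\CI_k$, and the homogeneity of such elements is bounded below (Lemma~\ref{lem:finDim}); hence $\CH_0$, which by Assumption~\ref{ass:SetF0} contains everything of homogeneity $\le 0$, already determines the model on all of $\TT_F^{(r)}$ through the admissibility identity \eqref{e:defRecursion} and the multiplicativity \eqref{e:recursion}. Concretely, one extends $\PPi^M$ by declaring it multiplicative and imposing \eqref{e:idPi}; by Remark~\ref{rem:polynomStructures} and the Extension Theorem (Theorem~\ref{theo:extension}), or more directly by the argument used in Proposition~\ref{prop:canonicReal}, this yields an admissible model, and the analytical bounds on the new basis vectors follow from the Schauder-type estimates of Section~\ref{sec:integral} exactly as in the proof of Proposition~\ref{prop:canonicReal}. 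Uniqueness of the extension is clear since admissibility pins down the action on every $\CI_k\tau$ once it is known on $\tau$, and on products once it is known on factors. The continuity of $M$ as a map on $\MM_F$ then follows by running the same estimates for differences of two models, which is routine once the bounds above are in place.
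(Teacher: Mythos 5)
Your proposal is correct and follows essentially the same route as the paper: the algebraic identities and admissibility are built into \eref{e:defMM2}--\eref{e:defMM1}, the bound on $\Pi^M_x$ comes from \eref{e:propDM} giving only terms $\Pi_x\tau^{(1)}$ with $|\tau^{(1)}|_\s > |\tau|_\s$ multiplied by locally bounded constants $f_x(\tau^{(2)})$, the bound on $\Gamma^M$ reduces via $\gamma^M_{xy} = (\gamma_{xy}\otimes f_y)\hDeltaM$ to the second condition in \eref{e:propDM}, and the extension is by Theorem~\ref{theo:extension} and Proposition~\ref{prop:extension}. One small slip: the condition $|\tau^{(1)}|_\s > |\tau|_\s$ does not force $|\tau^{(2)}|_\s < 0$ (elements of $\CF_0^+$ have nonnegative homogeneity), but this is immaterial since the required rate $\delta^{|\tau|_\s}$ already follows from $|\tau^{(1)}|_\s > |\tau|_\s$ and $\delta\le 1$.
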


\begin{proof}
We first verify that the renormalised model does indeed yield a model for $\TT_F$ on $\CH_0$.
For this, it suffices to show that the bounds \eref{e:boundPi} hold. 
Regarding the bound on $\Pi_x^M$, recall the first identity of \eref{e:constrModel}.
As a consequence of Definition~\ref{def:renormGroup}, this implies that $\bigl(\Pi_x^M \tau\bigr)(\phi_x^\lambda)$
can be written as a finite linear combination of terms of the type $\bigl(\Pi_x \bar \tau\bigr)(\phi_x^\lambda)$
with $|\bar \tau|_\s \ge |\tau|_\s$. The required scaling as a function of $\lambda$ then follows
at once.

Regarding the bounds on $\Gamma_{xy}$, recall that $\Gamma_{xy} \tau = (I\otimes \gamma_{xy})\Delta \tau$
with
\begin{equ}[e:defgammaxy]
\gamma_{xy} = (f_x \CA \otimes f_y) \Deltap\;,
\end{equ}
and similarly for $\gamma_{xy}^M$.
Since we know that $(\Pi,\Gamma)$ is a model for $\TT_F^{(r)}$, this implies that one has the bound
\begin{equ}[e:boundgammaxy]
|\gamma_{xy} \tau| \lesssim \|x-y\|_\s^{|\tau|_\s}\;,
\end{equ}
and we aim to obtain a similar bound for $\gamma_{xy}^M$. Recalling the
definitions \eref{e:defgammaxy} as well as \eref{e:constrModel}, 
we obtain for $\gamma_{xy}^M$ the identity
\begin{equs}
\gamma_{xy}^M &= (f_x \CA \otimes f_y)(\CA\hat M\CA \otimes \hat M) \Deltap
= (f_x\CA \otimes f_y)(I \otimes \CM) (\Deltap \otimes I)\hDeltaM \\
&= (f_x\CA \otimes f_y\otimes f_y)(\Deltap \otimes I)\hDeltaM = \bigl(\gamma_{xy} \otimes f_y\bigr)\hDeltaM\;,
\end{equs}
where the second equality is the definition of $\hDeltaM$, while the last equality 
uses the definition of $\gamma_{xy}$, combined with the morphism property of $f_y$.
It then follows immediately from Definition~\ref{def:renormGroup} and \eref{e:boundgammaxy} 
that the bound \eref{e:boundgammaxy} also holds for $\gamma_{xy}^M$.

Finally, we have already seen that if $(\Pi,\Gamma)$ is admissible, then 
$\Pi_x^M$ and $f_x^M$ satisfy the identities 
\eref{e:idPi} and \eref{e:idF} as a consequence of \eref{e:defMM2}, so that they also form
an admissible model. The fact that the model $(\Pi^M, \Gamma^M)$ 
extends uniquely (and continuously) to all of $\TT_F^{(r)}$ follows
from a repeated application of Theorem~\ref{theo:extension} and Proposition~\ref{prop:extension}.
\end{proof}

\begin{remark}
In principle, the construction of $\RR$ given in this section depends on the choice of 
a suitable set $\CF_0$. It is natural to conjecture that $\RR$ does not actually depend
on this choice (at least if $\CF_0$ is sufficiently large), but it is not clear at this stage
whether there is a simple algebraic proof of this fact.
\end{remark}

\section{Two concrete renormalisation procedures}
\label{sec:renProc}

In this section, we show how the regularity structure and renormalisation group 
built in the previous section can be used concretely to renormalise \eref{e:PAMGen}
and \eref{e:Phi4}.

\subsection{Renormalisation group for \eref{e:PAMGen}}
\label{sec:renPAM}

Consider the regularity structure generated by \eref{e:PAMGen} with $\M_F$ as in Remark~\ref{rem:PAMGen},
$\beta = 2$, and $\alpha \in (-{4\over 3},-1)$. In this case, we can choose 
\begin{equ}
\CF_0 = \{\one, \Xi, X_i \Xi, \CI(\Xi) \Xi, \CI_i(\Xi), \CI_i (\Xi)\CI_j (\Xi)\}\;,\qquad
\CF_\star = \{\Xi\}\;,
\end{equ}
where $i$ and $j$ denote one of the two spatial coordinates.
It is straightforward to check that this set satisfies Assumption~\ref{ass:SetF0}. 
Indeed, provided that $\alpha \in (-{4\over 3}, -1)$, it does contain all the elements of negative
homogeneity. Furthermore, all of the elements $\tau \in \CF_0$ satisfy 
$\Delta \tau = \tau \otimes \one$, except for $\Xi\, \CI (\Xi)$ and $X_i\Xi$ which satisfy
\begin{equ}
\Delta \bigl(\Xi\, \CI (\Xi)\bigr) = \Xi\, \CI (\Xi) \otimes \one + \Xi \otimes \CJ(\Xi)\;,\qquad
\Delta X_i \Xi = X_i \Xi\otimes \one + \Xi \otimes X_i\;.
\end{equ}
It follows that these elements indeed satisfy $\Delta \tau \in \CH_0\otimes \CH_0^+$, 
as required by our assumption.

Then, for any constant $C \in \R$ and $2\times 2$ matrix $\bar C$, one can define 
a linear map $M$ on the span of $\CF_0$ by
\begin{equs}
M \bigl(\CI(\Xi)\Xi\bigr) &= \CI(\Xi)\Xi - C \one\;,\\
M \bigl(\CI_i(\Xi)\CI_j(\Xi)\bigr) &= \CI_i(\Xi)\CI_j(\Xi) - \bar C_{ij} \one\;,
\end{equs}
as well as $M(\tau) = \tau$ for the remaining basis vectors in $\CF_0$.
Denote by $\RR_0$ the set of all linear maps $M$ of this type.

In order to verify that $\RR_0 \subset \RR$ as our notation implies, we need 
to verify that $\DeltaM$ and $\hDeltaM$ satisfy the property required by 
Definition~\ref{def:renormGroup}. Note first that
\begin{equ}
\hat M \CI(\Xi) = \CI(\Xi)\;,
\end{equ}
as a consequence of \eref{e:defMM2}. Since one furthermore has $\hat M X_i = X_i$, this 
shows that one has 
\begin{equ}
(M \otimes \hat M)\Delta \tau = (M \otimes I)\Delta \tau\;,
\end{equ}
for every $\tau \in \CF_0$.
Furthermore, it is straightforward to verify that $(M \otimes I)\Delta \tau = \Delta M \tau$
for every $\tau \in \CF_0$.
Comparing this to \eref{e:defMM3}, we conclude that
in the special case considered here we actually have the identity 
\begin{equ}[e:propertyDeltaM]
\DeltaM \tau = (M\tau) \otimes \one\;,
\end{equ}
for every $\tau \in \CF_0$. Indeed, when plugging \eref{e:propertyDeltaM} into the left hand side
of \eref{e:defMM3}, we do recover the right hand side, which shows the desired claim since we already
know that \eref{e:defMM3} is sufficient to characterise $\DeltaM$. 
Furthermore, it is straightforward to verify that $\hDeltaM \CI(\Xi) = \CI(\Xi)\otimes\one$ so that,
by Remark~\ref{rem:condDhat}, this shows that $M \in \RR$ for every choice of
the matrix $C_{ij}$ and the constant $\bar C$.

Furthermore, 
this $5$-parameter subgroup of $\R$ is canonically isomorphic to $\R^5$ endowed with addition as its
group structure. This is the subgroup $\RR_0$ 
that will be used to renormalise \eref{e:PAMGen} in Section~\ref{sec:PAMGenRigour}.

\subsection{Renormalisation group for the dynamical \texorpdfstring{$\Phi^4_3$}{Phi43} model}
\label{sec:renPhi4}

We now consider the regularity structure generated by \eref{e:Phi4}, which is our
second main example.
Recall from Remark~\ref{rem:Phi4} that this corresponds to the case where
\begin{equ}
\M_F = \{\Xi, U^n \,:\, n \le 3\}\;,
\end{equ}
$\beta = 2$ and $\alpha < -{5\over 2}$. In order for the relevant terms of negative homogeneity
not to depend on $\alpha$, we will choose $\alpha \in (-{18\over 7},-{5\over 2})$.
The reason for this strange-looking value $-{18 \over 7}$ is that this is precisely the value of $\alpha$
at which, setting $\Psi = \CI(\Xi)$ as a shorthand, the homogeneity of the term $\Psi^2 \CI(\Psi^2 \CI(\Psi^3))$ vanishes, so that one would have to modify our choice of $\CF_0$.

In this particular case,  
it turns out that we can choose for $\CF_0$ and $\CF_\star$ the sets
\begin{equs}
\label{e:F0Phi}
\CF_0 &= \{\one, \Xi, \Psi, \Psi^2, \Psi^3, \Psi^2 X_i, \CI(\Psi^3)\Psi, \CI(\Psi^3)\Psi^2,\\
&\qquad \CI(\Psi^2)\Psi^2, \CI(\Psi^2), \CI(\Psi)\Psi, \CI(\Psi)\Psi^2, X_i\}\;,
\qquad \CF_\star = \{\Psi,\Psi^2, \Psi^3\}\;,
\end{equs}
where the index $i$ corresponds again to any of the three spatial directions. 

Then, for any two constants $C_1$ and $C_2$, we define a linear map $M$ on $\CH_0$ by
\begin{equs}[e:defMPhi4]
M \Psi^2 &= \Psi^2 - C_1 \one\;,\\
M \bigl(\Psi^2 X_i\bigr) &= \Psi^2X_i - C_1 X_i\;,\\
M \Psi^3 &= \Psi^3 - 3 C_1 \Psi\;,\\
M \bigl(\CI(\Psi^2)\Psi^2\bigr) &= \CI(\Psi^2) \bigl(\Psi^2 - C_1 \one\bigr) - C_2 \one\;,\\
M \bigl(\CI(\Psi^3)\Psi\bigr) &= \bigl(\CI(\Psi^3) - 3C_1 \CI(\Psi)\bigr) \Psi \;,\\
M \bigl(\CI(\Psi^3)\Psi^2\bigr) &= \bigl(\CI(\Psi^3) - 3C_1 \CI(\Psi)\bigr) \bigl(\Psi^2- C_1 \one) - 3C_2 \Psi\;,\\
M \bigl(\CI(\Psi)\Psi^2\bigr) &= \CI(\Psi) \bigl(\Psi^2 - C_1 \one\bigr)\;,
\end{equs}
as well as $M \tau = \tau$ for the remaining basis elements $\tau \in \CF_0$. We claim
that one has the identity\minilab{e:idenDeltaMPhi}
\begin{equ}[e:idenDeltaMPhi1]
\DeltaM \tau = (M \tau) \otimes \one\;,
\end{equ}
for those elements $\tau \in \CF_0$ which do not contain a factor $\CI(\Psi^3)$. For the remaining
two elements, we claim that one has\minilab{e:idenDeltaMPhi}
\begin{equs}
\DeltaM \CI(\Psi^3)\Psi &= \bigl(M (\CI(\Psi^3)\Psi)\bigr) \otimes \one + 3 C_1 \,\Psi X_i \otimes  \CJ_i(\Psi)\;,\label{e:deltaMPsi1I3}\\
\DeltaM \CI(\Psi^3)\Psi^2 &= \bigl(M (\CI(\Psi^3)\Psi^2)\bigr) \otimes \one + 3 C_1 \,(\Psi^2 - C_1 \one) X_i \otimes \CJ_i(\Psi)\;,\label{e:deltaMPsi2I3}
\end{equs}
where a summation over the spatial components $X_i$ is implicit.

For $\tau \in \{\one, \Xi, \Psi, \Psi^2, \Psi^3\}$, 
one has $\Delta \tau = \tau \otimes \one$, so that $\DeltaM \tau = (M\tau)\otimes \one$ as
a consequence of \eref{e:defMM3}. 
Similarly, it can be verified that \eref{e:idenDeltaMPhi1} holds for $\Psi^2 X_i$ and $X_i$ by using
again \eref{e:defMM3}. For the remaining elements, we first note that, as 
a consequence of this and \eref{e:defMM2}, one has the identities
\begin{equ}\label{e:exprMhatPhi4}
\hat M \CI(\Psi^n) = \CI(M \Psi^n)\;,\qquad \hat M \CI_i(\Psi) = \CI_i(\Psi)\;.
\end{equ}
All the remaining elements are of the form $\tau = \CI(\Psi^n)\Psi^m$, so that \eref{e:defDelta} yields the identity
\begin{equ}
\Delta \tau = \tau \otimes \one + \Psi^m \otimes \CJ(\Psi^n) + \delta_{n1} \bigl(\Psi^m X_i \otimes \CJ_i(\Psi)
+ \Psi^m \otimes X_i \CJ_i(\Psi)\bigr)\;.
\end{equ}
As a consequence of this and of \eref{e:exprMhatPhi4}, one has 
\begin{equs}
(M \otimes \hat M)\Delta \tau &= M\tau \otimes \one + M\Psi^m \otimes \CJ(M\Psi^n)\label{e:FirstDeltaM} \\
&\qquad + \delta_{n1} \bigl(M\Psi^m\otimes \one\bigr) \bigl( X_i \otimes \CJ_i(\Psi)
+ \one \otimes X_i \CJ_i(\Psi)\bigr)\;.
\end{equs}
Furthermore, for each of these elements, one has 
\begin{equ}[e:Mtau]
M \tau = (M \Psi^m) \CI(M \Psi^n) + \bar \tau\;,
\end{equ}
where $\bar \tau$ is an element such that $\Delta \bar \tau = \bar \tau \otimes \one$.
Combining this with the explicit expression for $M$,
one obtains the identity
\begin{equs}
\Delta M \tau &= M\tau \otimes \one  + M\Psi^m \otimes \CJ(M\Psi^n) \\
&\qquad + \delta_{n1} \bigl(M\Psi^m\otimes \one\bigr) \bigl( X_i \otimes \CJ_i(\Psi)
+ \one \otimes X_i \CJ_i(\Psi)\bigr)\\
&\qquad - 3C_1\delta_{n3} (M \Psi^m \otimes \one)(X_i\otimes \CJ_i(\Psi) + \one \otimes X_i \CJ_i(\Psi))  \;.
\end{equs}
Comparing this expression with \eref{e:FirstDeltaM}, we conclude in view of \eref{e:defMM3}
that one does indeed have the identity
\begin{equ}
\DeltaM \tau = M\tau \otimes \one + 3C_1\delta_{n3}\, (M\Psi^m) X_i \otimes \CJ_i(\Psi)\;,
\end{equ}
which is precisely what we claimed. A somewhat lengthy but straightforward calculation along
the same lines yields the identities
\begin{equs}
\Deltap \CJ(M\Psi^n) &= \one \otimes \CJ(M\Psi^n) + \CJ(M\Psi^n) \otimes \one - \delta_{n1} \bigl(\CJ_i(\Psi)\otimes X_i\bigr) \\
&\qquad + 3C_1 \delta_{n3} \bigl(\CJ_i(\Psi) \otimes X_i\bigr)\;,
\end{equs}
as well as
\begin{equs}
\bigl(\CA \hat M \CA \otimes \hat M\bigr)\Deltap \CJ(\Psi^n)&=\one \otimes \CJ(M\Psi^n) + \CJ(M\Psi^n) \otimes \one - \delta_{n1} \bigl(\CJ_i(\Psi)\otimes X_i\bigr) \\
&\qquad - 3C_1 \delta_{n3} \bigl(X_i \CJ_i(\Psi) \otimes \one\bigr)\;.
\end{equs}
Comparing these two expressions with \eref{e:defDeltaMhat}, it follows that $\hDeltaM$ is given by
\begin{equ}
\hDeltaM \CJ(\Psi^n) = \CJ(M\Psi^n) \otimes \one + 3C_1\delta_{n3}\,\bigl(X_i \otimes \CJ_i(\Psi) - X_i \CJ_i(\Psi) \otimes \one\bigr) \;.
\end{equ}
As a consequence of the expressions we just computed for $\DeltaM$ and $\hDeltaM$ 
and of the definition of $M$, 
this shows that one does indeed have $M \in \RR$. Furthermore, it is immediate to verify that
this two-parameter subgroup is canonically isomorphic to $\R^2$ endowed with addition as its
group structure. This is the subgroup $\RR_0 \subset \RR$ 
that will be used to renormalise \eref{e:Phi4} in Section~\ref{sec:Phi4}.

\subsection{Renormalised equations for \eref{e:PAMGen}}
\label{sec:PAMGenRigour}

We now have all the tools required to formulate renormalisation
procedures for the 
examples given in the introduction. We give some details only for the cases of \eref{e:PAMGen}
and \eref{e:Phi4}, but it is clearly possible to obtain analogous
statements for all the other examples. 

The precise statement of our convergence results has to account
for the possibility of finite-time blow-up. (In the case of the $3D$ Navier-Stokes equations,
the existence or absence of such a blow-up is of course a famous open problem even in 
the absence of forcing, which is something that we definitely do not address here.)
The aim of this section is to show what the effect of the renormalisation group $\RR_0$
built in Section~\ref{sec:renPAM} is, when applied to a model used to solve \eref{e:PAMGen}.

Recall that the right hand side of \eref{e:PAMGen} is given by 
\begin{equ}
f_{ij}(u)\, \d_i u \, \d_j u + g(u)\,\xi\;,
\end{equ}
and that the set of monomials $\M_F$ associated with this right hand side 
is given by
\begin{equ}
\M_F = \{U^n, U^n \Xi, U^n \CP_i, U^n \CP_i \CP_j\,:\, n \ge 0,\; i,j \in \{1,2\}\}\;.
\end{equ}
We now let $\TT_F$ be the regularity structure associated to $\M_F$ 
via Theorem~\ref{theo:genStruct} with $d=3$, $\s = (2,1,1)$,
$\alpha = |\Xi|_\s \in (-{4\over 3}, -1)$, and $\beta = 2$.
As already mention when we built it,
the regularity structure $\TT_F$ comes with a sector $V = \scal{\CU_F}\subset T$ which is 
given by the direct sum of the abstract polynomials $\bar T$ with the image of $\CI$:
\begin{equ}[e:defVV]
V = \CI(T) \oplus \bar T\;.
\end{equ}
Since the element in $\CF_F$ with the lowest homogeneity is $\Xi$, the sector $V$ is function-like
and elements $u \in \CD^\gamma(V)$ with $\gamma > 0$ satisfy $\CR u \in \CC^{\gamma \wedge (\alpha+2)_\s}$.
Furthermore, the sector $V$ comes equipped with differentiation maps $\D_i$ given by
$\D_i \CI(\tau) = \CI_i(\tau)$ and $\D_i X^k = k_i X^{k-e_i}$.
It follows immediately from the definitions that any admissible model is compatible
with these differentiation maps in the sense of Definition~\ref{def:compatDiff}.

Assume for simplicity that the symmetry $\SS$ is given by integer translations
in $\R^2$, so that its action on $\TT_F$ is trivial. 
(In other words, we consider the case of periodic boundary conditions
on $[0,1]\times [0,1]$.)
Fix furthermore $\gamma > -\alpha$ and choose one of the decompositions $G = K + R$ 
of the heat kernel given by Lemma~\ref{lem:decompGreen} with $r > \gamma$.

With all this set-up in place, we define the local map $F_\gamma \colon V \to T$ by
\begin{equ}[e:defFgamma]
F_\gamma(\tau) = \hat f_{ij;\gamma}(\tau) \star \D_i \tau \star \D_i \tau + \hat g_\gamma(\tau)\star \Xi\;.
\end{equ}
Here, $\hat f_{ij;\gamma}$ and $\hat g_\gamma$ are defined from $f_{ij}$ and $g$ as in 
Section~\ref{sec:compSmooth}. 
Furthermore, we have explicitly used the symbol $\star$ to emphasise the fact
that this is the product in $T$. We also set as previously $P = \{(t,x) \,:\, t = 0\}$.

We then have the following result:
\begin{lemma}\label{lem:RHSPAM}
Assume that the functions $f_{ij}$ and $g$ are smooth.
Then, for every $\gamma > |\alpha|$ and for $\eta \in (0,\alpha + 2]$, the map $u \mapsto F_\gamma(u)$ is locally Lipschitz
continuous from $\CD^{\gamma,\eta}_P(V)$ into $\CD_P^{\gamma+\alpha,\eta+\alpha}$.
\end{lemma}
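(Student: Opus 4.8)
The plan is to assemble the statement from the building blocks established earlier in the paper: the multiplication theorem for singular modelled distributions (Proposition~\ref{prop:multSing}), the composition-with-smooth-functions result (Proposition~\ref{prop:compSmoothSing}), and the differentiation result (Proposition~\ref{prop:diffSing}). First I would record the relevant homogeneities: the sector $V = \CI(T)\oplus \bar T$ is function-like, so it has regularity $0$, and the lowest positive homogeneity $\zeta$ of a non-polynomial element of $V$ is $\alpha+2 > 0$ (this is $|\CI(\Xi)|_\s$). Since $f_{ij}$ and $g$ are smooth and $V$ is $\gamma$-regular (it is built as a sector of the regularity structure $\TT_F$, which is regular for $\star$ by the construction in Section~\ref{sec:genAlg}; in fact one only needs $\gamma$-regularity, which can be arranged via Proposition~\ref{prop:extendMult} if necessary), Proposition~\ref{prop:compSmoothSing} applies whenever $\eta\in[0,\gamma]$. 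With $\eta\in(0,\alpha+2]\subset(0,\gamma]$, it yields $\hat f_{ij;\gamma}(u),\ \hat g_\gamma(u) \in \CD^{\gamma,\eta}_P(V)$, locally Lipschitz in $u$.

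Next I would track the derivative and product terms. By Proposition~\ref{prop:diffSing}, $\D_i u \in \CD^{\gamma-1,\eta-1}_P(V')$ where $V' = \D_i V$ is the sector spanned by the $\CI_i(\tau)$ and lower-order polynomials; its regularity is $\alpha+1$ (the homogeneity of $\CI_i(\Xi)$), since $\alpha+1<0$ under our standing assumption $\alpha\in(-\tfrac43,-1)$, but note $\alpha+1 > -1 = -\s_i$ as needed. Then I apply Proposition~\ref{prop:multSing} twice. First, $\D_i u \star \D_j u$: with both factors of regularity $\alpha+1$ and both in $\CD^{\gamma-1,\eta-1}_P$, the product lands in $\CD^{\gamma',\eta'}_P$ with $\gamma' = (\gamma-1)+(\alpha+1) = \gamma+\alpha$ and $\eta' = (\eta-1)+(\alpha+1)\wedge\bigl((\eta-1)+(\alpha+1)\bigr) = \eta+\alpha$ (the three candidates in Proposition~\ref{prop:multSing} coincide here by symmetry). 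Second, multiplying by $\hat f_{ij;\gamma}(u)\in\CD^{\gamma,\eta}_P(V)$, whose sector has regularity $0$: the product $\hat f_{ij;\gamma}(u)\star\D_i u\star\D_j u$ lies in $\CD^{\gamma+\alpha,\eta+\alpha}_P$, since $(\gamma+\alpha)+0 = \gamma+\alpha$ dominates and the weight exponent computes to $(\eta+\alpha)\wedge\bigl(\eta+(\alpha)\bigr)\wedge\dots = \eta+\alpha$ using $\eta\ge 0$. Similarly, $\Xi\in\CD^{\infty,\infty}_P$ has sector-regularity $\alpha$, so $\hat g_\gamma(u)\star\Xi$ lies in $\CD^{\gamma+\alpha,\ (\eta+\alpha)\wedge\alpha}_P = \CD^{\gamma+\alpha,\eta+\alpha}_P$, again using $\eta\ge 0$ so that $\eta+\alpha\le$ the other candidates is what we want — actually here one should note $(\eta\wedge\infty)+\alpha$ against $(\eta+\infty)\wedge(\infty+\alpha)$, which gives $\eta+\alpha$. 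Summing finitely many terms of this form (using that $\CD^{\gamma+\alpha,\eta+\alpha}_P$ is a vector space) gives $F_\gamma(u)\in\CD^{\gamma+\alpha,\eta+\alpha}_P$.

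For the local Lipschitz continuity, I would invoke the corresponding continuity statements: the bilinearity bounds following Theorem~\ref{theo:mult} together with their singular analogues in Proposition~\ref{prop:multSing}, and the local Lipschitz continuity of $\hat f_{ij;\gamma}$ and $\hat g_\gamma$ from Proposition~\ref{prop:compSmoothSing}. Writing $F_\gamma(u) - F_\gamma(v)$ as a telescoping sum (replacing one factor at a time), each difference is controlled by $\$u-v\$_{\gamma,\eta;\K}$ (resp.\ $\n{\cdot}_{\gamma,\eta;\K}$ for the non-singular seminorm version, which is needed for the fixed-point argument), with constants depending on $\$u\$_{\gamma,\eta;\K}+\$v\$_{\gamma,\eta;\K}$; this is exactly the bookkeeping already done in Proposition~\ref{prop:multDiff} and its singular counterpart, so I would simply cite those.

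The main obstacle, modest as it is, is the verification that the weight exponents $\eta$ behave correctly through the two nested products — in particular checking that the hypothesis $\eta\le\gamma$ (needed for Proposition~\ref{prop:compSmoothSing}), the hypothesis $\eta>0$ (so that composition with smooth functions is legitimate and so that the weight arithmetic $\eta+\alpha\le$ the mixed candidates holds in the product with $\Xi$), and the constraint that all sectors involved have regularity $>-\s_i$ are all simultaneously met for the stated range $\eta\in(0,\alpha+2]$ and $\gamma>|\alpha|$. One has to be slightly careful that $\D_i u$ only takes values in a sector of regularity $\alpha+1$, which is negative, so that the intermediate objects are genuine distributions and Proposition~\ref{prop:multSing} (not merely Theorem~\ref{theo:mult}) is the right tool; but since $\alpha+1>-1=-\s_i$, the regularity of every sector that appears is strictly above the relevant threshold and no integrability issue arises at this stage (those would only surface when integrating against $K$, which is the content of Corollary~\ref{cor:locSolPAMGen}, not of this lemma).
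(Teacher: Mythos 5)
Your architecture is exactly the paper's: differentiate via Proposition~\ref{prop:diffSing}, compose via Proposition~\ref{prop:compSmoothSing}, and multiply via Proposition~\ref{prop:multSing}, with the same identification of sector regularities ($V$ function-like of regularity $0$ with $\zeta = \alpha+2$, $\D_i V$ of regularity $\alpha+1$, $\Xi$ of regularity $\alpha$). There is, however, a genuine slip in the central exponent computation. When you apply Proposition~\ref{prop:multSing} to $\D_i u \star \D_j u$ you assert that "the three candidates coincide by symmetry" and obtain the weight exponent $\eta+\alpha$. Only the two \emph{mixed} candidates $(\eta_1+\alpha_2)$ and $(\eta_2+\alpha_1)$ coincide by symmetry; the third candidate is $\eta_1+\eta_2 = 2\eta-2$, and since $\eta \le \alpha+2$ one has $2\eta-2 \le \eta+\alpha$ with equality only at the endpoint $\eta = \alpha+2$. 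The infimum is therefore $2\eta-2$, not $\eta+\alpha$, and this is not cosmetic: the component of $\D_i u \star \D_j u$ along $\one$ is a product of two coefficients each only controlled by $\|x\|_P^{\eta-1}$ near $P$, so the bound $\|x\|_P^{2\eta-2}$ is the best available and is strictly weaker than $\|x\|_P^{\eta+\alpha}$ when $\eta < \alpha+2$. Propagating the correct value through your second application of Proposition~\ref{prop:multSing} (with $\hat f_{ij;\gamma}(u)$, sector regularity $0$) again yields $2\eta-2$, so the gradient term lands in $\CD_P^{\gamma+\alpha,2\eta-2}$ and cannot be placed in $\CD_P^{\gamma+\alpha,\eta+\alpha}$ by this route. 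Your computation for $\hat g_\gamma(u)\star\Xi$ does legitimately give $\eta+\alpha$, but the sum of the two terms is then only in $\CD_P^{\gamma+\alpha,(2\eta-2)\wedge(\eta+\alpha)} = \CD_P^{\gamma+\alpha,2\eta-2}$.

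This is in fact exactly what the paper's own proof records: it concludes membership in $\CD_P^{\gamma+\alpha,2\eta-2}$ for every term, and that weaker exponent is all that is needed downstream (the hypotheses of Theorem~\ref{theo:fixedPointGen} in Corollary~\ref{cor:locSolPAMGen} only require the weight to exceed $-2$, which $2\eta-2$ does precisely when $\eta>0$). So the honest conclusion of your argument should be $F_\gamma \colon \CD_P^{\gamma,\eta}(V) \to \CD_P^{\gamma+\alpha,2\eta-2}$, with the exponent $\eta+\alpha$ of the statement attained only at $\eta=\alpha+2$; deriving $\eta+\alpha$ for general $\eta$ by declaring the three candidates equal is not a proof of the stronger claim but a miscalculation that happens to reproduce the displayed target space. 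The remainder of your write-up (the Lipschitz bookkeeping via the bilinearity estimates and Proposition~\ref{prop:compSmoothSing}, and the observation that no reconstruction-type integrability condition is needed at this stage) is correct and matches the paper.
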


\begin{remark}
In fact, we only need sufficient amount of regularity for the results of Section~\ref{sec:compSmooth}
to apply.
\end{remark}

\begin{proof}
Let $u \in \CD^{\gamma,\eta}_P(V)$ and note that $V$ is function-like.
By Proposition~\ref{prop:diffSing}, one then has $\D_i u \in \CD^{\gamma-1,\eta-1}_P(W)$
for some sector $W$ with regularity $\alpha + 1 < 0$.
This is a consequence of the fact that $\D_i \one = 0$, so that the element
of lowest homogeneity appearing in $W$ is given by $\CI_i(\Xi)$. 

Applying Proposition~\ref{prop:multSing}, 
we see that $\D_i u \star \D_j u \in \CD^{\gamma+\alpha,2\eta-2}_P(\bar W)$,
where $\bar W$ is of regularity $2\alpha + 2$.
Since furthermore $\hat f_{ij;\gamma} (u) \in \CD^{\gamma,\eta}_P(V)$
by Proposition~\ref{prop:compSmoothSing} (and similarly for $\hat g_\gamma(u)$), we can apply Proposition~\ref{prop:multSing} once more to conclude that
\begin{equ}
\hat f_{ij;\gamma}(u) \star \D_i u \star \D_i u \in \CD_P^{\gamma+\alpha, 2\eta-2}\;. 
\end{equ}
Similarly, note that we can view the map $z \mapsto \Xi$ as an element of 
$\CD_P^{\gamma,\gamma}$ for every $\gamma > 0$, but taking values in a sector of regularity $\alpha$.
By applying again Proposition~\ref{prop:multSing}, we conclude that one has also
\begin{equ}
\hat g_{\gamma}(u) \star \Xi \in \CD_P^{\gamma+\alpha, 2\eta-2}\;. 
\end{equ}
All of these operations are easily seen to be locally Lipschitz continuous in the sense
of Section~\ref{sec:GenFP}, so the claim follows.
\end{proof}

\begin{corollary}\label{cor:locSolPAMGen}
Denote by $G$ the solution map for the heat equation, let $\eta > 0$, $\alpha \in (-{4\over 3}, -1)$, 
$\gamma > |\alpha|$, and $K$ such that it annihilates polynomials of order $\gamma$.
Then, for every periodic initial condition $u_0 \in \CC^\eta$ with $\eta > 0$ and
every admissible model $Z\in \MM_F$, the fixed point map
\begin{equ}[e:FPPAM]
u = (\CK_{\bar \gamma} + R_{ \gamma}\CR) \PR^+ F_\gamma(u) + G u_0\;,
\end{equ}
where $F_\gamma$ is given by \eref{e:defFgamma},
has a unique solution in $\CD^\gamma$ on $(0,T)$ for $T>0$ sufficiently small. 

Furthermore, setting $T_\infty = T_\infty(u_0; Z)$ 
to be the smallest time for which \eref{e:FPPAM} does not 
have a unique solution, one has either $T_\infty = \infty$ or 
$\lim_{t \to T_\infty} \|\CR u(t,\cdot)\|_\eta = \infty$.
Finally, for every $T < T_\infty$ and every $\delta > 0$, there exists
$\eps > 0$ such that if $\|\bar u_0 - u_0\|_\eta \le \eps$ and $\$Z; \bar Z\$_\gamma \le \eps$,
one has $\$u;\bar u\$_{\gamma,\eta} \le \delta$.
\end{corollary}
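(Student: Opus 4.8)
The plan is to obtain the corollary by assembling the abstract results of Section~\ref{sec:solSPDE} with the concrete analytic input of Lemma~\ref{lem:RHSPAM}. First I would set $q = 1$, so that $2q = \beta = 2$ matches the regularising index of the heat kernel, take for $V$ the function-like sector \eref{e:defVV}, and take for $\bar V$ the smallest sector containing the image of the map $F_\gamma$ of \eref{e:defFgamma}; inspecting that image as in the proof of Lemma~\ref{lem:RHSPAM}, one sees that $\bar V$ has regularity $\bar\zeta = \alpha$, inherited from the term $\hat g_\gamma(u)\star\Xi$. The structural hypotheses of Theorem~\ref{theo:fixedPointGen} are then readily checked: $V$ has regularity $\zeta = 0$ and $\zeta \le \bar\zeta + 2q$ since $\alpha + 2 > 0$ for $\alpha > -\tfrac43$; the abstract integration map $\CI$ of order $2$ furnished by Theorem~\ref{theo:genStruct} satisfies $\CQ_\gamma^- \CI \bar V_{\bar\gamma} \subset V_\gamma$ by the very construction of the sets $\CU_F$; and, since $Z \in \MM_F$ is admissible in the sense of Definition~\ref{def:admissible}, $\CI$ is adapted to $K$ and $\CK_{\bar\gamma}$ is well-defined. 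Lemma~\ref{lem:RHSPAM} then gives that $F_\gamma$ maps $\CD^{\gamma,\eta}_P(V)$ into $\CD^{\bar\gamma,\bar\eta}_P(\bar V)$ with $\bar\gamma = \gamma + \alpha$ and an appropriate $\bar\eta$, locally Lipschitz continuously (and, since all the operations appearing in \eref{e:defFgamma} also obey the continuity estimates of Propositions~\ref{prop:multSing} and \ref{prop:compSmoothSing}, strongly locally Lipschitz), and it is immediate from the explicit form of $F_\gamma$ that it preserves the (trivial, for periodic boundary conditions) symmetry. The exponent inequalities $\gamma < \bar\gamma + 2q$, $(\bar\eta\wedge\bar\zeta) > -2q$ and $\eta < (\bar\eta\wedge\bar\zeta)+2q$ demanded by Theorem~\ref{theo:fixedPointGen} then hold for $\alpha \in (-\tfrac43,-1)$ and $\eta > 0$ small; since $G u_0 \in \CD^{\gamma,\eta}_P$ by Lemma~\ref{lem:harmonicExt} when $u_0 \in \CC^\eta$, the theorem yields the asserted unique local solution $u$ of \eref{e:FPPAM} on $(0,T)$ for some $T > 0$, together with joint (Lipschitz) continuity of $(u_0,Z) \mapsto u$ for $T$ small enough.

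For the maximal solution and the blow-up alternative I would appeal to Proposition~\ref{prop:solGen}. Its hypotheses hold here: $\zeta = 0$, the sector $V$ is function-like, $-\s_1 = -2 < \eta < 2 = \beta$ (taking $\eta \notin \N$, which costs nothing), and $G$ is the Green's function of the constant-coefficient parabolic operator $\d_t - \Delta$, which is smooth, non-anticipative and satisfies the scaling identity \eref{e:scalingG} with $\beta = 2$. That proposition shows that local solutions started at different times are mutually consistent and patch together, so one may define $T_\infty = T_\infty(u_0;Z)$ as the supremum of times on which \eref{e:FPPAM} admits a unique solution. If $T_\infty < \infty$ while $\|(\CR u)(t,\cdot)\|_\eta$ stayed bounded as $t \uparrow T_\infty$, then --- using that the local existence time supplied by Theorem~\ref{theo:fixedPointGen} is bounded below in terms of $\|u_0\|_\eta$ and the size of the model only --- one could restart \eref{e:FPPAM} from a time close enough to $T_\infty$ and continue $u$ past $T_\infty$, a contradiction; continuity in $t$ of $\|(\CR u)(t,\cdot)\|_\eta$ follows from continuity of $\CR$ into $\CC^\eta_{\bar\s}$. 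Hence either $T_\infty = \infty$, or $\|(\CR u)(t,\cdot)\|_\eta \to \infty$ as $t \uparrow T_\infty$.

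For the final, continuity assertion on a fixed interval $[0,T]$ with $T < T_\infty$, I would fix a cut-off $L > 2\sup_{t\le T}\|(\CR u)(t,\cdot)\|_\eta$, so that $u = \CS^L(u_0,Z)$ with $T^L(u_0,Z) > T$, and apply Corollary~\ref{cor:genFixedPoint} with a constant dominating the size of $Z$ to obtain $\eps > 0$ for which $\|u_0 - \bar u_0\|_\eta \le \eps$ and $\$Z;\bar Z\$_{\gamma;O}\le\eps$ make $\|\CS^L(u_0,Z) - \CS^L(\bar u_0,\bar Z)\|_{\gamma,\eta;T}$ arbitrarily small; the upgrade from this $\|\cdot\|$-bound to the bound $\$u;\bar u\$_{\gamma,\eta} \le \delta$ of the statement is obtained by iterating the strong local Lipschitz estimate of Theorem~\ref{theo:fixedPointGen} over a finite partition of $(0,T)$, using the a priori control on $\$u\$_{\gamma,\eta;T}$ that the cut-off provides. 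The only real work lies in the first paragraph, in checking that all exponents fall in the admissible ranges --- in particular in confirming the small ``wiggle room'' between $\gamma$ and $\bar\gamma + 2q = \gamma + \alpha + 2$ that Theorem~\ref{theo:fixedPointGen} uses to get continuity in the model; the rest is bookkeeping and assembly of results already established.
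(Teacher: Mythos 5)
Your proposal is correct and follows essentially the same route as the paper, whose proof is simply the observation that Lemma~\ref{lem:RHSPAM} verifies the hypotheses of Theorem~\ref{theo:fixedPointGen}, Proposition~\ref{prop:solGen} and Corollary~\ref{cor:genFixedPoint}; you have merely written out the exponent bookkeeping explicitly. Two small imprecisions are worth flagging, though neither creates a gap. First, when checking Proposition~\ref{prop:solGen} you identify the exponent $\beta$ there with the regularising order $2$ of the kernel, whereas in that proposition $\beta$ denotes the H\"older regularity of $\CR u$, which here is $\alpha+2<1$; the condition is thus $\eta<\alpha+2$, which is harmless only because you have already reduced to small $\eta$. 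Second, you assert that $F_\gamma$ is \emph{strongly} locally Lipschitz, but Proposition~\ref{prop:compSmoothSing} (and the remark following it) only establishes the weak form of local Lipschitz continuity for composition with smooth functions, so strong Lipschitz dependence on the model is not available from the cited results; since the corollary only claims an $\eps$--$\delta$ continuity statement, the weak form together with the interpolation argument inside Theorem~\ref{theo:fixedPointGen} suffices, and your final paragraph should iterate that continuity estimate rather than a strong Lipschitz one.
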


\begin{proof}
Since $\alpha > -2$ and $\eta > 0$, it follows from Lemma~\ref{lem:RHSPAM}
that all of the assumptions of Theorem~\ref{theo:fixedPointGen} and
Corollary~\ref{cor:genFixedPoint} are satisfied. 
\end{proof}

Denote now by $\CS^L$ the truncated solution map 
as given in Section~\eref{sec:GenFP}. On the other hand, for any
(symmetric / periodic) \textit{continuous} function $\xi_\eps \colon \R^3 \to \R$
and every (symmetric / periodic) $u_0 \in \CC^\eta(\R^2)$, 
we can build a ``classical'' solution map $u_\eps = \bar \CS^L(u_0, \xi_\eps)$ for the equation
\begin{equ}[e:PAMGen3]
\d_t u_\eps = \Delta u_\eps + f_{ij}(u_\eps) \, \d_i u_\eps \d_j u_\eps 
+ g(u_\eps)\,\xi_\eps\;,\qquad u_\eps(0,x) = u_0(x)\;,
\end{equ}
where the subscript $L$ refers again to the fact that we stop solutions when
$\|u_\eps(t,\cdot)\|_\eta \ge L$. Similarly to before, we also denote by $\bar T^L(u_0, \xi_\eps)$
the first time when this happens. Here, the solution map $\bar \CS^L(u_0, \xi_\eps)$ is the 
standard solution map for \eref{e:PAMGen3} obtained by classical PDE theory \cite{MR1406091,Krylov}.

Given an element $M \in \RR_0$ with the renormalisation group $\RR_0$ defined as in
Section~\ref{sec:renPAM}, we also define a ``renormalised'' solution map $u_\eps = \bar \CS^L_M(u_0, \xi_\eps)$
in exactly the same way, but replacing \eref{e:PAMGen3} by
\begin{equ}[e:PAMRen]
\d_t u_\eps = \Delta u_\eps + f_{ij}(u_\eps) \, \bigl(\d_i u_\eps \d_j u_\eps - g^2(u_\eps) \bar C_{ij}\bigr) 
+ g(u_\eps)\,\bigl(\xi_\eps - C g'(u_\eps)\bigr)\;,
\end{equ}
where $g'$ denotes the derivative of $g$. We then have the following result:

\begin{proposition}\label{prop:identifyRenormPAM}
Given a continuous and symmetric function $\xi_\eps$, denote by $Z_\eps$
the associated canonical model realising $\TT^{(r)}_F$ given by Proposition~\ref{prop:canonicReal}.
Let furthermore $M \in \RR_0$ be as in Section~\ref{sec:renPAM}. Then, for every $L>0$ and symmetric
$u_0 \in \CC^\eta(\R^2)$, one has the identities
\begin{equ}
\CR \CS^L(u_0, Z_\eps) = \bar \CS^L(u_0, \xi_\eps)\;,\quad\text{and}\quad
\CR \CS^L(u_0, M Z_\eps) = \bar \CS^L_M(u_0, \xi_\eps)\;.
\end{equ}
\end{proposition}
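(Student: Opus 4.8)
\textbf{Proof strategy for Proposition~\ref{prop:identifyRenormPAM}.}
The first identity is the ``consistency'' statement: when the model is the canonical lift $Z_\eps$ of a genuine continuous function $\xi_\eps$, the abstract solution $u = \CS^L(u_0, Z_\eps) \in \CD^{\gamma,\eta}_P$ has reconstruction $\CR u$ equal to the classical (PDE) solution of \eref{e:PAMGen3}. The plan is to first invoke Remark~\ref{rem:continuousModel}: since $Z_\eps$ is built from a continuous function, the model consists of continuous functions, $\Pi_x a \in \CC^\alpha_\s$ for $\alpha > 0$ suitably, and hence $\CR v(z) = \bigl(\Pi_z v(z)\bigr)(z)$ for any $v \in \CD^\gamma$ with $\gamma > 0$. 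Combined with Remark~\ref{rem:product}, this yields $\CR(v \star w) = \CR v \cdot \CR w$ and, together with Proposition~\ref{prop:recFcn}, that $\CR$ applied to $\hat F_\gamma$-type expressions reproduces the classical composition, i.e.\ $\CR \bigl(\hat g_\gamma(u)\bigr) = g(\CR u)$ and similarly for $f_{ij}$ and the derivatives (using the compatibility of the admissible model with the abstract gradient $\DD_i$, so that $\CR \DD_i u = \d_i \CR u$ from Proposition in Section~\ref{sec:diff}). Applying $\CR$ to both sides of the abstract fixed point equation \eref{e:FPPAM} and using the key identity $\CR \CK_{\bar \gamma} v = K * \CR v$ (Theorem~\ref{theo:Int}, extended to singular modelled distributions via Proposition~\ref{prop:intSing}), together with $\CR R_\gamma \xi = R * \xi$ (immediate from \eref{e:defRgamma} and Lemma~\ref{lem:Lipschitz}), and $\CR (\PR^+ v) = \PR^+ \CR v$, one obtains that $w \eqdef \CR u$ solves the mild formulation
\begin{equ}
w = (K + R) * \bigl(\PR^+ \bigl(f_{ij}(w) \d_i w \d_j w + g(w) \xi_\eps\bigr)\bigr) + G u_0\;,
\end{equ}
which, by Lemma~\ref{lem:decompGreen} and the fact that all terms are supported on positive times, is exactly the mild form of \eref{e:PAMGen3}. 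Uniqueness of the classical solution (standard parabolic theory) then gives $\CR \CS^L(u_0, Z_\eps) = \bar \CS^L(u_0, \xi_\eps)$ up to the common stopping time, which is the same on both sides since it is defined through $\|\CR u(t,\cdot)\|_\eta = \|\bar \CS^L(u_0,\xi_\eps)(t,\cdot)\|_\eta$.

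\textbf{The renormalised identity.} For the second identity, the point is to compute explicitly what $\CR \CS^L(u_0, M Z_\eps)$ solves. By Theorem~\ref{theo:renormStruct}, $M Z_\eps$ is again an admissible model, so $\CS^L(u_0, M Z_\eps)$ makes sense, and by the same argument as above (Remark~\ref{rem:continuousModel} still applies because $M Z_\eps$ also consists of continuous functions — $M$ only modifies finitely many components by subtracting constants times lower-homogeneity elements), $\hat u \eqdef \CS^L(u_0, M Z_\eps)$ satisfies the abstract fixed point equation \eref{e:FPPAM} \emph{with respect to the renormalised model}. The plan is then to apply the reconstruction operator $\CR^M$ associated to $M Z_\eps$ and to carefully track the difference between $\CR^M(v \star w)$ and $\CR^M v \cdot \CR^M w$. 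The crucial computation is the following: for $\hat u \in \CD^{\gamma,\eta}_P(V)$, one uses the identity from Section~\ref{sec:GenFP} (the ``$u = \FF(z, \bar\CQ u)$'' structure) which tells us that $\hat u$ is, up to terms of sufficiently high homogeneity, an explicit polynomial expression in the ``rough'' building blocks $\one, \Xi, \CI(\Xi), \CI_i(\Xi)$. Concretely, one writes $\hat u(z) = u_0(z)\one + \langle \nabla v(z), X\rangle + g(u_0(z)) \CI(\Xi) + (\text{h.o.t.})$ in terms of a classical function $v$, then $F_\gamma(\hat u)$ expands into monomials each of which, when hit with $\Pi^M_z$ and evaluated at $z$, differs from the naive classical value precisely by the counterterms encoded in $M$. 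The terms $M\bigl(\CI(\Xi)\Xi\bigr) = \CI(\Xi)\Xi - C\one$ and $M\bigl(\CI_i(\Xi)\CI_j(\Xi)\bigr) = \CI_i(\Xi)\CI_j(\Xi) - \bar C_{ij}\one$ are the only nontrivial ones, and tracing through the Leibniz-type expansion of $g(\hat u)\star\Xi$ produces (through the $\CI(\Xi)\Xi$ term with a factor $g'(u_0)$) the extra term $-C g'(w)$, while the expansion of $f_{ij}(\hat u)\star\DD_i\hat u\star\DD_j\hat u$ produces (through the $\CI_i(\Xi)\CI_j(\Xi)$ term with a factor $g(u_0)^2$) the extra term $-\bar C_{ij} g^2(w)$. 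Putting these together, $w = \CR^M \hat u$ is found to solve the mild form of \eref{e:PAMRen}, and uniqueness of classical solutions again gives the claimed identity.

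\textbf{Main obstacle.} The genuinely delicate part is the bookkeeping in the renormalised identity: one must show that, at the level of the pointwise evaluation $\bigl(\Pi^M_z F_\gamma(\hat u)(z)\bigr)(z)$, \emph{only} the two explicit counterterms survive and they combine into exactly $-\bar C_{ij} g^2(w)$ and $-C g'(w)$, with the correct combinatorial factors (the factor $3$ appearing on $\Psi$ in the $\Phi^4$ case has an analogue here, coming from the multiplicity with which $\CI(\Xi)\Xi$ appears when one Taylor-expands $g$). This requires knowing that all \emph{other} potentially divergent combinations either do not appear in $\M_F$-generated monomials of the relevant homogeneity, or are handled by the admissibility relations, which in turn rests on the local subcriticality (Lemma~\ref{lem:finDim}) ensuring the expansion terminates. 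I would organize this by: (i) writing down the explicit form of $\bar\CQ \hat u$ and hence of $F_\gamma(\hat u)$ modulo $T^+_{\gamma+\alpha}$; (ii) applying $\Pi^M_z$ using the identity $\Pi^M_z = \PPi M F^M_z$ together with the explicit action of $M$ from Section~\ref{sec:renPAM} and the fact (established there) that $\DeltaM\tau = (M\tau)\otimes\one$ for all $\tau\in\CF_0$, which means the renormalisation is ``pointwise'' and commutes cleanly with evaluation; (iii) reading off the classical PDE that $w$ satisfies. The technical content is lengthy but entirely mechanical once the algebraic structure of Section~\ref{sec:renPAM} is in hand; I would state it as a computation and present the key intermediate expressions rather than every term. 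The explicit value $C_\eps$ claimed in Theorem~\ref{theo:mainConvPAM} is then obtained separately by computing $\E\langle \Pi^{(\eps)}_z \CI(\Xi)\Xi, \phi^\lambda_z\rangle$-type quantities in Section~\ref{sec:Gaussian}, which is beyond the scope of this proposition.
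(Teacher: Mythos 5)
Your proposal is correct and follows essentially the same route as the paper: for the canonical model one uses the multiplicativity of the reconstruction (Remarks~\ref{rem:continuousModel} and \ref{rem:product}) together with $\CR \CK_\gamma = K * \CR$ to identify $\CR u$ with the classical mild solution, and for the renormalised model one expands $u$ and $F_\gamma(u)$ over the finitely many basis elements of homogeneity below $|\alpha|$ (with the coefficient of $\CI(\Xi)$ forced to be $g(\phi)$ by the fixed point structure), then uses $\bigl(\Pi^{M,(\eps)}_z\tau\bigr)(z) = \bigl(\Pi^{(\eps)}_z M\tau\bigr)(z)$, a consequence of $\DeltaM\tau = (M\tau)\otimes\one$, to read off the counterterms $-\bar C_{ij}g^2$ and $-Cg g'$. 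The only blemish is notational (writing $u_0(z)$ for what should be the reconstructed solution $\phi(z)=\bigl(\CR^M u\bigr)(z)$), which does not affect the argument.
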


\begin{proof}
The fact that $\CR \CS^L(u_0, Z_\eps) = \bar \CS^L(u_0, \xi_\eps)$ is relatively straightforward to see. 
Indeed, we have already seen in the proof of Proposition~\ref{prop:solGen} that the function $v = \CR \CS^L(u_0, Z_\eps)$ satisfies for $t \le T^L(u_0, Z_\eps)$ the identity
\begin{equ}
v(t,x) = \int_0^t \int_{\R^2} G(t-s, x-y) \bigl(\CR F_\gamma(v)\bigr)(s,y)\,dy\,ds + \int_{\R^2} G(t,x-y) u_0(y)\,dy\;,
\end{equ}
where $G$ denotes the heat kernel on $\R^2$. Furthermore, it follows from \eref{e:recursion} 
and Remark~\ref{rem:product} that in the case of the canonical model $Z_\eps$, one has
indeed the identity
\begin{equ}
\bigl(\CR F_\gamma(v)\bigr)(s,y) = f_{ij}(\CR v(s,y)) \, \d_i \CR v(s,y) \d_j \CR v(s,y) 
+ g(\CR v(s,y))\,\xi_\eps(s,y)\;,
\end{equ}
valid for every $v \in \CD^\gamma$ with $\gamma > |\alpha| > 1$. As a consequence, 
$\CR v$ satisfies the same fixed point equation as the classical solution to \eref{e:PAMGen3}.

It remains to find out what fixed point equation $v$ satisfies when we consider instead the
model $M Z_\eps$, for which we denote the reconstruction operator by $\CR^M$. 
Recall first Remark~\ref{rem:continuousModel} 
which states that for every $w \in \CD^\gamma$ with $\gamma > 0$, one has the identity
\begin{equ}
\bigl(\CR^M w\bigr)(z) = \bigl(\Pi_{z}^{M,(\eps)} w(z)\bigr)(z)\;,
\end{equ}
where we have made use of the notation $M Z_\eps = (\Pi^{M,(\eps)}, \Gamma^{M,(\eps)})$.
Furthermore, one has $\bigl(\Pi_{z}^{M,(\eps)} \tau \bigr)(z) = 0$ for any element $\tau$ with
$|\tau|_\s > 0$, so that we only need to consider the coefficients of $w$ belonging to the subspace
spanned by the elements with negative (or $0$) homogeneity.

It follows from Lemma~\ref{lem:RHSPAM} that in order to compute all components of $w = F_\gamma(v)$ with
negative homogeneity, we need to know all components of $v$ with homogeneity less than $|\alpha|$.
One can verify that as long as $\alpha > -{4\over 3}$,
the only elements in $V$ with homogeneity less than $|\alpha|$ are given by $\{\one, X_1, X_2, \CI(\Xi)\}$.
Since $v(z)$ furthermore belongs to the sector $V$, we can find functions 
$\phi \colon \R^3 \to \R$ and $\nabla \Phi\colon \R^3 \to \R^2$ such that 
\begin{equ}
v(z) = \phi(z)\, \one + g(\phi(z)) \CI(\Xi) + \scal{\nabla \phi(z), X} + \rho(z)\;,
\end{equ}
where the remainder $\rho$ consists of terms with homogeneity strictly larger than $|\alpha|$.
Here, the fact that the coefficient of $\CI(\Xi)$ is necessarily given by $g(\phi(z))$ follows 
from the identity \eref{e:idenSol}, combined with an explicit calculation to determine $\FF$.
Furthermore, we make a slight abuse of notation here by denoting by $X$ the spatial 
coordinates of $X$. Note that in general, although $\nabla \phi$ can be interpreted as some
kind of ``renormalised gradient'' for $\phi$, we do not claim any kind of relation between
$\phi$ and $\nabla \phi$.
It follows that
\begin{equ}
\D_i v(z) = g(\phi(z)) \CI_i(\Xi) + \nabla_{\!i} \phi(z)\,\one + \rho_i(z)\;,
\end{equ}
for some remainder $\rho_i$ consisting of terms with homogeneity greater than $|\alpha|-1$.

Regarding $\hat f_{ij;\gamma}(v)$ and $\hat g_\gamma(v)$, we obtain from \eref{e:defFcirc} the expansions
\begin{equs}
\hat f_{ij;\gamma}(v)(z) &= f_{ij}(\phi(z))\,\one + f_{ij}'(\phi(z)) g(\phi(z)) \CI(\Xi) + \rho_f(z)\;,\\
\hat g_{\gamma}(v)(z) &= g(\phi(z))\,\one + g'(\phi(z)) g(\phi(z)) \CI(\Xi) + \rho_g(z)\;,
\end{equs}
where both $\rho_f$ and $\rho_g$ contain terms proportional to $X$,
as well as other components of homogeneities strictly greater than $|\alpha|$.
Note also that when $\alpha > -{4\over 3}$, the elements of negative homogeneity are 
those in $\CF_0$ as in Section~\ref{sec:renPAM}, but that one actually has
$\bigl(\Pi_{z}^{M,(\eps)} X_i \Xi\bigr)(z) = 0$ for every $M \in \RR_0$.

It follows that one has the identity
\begin{equs}
F_\gamma(v)(z) &= f_{ij}(\phi(z)) \bigl(\nabla_{\! i}\phi(z)\nabla_{\! j}\phi(z)\,\one
+ g(\phi(z))\nabla_{\! i}\phi(z) \CI_j(\Xi)\\
&\quad + g(\phi(z))\nabla_{\! j}\phi(z) \CI_i(\Xi) + g^2(\phi(z)) \CI_i(\Xi)\CI_j(\Xi)\bigr)\\
&\quad+ g(\phi(z)) \Xi + g'(\phi(z)) g(\phi(z)) \CI(\Xi)\Xi + \rho_F(z)\;.
\end{equs}
At this stage we use the fact that, by \eref{e:propertyDeltaM}, one has the identity
\begin{equ}
\Pi_z^{M,(\eps)} \tau = \Pi_z^{(\eps)} M\tau\;,
\end{equ}
for all $\tau \in \CF_0$, together with the fact that $\CR^M v(z) = \phi(z)$.
A straightforward calculation  then yields the identity
\begin{equs}
\CR^M F_\gamma(v)(z) &= f_{ij}(\CR^M v(z)) \bigl( \d_i \CR^M v(z) \d_j \CR^M v(z) - \bar C_{ij} g^2(\CR^M v(z))\bigr) \\
&\quad+ g(\CR^M v(z))\bigl(\xi_\eps(z) - C g'(\CR^M v(z))\bigr)\;,
\end{equs}
which is precisely what is required to complete the proof.
\end{proof}

\subsection{Solution theory for the dynamical \texorpdfstring{$\Phi^4_3$}{Phi43} model}

We now turn to the analysis of \eref{e:Phi4}. In this case, one has
$F = \xi - u^3$, so that $\M_F$ is given by $\{1, \Xi, U, U^2, U^3\}$.
This time, spatial dimension is $3$ and the scaling we consider is once 
again the parabolic scaling $\s = (2,1,1,1)$, so that the scaling dimension of space-time is $5$.
Since $\xi$ denotes space-time white noise this time, 
we choose for $\alpha$ some value $\alpha = |\Xi|_\s < -{5\over 2}$. It turns out that in order to be able to
choose the set $\CF_0$ in Section~\ref{sec:renormalisation}
independently of $\alpha$, we should furthermore impose $\alpha > -{18\over 7}$. In this case,
the fixed point equation that we would like to consider is
\begin{equ}[e:localFPPhi]
u = \bigl(\CK_{\bar \gamma} + R_\gamma \CR\bigr)\PR^+ \bigl(\Xi - u^3\bigr) + G u_0\;,
\end{equ}
with $u_0 \in \CC^\eta_{\bar \s}(\R^3)$, $\eta > -{2\over 3}$, $\gamma \in (\bar \gamma, \bar \gamma + 2)$, 
and $\bar \gamma > 0$.

We are then in a situation which is slightly outside of the scope
of the general result of Corollary~\ref{cor:genFixedPoint} for two reasons. First, 
Proposition~\ref{prop:recSing} does \textit{a priori} not apply to the 
singular modelled distribution $\PR^+ \Xi$. Second, the distribution $\CR \CI(\Xi)$
is of negative order, so that there is in principle no obvious way of evaluating
it at a fixed time. 
Fortunately, both of these problems can be solved relatively easily.
For the first problem, we note that multiplying white noise by the indicator function of
a set is of course not a problem at all, so we are precisely in the situation
alluded to in Remark~\ref{rem:senseSingDist}. As a consequence, all we have to
make sure is that the convergence $\xi_\eps \to \xi$ takes place in some
space of distributions that allows multiplication with the relevant indicator function. 
Regarding the distribution $\CR \CI(\Xi)$, it is also possible to verify that
if $\xi$ is space-time white noise, then $K * \xi$ almost surely takes values not only
in $\CC_\s^\eta(\R^4)$ for $\eta < -{1\over 2}$, but it actually takes values in
$\CC(\R, \CC^\eta(\R^3))$, which is precisely what is needed to be able to evaluate it
on a fixed time slice, thus enabling us to extend the argument of 
Proposition~\ref{prop:solGen}.

The simplest way of ensuring that the reconstruction operator yields a well-defined distribution
on $\R^4$ for $\PR^+ \Xi$ is to build a suitable space of distributions
``by hand'' and to show that smooth approximations to space-time white noise also
converge in that space. 
We fix again some final time, which we take to be $1$
for definiteness. We then define for any $\alpha < 0$ and compact $\K$ the norm
\begin{equ}
\/\xi\/_{\alpha;\K} = \sup_{s \in \R} \|\xi \one_{t \ge s} \|_{\alpha;\K}\;,
\end{equ}
and we denote by $\bar \CC^{\alpha}_\s$ the intersections of the completions 
of smooth functions
under ${\/\cdot\/}_{\alpha;\K}$ for all compacts $\K$. 
One motivation for this definition is the following convergence result:

\begin{proposition}\label{prop:convLinearPhi4}
Let $\xi$ be white noise on $\R \times \T^3$, which we extend periodically to $\R^4$.
Let $\rho \colon \R^4 \to \R$ be a smooth compactly supported function integrating to $1$,
set $\rho_\eps = \CS_\s^\eps \rho$, and define $\xi_\eps = \rho_\eps * \xi$.
Then, for every $\alpha \in (-3,-{5\over 2})$, one has $\xi \in \bar \CC_\s^\alpha$ almost surely
and, for every $\eta \in (-1, -{1\over 2})$, one has $K * \xi \in \CC(\R, \CC^\eta(\R^3))$
almost surely.
Furthermore, for every compact $\K \subset \R^4$ and every $\kappa > 0$, one has
\begin{equ}[e:boundConvXi1]
\E \/\xi - \xi_\eps\/_{\alpha;\K} \lesssim \eps^{-{5\over 2} - \alpha - \kappa}\;.
\end{equ}
Finally, for every $\bar \kappa \in (0,-{1\over 2} - \eta)$, the bound
\begin{equ}[e:boundConvXi2]
\E \sup_{t \in [0,1]} \|(K*\xi)(t,\cdot) - (K*\xi_\eps)(t,\cdot)\|_{\eta} \lesssim \eps^{\bar\kappa}\;,
\end{equ}
holds uniformly over $\eps \in (0,1]$.
\end{proposition}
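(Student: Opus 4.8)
The plan is to reduce all four assertions to second-moment estimates on wavelet coefficients and then to upgrade these to almost sure statements and to $p$th moments by Gaussian hypercontractivity, together with a Kolmogorov/Borel--Cantelli argument as in the proof of Proposition~\ref{prop:charSpaces}. Since $\xi$ is periodic in space we work on a compact fundamental domain, so all suprema over ``$x \in \K$'' are harmless; fix a compactly supported wavelet basis of $\R^4$ with scaling function $\phi \in \CC^r$ for $r$ large, and, for the last two assertions, one of $\R^3$ with the spatial scaling $\bar \s$. For $\xi \in \bar \CC^\alpha_\s$, note that for each fixed $s$ the distribution $\xi \one_{t\ge s}$ lies in the dual of $\CC^r_0$, and by Proposition~\ref{prop:charSpaces} its $\CC^\alpha_\s$-norm over a compact is controlled by $\sup_{n\ge 0}\sup_{x,\psi} 2^{n|\s|/2+n\alpha}|\scal{\xi\one_{t\ge s},\psi_x^{n,\s}}|$ plus the order-one contribution from the $\phi_y^0$. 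Since $\one_{t\ge s}\psi_x^{n,\s}\in L^2(\R^4)$ with $L^2$-norm at most $1$, the coefficient $\scal{\xi\one_{t\ge s},\psi_x^{n,\s}}$ is a centred Gaussian of variance at most $1$, and since $\gamma_0:=-|\s|/2-\alpha>0$ (because $\alpha<-5/2$) the prefactor $2^{n|\s|/2+n\alpha}=2^{-\gamma_0 n}$ is summable. Taking $p$th moments (the $p$th moment of a centred Gaussian being controlled by its variance to the power $p/2$), noting that the suprema over $x\in\Lambda_n^\s\cap\K$ and over the finite family $\Psi$ are finite sums, and summing the geometric series in $n$, gives $\E\|\xi\one_{t\ge s}\|_{\alpha;\K}^p<\infty$ uniformly in $s$. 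The one delicate point is the $\sup$ over $s$ in $\/\cdot\/_{\alpha;\K}$, since $s\mapsto\one_{t\ge s}\psi_x^{n,\s}$ is not smooth; I would handle it by chaining in $s$: for $s'\le s$ the increment $\scal{\xi\one_{s'\le t\le s},\psi_x^{n,\s}}$ is Gaussian with variance $\int_{s'}^s\int_{\R^3}(\psi_x^{n,\s})^2\lesssim 2^{2n}|s-s'|$, using $\|\psi_x^{n,\s}\|_{L^\infty}\lesssim 2^{n|\s|/2}$ and that its spatial sections have $L^1$-mass $\lesssim 2^{n|\s|/2-3n}$. Discretising $s$ on a grid of spacing $2^{-Kn}$ with $K$ large and bounding the fluctuation between grid points by Kolmogorov, the extra cost is a factor $2^{n(1-K/2)}$, negligible against $2^{-\gamma_0 n}$ once $K>2$, while the $2^{Kn}$ grid points, being polynomial in $2^n$, are absorbed into the $p$th moment for $p$ large. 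This yields $\xi\in\bar\CC^\alpha_\s$ a.s.

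For the rate \eqref{e:boundConvXi1} the argument is identical with $\xi$ replaced by $\xi-\xi_\eps$. Writing $\scal{(\xi-\xi_\eps)\one_{t\ge s},\psi_x^{n,\s}}=\scal{\xi,(1-\rho_\eps^-\ast)(\one_{t\ge s}\psi_x^{n,\s})}$, where $\rho_\eps^-$ is the reflected mollifier, the variance equals $\|(1-\rho_\eps^-\ast)(\one_{t\ge s}\psi_x^{n,\s})\|_{L^2}^2$, which I expect to bound by $C\bigl((2^n\eps)^2\wedge 1\bigr)$: when $2^{-n}\gtrsim\eps$ the smooth part of $\psi_x^{n,\s}$ contributes $(2^n\eps)^2$ by Taylor expansion, while the jump at $t=s$ contributes a term supported on a parabolic slab of time-width $\eps^2$, which tested against $\psi_x^{n,\s}$ is again $O(2^{2n}\eps^2)$; when $2^{-n}\lesssim\eps$ one uses the trivial bound $1$. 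Summing $2^{-2\gamma_0 n}\bigl((2^n\eps)^2\wedge 1\bigr)$ over $n$ gives, up to logarithms and using $\gamma_0<1$ (which holds since $\alpha>-7/2$), a bound of order $\eps^{2\gamma_0}$, whence $\E\/\xi-\xi_\eps\/_{\alpha;\K}\lesssim\eps^{\gamma_0-\kappa}=\eps^{-5/2-\alpha-\kappa}$, the loss $\kappa$ absorbing the polynomial factors and logarithm.

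For the statements about $K\ast\xi$, write $K=G-R$ as in Lemma~\ref{lem:diffSing} with $G$ the heat kernel, so $K$ is non-anticipative and inherits the heat-kernel bounds of Lemma~\ref{lem:propG}. Testing $(K\ast\xi)(t,\cdot)$ against a spatial wavelet, $\scal{(K\ast\xi)(t,\cdot),\psi_y^{m,\bar\s}}=\scal{\xi,\Psi_{t,y,m}}$ with $\Psi_{t,y,m}(r,z)=\int_{\R^3}\psi_y^{m,\bar\s}(x)\,K\bigl((t,x),(r,z)\bigr)\,dx$, so the variance is $\|\Psi_{t,y,m}\|_{L^2(\R^4)}^2$. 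Splitting the $r$-integral at $t-r\sim 2^{-2m}$ and, for $t-r\gtrsim 2^{-2m}$, exploiting the cancellation from the vanishing moment of $\psi_y^{m,\bar\s}$ against $|\nabla_x K|\lesssim(t-r)^{-2}$ on a region of volume $(t-r)^{3/2}$, one finds $\|\Psi_{t,y,m}\|_{L^2}^2\lesssim 2^{-2m}$, which is $\lesssim 2^{-3m-2m\eta}$ precisely because $\eta<-1/2$; hence $(K\ast\xi)(t,\cdot)\in\CC^\eta(\R^3)$ a.s. For time-continuity, the same computation applied to $\Psi_{t,y,m}-\Psi_{t',y,m}$, splitting according to whether $t-r$ and $t'-r$ exceed $\delta:=|t-t'|$ and using $|\partial_t K|\lesssim(t-r)^{-1}|K|$, gives variance $\lesssim\delta^{2\theta}\,2^{-3m-2m\eta}$ for some $\theta>0$ — there is room to spare since $\eta<-1/2$ strictly. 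Taking $p$th moments, summing over $m$, and applying Kolmogorov's continuity criterion in $t$ yields $K\ast\xi\in\CC([0,1],\CC^\eta(\R^3))$ a.s. Finally \eqref{e:boundConvXi2} follows by running this with $\xi$ replaced by $\xi-\xi_\eps$: the variance of $\scal{(K\ast\xi-K\ast\xi_\eps)(t,\cdot),\psi_y^{m,\bar\s}}$ is bounded by $C\bigl((2^m\eps)^2\wedge 1\bigr)2^{-3m-2m\eta}$, the increment in $t$ still carries a factor $\delta^{2\theta}$, and a Kolmogorov-type argument controlling $\sup_{t\in[0,1]}$ uniformly in $\eps$ produces $\E\sup_t\|(K\ast\xi-K\ast\xi_\eps)(t,\cdot)\|_\eta\lesssim\eps^{\bar\kappa}$ for any $\bar\kappa<-1/2-\eta$.

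The main obstacle is not the Gaussian analysis, which is routine, but keeping the non-smoothness of $\one_{t\ge s}$ under control: the wavelet characterisation of $\CC^\alpha_\s$ is tailored to smooth test functions, so one must both check that $\xi\one_{t\ge s}$ still defines an element of the dual of $\CC^r_0$ and supply the chaining argument in $s$ with exponents chosen so that the polynomial losses do not destroy the geometric decay $2^{-\gamma_0 n}$. A secondary difficulty is the bookkeeping in the heat-kernel estimates for $\|\Psi_{t,y,m}-\Psi_{t',y,m}\|_{L^2}$, where one must carefully separate the regimes $t-r\lessgtr\delta$ to extract a positive power of $\delta$ without eroding the $\eta<-1/2$ margin.
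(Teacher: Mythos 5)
Your proposal is correct in substance. For the first two assertions it is essentially the paper's own argument: the key inputs are the same $L^2$ bounds $\|\one_{t\in[0,s]}\psi_x^{n,\s}\|_{L^2}^2\lesssim 1\wedge 2^{2n}s$ and $\|(1-\rho_\eps*)(\one_{t\in[0,s]}\psi_x^{n,\s})\|_{L^2}^2\lesssim 1\wedge 2^{2n}s\wedge 2^{2n}\eps^2$, combined with equivalence of Gaussian moments and the wavelet characterisation of $\CC^\alpha_\s$; your explicit chaining in $s$ is just an unpacked form of the Kolmogorov continuity criterion applied to the $\CC^\alpha_\s$-valued process $s\mapsto\xi\one_{t\ge s}$, which is what the paper does. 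For the last two assertions you take a genuinely different route: you estimate second moments of $K*(\xi-\xi_\eps)$ directly via heat-kernel/Plancherel computations on the spatial wavelet coefficients and their time increments. The paper instead observes that $K*\xi_\eps=\rho_\eps*(K*\xi)$, so that \eref{e:boundConvXi2} splits into a purely deterministic mollification estimate $\|\rho_\eps*\zeta-\zeta\|_{\CC([0,1],\CC^\eta)}\lesssim\eps^{\bar\kappa}\|\zeta\|_{\CX}$ with $\CX=\CC^{\bar\kappa/2}(\R,\CC^{\eta+\bar\kappa})$, plus an almost sure regularity statement $K*\xi\in\CX$ proved by a single second-moment bound on $K_n*\xi$. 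The paper's factorisation buys you that no stochastic estimate on the mollified \emph{difference} is ever needed; your route buys a self-contained computation but at the cost of more kernel bookkeeping.

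One point in your last step deserves care: the bound $\E|\scal{(K*(\xi-\xi_\eps))(t,\cdot),\psi_y^{m,\bar\s}}|^2\lesssim\bigl((2^m\eps)^2\wedge 1\bigr)2^{-3m-2m\eta}$ does not by itself yield $\eps^{\bar\kappa}$ after summation, since the minimum equals $2^{-3m-2m\eta}$ for all $2^{-m}\le\eps$ and $\sup_m\bigl((2^m\eps)\wedge 1\bigr)=1$. You must additionally use the slack $2^{-3m-2m\eta}=2^{-2m}\cdot 2^{-m(1+2\eta)}$ together with $2^{-m}\le\eps$ to convert the factor $1$ into $\eps^{-(1+2\eta)}=\eps^{2(-1/2-\eta)}$ in that regime; this interpolation is exactly what forces the restriction $\bar\kappa<-\tfrac12-\eta$ and is performed implicitly in the paper through the choice of the intermediate space $\CX$. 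With that step made explicit, your argument closes.
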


\begin{proof}
In order to show that $\xi \in \bar \CC_\s^\alpha$, note first that it is immediate that $\xi \one_{t \ge s} \in \CC^\alpha_\s$ for every fixed $s \in \R$. It therefore suffices to show that the map 
$s \mapsto \xi \one_{t \ge s}$ is continuous in $\CC^\alpha_\s$. For this, we choose a wavelet basis as
in Section~\ref{sec:spaceC} and, writing $\Psi_\star  = \Psi \cup \{\phi\}$, 
we note that for every $p > 1$, one has the bound
\begin{equs}
\E \|\xi \one_{t \ge s} - \xi \one_{t \ge 0}\|_{\alpha;\K}^{2p} 
&\le \sum_{\psi \in \Psi_\star}\sum_{n \ge 0} \sum_{x \in \Lambda_\s^n\cap \bar \K} \E 2^{2\alpha n p + |\s|np}|\scal{\xi \one_{t \ge s} - \xi \one_{t \ge 0}, \psi_x^{n,\s}}|^{2p} \\
&\le \sum_{\psi \in \Psi_\star} \sum_{n \ge 0} \sum_{x \in \Lambda_\s^n\cap \bar \K} 2^{2\alpha n p+|\s|np} \bigl(\E |\scal{\xi , \one_{t \in [0,s]}\psi_x^{n,\s}}|^2)^{p}\\
&\lesssim \sum_{\psi \in \Psi_\star} \sum_{n \ge 0} 2^{2\alpha n p+|\s|np+|\s|n}\|\one_{t \in [0,s]}\psi_x^{n,\s}\|_{L^2}^{2p}\;.
\end{equs}
Here we wrote $\bar \K$ for the $1$-fattening of $\K$ and we 
used the equivalence of moments for Gaussian random variables to obtain the second line.
We then verify that
\begin{equ}
\|\one_{t \in [0,s]}\psi_x^{n,\s}\|_{L^2}^{2} \lesssim  1 \wedge 2^{2n}s\;.
\end{equ}
Provided that $\alpha \in (-{7\over 2}, -{5\over 2})$, it then follows that
\begin{equ}
\E \|\xi \one_{t \ge s} - \xi \one_{t \ge 0}\|_{\alpha;\K} \lesssim s^{-{5\over 4} - {\alpha \over 2} - {5\over 4p}}\;.
\end{equ}
Choosing first $p$ sufficiently large and then 
applying Kolmogorov's continuity criterion, it follows that one does indeed have 
$\xi \in \bar \CC^\alpha_\s$ as stated.

In order to bound the distance between $\xi$ and $\xi_\eps$, we can proceed in exactly the same way.
We then obtain the same bound, but with $\|\one_{t \in [0,s]}\psi_x^{n,\s}\|_{L^2}^{2}$ replaced by
$\|\one_{t \in [0,s]}\psi_x^{n,\s} - \rho_\eps * \bigl(\one_{t \in [0,s]}\psi_x^{n,\s}\bigr)\|_{L^2}^{2}$.
A straightforward calculation shows that
\begin{equ}
\|\one_{t \in [0,s]}\psi_x^{n,\s} - \rho_\eps * \bigl(\one_{t \in [0,s]}\psi_x^{n,\s}\bigr)\|_{L^2}^{2}
\lesssim 1 \wedge 2^{2n}s \wedge 2^{2n}\eps^2\;.
\end{equ} 
As above, it then follows that, provided that $\alpha + \kappa > -3$,
\begin{equ}
\E \|(\xi - \xi_\eps) \one_{t \in [0,s]}\|_{\alpha;\K} \lesssim \eps^{-{5\over 2} - \alpha - \kappa} s^{{\kappa \over 2} - {5 \over 4p}}\;,
\end{equ}
so that the requested bound 
\eref{e:boundConvXi1} follows at once by choosing $p$ sufficiently large.

In order to show \eref{e:boundConvXi2}, note first that $K * \xi_\eps = \rho_\eps * (K*\xi)$.
As a consequence, it is sufficient to find some space of distributions $\CX \subset \CC([0,1], \CC^\eta)$ such that $K*\xi \in \CX$
almost surely and such that the bound
\begin{equ}[e:wantedBoundX]
\|\rho_\eps * \zeta - \zeta\|_{\CC([0,1], \CC^\eta)} \lesssim \eps^{\bar \kappa} \|\zeta\|_\CX\;,
\end{equ}
holds uniformly over all $\eps \in (0,1]$ and $\zeta \in \CX$.
We claim that $\CX = \CC^{\bar \kappa \over 2}(\R, \CC^{\eta + \bar \kappa})$ is a possible choice. 

To show that \eref{e:wantedBoundX} holds, we use the characterisation 
\begin{equs}
\|\rho_\eps &* \zeta - \zeta\|_{\CC([0,1], \CC^\eta)} \\
&= \sup_{t\in [0,1]} \sup_{\lambda \in (0,1]} \lambda^{-\eta} \sup_{\psi} \int \psi^\lambda(x) \rho_\eps(x-y,t-s) \bigl(\zeta(y,s) - \zeta(x,t)\bigr)\,dx\,dy\,ds\;,
\end{equs}
where the supremum runs over all test functions $\psi \in \CB^1_{\s,0}$ (for $\s$ the Euclidean scaling).
We also wrote $\psi^\lambda$ for the
rescaled test function as previously. One then rewrites the above expressions as a sum
$T_1 + T_2$ with
\begin{equs}
T_1 &= \int \psi^\lambda(x) \rho_\eps(x-y,t-s) \bigl(\zeta(y,s) - \zeta(y,t)\bigr)\,dx\,dy\,ds\;,\\
T_2 &= \int \psi^\lambda(x) \rho_\eps(x-y,t-s) \bigl(\zeta(y,t) - \zeta(x,t)\bigr)\,dx\,dy\,ds\\
&= \int \bigl(\psi^\lambda(x) - \psi^\lambda(y)\bigr) \rho_\eps(x-y,t-s) \zeta(y,t)\,dx\,dy\,ds\;.
\end{equs}
To bound each of these terms, one considers separately the cases $\lambda \le \eps$ and $\lambda > \eps$.
For $T_1$, it is then straightforward to verify that $|T_1| \lesssim (\eps^\eta \wedge \lambda^\eta) |t-s|^{\bar \kappa/2}$. Since one has $|t-s| \lesssim \eps^2$ due to the fact that $\rho$ is compactly supported, 
the requested bound follows for $T_1$. For $T_2$,
arguments similar to those used in Section~\ref{sec:Schauder} yield the bound
$|T_2| \lesssim \lambda^{\eta + \bar \kappa} \lesssim \eps^{\bar \kappa} \lambda^\eta$ in the
case $\lambda \le \eps$ and $|T_2| \lesssim \lambda^{\eta + \bar \kappa -1}\eps \lesssim \eps^{\bar \kappa} \lambda^\eta$ in the case $\eps \le \lambda$. The bound \eref{e:wantedBoundX} then follows at once.

To show that $K * \xi$ belongs to $\CX$ almost surely, the argument is similar. 
Write $K = \sum_{n \ge 0} K_n$ as in the assumption and set $\xi^{(n)} = K_n * \xi$.
We claim that it then suffices to show that there is $\delta > 0$ such that the bound
\begin{equ}[e:wantedBoundXi]
\E \Bigl(\int \psi^\lambda(x) \bigl(\xi^{(n)}(x,t)- \xi^{(n)}(x,0)\bigr)\,dx\Bigr)^2 \lesssim
2^{-\delta n} |t|^{\bar \kappa + \delta} \lambda^{2\eta + 2\bar \kappa + \delta}\;,
\end{equ}
holds uniformly over $n \ge 0$, $\lambda \in (0,1]$, and test functions $\psi \in \CB^1_{\s,0}$.
Indeed, this follows at once by combining the usual Kolmogorov continuity test (in time) with 
Proposition~\ref{prop:charSpaces} (in space) and the equivalence of moments for Gaussian
random variables. 

The left hand side of \eref{e:wantedBoundXi} is equal to
\begin{equ}
\int \Bigl(\int \psi^\lambda(x) \bigl(K_n(x-y,t-r) - K_n(x-y,-r)\bigr)\,dx \Bigr)^2\,dy\,dr =: \|\Psi_n^{\lambda;t} \|_{L^2}^2\;.
\end{equ}
It is immediate from the definitions and the scaling properties of the $K_n$
that the volume of the support of $\Psi_n^{\lambda;t}$ is bounded
by $(\lambda + 2^{-n})^3 2^{-2n}$. The values of $\Psi_n^{\lambda;t}$ inside this support
are furthermore bounded by a multiple of
\begin{equ}
2^{3n} \wedge |t| 2^{5n} \wedge \lambda^{-3}\;.
\end{equ}
For $\lambda < 2^{-n}$ we thus obtain the bound
\begin{equ}
\|\Psi_n^{\lambda;t} \|_{L^2}^2 \lesssim 2^{-5n} |t|^{\bar \kappa +\delta} 2^{6n + 2(\bar \kappa +\delta)n}
= 2^{n + 2(\bar \kappa +\delta)n} |t|^{\bar \kappa + \delta}\;,
\end{equ}
while for $\lambda \ge 2^{-n}$ we obtain
\begin{equ}
\|\Psi_n^{\lambda;t} \|_{L^2}^2 \lesssim \lambda^3 2^{-2n} |t|^{\bar \kappa +\delta}
\lambda^{-6 + \bar \kappa + \delta} 2^{5(\bar \kappa +\delta)n} = |t|^{\bar \kappa +\delta}
\lambda^{3(\bar \kappa +\delta)-3} 2^{-2n + 5(\bar \kappa +\delta)n}\;.
\end{equ}
It follows that since $\eta$ is strictly less than $-{1\over 2}$, it is possible 
to choose $\bar \kappa$ and $\delta$ sufficiently small to guarantee that the bound
\eref{e:wantedBoundXi} holds, thus concluding the proof.
\end{proof}

\begin{remark}
The definition of these spaces of distributions is of course rather \textit{ad hoc}, but
it happens to be one that then allows us to restart solutions, 
which is amply sufficient to apply the same procedure as in Corollary~\ref{cor:genFixedPoint} to
define local solutions to \eref{e:localFPPhi}. 
\end{remark}

As before, the regularity structure $\TT$ comes with a sector $V\subset T$ which is 
given by \eref{e:defVV}.
This time however, the sector $V$ is \textit{not} function-like, but has regularity $2+\alpha \in (-{4\over 7}, -{1\over 2})$.
Assume for simplicity that the symmetry $\SS$ is again given by integer translations
in $\R^3$, so that its action on $\TT$ is trivial. 
Fix furthermore $\gamma > |2\alpha+4|$ and choose one of the decompositions $G = K + R$ 
of the heat kernel given by Lemma~\ref{lem:decompGreen} with $r > \gamma$.

Regarding the nonlinearity, we then have the following bound:

\begin{lemma}\label{lem:RHSPhi}
For every $\gamma > |2\alpha+4|$ and for $\eta \le \alpha + 2$, 
the map $u \mapsto u^3$ is locally Lipschitz continuous  in the strong sense from 
$\CD^{\gamma,\eta}_P(V)$ into $\CD_P^{\gamma+2\alpha+4,3\eta}$.
\end{lemma}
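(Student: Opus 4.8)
The statement is a special case of the multiplication theorem for singular modelled distributions, Proposition~\ref{prop:multSing}, applied iteratively, together with the strong local Lipschitz bounds recorded there. So the plan is simply to identify the sectors and exponents and then invoke that proposition. Concretely, the sector $V = \CI(T) \oplus \bar T$ is of regularity $\zeta = 2+\alpha \in (-\tfrac47, -\tfrac12)$ because the element of lowest homogeneity in $\CF_F$ appearing in $V$ is $\CI(\Xi)$, of homogeneity $\alpha + 2$, while $\bar T$ has regularity $0$. Thus $V$ has regularity $\alpha_1 = 2+\alpha < 0$.

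\textbf{First step: the square.} I would first apply Proposition~\ref{prop:multSing} to $f_1 = f_2 = u \in \CD^{\gamma,\eta}_P(V)$. For this one needs $(V,V)$ to be $\gamma_V$-regular with $\gamma_V = (\gamma + \zeta)\wedge(\gamma+\zeta) = \gamma + \zeta$; this is part of the construction of $\TT_F$ (Theorem~\ref{theo:genStruct} gives that $T$, hence each sector built from $\CI$ and products, is regular for $\star$, cf.\ Lemma~\ref{lem:HopfRegular} and the discussion following \eref{e:realIdentity}). Proposition~\ref{prop:multSing} then yields $u^2 = u \star_{\gamma+\zeta} u \in \CD^{\gamma+\zeta,\,\eta_2}_P$, where $\eta_2 = (\eta + \zeta)\wedge(\eta+\zeta)\wedge(2\eta) = (\eta+\zeta)\wedge 2\eta$. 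Since $\eta \le \alpha+2 = \zeta$, we have $2\eta \le \eta + \zeta$, so $\eta_2 = 2\eta$; and the ambient regularity of the sector carrying $u^2$ is $2\zeta = 2\alpha+4$. Note $\gamma + \zeta = \gamma + 2 + \alpha > 0$ since $\gamma > |2\alpha+4| > |\alpha|$ (for $\alpha \in (-\tfrac{18}{7},-\tfrac52)$ one checks $|2\alpha+4| > 1 \ge |\alpha|$ is false, so one should rather note $\gamma > |2\alpha+4| = -(2\alpha+4) > 0$ directly implies $\gamma + 2 + \alpha > -(2+\alpha) \ge 0$; in any case $\gamma_V > 0$ is needed and holds), so the hypotheses of Proposition~\ref{prop:multSing} (which require the target exponent to be positive) are met.

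\textbf{Second step: the cube.} Apply Proposition~\ref{prop:multSing} again, this time to $f_1 = u^2 \in \CD^{\gamma+\zeta,\,2\eta}_P$ (sector regularity $2\zeta$) and $f_2 = u \in \CD^{\gamma,\eta}_P$ (sector regularity $\zeta$). The new homogeneity exponent for the product is $\gamma' = \bigl((\gamma+\zeta) + \zeta\bigr) \wedge \bigl(\gamma + 2\zeta\bigr) = \gamma + 2\zeta = \gamma + 2\alpha + 4$, which is positive precisely because $\gamma > |2\alpha+4| = -(2\alpha+4)$. The near-diagonal exponent is $\eta' = (2\eta + \zeta) \wedge (\eta + 2\zeta) \wedge (2\eta + \eta) = (2\eta+\zeta)\wedge(\eta+2\zeta)\wedge 3\eta$; again since $\eta \le \zeta$ one has $3\eta \le 2\eta + \zeta$ and $3\eta \le \eta + 2\zeta$, so $\eta' = 3\eta$. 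The target sector has regularity $3\zeta = 3\alpha + 6$, but this is irrelevant for the statement since we only need the codomain $\CD^{\gamma+2\alpha+4,\,3\eta}_P$ (without tracking the sector). Here one must again check that $(W, V)$ is $\gamma'$-regular, where $W$ is the sector carrying $u^2$; this is again automatic from the regularity of the full space $T$ for $\star$.

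\textbf{Local Lipschitz continuity.} The strong local Lipschitz bounds are exactly what the second half of Proposition~\ref{prop:multSing} (the bound on $\$f;g\$_{\gamma,\eta;\K}$ in terms of $\$f_1;g_1\$$, $\$f_2;g_2\$$, and $\|\Gamma-\bar\Gamma\|$) provides, applied along the same two-step composition: writing $u^3 = (u\star u)\star u$ and $v^3 = (v\star v)\star v$ with $u \in \CD^{\gamma,\eta}_P(V;\Gamma)$ and $v \in \CD^{\gamma,\eta}_P(V;\bar\Gamma)$, one controls $\$u^2; v^2\$_{\gamma+\zeta,2\eta;\K}$ by $\$u;v\$_{\gamma,\eta;\K} + \|\Gamma-\bar\Gamma\|_{\ldots;\K}$, uses the uniform bound $\$u^2\$ + \$v^2\$ \lesssim 1$ (from the first part of the proposition and the assumed uniform bound on $u,v$), and then applies the Lipschitz estimate once more to get $\$u^3; v^3\$_{\gamma+2\alpha+4,3\eta;\K} \lesssim \$u;v\$_{\gamma,\eta;\K} + \|\Gamma-\bar\Gamma\|_{\gamma_1+\gamma_2;\K}$ as required for strong local Lipschitz continuity. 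The non-strong Lipschitz bounds and the $\n{\cdot}$-bound \eref{e:boundNonSing} follow in the same way from the corresponding parts of Proposition~\ref{prop:multSing} (and are in fact weaker).

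\textbf{Main obstacle.} There is no genuine obstacle here — the proof is entirely a bookkeeping exercise of chasing exponents through two applications of Proposition~\ref{prop:multSing}. The only point requiring a moment's care is verifying that the intermediate and final homogeneity exponents $\gamma + \zeta$ and $\gamma + 2\zeta$ are strictly positive (so that Proposition~\ref{prop:multSing} applies and the outputs are genuine singular modelled distributions), which is where the hypothesis $\gamma > |2\alpha+4|$ is used, and checking that the minimum defining $\eta'$ is attained at $3\eta$, which uses $\eta \le \alpha + 2$. One should also remark that we are in the regime $\alpha \in (-\tfrac{18}{7}, -\tfrac52)$ precisely so that the relevant sectors and the regularity structure are well-defined and the exponents behave as stated; this is invoked implicitly through the standing construction of $\TT_F$.
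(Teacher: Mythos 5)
Your proof is correct and follows exactly the route the paper takes: the paper's own proof of this lemma is simply the one-line observation that it is an immediate consequence of Proposition~\ref{prop:multSing}, and your two-step application of that proposition (first to $u\star u$, then to $u^2\star u$), together with the exponent bookkeeping using $\zeta = \alpha+2$ and $\eta \le \zeta$, is precisely the computation that remark leaves implicit.
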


\begin{proof}
This is an immediate consequence of Proposition~\ref{prop:multSing}.
\end{proof}

With these results at hand, our strategy is now as follows. First, we reformulate
the fixed point map \eref{e:localFPPhi} as
\begin{equs}[e:fixedPointPhi4]
u &= - \bigl(\CK_{\bar \gamma} + R_\gamma \CR\bigr)\PR^+ u^3 + G u_0 + v\;,\\
v &= \bigl(\CK_{\bar \gamma} + R_\gamma \CR\bigr)\PR^+ \Xi \;.
\end{equs}
Here, we \textit{define} $\CR \PR^+ \Xi$ as the distribution $\xi \one_{t \ge 0}$, which 
does indeed coincide with $\CR \PR^+ \Xi$ when applied to test functions that are localised
away of the singular line $t = 0$,
and belongs to $\CC_\s^\alpha$ by assumption. This also shows immediately that
$v \in \CD_P^{\gamma,\eta}$ for $\eta$ and $\gamma$ as in Lemma~\ref{lem:RHSPhi}.
We then have the following result:

\begin{proposition}\label{prop:locSolPhi4}
Let $\TT_F$ be the regularity structure associated as above to 
\eref{e:Phi4} with $\alpha \in (-{18\over 7}, -{5\over 2})$, $\beta = 2$
and the formal right hand side $F(U,\Xi,P) = \Xi - U^{3}$. 
Let furthermore $\eta \in (-{2\over 3}, \alpha + 2)$ and let 
$Z=(\Pi,\Gamma)\in \MM_F$ be an admissible model for $\TT$ with the additional 
properties that $\xi \eqdef \CR \Xi$
belongs to $\bar \CC^\alpha_\s$ and that $K* \xi \in \CC(\R, \CC^\eta)$. 

Then, for every $\gamma > 0$ and every 
$L > 0$, one can build a maximal solution map $\CS^L$ for \eref{e:fixedPointPhi4}
with the same properties as in Section~\ref{sec:GenFP}.
Furthermore, $\CS^L$ has the same continuity properties as in Corollary~\ref{cor:genFixedPoint}, provided
that $Z$ and $\bar Z$ furthermore satisfy the bounds
\begin{equ}[e:boundXi1]
\/\xi\/_{\alpha;O} + \/\bar \xi\/_{\alpha;O} \le C\;,\qquad
\sup_{t \in [0,1]} \bigl(\|(K*\xi)(t,\cdot)\|_\eta + \|(K*\bar \xi)(t,\cdot)\|_\eta\bigr) \le C\;,
\end{equ}
as well as
\begin{equ}[e:boundXi2]
\/\xi - \bar \xi\/_{\alpha;O} \le \delta \;,\qquad
\sup_{t \in [0,1]} \bigl(\|(K*\xi)(t,\cdot)-(K*\bar \xi)(t,\cdot)\|_\eta\bigr) \le \delta\;.
\end{equ}
Here, we have set $\bar \xi = \bar \CR \Xi$, where $\bar \CR$ is the reconstruction operator associated to $\bar Z$.
\end{proposition}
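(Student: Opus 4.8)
The proof of Proposition~\ref{prop:locSolPhi4} will follow the same overall strategy as the proof of Theorem~\ref{theo:fixedPointGen} and Corollary~\ref{cor:genFixedPoint}, but with two modifications needed to accommodate the fact that the sector $V = \CI(T) \oplus \bar T$ has negative regularity $\zeta = 2+\alpha$. The first difficulty is that we cannot directly invoke Proposition~\ref{prop:recSing} to make sense of $\CR \PR^+ \Xi$ as an honest distribution on $\R^4$, since the assumptions of that proposition would require $\eta \wedge \alpha > -\m = -2$ with the $\eta$ in that statement being $\infty$ (because $\PR^+\Xi$ is "constant"), but the issue is rather that $\PR^+ \Xi$ lives in a sector of regularity $\alpha < -5/2 < -2 = -\m$. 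The second difficulty is that the term $\CR \CK_{\bar \gamma} \PR^+ \Xi$, which feeds into the "initial condition" via $v$, is \textit{a priori} a distribution of negative Hölder regularity, so that evaluating it on a fixed time slice (needed for the restart argument of Proposition~\ref{prop:solGen}) is not obviously meaningful.

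Both obstacles are resolved precisely by the hypotheses on $Z$: the assumption $\xi \eqdef \CR\Xi \in \bar\CC^\alpha_\s$ together with Remark~\ref{rem:senseSingDist} allows us to \emph{define} $\CR \PR^+ \Xi$ as $\xi\one_{t\ge 0}$, which belongs to $\CC^\alpha_\s$ by the very definition of $\bar\CC^\alpha_\s$; and the assumption $K*\xi \in \CC(\R,\CC^\eta)$ gives us the needed time-slice control on the linear term. So the first step of the plan is to set up \eqref{e:fixedPointPhi4}: define $v = (\CK_{\bar\gamma}+R_\gamma\CR)\PR^+\Xi$, note that $\CR\PR^+\Xi \in \CC^\alpha_\s$ by the above, and check (using Proposition~\ref{prop:intSing} in the form allowed by Remark~\ref{rem:senseSingDist}, plus Lemma~\ref{lem:boundT} for the $R_\gamma$ part, plus the fact that $K*\xi$ restricts to time slices) that $v \in \CD^{\gamma,\eta}_P$ with $\gamma$ and $\eta$ as in Lemma~\ref{lem:RHSPhi}, and that $v$ depends continuously (indeed Lipschitz-continuously) on $Z$ in the topology given by the bounds \eqref{e:boundXi1}--\eqref{e:boundXi2}. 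One must also absorb $Gu_0$ into this "data" term, using Lemma~\ref{lem:harmonicExt} to see that $Gu_0 \in \CD^{\gamma,\eta}_P$ for $u_0 \in \CC^\eta_{\bar\s}$ with $\eta \in (-2/3, \alpha+2)$ (here $-2/3$ is the lower cutoff $-\s_1/\cdots$ coming from the condition $\eta > -\m/\ldots$; more precisely it is dictated by the requirement $(\bar\eta\wedge\bar\zeta) > -2q = -2$ translated through the Schauder bound).

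The second step is to run the contraction argument for the $u$-equation $u = -(\CK_{\bar\gamma}+R_\gamma\CR)\PR^+ u^3 + Gu_0 + v$ exactly as in Theorem~\ref{theo:fixedPointGen}. Here $F(u) = -u^3$: by Lemma~\ref{lem:RHSPhi}, $u \mapsto u^3$ maps $\CD^{\gamma,\eta}_P(V)$ into $\CD^{\bar\gamma,\bar\eta}_P$ with $\bar\gamma = \gamma + 2\alpha + 4$ and $\bar\eta = 3\eta$, and is strongly locally Lipschitz. One then checks that the exponent conditions of Theorem~\ref{theo:fixedPointGen} hold: with $\zeta = 0$ we need $\eta < (\bar\eta \wedge \bar\zeta) + 2$, i.e. $\eta < (3\eta \wedge (2\alpha+4)) + 2$, which holds for $\eta$ in the stated range since $\eta < 0$ forces $3\eta < \eta$... actually one must be slightly careful and use $\bar\eta = 3\eta$, $\bar\zeta = \gamma + 2\alpha + 4$, and verify $(\bar\eta \wedge \bar\zeta) > -2$, which is where $\eta > -2/3$ enters. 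Similarly $\bar\gamma > 0$ requires $\gamma > |2\alpha+4|$, and $\gamma < \bar\gamma + 2$ is automatic. Since $\PR^+$ is bounded on $\CD^{\gamma,\eta}_P$ (it is multiplication by $\one$, hence handled by Proposition~\ref{prop:multSing} trivially) and, crucially, the kernel $K$ is non-anticipative, Theorem~\ref{theo:boundKT} gives the small-time factor $T^{\kappa/\s_1}$ making the map a contraction on a ball for $T$ small; uniqueness of the global fixed point follows from the usual continuity argument.

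The third step is the restart/maximality argument, mirroring Proposition~\ref{prop:solGen} and Corollary~\ref{cor:genFixedPoint}. The only point requiring care beyond the function-like case treated there is that here $V$ is not function-like, so $\CR u$ is only a distribution in $\CC^{2+\alpha}_\s$; however, one verifies that the component of $u$ in $\bar T$ (which is what controls the evaluation of $\CR u$ on time slices for the purpose of restarting, since the non-polynomial parts are of negative homogeneity and explicitly given by $v$ plus local functions of the polynomial part) is a continuous function, and the extra distributional part that must be evaluated on a time slice is exactly $K*\xi$, which we have assumed lies in $\CC(\R,\CC^\eta)$. Thus one can define $u_s$ in the appropriate space and restart, and the consistency argument via Proposition~\ref{prop:uniqueDeriv} goes through verbatim. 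The continuity statement under \eqref{e:boundXi1}--\eqref{e:boundXi2} then follows by combining the Lipschitz dependence of $v$ on $Z$ (from Step 1) with the continuity of the solution map in $(v,Z)$ from Theorem~\ref{theo:fixedPointGen}; as there, the dependence on $Z$ is only logarithmically continuous in general, but jointly continuous, and one obtains a maximal time $T^L$ and solution $\CS^L(u_0,Z)$ with the stated blow-up alternative. The main obstacle, and the only genuinely new ingredient, is the bookkeeping of Step 1: verifying that the ad~hoc definition $\CR\PR^+\Xi = \xi\one_{t\ge 0}$ together with the time-slice regularity of $K*\xi$ really does place $v$ into the right space $\CD^{\gamma,\eta}_P$ \emph{uniformly} and continuously in $Z$, so that it can legitimately play the role of "$v$" in the abstract fixed point theorem — everything else is a routine reprise of the general theory.
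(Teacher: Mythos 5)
Your proposal is correct and follows essentially the same route as the paper: reduce to Theorem~\ref{theo:fixedPointGen} via Lemma~\ref{lem:RHSPhi}, verify the exponent conditions (the binding one being $3\eta > -2q$, i.e.\ $\eta > -2/3$), use Remark~\ref{rem:senseSingDist} to define $\CR\PR^+\Xi = \xi\one_{t\ge 0}$, and exploit the decomposition $u = \CI\Xi + (\text{function-like part})$ together with the hypothesis $K*\xi \in \CC(\R,\CC^\eta)$ to carry out the restart argument of Proposition~\ref{prop:solGen}. The only blemishes are bookkeeping slips ($\zeta$ is $\alpha+2$, not $0$, and $\bar\zeta = 3(\alpha+2)$, not $\gamma+2\alpha+4$), which do not affect the conclusions since the constraints you actually verify are the correct ones.
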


\begin{proof}
We claim that, as a consequence of Lemma~\ref{lem:RHSPhi}, 
the nonlinearity $F(u) = -u^3$ satisfies the assumptions of Theorem~\ref{theo:fixedPointGen}
as soon as we choose $\gamma > |2\alpha + 4|$.
Indeed, in this situation, 
$V$ is the sector generated by all elements in $\CF_F$ of the form $\CI \tau$,
while $\bar V$ is the span of $\CF_F \setminus \{\Xi\}$.
As a consequence, one has $\zeta = \alpha + 2$ and 
$\bar \zeta = 3(\alpha+2)$, so that indeed $\zeta < \bar \zeta + 2$. 

Provided that $\eta$ and $\gamma$ are as in Lemma~\ref{lem:RHSPhi}, one then has
$\bar \eta = 3\eta$ and $\bar \gamma = \gamma+2\alpha+4$. 
The condition $\eta < (\bar \eta \wedge \bar \zeta) + 2q$ then reads 
$\eta < 3\eta  + 2$, which translates into the condition $\eta > -1$,
which is satisfied by assumption. 
The condition $\gamma < \bar \gamma + 2q$ reads $\alpha > -3$, which is
also satisfied by assumption. Finally, the assumption $\bar \eta \wedge \bar \zeta > - 2q$
reads $\eta > -{2\over 3}$, which is also satisfied. As a consequence, we can apply 
Theorem~\ref{theo:fixedPointGen} to get a local solution map.

To extend this local map up to the first time where $\|(\CR u)(t,\cdot)\|_\eta$ blows up,
the
argument is virtually identical to the proof of Proposition~\ref{prop:solGen}.
The only difference is that the solution $u$ does not take values in a function-like sector.
However, our local solutions are of the type
$u(t,x) = \CI \Xi + v(t,x)$, with $v$ taking values in a function-like sector.
(As a matter of fact, $v$ takes values in a sector of order $3(\alpha + 2) + 2$.)
The bounds \eref{e:boundXi1} and \eref{e:boundXi2} are then precisely what is required for the
reconstruction operator to still be a continuous map with values in $\CC(\R, \CC_{\bar \s}^\eta)$
and for the fixed point equation
\begin{equs}
u &= - \bigl(\CK_{\bar \gamma} + R_\gamma \CR\bigr)\PR_s^+ u^3 + G u_s + v\;,\\
v &= \bigl(\CK_{\bar \gamma} + R_\gamma \CR\bigr)\PR_s^+ \Xi \;,
\end{equs}
to make sense for all $s > 0$.
\end{proof}

\begin{remark}
The lower bound $-{2\over 3}$ for $\eta$ appearing in this theorem is probably sharp. This is because
the space $\CC^{-{2\over 3}}$ is critical for the deterministic equation so that one
wouldn't even expect to have a continuous solution map for $\d_t u = \Delta u - u^3$ 
in $\CC^{-{2\over 3}}$! If $u^3$ is replaced by $u^2$ however, the critical space is $\CC^{-1}$
and one can build local solutions for any $\eta > -1$.
\end{remark}

As in Section~\ref{sec:PAMGenRigour}, we now identify solutions corresponding to a model
that has been renormalised under the action of the group $\RR_0$ constructed in Section~\ref{sec:renPhi4}
with classical solutions to a modified equation. Recall that this time, elements $M \in \RR_0$ are
characterised by two real numbers $C_1$ and $C_2$.
As before, denote by $u_\eps = \bar \CS^L(u_0, \xi_\eps)$ the classical solution map to the equation
\begin{equ}
\d_t u_\eps = \Delta u_\eps - u_\eps^3 + \xi_\eps\;,
\end{equ}
stopped when $\|u_\eps(t,\cdot)\|_\eta \ge L$. Here, $\xi_\eps$ is a continuous function which is
periodic in space, and $u_0 \in \CC^\eta(\T^3)$. 
This time, it turns out that the renormalised map
$\bar \CS^L_M$ is given by the classical solution map to the equation
\begin{equ}[e:renormEquPhi4]
\d_t u_\eps = \Delta u_\eps + (3C_1 - 9C_2)u_\eps - u_\eps^3 + \xi_\eps\;,
\end{equ}
stopped as before when the norm of the solution reaches $L$. Indeed, one has again: 

\begin{proposition}\label{prop:identifyRenormPhi4}
Given a continuous function $\xi_\eps\colon \R \times \T^3 \to \R$, denote by $Z_\eps = (\Pi^{(\eps)}, \Gamma^{(\eps)})$
the associated canonical model for the regularity structure $\TT_F^{(r)}$ given by Proposition~\ref{prop:canonicReal}.
Let furthermore $M \in \RR_0$ be as in Section~\ref{sec:renPhi4}. Then, for every $L>0$ and symmetric
$u_0 \in \CC^\eta(\R^2)$, one has the identities
\begin{equ}
\CR \CS^L(u_0, Z_\eps) = \bar \CS^L(u_0, \xi_\eps)\;,\quad\text{and}\quad
\CR \CS^L(u_0, M Z_\eps) = \bar \CS^L_M(u_0, \xi_\eps)\;.
\end{equ}
\end{proposition}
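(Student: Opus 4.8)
The plan is to mirror the structure of the proof of Proposition~\ref{prop:identifyRenormPAM}, using the two key inputs that the model $Z_\eps$ is the canonical lift of a continuous function (so that products are represented by genuine pointwise products, cf. Remark~\ref{rem:product}) and that the renormalised model $MZ_\eps$ satisfies $\Pi_z^{M,(\eps)}\tau = \Pi_z^{(\eps)} M\tau$ for $\tau \in \CF_0$, which is precisely the content of the identities \eref{e:idenDeltaMPhi1}--\eref{e:deltaMPsi2I3} established in Section~\ref{sec:renPhi4}.

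First I would dispose of the unrenormalised identity $\CR \CS^L(u_0,Z_\eps) = \bar \CS^L(u_0,\xi_\eps)$. Following the argument in the proof of Proposition~\ref{prop:solGen}, the function $w = \CR \CS^L(u_0,Z_\eps)$ satisfies the mild-form fixed point equation $w = -G * \PR^+ w^3 + Gu_0$ on $(0,T^L)$, where $G$ is the heat kernel on $\T^3$. Since $Z_\eps$ is the canonical model associated with the continuous function $\xi_\eps$, Remark~\ref{rem:continuousModel} and Remark~\ref{rem:product} give $\CR(\PR^+ F_\gamma(u)) = \xi_\eps \one_{t>0} - (\CR u)^3$ wherever $u \in \CD^{\gamma,\eta}_P(V)$ with the representation $u(z) = \CI(\Xi) + (\text{lower order in a function-like sector})$; here one uses that $\bigl(\Pi_z^{(\eps)} \tau\bigr)(z)=0$ for every $\tau$ of strictly positive homogeneity, together with the explicit form $u = \CI\Xi + v$ with $\CR\CI\Xi = K*\xi_\eps$ and $v$ taking values in a function-like sector. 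Hence $w$ solves the same PDE as $\bar \CS^L(u_0,\xi_\eps)$, and the two agree by uniqueness of classical solutions.

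Next I would treat the renormalised identity. Write $MZ_\eps = (\Pi^{M,(\eps)},\Gamma^{M,(\eps)})$ and denote by $\CR^M$ its reconstruction operator; let $v = \CS^L(u_0, MZ_\eps)$ and $\phi = \CR^M v$. The key computation, exactly as in Proposition~\ref{prop:identifyRenormPAM}, is to expand $v(z)$ to sufficient order. Since the assumption $\alpha \in (-\tfrac{18}{7},-\tfrac52)$ keeps the list of elements of non-positive homogeneity equal to the set $\CF_0$ of \eref{e:F0Phi}, and since the identity \eref{e:idenSol} applied to $F = \Xi - U^3$ determines the coefficients of $v$ up to that order, I can write $v(z)$ as a function of $\phi(z)$ and a finite number of basis elements: schematically $v = \CI(\Xi) + \phi\,\one - \CI(\Psi^3)\phi^{\star?}$ plus components of higher homogeneity, where $\Psi = \CI(\Xi)$; the precise expansion is obtained by solving the abstract fixed point for the map $\tau\mapsto \CQ_\gamma^-(-\CI(\tau^3) + v)$. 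Then $F_\gamma(v)(z) = \Xi - \hat{(\cdot)^3}_\gamma(v(z))$ must be expanded into its components of negative homogeneity — these involve $\Xi$, $\Psi^2$, $\Psi^3$, $\CI(\Psi^2)\Psi^2$, $\CI(\Psi^3)\Psi$, $\CI(\Psi^3)\Psi^2$, $\CI(\Psi)\Psi^2$ and the $\Psi^2 X_i$ — and one then applies $\CR^M w(z) = \bigl(\Pi_z^{M,(\eps)} w(z)\bigr)(z) = \bigl(\Pi_z^{(\eps)} M w(z)\bigr)(z)$, reading off the action of $M$ from \eref{e:defMPhi4}. The terms $-C_1\one$ in $M\Psi^2$, $-3C_1\Psi$ in $M\Psi^3$, and $-C_2\one$ (resp. $-3C_2\Psi$) in $M\CI(\Psi^2)\Psi^2$ (resp. $M\CI(\Psi^3)\Psi^2$) are precisely the ones that survive evaluation at $z$, and bookkeeping the combinatorial coefficients $3$ (from the cube) and the structure of the cubic nonlinearity produces the term $(3C_1 - 9C_2)\phi$; the remaining renormalisation corrections either vanish on evaluation or have positive homogeneity and hence do not contribute. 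This shows $\phi$ satisfies the mild form of \eref{e:renormEquPhi4}, so $\phi = \bar\CS^L_M(u_0,\xi_\eps)$ by uniqueness.

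The main obstacle I anticipate is the bookkeeping in the second step: one must expand $v$ to high enough order that \emph{all} of the elements of $\CF_0$ of negative homogeneity that can arise in $F_\gamma(v)$ are captured, verify that the coefficients of those elements in $v$ are indeed the ``correct'' functions of $\phi$ (this uses \eref{e:idenSol} together with an explicit identification of the local solution map $\FF$, which for the $\Phi^4$ nonlinearity is not as transparent as in the \eref{e:PAMGen} case because the sector $V$ is not function-like and $\CI\Xi$ carries negative homogeneity), and then track the precise numerical constants through the cube. One must also be slightly careful that the contributions $3C_1\,\Psi X_i \otimes \CJ_i(\Psi)$ appearing in $\DeltaM\CI(\Psi^3)\Psi$ and $\DeltaM\CI(\Psi^3)\Psi^2$ — i.e. the fact that $\DeltaM\tau \ne (M\tau)\otimes\one$ for these two elements — do not spoil the identity $\Pi_z^{M,(\eps)}\tau = \Pi_z^{(\eps)}M\tau$ \emph{when evaluated at $z$}; this holds because $\bigl(\Pi_z^{(\eps)} X_i\bigr)(z) = 0$, so those extra terms drop out on evaluation, exactly as the $X_i\Xi$ term did in Proposition~\ref{prop:identifyRenormPAM}. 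Everything else is a routine but lengthy application of the reconstruction theorem and the definitions.
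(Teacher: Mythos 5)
Your proposal follows essentially the same route as the paper's proof: expand the solution $v$ in the model space up to all components of negative homogeneity, expand $F(v)=\Xi-v^3$ accordingly, and use $\bigl(\Pi_z^{M,(\eps)}\tau\bigr)(z)=\bigl(\Pi_z^{(\eps)}M\tau\bigr)(z)$ (valid because the extra terms in $\DeltaM$ for $\CI(\Psi^3)\Psi$ and $\CI(\Psi^3)\Psi^2$ carry a factor $X_i$ and vanish at the base point) to read off the counterterm $(3C_1-9C_2)\CR^M v$. The only blemish is notational — you set $\phi=\CR^M v$ but then use $\phi$ for the coefficient of $\one$ in the expansion of $v$, whereas these differ by $K*\xi_\eps$; the paper's explicit expansion $u=\Psi+\phi\,\one-\CI(\Psi^3)-3\phi\,\CI(\Psi^2)+\scal{\nabla\phi,X}+\rho_u$ resolves the bookkeeping you flag as the main obstacle.
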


\begin{proof}
The proof is similar to the proof of Proposition~\ref{prop:identifyRenormPAM}.
Just like there, we can find periodic 
functions $\phi\colon \R^4 \to \R$ and $\nabla \phi\colon \R^4 \to \R^3$ such that,
writing $\Psi = \CI(\Xi)$ as a shorthand, the solution
$u$ to the abstract fixed point map can be expanded as
\begin{equ}[e:decompuPhi]
u = \Psi + \phi\, \one - \CI(\Psi^3) - 3 \phi\, \CI(\Psi^2) + \scal{\nabla \phi, X} + \rho_u\;,
\end{equ}
where every component of $\rho_u$ has homogeneity strictly greater than $-4-2\alpha$. In particular, 
since $\bigl(\Pi_z^{M,(\eps)} \Psi\bigr)(z) = \bigl(K * \xi_\eps\bigr)(z)$, one has the identity
\begin{equ}
\bigl(\CR u\bigr)(z) = \bigl(K*\xi_\eps\bigr)(z) + \phi(z)\;,
\end{equ}
where we denote by $\CR$
the reconstruction operator associated to $Z_\eps$.
As a consequence of \eref{e:decompuPhi}, $F(u) = \Xi - u^3$ can be expanded in increasing degrees of homogeneity as
\begin{equs}
F(u) &= \Xi - \Psi^3 - 3\phi\, \Psi^2 + 3 \Psi^2 \CI(\Psi^3) - 3 \phi^2\,\Psi + 6 \phi\, \Psi\CI(\Psi^3) \\
&\quad + 9\phi\, \Psi^2 \CI(\Psi^2) - 3 \scal{\nabla\phi, \Psi^2 X}
- \phi^3\,\one + \rho_F\;,
\end{equs}
where every component of $\rho_F$ has strictly positive homogeneity. 
This time, one has the identity $\DeltaM \tau = M\tau \otimes \one + \bar \tau^{(1)} \otimes \bar \tau^{(2)}$
where each of the elements $\bar \tau^{(1)}$ includes at least one 
factor $X_i$. As a consequence, just like in the case of
 \eref{e:PAMGen}, one has again the identity $\bigl(\Pi_z^{M,(\eps)} \tau\bigr)(z) = \bigl(\Pi_z^{(\eps)} M\tau\bigr)(z)$.
It 
follows at once that, for $u$ as in \eref{e:decompuPhi}, one has the identity
\begin{equs}
\bigl(\CR^M F(u)\bigr)(z) &= \xi_\eps(z) - (\CR u)(z)^3 + 3C_1 \bigl(K * \xi_\eps\bigr)(z) + 3 C_1 \phi(z)\\
&\quad -  9C_2 \bigl(K * \xi_\eps\bigr)(z) - 9 C_2 \phi(z)\\
& = \xi_\eps(z) - (\CR u)(z)^3 + (3C_1 - 9C_2) \,\bigl(\CR u\bigr)(z)\;.
\end{equs}
The claim now follows in the same way as in the proof of Proposition~\ref{prop:identifyRenormPAM}.
\end{proof}

\begin{remark}
We could of course have taken for $F$ an arbitrary polynomial of degree $3$. If we take for
example $F(u) = \Xi - u^3 + a u^2$ for some real constant $a$, then we  obtain for
our renormalised equation
\begin{equ}
\d_t u_\eps = \Delta u_\eps + 3(C_1 - 3 C_2)u_\eps - u_\eps^3 + a u_\eps^2 - a (C_1-3C_2) + \xi_\eps\;.
\end{equ}
It is very interesting to note that, again, the renormalisation procedure
formally ``looks like'' simple
Wick renormalisation, except that the renormalisation constant does \textit{not}
equal the variance of the linearised equation. 
It is not clear at this stage whether this is a coincidence or
has a deeper meaning. 

In the case where no term $u^3$ appears, the renormalisation procedure is
significantly simplified since none of the terms involving $\CI(\Psi^3)$ appears. This then
allows to reduce the problem to the methodology of \cite{MR1941997,MR2016604}, 
see also the recent work \cite{ArnulfE}. In this case, the renormalisation is
the usual Wick renormalisation involving only the constant $C_1$.
\end{remark}

\section{Homogeneous Gaussian models}
\label{sec:Gaussian}

One very important class of random models for a given regularity structure is given by 
``Gaussian models'', where
the processes $\Pi_x a$ and $\Gamma_{xy} a$
are built from some underlying Gaussian white noise $\xi$.
Furthermore, we are going to consider the stationary situation where, for any given test 
function $\phi$, any $\tau \in T$, and any $h \in \R^d$, the processes
$x \mapsto \bigl(\Pi_x \tau\bigr)(\phi_x)$ and $x \mapsto \Gamma_{x,x+h}$ are stationary as a
function of $x$. (Here, we wrote $\phi_x$ for the function $\phi$ translated so that it is centred
around $x$.)
Finally, in such a situation, it will be natural to assume that
the random variables $\bigl(\Pi_x \tau\bigr)(\psi)$ and $\Gamma_{xy} \tau$
belong to the (inhomogeneous) Wiener chaos of some fixed order (depending only on $\tau$)
for $\xi$. This is indeed the case for the canonical models $Z_\eps$ built from some
continuous Gaussian process $\xi_\eps$ as in Section~\ref{sec:realAlg}, provided 
that $\xi_\eps(z)$ is a linear functional
of $\xi$ for every $z$. It is also the case for the renormalised model 
$\hat Z_\eps = M^{(\eps)} Z_\eps$, where $M^{(\eps)}$ denotes any element of the renormalisation group
$\RR$ built in Section~\ref{sec:renormalisation}.

Our construction suggests that there exists a general procedure such that, by using the
general renormalisation procedure described in Section~\ref{sec:renormalisation}, 
it is typically
possible to build natural stationary Gaussian models that can then 
be used as input for the abstract solution maps
built in Section~\ref{sec:GenFP}.
As we have seen, the corresponding solutions can then typically be interpreted 
as limits of classical solutions to a renormalised version of the equation as in Section~\ref{sec:renProc}.
Such a completely general statement does unfortunately seem out of reach for the moment, although
someone with a deeper knowledge of algebra and constructive quantum field theory techniques might be
able to achieve this. Therefore, we will only focus on two examples, namely on the case of the  dynamical
$\Phi^4_3$ model, as well as the generalisation of the two-dimensional
continuous parabolic Anderson model given in \eref{e:PAMGen}. 
Several of the intermediate steps in our construction are completely generic though,
and would just as well apply, \textit{mutatis mutandis}, to \eref{e:PAM} in dimension $3$, 
to \eref{e:KPZ}, or to \eref{e:NS}.

\subsection{Wiener chaos decomposition}
\label{sec:Chaos}

In all the examples mentioned in the introduction, the driving noise $\xi$ was Gaussian. 
Actually, it was always given by \textit{white noise} on some copy of $\R^d$ which would always
include the spatial variables and, except for \eref{e:PAM}, would include the temporal
variable as well. Mathematically, white noise is described by a probability space
$(\Omega,\F, \P)$, as well as a Hilbert space $H$ (typically some $L^2$ space) and a collection
$W_h$ of centred jointly Gaussian random variables indexed by $h \in H$ with the property that
the map $h \mapsto W_h$ is a linear isometry from $H$ into $L^2(\Omega,\P)$.
In other words, one has the identity
\begin{equ}
\E W_h W_{\bar h} = \scal{h, \bar h}\;,
\end{equ}
where the scalar product on the right is the scalar product in $H$.

\begin{remark}
We will usually consider a situation where some symmetry group $\SS$ 
acts on $\R^d$. In this case, $H$ is actually given by $L^2(\Dom)$, where
$\Dom \subset \R^d$ is the fundamental domain of the action of $\SS$.
This comes with a natural projection $\pi \colon L^2(\R^d) \to H$ given by
$\bigl(\pi \phi\bigr)(x) = \sum_{g \in \SS} \phi(T_g x)$.
\end{remark}

In the setting of the above remark, this data also yields a random 
distribution, which we denote by $\xi$, defined through $\xi(\phi) \eqdef W_{\pi \phi}$.
If we endow $\R^d$ with some scaling $\s$,
we have the following simple consequence of Proposition~\ref{prop:charSpaces}.

\begin{lemma}\label{lem:approxGauss}
The random distribution $\xi$ defined above 
almost surely belongs to $\CC_\s^{\alpha}$ for every $\alpha < -|\s|/2$.
Furthermore, let $\rho \colon \R^d \to \R$ be a smooth compactly supported function
integrating to one, set $\rho_\eps = \CS^\eps_{\s,0} \rho$, and define $\xi_\eps = \rho_\eps * \xi$.
Then, for every $\alpha < -{|\s|\over 2}$, every $\kappa > 0$, and every compact set $\K \subset \R^d$, 
one has the bound
\begin{equ}
\E \|\xi_\eps - \xi\|_{\alpha;\K} \lesssim \eps^{-{|\s|\over 2} - \alpha - \kappa}\;.
\end{equ}
\end{lemma}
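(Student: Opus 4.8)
The plan is to use the wavelet characterisation of $\CC^\alpha_\s$ from Proposition~\ref{prop:charSpaces} together with the equivalence of moments for Gaussian random variables, exactly in the spirit of the proof of Proposition~\ref{prop:convLinearPhi4}. First I would fix a wavelet basis with a compactly supported scaling function $\phi$ of regularity $r > |\alpha|$ as in Section~\ref{sec:wavelets}, and write $\Psi_\star = \Psi \cup \{\phi\}$. The key observation is that for any $\psi \in \Psi_\star$, the random variable $\scal{\xi, \psi_x^{n,\s}}$ is a centred Gaussian with variance $\|\pi \psi_x^{n,\s}\|_H^2 \le \|\psi_x^{n,\s}\|_{L^2(\R^d)}^2 = 1$, using that the wavelets are $L^2$-normalised. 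For the mollified noise, one uses $\scal{\xi_\eps, \psi_x^{n,\s}} = \scal{\xi, \rho_\eps * \psi_x^{n,\s}}$ (with $\rho_\eps$ even, or replacing $\rho$ by its reflection), so that $\scal{\xi_\eps - \xi, \psi_x^{n,\s}}$ has variance bounded by $\|\rho_\eps * \psi_x^{n,\s} - \psi_x^{n,\s}\|_{L^2}^2$.

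The main quantitative input is then the estimate
\begin{equ}
\|\rho_\eps * \psi_x^{n,\s} - \psi_x^{n,\s}\|_{L^2}^2 \lesssim 1 \wedge (2^{2n}\eps^2)\;,
\end{equ}
which follows from a direct computation: for $2^{-n} \ge \eps$ one Taylor-expands $\psi_x^{n,\s}$ and uses that $\rho$ is compactly supported and integrates to one, picking up a gradient bound of order $2^n$ times the support radius $\eps$; for $2^{-n} < \eps$ one simply bounds each term in $L^2$ by $1$. Given this, for any even integer $2p$ one estimates, using Proposition~\ref{prop:charSpaces} and the equivalence of Gaussian moments,
\begin{equs}
\E \|\xi_\eps - \xi\|_{\alpha;\K}^{2p} &\le \sum_{\psi \in \Psi_\star} \sum_{n \ge 0} \sum_{x \in \Lambda_\s^n \cap \bar \K} 2^{(2\alpha + |\s|)np} \, \E |\scal{\xi_\eps - \xi, \psi_x^{n,\s}}|^{2p} \\
&\lesssim \sum_{n \ge 0} 2^{(2\alpha + |\s|)np + |\s|n} \bigl(1 \wedge 2^{2n}\eps^2\bigr)^p\;,
\end{equs}
where $\bar \K$ is the $1$-fattening of $\K$ and we used that $|\Lambda_\s^n \cap \bar \K| \lesssim 2^{|\s|n}$. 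Splitting the sum at $2^{-n} \sim \eps$ and summing the two geometric-type series (the first converges at the top because $2\alpha + |\s| < 0$, the second at the bottom because $2\alpha + |\s| + 2 > 0$ for $\alpha$ close enough to $-|\s|/2$), one obtains a bound of order $\eps^{(-|\s|/2 - \alpha)2p - 2p\kappa'}$ for some small $\kappa'$ that can be absorbed; taking $2p$-th roots and then letting $p \to \infty$ to convert $\kappa'$ into an arbitrary $\kappa$ gives the claimed $L^1$ bound $\E\|\xi_\eps - \xi\|_{\alpha;\K} \lesssim \eps^{-|\s|/2 - \alpha - \kappa}$.

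Finally, the almost sure membership $\xi \in \CC^\alpha_\s$ for every $\alpha < -|\s|/2$ follows from the same computation with $\xi_\eps$ replaced by $0$ (i.e.\ just bounding $\E\|\xi\|_{\alpha;\K}^{2p} \lesssim \sum_n 2^{(2\alpha+|\s|)np+|\s|n} < \infty$ when $2\alpha + |\s| < 0$), which shows $\|\xi\|_{\alpha;\K} < \infty$ almost surely for any fixed compact $\K$; taking a countable exhaustion of $\R^d$ by compacts and a countable sequence $\alpha_k \uparrow -|\s|/2$ concludes. I expect the only mildly delicate point to be the elementary $L^2$ estimate on $\rho_\eps * \psi_x^{n,\s} - \psi_x^{n,\s}$ and keeping track of which regime ($2^{-n}$ versus $\eps$) contributes the dominant term; everything else is bookkeeping. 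One should also note that this is essentially the local, inhomogeneous-scaling analogue of the standard fact that white noise has Besov regularity $B^{-|\s|/2}_{\infty,\infty}$, so no genuinely new idea is required beyond Proposition~\ref{prop:charSpaces}.
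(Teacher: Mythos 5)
Your proof is correct and is essentially the argument the paper intends: the paper's own ``proof'' of this lemma merely points to the first part of Proposition~\ref{prop:convLinearPhi4} and leaves the calculation as an exercise, and what you have written out is precisely that calculation (wavelet characterisation via Proposition~\ref{prop:charSpaces}, equivalence of Gaussian moments, the $L^2$ estimate $1\wedge 2^{2n}\eps^2$, and the split of the dyadic sum at $2^{-n}\sim\eps$), only without the indicator functions $\one_{t\ge s}$ appearing there. The one caveat --- which the paper shares, since Proposition~\ref{prop:convLinearPhi4} likewise restricts to $\alpha\in(-{7\over 2},-{5\over 2})$ --- is that the first-order Taylor bound on $\rho_\eps*\psi_x^{n,\s}-\psi_x^{n,\s}$ only yields the stated rate for $\alpha\ge -1-|\s|/2$ (below that the rate saturates at $\eps^1$), a restriction you implicitly acknowledge and which covers every use of the lemma in the paper.
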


\begin{proof}
The proof is almost identical to the proof of the first part of Proposition~\ref{prop:convLinearPhi4}.
The calculations are actually more straightforward since the indicator functions 
$\one_{t \ge s}$ do not appear, so we leave this as an exercise. 
\end{proof}

It was first remarked by Wiener \cite{WienerChaos} that there exists a natural isometry between
\textit{all} of $L^2(\Omega,\P)$ and the ``symmetric Fock space''
\begin{equ}
\hat H = \bigoplus_{k \ge 0} H^{\otimes_s k}\;,
\end{equ}
where $H^{\otimes_s k}$ denotes the symmetric $k$-fold tensor product of $H$.
Here, we identify $H^{\otimes_s k}$ with $H^{\otimes k}$, quotiented by the
equivalence relations 
\begin{equ}
e_{i_1} \otimes\ldots \otimes e_{i_k} \sim e_{i_{\sigma(1)}} \otimes\cdots 
\otimes e_{i_{\sigma(k)}}\;,
\end{equ}
where $\sigma$ is an arbitrary permutation of $k$ elements.
(This extends by linearity.)

If $\{e_n\}_{n \ge 0}$ denotes an orthonormal basis of $H$ then, for any
sequence $k_0,k_1,\ldots$ of positive integers with only finitely many non-zero elements, 
Wiener's isometry is given
by
\begin{equ}
k! e_k \eqdef k! e_0^{\otimes k_0}\otimes e_1^{\otimes k_1}\otimes \ldots \qquad \Leftrightarrow\qquad
H_{k_0}(W_{e_0})H_{k_1}(W_{e_1})\ldots\;,
\end{equ}
where $H_n$ denotes the $n$th Hermite polynomial, $k! = k_0! k_1!\cdots$, and $e_k$ 
has norm $1$. Random variables in correspondence with elements in $H^{\otimes_s m}$ are
 said to belong to the $m$th homogeneous Wiener chaos. The $m$th inhomogeneous
chaos is the sum of all the homogeneous chaoses of orders $\ell \le m$.
See also \cite[Ch.~1]{Nualart} for more details.

We have a natural projection $H^{\otimes m} \twoheadrightarrow H^{\otimes_s m}$:
just map an element to its equivalence class.
Composing this projection with Wiener's isometry yields a natural family of maps 
$I_m\colon H^{\otimes m} \to L^2(\Omega,\P)$ with the property that
\begin{equ}
\E \bigl(I_m(f)^2\bigr) \le \|f\|^2\;,
\end{equ}
where $f \in H^{\otimes m}$ is identified with an element of $L^2(\Dom^{d})$,
and the right hand side denotes its $L^2$ norm. In the case of an element $f$ that
is symmetric under the permutation of its $m$ arguments, this inequality turns into an equality.
For this reason, many authors restrict themselves to symmetric functions from the start, but
it turns out that allowing ourselves to work with non-symmetric functions will greatly
simplify some expressions later on.

Note that in the case $m=1$, we simply have $I_1(h) = W_h$.
The case $m=0$ corresponds to the natural identification of $H^{0} \sim \R$ with
the constant elements of $L^2(\Omega,\P)$. To state the following result, we denote by 
$\CS(r)$ the set of all permutations of $r$ elements, and by $\CS(r,m) \subset \CS(m)$
the set of all ``shuffles'' of $r$ and $m-r$ elements, namely the set of permutations of 
$m$ elements which preserves the order of the first $r$ and of the last $m-r$ elements.
For $x \in \Dom^m$ and $\Sigma \in \CS(m)$, we write $\Sigma(x) \in \Dom^m$
as a shorthand for $\Sigma(x)_i = x_{\Sigma(i)}$.
For $x \in \Dom^r$ and $y \in \Dom^{m-r}$, we also denote by $x \sqcup y$ the element of $\Dom^{m}$
given by $(x_1,\ldots,x_r,y_1,\ldots,y_{m-r})$.
With these notations, we then have the following formula for the
product of two elements.

\begin{lemma}\label{lem:productChaos}
Let $f \in L^2(\Dom^\ell)$ and $g \in L^2(\Dom^m)$. Then, one has
\begin{equ}[e:formProduct]
I_\ell(f) I_m(g) = \sum_{r =0}^{\ell \wedge m} I_{\ell+m-2r}(f\star_r g)\;,
\end{equ}
where
\begin{equ}
(f \star_r g)(z \sqcup \tilde z) = 
\sum_{\Sigma \in \CS(r,\ell)\atop
\tilde \Sigma \in \CS(r,m)}\sum_{\sigma \in \CS(r)} \int_{\Dom^r} f(\Sigma(x\sqcup z))g(\tilde \Sigma(x \sqcup \sigma(\tilde z)))\,dx\;,
\end{equ}
for all $z \in \Dom^{\ell-r}$ and $\tilde z \in \Dom^{m-r}$.
\end{lemma}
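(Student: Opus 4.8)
\textbf{Proof strategy for Lemma~\ref{lem:productChaos}.}
The plan is to reduce the general statement to the case of elements of a particularly simple form and then use bilinearity and a density argument. First I would recall the classical product formula for Hermite polynomials, namely
\begin{equ}
H_p(\xi)\,H_q(\xi) = \sum_{r=0}^{p\wedge q} r!\binom{p}{r}\binom{q}{r} H_{p+q-2r}(\xi)\;,
\end{equ}
valid for a standard Gaussian $\xi$, together with the slightly more general statement that for jointly Gaussian $(\xi,\eta)$ with $\E\xi^2 = \E\eta^2 = 1$ and $\E\xi\eta = c$ one has an analogous expansion with the coefficients involving powers of $c$. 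The point is that $I_m$ applied to a pure tensor $e_{i_1}\otimes\cdots\otimes e_{i_m}$ (with the $e_i$ orthonormal) is, up to combinatorial factors, a product of Hermite polynomials in the independent Gaussians $W_{e_i}$, so the product formula for such pure tensors follows directly from the Hermite product formula, keeping careful track of which indices coincide. The contractions $\star_r$ precisely encode the pairing of $r$ of the arguments of $f$ with $r$ of the arguments of $g$ via the $L^2(\Dom)$ inner product $\scal{e_i,e_j} = \delta_{ij}$, and the sums over $\CS(r,\ell)$, $\CS(r,m)$ and $\CS(r)$ are there exactly to account for all the ways these pairings can occur once one drops the symmetry assumption on $f$ and $g$.

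The key steps, in order, would be: (i) verify the formula when $f = e_{i_1}\otimes\cdots\otimes e_{i_\ell}$ and $g = e_{j_1}\otimes\cdots\otimes e_{j_m}$ are pure tensors built from a fixed orthonormal basis, by expanding both sides in terms of Hermite polynomials of the $W_{e_k}$ and matching coefficients; here one uses that $e_i$ with distinct $i$ correspond to independent Gaussians so that only ``diagonal'' contractions survive, which is exactly what $f\star_r g$ produces since $\scal{e_i,e_j}$ vanishes off the diagonal. (ii) Extend to finite linear combinations of pure tensors by bilinearity of both sides of \eqref{e:formProduct} — note that $I_\ell$, $I_m$, $I_{\ell+m-2r}$ and each $\star_r$ are all (bi)linear, so this is immediate. (iii) Pass to general $f\in L^2(\Dom^\ell)$ and $g\in L^2(\Dom^m)$ by density: finite linear combinations of pure tensors are dense in $L^2(\Dom^\ell)$, and one checks that both sides of \eqref{e:formProduct} are continuous in $(f,g)$ in the appropriate topology. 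For the left-hand side this uses the hypercontractivity / equivalence of moments on a fixed Wiener chaos (so that $L^2$ convergence of $f$ implies $L^2(\Omega)$ convergence of $I_\ell(f)$, and the product of two $L^2(\Omega)$-bounded chaos elements is controlled). For the right-hand side one needs the elementary bound $\|f\star_r g\|_{L^2(\Dom^{\ell+m-2r})} \lesssim \|f\|_{L^2(\Dom^\ell)}\|g\|_{L^2(\Dom^m)}$, which follows from Cauchy--Schwarz applied to the $r$-fold integration (the combinatorial prefactors are finite constants depending only on $\ell$, $m$, $r$).

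The main obstacle I expect is purely bookkeeping rather than conceptual: getting the combinatorial factors to match exactly, i.e.\ checking that when one expands $I_\ell(f)I_m(g)$ for pure tensors and collects terms, the multiplicities produced by the Hermite product formula (the $r!\binom{p}{r}\binom{q}{r}$ type coefficients, summed over which indices among the $i$'s and $j$'s are being contracted) coincide precisely with the cardinality of the index set $\{(\Sigma,\tilde\Sigma,\sigma) : \Sigma\in\CS(r,\ell),\ \tilde\Sigma\in\CS(r,m),\ \sigma\in\CS(r)\}$ appearing in the definition of $\star_r$. The cleanest way to organise this is to not symmetrise $f$ and $g$ from the outset — which is exactly why the lemma is stated for arbitrary (not necessarily symmetric) kernels — so that each contraction pattern is counted with weight one and the shuffles $\CS(r,\ell)$, $\CS(r,m)$ simply enumerate which $r$ slots of $f$ and of $g$ are paired, while $\CS(r)$ enumerates the bijection between those slots. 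One should also double-check the degenerate cases $r=0$ (no contraction, giving the tensor product $f\otimes g$ mapped by $I_{\ell+m}$) and $\ell=0$ or $m=0$ (where $I_0$ is the identification with constants), both of which are immediate from the conventions. Once the pure-tensor identity is pinned down, steps (ii) and (iii) are routine.
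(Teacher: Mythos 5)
Your strategy is sound, but note that the paper does not actually prove this lemma: its ``proof'' is a single citation to Nualart's monograph, where the product formula is established for \emph{symmetrised} kernels (with the combinatorics packaged into factors $r!\binom{\ell}{r}\binom{m}{r}$), the standard textbook route being a reduction to the case where one of the two integrals has order one followed by an induction. Your route --- verify the identity on pure tensors $e_{i_1}\otimes\cdots\otimes e_{i_\ell}$ by expanding both sides in products of Hermite polynomials of the independent Gaussians $W_{e_k}$, then extend by bilinearity and density --- is the other standard proof, and it is arguably better adapted to the statement as given here: the lemma is deliberately phrased for non-symmetric kernels, and the shuffle sums in $\star_r$ count each contraction pattern with weight one, so your ``main obstacle'' of matching multiplicities is exactly the check that the $|\CS(r,\ell)|\cdot|\CS(r,m)|\cdot|\CS(r)| = \binom{\ell}{r}\binom{m}{r}\,r!$ triples enumerate the contraction patterns bijectively (only diagonal pairings survive since $\scal{e_i,e_j}=\delta_{ij}$). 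Your limiting argument is also the correct one: $L^2$ convergence of kernels gives $L^2(\Omega)$ convergence of the chaos elements, hypercontractivity (Lemma~\ref{lem:Nelson}) upgrades this to $L^4(\Omega)$ so that the product on the left converges in $L^2(\Omega)$, and Cauchy--Schwarz yields $\|f\star_r g\|\le C_{\ell,m,r}\|f\|\,\|g\|$ for the right-hand side. I see no gap in the proposal beyond the bookkeeping you have already flagged.
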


\begin{proof}
See \cite[Prop.~1.1.2]{Nualart}.
\end{proof}

\begin{remark}
Informally speaking, Lemma~\ref{lem:productChaos} states that in order to build the chaos decomposition
of the product $I_\ell(f) I_m(g)$, one should consider all possible ways of pairing $r$ of the $\ell$ 
arguments of $f$ with $r$ of the $m$ arguments of $g$ and integrate over these paired
arguments. This should really be viewed as an extension of Wick's product formula for Gaussian
random variables.
\end{remark}

A remarkable property of the Wiener chaoses is the following equivalence of moments:

\begin{lemma}\label{lem:Nelson}
Let $X \in L^2(\Omega,\P)$ be a random variable in the $k$th inhomogeneous Wiener chaos. Then,
for every $p \ge 1$, there exists a universal constant $C_{k,p}$ such that $\E |X^{2p}| \le C_{k,p} \bigl(\E X^2\bigr)^p$.
\end{lemma}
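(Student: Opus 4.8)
\textbf{Proof strategy for Lemma~\ref{lem:Nelson} (hypercontractivity).}
The plan is to reduce the statement to the classical hypercontractivity of the Ornstein--Uhlenbeck semigroup, which is the standard route to Nelson's estimate. First I would recall the Ornstein--Uhlenbeck semigroup $\{P_t\}_{t \ge 0}$ acting on $L^2(\Omega,\P)$, which acts on the $m$th homogeneous Wiener chaos simply as multiplication by $e^{-mt}$; this is the Mehler formula, and it shows in particular that any $X$ in the $m$th homogeneous chaos satisfies $P_t X = e^{-mt} X$. Nelson's hypercontractivity theorem then states that $P_t$ is bounded from $L^2(\Omega,\P)$ to $L^q(\Omega,\P)$ with norm $1$ precisely when $e^{2t} \ge q-1$, i.e. $\|P_t X\|_{L^q} \le \|X\|_{L^2}$ for such $t$ and $q$.

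Given this, the argument for a homogeneous chaos of order $m$ is immediate: fix $q = 2p$, choose $t$ with $e^{2t} = q-1$, and write $X = e^{mt} P_t X$, so that
\begin{equ}
\|X\|_{L^{2p}} = e^{mt}\|P_t X\|_{L^{2p}} \le e^{mt}\|X\|_{L^2} = (2p-1)^{m/2}\|X\|_{L^2}\;,
\end{equ}
which gives $\E|X^{2p}| \le (2p-1)^{mp}(\E X^2)^p$, i.e. the claimed bound with $C_{m,p} = (2p-1)^{mp}$. For an inhomogeneous chaos of order $k$, I would write $X = \sum_{m=0}^k X_m$ with $X_m$ in the $m$th homogeneous chaos, apply the triangle inequality in $L^{2p}$ together with the homogeneous bound to each $X_m$, obtaining $\|X\|_{L^{2p}} \le (2p-1)^{k/2}\sum_{m=0}^k \|X_m\|_{L^2}$, and then use orthogonality of distinct chaoses in $L^2$, namely $\E X^2 = \sum_{m=0}^k \E X_m^2$, combined with Cauchy--Schwarz over the $k+1$ summands, to get $\sum_m \|X_m\|_{L^2} \le \sqrt{k+1}\,(\E X^2)^{1/2}$. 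Collecting constants yields $\E|X^{2p}| \le C_{k,p}(\E X^2)^p$ with, say, $C_{k,p} = (k+1)^p (2p-1)^{kp}$, which depends only on $k$ and $p$ as required.

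The only genuine input here is Nelson's hypercontractivity theorem itself, whose proof is the main substance (it is classical, see e.g. Nelson's original article or \cite[Ch.~1]{Nualart}); I would simply cite it rather than reprove it, since reproving the two-point inequality and tensorising is standard and orthogonal to the concerns of this article. Everything else — the Mehler action of $P_t$ on chaoses, orthogonality of chaoses, the triangle inequality — is routine. So there is no real obstacle; the one point to be careful about is to state the constant $C_{k,p}$ in a form that manifestly does not depend on the particular random variable $X$ (only on its chaos order $k$ and on $p$), which the argument above does.
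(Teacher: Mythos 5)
Your proof is correct and is exactly the argument the paper has in mind: the paper's own proof consists of the single sentence that the claim follows from Nelson's hypercontractive estimate combined with the fact that the chaos decomposition diagonalises the Ornstein--Uhlenbeck semigroup, which is precisely the reduction you carry out. Your explicit constant $C_{k,p}=(k+1)^p(2p-1)^{kp}$ and the handling of the inhomogeneous case via orthogonality are both fine.
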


\begin{proof}
This is a consequence of Nelson's hypercontractive estimate \cite{MR0343816,MR0420249}, combined 
with the fact that the Wiener chaos decomposition diagonalises the Ornstein-Uhlenbeck semigroup.
\end{proof}

\subsection{Gaussian models for regularity structures}

From now on, we assume that we are given a probability space $(\Omega, \F, \P)$,
together with an abstract white noise $h \mapsto W_h$ over the Hilbert space $H = L^2(\Dom)$.
We furthermore assume that we are given a Gaussian random distribution $\xi$ 
which has the property that, for every test function $\psi$, the random variable
$\xi(\psi)$ belongs to the homogeneous first Wiener chaos of $W$.

\begin{remark}
One possible choice of noise $\xi$ is given by $\xi(\psi) = W_\psi$, which corresponds
to white noise. While this is a very natural choice in many physical situations, 
this is not the only choice by far.
\end{remark}

We furthermore assume that we are given a sequence $\xi_\eps$ of continuous
approximations to $\xi$ with the following properties:
\begin{claim}
\item For every $\eps > 0$, the map $x \mapsto \xi_\eps(x)$ is continuous almost surely.
\item For every $\eps > 0$ and every $x \in \R^d$, $\xi_\eps(x)$ is a 
random variable belonging to the first Wiener chaos of $W$.
\item For every test function $\psi$, one has 
\begin{equ}
\lim_{\eps \to 0} \int_{\R^d} \xi_\eps(x)\psi(x)\,dx = \xi(\psi)\;,
\end{equ}
in $L^2(\Omega,\P)$.
\end{claim}

Given such an approximation, one would ideally like to be able to show that the corresponding
sequence $(\Pi^{(\eps)},\Gamma^{(\eps)})$ of canonical models built from $\xi_\eps$
in Section~\ref{sec:realAlg} converges to some limit.
As already mentioned several times, this is simply not the case in general, thus 
the need for a suitable renormalisation procedure.
We will always consider renormalisation procedures based on a sequence $M_\eps$ of
elements in the renormalisation group $\RR$ built in Section~\ref{sec:renormalisation}.
We will furthermore take advantage of the fact that we know \textit{a priori} that the 
models $(\Pi^{(\eps)},\Gamma^{(\eps)})$ belong to some fixed Wiener chaos.

Indeed, we can denote by $\|\tau\|$ the number of occurrences of $\Xi$ in the formal expression $\tau$.
More formally, we set $\|\one\| = \|X\| = 0$, $\|\Xi\| = 1$, and then recursively
\begin{equ}
\|\tau \bar \tau\| = \|\tau\| + \|\bar \tau\|\;,\qquad \|\CI_k \tau\| = \|\tau\|\;.
\end{equ}
Then, as an immediate consequence of Lemma~\ref{lem:productChaos},
for any fixed $\tau \in \CF_F$, $x \in \R^d$, and smooth test function $\psi$, the random variables
$\bigl(\Pi^{(\eps)}_x \tau\bigr)(\psi)$ and $\Gamma_{xy}^{(\eps)}\tau$ belong 
to the (inhomogeneous) Wiener chaos 
of order $\|\tau\|$.
Actually, it belongs to the sum of the homogeneous chaoses of orders $\|\tau\| - 2n$ for
$n$ a positive integer, and this is still true for the renormalised models.
From now on, we denote $\CF_- = \{\tau \in \CF_F\,:\, |\tau|_\s < 0\}$.
The following convergence criterion is the foundation on which all of our convergence
results are built.

\begin{theorem}\label{theo:convRenormGauss}
Let $F$ be a locally subcritical nonlinearity and let $\TT_F^{(r)}$ be the corresponding
regularity structure built in Section~\ref{sec:SPDE}, restricted to $\{\tau\,:\, |\tau|_\s \le r\}$. Let $M_\eps$ be a sequence of
elements in its renormalisation group $\RR$, let $\xi_\eps$ be an approximation
to $\xi$ as in Lemma~\ref{lem:approxGauss} with associated canonical model $Z_\eps=(\Pi^{(\eps)},\Gamma^{(\eps)})$, and let 
$\hat Z_\eps=(\hat \Pi^{(\eps)},\hat \Gamma^{(\eps)}) = M_\eps Z_\eps$
be the corresponding sequence of renormalised models. 

Assume furthermore that there is $\kappa > 0$ such that, for every test function $\phi \in \CB^r_{\s,0}$,
every $x \in \R^d$, and every $\tau \in \CF_-$, there exists 
a random variable $\bigl(\hat \Pi_x \tau\bigr)(\phi)$ belonging to the inhomogeneous
Wiener chaos of order $\|\tau\|$ such that
\begin{equ}[e:aprioriBound]
\E \bigl|\bigl(\hat \Pi_x \tau\bigr)(\phi_x^\lambda)\bigr|^2 \lesssim \lambda^{2|\tau|_\s+\kappa}\;,
\end{equ}
and such that, for some $\theta > 0$,
\begin{equ}[e:convergenceBound]
\E \bigl|\bigl(\hat \Pi_x \tau - \hat \Pi_x^{(\eps)} \tau\bigr)(\phi_x^\lambda)\bigr|^2 
\lesssim \eps^{2\theta} \lambda^{2|\tau|_\s+\kappa}\;.
\end{equ}
Then, there exists a unique admissible random model $\hat Z = (\hat \Pi, \hat \Gamma)$ of $\TT_F^{(r)}$ 
such that, for every compact set $\K \subset \R^d$ and every $p \ge 1$, one has the bounds 
\begin{equ}
\E \$\hat Z\$_{\K}^p \lesssim 1\;,\qquad
\E \$\hat Z; \hat Z_\eps\$_{\K}^p \lesssim \eps^{\theta p}\;.
\end{equ}
\end{theorem}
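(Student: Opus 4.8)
The plan is to reduce everything to the wavelet-based characterisation of models provided by Proposition~\ref{prop:wavelet}, combined with the a~priori bounds \eref{e:aprioriBound} and \eref{e:convergenceBound} and the hypercontractivity of Wiener chaos (Lemma~\ref{lem:Nelson}). First I would note that the bounds \eref{e:aprioriBound} and \eref{e:convergenceBound} only concern $\tau \in \CF_-$, i.e.\ the finitely many basis elements of negative homogeneity, and that for such $\tau$ the claimed estimates are precisely the hypotheses of the generalised reconstruction statement Proposition~\ref{prop:reconstrGen} once one sets $\zeta_x = \hat \Pi_x \tau$ and observes, using $\hat \Pi_y = \hat \Pi_x \circ \hat \Gamma_{xy}$ together with $\hat \Gamma_{xy} \tau - \tau \in \bigoplus_{\beta < |\tau|_\s} T_\beta$, that the difference bound \eref{e:aprioriBound} for lower-homogeneity components propagates to a bound of the form $|\scal{\phi_x^{n,\s}, \zeta_x - \zeta_y}| \lesssim \|x-y\|_\s^{\gamma - \alpha} 2^{-n|\s|/2 - \alpha n}$. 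This step, however, requires that we \emph{already} control $\hat \Gamma$ on elements of negative homogeneity; I would therefore organise the argument by induction on homogeneity, exactly as in the proof of the extension theorem (Theorem~\ref{theo:extension}) and Proposition~\ref{prop:extension}, building $\hat \Pi_x \tau$ for $|\tau|_\s \ge 0$ out of the lower-order data via $\hat \Pi_x \tau = \hat \CR f_{\tau,x}$ with $f_{\tau,x}(y) = \hat \Gamma_{yx}\tau - \tau$, and using the formula \eref{e:defRecursion}--\eref{e:exprIntfx} to define $\hat\Pi$ on elements of the form $\CI_k\tau$ (this is legitimate because admissibility forces these formulas and $\hat Z$ is to be admissible). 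The key point of consistency, that the resulting $\hat\Pi_x$ satisfies $\hat\Pi_x = \hat\Pi_y\hat\Gamma_{yx}$, follows as in Proposition~\ref{prop:extension} from the uniqueness part of the reconstruction theorem.

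Second, I would upgrade the second-moment bounds to $p$-th moment bounds for all $p$: since $\bigl(\hat\Pi_x\tau\bigr)(\phi)$ lies in a fixed inhomogeneous Wiener chaos (of order $\|\tau\|$) by hypothesis, and $\hat\Gamma_{xy}\tau$, being built from $\hat\Pi$ via \eref{e:exprIntfx} and \eref{e:int1}, also lies in a fixed chaos, Lemma~\ref{lem:Nelson} gives $\E|X|^{2p} \lesssim (\E X^2)^p$ for each of the relevant random variables $X$. Hence \eref{e:aprioriBound} and \eref{e:convergenceBound} yield, for every $p$,
\begin{equ}
\E \bigl|\bigl(\hat \Pi_x \tau\bigr)(\phi_x^\lambda)\bigr|^{2p} \lesssim \lambda^{(2|\tau|_\s+\kappa)p}\;,\qquad
\E \bigl|\bigl(\hat \Pi_x \tau - \hat \Pi_x^{(\eps)} \tau\bigr)(\phi_x^\lambda)\bigr|^{2p} \lesssim \eps^{2\theta p}\lambda^{(2|\tau|_\s+\kappa)p}\;,
\end{equ}
and similarly for $\hat\Gamma$. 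Then a standard Kolmogorov-type argument — identical in spirit to the proof of Kolmogorov's continuity criterion, and to the discussion following Proposition~\ref{prop:wavelet} — lets me pass from these pointwise-in-$(x,\lambda,n)$ moment bounds on the wavelet coefficients $\bigl(\hat\Pi_x\tau\bigr)(\phi_x^{n,\s})$ (taking $\phi$ to be the fixed scaling function of a wavelet basis with $r > |\min A|$) to an almost sure bound on the seminorm $\|\hat\Pi\|_{\gamma;\K}$ of the form claimed, with all moments finite; here the loss of a bit of the exponent $\kappa/2$ is absorbed into the (arbitrarily small) Hölder wiggle room, exactly as in Theorem~\ref{theo:reconstruction} and the corollary to Theorem~\ref{theo:sowingLemma}. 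The bound $\|\hat\Gamma\|_{\gamma+\beta;\K}$ is controlled through \eref{e:boundNormPi} of Proposition~\ref{prop:wavelet}, which bounds $\|\Pi\|$ in terms of the wavelet coefficients and $\|\Gamma\|$, together with the explicit formula relating $\hat\Gamma$ to $\hat\Pi$; and the bound on $\$\hat Z;\hat Z_\eps\$_\K$ follows in the same way from the $\eps^{2\theta p}$-estimates, using the difference versions of Propositions~\ref{prop:reconstrGen} and \ref{prop:wavelet}.

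Third, uniqueness. Any admissible model satisfying the stated moment bounds must agree with $\hat Z$ because: (i) on polynomials both are forced to be the canonical polynomial model; (ii) on $\CF_-$ the values $\bigl(\hat\Pi_x\tau\bigr)(\phi_x^{n,\s})$ are determined as $L^2$-limits of the smooth approximations $\bigl(\hat\Pi^{(\eps)}_x\tau\bigr)(\phi_x^{n,\s})$ thanks to \eref{e:convergenceBound}, and these determine $\hat\Pi$ on $V_\tau$ by Proposition~\ref{prop:wavelet}; (iii) on the remaining elements $\CI_k\tau$ and products, admissibility (formulas \eref{e:defRecursion}, \eref{e:exprIntfx}, \eref{e:recursion}) together with the extension theorem forces the values uniquely. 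The main obstacle I anticipate is organising the induction cleanly: one must simultaneously propagate, at each homogeneity level, (a) the definition and bounds on $\hat\Pi_x\tau$, (b) the definition and bounds on $\hat\Gamma_{xy}$ restricted to that level, and (c) the algebraic identity $\hat\Pi_y = \hat\Pi_x\hat\Gamma_{xy}$, while carefully checking that the reconstruction/extension machinery of Section~\ref{sec:integral} (which was stated for deterministic models) can be applied pathwise to the random objects without any measurability pathology — this last point is routine but must be stated, e.g.\ by working with fixed representatives and using separability of the test-function spaces. The genuinely substantive analytic input, namely translating chaos membership plus a single second-moment estimate into the full quantitative control on the (nonlinear!) space of models, is exactly what Propositions~\ref{prop:reconstrGen} and \ref{prop:wavelet} were designed to deliver, so beyond the bookkeeping there is no new idea required here; the real work of the theory lies in verifying the hypotheses \eref{e:aprioriBound}--\eref{e:convergenceBound} for concrete equations, which is the subject of the subsequent subsections.
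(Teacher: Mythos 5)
Your proposal is correct and follows essentially the same route as the paper: the paper's proof likewise combines Proposition~\ref{prop:wavelet} with the equivalence of moments (Lemma~\ref{lem:Nelson}) and a Kolmogorov-type summation over dyadic scales (choosing $p$ with $\kappa p > |\s|$), organises the argument as an induction over the sets $\CF^{(n)}$ (which is exactly your induction on homogeneity, with the bookkeeping of which coefficients $f_x(\tau)$ are needed at each stage handled by $\Alg(\CF^{(n)})$), controls $\hat\Gamma$ on each level deterministically from the lower-level data via admissibility and Theorem~\ref{theo:extension}, and finishes the positive-homogeneity elements with Proposition~\ref{prop:extension}. Your detour through Proposition~\ref{prop:reconstrGen} is harmless since Proposition~\ref{prop:wavelet} is itself proved by that route.
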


\begin{remark}
As already seen previously, it is actually sufficient to take for $\phi$ the scaling
function of some sufficiently regular compactly supported wavelet basis. 
\end{remark}

\begin{proof}
Note first that the proportionality constants appearing in \eref{e:aprioriBound} and \eref{e:convergenceBound}
are independent of $x$ by stationarity.
Let now $\CV \subset \CF$ be any finite collection of basis vectors, let $V = \scal{\CV}$,
and assume that $\CV$ is such that $\Delta V \subset V \otimes \CH_+$, so that $V$ is a sector
of $\TT_F$. Then, it follows from Proposition~\ref{prop:wavelet} that, for every compact set $\K$, one has the bound
\begin{equs}
\E \|\hat \Pi\|_{V;\K}^p &\lesssim  \E \Bigl(\bigl(1+\|\Gamma\|_{V;\K}\bigr)^p \sup_{\tau \in \CV} \sup_{n \ge 0} \sup_{x \in \Lambda_\s^n(\bar \K)} 2^{|\tau|_\s p n + {p n|\s|\over 2}}  {\bigl|\bigl(\hat \Pi_x \tau\bigr)(\phi_x^{n,\s})\bigr|^p}\Bigr) \label{e:boundKolmCont}\\
&\lesssim  \sqrt{\E \bigl(1+\|\Gamma\|_{V;\K}\bigr)^{2p}} \sum_{\tau \in \CV} \sum_{n \ge 0} 2^{n|\s| + |\tau|_\s p n + {p n|\s|\over 2}}  \bigl({\E \bigl|\bigl(\hat \Pi_0 \tau\bigr)(\phi_0^{n,\s})\bigr|^2}\bigr)^{p\over 2}\;,\quad
\end{equs}
where the proportionality constant depends on $\K$ and the choice of $\CV$. Here, we used stationarity
and Lemma~\ref{lem:Nelson} to go from the first to the second line.
A similar bound also holds for $\hat \Pi^{(\eps)}$, as well as for the difference between the two models.


The claim will now be proved by induction over $\CF^{(n)}$, where $\CF^{(n)}$
was defined in Section~\ref{sec:renormalisation}.
Recall that for every $n \ge 0$, the linear span $T_n \eqdef \scal{\CF^{(n)}}$ 
forms a sector of $\TT_F$, that these sectors exhaust all of the model space $T$,
and that one has $\Delta T_n \subset T_n \otimes \scal{\Alg(\CF^{(n-1)})}$.
As a consequence, it is sufficient to prove that, for every $p \ge 0$, one has the bounds
\begin{equ}
\E \$\hat Z\$_{T_n;\K}^p \lesssim 1\;,\qquad
\E \$\hat Z; \hat Z_\eps\$_{T_n;\K}^p \lesssim \eps^{\kappa p}\;.
\end{equ}
The claim is trivial for $n = 0$, so we assume from now on that it holds for some
$n\ge 0$.
As a consequence of the definition of $\CF^{(n+1)}$ and the fact that we only consider admissible models, 
the action of $\hat \Gamma_{xy}$ on it 
is determined by the corresponding values $\hat f_x(\tau)$ for $\tau \in \Alg(\CF^{(n)})$.
Since furthermore the functionals $\hat f_x$ are multiplicative and, on elements of the form
$\CI_k \tau$, we know from our definition of the canonical model and
of the renormalisation group that \eref{e:idF} holds, we 
conclude from the finiteness of the set $\CF^{(n)}$
and from Theorem~\ref{theo:extension} 
that there exists some power $k$ (possibly depending on $n$) such that the deterministic bounds
\begin{equs}
\|\hat \Gamma\|_{T_{n+1};\K} &\lesssim 
\bigl(1 + \$\hat Z\$_{T_n;\K}\bigr)^k\;,\\
\|\hat \Gamma - \hat \Gamma^{(\eps)}\|_{T_{n+1};\K} &\lesssim 
\$\hat Z; \hat Z_\eps\$_{T_{n};\K}\bigl(1 + \$\hat Z\$_{T_{n};\K}\bigr)^k\;,
\end{equs}
hold. We now write $\CF^{(n+1)} = \CF^{(n+1)}_- \cup \CF^{(n+1)}_+$, where
$\CF^{(n+1)}_- = \CF^{(n+1)} \cap \CF_-$, while the second set
contains the remainder. Setting $T_{n+1}^- = \scal{\CF^{(n+1)}_-}$, it follows from 
Assumption~\ref{ass:SetF0} and \eref{e:coundGroup} that $\Delta T_{n+1}^- \subset T_{n+1}^- \otimes \scal{\Alg(\CF^{(n)})}$.

It thus follows from \eref{e:boundKolmCont} and \eref{e:aprioriBound} that
\begin{equ}
\E \|\hat \Pi\|_{T_{n+1}^-;\K}^p \lesssim  \sqrt{\E \bigl(1+\|\Gamma\|_{T_{n+1};\K}\bigr)^{2p}} \sum_{\tau \in \CV} \sum_{n \ge 0} 2^{n|\s| - \kappa p n} \;.
\end{equ}
Provided that $p$ is large enough so that $\kappa p > |\s|$, which is something that we can
always assume without any loss of generality since $p$ was arbitrary, it follows that $\hat \Pi$
does indeed satisfy the required bound on $T_{n+1}^-$. Regarding the difference $\hat \Pi - \hat \Pi^{(\eps)}$,
we obtain the corresponding bound in an identical manner.
In order to conclude the argument, it remains to obtain a similar bound on all of $T_{n+1}$.
This however follows by applying Proposition~\ref{prop:extension}, proceeding inductively 
in increasing order of homogeneity. Note that each element we treat in this way has strictly positive
homogeneity since we assume that only $\one$ has homogeneity zero, and $\Pi_x \one = 1$, so nothing needs
to be done there.
\end{proof}

We assume from now on that we are in the setting of Theorem~\ref{theo:convRenormGauss}
and therefore only need to obtain the convergence of $\bigl(\hat \Pi^{(\eps)}_x \tau\bigr)(\phi)$
to a limiting random variable $\bigl(\hat \Pi_x \tau\bigr)(\phi)$ with the required
bounds when considering rescaled versions of $\phi$. 
We also assume that we are in a translation invariant situation in the sense
that $\R^d$ acts onto $H$ via a group of unitary operators $\{S_x\}_{x \in \R^d}$
and there exists an element $\rho_\eps \in H$ such that
\begin{equ}
\xi_\eps(x) = I_1(S_x \rho_\eps)\;,
\end{equ}
where $I_1$ is as in Section~\ref{sec:Chaos}.
As a consequence, $\E \bigl|\bigl(\hat \Pi_x \tau\bigr)(\phi_x)\bigr|^2$ is independent of $x$,
so that we only need to consider the case $x = 0$.

Since the map $\phi \mapsto \bigl(\hat \Pi_x^{(\eps)} \tau\bigr)(\phi)$ is linear, one
can find some functions (or possibly distributions in general) $\hat \CW^{(\eps;k)}\tau$
with 
\begin{equ}[e:defWhat]
\bigl(\hat \CW^{(\eps;k)}\tau\bigr)(x) \in H^{\otimes k}\;,
\end{equ}
where $x \in \R^d$, and such that
\begin{equ}[e:identifPiW]
\bigl(\hat \Pi^{(\eps)}_0 \tau\bigr)(\phi) = \sum_{k \le \|\tau\|} I_k \Bigl(\int_{\R^d}\phi(y)\bigl(\hat \CW^{(\eps;k)}\tau\bigr)(y)\,dy \Bigr)\;,
\end{equ}
where $I_k$ is as in Section~\ref{sec:Chaos}.
The same is of course also true of the bare model $\Pi^{(\eps)}$, and we denote the corresponding functions
by $\CW^{(\eps;k)}\tau$. 

\begin{remark}
Regarding $\hat \Pi^{(\eps)}_x \tau$ for $x \neq 0$, it is relatively straightforward to see that one
has the identity
\begin{equ}[e:identifPiWx]
\bigl(\hat \Pi^{(\eps)}_x \tau\bigr)(\phi_x) = \sum_{k \le \|\tau\|} I_k \Bigl(\int_{\R^d}\phi(y)S_x^{\otimes k}\bigl(\hat \CW^{(\eps;k)}\tau\bigr)(y)\,dy \Bigr)\;,
\end{equ}
which again implies that the law of these random variables is independent of $x$. 
\end{remark}

\begin{remark}
For every $x \in \R^d$, $\bigl(\hat \CW^{(\eps;k)}\tau\bigr)(x)$ is a function on $k$ copies of $\Dom$. 
We will therefore
also denote it by $\bigl(\hat \CW^{(\eps;k)}\tau\bigr)(x; y_1,\ldots, y_k)$.
Note that the dimension of $x$ is not necessarily the same as that of the $y_i$. This is the
case for example in \eref{e:PAMGen} where the equation is formulated in $\R^3$ (one time dimension and
two space dimensions), while the driving noise $\xi$ lives in the Wiener chaos over a subset of $\R^2$.
\end{remark}

We then have the following preliminary result which shows that, in the kind of situations we consider here,
the convergence of the models $\hat Z_\eps$ to some limiting model $\hat Z$ can often be reduced to the convergence
of finitely many quite explicit kernels.

\begin{proposition}\label{prop:convCorFcn}
In the situation just described, fix some $\tau \in \CF_-$ and 
assume that there exists some $\kappa > 0$ such that, 
for every $k \le \|\tau\|$, there exist functions 
$\hat \CW^{(k)}\tau$ with values in $H^{\otimes k}$ and such that
\begin{equ}
\bigl|\scal{ \bigl(\hat \CW^{(k)}\tau\bigr)(z), \bigl(\hat \CW^{(k)}\tau\bigr)(\bar z)}\bigr| \le
C \sum_{\zeta} \bigl(\|z\|_\s + \|\bar z\|_\s\bigr)^{\zeta} \|z- \bar z\|_\s^{\kappa + 2|\tau|_\s - \zeta}\;,
\end{equ} 
where the sum runs over finitely many values $\zeta \in [0, 2|\tau|_\s + \kappa + |\s|)$.
Here, we denoted by
$\scal{\cdot,\cdot}$ the scalar product in $H^{\otimes k}$.

Assume furthermore that there exists $\theta > 0$ such that
\begin{equ}[e:bounddW]
\bigl|\scal{ \bigl(\delta\hat \CW^{(\eps;k)}\tau\bigr)(z), \bigl(\delta \hat \CW^{(\eps;k)}\tau\bigr)(\bar z)}\bigr| \le
C \eps^{2\theta} \sum_{\zeta} \bigl(\|z\|_\s + \|\bar z\|_\s\bigr)^{\zeta} \|z- \bar z\|_\s^{\kappa + 2|\tau|_\s - \zeta}\;,
\end{equ}
where we have set $\delta \hat \CW^{(\eps;k)} = \hat \CW^{(\eps;k)} - \hat \CW^{(k)}$,
and where the sum is as above.
Then, the bounds \eref{e:aprioriBound} and \eref{e:convergenceBound} are satisfied for $\tau$.
\end{proposition}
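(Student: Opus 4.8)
\textbf{Proof strategy for Proposition~\ref{prop:convCorFcn}.}
The plan is to express the second moments appearing in \eref{e:aprioriBound} and \eref{e:convergenceBound} directly in terms of the kernels $\hat \CW^{(k)}\tau$ and their $\eps$-approximations, and then to reduce the scaling bounds to the hypothesised estimates on the ``two-point functions'' $\scal{(\hat \CW^{(k)}\tau)(z),(\hat \CW^{(k)}\tau)(\bar z)}$. Concretely, I would start from the chaos decomposition \eref{e:identifPiW}: since the $I_k$ map into orthogonal subspaces of $L^2(\Omega,\P)$ and satisfy $\E |I_k(f)|^2 \le \|f\|_{H^{\otimes k}}^2$, one has
\begin{equ}
\E\bigl|\bigl(\hat \Pi_0 \tau\bigr)(\phi_0^\lambda)\bigr|^2 = \sum_{k \le \|\tau\|} \E\Bigl|I_k\Bigl(\int \phi_0^\lambda(y)\bigl(\hat \CW^{(k)}\tau\bigr)(y)\,dy\Bigr)\Bigr|^2 \le \sum_{k\le\|\tau\|} \int \phi_0^\lambda(z)\phi_0^\lambda(\bar z)\, \scal{\bigl(\hat \CW^{(k)}\tau\bigr)(z),\bigl(\hat \CW^{(k)}\tau\bigr)(\bar z)}\,dz\,d\bar z\;.
\end{equ}
(Strictly, the limiting random variable $(\hat \Pi_0\tau)(\phi)$ first has to be \emph{defined} by this formula, using the $\hat \CW^{(k)}\tau$ supplied in the hypothesis; one checks it is a genuine element of the inhomogeneous chaos of order $\|\tau\|$ and that \eref{e:identifPiWx} gives a consistent translation-covariant family, which is exactly what is needed for $\hat \Pi$ to be admissible.) Inserting the hypothesised bound on the scalar product, the proof reduces to the elementary scaling estimate: for a test function $\phi \in \CB^r_{\s,0}$ supported in the unit ball, for any exponents $\zeta \ge 0$ and $\mu$ with $\mu > -|\s|$, one has
\begin{equ}
\int \phi_0^\lambda(z)\,\phi_0^\lambda(\bar z)\,\bigl(\|z\|_\s + \|\bar z\|_\s\bigr)^{\zeta}\,\|z-\bar z\|_\s^{\mu}\,dz\,d\bar z \lesssim \lambda^{\zeta + \mu}\;,
\end{equ}
which follows by the change of variables $z = \CS_\s^\lambda z'$, $\bar z = \CS_\s^\lambda \bar z'$ together with the fact that $\|z-\bar z\|_\s^\mu$ is locally integrable precisely when $\mu > -|\s|$. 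Applying this with $\mu = \kappa + 2|\tau|_\s - \zeta$ (which is $> -|\s|$ exactly because $\zeta < 2|\tau|_\s + \kappa + |\s|$) yields $\lambda^{\zeta + (\kappa + 2|\tau|_\s - \zeta)} = \lambda^{2|\tau|_\s + \kappa}$, which is precisely \eref{e:aprioriBound}.

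For \eref{e:convergenceBound} the argument is identical, but one first writes, using the linearity of $I_k$ and orthogonality of the chaoses,
\begin{equ}
\E\bigl|\bigl(\hat \Pi_0 \tau - \hat \Pi_0^{(\eps)}\tau\bigr)(\phi_0^\lambda)\bigr|^2 \le \sum_{k \le \|\tau\|}\int \phi_0^\lambda(z)\,\phi_0^\lambda(\bar z)\, \scal{\bigl(\delta\hat \CW^{(\eps;k)}\tau\bigr)(z),\bigl(\delta\hat \CW^{(\eps;k)}\tau\bigr)(\bar z)}\,dz\,d\bar z\;,
\end{equ}
and then inserts \eref{e:bounddW}; the same scaling estimate as above produces the bound $\eps^{2\theta}\lambda^{2|\tau|_\s+\kappa}$. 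Since only finitely many values of $\zeta$ and $k$ are involved, the sums cause no difficulty, and the implicit constants are uniform in $x$ by the translation invariance recorded in \eref{e:identifPiWx}.

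I do not expect any genuine obstacle here: the statement is essentially a bookkeeping lemma that repackages the hypotheses into the form required by Theorem~\ref{theo:convRenormGauss}. The only points requiring a little care are (i) verifying that the candidate limit $(\hat \Pi_0\tau)(\phi)$ defined through the $\hat \CW^{(k)}\tau$ really does belong to the stated Wiener chaos and is independent of the approximating sequence (this is where one uses that $\delta\hat\CW^{(\eps;k)}\tau \to 0$ in the relevant sense, a consequence of \eref{e:bounddW} itself), and (ii) checking the local integrability condition $\mu = \kappa + 2|\tau|_\s - \zeta > -|\s|$, which is guaranteed by the range restriction on $\zeta$ imposed in the hypothesis. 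Everything else is the one scaling computation displayed above. Thus the real content of the proposition is not in its proof but in the fact --- to be exploited in the applications of Sections~\ref{sec:renPAM} and \ref{sec:renPhi4} --- that the convergence of the whole renormalised model is reduced to checking the two quite explicit kernel bounds on $\scal{(\hat\CW^{(k)}\tau)(z),(\hat\CW^{(k)}\tau)(\bar z)}$ and its $\eps$-analogue for the finitely many $\tau \in \CF_-$.
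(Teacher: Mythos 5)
Your proposal is correct and follows essentially the same route as the paper: define the limiting random variables through the chaos decomposition with the kernels $\hat \CW^{(k)}\tau$, use $\E|I_k(f)|^2 \le \|f\|^2$ to reduce the second moment to the double integral of the two-point function against $\phi_0^\lambda\otimes\phi_0^\lambda$, and conclude by the scaling computation, whose validity rests exactly on the integrability condition $\kappa+2|\tau|_\s-\zeta>-|\s|$ that you identify. The paper performs the final integral by bounding $(\|z\|_\s+\|\bar z\|_\s)^\zeta\lesssim\lambda^\zeta$ on the support and integrating directly rather than by your change of variables, but this is the same estimate.
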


\begin{proof}
In view of \eref{e:identifPiW} and \eref{e:identifPiWx} we define, for every smooth test function $\psi$ and
every  $x \in \R^d$ the random variable $\bigl(\hat \Pi_x \tau\bigr)(\psi)$ by
\begin{equ}[e:defineLimit]
\bigl(\hat \Pi_x \tau\bigr)(\psi) = \sum_{k \le \|\tau\|}\bigl(\hat \Pi_x^{(k)} \tau\bigr)(\psi) = \sum_{k \le \|\tau\|} I_k \Bigl(\int_{\R^3}\psi(z)S_x^{\otimes k}\bigl(\hat \CW^{(k)}\tau\bigr)(z)\,dz \Bigr)\;.
\end{equ}
We then have the bound
\begin{equs}
\E \bigl|\bigl(\hat \Pi_x^{(k)} \tau\bigr)(\psi_x^\lambda)\bigr|^2 &=
\E \bigl|\bigl(\hat \Pi_0^{(k)} \tau\bigr)(\psi^\lambda)\bigr|^2 \lesssim
\Bigl\|\int_{\R^d}\psi^\lambda(z)\bigl(\hat \CW^{(k)}\tau\bigr)(z)\,dz\Bigr\|^2 \\
&= \int \int \psi^\lambda(z)\psi^\lambda(\bar z) \scal{\bigl(\hat \CW^{(k)}\tau\bigr)(z), \bigl(\hat \CW^{(k)}\tau\bigr)(\bar z)}\,dz\,d\bar z \\
&\lesssim \lambda^{-2|\s|} \sum_\zeta\int_{\|z\|_\s \le \lambda \atop \|\bar z\|_\s \le \lambda}
\bigl(\|z\|_\s + \|\bar z\|_\s\bigr)^{\zeta} \|z- \bar z\|_\s^{\kappa + 2|\tau|_\s - \zeta}
\,dz\,d\bar z\\
&\lesssim \lambda^{-2|\s|} \sum_\zeta \lambda^{\zeta + |\s|} \int_{\|z\|_\s \le 2\lambda}
\|z\|_\s^{\kappa + 2|\tau|_\s - \zeta} \,dz \\
&\lesssim \lambda^{-2|\s|} \sum_\zeta \lambda^{\zeta + 2|\s| + \kappa + 2|\tau|_\s-\zeta}
\lesssim \lambda^{\kappa + 2|\tau|_\s}\;.
\end{equs}
A virtually identical calculation, but making use instead of the bound on $\delta \hat \CW^{(\eps;k)}$,
also yields the bound
\begin{equ}
\E \bigl|\bigl(\hat \Pi^{(\eps)}_x - \hat \Pi_x \tau\bigr)(\psi_x^\lambda)\bigr|^2 \lesssim \eps^{2\theta}\lambda^{\kappa + 2|\tau|_\s}\;,
\end{equ}
as claimed.
\end{proof}

\subsection{Functions with prescribed singularities}

Before we turn to examples of SPDEs for which the corresponding sequence of canonical models for
the regularity structure $\TT_F$ can be successfully renormalised, we 
perform a few preliminary computations on the behaviour of smooth functions having a singularity
of prescribed strength at the origin.

\begin{definition}\label{def:orderKernel}
Let $\s$ be a scaling of $\R^d$ and let $K \colon \R^d\setminus \{0\} \to \R$ be a smooth
function. We say that $K$ is of order
$\zeta$ if, for every sufficiently small
multiindex $k$, there exists a constant $C$ such that
the bound $|D^k K(x)| \le C \|x\|_\s^{\zeta-|k|_\s}$ holds for every $x$ with $\|x\|_\s \le 1$. 

For any $m \ge 0$, we furthermore write
\begin{equ}
\$K\$_{\zeta;m} \eqdef \sup_{|k|_\s \le m} \sup_{x \in \R^d} \|x\|_\s^{|k|_\s-\zeta} |D^k K(x)|\;. 
\end{equ}
\end{definition}

\begin{remark}
Note that this is purely an upper bound on the behaviour of $K$ near the origin.
In particular, if $K$ is of order $\zeta$, then it is also of order $\bar \zeta$ for every $\bar \zeta < \zeta$.
\end{remark}

\begin{lemma}\label{lem:convSing}
Let $K_1$ and $K_2$ be two compactly supported functions of respective
 orders $\zeta_1$ and $\zeta_2$. Then
$K_1 K_2$ is of order $\zeta = \zeta_1 + \zeta_2$ and one has the bound
\begin{equ}
\$K_1 K_2\$_{\zeta;m} \le C \$K_1\$_{\zeta_1;m}\$K_2\$_{\zeta_2;m}\;,
\end{equ}
where $C$ depends on the sizes of the supports of the $K_i$.

If $\zeta_1\wedge \zeta_2  > -|\s|$
 and furthermore $\bar \zeta \eqdef \zeta_1 + \zeta_2 + |\s|$ satisfies $\bar\zeta < 0$,
then $K_1 * K_2$ is of order $\bar \zeta$
and one has the bound
\begin{equ}[e:boundConvKK]
\$K_1 * K_2\$_{\bar \zeta;m} \le C \$K_1\$_{\zeta_1;m}\$K_2\$_{\zeta_2;m}\;.
\end{equ}
In both of these bounds, $m \in \N$ is arbitrary.
In general, if $\bar \zeta \in \R_+ \setminus \N$, then $K_1*K_2$ has derivatives
of order $|k|_\s < \bar \zeta$ at the origin and the function $K$ given by
\begin{equ}[e:renormConv]
K(x) = \bigl(K_1 * K_2\bigr)(x) - \sum_{|k|_\s < \bar\zeta} {x^k\over k!} D^k\bigl(K_1 * K_2\bigr)(0)
\end{equ}
is of order $\bar\zeta$. Furthermore, one has the bound
\begin{equ}[e:boundConvKKRenorm]
\$K\$_{\bar \zeta;m} \le C \$K_1\$_{\zeta_1;\bar m}\$K_2\$_{\zeta_2;\bar m}\;,
\end{equ}
where we set $\bar m = m \vee \bigl(\lfloor \bar \zeta\rfloor + \max \{\s_i\}\bigr)$.
\end{lemma}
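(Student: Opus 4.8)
\textbf{Proof plan for Lemma~\ref{lem:convSing}.}
The plan is to treat the three assertions of the lemma in order of increasing difficulty, reducing each to elementary scaling estimates for the kernels involved. The first claim, that $K_1 K_2$ is of order $\zeta_1 + \zeta_2$, is the easiest: I would simply apply the Leibniz rule to $D^k(K_1 K_2)$, expanding it as a sum of terms $D^j K_1 \cdot D^{k-j} K_2$, bound each factor by $\$K_i\$_{\zeta_i;m} \|x\|_\s^{\zeta_i - |j|_\s}$ respectively $\$K_2\$_{\zeta_2;m}\|x\|_\s^{\zeta_2 - |k-j|_\s}$, and observe that the exponents add up to exactly $\zeta - |k|_\s$. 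The compact support of the $K_i$ gives the constant $C$ controlling the region $\|x\|_\s \ge 1$ where the hypothesis gives no information; there $K_1 K_2$ is simply a smooth compactly supported function, hence bounded together with all its derivatives.

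For the convolution bound \eqref{e:boundConvKK} in the regime $\zeta_1 \wedge \zeta_2 > -|\s|$ and $\bar\zeta < 0$, the plan is a standard dyadic / annular decomposition argument. Fix $x$ with $\|x\|_\s \le 1$ and split the integral $\bigl(K_1 * K_2\bigr)(x) = \int K_1(y) K_2(x-y)\,dy$ into three regions: $\|y\|_\s \le \tfrac12 \|x\|_\s$ (where $K_2(x-y)$ behaves like $\|x\|_\s^{\zeta_2}$ and $K_1$ is integrable near $0$ precisely because $\zeta_1 > -|\s|$, producing a contribution of order $\|x\|_\s^{\zeta_1 + |\s| + \zeta_2} = \|x\|_\s^{\bar\zeta}$), the symmetric region $\|x-y\|_\s \le \tfrac12\|x\|_\s$, and the intermediate region $\|y\|_\s, \|x-y\|_\s \gtrsim \|x\|_\s$, which one handles by a further dyadic decomposition over annuli $\|y\|_\s \sim 2^{-n}$ with $2^{-n} \gtrsim \|x\|_\s$, again using $\zeta_1 + \zeta_2 + |\s| < 0$ to sum the geometric series. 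Derivatives $D^k$ are handled by letting them fall on either $K_1$ or $K_2$ and repeating; the key point is that $D^k(K_1*K_2) = (D^k K_1)*K_2 = K_1 * (D^k K_2)$, so one can always put the derivative on whichever factor is away from its singularity. Here the compact support is what guarantees that $K_1 * K_2$ is itself compactly supported and that the large-scale behaviour is harmless.

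The third and genuinely delicate claim is \eqref{e:boundConvKKRenorm} in the case $\bar\zeta \in \R_+ \setminus \N$. The main obstacle is that now $K_1 * K_2$ need not vanish at the origin, and the naive estimates only control $|D^k(K_1*K_2)(x)|$ for $|k|_\s > \bar\zeta$ (by the same annular argument as before, where now the exponent $\bar\zeta - |k|_\s$ is negative so the $2^{-n}$ geometric sum is dominated by its smallest-scale term $\sim \|x\|_\s$); for $|k|_\s < \bar\zeta$ one needs first to \emph{show} that $D^k(K_1*K_2)(0)$ exists, which requires absolute convergence of $\int D^k K_1(y) K_2(-y)\,dy$ — this follows because $(D^k K_1)$ has order $\zeta_1 - |k|_\s$, and near $0$ the product with $K_2$ has order $\zeta_1 + \zeta_2 - |k|_\s > -|\s|$, hence is integrable. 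Having established that the Taylor jet in \eqref{e:renormConv} is well-defined, I would then write $K(x) = \bigl(K_1*K_2\bigr)(x) - P(x)$ with $P$ the truncated Taylor polynomial and estimate $D^k K(x)$ for $|k|_\s < \bar\zeta$ by the integral form of the Taylor remainder: $D^k K(x) = \int_0^1 \frac{(1-t)^{N}}{N!}\,\bigl(\text{higher derivative of } K_1*K_2\bigr)(tx)\cdot x^{\otimes(N+1)}\,dt$ with $N$ chosen so that $|k|_\s + (N+1)\max\{\s_i\}$ exceeds $\bar\zeta$; plugging in the already-established bound $|D^{\ell}(K_1*K_2)(tx)| \lesssim (t\|x\|_\s)^{\bar\zeta - |\ell|_\s}$ for $|\ell|_\s > \bar\zeta$ and integrating in $t$ yields the required $\|x\|_\s^{\bar\zeta - |k|_\s}$ (the non-integer hypothesis on $\bar\zeta$ is what ensures $\bar\zeta - |\ell|_\s \ne -|\s| \cdot(\dots)$ type resonances do not occur and the $t$-integral converges). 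This is essentially the same mechanism as in Proposition~\ref{prop:Taylor} and the proof of Lemma~\ref{lem:boundPibar}, and the number of derivatives of $K_i$ one needs to invoke is exactly $\bar m = m \vee (\lfloor\bar\zeta\rfloor + \max\{\s_i\})$, which is why that quantity appears in \eqref{e:boundConvKKRenorm}. I expect bookkeeping the exponents across the three regions and the Taylor remainder — rather than any conceptual difficulty — to be the bulk of the work.
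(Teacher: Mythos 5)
Your treatment of the first two claims follows the paper's proof essentially verbatim: Leibniz for the product, and for the convolution the same cutoff into the three regions $\|y\|_\s\lesssim\|x\|_\s$, $\|x-y\|_\s\lesssim\|x\|_\s$ and the intermediate annulus, with the observation that when $\bar\zeta<0$ every derivative order $|k|_\s\ge 0$ exceeds $\bar\zeta$ so the intermediate integral is dominated by its inner boundary. (One small bookkeeping point: you cannot "put the derivative on whichever factor is away from its singularity" region by region through a single global identity; you must first insert the cutoffs $\phi_r$ and then differentiate, which produces extra commutator terms involving $D^\ell\phi_r$ — the paper carries these along explicitly, and they are harmless.)

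For the third claim there is a genuine gap. Your integral-form Taylor remainder $\int_0^1\frac{(1-t)^N}{N!}\,(\cdots)(tx)\cdot x^{\otimes(N+1)}\,dt$ is the one-dimensional expansion of $t\mapsto F(tx)$, whose polynomial part is the truncation of the Taylor series at \emph{Euclidean} degree $N$, whereas \eref{e:renormConv} truncates at \emph{scaled} degree $|k|_\s<\bar\zeta$. For a non-Euclidean scaling these two truncations are incompatible: to make every coefficient $D^jF(0)$ in the polynomial part exist you need $N\max_i\s_i<\bar\zeta$, while to make the remainder integrable in $t$ (using $\|tx\|_\s\ge t\|x\|_\s$) you need roughly $N+1>\bar\zeta$, and for, say, $\bar\zeta=2.5$ and $\max_i\s_i=2$ no integer $N$ satisfies both. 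Moreover the terms $D^jF(0)\,x^j$ with $|j|\le N$ but $|j|_\s\ge\bar\zeta$ either fail to exist or contribute only $O(\|x\|_\s^{|j|_\s})$ with $|j|_\s<\bar\zeta$, which is too large. The correct replacements are either the anisotropic Taylor formula of Proposition~\ref{prop:Taylor} (whose kernels $\CQ^k$ are tailored to scaled-degree truncations, and whose remainder indices $k\in\d A$ satisfy exactly the integrability condition $|k|_\s<\bar\zeta+\s_{\m(k)}$ needed against the singularity of $D^k(K_1*K_2)$), or — what the paper actually does — a reverse induction on $|k|_\s$: having established the bound $|D^k(K_1*K_2)(x)|\lesssim\|x\|_\s^{\bar\zeta-|k|_\s}$ for all $|k|_\s>\bar\zeta$ by the three-region argument, one applies the gradient-telescoping argument of Lemma~\ref{lem:vanishPower} along a dyadic sequence to produce, for each $k$ with $|k|_\s<\bar\zeta$ in decreasing order, a constant $C_k$ with $|D^kK^{(n)}(x)-C_k|\lesssim\|x\|_\s^{\bar\zeta-|k|_\s}$, subtracting $C_kx^k/k!$ at each step. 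This simultaneously \emph{constructs} the coefficients $D^k(K_1*K_2)(0)$ and proves the remainder bound, thereby also avoiding your separate step of justifying that the absolutely convergent integral $\int D^kK_1(y)K_2(-y)\,dy$ really is the classical derivative at the origin (which would itself require a continuity/differentiation-under-the-integral argument that you do not supply).
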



\begin{proof}
The claim about the product $K_1 K_2$ is an immediate consequence of the generalised Leibniz rule,
so we only need to bound $K_1 * K_2$. We will first show that, for every $x \neq 0$ and every multiindex $k$ such that
$\bar\zeta < |k|_\s$, one does have the bound
\begin{equ}[e:wantedBoundKK]
\bigl|D^k \bigl(K_1 * K_2\bigr)(x)\bigr| \lesssim \|x\|_\s^{\bar\zeta - |k|_\s}
\$K_1\$_{\zeta_1;|k|_\s}\$K_2\$_{\zeta_2;|k|_\s}\;,
\end{equ}
as required. From such a bound, \eref{e:boundConvKK} follows immediately.
To show that \eref{e:boundConvKKRenorm} follows from \eref{e:wantedBoundKK}, we
note first that $D^k K = D^k (K_1 * K_2)$ for every $k$ such that $|k|_\s > \bar \zeta$,
so that it remains to show that it is possible to find \textit{some} numbers
which we then call $D^k (K_1 * K_2)(0)$ such that if $K$ is defined by \eref{e:renormConv},
then similar bounds hold for
$D^k K$ with $|k|_\s < \bar \zeta$.

For this, we define the set of multiindices $A_{\bar \zeta} = \{k\,:\, |k|_\s < \bar \zeta\}$
and we fix a decreasing enumeration $A_{\bar \zeta} = \{k_0,\ldots,k_M\}$, i.e.\ $|k_m|_\s \ge |k_n|_\s$
whenever $m \le n$.
We then start by setting $\bar K^{(0)}(x) = (K_1 * K_2)(x)$ and we build a sequence of functions
$\bar K^{(n)}(x)$ iteratively as follows.
Assume that we have the bound $|D^{k_n+e_i} K^{(n)}(x)| \lesssim \|x\|_\s^{\bar \zeta - |k_n|_\s - \s_i}$ 
for $i \in \{1,\ldots,d\}$. (This is the case for $n = 0$ by \eref{e:wantedBoundKK}.)
Proceeding as in the proof of Lemma~\ref{lem:vanishPower} it then follows
that one can find a real number $C_n$ such that 
$|D^{k_n} K^{(n)}(x) - C_n| \lesssim \|x\|_\s^{\bar \zeta - |k_n|_\s}$.
We then set $K^{(n+1)}(x) = K^{(n)}(x) - C_n {x^{k_n} \over k_n!}$.
It is then straightforward to verify that if we set $K(x) = K^{(M)}(x)$, it has all the required properties.

It remains to show that \eref{e:wantedBoundKK} does indeed hold.
For this, let $\phi \colon \R^d$ be
a smooth function from $\R^d$ to $[0,1]$ such that $\phi(x) = 0$ for $\|x\|_\s \ge 1$
and $\phi(x) = 1$ for $\|x\|_\s \le {1\over 2}$.
For $r > 0$, we also set $\phi_r(y) = \phi(\CS_\s^r y)$.
Since $K$ is bilinear in $K_1$ and $K_2$, we can assume without loss of generality that
$\$K_i\$_{\zeta_i;|k|_\s} = 1$.
With these notations at hand, we can write
\begin{equs}
\bigl(K_1 * K_2\bigr)(x) &= \int_{\R^d} \phi_r(y) K_1(x-y) K_2(y)\,dy
+ \int_{\R^d} \phi_r(x-y) K_1(x-y) K_2(y)\,dy \\
&\quad + \int_{\R^d} \bigl(1-\phi_r(y)-\phi_r(x-y)\bigr) K_1(x-y) K_2(y)\,dy \\
&= \int_{\R^d} \phi_r(y) K_1(x-y) K_2(y)\,dy
+ \int_{\R^d} \phi_r(y) K_1(y) K_2(x-y)\,dy \\
&\quad + \int_{\R^d} \bigl(1-\phi_r(y)-\phi_r(x-y)\bigr) K_1(x-y) K_2(y)\,dy\;,\label{e:termsConv}
\end{equs}
so that, provided that $r \le \|x\|_\s/2$, say, one has the identity
\begin{equs}
D^k\bigl(K_1 * K_2\bigr)(x) &= \int_{\R^d} \phi_r(y) D^kK_1(x-y) K_2(y)\,dy\\
&\qquad + \int_{\R^d} \phi_r(y) K_1(y) D^k K_2(x-y)\,dy\\
&\qquad + \int_{\R^d} \bigl(1-\phi_r(y)-\phi_r(x-y)\bigr) D^k K_1(x-y) K_2(y)\,dy\\
&\qquad - \sum_{\ell < k} {k!\over \ell! (k-\ell)!}\int_{\R^d} D^\ell\phi_r(x-y) D^{k-\ell} K_1(x-y) K_2(y)\,dy\;.
\end{equs}
It remains to bound these terms separately. For the first term, since the integrand
is supported in the set $\{y\,:\, \|y\|_\s \le \|x\|_\s / 2\}$ (thanks to our choice of $r$), 
we can bound $|D^kK_1(x-y)|$ by $C \|x\|_\s^{\zeta_1-|k|_\s}$ and $K_2(y)$ by $\|y\|_\s^{\zeta_2}$.
Since, for $\zeta > -|\s|$, one has the easily verifiable bound
\begin{equ}[e:boundHomogen]
\int_{\|y\|_\s \le r} \|y\|_\s^{\zeta} \,dy \lesssim r^{|\s|+\zeta}\;,
\end{equ}
it follows that the first term in  \eref{e:termsConv} is bounded by a  multiple
of $\|x\|_\s^{\bar\zeta - |k|_\s}$, as required. The same bound holds for the
second term by symmetry.

For the third term, we use the fact that its integrand is supported in 
the set of points $y$ such that one has both 
$\|y\|_\s \ge \|x\|_\s/4$ and $\|x-y\|_\s \ge \|x\|_\s/4$.
Since $\|x-y\|_\s \ge \|y\|_\s - \|x\|_\s$ by the triangle inequality,
one has
\begin{equ}
\|x-y\|_\s \ge {\eps \|y\|_\s} + \Bigl({{1-\eps \over 4} - \eps}\Bigr) \|x\|_\s
\end{equ}
for every $\eps \in [0,1]$ so that, by choosing $\eps$ small enough,
one has $\|x-y\|_\s \ge C \|y\|_\s$ for some constant $C$.
We can therefore bound the third term by a multiple of
\begin{equ}[e:boundIntLarge]
\int_{C\ge\|y\|_\s \ge \|x\|_\s/4} \|y\|_\s^{\zeta_1+\zeta_2 - |k|_\s}\,dy \sim \|x\|_\s^{\bar\zeta-|k|_\s}\;,
\end{equ}
from which the requested bound follows again at once. (Here, the upper bound on
the domain of integration comes from the assumption that the $K_i$ are compactly supported.)

The last term is bounded in a similar way by using the scaling properties of $\phi_r$ and the fact
that we have chosen $r = \|x\|_\s/2$.
\end{proof}

In what follows, we will also encounter distributions that behave just as if they
were functions of order $\zeta$, but with $\zeta < -|\s|$.
We have the following definition:

\begin{definition}
Let $-|\s|-1 < \zeta \le -|\s|$ and let $K \colon \R^d \setminus \{0\} \to \R$
be a smooth function of order $\zeta$, which is supported in a bounded set. 
We then define the renormalised distribution 
$\Ren K$ corresponding to $K$ by
\begin{equ}
\bigl(\Ren K\bigr)(\psi) = \int_{\R^d} K(x) \bigl(\psi(x) - \psi(0)\bigr)\,dx\;,
\end{equ}
for every smooth compactly supported test function $\psi$.
\end{definition}

The following result shows that these distributions behave under convolution
in pretty much the same way as their unrenormalised counterparts with $\zeta > -|\s|$.

\begin{lemma}\label{lem:convolvRenorm}
Let $K_1$ and $K_2$ be two compactly supported functions of respective
 orders $\zeta_1$ and $\zeta_2$ with $-|\s|-1 < \zeta_1 \le -|\s|$ and 
 $-2|\s|- \zeta_1 < \zeta_2 \le 0$.
Then, the function $(\Ren K_1) * K_2$ is of order $\bar \zeta = 0 \wedge (\zeta_1 + \zeta_2 + |\s|)$
and the bound
\begin{equ}
\$(\Ren K_1) * K_2\$_{\bar \zeta;m} \le C \$K_1\$_{\zeta_1;m}\$K_2\$_{\zeta_2;\bar m}\;,
\end{equ}
holds for every $m \ge 0$, where we have set $\bar m = m+ \max\{\s_i\}$.
\end{lemma}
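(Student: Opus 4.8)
The plan is to reduce the statement to the already-established Lemma~\ref{lem:convSing} by carefully excising the singularity of $K_1$ at the origin. Since $\Ren K_1$ is defined by subtracting $\psi(0)$ from the test function, its convolution against $K_2$ should be expected to equal the naive convolution of $K_1$ against $K_2$ minus a correction term that involves $K_2$ evaluated at a single point. Concretely, I would first write, for fixed $x$,
\begin{equ}
\bigl((\Ren K_1) * K_2\bigr)(x) = \int_{\R^d} K_1(y)\bigl(K_2(x-y) - K_2(x)\bigr)\,dy\;,
\end{equ}
which makes sense as an absolutely convergent integral provided $K_2$ is, say, Lipschitz at $x$ (i.e.\ for $x\neq 0$, using $\zeta_2 > -|\s|$ this will be fine near $y=0$, and the compact support of $K_1$ handles the tail). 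The point is that near $y=0$ the integrand is $O(\|y\|_\s^{\zeta_1+\s_i})$ rather than $O(\|y\|_\s^{\zeta_1})$, and $\zeta_1 + \s_i > -|\s|$ by assumption, so the integral converges.

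Next I would split $K_1 = K_1^{(0)} + K_1^{(1)}$ where $K_1^{(0)} = \phi_r K_1$ is the part supported near the origin (with $\phi_r$ a cutoff at scale $r = \|x\|_\s/2$ as in the proof of Lemma~\ref{lem:convSing}) and $K_1^{(1)}$ is supported away from $0$. The contribution of $K_1^{(1)}$ is a convolution of two honest functions of orders $\zeta_1$ and $\zeta_2$ with $K_1^{(1)}$ smooth, so it is of order $\bar \zeta$ by Lemma~\ref{lem:convSing} (the correction term $\int K_1^{(1)}(y)\,dy \cdot K_2(x)$ is just a bounded multiple of $K_2$, hence of order $\zeta_2 \le 0 \le \bar\zeta$ — wait, one must check $\zeta_2 \ge \bar\zeta$; indeed $\bar\zeta = 0\wedge(\zeta_1+\zeta_2+|\s|) \le \zeta_2$ since $\zeta_1 + |\s| \le 0$, so this is consistent). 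For the singular part $K_1^{(0)}$, I would estimate $D^k$ of $\int K_1^{(0)}(y)(K_2(x-y)-K_2(x))\,dy$ directly: differentiating under the integral sign in $x$, the term $D^k_x K_2(x-y)$ contributes $\int_{\|y\|_\s \le r}\|y\|_\s^{\zeta_1}\|x\|_\s^{\zeta_2-|k|_\s}\,dy \lesssim \|x\|_\s^{\zeta_1+\zeta_2+|\s|-|k|_\s}$ by \eref{e:boundHomogen}, and the subtracted constant $D^k_x K_2(x)$ contributes $\|x\|_\s^{\zeta_2-|k|_\s}\int_{\|y\|_\s\le r}\|y\|_\s^{\zeta_1}\,dy \lesssim \|x\|_\s^{\zeta_1+\zeta_2+|\s|-|k|_\s}$ similarly. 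There is also a term where $D^k$ hits the cutoff $\phi_r$, which one handles with the scaling of $\phi_r$ exactly as in Lemma~\ref{lem:convSing}. All of these are of the required order; and the extra derivative on $K_2$ needed in the cutoff term is the source of the loss $\bar m = m + \max\{\s_i\}$.

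The main obstacle, and the one deserving the most care, is the well-posedness and correct interpretation of the identity
\begin{equ}
\bigl((\Ren K_1) * K_2\bigr)(x) = \int_{\R^d} K_1(y)\bigl(K_2(x-y) - K_2(x)\bigr)\,dy\;,
\end{equ}
together with showing that the right-hand side, a priori only defined pointwise for $x \neq 0$, genuinely represents the distribution $(\Ren K_1)*K_2$ in the sense of $\bigl((\Ren K_1)*K_2\bigr)(\psi) = (\Ren K_1)(\tilde K_2^\star \psi)$ with $\tilde K_2(x,y) = K_2(x-y)$; this requires a Fubini-type argument using the absolute convergence just established and the fact that $\zeta_2 > -|\s|$ so that $K_2$ is locally integrable and $\tilde K_2^\star \psi$ is $\CC^1$ in the relevant variable. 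Once that identification is in place, everything else is a routine repetition of the splitting-and-scaling estimates from Lemma~\ref{lem:convSing}, now with the benign extra subtraction of $K_2(x)$, and bilinearity in $(K_1,K_2)$ lets us normalise $\$K_1\$_{\zeta_1;m} = \$K_2\$_{\zeta_2;\bar m} = 1$ at the outset. I would also remark that no renormalisation of the output is needed here (unlike \eref{e:renormConv}) precisely because $\bar\zeta \le 0$, so there is no Taylor polynomial to subtract.
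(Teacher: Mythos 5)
Your overall strategy is the right one and is essentially the paper's: represent $\bigl((\Ren K_1)*K_2\bigr)(x)$ as $\int K_1(y)\bigl(K_2(x-y)-K_2(x)\bigr)\,dy$, cut off at scale $r=\|x\|_\s/2$, and rerun the scaling estimates of Lemma~\ref{lem:convSing}. However, the one estimate that actually uses the renormalisation is carried out incorrectly. For the near-origin piece you bound the two terms $D^k_xK_2(x-y)$ and $D^k_xK_2(x)$ \emph{separately}, each against $\|x\|_\s^{\zeta_2-|k|_\s}\int_{\|y\|_\s\le r}\|y\|_\s^{\zeta_1}\,dy$. But $\zeta_1\le -|\s|$, so both of these integrals \emph{diverge}; the bound \eref{e:boundHomogen} only applies for exponents strictly above $-|\s|$. (You correctly identify the cancellation mechanism for $k=0$ in your opening paragraph, but then abandon it for $k\neq 0$.) The correct step --- and the heart of the lemma --- is to keep the difference together and use the gradient theorem: on the region $\|y\|_\s\le\|x\|_\s/2$ one has $|D^kK_2(x-y)-D^kK_2(x)|\lesssim \sum_i |y_i|\,\|x\|_\s^{\zeta_2-|k|_\s-\s_i}\,\$K_2\$_{\zeta_2;|k|_\s+\max\{\s_i\}}$. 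Since $|y_i|\lesssim\|y\|_\s^{\s_i}$ and $\zeta_1+\s_i\ge\zeta_1+1>-|\s|$ by hypothesis, the $y$-integral now converges and \eref{e:boundHomogen} yields exactly $\|x\|_\s^{\bar\zeta-|k|_\s}$. This is also the true source of the loss $\bar m=m+\max\{\s_i\}$: one needs one extra ($\s_i$-weighted) derivative of $K_2$ for this mean-value estimate --- not, as you write, for the term where derivatives hit the cutoff (there the derivatives fall on $\phi_r$ and $K_1$, never on $K_2$).

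A second, smaller slip: the correction term $D^kK_2(x)\int K_1^{(1)}(y)\,dy$ is not ``a bounded multiple of $K_2$''. Since $K_1^{(1)}$ is supported in $\{\|y\|_\s\gtrsim\|x\|_\s\}$ and $K_1$ is of order $\zeta_1\le-|\s|$, the integral behaves like $\|x\|_\s^{\zeta_1+|\s|}$ and blows up as $x\to0$ when $\zeta_1<-|\s|$. The product is then of order $\|x\|_\s^{\zeta_1+\zeta_2+|\s|-|k|_\s}$, which is still $\lesssim\|x\|_\s^{\bar\zeta-|k|_\s}$ for $\|x\|_\s\le1$, so the conclusion survives, but for a different reason than the one you give. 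With these two repairs your argument coincides with the proof in the paper.
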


\begin{proof}
Similarly to before, we can write
\begin{equs}
D^k\bigl((\Ren K_1) * K_2\bigr)(x) &= \int_{\R^d} \phi_r(y) D^kK_1(x-y) K_2(y)\,dy
+ (\Ren K_1) \bigl(\phi_r D^k K_2(x-\cdot)\bigr)\\
&\quad + \int_{\R^d} \bigl(1-\phi_r(y)-\phi_r(x-y)\bigr) D^k K_1(x-y) K_2(y)\,dy\\
&\quad - \sum_{\ell < k} {k!\over \ell! (k-\ell)!} \int_{\R^d} D^\ell\phi_r(x-y) D^{k-\ell} K_1(x-y) K_2(y)\,dy\;.
\end{equs}
Here, we used the fact that, when tested against test functions that vanish at the
origin, $\Ren K_1$ is again nothing but integration against $K_1$.
All these terms are bounded exactly as before, thus yielding the desired bounds,
except for the second term. For this term, we have the identity
\begin{equs}
(\Ren K_1) \bigl(\phi_r D^k K_2(x-\cdot)\bigr)
&= \int_{\R^d} K_1(y) \bigl(\phi_r(y)  D^k K_2(x-y) - D^k K_2(x)\bigr) \,dy\\
&= \int_{\R^d} K_1(y) \phi_r(y) \bigl( D^k K_2(x-y) - D^k K_2(x)\bigr) \,dy\\
&\quad+ D^k K_2(x) \int_{\R^d} K_1(y) \bigl(1-\phi_r(y)\bigr) \,dy\;.\label{e:termRenorm}
\end{equs}
For the first term, we use the fact that the integrand is supported in the region
$\{y\,:\, \|y\|_\s \le \|x\|_\s / 2\}$ (this is the case again by making the choice $r = \|x\|_\s / 2$
as in the proof of Lemma~\ref{lem:convSing}). As a consequence of the gradient theorem,
we then obtain the bound
\begin{equ}
|D^k K_2(x-y) - D^k K_2(x)| \lesssim \sum_{i=1}^d |y_i|\, \|x\|_\s^{\zeta_2 - |k|_\s - \s_i}\$K_2\$_{\zeta_2;\bar k}\;,
\end{equ}
where we have set $\bar k = |k|_\s + \max\{\s_i\}$. Observing that $|y_i| \lesssim \|y\|_\s^{\s_i}$, the required bound then follows
from \eref{e:boundHomogen}. The second term in \eref{e:termRenorm} 
can be bounded similarly as in \eref{e:boundIntLarge}
by making use of the bounds on $K_1$ and $K_2$.
\end{proof}

To conclude this section, we give another two useful results regarding the behaviour
of such kernels. First, we show how a class of natural regularisations of
a kernel of order $\zeta$ converges to it. 
We fix a function $\rho \colon \R^d \to \R$ which is smooth, compactly supported,
and integrates to $1$,
and we write as usual $\rho_\eps(y) = \eps^{-|\s|}\rho(\CS_\s^\eps y)$.
Given a function $K$ on $\R^d$, we then set
\begin{equ}
K_\eps \eqdef K * \rho_\eps\;.
\end{equ}
We then have the following result:

\begin{lemma}\label{lem:mollify}
In the above setting, if $K$ is of order $\zeta \in (-|\s|,0)$, then $K_\eps$ 
has bounded derivatives of all orders. Furthermore, one has the bound
\begin{equ}[e:boundMollif]
\bigl|D^k K_\eps(x)\bigr| \le C \bigl(\|x\|_\s + \eps\bigr)^{\zeta-|k|_\s} \$K\$_{\zeta;|k|_\s}\;.
\end{equ}
Finally, for all $\bar \zeta \in [\zeta-1,\zeta)$ and $m \ge 0$, one has the bound
\begin{equ}[e:boundDiffK]
\$K-K_\eps\$_{\bar \zeta;m} \lesssim \eps^{\zeta - \bar \zeta} \$K\$_{\zeta; \bar m}\;,
\end{equ}
where $\bar m = m + \max \{\s_i\}$.
\end{lemma}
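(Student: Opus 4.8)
The plan is to reduce all three claims to pointwise bounds on $D^k K_\eps$ and on $D^k(K-K_\eps)$, obtained by a dyadic split according to whether $\|x\|_\s \lesssim \eps$ or $\|x\|_\s \gtrsim \eps$. By linearity of the map $K \mapsto K_\eps$ and homogeneity of the seminorms $\$\cdot\$_{\zeta;m}$, I may normalise $\$K\$_{\zeta;N}=1$ for $N$ as large as needed (if the relevant seminorm is infinite there is nothing to prove). The smoothness of $K_\eps$ and the boundedness of all its derivatives follow once \eref{e:boundMollif} is in hand; alternatively they follow directly from the fact that $K$ is locally integrable — here the hypothesis $\zeta > -|\s|$ is used — while $\rho_\eps \in \CC^\infty$ is compactly supported, so that $D^k K_\eps = K * D^k \rho_\eps$ is a bounded continuous function.

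For \eref{e:boundMollif} I would fix $k$, let $C$ be such that $\supp\rho \subset B_\s(0,C)$ (so that $\supp\rho_\eps \subset B_\s(0,C\eps)$ and $|D^k\rho_\eps| \lesssim \eps^{-|\s|-|k|_\s}$), and distinguish two regimes. When $\|x\|_\s \ge 2C\eps$, write $D^k K_\eps = (D^kK)*\rho_\eps$; the triangle inequality for $d_\s$ gives $\|x-y\|_\s \ge \|x\|_\s/2$ for $y \in \supp\rho_\eps$, so $|D^kK(x-y)| \lesssim \|x\|_\s^{\zeta-|k|_\s}$ there, and since $\int|\rho_\eps| = \|\rho\|_{L^1}$ one gets $|D^kK_\eps(x)| \lesssim \|x\|_\s^{\zeta-|k|_\s} \sim (\|x\|_\s+\eps)^{\zeta-|k|_\s}$. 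When $\|x\|_\s < 2C\eps$, write instead $D^kK_\eps = K*(D^k\rho_\eps)$; on $\supp\rho_\eps$ one has $\|x-y\|_\s \le 3C\eps$, so by \eref{e:boundHomogen} (valid because $\zeta > -|\s|$) $\int_{\|y\|_\s \le C\eps}|K(x-y)|\,dy \lesssim \int_{\|w\|_\s \le 3C\eps}\|w\|_\s^\zeta\,dw \lesssim \eps^{|\s|+\zeta}$, whence $|D^kK_\eps(x)| \lesssim \eps^{-|\s|-|k|_\s}\eps^{|\s|+\zeta} = \eps^{\zeta-|k|_\s} \sim (\|x\|_\s+\eps)^{\zeta-|k|_\s}$.

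For \eref{e:boundDiffK}, i.e.\ $|D^k(K-K_\eps)(x)| \lesssim \eps^{\zeta-\bar\zeta}\|x\|_\s^{\bar\zeta-|k|_\s}$ with $|k|_\s \le m$ and $\bar\zeta \in [\zeta-1,\zeta)$, I again split. For $\|x\|_\s \le 2C\eps$ there is no cancellation to exploit: combining $|D^kK(x)| \le \|x\|_\s^{\zeta-|k|_\s}$ with \eref{e:boundMollif} gives $|D^k(K-K_\eps)(x)| \lesssim \|x\|_\s^{\zeta-|k|_\s}$, and since $\zeta-\bar\zeta \in (0,1]$ and $\|x\|_\s \lesssim \eps$ this is $\lesssim \|x\|_\s^{\bar\zeta-|k|_\s}\eps^{\zeta-\bar\zeta}$. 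For $\|x\|_\s > 2C\eps$ I would use $\int\rho_\eps = 1$ to write $D^k(K-K_\eps)(x) = \int\bigl(D^kK(x)-D^kK(x-y)\bigr)\rho_\eps(y)\,dy$ and bound the integrand, for each $y \in \supp\rho_\eps$, via the gradient theorem along the Euclidean segment $t \mapsto x-ty$: $D^kK(x)-D^kK(x-y) = \sum_{i=1}^d y_i\int_0^1 D^{k+e_i}K(x-ty)\,dt$. Since $\|ty\|_\s \le \|y\|_\s \le C\eps \le \|x\|_\s/2$ for $t\in[0,1]$, one has $\|x-ty\|_\s \ge \|x\|_\s/2$ and hence $|D^{k+e_i}K(x-ty)| \lesssim \|x\|_\s^{\zeta-|k|_\s-\s_i}$; moreover $|y_i| \le \|y\|_\s^{\s_i} \lesssim \eps^{\s_i}$, and because $\s_i \ge 1 \ge \zeta-\bar\zeta$ the exponent $\s_i-(\zeta-\bar\zeta)$ is nonnegative, so using $\eps < \|x\|_\s$ one gets $\eps^{\s_i} = \eps^{\zeta-\bar\zeta}\eps^{\s_i-(\zeta-\bar\zeta)} \lesssim \eps^{\zeta-\bar\zeta}\|x\|_\s^{\s_i-(\zeta-\bar\zeta)}$. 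Multiplying the two estimates and summing over $i$ yields $|D^kK(x)-D^kK(x-y)| \lesssim \eps^{\zeta-\bar\zeta}\|x\|_\s^{\bar\zeta-|k|_\s}$ uniformly in $y \in \supp\rho_\eps$, and integrating against $|\rho_\eps|$ finishes the bound; the seminorm $\$K\$_{\zeta;\bar m}$ appears on the right because we differentiated $K$ up to order $|k|_\s+\max_i\s_i \le m+\max_i\s_i = \bar m$.

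I do not anticipate any genuine obstacle: this is a routine but bookkeeping-heavy scaling argument. The points requiring care will be (i) choosing correctly where to place the $k$-fold derivative — on $K$ when $\|x\|_\s \gtrsim \eps$, on $\rho_\eps$ when $\|x\|_\s \lesssim \eps$; (ii) invoking $\zeta > -|\s|$ for local integrability of $K$; and (iii) handling the scaled ``norm'' $\|\cdot\|_\s$ — in particular $\|ty\|_\s \le \|y\|_\s$ for $t\in[0,1]$, $|y_i| \le \|y\|_\s^{\s_i}$, and the triangle inequality for $d_\s$ — together with the elementary but crucial fact that $\zeta-\bar\zeta \le 1 \le \s_i$, which is exactly what makes the exponents in the last estimate balance.
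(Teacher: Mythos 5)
Your proof is correct and follows essentially the same route as the paper's: the identical two-regime split $\|x\|_\s \gtrless 2C\eps$ with the derivative placed on $K$ in the far regime and on $\rho_\eps$ in the near regime, and the same gradient-theorem plus exponent-balancing argument (using $\s_i \ge 1 \ge \zeta - \bar\zeta$) for the difference bound. The only cosmetic difference is that you run the gradient theorem along the straight Euclidean segment rather than an axis-parallel path, which is legitimate here since $\|ty\|_\s \le \|y\|_\s$ keeps the whole segment in the region $\|x - ty\|_\s \ge \|x\|_\s/2$.
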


\begin{proof}
Without loss of generality, we assume that $\rho$ is supported in the set $\{x\,:\, \|x\|_\s \le 1\}$.
We first obtain the bounds on $K_\eps$ itself. For $\|x\|_\s \ge 2\eps$, we can write
\begin{equ}
D^k K_\eps(x) = \int_{\R^d} D^k K(x-y) \rho_\eps(y)\,dy\;.
\end{equ}
Since $\rho_\eps$ is supported in a ball of radius $\eps$, it follows from the bound $\|x\|_\s \ge 2\eps$
that whenever the integrand is non-zero, one has $\|x-y\|_\s \ge \|x\|_\s/2$.
We can therefore bound $D^k K(x-y)$ by $\|x\|_\s^{\zeta -|k|_\s}\$K\$_{\zeta;|k|_\s}$,
and the requested bound follows from the fact that $\rho_\eps$ integrates to $1$.

For $\|x\|_\s \le 2\eps$ on the other hand, we use the fact that
\begin{equ}
D^k K_\eps(x) = \int_{\R^d}  K(y) D^k\rho_\eps(x-y)\,dy\;.
\end{equ}
Since $\|x\|_\s \le 2\eps$, the integrand is supported in a ball of radius $3\eps$.
Furthermore, $|D^k\rho_\eps|$ is bounded by a constant multiple of $\eps^{-|\s| - |k|_\s}$
there, so that we have the bound
\begin{equ}
|D^k K_\eps(x)| \lesssim \eps^{-|\s| - |k|_\s} \$K\$_{\zeta;0}\int_{\|y\|_\s \le 3\eps}  \|y\|_\s^\zeta \,dy\;,
\end{equ}
so that \eref{e:boundMollif} follows.

Regarding the bound on $K-K_\eps$, we write
\begin{equ}
D^k K_\eps(x) - D^k K(x) = \int_{\R^d} \bigl(D^kK(x-y) - D^k K(x)\bigr)\,\rho_\eps(y)\,dy\;.
\end{equ}
For $\|x\|_\s \ge 2\eps$, we obtain as previously the bound
\begin{equ}
\bigl|D^kK(x-y) - D^k K(x)\bigr| \lesssim \$K\$_{\zeta;\bar k} \sum_{i=1}^d |y_i|\, \|x\|_\s^{\zeta-|k|_\s-\s_i}\;,
\end{equ}
where we set $\bar k = |k|_\s + \max\{\s_i\}$.
Integrating this bound against $\rho_\eps$, we thus obtain
\begin{equ}
\bigl|D^k K(x) - D^k K_\eps(x)\bigr| \lesssim \$K\$_{\zeta;\bar k} \sum_{i=1}^d \eps^{\s_i}\, \|x\|_\s^{\zeta-|k|_\s-\s_i} \lesssim \eps^{\zeta - \bar \zeta} \$K\$_{\zeta;\bar k} \|x\|_\s^{\bar \zeta-|k|_\s}\;,
\end{equ}
where we used the fact that $\s_i \ge 1$ for every $i$.
For $\|x\|_\s \le 2\eps$ on the other hand, we make use of the bound obtained in the first part,
which implies in particular that 
\begin{equ}
\bigl|D^k K(x) - D^k K_\eps(x)\bigr| \lesssim \$K\$_{\zeta;|k|_\s} \|x\|_\s^{\zeta - |k|_\s}
\lesssim \eps^{\zeta - \bar \zeta}\$K\$_{\zeta;|k|_\s} \|x\|_\s^{\bar \zeta - |k|_\s}\;,
\end{equ}
which is precisely the requested bound.
\end{proof}

Finally, it will be useful to have a bound on the difference between the values of a singular kernel,
evaluated at two different locations. The relevant bound takes the following form:

\begin{lemma}\label{lem:boundDiffKernel}
Let $K$ be of order $\zeta \le 0$. Then, for every $\alpha \in [0,1]$, one has the bound
\begin{equ}
\bigl|K(z) - K(\bar z)\bigr| \lesssim \|z-\bar z\|_\s^\alpha \bigl(\|z\|_\s^{\zeta-\alpha} + \|\bar z\|_\s^{\zeta-\alpha}\bigr) \$K\$_{\zeta;m}\;,
\end{equ}
where $m = \sup_i \s_i$.
\end{lemma}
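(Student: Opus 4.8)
The statement is a quantitative Hölder-type bound on the increments of a kernel with a prescribed singularity at the origin, and the natural strategy is a standard ``two regimes'' dichotomy based on comparing $\|z-\bar z\|_\s$ to $\|z\|_\s \wedge \|\bar z\|_\s$. First I would dispose of the far regime: if $\|z - \bar z\|_\s \ge \tfrac12 (\|z\|_\s \vee \|\bar z\|_\s)$, then the triangle inequality gives $\|z\|_\s \lesssim \|z-\bar z\|_\s$ and $\|\bar z\|_\s \lesssim \|z-\bar z\|_\s$, so I can simply bound $|K(z) - K(\bar z)| \le |K(z)| + |K(\bar z)| \lesssim \$K\$_{\zeta;0}\bigl(\|z\|_\s^\zeta + \|\bar z\|_\s^\zeta\bigr)$ using the definition of order $\zeta$; since $\zeta \le 0$ and $\alpha \ge 0$ one has $\|z\|_\s^\zeta \le \|z\|_\s^{\zeta - \alpha}\|z\|_\s^\alpha \lesssim \|z\|_\s^{\zeta-\alpha}\|z-\bar z\|_\s^\alpha$ (and similarly for $\bar z$), which is exactly the claimed form. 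Note this regime only uses $m = 0$ derivatives, so the claim that $m = \sup_i \s_i$ suffices is safe.

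In the near regime $\|z - \bar z\|_\s < \tfrac12 (\|z\|_\s \vee \|\bar z\|_\s)$, say with $\|z\|_\s \le \|\bar z\|_\s$ without loss of generality, the whole segment joining $z$ to $\bar z$ stays at comparable distance from the origin: for every point $w$ on that segment one has $\|w\|_\s \ge \|\bar z\|_\s - \|z - \bar z\|_\s \gtrsim \|\bar z\|_\s$ and also $\|w\|_\s \lesssim \|\bar z\|_\s$. Here the plan is to interpolate between $\alpha = 0$ (trivial) and $\alpha = 1$, so it suffices to prove the bound for $\alpha = 1$ and then interpolate. For $\alpha = 1$, I would apply the gradient theorem (fundamental theorem of calculus along the segment), writing $K(\bar z) - K(z) = \int_0^1 \nabla K\bigl(z + t(\bar z - z)\bigr)\cdot(\bar z - z)\,dt$; each partial derivative $\partial_i K$ is controlled by $\$K\$_{\zeta;\s_i} \|w\|_\s^{\zeta - \s_i}$ along the segment, and the $i$th component of $\bar z - z$ is bounded by $\|z-\bar z\|_\s^{\s_i}$ by definition of $\|\cdot\|_\s$. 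Combining, each term contributes $\lesssim \|z-\bar z\|_\s^{\s_i} \|\bar z\|_\s^{\zeta-\s_i}\$K\$_{\zeta;\s_i}$, and since $\s_i \ge 1$ and $\|z-\bar z\|_\s \lesssim \|\bar z\|_\s$ one has $\|z-\bar z\|_\s^{\s_i}\|\bar z\|_\s^{-\s_i} \le \|z-\bar z\|_\s \|\bar z\|_\s^{-1}$, giving the $\alpha = 1$ version with the stated regularity requirement $m = \sup_i \s_i$.

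Finally I would interpolate: given the bound for $\alpha = 0$ (namely $|K(z)-K(\bar z)| \lesssim (\|z\|_\s^\zeta + \|\bar z\|_\s^\zeta)\$K\$_{\zeta;0}$, which follows from the definition of order) and for $\alpha = 1$, the bound for intermediate $\alpha$ follows by taking the $\alpha$-th power of the $\alpha=1$ estimate times the $(1-\alpha)$-th power of the $\alpha=0$ estimate and using $\|z\|_\s \lesssim \|\bar z\|_\s \lesssim \|z,\bar z\|_\s$-type comparisons in the near regime to absorb everything into $\|z-\bar z\|_\s^\alpha(\|z\|_\s^{\zeta-\alpha} + \|\bar z\|_\s^{\zeta-\alpha})$. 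I do not anticipate any genuine obstacle here; the only point requiring a little care is checking that the segment joining $z$ and $\bar z$ really does stay away from the origin in the near regime (so that the singular bound on $\nabla K$ is applicable uniformly along it), and that the combinatorial bookkeeping of the exponents $\s_i$ versus $|k|_\s$ works out — but this is routine given the definition of the anisotropic ``norm'' $\|\cdot\|_\s$ and of $\$K\$_{\zeta;m}$.
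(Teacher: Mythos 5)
Your proposal is correct and follows essentially the same route as the paper: split according to whether $\|z-\bar z\|_\s$ is comparable to the distance of the points from the origin, use the brutal bound $|K(z)|+|K(\bar z)|$ in the far regime, apply the gradient theorem in the near regime with $|\d_i K(w)|\lesssim \$K\$_{\zeta;m}\|w\|_\s^{\zeta-\s_i}$ and $|z_i-\bar z_i|\le\|z-\bar z\|_\s^{\s_i}$, use $\s_i\ge 1$ to reduce each term to $\|z-\bar z\|_\s\,\|\cdot\|_\s^{\zeta-1}$, and interpolate for intermediate $\alpha$. The only cosmetic difference is that you integrate along the straight Euclidean segment (correctly checking via the triangle inequality for $d_\s$ that it stays at distance $\gtrsim\|\bar z\|_\s$ from the origin), whereas the paper uses a staircase path of axis-parallel segments; both yield the identical exponent bookkeeping.
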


\begin{proof}
For $\alpha = 0$, the bound is obvious, so we only need to show it for $\alpha = 1$; the other values 
then follow by interpolation.

If $\|z-\bar z\|_\s \ge \|z\|_\s \wedge \|\bar z\|_\s$, we use the ``brutal'' bound
\begin{equs}
\bigl|K(z) - K(\bar z)\bigr| &\le
|K(z)| + |K(\bar z)|
\le \bigl(\|z\|_\s^{\zeta} + \|\bar z\|_\s^{\zeta}\bigr) \$K\$_{\zeta;m}\\
&\le 2\bigl(\|z\|_\s^{\zeta} \wedge \|\bar z\|_\s^{\zeta}\bigr) \$K\$_{\zeta;m}
\le 2\|z-\bar z\|\bigl(\|z\|_\s^{\zeta-1} \wedge \|\bar z\|_\s^{\zeta-1}\bigr) \$K\$_{\zeta;m} \\
&\le 2\|z-\bar z\|\bigl(\|z\|_\s^{\zeta-1} + \|\bar z\|_\s^{\zeta-1}\bigr) \$K\$_{\zeta;m}\;,
\end{equs}
which is precisely what is required.

To treat the case $\|z-\bar z\|_\s \le \|z\|_\s \wedge \|\bar z\|_\s$,
we use the identity
\begin{equ}[e:idenDiffK]
K(z) - K(\bar z) = \int_\gamma \scal{\nabla K(y), dy}\;,
\end{equ}
where $\gamma$ is any path connecting $\bar z$ to $z$.
It is straightforward to verify that it is always possible to find $\gamma$
with the following properties:
\begin{enumerate}
\item The path $\gamma$ is made of finitely many line segments that are parallel to
the canonical basis vectors $\{e_i\}_{i=1}^d$. 
\item There exists $c > 0$ such that one has $\|y\|_\s \ge c (\|z\|_\s \wedge \|\bar z\|_\s)$ for every $y$ on $\gamma$.
\item There exists $C>0$ such that the total (Euclidean) length of the line segments
parallel to $e_i$ is bounded by $C\|z-\bar z\|_\s^{\s_i}$. 
\end{enumerate} 
Here, both constants $c$ and $C$ can be chosen uniform in $z$ and $\bar z$.
It now follows from the definition of $\$K\$_{\zeta;m}$ that one has
\begin{equ}
|\d_i K(y)| \le \$K\$_{\zeta;m} \,\|y\|_\s^{\zeta - \s_i}\;.
\end{equ}
It follows that the total contribution to \eref{e:idenDiffK}
coming from the line segments parallel to $e_i$ is bounded by
a multiple of 
\begin{equ}
\$K\$_{\zeta;m} \|z-\bar z\|_\s^{\s_i} \bigl(\|z\|_\s^{\zeta-\s_i}
+ \|\bar z\|_\s^{\zeta-\s_i}\bigr)\le
\$K\$_{\zeta;m} \|z-\bar z\|_\s \bigl(\|z\|_\s^{\zeta-1}
+ \|\bar z\|_\s^{\zeta-1}\bigr)\;,
\end{equ}
where, in order to obtain the inequality, 
we have used the fact that $\s_i \ge 1$ and that we are
considering the regime $\|z-\bar z\|_\s \le \|z\|_\s \wedge \|\bar z\|_\s$.
\end{proof}

\subsection{Wick renormalisation and the continuous parabolic Anderson model}
\label{sec:PAMGenRen}

There is one situation in which it is possible to show without much effort that
bounds of the type \eref{e:aprioriBound} and \eref{e:convergenceBound} hold, which is
when $\tau = \tau_1 \tau_2$ and one has
identity 
\begin{equ}
\bigl(\hat \Pi^{(\eps)}_z \tau\bigr)(\bar z) \approx \bigl(\hat \Pi^{(\eps)}_z \tau_1\bigr)(\bar z) \diamond
\bigl(\hat \Pi^{(\eps)}_z \tau_2\bigr)(\bar z)\;,
\end{equ}
either as an exact identity or as an approximate identity with a ``lower-order'' error term,
where $\diamond$ denotes the Wick product between elements of some fixed Wiener chaos. 
Recall that if $f \in H^{\otimes k}$ and $g \in H^{\otimes \ell}$, then the Wick
product between the corresponding random variables is \textit{defined} by
\begin{equ}
I_k(f)\diamond I_\ell(g) = I_{k+\ell}(f \otimes g)\;.
\end{equ}
In other words, the Wick product only keeps the ``dominant'' term in the product formula
\eref{e:formProduct} and discards all the other terms. 

We have seen in Section~\ref{sec:PAMGenRigour} how to associate to
\eref{e:PAMGen} a renormalisation group $\RR_0$ and how to interpret the solutions
to the fixed point map associated to a renormalised model. In this section, we
perform the final step, namely we show that if $\xi_\eps$ is a smooth approximation
to our spatial white noise $\xi$ and $Z_\eps$ denotes the corresponding canonical
model, then one can indeed find a sequence of elements $M_\eps \in \RR_0$ such that
one has $M_\eps Z_\eps \to \hat Z$.
Recalling that elements in $\RR_0$ are characterised by a real number $C$ and a $2\times 2$ matrix
$\bar C$, we show furthermore that it is possible to choose the sequence $M_\eps$ in such a way
that the corresponding constant $C$ is given by a logarithmically diverging constant $C_\eps$,
while the corresponding $2\times 2$ matrix $\bar C$ is given by $\bar C_{ij} = -{1\over 2} C_\eps \delta_{ij}$.

We are in the setting of Theorem~\ref{theo:convRenormGauss} 
and Proposition~\ref{prop:convCorFcn} with $H = L^2(\T^2)$, and where 
the action of $\R^3$ onto $H$ is given by translation in the spatial directions. More
precisely, for $z = (t,x) \in \R \times \R^2$ and $\phi \in H$, one has
\begin{equ}
\bigl(S_z \phi\bigr)(y) = \phi(y-x)\;.
\end{equ}
It turns out that in this case, writing as before $z = (t,x)$ and $\bar z = (\bar t, \bar x)$,
the random variables $\bigl(\hat \Pi^{(\eps)}_z \tau\bigr)(\bar z)$ are not only independent of
$t$, but they are also independent of $\bar t$. So we really view our model as a model
on $\R^2$ endowed with the Euclidean scaling, rather than on $\R^3$ endowed with the 
parabolic scaling. The corresponding integral kernel $\bar K$ is obtained from $K$ by simply
integrating out the temporal variable. 

Since the temporal integral of the heat kernel yields the Green's function of the Laplacian,
we can choose $\bar K$ in such a way that
\begin{equ}[e:singK]
\bar K(x) = -{1\over 2\pi} \log \|x\|\;,
\end{equ}
for values of $x$ in some sufficiently small neighbourhood of the origin. 
Outside of that neighbourhood,
we choose $\bar K$ as before in such a way that it is smooth, compactly supported, and such that
$\int_{\R^2} x^k \bar K(x)\,dx = 0$, for every multiindex $k$ with $|k| \le r$ for some fixed and sufficiently
large value of $r$. These properties can always be ensured by a suitable choice for the original space-time
kernel $K$.
In particular, $\bar K$ is of order $\zeta$ for every $\zeta < 0$ in the sense of Definition~\ref{def:orderKernel}.

Recall now that we define $\xi_\eps$ by $\xi_\eps = \rho_\eps * \xi$, where $\rho$ is a smooth compactly 
supported function integrating to $1$ and $\rho_\eps$ denotes the rescaled function as usual. 
From now on, we consider everything in $\T^2$, so that $\rho \colon \R^2 \to \R$. With this definition, we then
have the following result, which is the last missing step for the proof of Theorem~\ref{theo:mainConvPAM}.

\begin{theorem}\label{theo:convModelPAM}
Denote by $\TT$ the regularity structure associated to \eref{e:PAMGen} with $\alpha \in (-{4\over 3}, -1)$ and $\beta = 2$. Let furthermore $M_\eps$ be a sequence of elements in $\RR_0$ and define the renormalised model
$\hat Z_\eps = M_\eps Z_\eps$. Then, there exists a limiting model $\hat Z$ independent of 
the choice of mollifier $\rho$, as well as a choice of $M_\eps \in \RR_0$ 
such that $\hat Z_\eps \to \hat Z$ in probability. 
More precisely, for any $\theta < -1 - \alpha$,
any compact set $\K$, and any $\gamma < r$, one has the bound
\begin{equ}
\E \$M_\eps Z_\eps; \hat Z\$_{\gamma;\K} \lesssim \eps^\theta\;,
\end{equ}
uniformly over $\eps \in (0,1]$.

Furthermore, it is possible to renormalise the model in such a way that the family 
of all solutions to \eref{e:PAMGen} with respect to the model $\hat Z$
formally satisfies the chain rule.
\end{theorem}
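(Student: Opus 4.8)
The strategy is to apply Theorem~\ref{theo:convRenormGauss}: it suffices to exhibit, for each basis vector $\tau$ of the regularity structure with $|\tau|_\s < 0$, a candidate limiting random variable $\bigl(\hat\Pi_x\tau\bigr)(\phi)$ in the inhomogeneous Wiener chaos of order $\|\tau\|$ satisfying the uniform bound \eref{e:aprioriBound} and the convergence bound \eref{e:convergenceBound}. Since $\alpha \in (-\frac43,-1)$ and $\s=(2,1,1)$, the set $\CF_-$ consists precisely of $\Xi$, $X_i\Xi$, $\CI_i(\Xi)$, $\CI(\Xi)\Xi$ and $\CI_i(\Xi)\CI_j(\Xi)$, and by stationarity all the associated models depend only on the spatial variables, so that we work in effect on $\R^2$ with the Green's function $\bar K$ of the Laplacian on $\T^2$ obtained by integrating $K$ over time; $\bar K$ has the logarithmic singularity \eref{e:singK}, is of order $0^-$, and its spatial gradient is of order $-1$. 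For $\Xi$ and $X_i\Xi$ the renormalisation map acts trivially (cf.\ Section~\ref{sec:renPAM}), so $\hat\Pi^{(\eps)}=\Pi^{(\eps)}$ there and the bounds are immediate from Lemma~\ref{lem:approxGauss}. For $\CI_i(\Xi)$ the model is again unrenormalised, being the first-chaos variable with kernel the mollified gradient $D_i\bar K_\eps$; the pairing $\langle D_i\bar K_\eps(z-\cdot),D_i\bar K_\eps(\bar z-\cdot)\rangle$ is a convolution of two order-$(-1)$ kernels and hence of order $0^-$ by Lemma~\ref{lem:convSing}, which after smearing against $\phi_x^\lambda$ yields the required $\lambda^{2|\tau|_\s+\kappa}$ because $2(\alpha+1)<0$, and the convergence is controlled by the rate \eref{e:boundDiffK} of Lemma~\ref{lem:mollify}.

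The crux is the pair $\tau_1=\CI(\Xi)\Xi$ and $\tau_2=\CI_i(\Xi)\CI_j(\Xi)$, on which $M_\eps\in\RR_0$ acts non-trivially. By the recursion \eref{e:recursion}, $\Pi^{(\eps)}_0\tau_k$ is a product of two first-chaos random variables, so the product formula of Lemma~\ref{lem:productChaos} splits it into its Wick product, lying in the second chaos, plus a deterministic contribution: for $\tau_1$ the latter is a genuine function of the evaluation point minus a constant $C^{(1)}_\eps=\int\bar K_\eps(w)\rho_\eps(w)\,dw$, and for $\tau_2$ it is the matrix constant $\bar C^{ij}_\eps=\int(D_i\bar K_\eps)(w)(D_j\bar K_\eps)(w)\,dw$. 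Because $\bar K$ is logarithmically singular and $D_i\bar K$ is of order $-1$ in dimension two, both constants diverge logarithmically as $\eps\to0$; the diagonal divergence of $\bar C^{ij}_\eps$ is proportional to $\delta_{ij}$ (the off-diagonal and anisotropic parts converging), and the ratio of the two divergences matches the ratio of the two counterterm coefficients in \eref{e:PAMRenorm}, with the divergent coefficients $\rho$-independent and the finite remainders $\rho$-dependent. From the explicit action of $M_\eps$ on $\CF_0$ given in Section~\ref{sec:renPAM} one computes $\hat\Pi^{(\eps)}_x\tau_k=\Pi^{(\eps)}_x\tau_k-(\text{constant of }M_\eps)\cdot\one$, so choosing the constants of $M_\eps$ equal to $C^{(1)}_\eps$ and to the $\delta_{ij}$-part of $\bar C^{ij}_\eps$ removes the zeroth-chaos piece entirely; in practice one takes the scalar constant to be $C_\eps=-\frac1\pi\log\eps+K_\rho$ as in Theorem~\ref{theo:mainConvPAM}, so that the surviving kernels in the limit are built from $\bar K$ and $D_i\bar K$ alone and are therefore independent of $\rho$.

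It then remains to estimate the second-chaos parts of $\tau_1$ and $\tau_2$. For $\tau_2$ the relevant kernel is a symmetrised tensor product of two copies of (mollified) $D_i\bar K$, so the pairing $\langle\hat\CW^{(2)}\tau_2(z),\hat\CW^{(2)}\tau_2(\bar z)\rangle$ is a product of convolutions of order-$(-1)$ kernels, hence of order $0^-$ by Lemma~\ref{lem:convSing}; Proposition~\ref{prop:convCorFcn} then applies directly, the exponent condition holding because $4\alpha+4<0$, and the difference bound \eref{e:bounddW} follows from Lemma~\ref{lem:mollify}, with Lemma~\ref{lem:boundDiffKernel} used to extract the needed power $\|z-\bar z\|_\s^\kappa$ from differences of kernel values at nearby points. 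For $\tau_1$ the explicit factor $\Xi$ produces a Dirac contribution to the kernel, so one cannot argue through a pointwise kernel bound; instead one estimates $\E\bigl|(\hat\Pi_x\tau_1)(\phi_x^\lambda)\bigr|^2$ directly, noting that smearing against $\phi_x^\lambda$ converts the Dirac factor into the bounded compactly supported $\phi_x^\lambda$ and leaves a square-integrable kernel whose $L^2$-norm is controlled by the same power-counting lemmas, and \eref{e:convergenceBound} is obtained in the same way. Feeding these bounds into Theorem~\ref{theo:convRenormGauss} produces the limiting model $\hat Z$ together with $\E\$M_\eps Z_\eps;\hat Z\$_{\gamma;\K}\lesssim\eps^\theta$; the admissible range $\theta<-1-\alpha$ is dictated by the slowest-converging contribution, namely that of $\Xi$ itself, governed by Lemma~\ref{lem:approxGauss}. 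I expect the principal obstacle to lie not in any single estimate but in the organisation of this power counting: correctly identifying and matching the divergent constants, and handling separately, with the test function carried along, the symbols containing an explicit factor $\Xi$.

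Finally, for the chain-rule assertion I would invoke Proposition~\ref{prop:identifyRenormPAM}. The $M_\eps$ just constructed is determined by a scalar constant and a matrix constant of the form considered there, and after the (convergent) finite adjustment that makes the matrix constant equal to $\delta_{ij}C_\eps$ with $C_\eps$ as above, the identity $\CR\CS^L(u_0,M_\eps Z_\eps)=\bar\CS^L_{M_\eps}(u_0,\xi_\eps)$ shows that the renormalised abstract solution coincides with the classical solution $u_\eps$ of \eref{e:PAMRenorm}. An elementary but lengthy computation with the classical chain rule shows that if $\psi$ is a smooth diffeomorphism of $\R$ and $u_\eps$ solves \eref{e:PAMRenorm} for data $(f,g,u_0)$, then $v_\eps:=\psi(u_\eps)$ solves \eref{e:PAMRenorm} for the data obtained by formally applying the chain rule to \eref{e:PAMGen} --- this works precisely because the counterterms there have the structure $-\delta_{ij}C_\eps g^2$ and $-2C_\eps g'$ forced by our choice of $M_\eps$. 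Passing to the limit $\eps\to0$, using the convergence $M_\eps Z_\eps\to\hat Z$ just established together with the continuity of the solution map (Corollary~\ref{cor:locSolPAMGen}) and of composition with $\psi$, one concludes that the abstract solution family associated to $\hat Z$ is stable under $u\mapsto\psi(u)$, i.e.\ it formally satisfies the chain rule; possible finite-time blow-up is handled by stopping at a large cutoff exactly as in Remark~\ref{rem:convergenceNotion}.
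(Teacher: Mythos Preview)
Your overall strategy matches the paper's: reduce to Theorem~\ref{theo:convRenormGauss} and verify the moment bounds for each $\tau$ of negative homogeneity. However, two points are not right as stated.

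First, your claim that choosing $C^{(\eps)} = \langle\rho_\eps,\bar K_\eps\rangle$ ``removes the zeroth-chaos piece entirely'' for $\tau_1 = \CI(\Xi)\Xi$ is false. The product $\xi_\eps(y)\cdot\bigl((\bar K_\eps*\xi)(y)-(\bar K_\eps*\xi)(0)\bigr)$ has \emph{two} zeroth-chaos contributions: the contraction of $\xi_\eps(y)$ with the first term gives the constant $C^{(\eps)}$, which is indeed cancelled, but the contraction with the recentering term yields a genuine function of $y$, namely $-(\rho_\eps*\bar K_\eps)(y)$, which survives and converges to $-\bar K(y)$. This residual piece must be bounded separately (easy via Lemma~\ref{lem:mollify}, but not automatic). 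Relatedly, your second-chaos convergence argument for $\tau_1$ is too sketchy to be a proof: after smearing, the difference $\delta\hat\CW^{(\eps;2)}\tau_1$ splits into a piece where $\bar K_\eps$ is replaced by $\bar K$ (handled by Lemma~\ref{lem:mollify}) and a piece where the Dirac $\delta(z_2-y)$ is replaced by $\rho_\eps(z_2-y)$. The latter produces terms of the shape $\bigl(\delta(y-\bar y)-\tilde\rho_\eps(y-\bar y)\bigr)\hat Q(\cdot)$, and extracting a rate $\eps^\theta$ here requires exploiting the regularity of the test function $\psi^\lambda$, not pointwise kernel bounds. This is the core estimate in the paper's treatment, and ``obtained in the same way'' does not capture it.

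For the chain-rule assertion, your route via Proposition~\ref{prop:identifyRenormPAM} and passing to the limit is reasonable, but it presupposes precisely what needs proving: that one can impose $\bar C_{ij}^{(\eps)} = -\tfrac12 C^{(\eps)}\delta_{ij}$ and still have convergence, i.e.\ that the quantities $E_{ij} = -\lim_{\eps\to 0}\bigl(\bar C_{ij}^{(\eps)} + \tfrac12 C^{(\eps)}\delta_{ij}\bigr)$ are finite. Your assertion that ``the off-diagonal and anisotropic parts converge'' is not true for a generic mollifier; the paper establishes it by choosing $\rho$ with the reflection symmetries \eref{e:symrho} and then computing explicitly, using in particular the identity $\Delta\bar K = \delta_0 + \bar R$ to relate the diagonal divergence of $\bar C_{ii}^{(\eps)}$ to that of $C^{(\eps)}$.
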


\begin{remark}
Note that we do not need to require that the mollifier $\rho$ be symmetric, although
a non-symmetric choice might require a renormalisation sequence $M_\eps$ which does
not satisfy the identity $\bar C_{ij} = -{1\over 2} C \delta_{ij}$.
\end{remark}

\begin{proof}
As already seen in Section~\ref{sec:renPAM}, the only elements in the regularity structure associated
to \eref{e:PAMGen} that have negative homogeneity are 
\begin{equ}
\{\Xi, X_i \Xi, \CI(\Xi)\Xi, \CI_i(\Xi)\CI_j(\Xi)\}\;.
\end{equ}
By Theorem~\ref{theo:convRenormGauss}, we thus only need to identify the random variables
$\bigl(\Pi_x \tau\bigr)(\psi)$ and
to obtain the bounds \eref{e:aprioriBound} 
and \eref{e:convergenceBound}
for elements $\tau$ in the above set. For $\tau = \Xi$,
it follows as in the proof of Proposition~\ref{prop:convLinearPhi4} that
\begin{equ}
\E \bigl|\bigl(\hat \Pi_x^{(\eps)} \Xi\bigr)(\phi_x^\lambda)\bigr|^2 \lesssim \lambda^{-2}\;,\qquad
\E \bigl|\bigl(\hat \Pi_x^{(\eps)} \Xi - \hat \Pi_x \Xi\bigr)(\phi_x^\lambda)\bigr|^2 \lesssim \eps^{2\theta}\lambda^{-2-2\theta}\;,
\end{equ}
provided that $\theta < {1\over 2}$, which is precisely the required bound. 
For $\tau = X_i\Xi$, the required bound
follows immediately from the corresponding bound for $\tau = \Xi$, so it only remains
to consider $\tau = \CI(\Xi)\Xi$ and $\tau = \CI_i(\Xi)\CI_j(\Xi)$.

We start with $\tau = \CI(\Xi)\Xi$, in which case we aim to show that
\begin{equ}[e:wantedBound]
\E \bigl|\bigl(\hat \Pi_x^{(\eps)} \tau\bigr)(\phi_x^\lambda)\bigr|^2 \lesssim \lambda^{-\kappa}\;,\qquad
\E \bigl|\bigl(\hat \Pi_x^{(\eps)} \tau - \hat \Pi_x \tau\bigr)(\phi_x^\lambda)\bigr|^2 \lesssim \eps^{2\theta}\lambda^{-\kappa-2\theta}\;.
\end{equ}
For this value of $\tau$, one has the identity
\begin{equ}
\bigl(\hat \Pi^{(\eps)}_x \tau\bigr)(y) = \xi_\eps(y)\int \bigl(\bar K(y-z) - \bar K(x-z)\bigr) \xi_\eps(z)\,dz - C^{(\eps)}\;,
\end{equ}
where $C^{(\eps)}$ is the constant appearing in the characterisation of $M_\eps \in \RR_0$. 
Note now that
\begin{equ}
\E \xi_\eps(y)\xi_\eps(z) = \int_{\R^2} \rho_\eps(y-x)\rho_\eps(z-x)\,dx \eqdef \rho_\eps^{\star 2}(y-z)\;,
\end{equ}
and define the kernel $\bar K_\eps$ by
\begin{equ}
\bar K_\eps(y) = \int \rho_\eps(y-z)\bar K(z)\,dz\;.
\end{equ}
With this notation, provided that we make the choice $C^{(\eps)} = \scal{\rho_\eps, \bar K_\eps}$,
we have the identity
\begin{equ}
\bigl(\hat \Pi^{(\eps)}_x \tau\bigr)(y) = \int \bigl(\bar K(y-z) - \bar K(x-z)\bigr) \bigl(\xi_\eps(z)\diamond \xi_\eps(y)\bigr)\,dz - \int \rho_\eps^{\star 2}(y-z)\bar K(x-z)\,dz\;.
\end{equ}
In the notation of Proposition~\ref{prop:convCorFcn}, we thus have
\begin{equs}
\bigl(\hat \CW^{(\eps;0)} \tau\bigr)(y) &= \bigl(\rho_\eps * \bar K_\eps\bigr)(y)\;,\\
\bigl(\hat \CW^{(\eps;2)} \tau\bigr)(y;z_1,z_2) &= \rho_\eps(z_2-y)\bigl(\bar K_\eps(y-z_1) - \bar K_\eps(-z_1)\bigr) \;.
\end{equs}
This suggests that one should define the $L^2$-valued distributions
\begin{equs}[e:defWtau1]
\bigl(\hat \CW^{(0)} \tau\bigr)(y) &= \bar K(y)\;,\\
\bigl(\hat \CW^{(2)} \tau\bigr)(y;z_1,z_2) &= \delta(z_2-y) \bigl(\bar K(y-z_1) - \bar K(-z_1)\bigr)\;,
\end{equs}
and use them to define the limiting random variables $\bigl(\hat \Pi_x^{(k)} \tau\bigr)(\psi)$ via
\eref{e:defineLimit}.

A simple calculation then shows that, for any two points $y$ and $\bar y$ in $\R^2$, one has
\begin{equs}
\scal{\bigl(\hat \CW^{(\eps;2)} \tau\bigr)(y)&,\bigl(\hat \CW^{(\eps;2)} \tau\bigr)(\bar y)} \\
&= \rho_\eps^{\star 2}(y-\bar y)\int \bigl(\bar K_\eps(y-z) - \bar K_\eps(-z)\bigr)\bigl(\bar K_\eps(\bar y- z) - \bar K_\eps(- z)\bigr) \,dz\\
&\eqdef \rho_\eps^{\star 2}(y-\bar y) W_\eps(y, \bar y)\;.\label{e:exprScalPAM2}
\end{equs}
Writing $Q_\eps(y) \eqdef \int \bar K(y-z) K_\eps(- z) \,dz$ and using furthermore the
shorthand notation
\begin{equ}[e:defhatQeps]
\hat Q_\eps(y) \eqdef Q_\eps(y) - Q_\eps(0) - \scal{y, \nabla Q_\eps(0)}\;,
\end{equ}
we obtain
\begin{equ}
W_\eps(y, \bar y) = \hat Q_\eps(y-\bar y)- \hat Q_\eps(y) - \hat Q_\eps(-\bar y)\;.
\end{equ}
As a consequence of Lemmas~\ref{lem:convSing} and \ref{lem:mollify}, we obtain 
for any $\delta > 0$ the bound $|\hat Q_\eps(z)| \lesssim \|z\|^{2-\delta}$
uniformly over $\eps \in (0,1]$. This then immediately implies that
\begin{equ}
|W_\eps(y, \bar y)| \lesssim \|y\|^{2-\delta} + \|\bar y\|^{2-\delta}\;,
\end{equ}
uniformly over $\eps \in (0,1]$. 

It follows immediately from these bounds that
\begin{equ}
\Bigl|\int \scal{\bigl(\hat \CW^{(\eps;2)} \tau\bigr)(y),\bigl(\hat \CW^{(\eps;2)} \tau\bigr)(\bar y)} \psi^\lambda(y)\psi^\lambda(\bar y) \,dy\,d\bar y\Bigr| \lesssim \lambda^{-\delta}\;,
\end{equ}
uniformly over $\eps \in (0,1]$.
In the same way, it is straightforward to obtain an analogous bound on
$\hat \CW^{(2)} \tau$, so it remains to find similar bounds on the quantity 
\begin{equ}
\bigl(\delta \hat \Pi^{(\eps;2)}_x \tau\bigr)(\psi^\lambda) \eqdef
\bigl(\hat \Pi^{(\eps;2)}_x \tau\bigr)(\psi^\lambda) - \bigl(\hat \Pi_x^{(2)} \tau\bigr)(\psi^\lambda)\;.
\end{equ}
Writing $\delta \hat\CW^{(\eps;2)}\tau = \hat\CW^{(\eps;2)}\tau - \hat\CW^{(2)}\tau$, 
we can decompose this as
\begin{equs}
\bigl(\delta \hat\CW^{(\eps;2)}\tau\bigr)(y;z_1,z_2) 
&= \bigl(\delta(z_2-y) - \rho_\eps(z_2-y)\bigr) 
\bigl(\bar K(y-z_1) - \bar K(-z_1)\bigr) \\
&\quad + \rho_\eps(z_2-y) \bigl(\delta \bar K_\eps(y-z_1) - \delta\bar K_\eps(-z_1)\bigr) \\
&\eqdef \bigl(\delta \hat\CW^{(\eps;2)}_1\tau\bigr)(y;z_1,z_2) + \bigl(\delta \hat\CW^{(\eps;2)}_2\tau\bigr)(y;z_1,z_2)\;.
\end{equs}
where we have set $\delta \bar K_\eps = \bar K - \bar K_\eps$.
Accordingly, at the level of the corresponding random variables, 
we can write 
\begin{equ}
\delta \hat \Pi^{(\eps;2)}_x \tau = \delta \hat \Pi^{(\eps;2)}_{x;1} \tau + \delta \hat \Pi^{(\eps;2)}_{x;2} \tau\;,
\end{equ}
and it suffices to bound each of these separately. Regarding
$\delta \hat \Pi^{(\eps;2)}_{x;2} \tau$, it is straightforward to bound it
exactly as above, but making use of Lemma~\ref{lem:mollify} 
in order to bound $\delta \bar K_\eps$. The result of this calculation is that
the second bound in \eref{e:wantedBound} does indeed hold for $\delta \hat \Pi^{(\eps;2)}_{x;2}$,
for every $\theta < {1\over 2}$
and $\kappa > 0$, uniformly over $\eps, \lambda\in (0,1]$.

Let us then turn to $\delta \hat \Pi^{(\eps;2)}_{x;1} \tau$.
It follows from the definitions that one has the identity
\begin{equs}
\scal{\bigl(\delta\hat \CW^{(\eps;2)}_{0;1} \tau\bigr)(y),&\bigl(\delta\hat \CW^{(\eps;2)}_{0;1} \tau\bigr)(\bar y)} \\ &= \bigl(\delta (y-\bar y) - \rho_\eps(\bar y- y) -  \rho_\eps(y-\bar y) + \rho_\eps^{\star 2}(y-\bar y)\bigr) W(y, \bar y)\;.
\end{equs}
At this stage, we note that we can decompose this as a sum of $9$ terms of the form
\begin{equ}[e:oneTerm]
\bigl(\delta (y-\bar y) - \tilde \rho_\eps(y-\bar y)\bigr) \hat Q(x) \;,
\end{equ}
where $\tilde \rho_\eps$ is one of $\rho_\eps^{\star 2}$, $\rho_\eps$, or $\rho_\eps(-\cdot)$, $x$ is one of $y$, $\bar y$ and $y-\bar y$, and $\hat Q$ is defined analogously
to \eref{e:defhatQeps}.
Let us consider the case $x = y$. One then has the identity
\begin{equs}
\int \bigl(\tilde \rho_\eps(y-\bar y)-\delta (y-\bar y)\bigr) &\hat Q(y) \,\psi^\lambda(y)\psi^\lambda(\bar y)\,dy \,d\bar y \label{e:something}\\
&= \int \tilde \rho_\eps(h) \hat Q(y) \,\psi^\lambda(y) \bigl(\psi^\lambda(y-h)- \psi^\lambda(y)\bigr)\,dy \,dh\;.
\end{equs}
Since the integrand vanishes as soon as $\|h\| \gtrsim \eps$, we have the bound
$\bigl|\psi^\lambda(y-h)- \psi^\lambda(y)\bigr| \lesssim \lambda^{-3}\eps$.
Combining this with the bound on $\hat Q$ obtained previously, this immediately
yields for any such term the bound $\eps \lambda^{-1-\delta}$,
provided that $\eps \le \lambda$. However, a bound proportional to $\lambda^{-\delta}$
can be obtained by simply bounding each term in 
\eref{e:something} separately, so that for every $\theta < {1\over 2}$,
one has again a bound of the type $\eps^{2\theta} \lambda^{-2\theta-\kappa}$, uniformly over all
$\eps, \lambda \in (0,1]$.

The case $x = \bar y$ is analogous by symmetry, so it remains to consider the
case $x = y-\bar y$. In this case however, \eref{e:something} reduces to
\begin{equ}
\int \rho_\eps(y-\bar y) \hat Q(y-\bar y) \,\psi^\lambda(y)\psi^\lambda(\bar y)\,dy \,d\bar y \;,
\end{equ}
which is even bounded by $\eps^{2-\delta}\lambda^{-2}$, so that the requested bound follows
again.
This concludes our treatment of the component in the second Wiener chaos for $\tau = \CI(\Xi)\Xi$.

Regarding the term $\hat \CW^{(\eps;0)} \tau$ in the $0$th Wiener chaos, it follows 
immediately from Lemma~\ref{lem:mollify} that, for any $\delta > 0$, 
one has the uniform bound
\begin{equ}
\Bigl|\int \scal{\bigl(\hat \CW^{(\eps;0)} \tau\bigr)(y),\bigl(\hat \CW^{(\eps;0)} \tau\bigr)(\bar y)} \psi^\lambda(y)\psi^\lambda(\bar y) \,dy\,d\bar y\Bigr| \lesssim \lambda^{-\delta}\;,
\end{equ}
as required. For the difference $\delta \hat \CW^{(\eps;0)} \tau$, we obtain
immediately from Lemma~\ref{lem:mollify} that, for any $\kappa < 1$ and $\delta > 0$, one 
has indeed the bound 
\begin{equ}
\Bigl|\int \scal{\bigl(\hat \CW^{(\eps;0)} \tau\bigr)(y),\bigl(\hat \CW^{(\eps;0)} \tau\bigr)(\bar y)} \psi^\lambda(y)\psi^\lambda(\bar y) \,dy\,d\bar y\Bigr| \lesssim \eps^{\kappa-\delta}\lambda^{-\kappa}\;,
\end{equ}
uniformly over $\eps, \lambda \in (0,1]$. This time, the corresponding bound
on the difference between $\hat \CW^{(\eps;0)} \tau$ and $\hat \CW^{(0)} \tau$
is an immediate consequence of Lemma~\ref{lem:mollify}.

We now turn to the case $\tau = \CI_i(\Xi)\CI_j(\Xi)$.
This is actually the easier case, noting that one has the identity
\begin{equ}
\bigl(\hat \Pi^{(\eps)}_x \tau\bigr)(y) = \int \d_i \bar K(y-z) \xi_\eps(z)\,dz \int \d_j \bar K(y-z) \xi_\eps(z)\,dz - \bar C_{ij}^{(\eps)}\;,
\end{equ}
independently of $x$.
If we now choose $\bar C_{ij}^{(\eps)} = \scal{\d_i \bar K_\eps, \d_j \bar K_\eps}$, one has similarly to before
the identity
\begin{equ}
\bigl(\hat \Pi^{(\eps)}_x \tau\bigr)(y) = \int \d_i \bar K(y-z_1)\d_j \bar K(y-z_2) \bigl(\xi_\eps(z_1)\diamond \xi_\eps(z_2)\bigr)\,dz_1\,dz_2\;,
\end{equ}
so that in this case $\bigl(\hat \Pi^{(\eps)}_x \tau\bigr)(y)$ belongs to the homogeneous chaos of order $2$
with
\begin{equ}
\bigl(\hat \CW^{(\eps;2)} \tau\bigr)(y;z_1,z_2) = \d_i \bar K_\eps(y-z_1)\,\d_i \bar K_\eps(y-z_2)\;.
\end{equ}
It then follows at once from Lemma~\ref{lem:mollify} that the required bounds \eref{e:aprioriBound} 
and \eref{e:convergenceBound} do hold in this case as well.

Let us recapitulate what we have shown so far. If we choose the renormalisation map $M_\eps$
associated to $C^{(\eps)} = \scal{\rho_\eps, \bar K_\eps}$ and 
$\bar C_{ij}^{(\eps)} = \scal{\d_i \bar K_\eps,\d_j \bar K_\eps}$, which certainly does depend on the choice
of mollifier $\rho$, then the renormalised model $\hat Z_\eps$ converges in probability to a limiting
model $\hat Z$ that is independent of $\rho$. However, this is not the only possible choice 
for $M_\eps$: we could just as well have added to $C^{(\eps)}$ and $\bar C_{ij}^{(\eps)}$ some
constants independent of $\eps$ and $\rho$ (or converging to such a limit as $\eps \to 0$) and we would
have obtained a different limiting model $\hat Z$, so that we do in principle obtain a $4$-parameter
family of possible limiting models.

We now lift some of this indeterminacy by imposing that the limiting model yields a family of solutions
to \eref{e:PAMGen} which obeys the usual chain rule. As we have seen in \eref{e:PAMRenorm},
this is the case if we obtain $\hat Z$ as a limit of renormalised models where 
$\bar C_{ij} = -{1\over 2} C \delta_{ij}$, thus yielding a one-parameter family of models. 
Since we already know that with the choices mentioned above
the limiting model is independent of $\rho$, it suffices to find \textit{some} $\rho$ such that
the constants $E_{ij}$ defined by
\begin{equ}[e:defEij]
E_{ij} = -\lim_{\eps \to 0} \Bigl(\bar C_{ij}^{(\eps)} + {1\over 2} C^{(\eps)} \delta_{ij}\Bigr)\;,
\end{equ}
are finite. If we then define the model $\tilde Z$ by $\tilde Z = M_E \hat Z$, where $M_E$ 
denotes the action of the element of $\RR_0$ determined by $C = 0$ and $\bar C_{ij} = E_{ij}$,
then the model $\tilde Z$ leads to a solution theory for \eref{e:PAMGen} that does 
obey the chain rule.

It turns out that in order to show that the limits \eref{e:defEij} exist and are finite,
it is convenient to choose a mollifier $\rho$ which has sufficiently many symmetries so that 
\begin{equ}[e:symrho]
\rho(x_1,x_2) = \rho(x_2, x_1) = \rho(x_1,-x_2) = \rho(-x_1,x_2)\;,
\end{equ}
for all $x \in \R^2$. (For example, choosing a $\rho$ that is radially symmetric will do.)
Indeed, by the symmetry of the singularity of $\bar K$ given by
\eref{e:singK}, it follows in this case that 
\begin{equ}
\d_1 \bar K_\eps(x_1, x_2) = -\d_1 \bar K_\eps(-x_1, x_2) = \d_1 \bar K_\eps(x_1, -x_2)\;,
\end{equ}
for $x$ in some sufficiently small neighbourhood of the origin, and similarly for $\d_2 \bar K_\eps$.
As a consequence, the function $\d_1 \bar K_\eps\, \d_2 \bar K_\eps$ integrates to $0$ in any
sufficiently small symmetric neighbourhood of the origin.
It follows at once that in this case, one has 
\begin{equ}[e:convOnDiag]
\lim_{\eps \to 0} C_{12}^{(\eps)} = \int_{\|x\| \ge \delta} \d_1 \bar K(x)\, \d_2 \bar K(x)\,dx\;,
\end{equ}
which is indeed finite (and independent of $\delta>0$, provided that it is sufficiently small) 
since the integrand is a smooth function.

It remains to treat the on-diagonal elements. For this, note that one has
\begin{equ}
\int \bigl(\bigl(\d_1 \bar K_\eps(x)\bigr)^2 + \bigl(\d_2 \bar K_\eps(x)\bigr)^2\bigr)\,dx = 
-\int \bar K_\eps(x)\, \Delta \bar K_\eps(x)\, dx\;.
\end{equ}
It follows from \eref{e:singK} that, as a distribution, one has the identity
$\Delta \bar K = \delta_0 + \bar R$, where $\bar R$ is a smooth function. As a consequence, we obtain
the identity
\begin{equ}
\scal{\d_1 \bar K_\eps, \d_1 \bar K_\eps} + \scal{\d_2 \bar K_\eps, \d_2 \bar K_\eps} = 
-\scal{\bar K_\eps, \rho_\eps} + \int \bar K_\eps(x)\, \bigl(\rho_\eps * \bar R\bigr)(x)\, dx\;,
\end{equ}
so that
\begin{equ}[e:convOffDiag]
\lim_{\eps \to 0} \bigl(\scal{\d_1 \bar K_\eps, \d_1 \bar K_\eps} + \scal{\d_2 \bar K_\eps, \d_2 \bar K_\eps} + \scal{\bar K_\eps, \rho_\eps}\bigr) = \scal{\bar K, \bar R}\;.
\end{equ}
On the other hand, writing $(x_1, x_2)^\perp = (x_2, x_1)$, it follows from
\eref{e:singK} and the symmetries of $\rho$ that $\bar K_\eps(x^\perp) = \bar K_\eps(x)$
for all values of $x$ in a sufficiently small neighbourhood of the origin, so that
$(\d_1 \bar K_\eps)^2 - (\d_2 \bar K_\eps)^2$ integrates to $0$ there.
It follows that 
\begin{equ}
\lim_{\eps \to 0} \bigl(\scal{\d_1 \bar K_\eps, \d_1 \bar K_\eps} - \scal{\d_2 \bar K_\eps, \d_2 \bar K_\eps}\bigr)
= \int_{\|x\| \ge \delta} \bigl(\bigl(\d_1 \bar K(x)\bigr)^2 - \bigl(\d_2 \bar K(x)\bigr)^2\bigr)\,dx\;.
\end{equ}
Combining this with \eref{e:convOffDiag} and \eref{e:convOnDiag}, it immediately follows that the right hand
side of \eref{e:defEij} does indeed converge to a finite limit. Furthermore, since the singularity is 
avoided in all of the above expressions, the convergence rate is of order $\eps$.
\end{proof}

\begin{remark}
The value $C^{(\eps)}$ can be computed very easily. Indeed, for $\eps$ small enough,
one has the identity
\begin{equs}[e:CEpsPAM]
C^{(\eps)} &= \int \rho^{\star 2}_\eps(z) \bar K(z)\,dz = -{1\over 2\pi} \int \rho^{\star 2}_\eps(z) \log \|z\|\,dz\\
&= -{1\over \pi} \log \eps  -{1\over 2\pi} \int \rho^{\star 2}(z) \log \|z\|\,dz\;,
\end{equs}
which shows that only the finite part of $C^{(\eps)}$ actually depends on the choice of $\rho$.
Since this expression does not depend explicitly on $K$ either, 
it also shows that in this case there is a unique canonical choice of renormalised model $\hat Z$.
This is unlike in case of the dynamical $\Phi^4_3$ model where no such canonical choice exists.
\end{remark}

\subsection{The dynamical \texorpdfstring{$\Phi^4_3$}{Phi43} model}
\label{sec:Phi4}

We now finally turn to the analysis of the renormalisation procedure for 
\eref{e:Phi4} in dimension $3$. The setting is very similar to the previous section, 
but this time we work in full space-time, so that the ambient space is $\R^4$, endowed
with the parabolic scaling $\s = (2,1,1,1)$. Our starting point is the canonical model
built from $\xi_\eps = \rho_\eps * \xi$, where $\xi$ denotes space-time white noise
on $\R \times \T^3$
and $\rho_\eps$ is a parabolically rescaled mollifier similarly to before.

We are then again in the setting of Theorem~\ref{theo:convRenormGauss} 
and Proposition~\ref{prop:convCorFcn} but with $H = L^2(\R \times \T^3)$.
This time, the kernel $K$ used for building the canonical model
is obtained by excising the singularity from the heat kernel,
so we can choose it in such a way that
\begin{equ}
K(t,x) =  {\one_{t > 0} \over (4\pi t)^{3\over 2}} \exp\Bigl(-{\|x\|^2 \over 4t}\Bigr)\;,
\end{equ}
for $(t,x)$ sufficiently close to the origin. Again, we extend this to all of $\R^4$ in a way which is
compactly supported and smooth away from the origin, and such that it annihilates all polynomials
up to some degree $r > 2$. The following convergence result is the last missing ingredient for
the proof of Theorem~\ref{thm:Phi4}.

\begin{theorem}\label{theo:convPhi4}
Let $\TT_F$ be the regularity structure associated to the dynamical $\Phi^4_3$ model for $\beta = 2$ and
some $\alpha \in (-{18\over 7}, -{5\over 2})$, let $\xi_\eps$ as above, and let $Z_\eps$ be the 
associated canonical model, where the kernel $K$ is as above. 
Then, there exists a random model $\hat Z$ independent of the
choice of mollifier $\rho$ and elements $M_\eps \in \RR_0$ such that $M_\eps Z_\eps \to \hat Z$
in probability. 

More precisely, for any $\theta < -{5\over 2} - \alpha$,
any compact set $\K$, and any $\gamma < r$, one has the bound
\begin{equ}
\E \$M_\eps Z_\eps; \hat Z\$_{\gamma;\K} \lesssim \eps^\theta\;,
\end{equ}
uniformly over $\eps \in (0,1]$.
\end{theorem}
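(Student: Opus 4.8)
The plan is to work within the framework of Theorem~\ref{theo:convRenormGauss}: combined with Proposition~\ref{prop:convCorFcn}, that result reduces the statement to verifying, for each basis vector $\tau$ of $\TT_F$ of \emph{strictly negative} homogeneity and each Wiener chaos order $k\le\|\tau\|$, a pair of scaling bounds (the a priori bound \eref{e:aprioriBound} and the convergence bound \eref{e:convergenceBound}, which in turn reduce to the kernel estimates of Proposition~\ref{prop:convCorFcn}). The first step is therefore to enumerate $\CF_-=\{\tau\in\CF_F\,:\,|\tau|_\s<0\}$. Writing $a=\alpha+2\in(-{4\over7},-{1\over2})$ for the homogeneity of $\Psi=\CI(\Xi)$, the assumption $\alpha\in(-{18\over7},-{5\over2})$ — whose upper endpoint is what forces the ``borderline'' symbols $\Psi^2 X_i$, $\CI(\Psi^3)\Psi$, $\CI(\Psi^2)\Psi^2$ to have negative homogeneity, and whose lower endpoint guarantees that $\CF_0$ of \eref{e:F0Phi} already contains all of $\CF_-$ — yields $\CF_-=\{\Xi,\Psi,\Psi^2,\Psi^3,\Psi^2 X_i,\CI(\Psi^3)\Psi,\CI(\Psi^3)\Psi^2,\CI(\Psi^2)\Psi^2\}$. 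For each such $\tau$ one has $\hat\Pi^{(\eps)}_x\tau=\Pi^{(\eps)}_x(M_\eps\tau)$, where $\Pi^{(\eps)}$ is the canonical model built from $\xi_\eps$ by Proposition~\ref{prop:canonicReal} and $M_\eps\in\RR_0$ is the renormalisation map of Section~\ref{sec:renPhi4}, parametrised by two constants $C_1^{(\eps)},C_2^{(\eps)}$. I would write out $\hat\Pi^{(\eps)}_x\tau(y)$ explicitly as an iterated integral against $K$, $D_iK$ and $\xi_\eps=\rho_\eps*\xi$ using \eref{e:defRecursion}, \eref{e:defMPhi4} and the multiplicativity of the canonical model, and then expand it into homogeneous Wiener chaos components via the product formula of Lemma~\ref{lem:productChaos}; for these $\tau$ only chaoses of order at most five occur.

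The second step is the choice of the constants and the identification of the divergent components. The $\eps^{-1}$-divergence is the variance of the stochastic convolution, $C_1^{(\eps)}=\E\bigl(\Pi^{(\eps)}_x\Psi(y)\bigr)^2=\|K_\eps\|_{L^2}^2$, which blows up like $\eps^{-1}$ because $K$ is of order $\beta-|\s|=-3$ (Definition~\ref{def:orderKernel}) and $|\s|=5$; by Wick's theorem this single constant simultaneously reduces $\hat\Pi^{(\eps)}_x$ of $\Psi^2$, $\Psi^3$, $\Psi^2 X_i$ and the inner subtractions inside $\CI(\Psi^3)\Psi$, $\CI(\Psi^3)\Psi^2$ to random variables living in their top chaos plus well-defined lower components. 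The remaining divergence is \emph{logarithmic}: it appears, for the symbol $\CI(\Psi^2)\Psi^2$, in the $0$th chaos component, which by the product formula is a double contraction of the shape $\int K(-v)\,R_\eps(v)^2\,dv$ with $R_\eps$ the (order $-1$) autocorrelation of $K_\eps$; since $(-3)+(-2)=-|\s|$ exactly this diverges like $|\log\eps|$, and a related contraction produces, with multiplicity, the $3C_2^{(\eps)}$ term in $\hat\Pi^{(\eps)}_x(\CI(\Psi^3)\Psi^2)$. Choosing $C_1^{(\eps)}=\|K_\eps\|_{L^2}^2$ and $C_2^{(\eps)}$ equal to (a fixed multiple of) this logarithmically divergent constant, as dictated by \eref{e:defMPhi4}, renders all the lower-chaos components of every $\hat\Pi^{(\eps)}_x\tau$ finite. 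The finite parts of $C_1^{(\eps)},C_2^{(\eps)}$ play no role at this stage — this is exactly the $\RR_0$-indeterminacy that reappears in \eref{e:renormEquPhi4} and Proposition~\ref{prop:identifyRenormPhi4} — and since the limiting kernels constructed below involve only $K$, the model $\hat Z$ is automatically independent of the mollifier $\rho$.

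The third step, the analytic core, is to verify the bounds of Proposition~\ref{prop:convCorFcn} for each $\hat\CW^{(k)}\tau$. Having identified the limit kernel $\hat\CW^{(k)}\tau$ as a finite sum of products and convolutions of copies of $K$ (and of $K*K$), obtained by formally sending $\eps\to0$, one performs the Gaussian integration — again Wick's theorem — so that the bound on $\scal{\hat\CW^{(k)}\tau(z),\hat\CW^{(k)}\tau(\bar z)}$ becomes a purely analytic estimate on iterated convolutions of singular kernels of prescribed order; this is exactly what Lemmas~\ref{lem:convSing} and \ref{lem:convolvRenorm} provide, the latter being needed whenever a convolution lands in the borderline regime or a singularity must first be renormalised. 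Here $\alpha<-{5\over2}$ is used to keep exponents such as $2|\Psi^2|_\s=4a<-2$ and $2|\Psi^3|_\s=6a<-3$ strictly negative so these lemmas apply, while $\alpha>-{18\over7}$ keeps them bounded away from the bad integer/borderline values. The convergence bound \eref{e:bounddW}, with its gain $\eps^{2\theta}$, is obtained in the same way but with one factor $K$ replaced by $K-K_\eps$ and Lemma~\ref{lem:mollify} (together with Lemma~\ref{lem:boundDiffKernel} for pointwise kernel differences) invoked to extract $\eps^\theta$. The overall rate $\theta<-{5\over2}-\alpha$ is governed by the coarsest symbol $\tau=\Xi$, where $\hat\Pi^{(\eps)}_x\Xi=\xi_\eps$ and the rate follows from Lemma~\ref{lem:approxGauss} exactly as in the proof of Proposition~\ref{prop:convLinearPhi4}; every other symbol is at least this good.

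The step I expect to be the main obstacle is the analysis of $\CI(\Psi^2)\Psi^2$ and its companion $\CI(\Psi^3)\Psi^2$ — the symbols carrying the logarithmic subdivergence, which is precisely why the three-dimensional problem is far more delicate than, for instance, \eref{e:PAMGen} in two dimensions. There the relevant convolution $K*g_\eps$ with $g_\eps$ of order $-2$ falls into the borderline case where the orders sum to $-|\s|$ and Lemma~\ref{lem:convSing} does not apply directly, so one works with renormalised convolutions in the manner of Lemma~\ref{lem:convolvRenorm}; the delicate point is to establish $\eps$-uniform ``order $0$ up to a logarithm'' bounds for $K*g_\eps$ and for $K*(g_\eps-g)$, and to propagate them through the Wick expansion so that, after subtracting $C_2^{(\eps)}$, the surviving $0$th-chaos kernel — and all the others — satisfy the required $\|z-\bar z\|_\s^{\kappa+2|\tau|_\s}$-type estimate with some small $\kappa>0$, uniformly in $\eps$. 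This uses essentially that $|\CI(\Psi^2)\Psi^2|_\s=4a+2$ is a \emph{fixed strictly negative} number, which leaves just enough room. Once all these bounds hold, Theorem~\ref{theo:convRenormGauss} delivers the limiting model $\hat Z$ together with the stated quantitative convergence, and the $\rho$-independence has already been observed.
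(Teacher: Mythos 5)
Your proposal is correct and follows essentially the same route as the paper: reduction via Theorem~\ref{theo:convRenormGauss} and Proposition~\ref{prop:convCorFcn} to kernel bounds for the eight symbols of negative homogeneity, the same choices $C_1^{(\eps)}=\|K_\eps\|_{L^2}^2$ and $C_2^{(\eps)}$ killing the logarithmic $0$th-chaos divergence of $\CI(\Psi^2)\Psi^2$, and the same use of Lemmas~\ref{lem:convSing}, \ref{lem:convolvRenorm}, \ref{lem:mollify} and \ref{lem:boundDiffKernel} for the analytic estimates. You correctly single out the subdivergent symbols $\CI(\Psi^2)\Psi^2$ and $\CI(\Psi^3)\Psi^2$ (where the paper indeed invokes the renormalised convolution of Lemma~\ref{lem:convolvRenorm}) as the delicate step.
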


\begin{proof}
Again, we are in the setting of Theorem~\ref{theo:convRenormGauss}, 
so we only need to show that the suitably 
renormalised model converges for those elements $\tau \in \CF_F$ with non-positive homogeneity.
It can be verified that in the case of the dynamical $\Phi^4_3$ model, these elements are given by
\begin{equ}
\CF_- = \{\Xi, \Psi, \Psi^2, \Psi^3, \Psi^2 X_i, \CI(\Psi^3)\Psi, \CI(\Psi^2)\Psi^2, \CI(\Psi^3)\Psi^2\}\;.
\end{equ}
Regarding $\tau = \Xi$, the claim follows exactly as in the proof of Theorem~\ref{theo:convModelPAM}.
Regarding $\tau = \Psi = \CI(\Xi)$, the relevant bound follows at once from Proposition~\ref{prop:convCorFcn} and Lemma~\ref{lem:mollify},
noting that $\bigl(\hat \Pi_z^{(\eps)} \Psi\bigr)(\bar z) = \bigl(\Pi_z^{(\eps)} \Psi\bigr)(\bar z)$ belongs to the first Wiener chaos with
\begin{equ}
\bigl(\hat \CW^{(\eps;1)} \Psi\bigr)(z, \bar z) = K_\eps(\bar z - z)\;,
\end{equ}
where we have set similarly to before
$K_\eps = \rho_\eps * K$. This is because $|\Psi|_\s < 0$, so that the second term 
appearing in \eref{e:defIa} vanishes in this case. In particular, $\bigl(\hat \Pi_z^{(\eps)} \Psi\bigr)(\bar z)$
is independent of $z$, so we also denote this random variable by $\bigl(\hat \Pi^{(\eps)} \Psi\bigr)(\bar z)$. 
Here, we used the fact that both $K$ and $K_\eps$ are of order $-3$.

The cases $\tau = \Psi^2$ and $\tau = \Psi^3$ 
then follow very easily. Indeed, denote by $C_1^{(\eps)}$ and $C_2^{(\eps)}$
the two constants characterising the element $M_\eps \in \RR_0$ used to renormalise our model.
Then, provided that we make the choice 
\begin{equ}[e:defC1eps]
C_1^{(\eps)} = \int_{\R^4} \bigl(K_\eps(z)\bigr)^2\,dz\;,
\end{equ}
we do have the identities
\begin{equ}
\bigl(\hat \Pi^{(\eps)} \Psi^2\bigr)(z) = \bigl(\hat \Pi^{(\eps)} \Psi\bigr)(z)\diamond \bigl(\hat \Pi^{(\eps)} \Psi\bigr)(z)\;,\qquad 
\bigl(\hat \Pi^{(\eps)} \Psi^3\bigr)(z) = \bigl(\hat \Pi^{(\eps)} \Psi\bigr)(z)^{\diamond 3}\;.
\end{equ}
As a consequence, $\bigl(\hat \Pi^{(\eps)} \Psi^k\bigr)(z)$ belongs to the $k$th homogeneous
Wiener chaos and one has
\begin{equ}[e:Psik]
\bigl(\hat \CW^{(\eps;k)} \Psi^k\bigr)(z; \bar z_1,\ldots,\bar z_k) = K_\eps(\bar z_1 - z)\cdots K_\eps(\bar z_k - z)\;,
\end{equ}
for $k \in \{2,3\}$ so that the relevant bounds follow again from 
Proposition~\ref{prop:convCorFcn} and Lemma~\ref{lem:mollify}.
Regarding $\tau = \Psi^2 X_i$, the corresponding bound follows again at 
once from those for $\tau = \Psi^2$.

In order to treat the remaining terms, it will be convenient to introduce the following
graphical notation, which associates a function to a graph with two types of edges.
The first type of edge, drawn as \mhpastefig{edge1}, represents a factor  
$K$, while the second type of edge, drawn as \mhpastefig{edge2}, represents a factor  $K_\eps$.
Each vertex of the graph denotes a variable in $\R^4$, and the kernel is always evaluated at the 
difference between the variable that the arrow points from and the one that it points to.
For example, $z_1  \mhpastefig{edge1} z_2$ is a shorthand for $K(z_1 - z_2)$. 
Finally, we use the convention that if a vertex is drawn in grey,
then the corresponding variable is integrated out. As an example, the identity \eref{e:Psik}
with $k=3$ and the identity \eref{e:defC1eps} translate into
\begin{equ}[e:defC1graphic]
\bigl(\hat \CW^{(\eps;3)}\Psi^3\bigr)(z) = \mhpastefig{Psi3}\;,\qquad
C_1^{(\eps)} = \mhpastefig{DefC1}\;.
\end{equ}
Here, we made a slight abuse of notation, since the second picture actually defines a function of
one variable, but this function is constant by translation invariance.
With this graphical notation, Lemma~\ref{lem:productChaos} has a very natural graphical 
interpretation as follows. The function $f$ is given by a graph with $\ell$ unlabelled
black vertices and similarly for $g$ with $m$ of them. Then, the contribution of $I_\ell(f) I_m(g)$
in the $(\ell+m-2r)$th Wiener chaos is obtained by summing over all possible ways 
of contracting $r$ vertices of $f$ with $r$ vertices of $g$.

We now treat the case $\tau = \CI(\Psi^3)\Psi$. Combining the comment we just made on the interpretation
of Lemma~\ref{lem:productChaos} with \eref{e:deltaMPsi1I3} 
and the definition \eref{e:defC1graphic} of $C_1^{(\eps)}$, we then have 
\begin{equ}
\bigl(\hat \CW^{(\eps;4)}\tau\bigr)(z) = \MHFigArg{z}{Psi3I1} - \MHFigArg{z}{Psi3I12}\;,
\end{equ}
while the contribution to the second Wiener chaos is given by
\begin{equ}[e:Wtau2]
\bigl(\hat \CW^{(\eps;2)}\tau\bigr)(z) = 3 \biggl(\MHFigArg{z}{Psi3I1contr} - \MHFigArg{z}{Psi3I1contr2}\biggr)
\eqdef 3\,\bigl(\hat \CW^{(\eps;2)}_1\tau - \hat \CW^{(\eps;2)}_2\tau\bigr)\;.
\end{equ}
The reason why no contractions appear between the top vertices is that,
thanks to the definition of $C_1^{(\eps)}$ in \eref{e:defC1graphic}, these have been taken care of 
by our renormalisation procedure. 

We first treat the quantity $\hat \CW^{(\eps;4)}\tau$. The obvious guess is that, in a suitable sense, one has
the convergence $\hat \CW^{(\eps;4)}\tau \to \hat \CW^{(4)}\tau $, where
\begin{equ}
\bigl(\hat \CW^{(4)}\tau\bigr)(z) = \MHFigArg{z}{Psi3I1full} - \MHFigArg{z}{Psi3I12full}\;.
\end{equ}
In order to apply Proposition~\ref{prop:convCorFcn}, 
we first need to obtain uniform bounds on the quantity $\scal[b]{\bigl(\hat \CW^{(\eps;4)}\tau\bigr)(z), \bigl(\hat \CW^{(\eps;4)}\tau\bigr)(\bar z)}$. This can be obtained in a way similar to what we did for bounding $\hat \CW^{(\eps;2)} \CI(\Xi)\Xi$ in
Theorem~\ref{theo:convModelPAM}. Defining kernels $Q_\eps^{(3)}$ and $P_\eps$ by
\begin{equ}
Q_\eps^{(3)}(z-\bar z) = \mhpastefig{Propagate3}\;,\qquad
P_\eps(z-\bar z) = \mhpastefig{Propagate1}\;,
\end{equ}
we have the identity
\begin{equ}
\scal[b]{\bigl(\hat \CW^{(\eps;4)}\tau\bigr)(z), \bigl(\hat \CW^{(\eps;4)}\tau\bigr)(\bar z)}
= P_\eps(z-\bar z) \, \delta^{(2)} Q_{\eps}^{(3)}(z,\bar z)\;,
\end{equ}
where, for any function $Q$ of two variables, we have set
\begin{equ}
\delta^{(2)} Q(z,\bar z) = Q(z,\bar z) - Q(z,0) - Q(0,\bar z) + Q(0,0)\;.
\end{equ}
(Here, we have also identified 
a function of one variable with a function of two variables by 
$Q(z,\bar z) \leftrightarrow Q(z-\bar z)$.)
It follows again from a combination of Lemmas~\ref{lem:convSing} and \ref{lem:mollify} that, for every $\delta > 0$, one has the bounds
\begin{equ}
\bigl|Q_\eps^{(3)}(z) - Q_\eps^{(3)}(0)\bigr| \lesssim \|z\|_\s^{1-\delta}\;,\qquad 
\bigl|P_\eps(z)\bigr| \lesssim \|z\|_\s^{-1}\;.
\end{equ}
Here, in the first term, we used the notation $z = (t,x)$ and we write $\nabla_x$ for 
the spatial gradient. As a consequence, 
we have the desired \textit{a priori} bounds for $\hat \CW^{(\eps;4)}\tau$, namely
\begin{equ}
\bigl|\scal[b]{\bigl(\hat \CW^{(\eps;4)}\tau\bigr)(z),\bigl(\hat \CW^{(\eps;4)}\tau\bigr)(\bar z)}\bigr| \lesssim \|z-\bar z\|_\s^{-1} \bigl(\|z-\bar z\|_\s^{1-\delta}
+ \|z\|_\s^{1-\delta} + \|\bar z\|_\s^{1-\delta}\bigr)\;,
\end{equ}
which is valid for every $\delta > 0$.

To obtain the required bounds on $\delta \hat \CW^{(\eps;4)}\tau$, we proceed in
a similar manner. For completeness, we provide some details for this term. Once
suitable \textit{a priori} bounds are established, all subsequent terms of the 
type $\delta \hat \CW^{(\ldots)}\tau$ can be bounded 
in a similar manner, so we will no longer treat them in detail.
Let us introduce a third kind of arrow, denoted by \mhpastefig{edge3}, which
represents the kernel $K - K_\eps$. With this notation, one has the identity
\begin{equs}
\bigl(\delta\hat \CW^{(\eps;4)}\tau\bigr)(z) &= \biggl(\mhpastefig{Psi3I1diff1} - \mhpastefig{Psi3I12diff1}\biggr) + 
\biggl(\mhpastefig{Psi3I1diff2} - \mhpastefig{Psi3I12diff2}\biggr)\\
&\quad + \biggl(\mhpastefig{Psi3I1diff3} - \mhpastefig{Psi3I12diff3}\biggr)
+ \biggl(\mhpastefig{Psi3I1diff4} - \mhpastefig{Psi3I12diff4}\biggr)
\eqdef \sum_{i=1}^4 \bigl(\delta\hat \CW^{(\eps;4)}_i\tau\bigr)(z)\;.
\end{equs}
It thus remains to show that each of the four terms $\bigl(\delta\hat \CW^{(\eps;4)}_i\tau\bigr)(z)$ satisfies a bound of the type \eref{e:bounddW}. 
Note now that each term is of exactly the same form as
$\bigl(\hat \CW^{(\eps;4)}\tau\bigr)(z)$, except that some of the factors $K_\eps$
are replaced by a factor $K$ and exactly one factor $K_\eps$ is replaced by a factor
$(K-K_\eps)$. Proceeding as above, but making use of the bound \eref{e:boundDiffK}, we
then obtain for each $i$ the bound
\begin{equs}
\bigl|\scal[b]{\bigl(\delta \hat \CW_i^{(\eps;4)}\tau\bigr)(z),&\bigl(\delta\hat \CW_i^{(\eps;4)}\tau\bigr)(\bar z)}\bigr| \\
&\lesssim \eps^{2\theta}\|z-\bar z\|_\s^{-1} \bigl(\|z-\bar z\|_\s^{1-2\theta-\kappa}
+ \|z\|_\s^{1-2\theta-\kappa} + \|\bar z\|_\s^{1-2\theta-\kappa}\bigr)\;,
\end{equs}
which is valid uniformly over $\eps \in (0,1]$, provided that
$\theta < 1$ and that $\kappa>0$. Here, we made use of \eref{e:boundDiffK} and the fact that 
each of these terms always contains exactly two factors $(K-K_\eps)$.

We now turn to $\hat \CW^{(\eps;2)}\tau$, which we decompose according to \eref{e:Wtau2}.
For the first term, it follows from Lemmas~\ref{lem:convSing} and \ref{lem:mollify} 
that we have the bound
\begin{equ}
\bigl|\scal{ \bigl(\hat \CW^{(\eps;2)}_1\tau\bigr)(z),\bigl(\hat \CW^{(\eps;2)}_1\tau\bigr)(\bar z)}\bigr|
= \biggl| \mhpastefig{Propagate2}  \biggr| \lesssim \|z-\bar z\|_\s^{-\delta}\;,
\end{equ}
valid for every $\delta > 0$. (Recall that both $K$ and $K_\eps$ are of order $-3$, with norms uniform in $\eps$.)
In order to bound $\hat \CW^{(\eps;2)}_2\tau$, we introduce the notation $z\mhpastefig{edgealpha}\bar z$ 
as a shorthand for $\|z-\bar z\|_\s^\alpha \one_{\|z-\bar z\|_\s \le C}$ for an unspecified constant $C$.
(Such an expression will always appear as a bound and means that there exists a choice 
of $C$ for which the bound holds true.) 
We will also make use of the inequalities\minilab{e:boundz}
\begin{equs}
\|z\|_\s^{-\alpha} \|\bar z\|_\s^{-\beta} &\lesssim \|z\|_\s^{-\alpha-\beta} + \|\bar z\|_\s^{-\alpha-\beta}\;,\label{e:boundz1}\\
\|z\|_\s^{-\alpha} \|\bar z\|_\s^{-\alpha} &\lesssim \|z-\bar z\|_\s^{-\alpha}\bigl(\|z\|_\s^{-\alpha} + \|\bar z\|_\s^{-\alpha}\bigr)\;,\label{e:boundz2}
\end{equs}
which are valid for every $z$, $\bar z$ in $\R^4$ and any two exponents $\alpha, \beta > 0$. 
The first bound is just a reformulation of Young's inequality. The second bound follows immediately
from the fact that $\|z\|_\s \vee \|\bar z\|_\s \ge {1\over 2} \|z-\bar z\|_\s$.

With these bounds at hand, we obtain for every $\delta \in (0,1)$ the bound
\begin{equs}
\bigl|\scal{ \bigl(\hat \CW^{(\eps;2)}_2\tau\bigr)(z),\bigl(\hat \CW^{(\eps;2)}_2\tau\bigr)(\bar z)}\bigr|
&\lesssim \biggl|\mhpastefig{PropagateBound1}\biggr| \label{e:boundW2}\\
&\lesssim \|z\|_\s^{-\delta} \bigl(G(z) + G(\bar z) + G(z-\bar z) + G(0)\bigr)\;,
\end{equs}
where the function $G$ is given by
\begin{equ}
G(z-\bar z) = \mhpastefig{PropagateBound}\;.
\end{equ}
Here, in order to go from the first to the second line in \eref{e:boundW2}, we used
\eref{e:boundz2} with $\alpha = \delta$, followed by \eref{e:boundz1}.
As a consequence of Lemma~\ref{lem:convSing}, the function $G$ is bounded, so that
the required bound follows from \eref{e:boundW2}. 
Defining as previously $\hat \CW^{(2)}_i\tau$ like $\hat \CW^{(\eps;2)}_i\tau$
but with each instance of $K_\eps$ replaced by $K$, one then also obtains
as before the bound
\begin{equ}
\bigl|\scal{ \bigl(\delta \hat \CW^{(\eps;2)}_i\tau\bigr)(z),\bigl(\delta \hat \CW^{(\eps;2)}_i\tau\bigr)(\bar z)}\bigr|
\lesssim \eps^{2\theta}\bigl(\|z\|_\s^{-2\theta-\kappa} + \|\bar z\|_\s^{-2\theta-\kappa}
+ \|\bar z-z\|_\s^{-2\theta-\kappa}\bigr)\;,
\end{equ}
which is exactly what we require.

We now turn to the case $\tau = \CI(\Psi^2)\Psi^2$. Denoting by $\psi_{\eps}$ the random function
$\psi_{\eps}(z) = (K * \xi_\eps)(z) = (K_\eps * \xi)(z)$, one has the identity
\begin{equ}[e:exprPsi22]
\bigl(\hat \Pi_0\tau\bigr)(z) = \bigl(\bigl(K * \psi_\eps^{\diamond 2}\bigr)(z) - \bigl(K * \psi_\eps^{\diamond 2}\bigr)(0)\bigr)\cdot \bigl(\psi_\eps(z) \diamond \psi_\eps(z)\bigr) - C_2^{(\eps)}\;.
\end{equ}
Regarding $\hat \CW^{(\eps;4)}\tau$, we therefore obtain similarly to before the identity
\begin{equ}
\bigl(\hat \CW^{(\eps;4)}\tau\bigr)(z) = \mhpastefig{Psi2I2} - \mhpastefig{Psi2I22}\;.
\end{equ}
Similarly to above, we then have the identity
\begin{equ}
\scal[b]{\bigl(\hat \CW^{(\eps;4)}\tau\bigr)(z), \bigl(\hat \CW^{(\eps;4)}\tau\bigr)(\bar z)}
= P_\eps^2(z-\bar z)   \,\delta^{(2)} Q_{\eps}^{(2)}(z,\bar z)\;,
\end{equ}
where $P_\eps$ is as above and $Q_\eps^{(2)}$ is defined by  
\begin{equ}
Q_\eps^{(2)}(z-\bar z) = \mhpastefig{Propagate3b}\;.
\end{equ}
This time, it follows from Lemmas~\ref{lem:convSing} and \ref{lem:mollify} that
\begin{equ}
\bigl|Q_\eps^{(2)}(z) - Q_\eps^{(2)}(0) - \scal{x, \nabla_x  Q_\eps^{(2)}(0)}\bigr| \lesssim \|z\|_\s^{2-\delta}\;,\end{equ}
for arbitrarily small $\delta > 0$ and otherwise the same notations as above. Combining this with the bound
already obtained for $P_\eps$ immediately yields the
bound
\begin{equ}
\bigl|\scal[b]{\bigl(\hat \CW^{(\eps;4)}\tau\bigr)(z), \bigl(\hat \CW^{(\eps;4)}\tau\bigr)(\bar z)}\bigr| 
\lesssim \|z-\bar z\|_\s^{-\delta}\;,
\end{equ}
as required. Again, the corresponding bound on $\delta \hat \CW^{(\eps;4)}$ then follows in 
exactly the same fashion as before.

Regarding $\hat \CW^{(\eps;2)}\tau$, it follows from Lemma~\ref{lem:productChaos} 
and \eref{e:exprPsi22} that
one has the identity
\begin{equ}
\bigl(\hat \CW^{(\eps;2)}\tau\bigr)(z) = 4 \biggl(\mhpastefig{Psi2I2contr} - \mhpastefig{Psi2I2contr2}\biggr)\;.
\end{equ}
We then obtain somewhat similarly to above
\begin{equ}
\scal{ \bigl(\hat \CW^{(\eps;2)}\tau\bigr)(z),\bigl(\hat \CW^{(\eps;2)}\tau\bigr)(\bar z)}
= P_\eps(z-\bar z) \, \delta^{(2)} Q_{z,\bar z}^\eps(z,\bar z) \;,
\end{equ}
where we have set
\begin{equ}
Q_{z,\bar z}^\eps(a,b) = \mhpastefig{Propagate5}\;.
\end{equ}
At this stage, we make use of Lemma~\ref{lem:boundDiffKernel}. 
Combining it with Lemma~\ref{lem:convSing}, 
this immediately yields, for any $\alpha \in [0,1]$, the bound
\begin{equ}
\bigl|\delta^{(2)}Q_{z,\bar z}^\eps(z,\bar z)\bigr|
\lesssim \|z\|_\s^{\alpha} \|\bar z\|_\s^{\alpha} 
\bigl(G(z,\bar z) + G(z,0) + G(0,\bar z) + G(0,0)\bigr)\;,
\end{equ}
where this time the function $G$ is given by
\begin{equ}
G(a,b) = \mhpastefig{PropagateBound2}\;.
\end{equ}
As a consequence of Lemma~\ref{lem:convSing}, we see that $G$ is bounded as soon
as $\alpha < {1\over 2}$, which yields the required bound. 
The corresponding bound on $\delta \hat \CW^{(\eps;2)}\tau$ is obtained
as usual.

Still considering $\tau = \CI(\Psi^2)\Psi^2$,
we now turn to $\hat \CW^{(\eps;0)}\tau$, the component in the $0$th Wiener chaos. From 
the expression \eref{e:exprPsi22} and the definition of the Wick product, we deduce that
\begin{equ}[e:Wtau0]
\bigl(\hat \CW^{(\eps;0)}\tau\bigr)(z) = 2\biggl(\mhpastefig{Psi2I2W0bis} - \mhpastefig{Psi2I2W0}\biggr) - C_2^{(\eps)}\;.
\end{equ}
The factor two appearing in this expression arises because there are two equivalent ways of pairing the two ``top'' arrows
with the two ``bottom'' arrows.
At this stage, it becomes clear 
why we need the second renormalisation constant $C_2^{(\eps)}$: the first term
in this expression diverges as $\eps \to 0$ and needs to be cancelled out. (Here, we omitted the label $z$ 
for the first term since it doesn't depend on it by translation invariance.)
This suggests the
choice
\begin{equ}[e:defC2eps]
C_2^{(\eps)} = 2\mhpastefig{Psi2I2W0bis}\;,
\end{equ}
which then reduces \eref{e:Wtau0} to
\begin{equ}[e:defW0]
-{1\over 2} \bigl(\hat \CW^{(\eps;0)}\tau\bigr)(z) =   \mhpastefig{Psi2I2W0}\;.
\end{equ}
This expression is straightforward to deal with, and it
follows immediately from Lemmas~\ref{lem:convSing} 
and \ref{lem:mollify} that we have the bound
$\bigl|\bigl(\hat \CW^{(\eps;0)}\tau\bigr)(z)\bigr|\lesssim \|z\|_\s^{-\delta}$ for every
exponent $\delta > 0$. 

This time, we postulate that $\hat \CW^{(0)}\tau$ is given by \eref{e:defW0}
with every occurrence of $K_\eps$ replaced by $K$.
The corresponding bound on $\delta \hat \CW^{(\eps;0)}\tau$ is then again obtained
as above. 
This concludes our treatment of the term $\tau = \CI(\Psi^2)\Psi^2$.

We now turn to the last element with negative homogeneity, 
which is $\tau = \CI(\Psi^3)\Psi^2$. This is treated in a way which 
is very similar to the previous term; in particular one has an identity similar to
\eref{e:exprPsi22}, but with $\psi_\eps^{\diamond 2}$ replaced by $\psi_\eps^{\diamond 3}$
and $C_2^{(\eps)}$ replaced by $3C_2^{(\eps)} \psi_\eps(z)$. One verifies that one has the identity
\begin{equ}
\scal[b]{\bigl(\hat \CW^{(\eps;5)}\tau\bigr)(z), \bigl(\hat \CW^{(\eps;5)}\tau\bigr)(\bar z)}
= P_\eps^2(z-\bar z)   \,\delta^{(2)} Q_{\eps}^{(3)}(z,\bar z)\;,
\end{equ}
where both $P_\eps$ and $Q_\eps^{(3)}$ were defined earlier. The relevant bounds then follow
at once from the previously obtained bounds.   

The component in the third Wiener chaos is also very similar to what was obtained previously. Indeed,
one has the identity
\begin{equ}
\bigl(\hat \CW^{(\eps;3)}\tau\bigr)(z) = 6 \biggl(\mhpastefig{Psi3I2contr} - \mhpastefig{Psi3I2contr2}\biggr)\;,
\end{equ}
so that
\begin{equ}
\scal{ \bigl(\hat \CW^{(\eps;3)}\tau\bigr)(z),\bigl(\hat \CW^{(\eps;3)}\tau\bigr)(\bar z)}
= P_\eps(z-\bar z) \, \delta^{(2)} \tilde Q_{z,\bar z}^\eps(z,\bar z) \;,
\end{equ}
where we have set
\begin{equ}
\tilde Q_{z,\bar z}^\eps(a,b) = \mhpastefig{PropagateLast}\;.
\end{equ}
This time however, we simply use \eref{e:boundz1} in conjunction with 
Lemmas~\ref{lem:convSing} and \ref{lem:mollify} to
obtain the bound
\begin{equ}
|\tilde Q_{z,\bar z}^\eps(a,b)| \lesssim \|z-\bar z\|_\s^{-\delta} + \|z-b\|_\s^{-\delta} + \|a-\bar z\|_\s^{-\delta} + \|b-a\|_\s^{-\delta}\;.
\end{equ}
The required a priori bound then follows at once, and the corresponding bounds on
$\delta \hat \CW^{(\eps;3)}\tau$ are obtained as usual.

It remains to bound the component in the first Wiener chaos. For this, one verifies the identity
\begin{equs}
\bigl(\hat \CW^{(\eps;1)}\tau\bigr)(z) &= \biggl(6\, \mhpastefig{Psi3I2W1bis} - 3C_2^{(\eps)}\mhpastefig{StochConv}  \biggr)
-\mhpastefig{Psi3I2W1} \\[0.5em]
&\eqdef 6\, \bigl(\hat \CW^{(\eps;1)}_1\tau\bigr)(z) - \bigl(\hat \CW^{(\eps;1)}_2\tau\bigr)(z)\;.
\end{equs}
Recalling that we chose $C_2^{(\eps)}$ as in \eref{e:defC2eps}, we see that
\begin{equ}
\bigl(\hat \CW^{(\eps;1)}_1\tau\bigr)(z; \bar z) = \bigl((\Ren L_\eps)* K_\eps\bigr)(\bar z - z)\;,
\end{equ} 
where the kernel $L_\eps$ is given by $L_\eps(z) = P_\eps^2(z) K(z)$.
It follows from Lemma~\ref{lem:convolvRenorm} that, for every $\delta > 0$, the bound
\begin{equ}
\bigl|\scal{ \bigl(\hat \CW^{(\eps;1)}_1\tau\bigr)(z),\bigl(\hat \CW^{(\eps;1)}_1\tau\bigr)(\bar z)}\bigr|
\lesssim \|z-\bar z\|_\s^{-1-\delta}\;,
\end{equ}
holds uniformly for $\eps \in (0,1]$ as required. Regarding $\hat \CW^{(\eps;1)}_2\tau$, we can again
apply the bounds \eref{e:boundz} to obtain
\begin{equ}
\bigl|\scal{ \bigl(\hat \CW^{(\eps;1)}_2\tau\bigr)(z),\bigl(\hat \CW^{(\eps;1)}_2\tau\bigr)(\bar z)}\bigr|
\lesssim \|z\|_\s^{-{1\over 2}-\delta}\|\bar z\|_\s^{-{1\over 2}-\delta}\;,
\end{equ}
as required. Regarding $\hat \CW^{(1)}\tau$, we define it as
\begin{equ}
\hat \CW^{(1)}\tau = \hat \CW^{(1)}_1\tau + \hat \CW^{(1)}_2\tau\;,
\end{equ}
where $\hat \CW^{(1)}_2\tau$ is defined like $\hat \CW^{(1)}_2\tau$, but with $K_\eps$ replaced by $K$, and where
\begin{equ}
\bigl(\hat \CW^{(1)}_1\tau\bigr)(z;\bar z) = \bigl((\Ren L)* K\bigr)(\bar z - z)\;.
\end{equ}
Again, $\delta \hat \CW^{(1)}\tau$ can be bounded in a manner similar to before,
thus concluding the proof.
\end{proof}

\begin{remark}
It is possible to show that $C_1^{(\eps)} \sim \eps^{-1}$ and
$C_2^{(\eps)} \sim \log \eps$, but the precise values of these constants
do not really matter here. See \cite{MR0384003,MR0416337} for an expression for these constants
in a slightly different context.
\end{remark}

\appendix

\section{A generalised Taylor formula}
\label{app:Taylor}

Classically, Taylor's formula for functions on $\R^d$ is obtained by applying the 
one-dimensional formula to the function obtained by evaluating the original
function on a line connecting the start and endpoints. This however does not yield the
``right'' formula if one is interested in obtaining the correct scaling behaviour when applying
it to functions with inhomogeneous scalings.
In this section, we provide a version of Taylor's formula with a remainder
term having the correct scaling behaviour for any non-trivial scaling $\s$ of $\R^d$.
Although it is hard to believe that this formula isn't known (see \cite{MR2504877} for some
formulae with a very similar flavour)
it seems difficult to find it in the literature in the form stated here. 
Furthermore, it is of course very easy to prove, so we provide a complete proof.

In order to formulate our result, we introduce the following kernels on $\R$:
\begin{equ}
\mu_\ell(x,dy) = \one_{[0,x]}(y){(x-y)^{\ell-1}\over (\ell-1)!}\,dy\;,\quad
\mu_\star(x,dy) = \delta_0(dy)\;.
\end{equ}
For $\ell = 0$, we extend this in a natural way by setting $\mu_0(x,dy) = \delta_x(dy)$.
With these notations at hand, any multiindex $k \in \N^d$ gives rise
to a kernel $\CQ^k$ on $\R^d$ by
\begin{equ}[e:massQk]
\CQ^k(x,dy) = \prod_{i=1}^d \mu^{k}_i(x_i,dy_i)\;, 
\end{equ} 
where we define
\begin{equ}
 \mu^{k}_i(z,\cdot) = 
 \left\{\begin{array}{cl}
 	\mu_{k_i}(z, \cdot) & \text{if $i \le \m(k)$,} \\
 	{z^{k_i} \over k_i !} \mu_{\star}(z, \cdot) & \text{otherwise,}
 \end{array}\right.
\end{equ}
where we defined the quantity
\begin{equ}
\m(k) = \min\{j\,:\, k_j \neq 0\}\;.
\end{equ}
Note that, in any case, one has the identity $\mu_i^k(z, \R) = {z^{k_i}\over k_i !}$, so that
\begin{equ}
\CQ^k(x,\R^d) = {x^k \over k!}\;.
\end{equ}

Recall furthermore that $\N^d$ is endowed with a natural partial order
by saying that $k \le \ell$ if $k_i \le \ell_i$ for every $i \in \{1,\ldots,d\}$. Given $k \in \N^d$,
we use the shorthand $k_< = \{\ell \neq k\,:\, \ell \le k\}$.

\begin{proposition}\label{prop:Taylor}
Let $A \subset \N^d$ be such that $k\in A \Rightarrow k_{<} \subset A$ and define
$\d A = \{k \not \in A\,:\, k - e_{\m(k)} \in A\}$. Then, the identity
\begin{equ}[e:Taylor]
f(x) = \sum_{k \in A} {D^k f(0) \over k!} x^k + \sum_{k \in \d A} \int_{\R^d} D^k f(y)\, \CQ^k(x,dy)\;,
\end{equ}
holds for every smooth function $f$ on $\R^d$.
\end{proposition}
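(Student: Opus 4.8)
The plan is to prove \eref{e:Taylor} by induction on the cardinality of the set $A$, starting from $A = \emptyset$ and adding one admissible multiindex at a time. The base case $A = \emptyset$ gives $\d A = \{0\}$ (since $0 - e_{\m(0)}$ is not even defined, but $0$ is the unique minimal element not in $A$ whose predecessor lies vacuously in $A$), and the right-hand side reads $\int_{\R^d} f(y)\, \CQ^0(x,dy) = \int_{\R^d} f(y)\,\delta_x(dy) = f(x)$, which is the identity. So the base case is trivial. For the inductive step, suppose \eref{e:Taylor} holds for some admissible $A$ (i.e. $k \in A \Rightarrow k_< \subset A$), and let $\bar A = A \cup \{m\}$ for some $m \in \d A$; one checks easily that $\bar A$ is again admissible, that $m \in \d A$ guarantees $m_< \subset A$, and that $\d \bar A = (\d A \setminus \{m\}) \cup \{m + e_i : i \le \m(m),\ m + e_i \notin \bar A\}$ — this combinatorial bookkeeping of how the boundary changes is the one place where the precise definitions of $\d A$ and $\m(k)$ must be handled carefully, but it is elementary.

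The analytic heart of the step is a single one-dimensional integration by parts applied in the $j$-th coordinate, where $j = \m(m)$. Concretely, for the term indexed by $m$ in the remainder sum of \eref{e:Taylor}, I would write $\CQ^m(x,dy) = \mu_m^m(x_j, dy_j) \otimes \CQ^{m'}(x', dy')$ where $m' = m - m_j e_j$ and the primes denote the remaining coordinates, and then use
\begin{equ}
\int_0^{x_j} D_j^{m_j} g(y_j)\, \frac{(x_j - y_j)^{m_j - 1}}{(m_j - 1)!}\, dy_j
= \frac{D_j^{m_j - 1}g(0)}{(m_j-1)!}x_j^{m_j-1} \cdot \frac{1}{?} + \int_0^{x_j} D_j^{m_j - 1} g(y_j)\, \frac{(x_j - y_j)^{m_j - 2}}{(m_j - 2)!}\, dy_j
\end{equ}
— more precisely, integrating by parts once converts $\int D_j^{m_j} g \cdot \frac{(x_j-y_j)^{m_j-1}}{(m_j-1)!}$ into $\frac{x_j^{m_j-1}}{(m_j-1)!}D_j^{m_j-1}g(0)$ plus $\int D_j^{m_j-1} g \cdot \frac{(x_j - y_j)^{m_j-2}}{(m_j-2)!}$ when $m_j \geq 1$ (with the convention that for $m_j = 1$ the integral term degenerates to $D_j^{0}g(0) = g(0)$ evaluated via $\mu_0 = \delta_{x_j}$, i.e. $\mu_\star$, matching the kernel definition). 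Applying this identity with $g(\cdot) = D^{m'}f(\cdot, y')$ and then integrating the $y'$ variables against $\CQ^{m'}(x',dy')$, the $m$-term in the remainder splits into exactly the new monomial term $\frac{D^m f(0)}{m!}x^m$ (this requires recognizing that the prefactor from the integration by parts, combined with the $\CQ^{m'}$-integral of $D^{m-e_j}f(y)$ evaluated appropriately, telescopes to give $\frac{x^m}{m!}D^m f(0)$ — one iterates the boundary extraction across all coordinates $\geq j$ where $m$ has no further support) plus a sum of new remainder terms indexed precisely by the elements of $\d \bar A \setminus \d A$. The key algebraic check is that the kernel $\mu_{m_j}$ integrated by parts produces exactly $\mu_{m_j - 1}$ together with the correct boundary monomial, and that for the coordinates $i < j$ (where $m_i = 0$) the kernel $\mu_0 = \delta$ plays nicely.

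The main obstacle I anticipate is purely bookkeeping: verifying that after the integration by parts, the collection of boundary terms produced is in bijection with $\d \bar A \setminus \d A$ with the correct coefficients, and that no term is produced twice or omitted. This hinges on the asymmetry built into the definition of $\CQ^k$ via $\m(k)$ — the kernel integrates "genuinely" only in coordinates up to the first nonzero index and is a point mass (carrying the factor $z^{k_i}/k_i!$) in all later coordinates. One must check that applying $\d$ always increments a multiindex in its \emph{first} nonzero slot, so that the boundary operation is consistent with this asymmetric kernel structure, and that the monomial factors $z^{k_i}/k_i!$ accumulated from the point-mass coordinates assemble correctly into $x^m/m!$. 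I would organize this by fixing $j = \m(m)$ once and for all, doing the integration by parts only in coordinate $j$, and noting that the "new" multiindices in $\d\bar A$ are $m + e_i$ for $i \leq j$ with $m+e_i \notin \bar A$ — for $i < j$ these have $\m(m+e_i) = i$ and pick up a fresh genuine integration, while for $i = j$ we get $m + e_j$ with $\m = j$ still. A short verification that the coefficients match (essentially: $\frac{1}{(m_j-1)!} \cdot \frac{1}{m_j} = \frac{1}{m_j!}$ and similarly for the other coordinates via the point-mass factors) closes the induction. Since all steps are elementary, no deep input beyond Fubini and one-dimensional integration by parts is needed.
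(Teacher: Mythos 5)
Your strategy is, at bottom, the same as the paper's: the paper proves the step $A \to A\cup\{\ell\}$ by substituting the first-order Taylor formula (the already-established case $A=\{0\}$) into $D^\ell f(y)$ inside the remainder term and then invoking the kernel convolution identities $\mu_m\star\mu_n=\mu_{m+n}$, $(\mu_\star\star\mu_n)(x,\cdot)=\frac{x^n}{n!}\mu_\star$, $\mu_n\star\mu_\star=0$, which give $\CQ^{e_i}\star\CQ^\ell=\CQ^{\ell+e_i}$ for $i\le\m(\ell)$ and $0$ otherwise. Your coordinate-by-coordinate integration by parts is exactly that computation unpacked, and your bookkeeping of $\d\bar A$ agrees with the paper's (the condition $m+e_i\notin\bar A$ you add is automatic, since $m+e_i\in A$ would force $m\in A$ by downward closedness). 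Your base case $A=\emptyset$ needs the convention $\m(0)=+\infty$ so that $\d\emptyset=\{0\}$ and $\CQ^0(x,\cdot)=\delta_x$; the paper sidesteps this by starting from $A=\{0\}$. Either is fine.

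Two points in your execution are however wrong as written and would derail the computation if carried out literally. First, your displayed integration-by-parts identity goes in the wrong direction: it lowers the derivative order and produces a boundary monomial of degree $m_j-1$. What you need, with $h(y_j)=(D^mf)(x_1,\ldots,x_{j-1},y_j,0,\ldots,0)$ and $j=\m(m)$, is
\begin{equ}
\int_0^{x_j} h(y_j)\,\frac{(x_j-y_j)^{m_j-1}}{(m_j-1)!}\,dy_j
= \frac{x_j^{m_j}}{m_j!}\,h(0) + \int_0^{x_j} h'(y_j)\,\frac{(x_j-y_j)^{m_j}}{m_j!}\,dy_j\;,
\end{equ}
obtained by writing $\frac{(x_j-y_j)^{m_j-1}}{(m_j-1)!}=\frac{d}{dy_j}\bigl(-\frac{(x_j-y_j)^{m_j}}{m_j!}\bigr)$; this raises the derivative order (yielding the $m+e_j$ remainder, since $\m(m+e_j)=j$) and produces the degree-$m_j$ boundary factor. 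Second, after this step the boundary term is $\prod_{i\ge j}\frac{x_i^{m_i}}{m_i!}\,(D^mf)(x_1,\ldots,x_{j-1},0,\ldots,0)$, so the remaining telescoping must be done in the coordinates $i<j$ — where $\CQ^m$ is the point mass $\delta_{x_i}$ and the evaluation point is still $x_i$ rather than $0$ — and not, as you write, "across all coordinates $\ge j$"; for $i>j$ the kernel is already $\frac{x_i^{m_i}}{m_i!}\delta_0$ and nothing remains to be done. Applying the fundamental theorem of calculus in each coordinate $i<j$ then yields $\frac{x^m}{m!}D^mf(0)$ plus exactly the remainders indexed by $m+e_i$ with $\m(m+e_i)=i$, and the identity $\d\bar A=(\d A\setminus\{m\})\cup\{m+e_i:i\le\m(m)\}$ closes the induction. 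With these two corrections your argument is complete.
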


\begin{proof}
The case $A = \{0\}$ is straightforward to verify ``by hand''.
Note then that, for every set $A$ as in the statement, one can
find a sequence $\{A_n\}$ of sets such as in the statement with $A_a = \{0\}$, $A_{|A|} = A$, and 
$A_{n+1} = A_n \cup \{k_n\}$ for some $k_n \in  \d A_n$. It is therefore sufficient to show that 
if \eref{e:Taylor} holds for some set $A$, then it also holds for $\bar A = A \cup \{\ell\}$ for any
$\ell \in \d A$. 

Assume from now on that \eref{e:Taylor} holds for some $A$ and we choose some 
$\ell \in \d A$. Inserting the first-order Taylor expansion (i.e.\ \eref{e:Taylor} with $A = \{0\}$)
into the term involving $D^\ell f$ and using \eref{e:massQk}, we then obtain
the identity
\begin{equs}
f(x) &= \sum_{k \in \bar A} {D^k f(0) \over k!} x^k + \sum_{k \in \d A \setminus\{\ell\}} \int_{\R^d} D^k f(y)\, \CQ^k(x,dy)\\ 
&\qquad + \sum_{i=1}^d \int_{\R^d} D^{\ell + e_i} f(y)\, \bigl(\CQ^{e_i}\star \CQ^\ell\bigr)(x,dy)\;.
\end{equs}
It is straightforward to check that one has the identities
\begin{equ}
\mu_m \star \mu_n = \mu_{m+n}\;,\quad \bigl(\mu_ \star \star \mu_{n}\bigr)(x,\cdot) = {x^n \over n!}\mu_\star\;,\quad
\mu_\star \star \mu_\star = \mu_\star\;,\quad \mu_n \star \mu_\star = 0\;,
\end{equ}
valid for every $m,n \ge 0$.
As a consequence, it follows from the definition of $\CQ^k$ that one has the identity
\begin{equ}
\CQ^{e_i}\star \CQ^\ell = 
\left\{\begin{array}{cl}
	\CQ^{\ell+e_i} & \text{if $i \le \m(\ell)$,} \\
	0 & \text{otherwise.}
\end{array}\right.
\end{equ}
The claim now follows from the fact that, by definition, $\d \bar A$ is precisely given by $(\d A \setminus \{\ell\}) \cup \{\ell + e_i \,:\, i \le \m(\ell)\}$.
\end{proof}

\newpage
\section{Symbolic index}

In this appendix, we collect the most used symbols of the article, together
with their meaning and the page where they were first introduced.

\begin{center}

\renewcommand{\arraystretch}{1.1}
\begin{tabular}{lll}\toprule
Symbol & Meaning & Page\\
\midrule
$\one$ & Unit element in $T$ & \pageref{def:regStruct}\\
$\one^*$ & Projection onto $\one$ & \pageref{e:defAnti}\\
$|\cdot|_\s$ & Scaled degree of a multiindex & \pageref{e:scaledk}\\
$\|\cdot\|_\s$ & Scaled distance on $\R^d$ & \pageref{e:defds} \\
$\$\cdot\$_{\gamma;\K}$ & ``Norm'' of a model / modelled distribution & \pageref{e:normModel}, \pageref{def:Dgamma} \\
$\circ$ & Dual of $\Delta^+$ & \pageref{def:convolution}\\
$\star$ & Generic product on $T$ & \pageref{e:defProd}\\
$*$ & Convolution of distributions on $\R^d$ & \pageref{e:wanted}\\
$A$ & Set of possible homogeneities for $T$ & \pageref{def:regStruct} \\
$\CA$ & Antipode of $\CH_+$ & \pageref{e:defAnti}\\
$\Alg(\CC)$ & Subalgebra of $\CH_+$ determined by $\CC$ & \pageref{lab:AlgC}\\
$\beta$ & Regularity improvement of $K$ & \pageref{def:regK}\\
$\CC^\alpha_\s$ & $\alpha$-H\"older continuous functions with scaling $\s$ & \pageref{def:Calphas}\\
$\DD$ & Abstract gradient & \pageref{def:Di} \\
$\CD^\gamma$ & Modelled distributions of regularity $\gamma$ & \pageref{def:Dgamma} \\
$\CD_P^{\gamma,\eta}$ & Singular modelled distributions of regularity $\gamma$ & \pageref{def:singDist} \\
$\Delta$ & Comodule structure of $\CH$ over $\CH_+$ & \pageref{e:defDelta} \\
$\Delta^+$ & Coproduct in $\CH_+$ & \pageref{e:defDelta} \\
$\DeltaM$ & Action of $M$ on $\Pi$ & \pageref{e:constrModel}\\
$\hDeltaM$ & Action of $M$ on $\Gamma$ & \pageref{e:defDeltaMhat}\\
$F$ & Nonlinearity of the SPDE under consideration & \pageref{e:generalProblem}\\
$F_x$ & Factor in $\Gamma_{xy} = F_x^{-1} F_y$ & \pageref{e:defGammaF}\\
$\CF$ & All formal expressions for the model space & \pageref{e:defCFF} \\
$\CF_+$ & All formal expressions representing Taylor coefficients & \pageref{e:defCFF} \\
$\CF_F$ & Subset of $\CF$ generated by $F$ & \pageref{e:defCFF} \\
$\CF_F^+$ & Subset of $\CF_+$ generated by $F$ & \pageref{e:defCFF} \\
$G$ & Structure group & \pageref{def:regStruct} \\
$\Gamma_g$ & Action of $\CH_+^*$ onto $\CH$ & \pageref{def:convolution} \\
$\CH_+^*$ & Dual of $\CH_+$ & \pageref{def:convolution}\\
$\CH_+$ & Linear span of $\CF_+$ & \pageref{lab:defHH}\\
$\CH_F^+$ & Linear span of $\CF_F^+$ & \pageref{lab:defHH}\\
$\CH$ & Linear span of $\CF$ & \pageref{lab:defHH}\\
$\CI$ & Abstract integration map for $K$ & \pageref{def:abstractI}\\
$\CI_k$ & Abstract integration map for $D^k K$ & \pageref{lab:defIk}\\
$\CJ(x)$ & Taylor expansion of $K * \Pi_x$ & \pageref{e:defJx}\\
$\CJ_k \tau$ & Abstract placeholder for Taylor coefficients of $\Pi_x \tau$ & \pageref{e:defF+}\\
$\K$ & Generic compact set in $\R^d$ & \pageref{e:boundAlpha}\\
$K$ & Truncated Green's function of $\CL$ & \pageref{e:wanted}, \pageref{e:scalingK}\\
$K_n$ & Contribution of $K$ at scale $2^{-n}$ & \pageref{def:regK}\\
\bottomrule
\end{tabular}

\newpage
\begin{tabular}{lll}\toprule
Symbol & Meaning & Page\\
\midrule
$K_{n,xy}^\alpha$ & Remainder of Taylor expansion of $K_n$ & \pageref{e:TaylorKn}\\
$\CK,\CK_\gamma$ & Operator such that $\CR \CK f = K* \CR f$ & \pageref{e:wanted}\\
$\CL$ & Linearisation of the SPDE under consideration & \pageref{e:generalProblem}\\
$\Lambda_n^\s$& Diadic grid at level $n$ for scaling $\s$ & \pageref{lab:Lambdan} \\
$\CM$ & Multiplication operator on $\CH_+$ & \pageref{e:defAnti}\\
$M$ & Renormalisation map & \pageref{e:defPiM} \\
$\hat M$ & Action of $M$ on $f$ & \pageref{e:constrModel} \\
$M_g$ & Action of $\SS$ on $T$ & \pageref{def:symmetric}\\
$\M_F$ & Basic building blocks of $F$ & \pageref{def:basicBlocks}\\
$\MM_\TT$ & All models for $\TT$ & \pageref{def:extension} \\
$\MM_F$ & All admissible models associated to $F$ & \pageref{def:admissible} \\
$\CN_\gamma$ & Operator such that $\CK_\gamma = \CI + \CJ + \CN_\gamma$ & \pageref{e:defI}\\
$(\PPi,f)$ & Alternative representation of an admissible model & \pageref{e:defGammaF} \\
$(\PPi^M,f^M)$ & Renormalised model & \pageref{e:defPiM} \\
$(\Pi,\Gamma)$ & Model for a regularity structure & \pageref{def:model} \\
$\phi_x^n$& Scaling function at level $n$ around $x$ & \pageref{thm:wavelet}\\
$\psi_x^n$& Wavelet at level $n$ around $x$ & \pageref{thm:wavelet}\\
$P$ & Time $0$ hyperplane & \pageref{e:defST} \\
$\CP_F$ & Formal expressions required to represent $\d u$ & \pageref{e:defCUF} \\
$\CQ_\alpha$ & Projection onto $T_\alpha$ & \pageref{e:defTalpha+} \\
$\CQ_\alpha^-$ & Projection onto $T_\alpha^-$ & \pageref{e:defTalpha+} \\
$\PR^+$ & Restriction to positive times & \pageref{e:defST} \\
$R$ & Smooth function such that ``$G = K + R$'' & \pageref{e:propertyGKR} \\
$R_\gamma$ & Convolution by $R$ on $\CD^\gamma$ & \pageref{e:defRgamma} \\
$\CR$ & Reconstruction operator & \pageref{theo:reconstruction} \\
$\RR$ & Renormalisation group & \pageref{def:renormGroup}\\
$\s$ & Scaling of $\R^d$ & \pageref{lab:scaling}\\
$\CS_{\s,x}^\delta$ & Scaling by $\delta$ around $x$ & \pageref{e:scaling}\\
$\CS_P^\delta$ & Scaling by $\delta$ in directions normal to $P$ & \pageref{lab:defSPdelta} \\
$\SS$ & Discrete symmetry group & \pageref{def:symmetric}\\
$T$ & Model space & \pageref{def:regStruct} \\
$T_\alpha$ & Elements of $T$ of homogeneity $\alpha$ & \pageref{def:regStruct} \\
$T_\alpha^+$ & Elements of $T$ of homogeneity $\alpha$ and higher & \pageref{e:defTalpha+} \\
$T_\alpha^-$ & Elements of $T$ of homogeneity strictly less than $\alpha$ & \pageref{e:defTalpha+} \\
$T_g$ & Action of $\SS$ on $\R^d$ & \pageref{def:symmetric}\\
$\bar T$ & Abstract Taylor polynomials in $T$ & \pageref{rem:polynomStructures} \\
$\TT$ & Generic regularity structure $\TT = (A,T,G)$ & \pageref{def:regStruct} \\
$\CT_\beta$ & Classical Taylor expansion of order $\beta$ & \pageref{e:TaylorNormal}\\
$\CU_F$ & Formal expressions required to represent $u$ & \pageref{e:defCUF} \\
$V,W$ & Generic sector of $T$ & \pageref{def:sector}\\
$X^k$ & Abstract symbol representing Taylor monomials & \pageref{sec:canonical}\\
$\Xi$ & Abstract symbol for the noise & \pageref{lab:Xi}\\
\bottomrule
\end{tabular}

\end{center}

\endappendix

\bibliographystyle{./Martin}
\bibliography{./refs}

\def\cprime{$'$} \def\polhk#1{\setbox0=\hbox{#1}{\ooalign{\hidewidth
  \lower1.5ex\hbox{`}\hidewidth\crcr\unhbox0}}}
\begin{thebibliography}{DPDT07}
\expandafter\ifx\csname url\endcsname\relax
  \def\url#1{\texttt{#1}}\fi
\expandafter\ifx\csname urlprefix\endcsname\relax\def\urlprefix{URL }\fi

\bibitem[AC90]{MR1051499}
\textsc{S.~Albeverio} and \textsc{A.~B. Cruzeiro}.
\newblock Global flows with invariant ({G}ibbs) measures for {E}uler and
  {N}avier-{S}tokes two-dimensional fluids.
\newblock \emph{Comm. Math. Phys.} \textbf{129}, no.~3, (1990), 431--444.

\bibitem[AF04]{MR2060312}
\textsc{S.~Albeverio} and \textsc{B.~Ferrario}.
\newblock Uniqueness of solutions of the stochastic {N}avier-{S}tokes equation
  with invariant measure given by the enstrophy.
\newblock \emph{Ann. Probab.} \textbf{32}, no.~2, (2004), 1632--1649.

\bibitem[Aiz82]{MR678000}
\textsc{M.~Aizenman}.
\newblock Geometric analysis of {$\varphi ^{4}$} fields and {I}sing models.
  {I}, {II}.
\newblock \emph{Comm. Math. Phys.} \textbf{86}, no.~1, (1982), 1--48.

\bibitem[ALZ06]{MR2214506}
\textsc{S.~Albeverio}, \textsc{S.~Liang}, and \textsc{B.~Zegarlinski}.
\newblock Remark on the integration by parts formula for the
  {$\phi^4_3$}-quantum field model.
\newblock \emph{Infin. Dimens. Anal. Quantum Probab. Relat. Top.} \textbf{9},
  no.~1, (2006), 149--154.

\bibitem[AR91]{AlbRock91}
\textsc{S.~Albeverio} and \textsc{M.~R{\"o}ckner}.
\newblock Stochastic differential equations in infinite dimensions: solutions
  via {D}irichlet forms.
\newblock \emph{Probab. Theory Related Fields} \textbf{89}, no.~3, (1991),
  347--386.

\bibitem[BCD11]{BookChemin}
\textsc{H.~Bahouri}, \textsc{J.-Y. Chemin}, and \textsc{R.~Danchin}.
\newblock \emph{Fourier analysis and nonlinear partial differential equations},
  vol. 343 of \emph{Grundlehren der Mathematischen Wissenschaften [Fundamental
  Principles of Mathematical Sciences]}.
\newblock Springer, Heidelberg, 2011.

\bibitem[BDP97]{MR1451997}
\textsc{F.~E. Benth}, \textsc{T.~Deck}, and \textsc{J.~Potthoff}.
\newblock A white noise approach to a class of non-linear stochastic heat
  equations.
\newblock \emph{J. Funct. Anal.} \textbf{146}, no.~2, (1997), 382--415.

\bibitem[BG97]{MR1462228}
\textsc{L.~Bertini} and \textsc{G.~Giacomin}.
\newblock Stochastic {B}urgers and {KPZ} equations from particle systems.
\newblock \emph{Comm. Math. Phys.} \textbf{183}, no.~3, (1997), 571--607.

\bibitem[BGJ12]{Zdzislaw}
\textsc{Z.~Brze{\'z}niak}, \textsc{B.~Goldys}, and \textsc{T.~Jegaraj}.
\newblock Weak solutions of a stochastic {L}andau-{L}ifshitz-{G}ilbert
  equation.
\newblock \emph{Appl. Math. Res. Express. AMRX} \textbf{2012}, (2012), Art. ID
  abs009, 33.

\bibitem[BGJ13]{Milton}
\textsc{C.~Bernardin}, \textsc{P.~Gon{\c c}alves}, and \textsc{M.~Jara}.
\newblock Private communication, 2013.

\bibitem[Bie11]{MR1511623}
\textsc{L.~Bieberbach}.
\newblock \"{U}ber die {B}ewegungsgruppen der {E}uklidischen {R}\"aume.
\newblock \emph{Math. Ann.} \textbf{70}, no.~3, (1911), 297--336.

\bibitem[Bie12]{MR1511704}
\textsc{L.~Bieberbach}.
\newblock \"{U}ber die {B}ewegungsgruppen der {E}uklidischen {R}\"aume
  ({Z}weite {A}bhandlung.) {D}ie {G}ruppen mit einem endlichen
  {F}undamentalbereich.
\newblock \emph{Math. Ann.} \textbf{72}, no.~3, (1912), 400--412.

\bibitem[BMN10]{MR2682821}
\textsc{{\'A}.~B{\'e}nyi}, \textsc{D.~Maldonado}, and \textsc{V.~Naibo}.
\newblock What is {$\ldots$} a paraproduct?
\newblock \emph{Notices Amer. Math. Soc.} \textbf{57}, no.~7, (2010), 858--860.

\bibitem[Bon81]{MR631751}
\textsc{J.-M. Bony}.
\newblock Calcul symbolique et propagation des singularit\'es pour les
  \'equations aux d\'eriv\'ees partielles non lin\'eaires.
\newblock \emph{Ann. Sci. \'Ecole Norm. Sup. (4)} \textbf{14}, no.~2, (1981),
  209--246.

\bibitem[Bon09]{MR2504877}
\textsc{A.~Bonfiglioli}.
\newblock Taylor formula for homogeneous groups and applications.
\newblock \emph{Math. Z.} \textbf{262}, no.~2, (2009), 255--279.

\bibitem[BPRS93]{MR1317994}
\textsc{L.~Bertini}, \textsc{E.~Presutti}, \textsc{B.~R{\"u}diger}, and
  \textsc{E.~Saada}.
\newblock Dynamical fluctuations at the critical point: convergence to a
  nonlinear stochastic {PDE}.
\newblock \emph{Teor. Veroyatnost. i Primenen.} \textbf{38}, no.~4, (1993),
  689--741.

\bibitem[Bro04]{MR2106008}
\textsc{C.~Brouder}.
\newblock Trees, renormalization and differential equations.
\newblock \emph{BIT} \textbf{44}, no.~3, (2004), 425--438.

\bibitem[But72]{MR0305608}
\textsc{J.~C. Butcher}.
\newblock An algebraic theory of integration methods.
\newblock \emph{Math. Comp.} \textbf{26}, (1972), 79--106.

\bibitem[CFO11]{Oberhaus}
\textsc{M.~Caruana}, \textsc{P.~K. Friz}, and \textsc{H.~Oberhauser}.
\newblock A (rough) pathwise approach to a class of non-linear stochastic
  partial differential equations.
\newblock \emph{Ann. Inst. H. Poincar\'e Anal. Non Lin\'eaire} \textbf{28},
  no.~1, (2011), 27--46.

\bibitem[Cha00]{MR1743612}
\textsc{T.~Chan}.
\newblock Scaling limits of {W}ick ordered {KPZ} equation.
\newblock \emph{Comm. Math. Phys.} \textbf{209}, no.~3, (2000), 671--690.

\bibitem[Che54]{MR0073174}
\textsc{K.-T. Chen}.
\newblock Iterated integrals and exponential homomorphisms.
\newblock \emph{Proc. London Math. Soc. (3)} \textbf{4}, (1954), 502--512.

\bibitem[CK00]{CK}
\textsc{A.~Connes} and \textsc{D.~Kreimer}.
\newblock Renormalization in quantum field theory and the {R}iemann-{H}ilbert
  problem. {I}. {T}he {H}opf algebra structure of graphs and the main theorem.
\newblock \emph{Comm. Math. Phys.} \textbf{210}, no.~1, (2000), 249--273.

\bibitem[CK01]{CK2}
\textsc{A.~Connes} and \textsc{D.~Kreimer}.
\newblock Renormalization in quantum field theory and the {R}iemann-{H}ilbert
  problem. {II}. {T}he {$\beta$}-function, diffeomorphisms and the
  renormalization group.
\newblock \emph{Comm. Math. Phys.} \textbf{216}, no.~1, (2001), 215--241.

\bibitem[CM94]{MR1185878}
\textsc{R.~A. Carmona} and \textsc{S.~A. Molchanov}.
\newblock Parabolic {A}nderson problem and intermittency.
\newblock \emph{Mem. Amer. Math. Soc.} \textbf{108}, no. 518, (1994), viii+125.

\bibitem[Col51]{MR0042889}
\textsc{J.~D. Cole}.
\newblock On a quasi-linear parabolic equation occurring in aerodynamics.
\newblock \emph{Quart. Appl. Math.} \textbf{9}, (1951), 225--236.

\bibitem[Col83]{MR701451}
\textsc{J.-F. Colombeau}.
\newblock A multiplication of distributions.
\newblock \emph{J. Math. Anal. Appl.} \textbf{94}, no.~1, (1983), 96--115.

\bibitem[Col84]{MR738781}
\textsc{J.-F. Colombeau}.
\newblock \emph{New generalized functions and multiplication of distributions},
  vol.~84 of \emph{North-Holland Mathematics Studies}.
\newblock North-Holland Publishing Co., Amsterdam, 1984.
\newblock Notas de Matem{\'a}tica [Mathematical Notes], 90.

\bibitem[CQ02]{MR1883719}
\textsc{L.~Coutin} and \textsc{Z.~Qian}.
\newblock Stochastic analysis, rough path analysis and fractional {B}rownian
  motions.
\newblock \emph{Probab. Theory Related Fields} \textbf{122}, no.~1, (2002),
  108--140.

\bibitem[Dau88]{MR951745}
\textsc{I.~Daubechies}.
\newblock Orthonormal bases of compactly supported wavelets.
\newblock \emph{Comm. Pure Appl. Math.} \textbf{41}, no.~7, (1988), 909--996.

\bibitem[Dau92]{MR1162107}
\textsc{I.~Daubechies}.
\newblock \emph{Ten lectures on wavelets}, vol.~61 of \emph{CBMS-NSF Regional
  Conference Series in Applied Mathematics}.
\newblock Society for Industrial and Applied Mathematics (SIAM), Philadelphia,
  PA, 1992.

\bibitem[Dav08]{MR2387018}
\textsc{A.~M. Davie}.
\newblock Differential equations driven by rough paths: an approach via
  discrete approximation.
\newblock \emph{Appl. Math. Res. Express. AMRX} \textbf{2008}, (2008), Art. ID
  abm009, 40.

\bibitem[Del04]{Renorm}
\textsc{B.~Delamotte}.
\newblock A hint of renormalization.
\newblock \emph{Amer. J. Phys.} \textbf{72}, no.~2, (2004), 170.

\bibitem[DPD02]{MR1941997}
\textsc{G.~Da~Prato} and \textsc{A.~Debussche}.
\newblock Two-dimensional {N}avier-{S}tokes equations driven by a space-time
  white noise.
\newblock \emph{J. Funct. Anal.} \textbf{196}, no.~1, (2002), 180--210.

\bibitem[DPD03]{MR2016604}
\textsc{G.~Da~Prato} and \textsc{A.~Debussche}.
\newblock Strong solutions to the stochastic quantization equations.
\newblock \emph{Ann. Probab.} \textbf{31}, no.~4, (2003), 1900--1916.

\bibitem[DPDT07]{MR2365646}
\textsc{G.~Da~Prato}, \textsc{A.~Debussche}, and \textsc{L.~Tubaro}.
\newblock A modified {K}ardar-{P}arisi-{Z}hang model.
\newblock \emph{Electron. Comm. Probab.} \textbf{12}, (2007), 442--453
  (electronic).

\bibitem[EJS13]{ArnulfE}
\textsc{W.~{E}}, \textsc{A.~{Jentzen}}, and \textsc{H.~{Shen}}.
\newblock {Renormalized powers of Ornstein-Uhlenbeck processes and
  well-posedness of stochastic Ginzburg-Landau equations}.
\newblock \emph{ArXiv e-prints} (2013).
\newblock \ifx\href\undefined
  arXiv:1302.5930\else\href{http://arxiv.org/abs/1302.5930}{arXiv:1302.5930}\fi.

\bibitem[EO71]{MR0359612}
\textsc{J.-P. Eckmann} and \textsc{K.~Osterwalder}.
\newblock On the uniqueness of the {H}amiltionian and of the representation of
  the {${\rm CCR}$} for the quartic boson interaction in three dimensions.
\newblock \emph{Helv. Phys. Acta} \textbf{44}, (1971), 884--909.

\bibitem[FdLP06]{MR2261056}
\textsc{D.~Feyel} and \textsc{A.~de~La~Pradelle}.
\newblock Curvilinear integrals along enriched paths.
\newblock \emph{Electron. J. Probab.} \textbf{11}, (2006), no. 34, 860--892
  (electronic).

\bibitem[Fel74]{MR0384003}
\textsc{J.~Feldman}.
\newblock The {$\lambda \varphi ^{4}_{3}$} field theory in a finite volume.
\newblock \emph{Comm. Math. Phys.} \textbf{37}, (1974), 93--120.

\bibitem[FO76]{MR0416337}
\textsc{J.~S. Feldman} and \textsc{K.~Osterwalder}.
\newblock The {W}ightman axioms and the mass gap for weakly coupled {$(\Phi
  ^{4})_{3}$} quantum field theories.
\newblock \emph{Ann. Physics} \textbf{97}, no.~1, (1976), 80--135.

\bibitem[Fr{\"o}82]{MR643591}
\textsc{J.~Fr{\"o}hlich}.
\newblock On the triviality of {$\lambda \varphi ^{4}_{d}$} theories and the
  approach to the critical point in {$d>4$} dimensions.
\newblock \emph{Nuclear Phys. B} \textbf{200}, no.~2, (1982), 281--296.

\bibitem[Fun92]{Funaki}
\textsc{T.~Funaki}.
\newblock A stochastic partial differential equation with values in a manifold.
\newblock \emph{J. Funct. Anal.} \textbf{109}, no.~2, (1992), 257--288.

\bibitem[FV06]{MR2257130}
\textsc{P.~Friz} and \textsc{N.~Victoir}.
\newblock A note on the notion of geometric rough paths.
\newblock \emph{Probab. Theory Related Fields} \textbf{136}, no.~3, (2006),
  395--416.

\bibitem[FV10a]{MR2667703}
\textsc{P.~Friz} and \textsc{N.~Victoir}.
\newblock Differential equations driven by {G}aussian signals.
\newblock \emph{Ann. Inst. Henri Poincar\'e Probab. Stat.} \textbf{46}, no.~2,
  (2010), 369--413.

\bibitem[FV10b]{MR2604669}
\textsc{P.~K. Friz} and \textsc{N.~B. Victoir}.
\newblock \emph{Multidimensional stochastic processes as rough paths}, vol. 120
  of \emph{Cambridge Studies in Advanced Mathematics}.
\newblock Cambridge University Press, Cambridge, 2010.
\newblock Theory and applications.

\bibitem[GIP12]{PAMPreprint}
\textsc{M.~{Gubinelli}}, \textsc{P.~{Imkeller}}, and \textsc{N.~{Perkowski}}.
\newblock {Paraproducts, rough paths and controlled distributions}.
\newblock \emph{ArXiv e-prints} (2012).
\newblock \ifx\href\undefined
  arXiv:1210.2684\else\href{http://arxiv.org/abs/1210.2684}{arXiv:1210.2684}\fi.

\bibitem[GJ73]{MR0408581}
\textsc{J.~Glimm} and \textsc{A.~Jaffe}.
\newblock Positivity of the {$\phi ^{4}_{3}$} {H}amiltonian.
\newblock \emph{Fortschr. Physik} \textbf{21}, (1973), 327--376.

\bibitem[GJ87]{MR887102}
\textsc{J.~Glimm} and \textsc{A.~Jaffe}.
\newblock \emph{Quantum physics}.
\newblock Springer-Verlag, New York, second ed., 1987.
\newblock A functional integral point of view.

\bibitem[Gli68]{MR0231601}
\textsc{J.~Glimm}.
\newblock Boson fields with the {${:}\Phi ^{4}{:}$} interaction in three
  dimensions.
\newblock \emph{Comm. Math. Phys.} \textbf{10}, (1968), 1--47.

\bibitem[GLP99]{MR1661764}
\textsc{G.~Giacomin}, \textsc{J.~L. Lebowitz}, and \textsc{E.~Presutti}.
\newblock Deterministic and stochastic hydrodynamic equations arising from
  simple microscopic model systems.
\newblock In \emph{Stochastic partial differential equations: six
  perspectives}, vol.~64 of \emph{Math. Surveys Monogr.},  107--152. Amer.
  Math. Soc., Providence, RI, 1999.

\bibitem[Gro75]{MR0420249}
\textsc{L.~Gross}.
\newblock Logarithmic {S}obolev inequalities.
\newblock \emph{Amer. J. Math.} \textbf{97}, no.~4, (1975), 1061--1083.

\bibitem[GRS75]{MR0378670}
\textsc{F.~Guerra}, \textsc{L.~Rosen}, and \textsc{B.~Simon}.
\newblock The {${\bf P}(\phi)_{2}$} {E}uclidean quantum field theory as
  classical statistical mechanics. {I}, {II}.
\newblock \emph{Ann. of Math. (2)} \textbf{101}, (1975), 111--189; ibid. (2)
  101\ (1975), 191--259.

\bibitem[GT10]{MaxSam}
\textsc{M.~Gubinelli} and \textsc{S.~Tindel}.
\newblock Rough evolution equations.
\newblock \emph{Ann. Probab.} \textbf{38}, no.~1, (2010), 1--75.

\bibitem[Gub04]{MR2091358}
\textsc{M.~Gubinelli}.
\newblock Controlling rough paths.
\newblock \emph{J. Funct. Anal.} \textbf{216}, no.~1, (2004), 86--140.

\bibitem[Gub10]{Trees}
\textsc{M.~Gubinelli}.
\newblock Ramification of rough paths.
\newblock \emph{J. Differential Equations} \textbf{248}, no.~4, (2010),
  693--721.

\bibitem[{Hai}09]{SPDENotes}
\textsc{M.~{Hairer}}.
\newblock {An Introduction to Stochastic PDEs}.
\newblock \emph{ArXiv e-prints} (2009).
\newblock \ifx\href\undefined
  arXiv:0907.4178\else\href{http://arxiv.org/abs/0907.4178}{arXiv:0907.4178}\fi.

\bibitem[Hai11]{BurgersRough}
\textsc{M.~Hairer}.
\newblock Rough stochastic {PDE}s.
\newblock \emph{Comm. Pure Appl. Math.} \textbf{64}, no.~11, (2011),
  1547--1585.

\bibitem[Hai12]{MR2875759}
\textsc{M.~Hairer}.
\newblock Singular perturbations to semilinear stochastic heat equations.
\newblock \emph{Probab. Theory Related Fields} \textbf{152}, no. 1-2, (2012),
  265--297.

\bibitem[Hai13]{KPZ}
\textsc{M.~Hairer}.
\newblock Solving the {KPZ} equation.
\newblock \emph{Ann. Math.} (2013).
\newblock To appear.

\bibitem[Hid75]{MR0451429}
\textsc{T.~Hida}.
\newblock \emph{Analysis of {B}rownian functionals}.
\newblock Carleton Univ., Ottawa, Ont., 1975.
\newblock Carleton Mathematical Lecture Notes, No. 13.

\bibitem[HK12]{DavidK}
\textsc{M.~{Hairer}} and \textsc{D.~{Kelly}}.
\newblock {Geometric versus non-geometric rough paths}.
\newblock \emph{ArXiv e-prints} (2012).
\newblock \ifx\href\undefined
  arXiv:1210.6294\else\href{http://arxiv.org/abs/1210.6294}{arXiv:1210.6294}\fi.

\bibitem[HM12]{Jan}
\textsc{M.~Hairer} and \textsc{J.~Maas}.
\newblock A spatial version of the {I}t\^o-{S}tratonovich correction.
\newblock \emph{Ann. Probab.} \textbf{40}, no.~4, (2012), 1675--1714.

\bibitem[HMW12]{JanHendrik}
\textsc{M.~{Hairer}}, \textsc{J.~{Maas}}, and \textsc{H.~{Weber}}.
\newblock {Approximating rough stochastic PDEs}.
\newblock \emph{ArXiv e-prints} (2012).
\newblock \ifx\href\undefined
  arXiv:1202.3094\else\href{http://arxiv.org/abs/1202.3094}{arXiv:1202.3094}\fi.

\bibitem[Hop50]{MR0047234}
\textsc{E.~Hopf}.
\newblock The partial differential equation {$u_t+uu_x=\mu u_{xx}$}.
\newblock \emph{Comm. Pure Appl. Math.} \textbf{3}, (1950), 201--230.

\bibitem[H{\"o}r55]{MR0076151}
\textsc{L.~H{\"o}rmander}.
\newblock On the theory of general partial differential operators.
\newblock \emph{Acta Math.} \textbf{94}, (1955), 161--248.

\bibitem[H{\O}UZ10]{MR2571742}
\textsc{H.~Holden}, \textsc{B.~{\O}ksendal}, \textsc{J.~Ub{\o}e}, and
  \textsc{T.~Zhang}.
\newblock \emph{Stochastic partial differential equations}.
\newblock Universitext. Springer, New York, second ed., 2010.
\newblock A modeling, white noise functional approach.

\bibitem[HP90]{MR1160391}
\textsc{T.~Hida} and \textsc{J.~Potthoff}.
\newblock White noise analysis---an overview.
\newblock In \emph{White noise analysis ({B}ielefeld, 1989)},  140--165. World
  Sci. Publ., River Edge, NJ, 1990.

\bibitem[HP13]{Natesh}
\textsc{M.~Hairer} and \textsc{N.~Pillai}.
\newblock Regularity of laws and ergodicity of hypoelliptic sdes driven by
  rough paths.
\newblock \emph{Ann. Probab.} (2013).
\newblock To appear.

\bibitem[HRW12]{MR2928722}
\textsc{M.~Hairer}, \textsc{M.~D. Ryser}, and \textsc{H.~Weber}.
\newblock Triviality of the 2{D} stochastic {A}llen-{C}ahn equation.
\newblock \emph{Electron. J. Probab.} \textbf{17}, (2012), no. 39, 14.

\bibitem[HV11]{MR2860933}
\textsc{M.~Hairer} and \textsc{J.~Voss}.
\newblock Approximations to the stochastic {B}urgers equation.
\newblock \emph{J. Nonlinear Sci.} \textbf{21}, no.~6, (2011), 897--920.

\bibitem[HW74]{hairer74}
\textsc{E.~Hairer} and \textsc{G.~Wanner}.
\newblock On the {B}utcher group and general multi-value methods.
\newblock \emph{Computing (Arch. Elektron. Rechnen)} \textbf{13}, no.~1,
  (1974), 1--15.

\bibitem[HW13]{Hendrik}
\textsc{M.~Hairer} and \textsc{H.~Weber}.
\newblock Rough {B}urgers-like equations with multiplicative noise.
\newblock \emph{Probab. Theory Related Fields}  1--56.

\bibitem[IPP08]{MR2451056}
\textsc{B.~Iftimie}, \textsc{{\'E}.~Pardoux}, and \textsc{A.~Piatnitski}.
\newblock Homogenization of a singular random one-dimensional {PDE}.
\newblock \emph{Ann. Inst. Henri Poincar\'e Probab. Stat.} \textbf{44}, no.~3,
  (2008), 519--543.

\bibitem[JLM85]{MR815192}
\textsc{G.~Jona-Lasinio} and \textsc{P.~K. Mitter}.
\newblock On the stochastic quantization of field theory.
\newblock \emph{Comm. Math. Phys.} \textbf{101}, no.~3, (1985), 409--436.

\bibitem[KE83]{MR701675}
\textsc{J.~R. Klauder} and \textsc{H.~Ezawa}.
\newblock Remarks on a stochastic quantization of scalar fields.
\newblock \emph{Progr. Theoret. Phys.} \textbf{69}, no.~2, (1983), 664--673.

\bibitem[KPZ86]{KPZOrig}
\textsc{M.~Kardar}, \textsc{G.~Parisi}, and \textsc{Y.-C. Zhang}.
\newblock Dynamic scaling of growing interfaces.
\newblock \emph{Phys. Rev. Lett.} \textbf{56}, no.~9, (1986), 889--892.

\bibitem[Kry08]{Krylov}
\textsc{N.~V. Krylov}.
\newblock \emph{Lectures on elliptic and parabolic equations in {S}obolev
  spaces}, vol.~96 of \emph{Graduate Studies in Mathematics}.
\newblock American Mathematical Society, Providence, RI, 2008.

\bibitem[LCL07]{MR2314753}
\textsc{T.~J. Lyons}, \textsc{M.~Caruana}, and \textsc{T.~L{\'e}vy}.
\newblock \emph{Differential equations driven by rough paths}, vol. 1908 of
  \emph{Lecture Notes in Mathematics}.
\newblock Springer, Berlin, 2007.
\newblock Lectures from the 34th Summer School on Probability Theory held in
  Saint-Flour, July 6--24, 2004, With an introduction concerning the Summer
  School by Jean Picard.

\bibitem[LQ02]{MR2036784}
\textsc{T.~Lyons} and \textsc{Z.~Qian}.
\newblock \emph{System control and rough paths}.
\newblock Oxford Mathematical Monographs. Oxford University Press, Oxford,
  2002.
\newblock Oxford Science Publications.

\bibitem[LS69]{MR0240169}
\textsc{R.~G. Larson} and \textsc{M.~E. Sweedler}.
\newblock An associative orthogonal bilinear form for {H}opf algebras.
\newblock \emph{Amer. J. Math.} \textbf{91}, (1969), 75--94.

\bibitem[LV07]{Extension}
\textsc{T.~Lyons} and \textsc{N.~Victoir}.
\newblock An extension theorem to rough paths.
\newblock \emph{Ann. Inst. H. Poincar\'e Anal. Non Lin\'eaire} \textbf{24},
  no.~5, (2007), 835--847.

\bibitem[Lyo98]{Lyons}
\textsc{T.~J. Lyons}.
\newblock Differential equations driven by rough signals.
\newblock \emph{Rev. Mat. Iberoamericana} \textbf{14}, no.~2, (1998), 215--310.

\bibitem[Mey92]{MR1228209}
\textsc{Y.~Meyer}.
\newblock \emph{Wavelets and operators}, vol.~37 of \emph{Cambridge Studies in
  Advanced Mathematics}.
\newblock Cambridge University Press, Cambridge, 1992.
\newblock Translated from the 1990 French original by D. H. Salinger.

\bibitem[MM65]{MR0174052}
\textsc{J.~W. Milnor} and \textsc{J.~C. Moore}.
\newblock On the structure of {H}opf algebras.
\newblock \emph{Ann. of Math. (2)} \textbf{81}, (1965), 211--264.

\bibitem[MR04]{MR2050201}
\textsc{R.~Mikulevicius} and \textsc{B.~L. Rozovskii}.
\newblock Stochastic {N}avier-{S}tokes equations for turbulent flows.
\newblock \emph{SIAM J. Math. Anal.} \textbf{35}, no.~5, (2004), 1250--1310.

\bibitem[Nel73]{MR0343816}
\textsc{E.~Nelson}.
\newblock The free {M}arkoff field.
\newblock \emph{J. Functional Analysis} \textbf{12}, (1973), 211--227.

\bibitem[Nua06]{Nualart}
\textsc{D.~Nualart}.
\newblock \emph{The {M}alliavin calculus and related topics}.
\newblock Probability and its Applications (New York). Springer-Verlag, Berlin,
  second ed., 2006.

\bibitem[Pin02]{MR2100936}
\textsc{M.~A. Pinsky}.
\newblock \emph{Introduction to {F}ourier analysis and wavelets}.
\newblock Brooks/Cole Series in Advanced Mathematics. Brooks/Cole, Pacific
  Grove, CA, 2002.

\bibitem[PW81]{ParisiWu}
\textsc{G.~Parisi} and \textsc{Y.~S. Wu}.
\newblock Perturbation theory without gauge fixing.
\newblock \emph{Sci. Sinica} \textbf{24}, no.~4, (1981), 483--496.

\bibitem[Reu93]{reutenauer93}
\textsc{C.~Reutenauer}.
\newblock \emph{Free {L}ie algebras}, vol.~7 of \emph{London Mathematical
  Society Monographs. New Series}.
\newblock The Clarendon Press Oxford University Press, New York, 1993.
\newblock Oxford Science Publications.

\bibitem[RY91]{RevYor}
\textsc{D.~Revuz} and \textsc{M.~Yor}.
\newblock \emph{Continuous martingales and {B}rownian motion}, vol. 293 of
  \emph{Grundlehren der Mathematischen Wissenschaften [Fundamental Principles
  of Mathematical Sciences]}.
\newblock Springer-Verlag, Berlin, 1991.

\bibitem[Sch54]{MR0064324}
\textsc{L.~Schwartz}.
\newblock Sur l'impossibilit\'e de la multiplication des distributions.
\newblock \emph{C. R. Acad. Sci. Paris} \textbf{239}, (1954), 847--848.

\bibitem[Sim97]{MR1459795}
\textsc{L.~Simon}.
\newblock Schauder estimates by scaling.
\newblock \emph{Calc. Var. Partial Differential Equations} \textbf{5}, no.~5,
  (1997), 391--407.

\bibitem[Swe67]{MR0214646}
\textsc{M.~E. Sweedler}.
\newblock Cocommutative {H}opf algebras with antipode.
\newblock \emph{Bull. Amer. Math. Soc.} \textbf{73}, (1967), 126--128.

\bibitem[Swe69]{Sweedler}
\textsc{M.~E. Sweedler}.
\newblock \emph{Hopf algebras}.
\newblock Mathematics Lecture Note Series. W. A. Benjamin, Inc., New York,
  1969.

\bibitem[Tei11]{Josef}
\textsc{J.~Teichmann}.
\newblock Another approach to some rough and stochastic partial differential
  equations.
\newblock \emph{Stoch. Dyn.} \textbf{11}, no. 2-3, (2011), 535--550.

\bibitem[Unt10]{Unterberger}
\textsc{J.~Unterberger}.
\newblock H\"older-continuous rough paths by {F}ourier normal ordering.
\newblock \emph{Comm. Math. Phys.} \textbf{298}, no.~1, (2010), 1--36.

\bibitem[Wal86]{MR876085}
\textsc{J.~B. Walsh}.
\newblock An introduction to stochastic partial differential equations.
\newblock In \emph{\'{E}cole d'\'et\'e de probabilit\'es de {S}aint-{F}lour,
  {XIV}---1984}, vol. 1180 of \emph{Lecture Notes in Math.},  265--439.
  Springer, Berlin, 1986.

\bibitem[Wie38]{WienerChaos}
\textsc{N.~Wiener}.
\newblock The {H}omogeneous {C}haos.
\newblock \emph{Amer. J. Math.} \textbf{60}, no.~4, (1938), 897--936.

\bibitem[You36]{Young}
\textsc{L.~C. Young}.
\newblock An inequality of the {H}\"older type, connected with {S}tieltjes
  integration.
\newblock \emph{Acta Math.} \textbf{67}, no.~1, (1936), 251--282.

\end{thebibliography}

\end{document}